\newcommand{\A}{\mathbb{A}}
\newcommand{\G}{\mathbb{G}}
\newcommand{\N}{\mathbb{N}}
\newcommand{\Q}{\mathbb{Q}}
\newcommand{\T}{\mathbb{T}}
\newcommand{\Z}{\mathbb{Z}}
\newcommand{\cL}{{\mathcal{L}}}
\newcommand{\cB}{{\mathcal{B}}}
\newcommand{\cC}{{\mathcal{C}}}
\newcommand{\cF}{{\mathcal{F}}}
\newcommand{\cH}{{\mathcal{H}}}
\newcommand{\cI}{{\mathcal{I}}}
\newcommand{\cM}{{\mathcal{M}}}
\newcommand{\cR}{{\mathcal{R}}}
\newcommand{\cX}{{\mathcal{X}}}
\newcommand{\cW}{{\mathcal{W}}}
\newcommand{\cY}{{\mathcal{Y}}}
\newcommand{\cZ}{{\mathcal{Z}}}
\newcommand{\sM}{{\mathcal{M}}}
\newcommand{\sK}{{\mathscr{K}}}
\newcommand{\fm}{{\mathfrak{m}}}
\newcommand{\bP}{\mathbb{P}}
\newcommand{\bA}{\mathbb{A}}
\newcommand{\bQ}{\mathbb{Q}}
\newcommand{\bZ}{\mathbb{Z}}
\newcommand{\bG}{\mathbb{G}}
\newcommand{\bF}{\mathbb{F}}
\newcommand{\bN}{\mathbb{N}}
\newcommand{\bk}{\mathbbm{k}}
\newcommand{\bK}{\mathbb{K}}
\newcommand{\bT}{\mathbb{T}}
\newcommand{\Diff}[0]{\operatorname{Diff}}
\newcommand{\red}[0]{\operatorname{red}}
\newcommand{\dlt}[0]{\operatorname{dlt}}
\newcommand{\Aut}[0]{\operatorname{Aut}}
\newcommand{\Pic}[0]{\operatorname{Pic}}
\newcommand{\mld}{{\rm mld}}
\newcommand{\mult}{{\rm mult}}
\newcommand{\Supp}{{\rm Supp}}
\newcommand{\Hom}{{\rm Hom}}
\newcommand{\sm}{{\rm sm}}
\newcommand{\ord}{{\rm ord}}
\newcommand{\gr}{{\rm gr}}
\newcommand{\orb}{\mathrm{orb}}
\numberwithin{equation}{section}
\newtheorem{thm}{Theorem}[section]
\newtheorem{lem}[thm]{Lemma}
\newtheorem{cor}[thm]{Corollary}
\newtheorem{prop}[thm]{Proposition}
\theoremstyle{definition}
\newtheorem{defn}[thm]{Definition}
\newtheorem{example}[thm]{Example}
\newtheorem{rem}[thm]{Remark}
\newtheorem{defn-thm}[thm]{Definition--Theorem}  %!!!!!!!!!!!!!!!!!!!!!!!!
\newtheorem{defn-prop}[thm]{Definition--Proposition}  %!!!!!!!!!!!!!!!!!!!!!!!!
\newtheorem{defn-lem}[thm]{Definition--Lemma}  %!!!!!!!!!!!!!!!!!!!!!!!!
\theoremstyle{remark}
\newcommand{\Proj}{{\rm Proj}}
\newcommand{\DivVal}{{\rm DivVal}}
\newcommand{\cD}{{\mathcal{D}}}
\newcommand{\cE}{{\mathcal{E}}}
\newcommand{\cG}{{\mathcal{G}}}
\newcommand{\cO}{{\mathcal{O}}}
\newcommand{\cV}{{\mathcal{V}}}
\newcommand{\cP}{{\mathcal{P}}}
\newcommand{\fa}{{\mathfrak{a}}}
\newcommand{\Spec}{{\rm Spec}}
\newcommand{\la}{\lambda}
\newcommand{\bmu}{\bm{\mu}}
\newcommand{\Div}{{\rm Div}}
\newcommand{\Exc}{{\rm Exc}}
\newcommand{\Gal}{{ \rm Gal}}
\newcommand{\PGL}{{ \rm PGL}}
\newcommand{\LC}{{ \rm LC}}
\newcommand{\Src}{{\rm Src}}
\newcommand{\cbir}{{ \overset{\rm cbir}{\sim}}}
\newcommand{\STR}{\overline{\rm ST}_R}
\newcommand{\tX}{\widetilde{X}}
\newcommand{\tD}{\widetilde{D}}
\newcommand{\tcX}{\widetilde{\cX}}
\newcommand{\tcD}{\widetilde{\cD}}
\newcommand{\tcDe}{\Delta_{\widetilde{\cX}}}
\newcommand{\ocX}{\overline{\cX}}
\newcommand{\ocD}{\overline{\cD}}
\newcommand{\ocG}{\overline{\cG}}
\newcommand{\hcD}{\widehat{\cD}}
\newcommand{\vep}{\varepsilon}
\newcommand{\Hodge}{\mathrm{Hodge}}
\newcommand{\CY}{\mathrm{CY}}
\newcommand{\sF}{\mathscr{F}}
\newcommand{\mybar}[1]{\makebox[0pt]{$\phantom{#1}\overline{\phantom{#1}}$}#1}
\providecommand{\leftsquigarrow}{%
  \mathrel{\mathpalette\reflect@squig\relax}%
}
\newcommand{\reflect@squig}[2]{%
  \reflectbox{$\m@th#1\rightsquigarrow$}%
}
\newcommand{\sS}{\mathscr{S}}
\newcommand{\reg}{\mathrm{reg}}
\newcommand{\cU}{\mathcal{U}}
\newcommand{\tC}{\widetilde{C}}
\newcommand{\tQ}{\widetilde{Q}}
\newcommand{\oX}{\overline{X}}
\newcommand{\oDe}{\overline{\Delta}}
\newcommand{\oD}{\overline{D}}
\newcommand{\oG}{\overline{G}}
\newcommand{\oK}{\overline{K}}
\newcommand{\oE}{\overline{E}}
\newcommand{\oF}{\overline{F}}
\newcommand{\oeta}{\overline{\eta}}
\newcommand{\coreg}{\mathrm{coreg}}
\newcommand{\WDiv}{\mathrm{WDiv}}
\newcommand{\GIT}{\mathrm{GIT}}
\newcommand{\thickslash}{\mathbin{\!\!\pmb{\fatslash}}}
\newcommand{\et}{\mathrm{\acute{e}t}}
\begin{document}

\title{Moduli of boundary polarized Calabi--Yau pairs}
%\title{Moduli of Fano varieties with complements}
%\date{\today}

\author[K.\ Ascher]{Kenneth Ascher}
\address{Department of Mathematics, University of California, Irvine, CA, 92697, USA}
\email{kascher@uci.edu}
\author[D.\ Bejleri]{Dori Bejleri}
\address{Harvard University, One Oxford Street, Cambridge, MA 02138, USA}
\email{bejleri@math.harvard.edu}
\author[H.\ Blum]{Harold Blum}
\address{Department of Mathematics,  University of Utah, Salt Lake City, UT 84112, USA}
\email{blum@math.utah.edu}
\author[K.\ DeVleming]{Kristin DeVleming}
\address{Department of Mathematics and Statistics,
University of Massachusetts, Amherst, MA 01003, USA}
\email{kdevleming@umass.edu}
\author[G.\ Inchiostro]{Giovanni Inchiostro}
\address{University of Washington, Department of Mathematics, Box 354350, Seattle, WA 98195, USA}
\email{ginchios@uw.edu}
\author[Y.\ Liu]{Yuchen Liu}
\address{Department of Mathematics, Northwestern University, Evanston, IL 60208, USA}
\email{yuchenl@northwestern.edu}
\author[X.\ Wang]{Xiaowei Wang}
\address{Department of Mathematics and Computer Science, Rutgers University, Newark, NJ 07102, USA}
\email{xiaowwan@rutgers.edu}

%\subjclass[2000]{Primary: 32H50, Secondary: 14C21}

\begin{abstract}
We develop the moduli theory of boundary polarized CY pairs, which are slc Calabi--Yau pairs $(X,D)$ such that $D$ is ample.
The motivation for studying this moduli problem is to construct a moduli space at the Calabi--Yau wall interpolating between certain K-moduli and KSBA moduli spaces.    
We prove that the moduli stack of boundary polarized CY pairs is S-complete, $\Theta$-reductive, and satisfies the existence part of the valuative criterion for properness, which are steps towards constructing a proper moduli space.
A key obstacle in this theory is that the irreducible components of the moduli stack are not in general of finite type. Despite this issue, in the case of pairs $(X,D)$ where $X$ is a degeneration of $\bP^2$, we construct a projective moduli space on which the Hodge line bundle is ample.
%Despite this issue, we construct a projective moduli space parametrizing equivalence classes in the case of pairs $(X,D)$ where $X$ is a degeneration of $\bP^2$, on which the Hodge line bundle is ample.
As a consequence, we complete the proof of %the b-semiampleness 
a conjecture of Prokhorov and Shokurov in relative dimension two.
\end{abstract}

\maketitle

\setcounter{tocdepth}{1}

\tableofcontents

\newpage

\section{Introduction}

Recently, there has been tremendous progress in constructing compact moduli spaces of higher dimensional varieties. 
The main classes of varieties to consider are canonically polarized varieties, %\footnote{\KA{Should this be general type?} \HB{I kind of prefer canonically polarized, since this corresponds to what we say the KSBA moduli space parametrizes in the next sentence.}}, 
Calabi--Yau varieties, and Fano varieties,
which, from the viewpoint of birational geometry, are the three  building blocks of algebraic varieties.
For canonically polarized varieties, the  Koll\'ar, Shepherd-Barron, and Alexeev (KSBA)  approach  produces
a projective moduli space parametrizing canonically polarized varieties with slc singularities \cite{KSB88, Ale94, Kol08,BCHM10,AH11, HX13, KP17, HMX18, Fuj18, KolKflat}.
For Fano varieties, an approach using K-stability  produces a projective moduli space parametrizing K-polystable Fano varieties \cite{Jia20, CP21, LWX21, BX19, ABHLX20, Xu20, BLX19, XZ20, XZ21, BHLLX21, LXZ21}.
See \cite{KolNewBook,Xu21,Xu23} for surveys on these topics.

In the remaining case, the problem is to construct compact moduli spaces of Calabi--Yau varieties polarized by an ample line bundle.
While there is no unified solution to this problem,  
 techniques including  Hodge theory, KSBA moduli, Fano K-moduli,  mirror symmetry,  geometric invariant theory, and differential geometry provide various approaches to the problem and are successful in specific examples. 
In this paper, we introduce a new approach that bridges the KSBA  and Fano K-moduli theories and apply it to certain examples.

The main objects we consider  are \emph{boundary polarized CY pairs},
which are semi-log-canonical (slc) pairs $(X,D)$ such that $X$ is projective, $D$ is an ample $\bQ$-Cartier $\bQ$-divisor, and $K_{X}+D\sim_{\bQ}0$.\footnote{More generally, we will also consider pairs of the form $(X,\Delta+D)$; see Definition \ref{d:bpcy}.}
Note that  $-K_X$ is ample and so $X$ is an slc Fano variety. 
Many important classes of Calabi--Yau varieties are naturally associated to such pairs.
\begin{enumerate}
	\item A smooth hypersurface $H \subset \bP^n$ of degree $n+1$ gives such a pair $(\bP^n,H)$.
	\item A general  K3 surface of degree two is the double cover of such a pair $(\bP^2, \frac{1}{2} C_6)$, where $C_6$ is a sextic plane curve. More generally, a hyperelliptic K3 surface is a double cover of a pair of the form $(X,\tfrac{1}{2}C)$, where $X$ is a del Pezzo surface.
 	\item The projective cone $X:= C_p(Y,L)$ over a Calabi--Yau variety  $Y$ with respect to an ample line bundle $L$  with divisor $Y \cong D\subset X$ at infinity is such a pair.
\end{enumerate}
Via the construction in (3), the moduli theory of Calabi--Yau varieties polarized by an ample line bundle is contained in the moduli theory of boundary polarized CY pairs. This relationship will be studied in detail in future work.

In certain situations, both KSBA and K-moduli theories can be used to construct compactifications of the moduli of boundary polarized CY pairs.
Indeed, if $(X,D)$ is a boundary polarized CY pair that is klt (for example, the klt condition occurs in (2)), then
\begin{itemize}
	\item   $(X,(1-\vep)D)$ is a K-semistable log Fano pair, 
	\item $(X,D)$ is a klt CY pair, and
	\item $(X,(1+\vep)D)$ is a KSBA stable canonically polarized pair, 
\end{itemize}
for $0<\vep \ll1$. 
Therefore,  Fano K-moduli  and KSBA machinery combined with boundedness results (needed to deal with an $\vep $ in the coefficient)
produce projective  moduli spaces of such pairs
 and  we denote them by $M^{\rm K}$ and $M^{\rm KSBA}$.
Such moduli spaces were constructed in \cite{ADL19,Zhou} and \cite{KX20,Bir20,Bir22}, respectively.

In the special case when $X:=\bP^2$ and $D:= \frac{3}{d} C$, where $C\subset \bP^2$ is a smooth degree $d$ curve, the above K-moduli  and KSBA compactifications were studied by Ascher, DeVleming, and Liu   in \cite{ADL19} and by Hacking in \cite{Hac04}, respectively. 
Furthermore, in this case, the first three authors conjectured that at $\vep=0$
there exists a moduli space of  CY pairs that is the common ample model of the Hodge line bundles on $M^{\rm K}$ and $M^{\rm KSBA}$.

One reason to believe the existence of this  moduli space of CY pairs  comes from K-stability for polarized varieties.
For Fano varieties, K-stability produces the  K-moduli space of Fano varieties.
For canonically polarized varieties, KSBA stability agrees with K-stability by \cite{Oda12,Oda13b}.
Hence, the moduli spaces $M^{\rm K}$ and $M^{\rm KSBA}$ are both K-moduli spaces and there ideally would be a K-moduli space $M^{\CY}$ 
at $\vep=0$ parametrizing certain CY pairs.
Such a space would interpolate between two very different geometries: log general type and Fano.

To determine what the Calabi--Yau moduli  space should parametrize,  note that the  Fano K-moduli space $M^{\rm K}$ is a good moduli space as defined in \cite{Alp13} and  parametrizes S-equivalence classes of K-semistable log Fano pairs.
 Here, two K-semistable log Fano pairs   are {S-equivalent} if they degenerate to a common K-semistable log Fano pair via  test configurations.
By \cite{Oda13Slope,Oda13b}, a  polarized K-trivial variety is K-semistable if and only if it is slc. 
Thus, when the Calabi--Yau moduli space exists, it  should  parametrize S-equivalence classes of (slc) boundary polarized CY pairs.

The moduli theory of boundary polarized CY pairs differs from both the KSBA and Fano K-moduli theories:
\begin{itemize}
\item  Boundary polarized CY pairs can have non-discrete automorphism groups and can admit isotrivial degenerations to other such pairs. In particular, the moduli stack will be algebraic, but not in general Deligne--Mumford. 
\item The irreducible components of the moduli stack are often not quasi-compact and hence not of finite type.
See Example \ref{e:unbounded} for simple examples.
\end{itemize}
While the first issue does not occur for KSBA stable varieties, it appears for K-semistable Fano varieties and requires  technical machinery to understand and construct a good moduli space in the sense of \cite{Alp13}.
The second issue differs significantly from both the KSBA and Fano K-moduli theories and will be a  crucial obstacle moving forward.

\subsection{Higher dimensional results}
Before constructing the moduli space of CY pairs in specific examples, we will need to build up considerable  theory.  As a first step, we construct a moduli stack parametrizing boundary polarized CY pairs with fixed invariants (see Definition \ref{defn:modulistack}).
This is carried out in Section \ref{s:stack} and uses many of the same techniques in the construction of the moduli stack of KSBA stable pairs in \cite{KolNewBook}.
As mentioned above, the irreducible components of the stack are not in general of finite type.

In hopes of constructing a good moduli space for the stack, 
we recall the foundational result of \cite{AHLH18}, which states 
that a finite type algebraic stack with affine diagonal admits a separated good moduli space if and only if it  is S-complete and $\Theta$-reductive \cite{AHLH18}. 
The latter two notions are defined in Sections \ref{ss:defScom} and \ref{ss:deftheta}
and may be thought of as valuative criteria over two-dimensional bases and require extensions of families over punctured surfaces.  The first main theorem of this paper shows  that almost all of these conditions are satisfied for the moduli stack of boundary polarized CY pairs.

\begin{thm}\label{t:main1}
The  moduli stack of boundary polarized CY pairs  is algebraic, locally of finite type, and has affine diagonal. 
Moreover, the stack is
\begin{enumerate}			
\item S-complete,
\item $\Theta$-reductive, and 
\item satisfies the existence part of the valuative criterion for properness
\end{enumerate}
all with respect to DVRs essentially of finite type over the base field.
\end{thm}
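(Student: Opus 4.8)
The plan is to verify each property by reducing to statements about families of slc pairs over two-dimensional bases, using the machinery of the minimal model program and the theory already developed for KSBA stable families in \cite{KolNewBook}. For algebraicity, local finite type, and affine diagonal, I would follow the construction of the moduli stack in Section \ref{s:stack}: one represents the stack as a locally closed substack of a Hilbert-type parameter space, with affineness of the diagonal coming from the fact that isomorphisms of polarized pairs are parametrized by an affine scheme (the $D$ being ample is what rigidifies things enough for the Hom-scheme to be affine, just as in the KSBA case). So I regard this first sentence as essentially bookkeeping on top of Section \ref{s:stack}.

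The substantive content is parts (1)--(3), and the unifying strategy is: given a family of boundary polarized CY pairs over a punctured two-dimensional (or one-dimensional, for (3)) base, first take a limit as a family of \emph{KSBA stable} pairs for the perturbed boundary $(1+\vep)D$ — this limit exists and is unique by the properness of KSBA moduli — and then run a minimal model program / canonical model argument to pass from the KSBA limit $(X_0, (1+\vep)D_0)$ back to a boundary polarized CY limit $(X_0', D_0')$ with $K_{X_0'}+D_0' \sim_{\bQ} 0$ and $D_0'$ ample. The point is that the KSBA limit is log canonical with $K + (1+\vep)D$ ample, and one wants instead the \emph{ample model} of $D$ over the CY pair; running MMP to contract the locus where $D$ fails to be ample (equivalently, taking $\Proj$ of the section ring of $D$) should produce the desired CY degeneration. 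For S-completeness one does this over $\Spec R[s,t]/(st)$ (equivalently, glues two one-parameter families agreeing on the generic fiber and shows the two limits are isomorphic after the MMP); for $\Theta$-reductivity one works over $\Spec R[t] \times [\A^1/\G_m]$, i.e. shows that a $\G_m$-equivariant family over the punctured affine line extends; and for the existence part of the valuative criterion one works over a single DVR.

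The hard part will be controlling the minimal model program step and showing the limit one produces is again a boundary polarized CY pair — in particular that $D_0'$ remains \emph{ample} (not merely semiample/nef) and that the pair stays slc with $K+D \sim_\bQ 0$ rather than merely $\equiv 0$. Because the irreducible components of the stack are not of finite type, one cannot simply invoke boundedness to extract limits; instead the relevant MMP's must be run with the existence of the relevant canonical/ample models justified directly, likely using that $-K_X = D$ is ample so that the section rings in question are finitely generated (this is where the Fano nature of $X$ enters and replaces a boundedness input). A secondary subtlety is uniqueness of the limit, needed genuinely for S-completeness: two different one-parameter degenerations with the same generic fiber must yield isomorphic central fibers after passing to CY models — here I would argue that both central fibers have the same KSBA stable model $(X_0,(1+\vep)D_0)$ by separatedness of KSBA moduli, and that the CY model is canonically determined by the KSBA model as the ample model of $D_0$, hence unique. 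Finally, one must be careful that all of this is carried out over DVRs that are only \emph{essentially of finite type} over the base field, so any appeal to resolution, MMP, or properness of auxiliary moduli spaces must be valid in that generality, which it is by the cited references \cite{BCHM10, HX13, KolNewBook}.
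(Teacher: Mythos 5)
Your reading of the properness part (3) is closest to the paper's actual argument, which indeed reduces to the Koll\'ar--Xu properness result for KSBA stable perturbations and then stabilizes the limit as $\vep\to 0$. But even there your specific perturbation is wrong: you propose to perturb to $(X,\Delta+(1+\vep)D)$, and this pair fails to be slc whenever $(X,\Delta+D)$ has an lc place $E$ with $\ord_E(D)>0$ (then $A_{X,\Delta+(1+\vep)D}(E)=-\vep\,\ord_E(D)<0$), which is exactly the generic situation outside the Hacking locus. The paper instead perturbs to $\Delta+(1-\vep)D+(\vep+\delta)H$ for a \emph{general} $H\sim_{\bQ}-K_X-\Delta$, applies \cite{KX20} for each $\vep$, and then shows the resulting limits $X^{\vep_i}$ stabilize along a sequence $\vep_i\to 0$ (by comparing which components of a fixed dlt model get contracted), recovering the CY limit with $D\sim_{\bQ}H$ ample.

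The more serious problem is with (1) and (2). For S-completeness your plan is to show that the two central fibers $(X_0,D_0)$ and $(X_0',D_0')$ are \emph{isomorphic}, arguing that both have the same KSBA limit and that "the CY model is canonically determined by the KSBA model as the ample model of $D_0$." This would prove that the stack is separated, which is false and is the whole reason the theory is built around S-equivalence: e.g.\ $(\bP^2,C)$ with $C$ a smooth cubic and the projective cone over $C$ both arise as limits of the same punctured family, and they are not isomorphic (see Corollary \ref{c:sequiv} and Example (3) in the introduction). S-completeness does not assert uniqueness of limits; it asserts that the $\bG_m$-equivariant family over $\overline{\rm ST}_R\setminus 0$ extends over the stacky origin, i.e.\ that the two central fibers admit a \emph{common degeneration} via test configurations. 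The paper produces this filling by an explicitly different mechanism: it forms the filtration $\cF^\la H^0(X,mL)=H^0(X,mL)\cap\pi^\la H^0(X',mL')$ with $L=-r(K_X+\Delta)$, proves the associated bigraded Rees algebra is finitely generated (Theorem \ref{t:fg}, via the cone construction over $(X,\Delta+D)$ and the lc MMP of \cite{HX13}), and takes $\Proj$; the non-normal case is then handled by Koll\'ar's gluing theory. $\Theta$-reductivity is handled the same way, intersecting the test-configuration filtration on the generic fiber with the integral structure; your sketch does not engage with the $\bG_m$-equivariant/filtration data at all, and a KSBA-limit argument gives no handle on it. So parts (1) and (2) of your proposal rest on a false uniqueness claim and would need to be replaced wholesale by a finite-generation argument of the above type.
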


In a future work we will prove an analogue of this result for the moduli stack of Calabi--Yau varieties polarized by an ample $\bQ$-line bundle.

While Theorem \ref{t:main1} may seem abstract, it implies the following concrete  results regarding degenerations of  boundary polarized CY pairs over the germ of a smooth curve $0\in C$.
\begin{itemize}
	\item {\sc Properness}:
 If
	$
	(X^\circ,D^\circ)\to C^\circ
	$
is a family of boundary polarized CY pairs over $C^\circ := C\setminus 0$, then it admits an extension to a family over $C$, possibly after a finite ramified base change.

\item {\sc $\Theta$-reductivity}: 
If $(X,D)\to C$ is a family of boundary polarized CY pairs, then any weakly special test configuration of the generic fiber extends to the special fiber. 

\item {\sc S-completeness}: S-completeness implies the next corollary, which is a Calabi--Yau analogue
  of the main theorem of  \cite{BX19}.
\end{itemize}

\begin{cor}\label{c:sequiv}
Let 
$(X,D)\to C$
and
$(X',D')\to C$ 
be families of boundary polarized CY pairs over the germ of a smooth pointed curve $0 \in C$.
If there is an isomorphism 
\[
(X,D)\vert_{C^\circ} \cong (X',D')\vert_{C^\circ}
,\]
 over $C^\circ:=C\setminus 0$, then $(X_0,D_0)$ and $(X'_0,D'_0)$ are S-equivalent.
\end{cor}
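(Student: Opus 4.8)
The plan is to deduce Corollary \ref{c:sequiv} directly from the S-completeness statement in Theorem \ref{t:main1}, following the strategy that proves the analogous result \cite{BX19} in the Fano case. First I would recall the precise formulation of S-completeness from Section \ref{ss:defScom}: given a DVR $R$ essentially of finite type over the base field with uniformizer $\pi$ and fraction field $K$, one forms the stack $\overline{\mathrm{ST}}_R = [\Spec(R[s,t]/(st-\pi))/\bG_m]$, and S-completeness asserts that any map $\overline{\mathrm{ST}}_R \setminus 0 \to \cM$ (where $\cM$ is the moduli stack of boundary polarized CY pairs and $0$ is the closed point of the origin) extends uniquely to $\overline{\mathrm{ST}}_R \to \cM$.

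The main step is then to translate the hypothesis of the corollary into such a map and to read off the conclusion. After possibly localizing $C$ at $0$, we may take $R = \cO_{C,0}$, which is a DVR essentially of finite type over the base field. The two families $(X,D) \to C$ and $(X',D') \to C$ give two maps $\Spec R \to \cM$, and restricting to $C^\circ = \Spec K$ gives two maps $\Spec K \to \cM$ which, by hypothesis, agree. I would use the standard identification (as in \cite{AHLH18}) under which a map $\overline{\mathrm{ST}}_R \setminus 0 \to \cM$ is equivalent to the data of two maps $\Spec R \to \cM$ together with an isomorphism of their restrictions to $\Spec K$; thus our data defines a map $\overline{\mathrm{ST}}_R \setminus 0 \to \cM$. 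By S-completeness this extends to $\overline{\mathrm{ST}}_R \to \cM$, and restricting the extension to the closed point $0 = B\bG_m$ of the origin produces a boundary polarized CY pair $(X_0'', D_0'')$ over the residue field. The two coordinate axes $\{t = 0\}$ and $\{s = 0\}$ of $\overline{\mathrm{ST}}_R$, each isomorphic to $[\A^1/\bG_m] = \Theta$, restrict the extended map to two $\bG_m$-equivariant degenerations, i.e.\ weakly special test configurations, of $(X_0,D_0)$ and of $(X_0',D_0')$ respectively, both with central fiber $(X_0'', D_0'')$. This exhibits $(X_0,D_0)$ and $(X_0',D_0')$ as S-equivalent by definition of S-equivalence via test configurations degenerating to a common pair.

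I would also need to confirm that the fibers appearing are genuinely boundary polarized CY pairs and that the notion of test configuration here matches the one used to define S-equivalence; this is where one invokes that $\cM$ is the moduli stack of boundary polarized CY pairs, so any map from $\Theta$ or $\overline{\mathrm{ST}}_R$ automatically has all fibers in the desired class, and the central fiber of $\{s=0\}$ (resp.\ $\{t=0\}$) is identified with the fiber over $0$, giving the common degeneration. A minor technical point to address is the compatibility of the two $\Theta$-restrictions at the origin — namely that both test configurations really do have the \emph{same} special fiber $(X_0'',D_0'')$ — which follows from the fact that both axes pass through the origin $0$ of $\overline{\mathrm{ST}}_R$ and hence their central fibers are both pulled back from the map's value at $0$.

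The hard part is not in this corollary itself, which is a formal consequence, but rather lies entirely in Theorem \ref{t:main1}(1), the S-completeness statement, whose proof requires genuine input from the theory of CY pairs; here I would simply cite it as permitted. The only real care needed in the present argument is bookkeeping: ensuring the DVR is of the right type (essentially of finite type, so that Theorem \ref{t:main1} applies), handling the possible finite base change transparently (it does not affect S-equivalence of central fibers, since S-equivalence is insensitive to such base changes), and correctly matching the $\bG_m$-actions so that the two $\Theta$-restrictions are legitimately weakly special test configurations in the sense used to define S-equivalence.
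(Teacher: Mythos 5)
Your proposal is correct and follows essentially the same route as the paper: take $R=\cO_{C,0}$, glue the two families over $\Spec(K)$ (times $\bG_m$) into a $\bG_m$-equivariant family over $\Spec(R[s,t]/(st-\pi))\setminus 0$, extend across the origin by S-completeness (Theorem \ref{t:Scomplete}), and read off the two weakly special test configurations with common central fiber from the restrictions to $\{s=0\}$ and $\{t=0\}$. The only superfluous remark is the worry about finite base change, which does not arise here.
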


In the above corollary, \emph{S-equivalence} means that 
 $(X_0,D_0)$ and $(X'_0,D'_0)$  admit degenerations via test configurations to a common boundary polarized CY pair; see Definition \ref{d:Sequiv}. 
 Roughly, the corollary states that any moduli space parametrizing
 S-equivalence classes of boundary polarized CY pairs is separated.

The proofs  of Theorem \ref{t:main1}.1 and \ref{t:main1}.2 
are inspired by the proofs of S-completeness and $\Theta$-reductivity for the moduli stack of  K-semistable Fano varieties in \cite{ABHLX20}, which builds on  \cite{BX19,LWX21}. 
Additional subtleties arise in the proof in the boundary polarized CY case due to the presence of slc, rather than klt, singularities. 
To deal with this obstacle, the proof uses the MMP for lc pairs \cite{HX13} and Koll\'ar's gluing theory \cite{Kol13}.
Theorem \ref{t:main1}.3 is related to, but different from, extension results for CY pairs in the literature \cite{Fuj11, KLSV, KX20}. 
Its proof combines \cite{KX20} with a perturbation argument. In particular, we obtain a stronger version of the existence part of the valuative criterion than one expects for general Artin stacks (see Remark \ref{r:rootstack}).

Note that Theorem \ref{t:main1}
may not be immediately applied to construct a moduli space of boundary polarized CY pairs, 
since the irreducible components of the moduli stack  will not always be of finite type.
Significant additional work is needed to construct a moduli space in specific examples.

\subsection{Moduli of plane curve pairs}

Let $\cP_d$ denote the stack parametrizing families of pairs $(\bP^2, \tfrac{3}{d}C)$, where $C\subset\bP^2$ is a smooth degree $d\geq 3$  curve. We consider the following three compactifications of the stack.

\begin{enumerate}

	\item[(i)] {\sc Calabi--Yau}: 
	 $\cP_{d}^{\rm CY}$ denotes the stack theoretic closure of $\cP_d$ in the moduli  stack of boundary polarized CY pairs.
	
	\item[(ii)] {\sc K-moduli}:
	 $\cP_d^{\rm K}\subset \cP_d^{\rm CY}$ denotes the open locus consisting of pairs $(X,D)$ such that $(X,(1-\vep )D)$ is K-semistable for $0<\vep  \ll1$. 
	
	\item[(iii)] {\sc KSBA}:
		$\cP_d^{\rm H}\subset \cP_d^{\rm CY}$ denotes the open locus  consisting of pairs  $(X,D)$ such that $(X,(1+\vep )D)$ is KSBA stable (i.e. slc)  for $0<\vep  \ll1$. 	 The notation is a reference to Hacking \cite{Hac04}.
\end{enumerate}
Concretely, the points of $\cP_d^{\CY}$ correspond to boundary polarized CY pairs that are degenerations of families of pairs $(\bP^2, \tfrac{3}{d}C)$, where $C$ is a smooth degree $d$ plane curve.
By \cite{ADL19} and \cite{Hac04}, the K-moduli and KSBA stacks  have associated good  moduli spaces $\phi_{\rm K}:\cP_d^{\rm K} \to P_d^{\rm K}$ and $\phi_{\rm H}:\cP_d^{\rm H}\to P_d^{\rm H}$.
The following theorem produces a moduli space of Calabi--Yau pairs as   conjectured in  \cite[Conj. 1.8]{ADL19}.

	\begin{thm} \label{t:main2}
For any $d \ge 3$, there exists a seminormal 
 projective variety $P_{d}^{\CY}$ and a surjective morphism 
\[
\phi:
(\cP_{d}^{\CY})^{\rm sn} \to P_{d}^{\CY}
,\]
where  $\rm{sn}$ denotes seminormalization, such that the following hold: 
\begin{enumerate}
	\item The map $\phi$ is universal among maps from $(\cP_{d}^{\CY})^{\rm sn}$ to algebraic spaces.
	\item  The moduli space $P_d^{\CY}$ parametrizes S-equivalence classes of pairs in $\cP_{d}^{\CY}$.
	\item There exists a wall crossing diagram 
	\[
	\begin{tikzcd}
\cP_{d}^{\rm K} \arrow[r,hook]\arrow[d, "\phi_{\rm K}",swap] & (\cP_{d}^{\CY})^{\rm sn}\arrow[d,"\phi"] & (\cP_d^{\rm H})^{\rm sn}\arrow[l,hook'] \arrow[d, "\phi_{\rm H}^{\rm sn}"]\\
		P_{d}^{\rm K} \arrow[r] & P_{d}^{\CY} & (P_d^{\rm H})^{\rm sn}\arrow[l] 
	\end{tikzcd}
	\]
    where the arrows in the first row are open immersions and in the second are projective birational morphisms for $d > 3$.\footnote{When $d = 3$, $\cP_3^{\rm H}$ and $P_3^{\rm H}$ are empty and the bottom left arrow is an isomorphism.}

	\item The  Hodge line  bundle on $P_{d}^{\CY}$ is ample.
\end{enumerate}
	\end{thm}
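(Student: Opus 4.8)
The proof has three stages: (a) constructing $P_d^{\CY}$ as the good moduli space of $(\cP_d^{\CY})^{\rm sn}$, which is a proper algebraic space; (b) establishing the wall-crossing diagram; and (c) proving ampleness of the Hodge line bundle. Part (2) of the theorem then follows formally from the universal property of good moduli spaces together with Corollary \ref{c:sequiv}. The essential difficulty is concentrated in stage (a): because the irreducible components of $\cP_d^{\CY}$ are not of finite type, the existence theorem of \cite{AHLH18} does not apply off the shelf.

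\textbf{Stage (a).} By Theorem \ref{t:main1}, the stack $(\cP_d^{\CY})^{\rm sn}$ has affine diagonal, is S-complete and $\Theta$-reductive, and satisfies the existence part of the valuative criterion. To build a good moduli space despite the failure of finite type, I would write $(\cP_d^{\CY})^{\rm sn}$ as an increasing union of finite-type open substacks $\cU_1 \subset \cU_2 \subset \cdots$ that are saturated, i.e.\ are unions of S-equivalence classes; the key point, and where the geometry of degenerations of $\bP^2$ enters, is a boundedness statement of the form: the S-equivalence closure of a bounded family of boundary polarized CY pairs $(X,D)$ with $X$ a degeneration of $\bP^2$ is again bounded. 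One proves this using the classification of slc degenerations of $\bP^2$ together with the fact that $D$ is $\bQ$-linearly equivalent to $-K_X$, hence determined by $X$ up to bounded ambiguity. Applying \cite{AHLH18} to each $\cU_i$ yields good moduli spaces with open-immersion transition maps, whose colimit is a good moduli space $\phi \colon (\cP_d^{\CY})^{\rm sn} \to P_d^{\CY}$ with $P_d^{\CY}$ an algebraic space locally of finite type; this gives part (1). Descending Theorem \ref{t:main1}.3 along $\phi$ gives the existence part of the valuative criterion for $P_d^{\CY}$, and Corollary \ref{c:sequiv} gives separatedness. For quasi-compactness I would use that the projective morphism $\phi_{\rm K}\colon P_d^{\rm K} \to P_d^{\CY}$ constructed in stage (b) is surjective: every CY pair occurring as a limit of the tautological family, after an $\vep$-perturbation to a log Fano, degenerates by properness and S-completeness of Fano K-moduli \cite{ADL19, ABHLX20} to a K-semistable one; thus K-semistable pairs are dense in $\cP_d^{\CY}$, and since $P_d^{\rm K}$ is proper and $P_d^{\CY}$ separated, the image of $\phi_{\rm K}$ is closed and dense, hence all of $P_d^{\CY}$, which is therefore proper. (Alternatively, one could first prove stage (c) on $P_d^{\rm K}$ and then define $P_d^{\CY}$ as the ample model of the limit CM line bundle, afterward checking the universal property; the content is the same.)

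\textbf{Stages (b) and (c).} Since $\phi$ restricts to good moduli spaces on the saturated open substacks $\cP_d^{\rm K}$ and $\cP_d^{\rm H}$, we obtain morphisms $P_d^{\rm K}, (P_d^{\rm H})^{\rm sn} \to P_d^{\CY}$; all three stacks agree on the dense open locus $\cP_d$ of smooth plane curves, so these are birational, and being morphisms of proper algebraic spaces with projective source they are projective birational for $d > 3$ (with $\phi_{\rm K}$ an isomorphism for $d = 3$). That these are morphisms, not merely rational maps, uses that K-semistable reduction and KSBA stable reduction of the tautological family stay inside $\cP_d^{\CY}$, the latter via \cite{Hac04, KX20}. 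For stage (c), the Hodge line bundle $\lambda$ descends along $\phi$ because it has trivial weight at the closed points with positive-dimensional automorphism group, a consequence of the vanishing of the associated Futaki-type invariant forced by $K_X + D \sim_{\bQ} 0$. To prove $\lambda$ ample I would verify the Nakai--Moishezon criterion on $P_d^{\CY}$: the pullback $\phi_{\rm K}^*\lambda$ is the limit at the Calabi--Yau wall of the ample CM line bundles $\lambda_{\rm CM, \vep}$ on $P_d^{\rm K}$, hence nef by the positivity theory of CM line bundles; and since $\phi_{\rm K}$ is birational, intersection numbers of $\lambda$ along subvarieties of $P_d^{\CY}$ may be computed on $P_d^{\rm K}$, where they are positive because $\phi_{\rm K}$ contracts precisely the locus on which the CM degree vanishes.

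\textbf{Main obstacle.} The crux is the boundedness statement in stage (a) underlying the saturated finite-type filtration; this is exactly where the failure of finite type --- flagged in the introduction as the crucial obstacle of the theory --- must be controlled, and it is the reason the construction works for degenerations of $\bP^2$ but not in general. A secondary difficulty is the positivity and descent of the Hodge/CM line bundle in stage (c), where the slc (rather than klt) singularities and the non-seminormal structure of $\cP_d^{\CY}$ force one to work throughout on seminormalizations.
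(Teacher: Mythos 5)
There is a genuine gap at the heart of Stage (a). Your filtration by \emph{saturated} finite-type open substacks $\cU_i$ (unions of S-equivalence classes) does not exist when $3 \mid d$: by Theorem \ref{t:TypeIIISequiv} all Type III pairs form a single S-equivalence class, and by Example \ref{e:unbounded} that class is unbounded (it contains $(\bP(a^2,b^2,c^2), \tfrac{3}{d}\{(xyz)^{d/3}=0\})$ for all Markov triples), so no finite-type open substack meeting the Type III locus can be saturated. For the same reason your proposed boundedness statement --- that the S-equivalence closure of a bounded family is bounded --- is false; the paper proves such a statement only for pairs of regularity $\le 0$ (Proposition \ref{p:Hstabledegbound}). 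The actual construction uses the non-saturated filtration $\cP_{d,m}^{\CY}$ by the Cartier index of $K_X$, and the real content --- which your proposal omits entirely --- is Proposition \ref{p:PdmSTheta}: that these particular non-saturated substacks are nonetheless S-complete and $\Theta$-reductive. This is proved by producing equivariant $1$-complements on Type II/III central fibers to reduce to $d=3$ (Proposition \ref{p:1-complementS}) and then using the theory of twisted curves to show the index cannot jump over the closed point of the two-dimensional test surfaces (Propositions \ref{p:planecurvetotwisted}--\ref{p:Pdmtwisted}). Without an argument of this kind, applying \cite{AHLH18} to an arbitrary finite-type exhaustion fails, exactly as the paper warns. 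Relatedly, the transition maps between the resulting spaces $P_{d,m}^{\CY}$ are not open immersions but proper surjections that are only eventually bijective on $\bk$-points; one must prove this stabilization (Proposition \ref{p:stabilization}, again via the classification of Type II and III S-equivalence classes) and then pass to seminormalizations to get isomorphisms.

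Stage (c) also has a gap. Nefness of the Hodge line bundle is fine, but your positivity argument (``$\phi_{\rm K}$ contracts precisely the locus on which the CM degree vanishes'') assumes what must be proved. The Nakai--Moishezon verification requires $(L_{\Hodge}|_Z)^{\dim Z}>0$ for subvarieties $Z$ contained in the boundary, where the parametrized pairs are strictly slc and the klt positivity results for the moduli part do not apply directly. The paper handles this by replacing each pair with its source via a dlt modification of a generically finite cover (Theorem \ref{p:ftHodgeampleonsubspace}), and then invoking Ambro's b-bigness criterion, which requires knowing that only finitely many S-equivalence classes share a given source; this is exactly the content of the explicit Type II classification in Theorem \ref{thm:type2Sequiv} and the collapse of the Type III locus to a point. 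That finiteness statement is the missing input in your sketch. (The existence of a generically finite scheme cover supporting a universal family, Theorem \ref{p:finitecoverbyscheme}, is a further nontrivial ingredient since the relevant stacks are not Deligne--Mumford.)
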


The geometry of $\phi$ is governed by the divisibility of the degree. 
Indeed,  when $3\nmid d$, the geometry is well behaved: $\cP_d^{\rm CY}$ is a smooth algebraic stack of finite type, $P_d^{\CY}$ is normal, and $\phi$ is a good moduli space morphism.
When $3 \mid d$,  the moduli problem is unbounded: $\cP_d^{\CY}$  is not finite type and $\phi$ fails to be a good moduli space morphism by Remark \ref{r:notgms}. 
To remedy this failure, we introduce the notion of an \emph{asymptotically good moduli space} (Definition \ref{d:asgm}), which allows for unboundedness.

The moduli space $P_d^{\rm CY}$ has similarities to the Baily--Borel (BB) compactification of the moduli space of polarized K3 surfaces. 
Indeed the Hodge line bundle is ample on $P_d^{\CY}$, the boundary parametrizes lower dimensional CY pairs, and the compactification is minimal in that both $P^{\rm K}_d$ and $P^{\rm H}_d$ map down to it. 
In fact, when $d=4,6$,  this heuristic can be made precise: $P_4^{\rm \CY}$ is  isomorphic to the BB compactification of the moduli space of ADE K3 surfaces of degree 4 with a $\mathbb{Z}/4\mathbb{Z}$ symmetry constructed by Kond\=o \cite{Kondo}, and  $P_6^{\CY}$ is   isomorphic to the BB compactification  of the  moduli space of primitively polarized K3 surfaces of degree two; see Section \ref{sec:K3}.
Thus Theorem \ref{t:main2} gives a moduli interpretation to the previous two BB compactifications.
It would be interesting  to find  explicit descriptions of $P_d^{\CY}$ for additional degrees and relate them to other moduli spaces of polarized Calabi--Yau varieties.

\subsubsection{Sketch of proof}
The proof of Theorem \ref{t:main2} combines the abstract theory developed in Theorem \ref{t:main1}  and an in depth analysis of degenerations of plane curve pairs.

To construct the moduli space,  we first analyze the case when $3\nmid d$. 
In this case, $\cP_d^{\rm CY}$ is of finite type 
and we can use   Theorem \ref{t:main1} to construct $P_{d}^{\CY}$.
The case when $3 \mid d$ is significantly more involved since  $\cP_d^{\rm CY}$ will not be of finite type. 
In this case, we write $\cP_d^{\CY}$
as a union of an ascending chain of finite type open substacks
\[
\cP_{d,1}^{\CY}\subset \cP_{d,2}^{\CY}\subset
\cP_{d,3}^{\CY}\subset \cdots 
,\]
where  $\cP_{d,m}^{\CY}\subset
\cP_d^{\rm CY}$ is the open substack consisting of pairs $(X,D)$ such that the Cartier index of $K_X$ at each point is at most $m$.
Next, we prove that  $\cP_{d,m}^{\CY}$ is  S-complete and $\Theta$-reductive by an involved argument using Theorem \ref{t:main1}, a reduction argument to the case when $d=3$, and the geometry of twisted stacky curves. 
Thus  $\cP_{d,m}^{\CY}$ admits a separated good moduli space  $ P_{d,m}^{\CY}$.
The latter result is somewhat miraculous, since an arbitrary finite type open substack of $\cP_{d}^{\rm CY}$ will not admit a good moduli space!

Next, the inclusions  of the stacks induce a sequence of morphisms  
\[
P_{d,1}^{\rm \CY }\to P_{d,2}^{\rm CY} \to P_{d,3}^{\rm CY} \to \cdots
.\]
We prove that the morphisms are eventually isomorphisms after seminormalization
and deduce that 
Theorem \ref{t:main2}.1-.3 holds with   $P_d^{\CY}:=(P_{d,m}^{\CY})^{\rm sn}$ for $m$ sufficiently large.

The remaining step is to prove the ampleness of  the Hodge line bundle on  $P_{d}^{\rm CY}$. 
Results that rely on Hodge theory  \cite{Amb05,Kol07,FG14moduli} imply the Hodge line bundle is nef and, additionally, big  on the locus of $P_{d}^{\CY}$ where the sources of the pairs (see Definition \ref{d:source})  have maximal variation.

We perform  a detailed analysis of the S-equivalence classes of pairs in $\cP_d^{\rm CY}$ to prove that the sources have maximal variation on all of $P_d^{\CY}$. 
With this done, the proof of Theorem \ref{t:main2} is complete.

As part of the proof of the ampleness result, 
we prove a result of independent interest on the existence of generically finite covers of certain stacks by schemes (Theorem \ref{p:finitecoverbyscheme}). This generalizes a well-known result about Deligne-Mumford stacks \cite[Thm. 6.6]{LMB00} to a larger class of stacks.

\medskip

\subsection{Application to the b-semiampleness conjecture}

The canonical bundle formula is a technical, but powerful, tool that has many applications in the MMP.
The formula, which was developed in the work of \cite{Kod,Kaw95,FM,Amb04}, associates to an lc-trivial fibration $(X,D)\to Y$ a $\bQ$-linear equivalence
\[
K_X+ D \sim_{\bQ}  f^*(K_Y+D_Y+M_Y),
\]
where $D_Y$ is the discriminant divisor, which measures the singularities of the fibers, and $M_Y$ is the moduli divisor, 
which measures the variation of the fibers.
Furthermore, the moduli divisor gives rise to a b-divisor ${\bf M}$;
see Section \ref{s:canonicalbundle} for details.

The main open problem on this topic is a conjecture of Prokhorov and Shokurov stating that the moduli divisor is b-semiample \cite[Conj. 7.13]{PS09}. 
The conjecture is known in  only a few cases, 
which include
when the base is a curve \cite{Amb04,Amb05,Flo14a}
and when the geometric generic fiber is a curve \cite{PS09}
or a surface not isomorphic to $\bP^2$  \cite{Fuj03,Fil18}. 

 Shokurov has suggested the following strategy to prove the conjecture: construct a compact moduli space parametrizing certain equivalence classes of  CY pairs such that the Hodge line bundle is ample on the moduli space and verify that the moduli b-divisor is the pullback of this line bundle.
In the case when the geometric generic fiber of $X\to Y$ is $\bP^2$, the moduli space $P_{d}^{\rm CY}$  with its ample Hodge line bundle realizes this vision. 
Combining the $\bP^2$ case with \cite{PS09,Fuj03,Fil18} leads to the following result.

\begin{thm}\label{t:effectiveb}
If $(X,D)\to Y$ is an lc-trivial fibration such that the generic fiber $(X_\eta,D_\eta)$ is a projective lc pair of dimension $\leq 2$, then  ${\bf M}$ is b-semiample.

 If additionally $D_\eta\neq 0$, then there exists an integer $I>0$ depending only on the coefficient set of $D_\eta$ such that $I{\bf M}$ is b-free.
\end{thm}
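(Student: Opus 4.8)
\textbf{Proof plan for Theorem \ref{t:effectiveb}.}
The plan is to reduce the $b$-semiampleness of $\mathbf{M}$ to the moduli-theoretic input already available, splitting into cases according to the dimension and structure of the generic fiber. When $\dim X_\eta = 1$, the pair $(X_\eta, D_\eta)$ is a boundary polarized CY pair on a curve (or $D_\eta = 0$ and $X_\eta$ is elliptic), and $b$-semiampleness of $\mathbf{M}$ is the classical result of \cite{PS09, Amb04}; nothing new is needed. When $\dim X_\eta = 2$ and $D_\eta = 0$, the geometric generic fiber is an abelian surface, a K3, or a rational surface with $K = 0$ boundary; here $b$-semiampleness follows from \cite{Fuj03, Fil18} provided the generic fiber is not $\mathbb{P}^2$ — but with $D_\eta = 0$ and $K_{X_\eta} \sim_{\mathbb{Q}} 0$ it cannot be $\mathbb{P}^2$, so this case is also covered by the literature. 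The genuinely new content is the case $\dim X_\eta = 2$, $D_\eta \neq 0$: then $(X_\eta, D_\eta)$ is a boundary polarized CY pair of dimension two, so $-K_{X_\eta}$ is ample and (running the MMP / using classification of lc del Pezzo pairs) after a birational modification the generic fiber is of the form $(\mathbb{P}^2, \tfrac{3}{d} C_\eta)$ for some $d$ — or more generally degenerates within $\cP_d^{\CY}$ — which is exactly where Theorem \ref{t:main2} applies.

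First I would set up the standard reduction: after a generically finite base change $Y' \to Y$ and a birational modification of $X$, one may assume the $b$-divisor $\mathbf{M}$ descends to a $\mathbb{Q}$-Cartier divisor $M_{Y'}$ on a smooth model $Y'$, and by the descent property of the moduli $b$-divisor it suffices to prove $M_{Y'}$ is semiample there (semiampleness is preserved under the pushforward/pullback bookkeeping needed to recover $b$-semiampleness of $\mathbf{M}$ on $Y$). Next, for the $\mathbb{P}^2$-fiber case, I would produce the moduli map $Y' \dashrightarrow P_d^{\CY}$ classifying the fibers of the lc-trivial fibration $(X', D') \to Y'$ (using that the fibers, being boundary polarized CY pairs degenerating from $(\mathbb{P}^2, \tfrac{3}{d} C)$, land in $\cP_d^{\CY}$, and that after further blow-up the map is a morphism on the locus we care about), and then invoke the compatibility of the canonical bundle formula with pullback along this moduli map: the moduli divisor $M_{Y'}$ is $\mathbb{Q}$-linearly equivalent to the pullback of the Hodge line bundle on $P_d^{\CY}$. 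Since that Hodge line bundle is ample by Theorem \ref{t:main2}.4, its pullback is semiample, giving $b$-semiampleness of $\mathbf{M}$.

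For the second assertion — effectivity of the index $I$ when $D_\eta \neq 0$ — I would track the denominators. The coefficient set of $D_\eta$ together with the dimension bound pins down finitely many possible "degrees": in the surface case the pair $(X_\eta, D_\eta)$ with $K + D \sim_{\mathbb{Q}} 0$ and prescribed coefficients forces $D_\eta$ to be a specific multiple of an anticanonical divisor (e.g. $\tfrac{3}{d} C$ with $d$ determined by the coefficient), so only finitely many moduli spaces $P_d^{\CY}$ occur. On each such $P_d^{\CY}$ the Hodge line bundle $\lambda_d$ is an honest line bundle (not just a $\mathbb{Q}$-bundle), and the canonical bundle formula produces a universally bounded multiple — governed by the Cartier index of $K_{X_\eta} + D_\eta \sim 0$ and the degree of the Hodge bundle's definition — after which $I\mathbf{M}$ is the pullback of the genuine line bundle $I'\lambda_d$, which is globally generated for $I'$ divisible enough (here one may even take $I'$ absorbed into $I$ once $\lambda_d$ itself is shown globally generated on a projective variety). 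One then takes $I$ to be the lcm over the finitely many relevant degrees. In the curve-fiber case the analogous bound is classical (\cite{PS09}).

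The main obstacle I expect is promoting the rational moduli map $Y' \dashrightarrow P_d^{\CY}$ to something usable: one must ensure that after a birational modification of the base and total space, the family $(X', D') \to Y'$ has all fibers (over a big enough locus, in codimension one) lying in $\cP_d^{\CY}$ and inducing a \emph{morphism} to the seminormal moduli space $(\cP_d^{\CY})^{\mathrm{sn}}$ — this is where the existence part of the valuative criterion (Theorem \ref{t:main1}.3) and the properness of $P_d^{\CY}$ do the work, filling in the fibers over codimension-one points after base change, and where one needs the equality of the moduli $b$-divisor with the pullback of the Hodge bundle to hold at the level of b-divisors rather than just generically. A secondary subtlety is that the excerpt's moduli space is built only for $X = \mathbb{P}^2$ degenerations; one must verify that \emph{every} dimension-two lc-trivial fibration with $D_\eta \neq 0$ and $-K_{X_\eta}$ ample can, after the MMP on the generic fiber, be brought into the $(\mathbb{P}^2, \tfrac{3}{d}C)$ form (equivalently, that the other rational surface cases either reduce to $\mathbb{P}^2$ or are already handled by \cite{Fuj03, Fil18}) — this is a finite classification check rather than a deep point, but it is needed for the statement to follow.
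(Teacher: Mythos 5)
Your plan correctly identifies the one genuinely new ingredient — pulling back the ample Hodge line bundle on $P_d^{\CY}$ along a moduli map classifying the fibers — and this is exactly how the paper's Proposition \ref{p:effectivenessP2} works (there the map goes to $P_{d,m}^{\CY}$ via a bounded generically finite cover of $Y$ constructed from a finite scheme cover of the Deligne--Mumford stack $\cP_d^{\rm H}$, Lemma \ref{l:genfinitecover}). However, there are two genuine gaps. First, your claim that $\dim X_\eta=2$ and $D_\eta\neq 0$ makes $(X_\eta,D_\eta)$ a boundary polarized CY pair is false: the hypothesis is only that $(X_\eta,D_\eta)$ is an lc pair with $K_{X_\eta}+D_\eta\sim_{\bQ}0$, so $-K_{X_\eta}$ need not be ample (e.g.\ a rational elliptic surface with $D$ a fiber). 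The paper has to run a $K$-MMP over $Y$ on a terminalization of $X$ to reach a Mori fiber space; if that fiber space is a conic bundle over a curve one is in case (iii) of Lemma \ref{l:effectivebsemi} (handled by \cite{Fil18}), and if it is a del Pezzo not geometrically $\bP^2$, Lemma \ref{l:delpezzoK} is needed to produce, after a field extension of \emph{universally bounded} degree, a map to $\bP^1_{K'}$ and hence again a conic bundle. You flag this as ``a finite classification check,'' but for the effective statement the bound on the degree of all base changes must depend only on the coefficient set, and that is precisely what Lemma \ref{l:delpezzoK} supplies; it is not automatic. Relatedly, your suggested mechanism for filling in fibers (the valuative criterion of Theorem \ref{t:main1}.3) involves base changes $R[\sqrt[d]{\pi}]$ of a priori unbounded degree, so it does not by itself yield the effective index $I$; the paper instead routes through the finite-type proper stack $\cP_d^{\rm H}$, whose cover has degree bounded by a constant $c(d)$.

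Second, and more seriously, your case division omits the situation where $(X_\eta,D_\eta)$ is lc but not klt (e.g.\ $X_\eta=\bP^2$ with $D_\eta$ a nodal cubic). Proposition \ref{p:effectivenessP2} explicitly assumes the geometric generic fiber is klt — the construction via $\cP_d^{\rm H}$ breaks down otherwise since $(X,(1+\vep)D)$ is no longer slc — so the moduli-map argument does not apply directly. The paper handles this case by adjunction to a minimal lc center of a dlt model over $Y$ (via \cite[Thm.\ 5.2]{FL19}), obtaining an lc-trivial fibration of relative dimension $\leq 1$ over a generically finite cover $Y^1\to Y$ with $\mathbf{M}^1=\tau^*\mathbf{M}$; the effective statement then requires bounding $\deg(\tau)$, i.e.\ the number of Galois conjugates of the chosen minimal lc center, which occupies most of the proof of Lemma \ref{l:effectivebsemi} (an explicit case analysis by the regularity of the fiber). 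Without this step the second assertion of the theorem is not proved for non-klt generic fibers, and even the qualitative b-semiampleness in that case needs the reduction of \cite[Rem.\ 4.1]{FG14moduli} rather than \cite{Fuj03,Fil18}.
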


\subsection{Additional examples}
It is natural to ask whether or not Theorem \ref{t:main2} generalizes to other moduli problems, not just the case of degenerations of pairs $(\bP^2,\tfrac{3}{d}C)$, where $C$ is a smooth degree $d$ plane curve.
In Section \ref{sec:K3},  we give a few examples where boundedness holds (as in the case of $\cP_d^{\CY}$ with $3\nmid d$) and use Theorem \ref{t:main1}  to construct the moduli space of boundary polarized CY pairs. These examples are degenerations of pairs $(\bP^1, \tfrac{2}{d} \sum_{i=1}^d p_i)$, where the $p_i$ are distinct points, $ (\bP^1\times \bP^1, \tfrac{2}{d}C)$, where $d\geq 3$ is an odd integer and $C$ is a smooth bidegree $(d,d)$ curve, and $(\bP^3, \frac{4}{5}S)$, where $S$ is a smooth quintic surface.
We expect that Theorem \ref{t:main2} likely extends to compactifying boundary polarized CY pairs $(X,D)$, where $X$ is a smooth del Pezzo surface.

In higher dimensions,  such a moduli space  will not always exist. 
Indeed, in a future paper, we will explain
that there does not exist a moduli space parametrizing S-equivalence classes of boundary polarized CY pairs that are degenerations of $(C_p(A),H)$,
where $C_p(A)$ is a projective cone over a principally polarized abelian surface and $H$ is the divisor at infinity.
It would be interesting to find geometric conditions that guarantee the existence of such a moduli space.

\subsection{Appendices}
In two appendices to the paper, we prove additional results that are independent of the main theorems stated above. 

In  Appendix \ref{a:specialdegenerations}, we characterize all special degenerations of $\bP^2$. 
This gives a complete answer to a problem in the Fano K-stability literature \cite[Prob. 3.4]{AIM20}. The proof relies on a technical result from Section \ref{ss:1comp} to show that all  special degenerations of $\bP^2$ are induced by lc places of a 1-complement. 
This reduces the classification to \cite[\S 6]{LXZ21} and an additional computation.

Appendix \ref{a:coreg0}  concerns the index of CY pairs and the boundedness of complements. 
A celebrated result of Birkar states that a klt Fano variety $X$ of dimension $n$ admits an  $N(n)$-complement, i.e. a $\bQ$-divisor $D$ on $X$ such that $(X,D)$ is lc and $N(n)(K_X+D)\sim0$ \cite{Bir19}. 
Here,  $N(n)$ is a positive integer depending only on $n$.

In \cite{FFMP22},  the authors prove that
 if $X$ additionally admits a complement of coregularity 0, then $N(n)$ can be replaced with 2.
 Here, the coregularity of a Fano variety with complement  is the dimension of the minimal lc center on a dlt modfication; see Definition \ref{d:coregularity}.
 Their  argument uses  ideas from \cite{Bir19} with significant upgrades to utilize the coregularity assumption.
 Using a degeneration argument, we give a new proof of their result, which also holds for lc Fano varieties. 

Using a similar degeneration argument, we also give a new proof of a result on the index of coregularity 0  CY pairs \cite[Thm. 1]{FMM22}.

\subsection{Relation to other work}\label{sec:related}
In constructing compact moduli spaces of Calabi--Yau varieties, a fundamental issue is that a  family of polarized smooth Calabi--Yau varieties over a punctured curve can admit infinitely  many slc Calabi--Yau  degenerations after finite base changes.
In order to construct canonical limits of Calabi--Yau varieties, an effective technique is to use K-moduli or KSBA theory. 
As mentioned above, if $(X,D)$ is a klt boundary polarized CY pair, then $(X, (1-\varepsilon)D)$ is a K-semistable log Fano pair, and K-stability machinery can be applied to construct a projective moduli space \cite{ADL19, Zhou}. 
In the KSBA setting, if  $(X,D)$ is a klt CY pair, then the standard technique is to consider the pair $(X,D + \varepsilon H)$, where $H$ is an ample divisor, possibly equal to $D$.
Since the perturbed pair is KSBA stable, KSBA theory gives an approach to compactifying the moduli space of such pairs.
In \cite{KX20}, Koll\'ar and Xu construct the irreducible components of the moduli space and prove that the components are projective schemes. 
In \cite{Bir20,Bir22}, Birkar proves a powerful boundedness result for such pairs and uses the boundedness  to give a more direct construction of the moduli space using KSBA theory.
This type of moduli space was notably constructed and described for abelian varieties in \cite{Ale02} and K3 surfaces in \cite{AE21}.
See 
 \cite{Hac04, Laz16,  AB19, AET, Inc20, AE21, DH21, GKS21, Gol21, MS21, DeV19, AB22, ABE22, Sch23}
 and
 \cite{GMGS18, ADL20, ADL21, Zha22, Pap22, PSW23} for a partial list of additional explicit examples of these KSBA and K-moduli spaces.
 The approach of this paper is different in that we do not use KSBA or K-moduli theory to construct the moduli space.

In this paper, we do not aim to choose a canonical limit of a family of boundary polarized CY pairs over a punctured disc, but instead parametrize S-equivalence classes of all (slc) boundary polarized CY pairs.
The approach is philosophically similar to the notion of a \emph{galaxy} in \cite{Oda22}, which takes a family of Calabi--Yau varieties over a punctured disc $X^\circ \to \mathbb{D}^\circ$ and outputs the projective limit of all extensions $X\to \mathbb{D}$ to a flat family of slc Calabi--Yau varieties after finite base change. 
Since the aim of this paper is to construct moduli spaces, we consider S-equivalence classes rather than galaxies.

As mentioned earlier, the Hodge line bundle on $P_d^{\CY}$ is ample and, hence, the compactification is an analogue of the BB compactification of the moduli space of K3 surfaces.
A number of authors, including Shokurov, Odaka \cite{Oda22}, and Laza \cite{Laz23}, have conjectured that there exists a compactification of the moduli space of smooth Calabi--Yau varieties on which the Hodge line bundle is ample and the boundary parametrizes lower dimensional Calabi--Yau varieties. 
This conjecture is closely related to the b-semiampleness conjecture of Prokhorov and Shokurov \cite{PS09}. There are also various approaches in the literature for constructing the ample model of the (augmented) Hodge line bundle, see e.g. \cite{BBT, GGLR}.

Finally, note that $P_d^{\CY}$ sits   between $P_d^{\rm K}$ and $P_d^{\rm H}$, which are given by perturbing the coefficient of the boundary divisor, and gives a ``wall crossing'' between the two moduli spaces. Constructing this wall crossing is one motivation for the conjectural description of $\cP_{d}^{\CY}$ present in \cite{ADL19}. The idea of wall-crossing for moduli spaces of pairs originated in the case of weighted stable pointed curves \cite{Has03} and has been generalized to higher dimensions for both the moduli of K-stable and KSBA stable varieties in \cite{ADL19, Zhou} and \cite{ABIP}, respectively.

\subsection{Structure of the paper}
The first half of the paper builds general theory on the moduli of boundary polarized CY pairs.
 In Section \ref{s:prelim}, we introduce necessary preliminaries and the definition of boundary polarized CY pairs.
 In Section \ref{s:stack}, we construct the moduli stack of boundary polarized CY pairs. 
 Section \ref{s:tc} discusses relevant results on test configurations of such pairs.
 In Sections \ref{s:Scompleteness}, \ref{s:Thetared}, and \ref{s:properness}, we prove that the moduli stack of boundary polarized CY pairs is S-complete, $\Theta$-reductive, and satisfies the existence part of the valuative
criterion of properness, respectively. 
Section \ref{s:sources} discusses the source and regularity of a CY pair and constructs a good moduli space for pairs of regularity at most 0.

The  second half of the paper studies compactifications of the moduli of plane curve pairs.
After introducing the relevant stacks in Section \ref{s:planecurves1}, 
we explicitly describe the S-equivalence classes of pairs in $\cP_d^{\CY}$ in Section \ref{s:sequivcurves}.
In Section \ref{s:twistedvarieties}, we apply the theory of twisted varieties to study local properties of the moduli stack.
We construct the  good moduli  spaces $P_{d,m}^{\CY}$  in Section \ref{s:gmsforcurves} and the asymptotically good moduli space $P_d^{\CY}$ in Section \ref{s:agm}.
In Section \ref{s:hodgelinebundle}, we prove the ampleness of the Hodge line bundle on $P_d^{\CY}$
and deduce the b-semiampleness conjecture in relative dimension 2. 
In Section \ref{s:proofofmainthms}, we combine the previous results to obtain the theorems stated in the introduction.
 In Section \ref{sec:K3}, we give explicit descriptions of $P_d^{\CY}$ when $3 \leq d \leq 6$.
 
The paper has two appendices. 
In Appendix \ref{a:specialdegenerations}, we characterize special
degenerations of $\bP^2$. 
In Appendix \ref{a:coreg0}, we prove results on coregularity 0 pairs.

\subsection*{Acknowledgments}

We thank the American Institute of Mathematics (AIM) for hosting the 2021 workshop \textit{Moduli problems beyond geometric invariant theory} at which this project began
and Jarod Alper, Daniel Halpern-Leistner, and Filippo Viviani for organizing the workshop.
We are grateful to Valery Alexeev, Jarod Alper, Elden Elmanto, Philip Engel, Stefano Filipazzi, Samuel Grushevsky, Daniel Halpern-Leistner, Mattias Jonsson, J\'anos Koll\'ar, S\'andor Kov\'acs, Aaron Landesman, Radu Laza, Jihao Liu, Siddharth Mathur, Joaqu\'{i}n Moraga, Yuji Odaka, Vyacheslav Shokurov, Chenyang Xu, Chuyu Zhou, and Ziquan Zhuang for helpful conversations and comments.

KA was partially supported by NSF grant DMS-2140781 (formerly DMS-2001408).
DB was partially supported by NSF Grant DMS-1803124.
HB is partially supported by NSF Grant DMS-2200690.  
KD is partially supported by NSF Grant DMS-2302163.
YL is partially supported by NSF Grant DMS-2148266 (formerly DMS-2001317), NSF CAREER Grant DMS-2237139, and an Alfred P. Sloan research fellowship. 
XW is partially supported by NSF Grant DMS-1609335 and a Collaboration Grants for Mathematicians from Simons Foundation: award 631318. KA, KD, YL, and XW were partially supported by AIM as part of the AIM SQuaREs program.

\section{Preliminaries}\label{s:prelim}

\subsection{Conventions}
Throughout, all schemes are defined over an algebraically closed field $\bk$ of characteristic 0, which is our ground field. 
A scheme  $X$ is \emph{demi-normal} if it is $S_2$ and its codimension 1 points are either regular or nodes \cite[Def. 5.1]{Kol13}. We denote by $\bN$ the set of all non-negative integers.

\subsection{Pairs}
A \emph{pair} $(X,\Delta)$ consists of a demi-normal scheme $X$ that is essentially of finite type over a field $\bK$ containing $\bk$ and an effective $\bQ$-divisor $\Delta$ on $X$ such that  $\Supp(\Delta)$ does not contain a codimension 1 singular point of $X$ and
 $K_{X}+\Delta$ is $\bQ$-Cartier.
The pair $(X,\Delta)$ is  \emph{projective} if $X$ is projective over $\bK$.
For the notions of singularities of pairs that assume $X$ is normal, including \emph{lc}, \emph{klt}, \emph{plt}, and \emph{dlt},  see \cite[Def. 2.8]{Kol13}. Unless stated otherwise, we assume that $X$ is geometrically connected.

The \emph{normalization} of a pair $(X,\Delta)$ is the possibly disconnected pair
$(\overline{X},\overline{\Delta}+\overline{G})$,
where 
$\pi:\overline{X}\to X$ is the normalization morphism, $\overline{G} \subset \overline{X}$ the conductor divisor on $\overline{X}$, and $\overline{\Delta}$  the divisorial part of $\pi^{-1}(\Delta)$, 
and satisfies
$K_{\overline{X}}+\overline{G}+\overline{\Delta}=\pi^*(K_X+\Delta)$ \cite[5.7]{Kol13}.
This construction induces a generically fixed point free involution $\tau:\oG^n\to \oG^n$  that fixes ${\rm Diff}_{\oG^n}(\oDe)$; see  \cite[5.2 \& 5.11]{Kol13} for details.
A pair $(X,\Delta)$ is \emph{slc} if its normalization $(\overline{X},\overline{\Delta}+\overline{G})$ is \emph{lc} \cite[Def. 5.10]{Kol13}.

\begin{defn}\label{d:FanoCYKSBA}
A projective slc\footnote{Note that this definition  differs from common conventions, which  assume a log Fano pair is klt and a CY pair is lc.} pair  $(X,\Delta)$ is called
\begin{enumerate}
	\item \emph{log Fano} if $-K_X-\Delta$ is ample,
	\item  \emph{CY} if $K_{X}+\Delta \sim_{\bQ} 0$, and 
	\item \emph{canonically polarized} if $K_{X}+\Delta$ is ample.
\end{enumerate}
The \emph{index} of a CY pair $(X,\Delta)$ is the smallest positive integer $N$ such that $N(K_X+\Delta)\sim0$.
\end{defn}

Throughout this paper, we focus on the following special class of CY pairs.

\begin{defn}\label{d:bpcy}
A \emph{boundary polarized CY pair} is a projective  slc pair $(X,\Delta+D)$ such that 
\begin{enumerate}
	\item $\Delta$ and $D$ are effective $\bQ$-divisors,
	\item $K_{X}+\Delta+D\sim_{\bQ} 0$, and
	\item $D$ is $\mathbb{Q}$-Cartier and ample.
\end{enumerate}
Note that (3) is equivalent to the condition that $(X,\Delta)$ is log Fano. 
Hence we call $\Delta$ the  \emph{log Fano boundary} and $D$ the \emph{polarizing boundary} and write them in this order to avoid confusion.
\end{defn}

\begin{rem}
The above definition can be alternatively phrased using the language of \textit{complements}. Let $(X,\Delta)$ be a log Fano pair.  A \emph{complement} $D$ of $(X,\Delta) $ is an effective $\bQ$-divisor such that $(X,\Delta+D)$ is a CY pair or equivalently, a boundary polarized CY pair. A complement is an
\emph{$N$-complement} if $N(K_X+\Delta+D)\sim 0$ or, equivalently, the index of $(X,\Delta+D)$ divides $N$.
\end{rem}

While we are primarily interested in the case when $\Delta=0$, the general case arises when normalizing.
Indeed, if $(X,\Delta+D)$ is a boundary polarized CY pair, 
then
$(\overline{X},\overline{G}+\overline{\Delta}+\overline{D})$
is a possible disconnected lc boundary polarized CY pair, 
where $\overline{G}+\overline{\Delta}$ is the log Fano boundary.
The latter holds, since $\pi$ is finite, and
\[
K_{\overline{X}}+\overline{G}+\overline{\Delta}=\pi^*(K_X+\Delta) \quad \text{ and } \quad K_{\overline{X}}+\overline{G}+\overline{\Delta}+\overline{D}=\pi^*(K_X+\Delta+D).\]

\subsection{Families of pairs}

To define a family of pairs, we  first need the following notion of a family of divisors \cite[Def. 4.68]{KolNewBook}.

\begin{defn}[Relative Mumford divisor]
Let $f:X\to B$ be a flat finite type morphism with $S_2$ fibers of pure dimension $n$. 
A subscheme $D\subset X$ is a \emph{relative Mumford divisor} if there is an open set $U\subset X$ such that 
\begin{enumerate}
\item ${\rm codim}_{X_b}(X_b\setminus U_b) \geq 2$ for each $s\in S$, 
\item $D\vert_U$ is a relative Cartier divisor,
\item $D$ is the closure of $D\vert_U$, and
\item $X_b$ is smooth at the generic points of $D_b$ for every $b\in B$.
\end{enumerate}
\end{defn}
	
If $D\subset X$ is a relative Mumford divisor for $f:X\to B$ and $B'\to B$ is a morphism, then the \emph{divisorial pullback} $D_{B'}$  on $X_{B'}: = X\times_B B'$ is the relative Mumford divisor defined to be the closure of the pullback of $D\vert_U$ to $U_{B'}$. Note that $D_b$ as in (4) always denotes the divisorial pullback.

\begin{defn}\label{def:mumford}	
A \emph{family of slc (resp., lc, klt) pairs} $(X,\Delta) \to B$  over a reduced Noetherian scheme $B$
is a flat finite type morphism $X\to B$ with $S_2$ fibers and a $\bQ$-divisor $\Delta$ on $X$   satisfying
\begin{enumerate}
\item each prime component of $\Delta$ is a relative Mumford divisor,
\item $K_{X/B} +\Delta$ is $\bQ$-Cartier, and
\item  $(X_b,\Delta_b)$ is an slc (resp. lc, klt) pair for all points $b\in B$.
\end{enumerate}
\end{defn}

Using this definition, we can define a family of the classes of pairs appearing in Definition \ref{d:FanoCYKSBA}.

\begin{defn}
A  family of slc pairs $(X,\Delta)\to B$ over a reduced Noetherian scheme $B$ with $X\to B$ projetive is called 
\begin{enumerate}
 \item \emph{a family of KSBA stable pairs} if $K_{X/B} + \Delta$ is relatively ample,
    \item \emph{a family of CY pairs} if $K_{X/B} + \Delta \sim_{\bQ,B} 0$, and
    \item \emph{a family of log Fano pairs} if $-(K_{X/B}+\Delta)$ is relatively ample.
\end{enumerate}
\end{defn}

The following  characterization of a family of slc pairs  follows from \cite[Proof of Thm. 4.54]{KolNewBook}.

\begin{lem}\label{l:slcadj}
Let $f:X \to B$ be a 
finite type morphism to a regular, local, essentially of finite type scheme $0 \in B$.
Fix a $\Q$-divisor $\Delta$ on $X$ and a snc divisor $D_1+ \cdots + D_r$  on $B$ so that $D_1 \cap \cdots \cap D_r = \{0\}$. The following are equivalent:
\begin{enumerate}
\item $(X,\Delta) \to B$ is a family of (possibly disconnected) slc pairs.
\item $(X,\Delta+f^*D_1+ \cdots+ f^*D_r)$ is a (possibly disconnected) slc pair.
\end{enumerate}
\end{lem}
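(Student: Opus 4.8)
The statement is a local-over-the-base adjunction/inversion-of-adjunction criterion: it exchanges the ``family of slc pairs over $B$'' condition for the ``slc pair on the total space'' condition, after adding back the full preimage of a maximal snc flag $D_1,\dots,D_r$ passing through the closed point $0$. The reference \cite[Proof of Thm. 4.54]{KolNewBook} already contains the essential mechanism, so the plan is to extract exactly the inputs used there and assemble them in the local setting. The key observation is that since $B$ is regular, local, and essentially of finite type, and $D_1+\cdots+D_r$ is snc with $D_1\cap\cdots\cap D_r=\{0\}$, one may run a local induction on the dimension of $B$ (equivalently on $r$), where each step restricts to the generic point of one of the $D_i$ and uses adjunction for the Cartier divisor $f^*D_i$ inside $X$.

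\textbf{Key steps.} First I would reduce to the case where $X\to B$ is the localization of a finite type morphism, so that standard MMP-style statements (adjunction, inversion of adjunction) apply; this is where ``essentially of finite type'' is used, together with the fact that slc/lc conditions are insensitive to such localization. Second, set up the induction: the base case $r=0$ (so $B$ is already the spectrum of a field, namely the residue field of $0$, after localization — or more precisely one reduces to $\dim B = 0$) is the statement that $(X,\Delta)\to\Spec\bk(0)$ is a family of slc pairs iff $(X,\Delta)$ is slc, which is immediate. For the inductive step, write $B' := D_1$ (a regular local essentially of finite type scheme of one lower dimension) with its induced snc flag $D_2|_{B'},\dots,D_r|_{B'}$; let $X' := f^{-1}(D_1) = X\times_B D_1$, which is the Cartier divisor cut out by $f^*D_1$. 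Third, the heart of the argument: one shows
\[
(X,\Delta + f^*D_1 + \cdots + f^*D_r) \text{ is slc}
\iff
(X', \Delta|_{X'} + f^*D_2|_{X'} + \cdots + f^*D_r|_{X'}) \text{ is slc and } \dots
\]
via adjunction along the Cartier divisor $X' = \{f^*D_1 = 0\}$ together with inversion of adjunction for lc/slc pairs (on the normalization, this is \cite[Thm. 5.50 or the slc inversion of adjunction in Kol13]{Kol13}); the divisor $f^*D_1$ appears with coefficient $1$, so adjunction is clean and the ``extra'' condition extracted is precisely the pointwise slc condition on the remaining fibers over $B\setminus D_1$, which one handles by a separate flat-base-change / generic flatness argument (this is the role of condition (1) and the relative Mumford divisor structure — one needs $\Delta$ to restrict compatibly). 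Fourth, combine the adjunction equivalence with the inductive hypothesis applied to $(X',\Delta|_{X'})\to B'$ to conclude. Throughout, one must also verify the bookkeeping that $\Delta_b$ (divisorial pullback) is the correct restriction and that $K_{X/B}+\Delta$ being $\bQ$-Cartier is equivalent to $K_X+\Delta+\sum f^*D_i$ being $\bQ$-Cartier, which follows since $K_{X/B} = K_X - f^*K_B$ and $K_B \sim \sum a_i D_i$ locally is not quite right — rather, on a regular local scheme every divisor is Cartier, so $f^*K_B$ is $\bQ$-Cartier and the equivalence of the two $\bQ$-Cartier conditions is automatic.

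\textbf{Main obstacle.} The hard part will be the precise matching between the \emph{pointwise} slc condition in Definition \ref{def:mumford}(3) — quantified over \emph{all} points $b\in B$, including points not on any $D_i$ and non-closed points — and what adjunction along the flag $\{f^*D_i\}$ directly sees, which is a priori only the behavior near $f^{-1}(0)$ and along the strata of the flag. Bridging this requires the openness of the slc locus in families (so that slc at the closed fiber, plus the family structure, propagates to a neighborhood) combined with Noetherian induction on $B$; equivalently one invokes that ``family of slc pairs'' is a condition that can be checked on a dense open plus a specialization/generization argument, which is exactly what \cite[Thm. 4.54]{KolNewBook} packages. A secondary subtlety is ensuring that the relative Mumford divisor hypothesis guarantees $\Delta$ restricts to $X'$ without picking up or losing components over the generic point of $D_1$ — i.e.\ that divisorial pullback commutes with the adjunction restriction — which is where conditions (1)–(4) of the relative Mumford divisor definition are genuinely needed rather than cosmetic. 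I would cite \cite[Proof of Thm. 4.54]{KolNewBook} for this package and spell out only the local induction explicitly.
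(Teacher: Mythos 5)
The paper offers no proof of this lemma beyond the citation to \cite[Proof of Thm.~4.54]{KolNewBook}, and your sketch is a faithful reconstruction of the induction on the length of the flag that underlies that proof: adjunction and inversion of adjunction along the Cartier divisor $f^*D_1$ to pass to the family over $D_1$ with the induced flag, together with openness of the fiberwise slc locus, with the base case $r=\dim B=0$ being the definition. The one point worth making explicit in your ``main obstacle'' paragraph is that, since $B$ is local with closed point $0$, any open subset of $B$ containing $0$ equals $B$ (its closed complement would otherwise have to contain $0$), so openness of the slc locus plus slc-ness of the central fiber --- extracted by adjunction down the flag --- already yields condition (1) at every $b\in B$; for the converse direction one localizes at an arbitrary $b$, completes $D_1,\dots,D_s$ (those $D_i$ through $b$) to a maximal snc flag cutting out $\{b\}$, applies the induction there, and uses that slc is preserved under decreasing an effective $\bQ$-Cartier part of the boundary.
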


The previous lemma implies that the normalization and restriction of a family of slc pairs over a regular scheme is a family of slc pairs.
	
\begin{lem}\label{l:familyslcnormadj}
Let $(X,\Delta)\to B$ be a family of slc pairs over a regular, essentially of finite type, Noetherian scheme $B$.

\begin{enumerate}
\item 
If $f:(\overline{X},\overline{G}+\overline{\Delta})$ is the normalization of $(X,\Delta)$, 
then  $\overline{f}:(\overline{X},\overline{G}+\overline{\Delta})\to B$ is a family of (possibly disconnected) slc pairs.
	
\item If $\Delta:= \Delta'+\Delta''$, where $\Delta'$ is a reduced divisor and $\Delta''\geq0$, then $(\Delta', {\rm Diff}_{\Delta'}(\Delta'')) \to B$ is a family of (possibly disconnected) slc pairs.
\end{enumerate}
	
	\end{lem}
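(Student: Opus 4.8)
The statement to prove is Lemma~\ref{l:familyslcnormadj}: that both the normalization and the divisorial adjunction of a family of slc pairs over a regular essentially of finite type base $B$ is again a family of slc pairs. The key technical tool is Lemma~\ref{l:slcadj}, which converts the ``family of slc pairs'' condition over a regular local base into the ``slc pair'' condition on the total space after adding pullbacks of coordinate hyperplanes. The overall strategy is therefore: reduce to the case where $B$ is regular, local, and essentially of finite type (this is legitimate since both conclusions are local on $B$ and can be checked at each point together with its residue at codimension-one specializations — more precisely, being a family of slc pairs is a condition that can be checked after localizing and, by Lemma~\ref{l:slcadj}, at the level of the total space); then apply Lemma~\ref{l:slcadj} to rewrite the hypothesis, perform the normalization or adjunction on the total space using Koll\'ar's results \cite[5.7, 5.11]{Kol13}, and apply Lemma~\ref{l:slcadj} again in the other direction to conclude.

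\textbf{Step 1 (reduce to regular local base).} First I would observe that both statements are local on $B$, so we may assume $B$ is local with closed point $0$; since $B$ is regular and essentially of finite type over $\bk$, we may choose a regular system of parameters and hence a snc divisor $D_1 + \cdots + D_r$ on $B$ with $D_1 \cap \cdots \cap D_r = \{0\}$ (for instance $r = \dim B$). We also need to know that flatness, $S_2$ fibers, and the relative Mumford divisor conditions are preserved under normalization and adjunction; these are handled by the flatness and fiberwise statements in \cite[\S4]{KolNewBook} (the normalization of a family with $S_2$ fibers commutes with base change and has $S_2$ fibers by \cite[Thm.~4.54]{KolNewBook}, and adjunction of a relative Mumford divisor along a reduced component produces a relative Mumford divisor by the theory of families of divisors in \cite[\S4]{KolNewBook}). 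The divisorial part — that the fibers are slc, and that the relevant $K + (\text{boundary})$ is $\bQ$-Cartier in families — is what Lemma~\ref{l:slcadj} is designed to extract.

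\textbf{Step 2 (normalization, part 1).} By Lemma~\ref{l:slcadj}, the hypothesis that $(X,\Delta) \to B$ is a family of slc pairs is equivalent to $(X, \Delta + f^*D_1 + \cdots + f^*D_r)$ being an slc pair. Now take the normalization $\pi \colon \overline{X} \to X$. Since $f^*D_i$ is pulled back from the regular base, $\pi^{-1}(f^*D_i) = \overline{f}^*D_i$ and these do not contribute to the conductor (the codimension-one singular locus of $X$ is unchanged by adding pullbacks of Cartier divisors from $B$). Thus the normalization of the slc pair $(X, \Delta + \sum f^*D_i)$ is precisely $(\overline{X}, \overline{G} + \overline{\Delta} + \sum \overline{f}^*D_i)$ where $(\overline{X}, \overline{G} + \overline{\Delta})$ is the normalization of $(X,\Delta)$, by \cite[5.7]{Kol13}. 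By definition of slc \cite[Def.~5.10]{Kol13}, this normalization is lc, in particular an slc pair; so $(\overline{X}, \overline{G} + \overline{\Delta} + \sum \overline{f}^*D_i)$ is slc. Applying Lemma~\ref{l:slcadj} in the reverse direction to $\overline{f} \colon \overline{X} \to B$ gives that $(\overline{X}, \overline{G} + \overline{\Delta}) \to B$ is a family of slc pairs, which is part (1).

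\textbf{Step 3 (adjunction, part 2) and the main obstacle.} For part (2), write $\Delta = \Delta' + \Delta''$ with $\Delta'$ reduced; again by Lemma~\ref{l:slcadj} the hypothesis becomes: $(X, \Delta' + \Delta'' + \sum f^*D_i)$ is slc. Adjunction along the reduced component $\Delta'$ (which meets no codimension-one singular point of $X$ by the definition of pair) produces, on each irreducible component of $\Delta'$, an slc pair $(\Delta', {\rm Diff}_{\Delta'}(\Delta'' + \sum f^*D_i)) = (\Delta', {\rm Diff}_{\Delta'}(\Delta'') + \sum (f|_{\Delta'})^*D_i)$, using that adjunction commutes with pulling back Cartier divisors from the base and that slc adjunction along a reduced boundary component yields an slc pair (this is \cite[5.2, 5.11]{Kol13} together with the fact that the different of a pullback from $B$ is its pullback, since $f^*D_i$ is Cartier and transverse to $\Delta'$ over the generic points in question). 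Then Lemma~\ref{l:slcadj} applied to $\Delta' \to B$ shows $(\Delta', {\rm Diff}_{\Delta'}(\Delta'')) \to B$ is a family of slc pairs. The main obstacle I anticipate is the careful bookkeeping around adjunction and the different: one must verify that ${\rm Diff}_{\Delta'}(\Delta'' + \sum f^*D_i)$ genuinely splits as ${\rm Diff}_{\Delta'}(\Delta'') + \sum (f|_{\Delta'})^*D_i$ and that $\Delta'$ restricted to $B$ is still flat with $S_2$ fibers and the components of ${\rm Diff}_{\Delta'}(\Delta'')$ are relative Mumford divisors — this is where one genuinely needs the families-of-divisors machinery of \cite[\S4]{KolNewBook} rather than just fiberwise statements, since a relative Mumford divisor can fail to remain Mumford after a badly behaved restriction; the hypothesis that $\Delta'$ is a reduced (whole) boundary component and $B$ is regular is exactly what makes this work.
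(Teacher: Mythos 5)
Your proposal is correct and follows essentially the same route as the paper: localize on the regular base, pick an snc divisor $D_1+\cdots+D_r$ cutting out the closed point, use Lemma \ref{l:slcadj} to translate the family condition into slc-ness of the total space pair, normalize (resp.\ apply adjunction along $\Delta'$, noting ${\rm Diff}_{\Delta'}(\Delta''+f^*D)={\rm Diff}_{\Delta'}(\Delta'')+f^*D|_{\Delta'}$), and apply Lemma \ref{l:slcadj} in the reverse direction. The extra bookkeeping you flag in Steps 1 and 3 about flatness, $S_2$ fibers, and relative Mumford divisors is reasonable diligence but is not belabored in the paper's argument either.
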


\begin{proof}
Since both statements are local on $B$, we may assume $B$ is a regular local scheme.
Fix an snc divisor $D:=D_1+ \cdots +D_r$ on $B$ so that $D_1 \cap \cdots \cap D_r $ is the unique closed point.
Since $(X,\Delta)\to B$ is a family of slc pairs, 
  Lemma \ref{l:slcadj} implies $(X,\Delta+f^*D)$ is slc.
Hence $(\overline{X},\overline{G}+\overline{\Delta}+ \overline{f}^*D)$ is lc.  Thus  Lemma \ref{l:slcadj} implies (1) holds.

For (2), note that  
$(\Delta', {\rm Diff}_{\Delta'}(\Delta'' +f^*D))$ is slc  by adjunction  \cite[Thm. 11.17]{KolNewBook} and ${\rm Diff}_{\Delta'}(\Delta'' +f^*D)= {\rm Diff}_{\Delta'}(\Delta'') +f^*D\vert_{\Delta''}$.
Thus (2) holds by Lemma \ref{l:slcadj}.
\end{proof}

\subsubsection{Families of boundary polarized CY pairs}

We now define a family of boundary polarized CY pairs over a reduced Noetherian scheme. A generalization of this  definition to arbitrary base schemes will be given in Section \ref{s:stack}. 
	
\begin{defn}\label{d:Familybasenormal}
A family of \emph{boundary polarized CY pairs} $f:(X,\Delta+D)\to B$ over a reduced  Noetherian scheme $B$ is a family of slc pairs $f:(X,\Delta+D) \to B$ such that 
\begin{enumerate}
\item $f: X\to B$ is projective,
\item $K_{X/B}+\Delta +D\sim_{\bQ,B}0$, and
\item $D$ is $\bQ$-Cartier and ample over $B$.
\end{enumerate}
\end{defn}

We now prove the following uniqueness results for extensions. 
See  \cite{Bou14} and \cite[Cor. 4.3]{Oda12moduli} for similar statements already in the literature.

	\begin{lem}\label{l:CYbirational}
Fix two families of  boundary polarized CY pairs 	
\[(X^1,\Delta^1+D^1) \to B \quad \text{ and } \quad (X^2,\Delta^2+D^2)\to B
\]
over a regular local Noetherian scheme $0 \in B$ with an isomorphism
		$\phi:(X^1 ,\Delta^1+D^1)\vert_{B\setminus 0} \cong (X^2, \Delta^2+D^2)\vert_{B\setminus 0}$ over $B\setminus 0$.
		\begin{enumerate}
			\item If $\dim B=1$ and both $X^i$ are normal, then  $(X^1,\Delta^1 +D^1+X^1_0) $ and $(X^2,\Delta^2 +D^2+X^2_0)$ are crepant birational.
			\item If $\dim B=1$ and $(X^1_0,\Delta^1_0+D^1_0)$ is klt, then  $\phi$ extends to an isomorphism over $B$.
			\item  If $\dim B=2$, then $\phi$ extends to an isomorphism over $B$.
		\end{enumerate}
	\end{lem}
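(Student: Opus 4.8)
The three statements are uniqueness results for extensions of families of boundary polarized CY pairs, so the plan is to reduce each to a ``no-bad-behavior in codimension two'' principle and known crepant-birational rigidity. First I would treat (1): since $\dim B = 1$ and both $X^i$ are normal, the isomorphism $\phi$ over $B \setminus 0$ identifies $(X^1,\Delta^1+D^1)$ and $(X^2,\Delta^2+D^2)$ away from the central fibers. Running adjunction via Lemma \ref{l:slcadj}, the pairs $(X^i, \Delta^i + D^i + X^i_0)$ are lc (here $X^i_0$ is the reduced central fiber, which is the pullback of the origin). Both total spaces have $K_{X^i} + \Delta^i + D^i + X^i_0 \sim_{\bQ, B} 0$ after including the fiber, so both are lc Calabi--Yau over $B$, agreeing over $B \setminus 0$. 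Two lc CY pairs that agree over a dense open set and that are proper over the common base are crepant birational --- this is a standard consequence of the negativity lemma applied to a common resolution, using that $K + (\text{boundary})$ is $\bQ$-linearly trivial over $B$ on both sides. This gives (1).

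Next, for (2), assume additionally that $(X^1_0, \Delta^1_0 + D^1_0)$ is klt. By (1) the two total spaces are crepant birational over $B$. The klt condition on the central fiber, together with $D^1$ ample, pins down $X^1$ as the unique lc (in fact klt near the central fiber) model: a crepant birational map between klt CY pairs that is an isomorphism in codimension one and such that the polarizing divisor $D$ is relatively ample on both sides must be an isomorphism. Concretely, $X^1 = \Proj_B \bigoplus_{m \geq 0} f^1_* \cO_{X^1}(mD^1)$ since $D^1$ is relatively ample, and the analogous statement holds for $X^2$; the crepant birational identification matches the section rings (both are computed from the isomorphic generic fibers and the reflexive extension is unique because $X^i$ are $S_2$ with klt, hence normal and canonical enough, central fibers), so $\phi$ extends to an isomorphism $X^1 \cong X^2$ over $B$, carrying $\Delta^1 + D^1$ to $\Delta^2 + D^2$. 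The key input here is that there is no room for a flip or a divisorial contraction because the central fiber is klt and $D$ is ample.

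Finally, (3) is the heart of the matter. When $\dim B = 2$, write $0 \in B$ with $B$ regular local two-dimensional, and let $C \subset B$ be any curve through $0$. Restricting the families to $C$ (after passing to the normalization $C^\nu$) gives families over a one-dimensional base to which (2) might not directly apply --- the central fiber over $0$ need not be klt. Instead the strategy is: first use (1)/(2) along the \emph{generic} behavior. The map $\phi$ is already an isomorphism over the punctured surface $B \setminus 0$, so $X^1$ and $X^2$ differ at most over the codimension-two point $0 \in B$, i.e.\ $\phi^{-1}$ does not contract any divisor of $X^1$ (a contracted divisor would have to dominate a curve in $B$, contradicting that $\phi$ is defined in codimension one). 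Thus $\phi: X^1 \dashrightarrow X^2$ is an isomorphism in codimension one over $B$. Since $D^i$ is $\bQ$-Cartier and ample over $B$, I would argue that $X^i = \Proj_B \bigoplus_{m} f^i_* \cO_{X^i}(\lfloor mD^i\rfloor)$; because $\phi$ is an isomorphism in codimension one and $B$ is regular of dimension two (so $S_2$-extension across the point $0$ is automatic), the pushforward sheaves $f^1_*\cO(\lfloor mD^1\rfloor)$ and $f^2_*\cO(\lfloor mD^2 \rfloor)$ agree as subsheaves of the constant sheaf of rational functions on the generic fiber, and hence $X^1 \cong X^2$ over $B$, matching boundaries.

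\textbf{Main obstacle.} The hardest step is (3), specifically the claim that $\phi$ extends across the codimension-two point with \emph{no} modification of the central fiber. The subtlety is that a priori the two families could be related by a codimension-two ``flop-like'' operation supported entirely over $0 \in B$ --- an isomorphism in codimension one on total spaces need not be a global isomorphism if the relevant divisor fails to be Cartier. The resolution must use crucially that $D$ (equivalently $-K_X - \Delta$, via Definition \ref{d:bpcy}(3)) is \emph{ample} over $B$ on both sides: ampleness of a reflexive sheaf that agrees in codimension one forces the two $\Proj_B$ constructions to coincide. Making this precise requires showing the section rings $\bigoplus_m f^i_* \cO_{X^i}(\lfloor m D^i \rfloor)$ are finitely generated (which follows from $D^i$ ample together with $X^i$ having slc, hence $S_2$, structure) and that they are reflexive extensions of a common ring over $B \setminus 0$ — here the regularity of the two-dimensional base $B$ is what guarantees that a reflexive sheaf is determined by its restriction to the complement of a point, closing the argument.
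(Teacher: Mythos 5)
Parts (1) and (3) of your proposal are essentially correct and follow the same route as the paper: (1) is the negativity-lemma/weak-minimal-model argument on a common resolution (the paper cites \cite[Prop.~1.21]{Kol13}), and (3) is exactly the codimension-two Hartogs argument with the relatively ample polarization $L^i=-r(K_{X^i/B}+\Delta^i)$, which is the content of \cite[Lem.~2.16]{ABHLX20} that the paper invokes. Your observation in (3) that no divisor of $X^1$ can be contracted because the central fiber has codimension two is the right point.

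Part (2) has a genuine gap. Your claim that the section rings $\bigoplus_m f^i_*\cO_{X^i}(mD^i)$ ``are computed from the isomorphic generic fibers and the reflexive extension is unique'' is false over a one-dimensional base: removing $0\in B$ removes a \emph{divisor} from $B$ and a divisor (the central fiber) from $X^i$, so there is no Hartogs-type extension, and the $R$-lattices $H^0(X^i,mD^i)\subset H^0(X^i_K,mD^i_K)$ genuinely depend on the model --- this is precisely the non-separatedness phenomenon for moduli of polarized CY pairs that makes (2) nontrivial. The Proj argument only works \emph{after} one knows $\phi$ is an isomorphism in codimension one, and that intermediate step is exactly what the klt hypothesis must be used to prove; you assert it (``no room for a flip or divisorial contraction'') but give no argument. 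A correct route along your lines would be: by (1) every component of $X^2_0$ is an lc place of $(X^1,\Delta^1+D^1+X^1_0)$; since $(X^1_0,\Delta^1_0+D^1_0)$ is klt, inversion of adjunction makes $(X^1,\Delta^1+D^1+X^1_0)$ plt near $X^1_0$, so $X^1_0$ is its only lc place over $0$, forcing $X^2_0$ to be irreducible and $\phi$ to be an isomorphism in codimension one; only then does the section-ring comparison close the argument. The paper avoids this entirely by a different device: it perturbs by a general $H\sim_{\bQ}D$ so that $(X^i,\Delta^i+D^i+\vep H^i)\to B$ become KSBA stable families (using klt-ness of the central fiber and irreducibility of $X^1_0$ to get $H^1\sim_{\bQ}D^1$), and then quotes separatedness of the KSBA moduli functor \cite[Prop.~2.50]{KolNewBook}. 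Either repair works, but as written your step (2) does not.
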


 Recall, two birational pairs $(Z,\Delta_Z)$ and $(Z',\Delta'_Z)$ are \emph{crepant birational} 
 if the discrepancies $a(E,Z,\Delta_Z) = a(E, Z', \Delta_{Z'})$ for every prime divisor $E$ over $Z$ \cite[Def. 2.23.2]{Kol13}.
	
	\begin{proof}
 Let $Y$ denote the main component of the normalization of the graph of $X^1\dashrightarrow X^2$. 
Write $g_i:Y\to X^i$  for the natural map.
We define $\bQ$-divisors $F^1$ and $F^2$ on $Y$ by the  
\[
K_Y+ {g_i}_*^{-1}(\Delta^i+D^i) +Y_0^{\red}  +F^i =  {g_i}^*(K_{X^i}+\Delta^i+D^i +X^i_0)
\]
Note that
$F^i$ is anti-effective, since $(X^i,\Delta^i+D^i+X^i_0)$ is lc.
Therefore both $(X^i,\Delta^i+D^i +X^i_0)$ are weak minimal models of
$(Y, (g_i)_*^{-1}(\Delta^1 +D^1) +Y_0^{\red})$ \cite[Def. 1.19]{Kol13}.
Hence the pairs are crepant birational  by \cite[Prop. 1.21]{Kol13}.

The proof of (2) reduces to the separatedness of the KSBA-moduli space.
Indeed,  since $D^2$ is relatively ample, there exists a $\bQ$-divisor   $0 \leq H^2 \sim_{\bQ}D^2$ such that 
$(X^2_0,\Delta^2_0+D^2_0+H^2_0)$
is slc. 
Let $H^1$ denote the birational transform of $G^2$ on $X^1$. 
Since $H^1_{B\setminus 0 } \sim_{\bQ} D^1_{B\setminus 0}$,
$H^1\sim_{\bQ} D^1+ E^1$ for some $\bQ$-divisor $E^1$ with $\Supp(E^1) \subset X^1_0$.
Using that $X^1_0$ is irreducible by the klt assumption and $X^1_0 \sim 0$, we see $H^1\sim_{\bQ}D^1$ as well.
By our assumption in (2),  there exists $\vep>0$, such that 
$(X^1_0,\Delta^1+D^1_0 +\vep H^1_0 )$ is klt.
Hence
\[
(X^1,\Delta^1+D^2+\vep H^1) \to B
\quad \text{ and } \quad  (X^2,\Delta^2+D^2+\vep H^2)\to B
\]
are families of slc pairs and are isomorphic  over $B\setminus 0$.
Since 
\[
K_{X^i/B}+\Delta^i+D^i+\vep H^i \sim_{\bQ}\vep D^i
,
\]
 which is ample,
 \cite[Prop. 2.50]{KolNewBook} implies $\phi$ extends to an isomorphism.

Finally, to verify (3), fix  a sufficiently divisible  integer $r$ such that
$L^i := -r(K_{X^i/B} +\Delta^i)$ is a relative ample Cartier divisor over $B$ for $i=1,2$.
Since  $(X^1,L^1) \to B$ and $(X^2,L^2)\to B$ are isomorphic over $B\setminus 0$,  \cite[Lem. 2.16]{ABHLX20} implies (3) holds.
\end{proof}

\begin{lem}\label{l:Ncompspecialize}
Let $(X,\Delta+D) \to B$ be a family of  boundary polarized CY pairs over a regular  local scheme $0\in B$ with generic point $\eta\in B$.

If $N(K_{X_\eta}+\Delta_\eta+D_\eta) \sim 0$ for some integer $N$, then $N(K_{X/B}+\Delta+D) \sim 0$.
\end{lem}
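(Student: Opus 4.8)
The plan is to reduce the problem to the statement that the relative dualizing sheaf is compatible with passing to the central fiber, together with a Noetherian/completeness argument to lift a linear equivalence from the generic fiber to the total space. First I would choose a $\bQ$-Cartier integer $M$ for the divisor $K_{X/B}+\Delta+D$, i.e. $M(K_{X/B}+\Delta+D)$ is Cartier; after replacing $N$ by a suitable multiple I may assume $M \mid N$, so that $L := \cO_X\big(N(K_{X/B}+\Delta+D)\big)$ is a genuine line bundle on $X$, trivial on $X_\eta$ by hypothesis. The goal is then to show $L \cong \cO_X$. Since $B$ is regular local (hence $X$ is an integral, or at least demi-normal, scheme of finite type over a regular local base), $\Pic(X) \to \Pic(X_\eta)$ being injective would finish the argument; but this injectivity can fail when $X_0$ is reducible (exactly the subtlety flagged after Lemma \ref{l:CYbirational}), so one cannot simply invoke that.

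Instead I would argue as follows. After base change to the completion $\widehat{B}$ (faithfully flat, so it suffices to prove the statement there) and using that $X\to B$ is projective, formal GAGA reduces us to working over the thickenings $B_n := \Spec \cO_{B,0}/\fm^{n+1}$. On $X_\eta$ we have $L|_{X_\eta} \cong \cO_{X_\eta}$, so $L$ is trivial on the generic fiber; because $X\to B$ is projective and flat with $S_2$, hence geometrically connected and reduced (demi-normal) fibers, $f_* \cO_X = \cO_B$ and this persists after base change, so a nowhere-vanishing section of $L|_{X_\eta}$, cleared of denominators, gives an effective Cartier divisor $\Gamma \subset X$ supported on $X_0$ with $L \cong \cO_X(\Gamma)$. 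Now $X_0 \sim_{\bQ} 0$ relatively — more precisely, writing $X_0 = f^*(0)$ as a principal divisor since $0 \in B$ is cut out (locally, after shrinking) by a regular system of parameters and $\dim B$ is arbitrary here — the components of $X_0$ are the pullbacks of the coordinate hyperplanes; since the whole fiber $X_0$ is a principal divisor on $X$, any divisor supported on $X_0$ which is $f$-numerically trivial on each component must already be a combination of the $f^*(\text{prime divisors through }0)$, hence principal. Concretely: $\Gamma$ is vertical and $\Gamma \cdot (\text{curve in a general fiber}) = 0$, and vertical divisors numerically trivial over the base are pulled back from $B$ up to components, so $\Gamma \sim_{\bQ} f^* (\text{divisor on } B)$; but $B$ is local so every divisor on it supported at $0$... here one must be slightly careful when $\dim B \geq 2$, and I would instead invoke directly that $f_*\cO_X = \cO_B$ together with $H^0(X_\eta, L) \neq 0$ and $H^0(X_\eta, L^{-1})\neq 0$ to conclude $L$ and $L^{-1}$ are both effective and vertical, forcing $L \cong f^* \cL_0$ for a line bundle $\cL_0$ on $B$, which is trivial as $B$ is local.

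I expect the main obstacle to be precisely this last point: controlling $\Pic(X)$ relative to $\Pic(B)$ and $\Pic(X_\eta)$ in the presence of a \emph{reducible}, possibly non-normal central fiber $X_0$, where the seesaw-type argument needs the hypothesis that $L$ is trivial on $X_\eta$ (not merely torsion) and the projectivity and fiberwise-$S_2$-with-$f_*\cO_X=\cO_B$ properties to rule out a nontrivial twist by components of $X_0$. A clean way to package this, which I would ultimately prefer, is: $L|_{X_\eta}\cong\cO_{X_\eta}$ gives rational sections $s$ of $L$ and $s'$ of $L^{-1}$ with $s\cdot s' = 1$; their divisors $\Gamma = \Div(s)\geq 0$ and $\Gamma' = \Div(s')\geq 0$ are vertical with $\Gamma + \Gamma' = \Div(s\cdot s')=0$ hmm — rather, $\Gamma - (-\Gamma') = 0$ so $\Gamma = -\Gamma'$, forcing both to be $0$ since both are effective; thus $s$ is a nowhere-vanishing global section of $L$ and $L\cong\cO_X$. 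This works verbatim once one knows $\Gamma$ and $\Gamma'$ are honest effective divisors (not just effective cycles), which holds because $L$ is a line bundle on the demi-normal, hence $S_2$ and $R_1$-at-the-relevant-points, scheme $X$, and a section of a line bundle vanishing only in codimension $\geq 1$ on a reduced scheme has an honest vanishing divisor; the equality $\Gamma+\Div(s') = \Div(1) = 0$ (using $s\otimes s'$ is the constant $1$ under $L\otimes L^{-1}\cong\cO_X$) then pins everything down. I would write the final proof in this second form, as it avoids any intersection-theoretic input and only uses that $X$ is reduced and $L|_{X_\eta}$ is trivial.
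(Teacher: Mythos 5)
Your proposal has a genuine gap at its central step, and the gap is not cosmetic: the final argument you say you would write down never uses the hypothesis that $K_{X/B}+\Delta+D\sim_{\bQ,B}0$ (numerical triviality of $L$ on the special fiber), and without that hypothesis the statement you are trying to prove is false. Concretely, let $X\to B$ be the blow-up of $\bP^1\times B$ at a point of the special fiber over a DVR, so $X_0=E\cup F$ is reduced and connected, $f_*\cO_X=\cO_B$, and $L:=\cO_X(E)$ is trivial on $X_\eta$; both $L$ and $L^{-1}$ acquire nonzero global sections (with divisors $E$ and $F$ respectively, after clearing denominators), yet $L$ is not pulled back from $B$. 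Any correct proof must therefore invoke fiberwise numerical triviality somewhere, and yours does not.

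The precise failure is in your preferred "clean" paragraph. If $s$ is a rational section of $L$ trivializing $L$ over $\eta$ and $s'=s^{-1}$, then $\Div(s')=-\Div(s)$ identically, so asserting that both $\Div(s)$ and $\Div(s')$ are effective is literally asserting $\Div(s)=0$, i.e.\ the conclusion, with no argument supplied; this has nothing to do with $X$ being reduced or $S_2$. If instead you clear denominators separately so that $s\in H^0(X,L)$ and $s'\in H^0(X,L^{-1})$ are honest sections with effective vertical divisors $\Gamma,\Gamma'$, then $s\otimes s'$ is no longer $1$ but a nonzero function $g$ pulled back from $B$, and the relation you get is $\Gamma+\Gamma'=f^*\Div(g)$; over a DVR with $X_0=\sum_i X_{0,i}$ this reads $a_i+a_i'=v(g)$ with $a_i,a_i'\geq 0$, which does not force the $a_i$ to be equal (the blow-up example realizes $a_1=0$, $a_2=1$). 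The missing ingredient is exactly the step the paper's proof supplies: over a DVR write $N(K_{X/B}+\Delta+D)\sim\sum_i a_iX_{0,i}$, twist by the principal divisor $X_0$ so that $\min_i a_i=0$, and then use connectedness of $X_0$ together with $N(K_{X/B}+\Delta+D)\equiv_B 0$ to find a curve $C\subset X_0$ with $C\cdot\sum_i a_iX_{0,i}>0$ unless all $a_i$ vanish. Two smaller issues: replacing $N$ by a multiple changes the assertion (the conclusion is for the given $N$, and the multiple is unnecessary since one only needs $\bQ$-Cartierness to form the intersection numbers); and your reduction from a general regular local base to the DVR case via completion and formal GAGA is left vague, whereas the paper does this cleanly by representing the condition $N(K_{X_T/T}+\Delta_T+D_T)\sim_T 0$ by a locally closed subscheme of $B$ and testing it on DVRs dominating $B$.
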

	
\begin{proof}
First, assume $B$ is the spectrum of a DVR.
Let $X_{0,1},\ldots, X_{0,m}$ denote the irreducible components of $X_0$. 
Since $(K_{X/B}+\Delta+ D) \vert_{X_\eta} \sim 0$, 
$K_{X/B}+\Delta+D \sim \sum a_i X_{0,i}$ for some $a_i \in \bZ$.
After twisting by the principal divisor $X_0 = \sum_i X_{0,i}$, 
we may assume 
\[
0 = a_1 = \cdots = a_{r} < a_{r+1}  \leq \cdots \leq a_m
.\]
We seek to show $r=m$. If not, then, using that $X_0$ is connected, we can choose a curve  $C\subset X_0$ such that $C\not \subset X_{i,0}$ for all $r<i \leq m$ and $C \cap X_{0,j} \neq  \emptyset $ for some $r<j \leq m$.  
Hence, 
		\[
		C \cdot N(K_{X/B}+\Delta+D) 
		= 
		C \cdot \sum_{i=r+1}^m a_i D_{0,i} \geq  a_j  X_{0,j} \cdot C_j
		>0
		.\]
		The latter is not possible, since $K_{X/B}+\Delta+D \sim_{\bQ,S} 0$ by assumption.
		Therefore, $r=m$, which implies $N(K_{X/B}+\Delta+D)\sim0$ when $B$ is the spectrum of a DVR. 
	
We now deduce the full result from the above special case. 
By \cite[Prop. 9.42]{KolNewBook}, there is locally closed subscheme $B' \hookrightarrow B$ such that for any $T\to B$, $N(K_{X_T/T} +\Delta_T+ D_T) \sim_B 0$ if and only if $T\to B$ factors through $B'$. 
By the above case, every spectrum of a DVR $T$ with a dominant map $T\to B$ must factor through $B'$. 
Thus \cite[Lem. 4.31]{KolNewBook} implies $B'=B$ and so $N(K_{X/B}+\Delta+D)\sim_B 0$.
\end{proof}

Next, we prove a result, which is a consequence of Koll\'ar's gluing theory for slc pairs.

\begin{prop}\label{p:bpcygluing}
Let $B$ be a regular  scheme essentially of finite type over $\bk$ and $B^\circ $ a dense open subset. 
Let $(X^\circ, \Delta^\circ+D^\circ)\to B^\circ$  be a family of boundary polarized CY pairs and write 
 $(\oX^\circ, \oG^\circ+\oDe^\circ+\oD^\circ) $ for the normalization of $(X^\circ, \Delta^\circ+D^\circ)$.

If $\overline{f}^\circ: (\oX^\circ, \oG^\circ+\oDe^\circ+\oD^\circ) \to B^\circ$ extends to a family of boundary polarized CY pairs $\overline{f}: (\oX,\oG+\oDe +\oD) \to B$  and the involution $\tau^\circ$ of $(\overline{G}^\circ)^n$ extends to an involution $\tau$ of $\overline{G}^n$, 
then $f^\circ$ extends to a family of boundary polarized CY pairs $f:(X,\Delta+D)\to B$ such that there is a commutative diagram 
\[
\begin{tikzcd}
(\oX^\circ, \oG^\circ+\oDe^\circ +\oD^\circ ) \arrow[r,hook] \arrow[d,"\pi^\circ"] & (\oX,\oG+\oDe+\oD )\arrow[d,"\pi"]\\
(X^\circ, \Delta^\circ +D^\circ ) \arrow[r,hook] & (X,\Delta+D)
\end{tikzcd},
\]
where the horizontal maps are open embeddings and the vertical maps  normalizations.
\end{prop}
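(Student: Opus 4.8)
The strategy is to reconstruct the family $(X,\Delta+D)\to B$ from its normalization $(\oX,\oG+\oDe+\oD)\to B$ and the extended involution $\tau$ by applying Koll\'ar's gluing theory \cite[\S 5]{Kol13}, and to check that the glued family is again a family of boundary polarized CY pairs. First I would reduce to the affine, and then to the local, case: since the assertion (existence of $X$ together with the commutative square) is local on $B$, we may assume $B$ is a regular local scheme, so that in particular $\oX$ is projective over $B$ and $\oD$ is $\bQ$-Cartier and relatively ample. Because $B^\circ$ is dense in the regular (hence irreducible, after localizing at a point) scheme $B$, the open embedding $\oX^\circ\hookrightarrow \oX$ is dense and the conductor $\oG^\circ$, the involution $\tau^\circ$, and $\oDe^\circ,\oD^\circ$ are the restrictions of $\oG,\tau,\oDe,\oD$.

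Next I would produce the glued space fiberwise/relatively. Apply Lemma \ref{l:familyslcnormadj}(2) to the reduced divisor $\oG \subset \oX$ to see that $(\oG, \mathrm{Diff}_{\oG}(\oDe+\oD))\to B$ is a family of slc pairs, and that $\tau$ is an involution of $\oG^n$ fixing $\mathrm{Diff}_{\oG^n}(\oDe+\oD)$ by hypothesis (it does so over $B^\circ$, and this is a closed condition on the normal, flat-over-$B$ family $\oG^n$, so it holds everywhere). The crucial input is that gluing exists in families: one wants to construct $X := \oX/(\oG^n \rightrightarrows \oG^n)$ as the geometric quotient by the finite equivalence relation generated by $\pi$ and $\tau$, flat and projective over $B$, with demi-normal $S_2$ fibers, and with $\oX\to X$ the normalization. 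For this I would invoke the relative version of Koll\'ar's gluing for slc pairs — over a DVR or regular base — together with the hull-and-husk / $S_2$-ification machinery of \cite[\S 9]{KolNewBook} to ensure the quotient is flat with $S_2$ fibers; alternatively, one applies the gluing pointwise on fibers and then appeals to \cite[Prop. 9.42, Lem. 4.31, Thm. 4.54]{KolNewBook} (as in Lemma \ref{l:Ncompspecialize} and Lemma \ref{l:slcadj}) to see that the fiberwise-glued family is a genuine flat family of slc pairs. The divisors $\Delta,D$ are then obtained as the pushforwards $\pi_*\oDe$, $\pi_*\oD$; that $D$ descends to a $\bQ$-Cartier divisor follows since $\tau$ fixes $\oD|_{\oG^n}$, and relative ampleness of $D$ follows from relative ampleness of $\oD$ by descent along the finite morphism $\pi$. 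Finally $K_{X/B}+\Delta+D\sim_{\bQ,B}0$: over $B^\circ$ this holds by hypothesis, hence $N(K_{X/B}+\Delta+D)$ is $B$-linearly equivalent to a divisor supported on fibers over $B\setminus B^\circ$; pulling back to $\oX$ and using $N(K_{\oX/B}+\oG+\oDe+\oD)\sim_B 0$ together with $\pi$ being finite (so no nontrivial line bundle supported on a closed subset of $B$ becomes trivial) forces triviality — this is exactly the argument of Lemma \ref{l:Ncompspecialize}.

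The commutative square is then automatic: $\pi\colon \oX\to X$ is the normalization by construction, $\oX^\circ\hookrightarrow\oX$ and $X^\circ\hookrightarrow X$ are the open embeddings over $B^\circ\hookrightarrow B$, and the square commutes because both $\pi$ and $\pi^\circ$ are the normalization maps, which are unique. Uniqueness of $(X,\Delta+D)$ over $B$ (needed to glue the local constructions into a global family over non-local $B$) follows from Lemma \ref{l:CYbirational}(3) applied over the regular $2$-dimensional, or general regular, base — or more directly from the fact that a demi-normal scheme is determined by its normalization together with the gluing involution \cite[Thm. 5.13]{Kol13}, which also handles the family case via \cite[\S 5]{Kol13}.

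\textbf{Main obstacle.} The technical heart is showing that Koll\'ar's gluing, which is developed primarily for a single slc pair (or over a field), applies \emph{in families} over the regular base $B$ so as to yield a \emph{flat} morphism $X\to B$ with $S_2$ fibers and $\oX\to X$ the fiberwise normalization — i.e., that formation of the gluing quotient commutes with base change to each fiber. This requires combining the finiteness of the equivalence relation (automatic from the DVR-case analysis of $\tau$ and properness of $\oX/B$) with the representability and flatness results for such quotients; I expect this to be where one must carefully cite the relative gluing statements from \cite{Kol13} and the moduli-theoretic foundations in \cite{KolNewBook}, rather than reprove them, and to check that the density of $B^\circ$ guarantees the glued family has no extraneous components or embedded points.
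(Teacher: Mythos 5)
Your overall strategy (Koll\'ar gluing along $(n,n\circ\tau)\colon \oG^n\rightrightarrows\oX$, then descent of the CY structure) is the paper's strategy, but you flag the main obstacle --- that the gluing must produce a genuine \emph{family} over $B$, with the equivalence relation finite --- and then leave it unresolved, deferring to ``relative gluing statements'' or a fiberwise-plus-flatness argument that is not actually available off the shelf. The paper closes this gap differently and more cheaply: it never glues ``in families'' at all. Working locally on $B$, Lemma \ref{l:slcadj} shows that $(\oX,\oG+\oDe+\oD)$ is a single (absolute) lc pair with \emph{no lc centers contained in} $\oX\setminus\oX^\circ$ (because adding the pullback of an snc divisor through any point of $B$ keeps it slc), and over $\oX^\circ$ the relation is finite with quotient $X^\circ$; then \cite[Lem.~9.55]{Kol13} gives finiteness of the generated equivalence relation, and \cite[Cor.~5.33, Thm.~5.38]{Kol13} produce a demi-normal slc pair $(X,\Delta+D)$. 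Only \emph{afterwards} does one recover the family structure (flatness, $S_2$ fibers) by checking that $(X,\Delta+D+f^*\{x_1\cdots x_r=0\})$ is slc and invoking Lemma \ref{l:slcadj} again; no hull-and-husk or $S_2$-ification machinery is needed. The finiteness of the equivalence relation is exactly where the lc structure and the density of $B^\circ$ enter, and your ``automatic from the DVR-case analysis of $\tau$ and properness'' does not substitute for this.

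Two smaller points. First, your claim that $\tau$ fixes ${\rm Diff}_{\oG^n}(\oDe+\oD)$ because ``this is a closed condition'' is not quite right as stated: equality of divisors over a dense open only controls the components dominating $B$, so you must rule out components of the different supported over $B\setminus B^\circ$; this is precisely what Lemma \ref{l:familyslcnormadj}.2 gives you, so the fix is available but the reasoning as written is gappy. Second, for $K_{X/B}+\Delta+D\sim_{\bQ,B}0$ the paper does not run the connectedness argument of Lemma \ref{l:Ncompspecialize}: it observes that $\pi^*(K_X+\Delta+D)=K_{\oX}+\oG+\oDe+\oD$ and $\pi$ is finite, so the sum is numerically trivial over $B$, and then applies abundance for numerically trivial slc pairs \cite[Cor.~1.6]{HX16}. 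Your route would first require extending the relative $\bQ$-linear equivalence from $B^\circ$, which is exactly the kind of statement you are trying to prove; the numerical-triviality-plus-abundance argument avoids this circularity.
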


\begin{proof}
To begin, note that $(X^\circ, \Delta^\circ+D^\circ)$ and  $(\oX,\oG+\oDe+\oD)$  are slc pairs by Lemma \ref{l:familyslcnormadj}.1
Since $\tau^\circ$ fixes ${\rm Diff}_{(\oG^\circ)^{n}}(\oDe^\circ+\oD^\circ)$ by \cite[Prop. 5.12]{Kol13} and ${\rm Diff}_{\oG^n}(\oDe+\oD)$ has no components supported over $B\setminus B^\circ$ by Lemma \ref{l:familyslcnormadj}.2, $\tau$ fixes ${\rm Diff}_{\oG^{n}}(\oDe+\oD)$.

We will  now construct $X$ as a geometric quotient \cite[Def. 9.4]{Kol13} by the gluing relation  $(n,n\circ \tau):\oG^n  \rightrightarrows  \oX$ \cite[\S 5.31]{Kol13}.
Note that 
$(\oX, \oG+\oDe +\oD)$ is lc and has no lc centers contained in $\oX\setminus \oX^\circ$  by Lemma \ref{l:slcadj}.
Additionally, over $\oX^\circ$, the equivalence relation is finite and has quotient $X^\circ$.
Therefore \cite[Lem. 9.55]{Kol13} implies $(n,n\circ \tau)$ generates a finite equivalence relation.
Thus, by \cite[Cor. 5.33]{Kol13}, 
there exist  a deminormal pair $(X,\Delta +D)$ and a diagram as above.
Since $\tau$ fixes ${\rm Diff}_{\oG^n} ( \Delta+D)$,
\cite[Thm. 5.38]{Kol13} implies $(X,\Delta +D)$  is slc. 
By the universal property of  categorical quotients \cite[Def. 9.4]{Kol13},  there exists a map $f:X\to B$ such that $\overline{f}= f \circ\pi$.
By  \cite[9.31]{Kol13},  $f$ is finite type. Using that $\overline{f}$ is proper and $\pi$ is proper and surjective, $f$ is proper.

It remains to show that $(X,\Delta+D)\to B$ is a family of boundary polarized CY pairs. 
First, fix any $b\in B$ and regular system of parameters $x_1\ldots, x_r\in \cO_{B,0}$.
Since
$
(\oX,\oG+\oDe+\oD+ \overline{f}^*\{ x_1 \cdots x_r =0\})
$
is  slc  over a neighborhood of $b$ by Lemma \ref{l:slcadj}, 
$
(X,G+\Delta+D+ {f}^*\{ x_1 \cdots x_r =0\})
$
is slc over a neighborhood of $b$. 
Thus $(X,\Delta+D)\to B$  is a family of slc pairs in a neighborhood of $b\in B$ by Lemma \ref{l:slcadj}.
Since the Definition \ref{def:mumford} is local on the base, it follows that $(X,\Delta+D)\to B$ is a family of slc pairs. 
Next, note that 
\[
K_{\oX}+\oG+ \oDe+\oD = \pi^* (K_{X}+\Delta+D) 
\quad \text{ and } \quad 
K_{\oX}+\oG+ \oDe = \pi^* (K_{X}+\Delta) 
\]
Since $\pi$ is finite, $-K_X-\Delta$ is ample over $B$
and $K_{X}+\Delta+D\equiv_{B} 0$. 
The latter  implies $K_{X}+\Delta+D\sim_{B,\bQ}0$ by \cite[Cor. 1.6]{HX16}.
Therefore $(X,\Delta+D)\to B$ is a family of boundary polarized CY pairs.
\end{proof}	

\subsection{Valuations}
Let $X$ be a normal scheme. 
If $\mu: Y \to X$ is a proper birational morphism with $Y$ normal and $E\subset Y$ is a prime divisor (called a  \emph{divisor over} $X$), then $E$ defines a valuation $\ord_{E}:K(X)^\times \to \bZ$. 
A valuation $v:K(X)^\times \to \bZ$ of the form $v:= c \, \ord_E$, where $c\in \bZ_{\geq 0}$ and $E$ a divisor over $X$, is called \emph{divisorial}
and we write $\DivVal_X$ for the set of divisorial valuations on $X$. 

Let $(X,\Delta)$ be a pair. 
The  \emph{log discrepancy} of a divisorial valuation $v=c\ord_E$ on $X$ is
\[
A_{X,\Delta}(v) := c(1+{\rm coeff}_E(K_{Y}- \mu^*(K_X+\Delta))
.\]
If $v = \ord_E$ for a divisor $E$ over $X$, we will often write $A_{X,\Delta}(E)$ for $A_{X,\Delta}(v)$.  Note that $(X,\Delta)$ is lc if and only if $A_{X,\Delta}(v) \geq 0$ for all divisorial valuations $v$ on $X$.

\subsection{Koll\'ar components}

\begin{defn}[\cite{LX20}]
Let $(X,\Delta)$ be a klt pair of finite type over $\bk$. Let $x\in X$ be a closed point. 
A proper birational morphism $\mu: Y\to X$ is said to provide a \emph{Koll\'ar component} $E$ over $x\in (X,\Delta)$ if the following statements hold.
\begin{enumerate}
    \item $\mu$ is an isomorphism over $X\setminus\{x\}$;
    \item $\mu^{-1}(x)= E$ is a prime divisor on $Y$;
    \item $-E$ is $\bQ$-Cartier and $\mu$-ample;
    \item $(Y, E+\mu_*^{-1}\Delta)$ is plt.
\end{enumerate}
Sometimes we also say that $(E,\Delta_E)$ is a Koll\'ar component over $x\in (X,\Delta)$ where $\Delta_E$ is the different of $(Y, E+\mu_*^{-1}\Delta)$ on $E$.
\end{defn}

\begin{lem}\label{l:pltcyclicquot}
Let $x\in X$ be a cyclic quotient surface singularity. Let $(E, \Delta_E)$ be a Koll\'ar component over $x\in X$. Then $(E,\Delta_E)$ is toric.
\end{lem}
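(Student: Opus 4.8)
The statement to prove is Lemma~\ref{l:pltcyclicquot}: if $x\in X$ is a cyclic quotient surface singularity and $(E,\Delta_E)$ is a Koll\'ar component over $x\in X$, then $(E,\Delta_E)$ is toric. The plan is to exploit the fact that the cyclic quotient singularity comes with a torus action and then argue that a Koll\'ar component can, after a suitable degeneration, be taken to be torus-invariant; the invariant Koll\'ar component is then visibly toric, and this forces the original $(E,\Delta_E)$ to be toric as well. Concretely, write $x\in X$ as $0\in \mathbb{A}^2/\bmu_n$ for a cyclic group acting diagonally with weights $(1,q)$; this carries an effective action of a one-dimensional torus $T=\G_m$ (the image of the diagonal torus of $\mathbb{A}^2$), with $X$ a $T$-variety of complexity one in which $x$ is the unique fixed point.

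First I would recall (from the theory of Koll\'ar components / K-semistability of klt singularities, \cite{LX20} and the surrounding literature) that any Koll\'ar component $E$ over $x$ induces a divisorial valuation $v=\ord_E$ on $X$ with $A_{X}(v)<\infty$, and that divisorial valuations on a surface germ correspond to (weighted) sequences of blowups. The key step is to \emph{degenerate $E$ to a $T$-invariant Koll\'ar component}: using the $T$-action one forms the initial term / associated graded degeneration of $v$ with respect to the $T$-weight filtration, obtaining a $T$-invariant divisorial valuation $v_0$ whose log discrepancy and whose associated graded ring dominate those of $v$; since $x\in X$ is a surface singularity this $v_0$ is again (realized by) a Koll\'ar component $E_0$, now $T$-invariant. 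A $T$-invariant prime divisor over a complexity-one $T$-surface that contracts to the fixed point is a toric divisor in a toric resolution, so $(E_0,\Delta_{E_0})\cong (\mathbb{P}^1, \Delta_{E_0})$ with $\Delta_{E_0}$ supported at the two $T$-fixed points of $E_0\cong\mathbb{P}^1$ — hence $(E_0,\Delta_{E_0})$ is toric. Finally, one transports toricity back: $E$ and $E_0$ are fibers of the same test configuration-type degeneration over $\mathbb{A}^1$, so $(E,\Delta_E)$ and $(E_0,\Delta_{E_0})$ are isomorphic (the general and special fiber of a $\G_m$-equivariant family of Koll\'ar components over $\mathbb{A}^1$ with $\G_m$ acting trivially on the base), or more elementarily: the minimal log discrepancy data and the self-intersection $E^2$ on the minimal resolution of $(Y,E)$ pin down $E$ up to the toric model, and a rational curve $E$ with two marked points and a fixed different is automatically toric.

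An alternative, perhaps cleaner, route avoids degeneration entirely and works directly with the classification of plt blowups of cyclic quotient surface singularities. A Koll\'ar component over a surface singularity is extracted by a weighted blowup, and for a cyclic quotient singularity $\frac1n(1,q)$ the condition that $(Y,E+\mu_*^{-1}\Delta)$ be plt forces $E$ to be a smooth rational curve, $(E,\Delta_E)$ to be a log del Pezzo (log Fano of dimension one), and the self-intersection and the coefficients of $\Delta_E$ to be computable from the toric continued-fraction data of $\frac nq$. One then checks that every such $(E,\Delta_E)=(\mathbb{P}^1, a_1 p_1 + a_2 p_2)$ with at most two boundary points is, by definition, a toric pair (the torus being $\G_m\subset \mathbb{P}^1$ with $p_1,p_2$ the two fixed points). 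The hard part — and the step I expect to be the main obstacle — is showing that $\Delta_E$ is supported at \emph{at most two} points of $E\cong\mathbb{P}^1$: this is exactly the place where the $T$-invariance (or the explicit weighted-blowup computation on the toric chart) does the real work, since a priori a plt center could meet the different at an arbitrary finite set of points, and one must rule out the existence of any third point of non-trivial different using that $(E,\Delta_E)$ lives over a $\emph{toric}$ (hence everywhere-locally-toric-along-$E$) ambient germ. Once the ``at most two points'' claim is in hand, toricity is immediate, so I would concentrate the written proof on that claim.
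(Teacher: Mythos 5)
You have correctly isolated the crux of the lemma -- showing that $\Delta_E$ is supported at no more than two points of $E\cong\bP^1$ -- but neither of your two routes actually closes it. In the degeneration route, the step ``transport toricity back'' fails: the special fiber of a $\G_m$-equivariant degeneration of a Koll\'ar component is in general \emph{not} isomorphic to the general fiber (if it were, the degeneration would be trivial and would give no information), and your parenthetical justification ``$\G_m$ acting trivially on the base'' is inconsistent with having a non-trivial initial-term degeneration in the first place. Moreover the implication goes the wrong way for your purposes: under such a degeneration the points of $\Supp(\Delta_E)$ can collide, so knowing that the invariant limit $(E_0,\Delta_{E_0})$ has at most two boundary points says nothing about the number of boundary points of $(E,\Delta_E)$. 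In the ``alternative route,'' the assertion that every Koll\'ar component over a cyclic quotient surface singularity is extracted by a weighted blowup in toric coordinates is precisely the statement that needs proof (it is equivalent to the two-point bound you are after), so as written that route is circular; you acknowledge this is the main obstacle but do not supply an argument.

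For comparison, the paper closes exactly this gap as follows. Since $-(K_Y+E)$ is $\mu$-ample, Bertini produces $\Gamma\sim_f -m(K_Y+E)$ with $(Y,E+\tfrac1m\Gamma)$ plt, so that $(X,\tfrac1m\mu_*\Gamma)$ is lc with $E$ as its unique lc place; hence for $0<\varepsilon\ll1$ the germ $x\in(X,\tfrac{1-\varepsilon}{m}\mu_*\Gamma)$ is klt with $\mld$ computed by $E$. Because the minimal resolution of a cyclic quotient surface singularity has chain dual graph ($A$-type), \cite[Thm.~4.5]{LX21} applies and yields a model $g:W\to X$, obtained by smooth blowups, on which $E$ appears and whose exceptional dual graph is still a chain. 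The induced contraction $p:W\to Y$ then satisfies that $p(\Exc(p))\subset E$ consists of at most two points, and since $\Delta_E$ is supported there, the two-point bound follows. If you want to avoid citing \cite{LX21}, you would need to reprove some version of that chain-preservation statement; the torus action alone does not give it.
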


\begin{proof}
As we will explain, the result follows from 
\cite[Proof of Theorem 4.5]{LX21}.
Let $f: Z\to X$ be the minimal resolution of the singularity $x\in X$. By \cite{Kol13}*{\S 3.40.1}, we know that the dual graph of $\Exc(f)$ is a chain, i.e. $x\in X$ is an $A$-type singularity in terms of \cite{LX21}. Since $-(K_Y+E)\sim_{\bQ,\mu} A_{X}(E) E$ is $\bQ$-Cartier and $\mu$-ample, there exists an integer $m>1$ such that $-m(K_Y+E)$ is Cartier and $\mu$-very ample. By Bertini's theorem \cite{KM98}*{Lem. 5.17}, there exists an effective Cartier divisor $\Gamma\sim_f -m(K_Y+E)$ on $Y$ such that $(Y, E+\frac{1}{m}\Gamma)$ is plt. Since $K_Y + E+\frac{1}{m}\Gamma \sim_{\bQ,\mu} 0$, we know that $K_Y + E+\frac{1}{m}\Gamma = \mu^*(K_X+ \frac{1}{m}\mu_*\Gamma)$. Thus $(X, \frac{1}{m}\mu_*\Gamma)$ is log canonical with only one lc place $E$. Hence, for $0<\varepsilon\ll 1$, we have that $x\in (X, \frac{1-\varepsilon}{m}\mu_*\Gamma)$ is a klt surface germ whose $\mld$ is computed by $E$. Thus the assumptions of \cite[Thm. 4.5]{LX21} are satisfied. 
By \cite[Proof of Theorem 4.5]{LX21}, there exists a sequence of smooth blowups $h: W\to Y$ such  the dual graph of $\Exc(g)$ is a chain where $g=f\circ h: W\to X$, and $E$ is a $g$-exceptional prime divisor on $W$.
Therefore, we have a birational morphism $p: W\to Y$ such that $g=\mu\circ p$, and the dual complex of $\Exc(g)$ being a chain implies that $p(\Exc(p))$ is a subset of $E$ containing at most two points. Since $\Delta_E$ is supported in $p(\Exc(p))$, we know that $(E, \Delta_E)$ is toric. 
\end{proof}

\subsection{Seifert $\bG_m$-bundles and orbifold cones}\label{s:seifert}

\subsubsection{Seifert $\bG_m$-bundles}
\begin{defn}[\cite{Kol04}]\label{def:seifert}
Let $X$ be a normal variety over $\bk$. A \emph{Seifert $\bG_m$-bundle} $Y$ over $X$ is a normal variety $Y$ equipped with a $\bG_m$-action together with a morphism $\pi: Y\to X$ such that the following hold.
\begin{enumerate}
    \item $\pi$ is affine and $\bG_m$-equivariant with respect to the trivial $\bG_m$-action on $X$.
    \item For every $x\in X$, the $\bG_m$-action on the reduced fiber $Y_x^{\red} := \red f^{-1}(x)$ is $\bG_m$-equivariantly isomorphic to the natural left action of $\bG_m$ on $\bG_m/ \bmu_{m(x)}$ for some $m(x)\in \bZ_{>0}$.
    \item $m(x)=1$ for $x$ in a dense open subset of $X$.
\end{enumerate}
\end{defn}

\begin{defn}[cf. \cite{Kol04}]\label{def:compact-G_m-bundle}
Let $X$ be a normal variety. Let $L$ be a $\bQ$-Cartier $\bQ$-divisor on $X$. We define a normal variety $Y_L$ together with an affine morphism $\pi_L: Y_L\to X$ as
\[
Y_L: = \Spec_X \bigoplus_{i\in \bZ} \cO_X(\lfloor iL\rfloor).
\]
There is a natural $\bG_m$-action on $Y_L$ where $\cO_X(\lfloor i L \rfloor)$ has weight $i$. 
We define another normal variety $\mybar{Y}_L$ together with a projective morphism $\mybar{\pi}_L: \mybar{Y}_L \to X$ as 
\[
    \mybar{Y}_L: =\Proj_X \bigoplus_{i=0}^\infty \bigoplus_{j=0}^\infty \cO_X(\lfloor iL\rfloor) \cdot  t^j,
\]
where  $\cO_X(\lfloor iL\rfloor)$ and $t$ have degree $i$ and $1$ respectively. Then $\mybar{Y}_L$ admits a $\bG_m$-action where $\cO_X(\lfloor iL\rfloor)$ and $t$ have weight $i$ and $0$ respectively. There are natural $\bG_m$-equivariant open immersions $Y_L\hookrightarrow Y_L^{a} \hookrightarrow \mybar{Y}_L$, where 
\[
Y_L^a:=\Spec_X\bigoplus_{i=0}^\infty \cO_X(\lfloor iL\rfloor).
\]
In fact, $Y_L^a$ is $\bG_m$-equivariantly isomorphic to the open subset $(t\neq 0)$ of $\mybar{Y}_L$. We call $\mybar{Y}_L$ a \emph{compactified Seifert $\bG_m$-bundle over $X$.} From \cite[\S 14]{Kol04} we know that $\mybar{Y}_L\setminus Y_L $ is the disjoint union of two sections $X_0$ and $X_\infty$, where $X_0 = Y_L^a\setminus Y_L$ and $X_\infty = \mybar{Y}_L\setminus Y_L^a$.
\end{defn}

\begin{thm}[\cite{Kol04}*{Thm. 7}]\label{thm:Kollar-Seifert}
Let $X$ be a normal variety.  Then the following hold.
\begin{enumerate}
    \item Let $L$ be a $\bQ$-Cartier $\bQ$-divisor on $X$. Then $Y_L$ is a Seifert $\bG_m$-bundle over $X$.
    \item Every Seifert $\bG_m$-bundle $Y$ over $X$ is $\bG_m$-equivariantly isomorphic to $Y_L$ for some $\bQ$-Cartier $\bQ$-divisor $L$ on $X$.
    Moreover, such $L$ is uniquely determined by $Y$ up to $\bZ$-linear equivalence.
\end{enumerate}
\end{thm}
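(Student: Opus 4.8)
The plan is to analyze both parts through the quasi-coherent $\bZ$-graded $\cO_X$-algebra $\cR_L:=\bigoplus_{i\in\bZ}\cO_X(\lfloor iL\rfloor)$, for which $Y_L=\Spec_X\cR_L$; throughout one works Zariski-locally and assumes $X=\Spec A$ affine and normal (this follows \cite{Kol04}). For (1), I would first record the elementary structure: choosing $N>0$ with $NL$ Cartier, the sheaves $\cO_X(NL)^{\pm 1}$ are invertible and $\cO_X(\lfloor(qN+r)L\rfloor)\cong\cO_X(\lfloor rL\rfloor)\otimes\cO_X(NL)^{\otimes q}$, so $\cR_L$ is generated over $\cO_X$ in degrees $-N,\dots,N$; hence $\pi_L$ is affine of finite type, and the grading makes $Y_L$ a $\bG_m$-scheme with $\pi_L$ equivariant for the trivial action on $X$. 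For normality I would apply Serre's criterion: each $\cO_X(\lfloor iL\rfloor)$ is reflexive, hence $S_2$ over the normal scheme $X$, so $Y_L$ is $S_2$; and over every codimension-$\le 1$ point of $X$ the local ring is regular, so $L$ is Cartier there and $Y_L$ is locally either a Zariski $\bG_m$-bundle or the spectrum of a normal ring, giving $R_1$. This also yields Definition~\ref{def:seifert}(3), since $\{x:m(x)=1\}=\{x:L\text{ is Cartier at }x\}$ is open and dense.

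The substantive step of (1) is the fiber computation. Fix $x$, put $m=m(x)$, and after localizing at $x$ write $mL=\operatorname{div}(\psi)$, so $\cO_X(mL)=\psi^{-1}\cO_{X,x}$ is free on a generator $e_m$ and $\cR_L$ is generated near $x$ by $e_m^{\pm 1}$ together with the pieces $\cO_X(\lfloor kL\rfloor)$, $0\le k<m$. I would show that for $m\nmid k$ every weight-$k$ element $f$ satisfies $\overline{f^m}=0$ in $\cR_L\otimes\kappa(x)$: the fractional ideal spanned by $m$-fold products from $\cO_X(\lfloor kL\rfloor)$ lies in $\fm_x\cdot\cO_X(\lfloor kmL\rfloor)$, either because $m\lfloor kL\rfloor=kmL-m\{kL\}$ with $m\{kL\}$ a nonzero effective divisor through $x$, or — when $kL$ is integral but not Cartier near $x$, using $0<k<m$ — because then $\cO_X(kL)$ is a non-free rank-one reflexive sheaf at $x$, so its $m$-th product ideal is a proper ideal of the free module $\cO_X(kmL)$. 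Hence $(\cR_L\otimes\kappa(x))_{\red}$ is generated by the weight-$m\bZ$ part $\kappa(x)[\bar e_m^{\pm1}]$, so $\red\pi_L^{-1}(x)\cong\Spec\kappa(x)[\bar e_m^{\pm1}]$ with $\bG_m$ acting through weight $m$, which is exactly $\bG_m/\bmu_{m}$ with its standard action. This proves (1).

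For (2), decompose $\pi_*\cO_Y=\bigoplus_{i\in\bZ}\cF_i$ into $\bG_m$-weight spaces. Then $\cF_0=\cO_X$, each $\cF_i$ is rank-one reflexive ($S_2$ as $\pi$ is affine and $Y$ normal, torsion-free as $Y$ is integral), and $\cF_i\otimes\cF_j\to\cF_{i+j}$ is an isomorphism over the dense torsor locus $\{m(x)=1\}$. For each prime divisor $E\subset X$ I would restrict to the DVR $\cO_{X,\eta_E}$ and use the classification of Seifert $\bG_m$-bundles over a DVR (standard local model $\Spec\cO_{X,\eta_E}[u,w^{\pm1}]/(u^{m}-\varpi^{a}w)$ with $m=m(\eta_E)$, $\gcd(a,m)=1$, and $\varpi$ a uniformizer) to extract $r_E\in[0,1)$ of exact denominator $m(\eta_E)$. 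Taking $L:=L_1+\sum_E r_E\,E$ for a Weil divisor $L_1$ with $\cO_X(L_1)\cong\cF_1$, one checks $\cF_i\cong\cO_X(\lfloor iL\rfloor)$ for all $i$ (an equality of reflexive sheaves verifiable at codimension-one points from the local models) and, after trivializing $\cF_1$ at the generic point of $X$, that the multiplication of $\pi_*\cO_Y$ matches that of $\cR_L$, yielding a $\bG_m$-equivariant isomorphism $Y\cong_X Y_L$. For uniqueness, from an abstract graded $\cO_X$-algebra isomorphism $\cR_L\cong\cR_{L'}$ the colength of $\cF_1^{\otimes i}\to\cF_i$ at $\eta_E$ equals $\lfloor i\{{\rm coeff}_E L\}\rfloor$, so letting $i\to\infty$ recovers $\{{\rm coeff}_E L\}=\{{\rm coeff}_E L'\}$ for every $E$; thus $L-L'$ is integral, and comparing weight-one pieces gives $\lfloor L\rfloor\sim\lfloor L'\rfloor$, hence $L\sim L'$.

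The hard part will be the fiber analysis in (1) — determining the non-reduced scheme structure of $\pi_L^{-1}(x)$ and showing that its reduction is precisely $\bG_m/\bmu_{m(x)}$, which is what forces the case split above between fractional and integral-but-non-Cartier behaviour of $\lfloor kL\rfloor$ at $x$ — together with the parallel local classification of Seifert bundles over a DVR needed in (2). Once those local statements are in place, the remaining global-to-local steps (extending identities of reflexive sheaves and of graded algebra structures from codimension one, using normality of $X$ and of $Y$) should be routine.
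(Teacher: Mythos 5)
The paper does not actually prove Theorem \ref{thm:Kollar-Seifert}: it is quoted from \cite{Kol04}*{Thm.~7} without proof, so the only meaningful comparison is with Koll\'ar's original argument. Your write-up essentially reconstructs that argument --- the graded algebra $\cR_L$ of reflexive sheaves, the nilpotence in the fiber ring of the weight-$k$ pieces with $m\nmid k$ (with the correct dichotomy between $x\in\Supp\{kL\}$ and $kL$ integral but non-Cartier at $x$), and, for (2), the recovery of $L$ from the weight decomposition of $\pi_*\cO_Y$ via the local models over the DVRs $\cO_{X,\eta_E}$ together with a comparison of reflexive sheaves in codimension one. The structure is sound and the key computations are correct.

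There is, however, one genuine omission in (2): you never verify that the divisor $L=L_1+\sum_E r_E E$ you construct is $\bQ$-Cartier, which is both part of the assertion and a precondition for $Y_L$ to be defined at all under Definition \ref{def:compact-G_m-bundle}. Your codimension-one analysis pins down each $\cO_X(\lfloor iL\rfloor)$ but says nothing about Cartierness of $mL$ at points of codimension $\geq 2$. The missing step is exactly where the fiber condition of Definition \ref{def:seifert} gets used globally: the reduced fiber over $x$ is $\bG_m/\bmu_{m(x)}$, hence carries a unit of weight $m(x)$, so the multiplication $\cF_{m(x)}\otimes\cF_{-m(x)}\to\cO_X$ is surjective at $x$; by Nakayama $\cF_{m(x)}$ is invertible at $x$, and since the $m(x)$ are bounded (a degree bound for generators of the finitely generated algebra $\pi_*\cO_Y$ bounds the weights occurring in the reduced fiber rings $\kappa(x)[s^{\pm 1}]$, hence bounds $m(x)$), $\cF_m$ is invertible for $m=\operatorname{lcm}_x m(x)$, i.e.\ $mL$ is Cartier. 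Two smaller repairable points: the $S_2$ step in your normality argument for $Y_L$ is a gloss (depth of $\cR_L$ as an $\cO_X$-module does not directly give $S_2$ of the total space over points of $X$ of codimension $\leq 1$; the clean route is that $\cR_L\subset K(X)[t^{\pm 1}]$ is an intersection of valuation rings, hence integrally closed), and the local model $\Spec\cO_{X,\eta_E}[u,w^{\pm1}]/(u^{m}-\varpi^{a}w)$ is not normal when $a>1$ --- the correct model over the DVR is the full ring $\bigoplus_i\varpi^{-\lfloor ia/m\rfloor}\cO_{X,\eta_E}\,t^i$, of which your presentation is only the non-normal subring generated in weights $1$ and $\pm m$.
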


\subsubsection{Orbifold cones}
\begin{defn}[cf. \cite{Kol04}]
Let $X$ be a normal projective variety. Let $L$ be an ample $\bQ$-Cartier $\bQ$-divisor on $X$. 
\begin{enumerate}
    \item The \emph{affine orbifold cone} $C_a(X,L)$ is defined as 
    \[
        C_a(X,L):=\Spec\bigoplus_{i=0}^\infty H^0(X, \cO_X(\lfloor iL\rfloor)).
    \]
    It admits a $\bG_m$-action where $H^0(X, \cO_X(\lfloor iL\rfloor))$ has weight $i$.
    \item The \emph{projective orbifold cone} $C_p(X,L)$ is defined as 
    \[
        C_p(X,L):=\Proj\bigoplus_{i=0}^\infty \bigoplus_{j=0}^\infty H^0(X, \cO_X(\lfloor iL\rfloor))\cdot t^j,
    \]
    where $H^0(X, \cO_X(\lfloor iL\rfloor))$ and $t$ have degree $i$ and $1$ respectively. It admits a $\bG_m$-action where $H^0(X, \cO_X(\lfloor iL\rfloor))$ and $t$ have weight $i$ and $0$ respectively.
\end{enumerate} 

Note that $C_a(X,L)$ is $\bG_m$-equivariantly isomorphic to the open subset $(t\neq 0)$ of $C_p(X,L)$. Moreover, both $C_a(X,L)$ and $C_p(X,L)$ can be obtained by contracting the zero section $X_0$ in $Y_L^a$ and $\mybar{Y}_L$ to a point respectively, see \cite[\S 14]{Kol04}.
\end{defn}

\begin{defn}\label{def:orb-div}
Let $X$ be a normal projective variety. Let $L$ be an ample $\bQ$-Cartier $\bQ$-divisor on $X$. Assume $\{L\}:=L- \lfloor L \rfloor =\sum_{i=1}^l \frac{a_i}{b_i} L_i$ where $a_i$ and $b_i$ are coprime positive integers, and the $L_i$ are distinct prime divisors.  We define the \emph{orbifold divisor} $L_{\orb}$ of $L$ as the effective $\bQ$-divisor on $X$ given by
\[
L_{\orb}:= \sum_{i=1}^l \frac{b_i-1}{b_i}L_i.
\]
We often say that $L_{\orb}$ is the \emph{orbifold divisor} of the projective orbifold cone $C_p(X,L)$.

Let $D$ be an effective $\bQ$-divisor on $X$ such that $D\geq L_{\orb}$. Let $\mybar{\pi}_L: \mybar{Y}_L\to X$ be the compactified Seifert $\bG_m$-bundle. Suppose $D = \sum_{i} \frac{b_i-1+c_i}{b_i} L_i + \sum_{j} c_j' D_j$ where  the $D_j$'s are irreducible components of $D$ not contained in $\Supp(L_{\orb})$. We define 
\begin{equation}\label{eq:Seifert-adj-0}
\mybar{D}_L:= \sum_{i} c_i \red(\mybar{\pi}_L^{-1}(L_i))  + \sum_j c_j' \red(\mybar{\pi}_L^{-1}(D_j)).
\end{equation}
From \eqref{eq:Seifert-adj-0} we see that if $D=L_{\orb}$ then $\mybar{D}_L = 0$.
Define the $\bQ$-divisor $D_{L,p}$ on $C_p(X,L)$ to be the pushforward of $\mybar{D}_L$ under the contraction $\mu_L:\mybar{Y}_L \to C_p(X,L)$. As an abuse of notation, we denote the section at infinity of $C_p(X,L)$ by $X_\infty$ which is just $(\mu_L)_* X_\infty$. It is clear that $X_\infty =(t=0)$ is $\bQ$-Cartier ample on $C_p(X,L)$. 
\end{defn}

\begin{prop}[cf. \cite{Kol04}]\label{prop:orbcone-adjunction}
    Let $X$ be a normal projective variety. Let $L$ be an ample $\bQ$-Cartier $\bQ$-divisor on $X$. Let $D$ be an effective $\bQ$-divisor on $X$ such that $D\geq L_{\orb}$ and $K_X+D$ is $\bQ$-Cartier.

    \begin{enumerate}
        \item    If $(X, D)$ is lc (resp. klt), then $(\mybar{Y}_L, \mybar{D}_L+X_0+X_\infty)$ is an lc (resp. plt) pair whose different divisors on $X_0$ and $X_\infty$ are the pullbacks of $D$ under the isomorphisms $\mybar{\pi}_L|_{X_0}$ and $\mybar{\pi}_L|_{X_\infty}$.
        \item If $(X, L_{\orb})$ is an lc (resp. klt) log Fano pair satisfying $rL\sim_{\bQ} -(K_X+L_{\orb})$ for some $r\in \bQ_{>0}$, then $(C_p(X,L), X_\infty)$ is an lc (resp. plt) log Fano pair whose different divisor on $X_\infty$ is the pullback of $L_{\orb}$.
    \item If $(X,D)$ is an lc CY pair, then $(C_p(X,L), D_{L,p} + X_\infty)$ is a boundary polarized CY pair whose different divisor on the polarizing boundary $X_\infty$ is the pullback of  $D$. 
    \end{enumerate}

\end{prop}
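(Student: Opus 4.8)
The plan is to deduce all three parts from the general structure theory of compactified Seifert $\bG_m$-bundles and orbifold cones recalled above, treating the orbifold cone $C_p(X,L)$ as the image of $\mybar Y_L$ under the contraction $\mu_L$ of the zero section $X_0$.

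\medskip

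\noindent\textbf{Part (1).} First I would set up adjunction on $\mybar Y_L\to X$. Over the locus where $L$ is Cartier the morphism $\mybar\pi_L$ is a $\bP^1$-bundle and $K_{\mybar Y_L/X}+X_0+X_\infty \sim_{\bQ,X} 0$; the point is to identify the fractional contributions coming from $\{L\}=\sum \frac{a_i}{b_i}L_i$. Over a general point of $L_i$ the total space $\mybar Y_L$ has a cyclic quotient singularity of order $b_i$ along $\red(\mybar\pi_L^{-1}(L_i))$, and the Hurwitz-type computation (this is exactly \cite[\S 14]{Kol04}) gives
\[
K_{\mybar Y_L} + X_0 + X_\infty + \mybar{(L_{\orb})}_L = \mybar\pi_L^*(K_X) + (\text{something }\sim_{\bQ,X}0),
\]
so more generally $K_{\mybar Y_L}+X_0+X_\infty+\mybar D_L \sim_{\bQ} \mybar\pi_L^*(K_X+D)$, using the definition \eqref{eq:Seifert-adj-0} of $\mybar D_L$. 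This identity is the crepant pullback formula. Then I apply inversion of adjunction for the divisors $X_0$ and $X_\infty$ (each isomorphic to $X$ via $\mybar\pi_L$): the different of $(\mybar Y_L, \mybar D_L+X_0+X_\infty)$ along $X_0$ is, by \cite[\S 14]{Kol04} or a direct Seifert-bundle computation, precisely $(\mybar\pi_L|_{X_0})^*D$ — the orbifold/ramification contributions of $\{L\}$ get absorbed into the different along the sections — and similarly along $X_\infty$. So if $(X,D)$ is lc (resp.\ klt), inversion of adjunction \cite[Thm. 11.17]{KolNewBook} together with the crepant formula gives that $(\mybar Y_L,\mybar D_L+X_0+X_\infty)$ is lc (resp.\ plt) near $X_0\cup X_\infty$, and away from the sections $\mybar\pi_L$ is a quotient of an affine bundle so lc/klt is inherited from $(X,D)$ directly. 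I expect the main obstacle here to be the bookkeeping in this ramification computation — getting the coefficients $c_i$, $b_i$ to match on both sides — rather than anything conceptual; I would lean on \cite[\S 14]{Kol04} and Lemma \ref{l:pltcyclicquot}-style local toric models for this.

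\medskip

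\noindent\textbf{Part (2).} This is the special case $D=L_{\orb}$, $\Delta=0$, pushed down to $C_p(X,L)$. By part (1), $(\mybar Y_L, X_0+X_\infty)$ is lc (resp.\ plt) — note $\mybar{(L_{\orb})}_L=0$. Now contract $X_0$: since $rL\sim_{\bQ}-(K_X+L_{\orb})$ with $r>0$, the zero section $X_0$ is a $K$-negative (with respect to $K_{\mybar Y_L}+X_\infty$) extremal contraction $\mu_L:\mybar Y_L\to C_p(X,L)$, so $C_p(X,L)$ is again lc (resp.\ plt) and one computes $-(K_{C_p(X,L)}+X_\infty)$ is $\bQ$-Cartier and ample (the positivity along the cone point comes from $r>0$, away from it from ampleness of $L$), giving the log Fano conclusion. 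The different of $(C_p(X,L),X_\infty)$ along $X_\infty$ is unchanged by the contraction — since $X_\infty$ is disjoint from $X_0$ — so it equals the pullback of $L_{\orb}$ by part (1). The only subtlety is checking $\bQ$-Cartierness of $K_{C_p(X,L)}+X_\infty$ at the vertex, which follows from the grading/$\bQ$-Cartier index computation in \cite[\S 14]{Kol04}.

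\medskip

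\noindent\textbf{Part (3).} Apply part (1) with this $D$ (noting $D\geq L_{\orb}$ is forced: any lc CY pair $(X,D)$ with $K_X+D\sim_{\bQ}0$ and $L\sim_{\bQ}$ the relevant multiple of $D-\ldots$ — more precisely the hypothesis of the proposition includes $D\geq L_{\orb}$, so I just invoke it). Then $(\mybar Y_L,\mybar D_L+X_0+X_\infty)$ is lc with $K_{\mybar Y_L}+\mybar D_L+X_0+X_\infty\sim_{\bQ}\mybar\pi_L^*(K_X+D)\sim_{\bQ}0$. Contracting $X_0$ via $\mu_L$: since $X_0$ is covered by $\mu_L$-trivial curves and $K_{\mybar Y_L}+\mybar D_L+X_0+X_\infty\equiv 0$, after pushforward we get $K_{C_p(X,L)}+D_{L,p}+X_\infty\sim_{\bQ}0$ and the pair stays lc (the contracted locus $X_0$ is an lc center or not depending on klt-ness of $(X,D)$, but lc is preserved under this $K$-trivial contraction regardless). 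Finally $X_\infty=(t=0)$ is $\bQ$-Cartier and ample on $C_p(X,L)$ as noted in Definition \ref{def:orb-div}, so $X_\infty$ is a legitimate polarizing boundary and $(C_p(X,L),D_{L,p}+X_\infty)$ is a boundary polarized CY pair in the sense of Definition \ref{d:bpcy} (with log Fano boundary $D_{L,p}$). The different of this pair along $X_\infty$ equals the pullback of $D$ by part (1), since $X_\infty$ is untouched by the contraction. The main thing to be careful about throughout part (3) is that $D_{L,p}$ is genuinely effective and that no component of $\mybar D_L$ meets $X_0$ in a way that would change the different computation after contraction — but components of $\mybar D_L$ are vertical over $X$ only along the $L_i$, $D_j$, and $X_0\cong X$ meets each such vertical divisor transversally in a copy of the base locus, so the different along $X_\infty$ is read off cleanly from part (1).
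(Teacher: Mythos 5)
Your proposal is correct and follows essentially the same route as the paper: establish the crepant pullback identity $K_{\mybar{Y}_L}+\mybar{D}_L+X_0+X_\infty=\mybar{\pi}_L^*(K_X+D)$ from the ramification formulas of \cite[\S 14]{Kol04}, read off the differents and apply inversion of adjunction along $X_0$ and $X_\infty$, then contract the zero section; the only cosmetic difference is that the paper propagates lc-ness from a neighborhood of $X_0\cup X_\infty$ to all of $\mybar{Y}_L$ by noting that every $\bG_m$-orbit closure meets $X_0\cup X_\infty$, rather than your affine-bundle-quotient argument. One small caution in (2): the contraction of $X_0$ is $(K_{\mybar{Y}_L}+X_\infty)$-negative only when $r>1$, so the lc (resp.\ plt) conclusion downstairs should be justified, as the paper does, by the identity $K_{\mybar{Y}_L}+(1-r)X_0+X_\infty\sim_{\bQ}\mu_L^*(K_{C_p(X,L)}+X_\infty)$ together with $A_{C_p(X,L),X_\infty}(X_0)=r>0$, not by invoking a $K$-negative extremal contraction.
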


\begin{proof}
    (1) Since $\mybar{\pi}_L:\mybar{Y}_L\to X$ is a pure relative dimension $1$ morphism between normal varieties, the pullback map $\mybar{\pi}_L^*: \WDiv(X)\to \WDiv(Y)$ is well-defined, where $\WDiv(\cdot)$ denotes the free abelian group of Weil divisors. The following equalities on ($\bQ$-)Weil divisors are often obtained by first restricting to the smooth locus $X_{\sm}$ of $X$ and its preimage $\mybar{Y}_L\times_X X_{\sm}$, and then taking Zariski closure. By \cite[\S 40 and \S 41]{Kol04}, we have 
    \[
    K_{Y_L} = \pi_L^* (K_{X} + L_{\orb}).
    \]
    By similar arguments to \cite[Proof of Lemma 3.11]{LL19}, we can extend the above equality to 
    \begin{equation}\label{eq:seifert-adj-1}
    K_{\overline{Y}_{\!L}} + X_{0} + X_{\infty} = \mybar{\pi}_L^* (K_{X} + L_{\orb}).
    \end{equation}
    By \cite[Proof of Prop. 16]{Kol04}, we know that 
    \begin{equation}\label{eq:seifert-adj-2}
    \mybar{\pi}_L^* L_i = b_i \mathrm{red}(\mybar{\pi}_L^{-1}(L_i)) \quad \textrm{and}\quad
    \mybar{\pi}_L^* D_j = \mathrm{red}(\mybar{\pi}_L^{-1}(D_j)).
    \end{equation}
    Combining \eqref{eq:seifert-adj-1} and \eqref{eq:seifert-adj-2} together with the definition of $\mybar{D}_L$ from \eqref{eq:Seifert-adj-0}, we have
    \begin{equation}\label{eq:seifert-adj-3}
    K_{\overline{Y}_{\!L}} + \mybar{D}_L+ X_{0} + X_{\infty} = \mybar{\pi}_L^* (K_{X} + D).
    \end{equation}

    Since $K_X+D$ is $\bQ$-Cartier, \eqref{eq:seifert-adj-3} implies that the different divisors of $(\mybar{Y}_L, \mybar{D}_L+X_0+X_\infty)$ along $X_0$ and $X_\infty$ are pullbacks of $D$.
    By inversion of adjunction, if $(X,D)$ is lc (resp. klt), then $(\mybar{Y}_L, \mybar{D}_L+X_0+X_\infty)$ is lc (resp. plt) in a neighborhood of $X_0\cup X_\infty$. This implies that $(\mybar{Y}_L, \mybar{D}_L+X_0+X_\infty)$ is lc (resp. plt) as the closure of every $\bG_m$-orbit of  $\mybar{Y}_L$ intersects either $X_0$ or $X_\infty$. Thus (1) is proved.

    (2) For simplicity, denote by $\pi_0:=\mybar{\pi}_L|_{X_0}$ (resp. $\pi_\infty:=\mybar{\pi}_L|_{X_\infty}$) the isomorphism between $X_0$ (resp. $X_\infty$) and $X$. Thus $X_0|_{X_0}\sim_{\bQ} \pi_0^* (-L)$. Hence by (1) we have
    \[
    (K_{\overline{Y}_{\!L}} + (1-r)X_0 + X_\infty)|_{X_0} = K_{X_0} + L_{\orb} -r X_0|_{X_0} \sim_{\bQ} \pi_0^*(-rL) - \pi_0^*(-rL) = 0. 
    \]
    As a result,  we have that 
    \[
    K_{\overline{Y}_{\!L}} + (1-r)X_0 + X_\infty\sim_{\bQ} \mu_L^* (K_{C_p(X,L)} + X_\infty).
    \]
    If $(X,L_{\orb})$ is lc (resp. klt), then by (1) we know that $(\mybar{Y}_L, X_0 + X_\infty)$ is lc (resp. plt), which implies that $(C_p(X,L), X_\infty)$ is lc (resp. plt) as $A_{C_p(X,L), X_\infty}(X_0) = r>0$.

    Next, we show that $K_{C_p(X,L)}+X_\infty\sim_{\bQ} -r X_\infty$. By \eqref{eq:Seifert-adj-0} and \eqref{eq:seifert-adj-3}, we have
    \begin{equation}\label{eq:seifert-adj-4}
    K_{\overline{Y}_{\!L}} + X_0 + X_\infty = \mybar{\pi}_L^*(K_X+L_{\orb}) \sim_{\bQ} -r\mybar{\pi}^* L.
    \end{equation}
    Since $X_\infty = (t=0)$, we know that $X_\infty \sim_{\bQ} (\mu_L)_*\mybar{\pi}_L^* L$. Thus taking pushforward of \eqref{eq:seifert-adj-4} under $\mu_L$ yields the desired $\bQ$-linear equivalence.    
    As a result, $-K_{C_p(X,L)}-X_\infty\sim_{\bQ} r X_\infty$ is ample, which implies that $(C_p(X,L), X_\infty)$ is a log Fano pair. The statement on the different divisor follows from (1).

    (3) By (1) and \eqref{eq:seifert-adj-3} we know that $(\mybar{Y}_L, \mybar{D}_L + X_0+X_\infty)$ is an lc CY pair. Thus $(C_p(X,L), D_{L,p}+X_\infty)$ is also  an lc CY pair. Clearly $X_\infty$ is ample on $C_p(X,L)$. The statement on the different divisor follows from (1).
\end{proof}

\begin{prop}\label{prop:orbcone-vol}
Under the assumptions of Proposition \ref{prop:orbcone-adjunction}(2) with $n=\dim X$, we have 
\[
(-K_{C_p(X,L)})^{n+1} = (1+r)^{n+1}(L^n) \quad \textrm{and}\quad (-K_{C_p(X,L)}-X_\infty)^{n+1} = r^{n+1}(L^n).
\]
\end{prop}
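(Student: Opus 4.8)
The plan is to compute both intersection numbers by working on the compactified Seifert $\bG_m$-bundle $\mybar{Y}_L$ and then pushing forward along the contraction $\mu_L \colon \mybar{Y}_L \to C_p(X,L)$, which contracts the zero section $X_0$. The key inputs are already recorded in the proof of Proposition \ref{prop:orbcone-adjunction}: the relations $X_\infty \sim_{\bQ} (\mu_L)_* \mybar{\pi}_L^* L$ and $K_{C_p(X,L)} + X_\infty \sim_{\bQ} -r X_\infty$, which give $-K_{C_p(X,L)} \sim_{\bQ} (1+r) X_\infty$ and $-K_{C_p(X,L)} - X_\infty \sim_{\bQ} r X_\infty$. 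So both quantities to be computed reduce to computing the single number $(X_\infty)^{n+1}$ on $C_p(X,L)$, and the claimed formulas will follow once we show $(X_\infty)^{n+1} = (L^n)$.

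First I would pull everything back to $\mybar{Y}_L$. Since $\mu_L$ is birational and $X_\infty$ is disjoint from the contracted locus $X_0$, the projection formula gives $(X_\infty)^{n+1}_{C_p(X,L)} = (\mybar{\pi}_L^* L)^n \cdot X_\infty$ on $\mybar{Y}_L$, using $\mu_L^* X_\infty = X_\infty$ and $X_\infty \sim_{\bQ} (\mu_L)_* \mybar{\pi}_L^* L$ together with the fact that $(\mu_L)_* \mybar{\pi}_L^* L$ and $\mybar{\pi}_L^* L$ differ by a multiple of $X_0$, which meets $X_\infty$ trivially. Next, since $X_\infty$ is a section of $\mybar{\pi}_L$ with $\mybar{\pi}_L|_{X_\infty} \colon X_\infty \xrightarrow{\sim} X$, restricting the line bundle $\mybar{\pi}_L^* L$ to $X_\infty$ recovers $L$ on $X$; hence $(\mybar{\pi}_L^* L)^n \cdot X_\infty = \big( (\mybar{\pi}_L^* L)|_{X_\infty} \big)^n = (L^n)$. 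Combining, $(X_\infty)^{n+1} = (L^n)$, and therefore
\[
(-K_{C_p(X,L)})^{n+1} = (1+r)^{n+1}(X_\infty)^{n+1} = (1+r)^{n+1}(L^n),
\]
\[
(-K_{C_p(X,L)} - X_\infty)^{n+1} = r^{n+1}(X_\infty)^{n+1} = r^{n+1}(L^n).
\]

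The main obstacle is making the intersection-theoretic manipulations rigorous on the possibly singular variety $C_p(X,L)$ (and on $\mybar{Y}_L$), since $\bQ$-Cartier-ness is needed to intersect. Here $X_\infty$ is $\bQ$-Cartier and ample on $C_p(X,L)$ by Definition \ref{def:orb-div}, and $-K_{C_p(X,L)}$ is $\bQ$-Cartier by Proposition \ref{prop:orbcone-adjunction}(2), so all the intersection numbers in the statement are well-defined; the projection formula and adjunction-type restriction to the section $X_\infty$ then apply. An alternative, purely algebraic route that sidesteps birational bookkeeping is to compute the leading term of the Hilbert function: for $m$ sufficiently divisible, $h^0\big(C_p(X,L), \cO(m X_\infty)\big) = \sum_{j=0}^{m} h^0\big(X, \cO_X(\lfloor jL \rfloor)\big) = \tfrac{(L^n)}{(n+1)!} m^{n+1} + O(m^n)$, which gives $(X_\infty)^{n+1} = (L^n)$ directly; then multiply by $(1+r)^{n+1}$ and $r^{n+1}$ as above. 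I would present the projection-formula argument as the main proof and perhaps remark on the Hilbert-function computation as a cross-check.
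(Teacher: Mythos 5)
Your proposal is correct and follows essentially the same route as the paper: reduce to showing $(X_\infty^{n+1}) = (L^n)$ via the $\bQ$-linear equivalences $-K_{C_p(X,L)} \sim_{\bQ} (1+r)X_\infty$ and $-K_{C_p(X,L)} - X_\infty \sim_{\bQ} rX_\infty$, and then use $X_\infty \sim_{\bQ} (\mu_L)_*\mybar{\pi}_L^* L$ together with the disjointness of $X_0$ and $X_\infty$ to get $X_\infty|_{X_\infty} \sim_{\bQ} \pi_\infty^* L$. Your extra detail on the projection formula and the Hilbert-function cross-check are fine but not needed.
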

\begin{proof}
By the proof of Proposition \ref{prop:orbcone-adjunction}(2), we have $-K_{C_p(X,L)}\sim_{\bQ} (1+r) X_\infty$ and $-K_{C_p(X,L)}-X_\infty\sim_{\bQ} r X_\infty$. Thus it suffices to show $(X_\infty^{n+1}) = (L^n)$. Since $X_\infty = (t=0)$, we know that $X_\infty \sim_{\bQ} (\mu_L)_*\mybar{\pi}_L^* L$ which implies that $X_\infty|_{X_\infty}\sim_{\bQ} \pi_\infty^* L$. Thus we have $(X_\infty^{n+1}) = (L^n)$.
\end{proof}

\subsubsection{Surfaces with $\bG_m$-actions}

\begin{defn}\label{d:T-var-divisors}
Let $Y$ be a normal surface with an effective $\bG_m$-action. Let $E$ be a  $\bG_m$-invariant prime divisor on $Y$. We say $E$ is \emph{vertical} if it is the closure of a  $\bG_m$-orbit. We say $E$ is \emph{horizontal} if every point in $E$ is a $\bG_m$-fixed point.
\end{defn}

\begin{prop}\label{p:horizontaldiv}
Under the assumption of Definition \ref{d:T-var-divisors}, there exist at most two horizontal divisors on $Y$.
\end{prop}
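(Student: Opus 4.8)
The plan is to reduce the statement to an explicit computation of $\bG_m$-stable divisorial valuations on $\mathbb{P}^1$ over the function field of the quotient. First I would extract the structure forced by effectiveness: if $\bG_m$ had no one-dimensional orbit it would act trivially on the irreducible surface $Y$, and a nontrivial finite generic stabilizer would fix a dense open set and hence all of $Y$; so the generic orbit is isomorphic to $\bG_m$. Writing $K=\bK(Y)$ and $K_0=K^{\bG_m}$, this gives that $K_0$ is finitely generated over $\bK$ with $\mathrm{tr.deg}_{\bK}K_0=1$. Next I would produce a weight-one homogeneous coordinate: by Sumihiro's theorem choose a $\bG_m$-invariant dense affine open $\Spec A\subseteq Y$ with weight decomposition $A=\bigoplus_{n\in\bZ}A_n$; effectiveness forces $\gcd\{n:A_n\neq 0\}=1$, so $K$ contains a homogeneous element $f$ of weight $1$, and since any two homogeneous elements of the same weight have ratio in $K_0$, every homogeneous element of $K$ lies in $K_0 f^n$; as $K/K_0(f)$ is algebraic of transcendence degree $0$, one concludes $K=K_0(f)$ with $\bG_m$ acting by $t\cdot f=tf$ and trivially on $K_0$.

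The second step is to translate ``horizontal divisor'' into valuation-theoretic terms. A horizontal divisor $E$ is $\bG_m$-invariant, so $R:=\cO_{Y,\eta_E}$ (a DVR with fraction field $K$) is $\bG_m$-stable, and pointwise-fixedness of $E$ means $\bG_m$ acts trivially on the residue field $\bK(E)$. The key lemma is that $v_E:=\ord_E$ is then trivial on $K_0$: otherwise pick $g\in K_0$ with $v_E(g)=m>0$; after checking that a uniformizer $\varpi$ of $R$ can be chosen homogeneous of some weight $a$, and that $a\neq 0$ (if $a=0$ then trivial residue action together with weight-zero action on $\mathfrak{m}/\mathfrak{m}^2$ would force $\bG_m$ to act trivially on $R$, hence on $K$, hence on $Y$, contradicting effectiveness), the relation $g=u\varpi^m$ exhibits the unit $u\in R^\times$ as homogeneous of weight $-ma\neq 0$, contradicting triviality of the $\bG_m$-action on $\bK(E)$. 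Hence $v_E$ is a $\bG_m$-stable divisorial valuation of $K=K_0(f)$ trivial on $K_0$, i.e. a closed point of $\mathbb{P}^1_{K_0}$ whose local ring is graded for the grading $t\cdot f=tf$ of $K_0[f]$.

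The final step is the computation on $\mathbb{P}^1_{K_0}$: its closed points are the monic irreducible polynomials $p(f)=f^e+c_{e-1}f^{e-1}+\dots+c_0\in K_0[f]$ together with the point at infinity, and $\bG_m$-stability of the associated valuation ring forces the defining ideal to be homogeneous for $t\cdot f=tf$; a homogeneous monic irreducible polynomial in $f$ over the field $K_0$ must be $f$ itself, while the point at infinity is automatically homogeneous. So there are exactly two such valuations, $\ord_{\{f=0\}}$ and $\ord_{\{f=\infty\}}$, and since distinct prime divisors of $Y$ determine distinct divisorial valuations of $K$, the surface $Y$ carries at most two horizontal divisors.

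I expect the main technical obstacle to be the bookkeeping around the $\bG_m$-action on the non-finitely-generated local ring $R=\cO_{Y,\eta_E}$, namely justifying that a homogeneous uniformizer exists and that a trivial action on $R$ propagates to $Y$. I would circumvent this by working instead in a Sumihiro chart $\Spec A$ meeting $E$: there $E$ is cut out by a graded height-one prime $\mathfrak p$, the localization $A_{\mathfrak p}$ equals $R$, and the $\bG_m$-action on $A$ is rational, so lifting a generator from $\mathfrak p/\mathfrak p^2$ and clearing denominators make the homogeneity arguments entirely routine; the rest of the argument is formal.
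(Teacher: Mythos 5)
Your proof is correct, but it takes a genuinely different route from the paper. The paper disposes of this proposition in one line by citing Petersen--S\"u\ss{} \cite[Cor.~3.17]{PS11}: in the language of divisorial fans for $T$-varieties, a horizontal divisor is a ``prime divisor of type~2,'' these are indexed by the rays of the tail fan, and for a surface with a $\bG_m$-action the tail fan lives in a one-dimensional lattice and so has at most two rays. Your argument unpacks exactly this combinatorial fact into an elementary, self-contained computation: the identity $K=K_0(f)$ with $f$ a weight-one semiinvariant, the key lemma that pointwise-fixedness of $E$ forces $v_E$ to be trivial on $K_0$ (via the homogeneous uniformizer and the weight of the unit $u=g\varpi^{-m}$ landing in the residue field), and the observation that the only $\bG_m$-homogeneous closed points of $\bP^1_{K_0}$ are $f=0$ and $f=\infty$. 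The two rays of the tail fan are precisely your two fixed points of $\bP^1_{K_0}$, so the underlying combinatorics coincides. What the citation buys is brevity and a uniform placement within the structure theory of $T$-varieties; what your argument buys is independence from the machinery of polyhedral divisors, and it visibly generalizes verbatim to complexity-one torus actions in any dimension (an effective $\bG_m$-action on a normal variety with $\operatorname{tr.deg}_{\bK}K^{\bG_m}=\dim Y-1$ admits at most two pointwise-fixed prime divisors). The technical points you flag --- existence of a homogeneous uniformizer and propagating a trivial action from the associated graded back to $Y$ --- are handled correctly by retreating to a Sumihiro chart, where the graded height-one prime $\mathfrak{p}$ and the rationality of the action make both steps routine.
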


\begin{proof}
This follows from \cite[Cor. 3.17]{PS11} as the divisorial fan of $Y$ with the $\bG_m$-action is $1$-dimensional, hence there are at most two rays in the tail fan. Note that a horizontal divisor is called a prime divisor of type 2 in loc. cit.  
\end{proof}

\begin{defn}[\cite{Kol13}*{Def. 4.36}]
A \emph{standard $\bP^1$-link} is a pair $(Y, \Delta + D_1+D_2)$ where $Y$ is a normal variety, $D_1$ and $D_2$ are prime divisors on $Y$, together with a proper morphism $\pi: Y\to X$ to a normal variety $X$ such that the following hold.
\begin{enumerate}
    \item $K_Y + \Delta + D_1+D_2\sim_{\bQ,\pi} 0$;
    \item $(Y, \Delta+ D_1+D_2)$ is plt (in particular, $D_1$ and $D_2$ are disjoint);
    \item $\pi|_{D_i}: D_i \to X$ is an isomorphism for every $i\in \{1,2\}$;
    \item every reduced fiber $Y_x^{\red} = \red \pi^{-1}(x)$ is isomorphic to $\bP^1$.
\end{enumerate}
\end{defn}

\begin{prop}\label{p:Seifert-P^1link}
Let  $\pi:(Y, D_1+D_2)\to B$ be a standard $\bP^1$-link over a smooth projective curve $B$. Assume that $(Y,D_1+D_2)$ admits an effective $\bG_m$-action such that every fiber of $\pi$ is $\bG_m$-invariant. Then $Y$ is  $\bG_m$-equivariantly isomorphic to a compactified Seifert $\bG_m$-bundle over $B$.
\end{prop}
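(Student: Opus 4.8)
The plan is to identify $D_1,D_2$ with the two horizontal divisors of the $\bG_m$-action, to realize the complement $Y^{\circ}:=Y\setminus(D_1\cup D_2)$ as a Seifert $\bG_m$-bundle over $B$ via Koll\'ar's structure theorem, and then to argue that $Y$ is forced to be the canonical compactification of that bundle. First I would note that the $\bG_m$-action is nontrivial on the generic fiber of $\pi$: otherwise its fixed locus would contain a dense open subset of $Y$, hence all of $Y$, contradicting effectiveness. Since every fiber of $\pi$ is invariant, the induced action of $\bG_m$ on $B$ is trivial; as $\pi|_{D_i}\colon D_i\to B$ is an equivariant isomorphism, each $D_i$ is pointwise fixed, i.e.\ horizontal in the sense of Definition~\ref{d:T-var-divisors}. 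By Proposition~\ref{p:horizontaldiv} there are at most two horizontal divisors, so $D_1$ and $D_2$ are precisely the horizontal divisors; in particular $Y^{\circ}$ is $\bG_m$-invariant, and on each reduced fiber $\bP^1$ of $\pi$ it is the complement of the two $\bG_m$-fixed points.

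Next I would establish the Seifert bundle structure. Since $D_1+D_2$ is a $\bQ$-Cartier divisor of positive degree on every fiber of the ruling $\pi$, it is $\pi$-ample, hence $\pi^{\circ}\colon Y^{\circ}\to B$ is affine; it is moreover equivariant over the trivial $\bG_m$-action on $B$. For each $b\in B$, the reduced fiber $(Y^{\circ}_b)^{\red}$ is a copy of $\bG_m$ on which $\bG_m$ acts through a character $t\mapsto t^{m(b)}$, so it is equivariantly isomorphic to $\bG_m/\bmu_{m(b)}$; and $m(b)=1$ on a dense open subset of $B$, for otherwise some $\bmu_m$ with $m>1$ would act trivially on a dense open subset of $Y$, hence on $Y$. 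Thus $\pi^{\circ}$ is a Seifert $\bG_m$-bundle over $B$, and by Theorem~\ref{thm:Kollar-Seifert}(2) there is a $\bG_m$-equivariant isomorphism $\phi\colon Y^{\circ}\cong Y_L$ over $B$ for some $\bQ$-Cartier $\bQ$-divisor $L$ on $B$.

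The remaining point is to extend $\phi$ to the compactification. Composing $\phi$ with the open immersion $Y_L\hookrightarrow\mybar{Y}_L$ gives a $\bG_m$-equivariant birational map $f\colon Y\dashrightarrow\mybar{Y}_L$ over $B$ which is an isomorphism on $Y^{\circ}\cong Y_L$. Let $\Gamma$ be the normalization of its graph, with proper birational morphisms $p\colon\Gamma\to Y$ and $q\colon\Gamma\to\mybar{Y}_L$, both isomorphisms over $Y^{\circ}\cong Y_L$. The crucial claim is $\Exc(p)=\Exc(q)$: a $p$-exceptional curve is contracted to a point of $Y$, hence maps to a point of $B$, whereas the two components $X_0,X_\infty$ of $\mybar{Y}_L\setminus Y_L$ are sections of $\mybar{\pi}_L$ and thus dominate $B$, so a $p$-exceptional curve cannot lie over a component of $\mybar{Y}_L\setminus Y_L$ and must therefore be $q$-exceptional; the reverse inclusion is symmetric. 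Consequently $Y$ and $\mybar{Y}_L$ are both obtained by contracting the same curve $\Exc(p)\subset\Gamma$, and by uniqueness of contractions of curves on normal surfaces (a consequence of Zariski's connectedness theorem) there is an isomorphism $Y\cong\mybar{Y}_L$ compatible with $p$ and $q$. It restricts to $\phi$ on the dense invariant open $Y^{\circ}$, hence is $\bG_m$-equivariant; after possibly replacing $L$ by $-L$ and the $\bG_m$-action by its inverse so that $D_1,D_2$ match $X_0,X_\infty$, this shows $Y$ is a compactified Seifert $\bG_m$-bundle over $B$.

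The delicate step is the last one: a priori $f$ could perform a nontrivial birational modification over the finitely many (possibly singular) fibers where $m(b)>1$, and the argument above isolates this difficulty in the identity $\Exc(p)=\Exc(q)$ and in the uniqueness of surface contractions. An alternative route to the extension step is the description of complexity-one $\bG_m$-surfaces by divisorial fans on curves already used in the proof of Proposition~\ref{p:horizontaldiv} (cf.\ \cite{PS11}), identifying the divisorial fan of $Y$ with that of a compactified Seifert $\bG_m$-bundle. The first two steps are routine, amounting to the local structure of $\bG_m$-actions on $\bP^1$ together with Theorem~\ref{thm:Kollar-Seifert}.
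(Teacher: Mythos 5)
Your proof is correct and follows essentially the same route as the paper's: identify $D_1,D_2$ as the (at most two) horizontal divisors via Proposition~\ref{p:horizontaldiv}, conclude that each fiber of $\pi^\circ$ is a single orbit so that $Y^\circ$ is a Seifert $\bG_m$-bundle and Theorem~\ref{thm:Kollar-Seifert} applies, and then show the resulting birational map $Y\dashrightarrow\mybar{Y}_L$ is an isomorphism in codimension one --- hence an isomorphism of normal projective surfaces --- because the boundary divisors on both sides dominate $B$. Your graph/exceptional-locus bookkeeping is just a spelled-out version of the paper's final sentence, so there is nothing substantive to change.
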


\begin{proof}
Denote by $Y^\circ := Y\setminus(D_1\cup D_2)$ and $\pi^\circ:=\pi|_{Y^\circ}$. Since the reduced structure of every fiber of $\pi$ is isomorphic to $\bP^1$ and $D_1$ and $D_2$ are two disjoint sections of $\pi$, we know that the reduced structure of $(\pi^{\circ})^{-1}(b)$ is isomorphic to $\bA^1\setminus \{0\}$ for every $b\in B$. Since $D_1$ and $D_2$ are two horizontal divisors on $Y$, 
we know that every fiber of $\pi$ is vertical by Proposition \ref{p:horizontaldiv}. In particular,  $(\pi^{\circ})^{-1}(b)$ consists of a single $\bG_m$-orbit for every $b\in B$. This implies that $Y^\circ$ is a Seifert $\bG_m$-bundle by Definition \ref{def:seifert}. By Theorem \ref{thm:Kollar-Seifert}, there exists a $\bQ$-divisor $L$ on $B$ such that $Y^\circ$ is $\bG_m$-equivariantly isomorphic to $Y_L$ in terms of Definition \ref{def:compact-G_m-bundle}. Denote by $Y':=\mybar{Y}_{L}$ the compactified Seifert $\bG_m$-bundle over $B$  from Definition \ref{def:compact-G_m-bundle}. Denote by $Y'^\circ$ the open subset of $Y'$ that is isomorphic to $Y_L$.

Denote by $D_1'$ and $D_2'$ the two horizontal divisors (which are exactly the two sections) of $Y'$ such that $D_1'\cup D_2' = Y'\setminus Y'^\circ$. Thus we get a $\bG_m$-equivariant birational map $Y\dashrightarrow Y'$ that is isomorphic between $Y^\circ$ and $Y'^\circ$.  Since $D_1$ and $D_2$ both dominate $B$, they are not exceptional over $Y'$; similarly, $D_1'$ and $D_2'$ are not exceptional over $Y$. This implies that $Y\dashrightarrow Y'$ is an isomorphism in codimension $1$, which implies that it is an isomorphism as both $Y$ and $Y'$ are normal projective surfaces.
\end{proof}

\subsection{Good moduli spaces}

The definition of a good moduli space was introduced by Alper to generalize a good quotient in GIT to arbitrary algebraic stacks \cite{Alp13}. 
Below, we recall its definition and some results that will be used later on.
Throughout, we use conventions for algebraic stacks in \cite[\S2]{Alp13}.

\begin{defn}[\cite{Alp13}]\label{d:gm}
A quasi-compact morphism $\phi:\cM\to M$ from an algebraic stack to an algebraic space is a \emph{good moduli space} if 
\begin{enumerate}
	\item  The push-forward functor on quasi-coherent sheaves is exact.
	\item  The induced morphism $\cO_{M} \to \phi_{*} \cO_{\cM}$ is an isomorphism.
\end{enumerate}
\end{defn}

Good moduli space morphisms satisfy a number of natural properties.

\begin{prop}[{\cite[Main Prop.]{Alp13}}]\label{p:Alpergmprops}
If $\phi: \cM\to M$ is a good moduli space, then:
\begin{enumerate}
	\item  The morphism $\phi$ is surjective and universally closed.
	\item Two geometric points $x,x' \in \cM(\bK)$ are identified in $M$ if and only if their closures $\overline{ \{x\}}$ and $\overline{\{x'\}}$  in $\cX\times_{\bk} \bK$  intersect.
	\item Assuming $\cM$ is locally Noetherian,  $\phi$ is universal for  maps from $\cM$ to algebraic spaces.
\end{enumerate}
\end{prop}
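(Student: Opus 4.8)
The plan is to reduce everything to the local situation in which $M=\Spec A$ is affine and $\phi\colon\cM\to M$ is cohomologically affine, i.e.\ $\phi_*$ is exact on quasi-coherent sheaves; this is legitimate because the two defining conditions of a good moduli space are Zariski-local on $M$. Two structural inputs then do the heavy lifting. First, good moduli space morphisms are stable under arbitrary base change $M'\to M$. Second, for any closed substack $\cZ\hookrightarrow\cM$, exactness of $\phi_*$ shows that the ideal $\cI:=\ker(\cO_M\to\phi_*\cO_\cZ)$ cuts out a closed subscheme $Z\hookrightarrow M$ through which $\phi|_\cZ$ factors, and that $\cZ\to Z$ is again a good moduli space. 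I would establish these two facts first. The first is the technically delicate one: base change along a non-flat morphism requires a reduction to the finitely presented case together with a cohomological computation, and this is where I expect the real work to lie.

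Granting this, part (1) is short. For surjectivity, base change $\phi$ along $\Spec\kappa(y)\to M$ for $y\in M$; the fiber $\cM_y$ is cohomologically affine with $\Gamma(\cM_y,\cO_{\cM_y})=\kappa(y)\neq 0$, hence nonempty, so $y\in\im\phi$, and applying the same after any further base change gives universal surjectivity. For universal closedness it suffices, by base-change stability, to show that $\phi$ is closed: given a closed substack $\cZ\hookrightarrow\cM$ with associated closed subscheme $Z\hookrightarrow M$ as above, the morphism $\cZ\to Z$ is a surjective good moduli space morphism, so $\phi(|\cZ|)=|Z|$ is closed.

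For part (2), first base change along $\Spec\bK\to\Spec\bk$, which preserves the good moduli space property, so that $x_1,x_2$ become $\bK$-points of $\cM_\bK$, and then work Zariski-locally on $M_\bK$ so that it is affine. The key lemma is that for every $y\in M$ the fiber $\cM_y$ has a unique closed point: it is cohomologically affine, nonempty, and $\Gamma(\cM_y,\cO_{\cM_y})=\kappa(y)$ is a field, so it cannot contain two disjoint nonempty closed substacks $\cZ_1,\cZ_2$, since exactness of $\phi_*$ would then force a surjection of the field $\kappa(y)$ onto $\Gamma(\cZ_1,\cO)\times\Gamma(\cZ_2,\cO)$ with both factors nonzero. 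Now if $\phi(x_1)=\phi(x_2)=y$, then $\overline{\{x_1\}}$ and $\overline{\{x_2\}}$ each meet $\cM_y$ in a nonempty closed set, hence each contains the unique closed point of $\cM_y$, so the two closures intersect. Conversely, if $z\in\overline{\{x_1\}}\cap\overline{\{x_2\}}$ then $\phi(z)$ lies in $\overline{\{\phi(x_1)\}}\cap\overline{\{\phi(x_2)\}}$ by continuity; when $\cM$ is of finite type over $\bk$ the points $\phi(x_i)$ are closed, so $\phi(x_1)=\phi(z)=\phi(x_2)$, and in general one localizes $M$ at $\phi(z)$ and combines universal closedness with the unique-closed-point lemma for the fiber over $\phi(z)$ to reach the same conclusion.

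For part (3), let $\psi\colon\cM\to W$ be a morphism to an algebraic space with $\cM$ locally Noetherian. Choose an \'etale cover $\{W_i\to W\}$ by affine schemes and put $\cM_i:=\cM\times_W W_i$. Using that the formation of a good moduli space commutes with flat, in particular \'etale, base change on $M$ --- part of the first structural input --- the good moduli spaces $M_i$ of $\cM_i$ form an \'etale cover of $M$; each $\cM_i$ is cohomologically affine and maps to the affine scheme $W_i$, and such a map factors uniquely through $\Spec\Gamma(\cM_i,\cO_{\cM_i})=M_i$. Gluing these factorizations over $\{M_i\to M\}$ yields the desired morphism $M\to W$, and the same functoriality gives uniqueness. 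The single genuine obstacle in this program is the base-change stability of good moduli space morphisms --- equivalently, of cohomological affineness --- appearing in the first structural input; everything else is formal once that is available.
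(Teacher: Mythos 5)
The paper gives no proof of this proposition --- it is quoted verbatim from \cite{Alp13} --- so the comparison is with Alper's arguments. Your parts (1) and (2) reconstruct those arguments correctly: the reduction to the affine, cohomologically affine case; the two structural inputs (stability of good moduli spaces under arbitrary base change, and the closed-image lemma via $\ker(\cO_M\to\phi_*\cO_{\cZ})$); the nonvanishing of $\Gamma(\cM_y,\cO_{\cM_y})=\kappa(y)$ for surjectivity; and the unique-closed-point-in-fibers lemma for (2) are exactly the right ingredients, deployed as in \cite{Alp13}.

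Part (3), however, has a genuine gap. You set $\cM_i:=\cM\times_W W_i$ for an \'etale cover $\{W_i\to W\}$ and justify the existence of good moduli spaces $M_i$ forming an \'etale cover of $M$ by appealing to ``base change on $M$.'' But $W_i\to W$ is a base change on the \emph{target of $\psi$}, not on $M$: to exhibit $\cM_i$ as $\cM\times_M M'$ for some $M'\to M$ you would already need the factorization $M\to W$ that part (3) is supposed to produce, so the appeal to base-change stability is circular. The intermediate claims also fail on their own terms. The morphism $\cM_i\to\cM$ is only representable, \'etale and quasi-affine, and cohomological affineness is not inherited along quasi-affine morphisms (for the Zariski special case $\cM_i=\psi^{-1}(W_i)$ this is the familiar fact that an open substack of a cohomologically affine stack need not be cohomologically affine, cf.\ $\bA^2\setminus\{0\}\subset\bA^2$); and even when $\cM_i$ does admit a good moduli space, \'etaleness of $M_i\to M$ is Luna's fundamental lemma, which requires $\cM_i\to\cM$ to preserve closed points and stabilizers --- not automatic for a representable \'etale map. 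The route in \cite[Thm.~6.6]{Alp13} is different: for a scheme target one shows that the opens $V_i:=M\setminus\phi(\cM\setminus\psi^{-1}(W_i))$ cover $M$ and satisfy $\phi^{-1}(V_i)\subseteq\psi^{-1}(W_i)$ --- this saturation step is where local Noetherianness and the unique-closed-point lemma (which you prove in (2) but never use in (3)) actually enter --- and then applies the affine case to the saturated pieces $\phi^{-1}(V_i)\to V_i$; the algebraic-space case requires a further, non-formal reduction. As written, your part (3) does not go through.
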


\begin{prop}[{\cite[Rem. 6.2]{Alp13}}]\label{p:saturated}
Let $\phi:\sM\to M$ be a good moduli space  and $\cU\subset \cM$ be an open substack.
If $\cU$ is saturated (which means $\phi^{-1} (\phi (\cU)) = \cU$), then
\begin{enumerate}
\item  $\phi(\cU)\subset M$ is an open subspace and
\item  $\cU \to \phi(\cU)$ is a good moduli space morphism.
\end{enumerate} 
\end{prop}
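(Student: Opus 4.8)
The plan is to prove the two assertions separately. Part (1) is a purely topological consequence of the basic properties of $\phi$ recorded in Proposition~\ref{p:Alpergmprops}, and part (2) then follows by flat base change along the open immersion $\phi(\cU)\hookrightarrow M$.

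For (1), I would set $\cZ:=\cM\setminus\cU$, a closed substack. The saturation hypothesis $\phi^{-1}(\phi(\cU))=\cU$ says exactly that a point of $|\cM|$ lies in $|\cZ|$ if and only if its image in $|M|$ avoids $\phi(\cU)$; combined with the surjectivity of $\phi$ from Proposition~\ref{p:Alpergmprops}.1 and the elementary observation that $\phi^{-1}(y)\subset\cU$ whenever $y\in\phi(\cU)$, this yields $\phi(\cZ)=|M|\setminus\phi(\cU)$. Since $\phi$ is universally closed by Proposition~\ref{p:Alpergmprops}.1, it is closed, so $\phi(\cZ)$ is closed; hence $\phi(\cU)=|M|\setminus\phi(\cZ)$ is open, and as an open subset of the algebraic space $M$ it is an open subspace, which I denote $V$.

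For (2), saturation gives $\cU=\phi^{-1}(V)=\cM\times_M V$, so $\phi':=\phi|_{\cU}\colon\cU\to V$ is quasi-compact because quasi-compactness is stable under base change. Writing $j\colon V\hookrightarrow M$ and $j'\colon\cU\hookrightarrow\cM$ for the open (hence flat) immersions, flat base change for quasi-coherent cohomology supplies natural isomorphisms $j^{*}R^{i}\phi_{*}(-)\cong R^{i}\phi'_{*}((j')^{*}(-))$. For $i=0$ applied to $\cO_{\cM}$ this reads $\phi'_{*}\cO_{\cU}\cong j^{*}\phi_{*}\cO_{\cM}\cong j^{*}\cO_{M}=\cO_{V}$, so condition (2) of Definition~\ref{d:gm} holds. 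For the exactness required in condition (1) of Definition~\ref{d:gm}, I would use that a left exact functor such as $\phi'_{*}$ is exact on quasi-coherent sheaves precisely when $R^{1}\phi'_{*}$ vanishes identically; since every quasi-coherent sheaf on the open substack $\cU$ extends to a quasi-coherent sheaf on $\cM$ (the ambient stack being quasi-compact and quasi-separated), the vanishing $R^{1}\phi_{*}=0$ coming from exactness of $\phi_{*}$ transports through the base-change isomorphism to give $R^{1}\phi'_{*}=0$.

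The only genuinely technical inputs are the flat base change isomorphism for higher direct images on algebraic stacks and the extendability of quasi-coherent sheaves along $\cU\hookrightarrow\cM$; both are standard under the mild finiteness hypotheses available in our setting (locally Noetherian with affine diagonal). Alternatively, one may simply quote the stability of good moduli space morphisms under base change from \cite{Alp13}. Thus the only point requiring care is the verification of these base-change facts in the stacky context; the topological bookkeeping in (1) is routine.
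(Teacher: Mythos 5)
Your argument is correct, and it is worth noting that the paper offers no proof of its own here: Proposition~\ref{p:saturated} is quoted verbatim from \cite[Rem.~6.2]{Alp13}, so the only comparison available is with Alper's (equally terse) justification. Your part (1) is exactly that argument: saturation forces $\phi(\cM\setminus\cU)=|M|\setminus\phi(\cU)$, and closedness of $\phi$ does the rest. For part (2), the intended route is the one you relegate to an aside: saturation identifies $\cU$ with $\cM\times_M V$, and good moduli space morphisms are stable under arbitrary base change (\cite[Prop.~4.7(i)]{Alp13}), which finishes the proof in one line. Your primary argument via $R^1\phi'_*=0$ does work in the setting of this paper, but it quietly leans on two technical facts you should at least flag: that exactness of $\phi_*$ on quasi-coherent sheaves is equivalent to vanishing of the higher quasi-coherent pushforwards (a comparison of derived functors on the lisse-\'etale site), and that quasi-coherent sheaves extend from $\cU$ to $\cM$ (which requires $\cM$ locally Noetherian, or at least a qcqs reduction -- note the statement itself does not assume $\cM$ quasi-compact, though $\phi$ is quasi-compact by definition, so one can check surjectivity of $\phi'_*F\to\phi'_*F''$ over a quasi-compact open of $V$ and extend there). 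Since both of these are exactly the kind of bookkeeping the base-change citation is designed to avoid, I would make \cite[Prop.~4.7(i)]{Alp13} the main argument and drop the $R^1$ discussion.
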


In order to construct good moduli spaces, we will use the following existence result due to Alper, Halpern-Leistner, and Heinloth.
The terms S-completeness and $\Theta$-reductivity, which appear in the result, will be defined in Sections \ref{ss:defScom} and \ref{ss:deftheta}.

\begin{thm}[{\cite[Thm. A]{AHLH18}}]\label{t:AHLH}
Let $\cM$ be a finite type algebraic stack with affine diagonal. The following are equivalent:
\begin{enumerate}
	\item There exists a good moduli space morphism $\cM\to M$ with $M$ separated.
	\item  $\cM$ is S-complete and $\Theta$-reductive. 
\end{enumerate}
Moreover, when (1) holds, $M$ is proper if $\cM$ satisfies the existence part of the valuative criterion for properness.
\end{thm}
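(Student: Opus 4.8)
The plan is to prove the equivalence in two directions, the implication $(2)\Rightarrow(1)$ being the substantial one. For $(1)\Rightarrow(2)$: assuming $\phi\colon\cM\to M$ is a good moduli space with $M$ separated, I would use that each of the test stacks $\Theta_R$ and $\STR$ (for a DVR $R$) has $\Spec R$ as its good moduli space. Given a morphism from a punctured test stack to $\cM$, composing with $\phi$ yields a rational map to $M$ defined away from a point of codimension $\geq 2$ on a regular scheme; normality and separatedness of $M$ extend it over $\Spec R$, and the limit-detecting properties of good moduli space morphisms together with separatedness of $M$ then produce the required unique lift. This gives S-completeness and $\Theta$-reductivity.

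For $(2)\Rightarrow(1)$, the first step would be to upgrade the two valuative hypotheses to the statement that every closed point $x\in\cM$ has linearly reductive (equivalently, since we are in characteristic $0$, reductive) stabilizer $G_x$: S-completeness obstructs a non-reductive stabilizer, since a unipotent subgroup or a bad $\G_m$-degeneration would give a map out of $\STR\setminus\{0\}$ with no extension, while $\Theta$-reductivity rules out one-parameter subgroups of $G_x$ with no limit in $\cM$. With linear reductivity of the stabilizers of closed points in hand, I would invoke the \'etale-local structure theorem for algebraic stacks of Alper--Hall--Rydh: around such a point $\cM$ admits an \'etale, stabilizer-preserving chart $[\Spec A/G_x]$, which has the good moduli space $\Spec(A^{G_x})$. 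The main work is then to glue these local good moduli spaces into a single separated algebraic space $M$: I would need to verify that for a pair of charts the induced maps of good moduli spaces are \'etale and assemble into an \'etale equivalence relation, equivalently that the relevant open substacks are \emph{saturated} with respect to the local good moduli space morphisms --- and this saturation is precisely what S-completeness and $\Theta$-reductivity provide, since they control which closed points are linked by degenerations. Separatedness of the resulting $M$ then follows again from S-completeness.

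For the properness addendum, recall that a good moduli space morphism $\phi\colon\cM\to M$ is always surjective and universally closed, and here $M$ is of finite type and separated. Hence $M$ is proper if and only if it satisfies the existence part of the valuative criterion: given a DVR $R$ with fraction field $K$, a point $\Spec K\to M$ lifts, after a harmless finite extension of $K$, to $\Spec K\to\cM$, which extends over $\Spec R$ by the hypothesis that $\cM$ satisfies the existence part of the valuative criterion for properness; composing with $\phi$ gives the desired extension of $\Spec K\to M$, and the converse implication is immediate from universal closedness of $\phi$. I expect the gluing step --- establishing the saturation property that lets the local good moduli spaces patch --- to be the main obstacle, with the reduction to linearly reductive stabilizers the key conceptual input that makes the \'etale-local structure theorem applicable.
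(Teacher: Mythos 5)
This theorem is not proved in the paper at all: it is quoted verbatim from \cite[Thm.\ A]{AHLH18} and used as a black box (together with Remark \ref{r:AHLHeftDVRs}), so there is no in-paper proof to compare against. Measured against the actual argument in \cite{AHLH18}, your sketch of $(2)\Rightarrow(1)$ captures the correct architecture: S-completeness forces closed points to have reductive (hence, in characteristic $0$, linearly reductive) stabilizers; the Alper--Hall--Rydh local structure theorem then supplies \'etale charts $[\Spec A/G_x]$ with good moduli spaces $\Spec A^{G_x}$; and the substance of the proof is showing these glue. But that gluing step, which you correctly flag as ``the main work,'' is where essentially all of the content lives and where your sketch stops: one must show the charts can be shrunk to be \emph{strongly \'etale} --- sending closed points to closed points and stabilizer-preserving at closed points --- so that Luna's fundamental lemma makes the induced maps of the local good moduli spaces \'etale and assembles them into an \'etale equivalence relation. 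It is precisely in verifying these two refinement conditions that $\Theta$-reductivity and S-completeness are each used (via the local criteria of \cite[Thm.\ 4.12]{AHLH18} in their numbering), and ``saturation'' alone does not name the mechanism. Your properness addendum is correct and is the standard argument.

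Two smaller inaccuracies. In $(1)\Rightarrow(2)$ you invoke normality of $M$ to extend a rational map across a codimension-$2$ point, but $M$ is only assumed separated, not normal; the actual argument shows directly that a good moduli space \emph{morphism} is $\Theta$-reductive and S-complete (a coherent-sheaf/Langton-type extension argument on $\Theta_R$ and $\STR$ using exactness of pushforward), and then combines this with the fact that a separated algebraic space satisfies both valuative criteria. Second, your division of labor between the two hypotheses at the level of stabilizers is slightly off: it is S-completeness (with affine diagonal) that yields reductivity of stabilizers of closed points; $\Theta$-reductivity is not about one-parameter subgroups of $G_x$ lacking limits but about extending maps from $\Theta_K\cup\Spec R$ over the closed point of $\Theta_R$, and its role is in the chart-refinement step above.
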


\begin{rem}\label{r:AHLHeftDVRs}
In the above theorem, it suffices to check S-completeness, $\Theta$-reductivity, and the existence part of the valuative criterion with respective to DVRs that are essentially of finite type over $\bk$ \cite[Thm. 5.4 \& Lem. A.11]{AHLH18}. 
\end{rem}

\section{Moduli stack}\label{s:stack}
In this section, we define a family of boundary polarized CY pairs over an arbitrary base scheme.
Using the definition, we introduce a moduli stack parametrizing families of boundary polarized CY pairs  and prove it is a locally finite type algebraic stack with affine diagonal.

\medskip

Throughout this section, we fix the following numerical invariants: an index $N\in \bZ_{>0}$, a coefficient set 
${\bf r}=(r,s) \in \bQ_{>0}$ satisfying that $N {\bf r} \in \bZ^2$, and a function $\chi:\bN\to \bZ$.

\subsection{Families of boundary polarized CY pairs}
We first define a family of boundary polarized CY pairs over an arbitrary Noetherian scheme.

\begin{defn}\label{d:familyNoetherian}
A \emph{family of boundary polarized CY pairs}
$(X,\Delta+D)\to B$ over a Noetherian scheme $B$ and with coefficients ${\bf r}$ and index dividing $N$ consists of 
\begin{enumerate}
\item a flat projective morphism $X\to B$, and 
\item relative K-flat Mumford divisors $\Delta^{\Div}$ and $D^{\Div}$,
where
$\Delta:=r\Delta^{\Div}$ and $D:=sD^{\Div}$
\end{enumerate}
satisfying the following conditions\begin{enumerate}
\item[(3)] $(X_b,\Delta_b+ D_b)$ is a boundary polarized CY pair for each $b\in B$,
\item[(4)] $\omega_{X/B}^{[N]}(N(\Delta + D))\cong_B \cO_{X}$,
\item[(5)] $\omega_{X/B}^{[m]}(m\Delta + n D^{\rm Div})$ commutes with base change for all $m, n\in \bZ$ with  $mr$ integral.
\end{enumerate}

\end{defn}

\begin{rem}
We now explain some of the terminology in the above definition.
\begin{enumerate}

\item[(i)] By writing $\Delta$ and $D$ as rational multiples of relative Mumford divisors in (2), we are viewing $(X,\Delta+D)\to B$ as a family of \emph{marked} pairs. See \cite[Sec. 8.1]{KolNewBook} for a discussion on markings and why they are necessary when defining a moduli functor.

\item[(ii)] 
The term K-flatness is defined in \cite[Sec. 7.1]{KolNewBook} and is needed to have a moduli theory for divisors over non-reduced schemes.

\item[(iii)]
The notation $\cong_B$ means 
$\omega_{X/B}^{[N]}(N(\Delta + D))\cong \cO_{X} \otimes L_B$ 
for some line bundle $L_B$ pulled back from $B$.
\item[(iv)] The sheaves  in (4) and (5) are defined as follows. Let $U\subset X$ be the largest open subset where the fibers of $U\to B$ are either smooth or nodal  and   $D^{\Div}_U$ and $\Delta^{\Div}_U$ are relative Cartier divisors. 
Note that ${\rm codim}_{X_b}(X_b\setminus U_b)\geq 2$ for each $b\in B$. 
We set 
\[
\omega_{X/B}^{[m]}(m\Delta + n D^{\Div}):=i_* \big( \omega_{U/B}^{\otimes m}(m\Delta_U + nD_U^\Div)\big)
,\]
where $i:U\hookrightarrow X$ is the inclusion.	
The sheaves are reflexive by \cite[Cor. 3.7]{HK}.

\item[(v)] 
Condition (5) means that for any morphism of schemes $g: B'\to B$, the natural map 
\[
g'^* \omega_{X/B}^{[m]}(m\Delta + nD^{\Div})  \to \omega_{X'/B'}^{[m]}(m\Delta_{B'} + nD^{\Div}_{B'})
\]
is an isomorphism, where $X':= X\times_{B}B'$ and $g':X'\to X$ the first projection.
This is often referred to as \emph{Koll\'ar's condition} in the literature; see \cite[Ch. 9]{KolNewBook}.

\item[(vi)] Conditions (3) and (5) imply $\omega_{X/B}^{[-m]}(-m\Delta)$ and  $\mathcal{O}_X(mD)$ are relatively ample line bundles for any sufficiently divisible positive integer $m$.

\item[(vii)]  Condition (5) with $m=0$ and $n = 1$ implies that $D^\Div$ is flat with pure dimensional scheme theoretic fibers by \cite[4.19 and 4.33]{KolNewBook}.

\end{enumerate}
\end{rem}

It is clear that, over a reduced Noetherian scheme, Definition  \ref{d:familyNoetherian} implies Definition \ref{d:Familybasenormal}. 
The following proposition shows the notions are in fact equivalent in this setting.

\begin{lem}\label{lem:flatdiv}
If $(X,\Delta+D)\to B$ is a family of boundary polarized CY pairs over a reduced base scheme $B$ as defined in Definition \ref{d:Familybasenormal} such that
\begin{enumerate}
 \item $N(K_{X/B}+\Delta+D)\sim 0$ and
 \item $\Delta:=r \Delta^{\Div}$ and  $D:= s D^{\rm Div}$, where $\Delta^{\Div}$ and $D^{\Div}$  are  relative Mumford divisors,
\end{enumerate}
then the family satisfies Definition \ref{d:familyNoetherian}.
\end{lem}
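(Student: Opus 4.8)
The plan is to reduce the statement to the fact that, over a reduced base, the various reflexive sheaves and divisorial objects behave well under base change, and then to verify the conditions (3)--(5) of Definition \ref{d:familyNoetherian} one at a time. First I would observe that conditions (1) and (2) of Definition \ref{d:Familybasenormal} together with hypotheses (1) and (2) of the present lemma immediately give conditions (1), (2), and (3) of Definition \ref{d:familyNoetherian}: the morphism $X\to B$ is flat projective by assumption, and the fibers are boundary polarized CY pairs by definition of a family of slc pairs. The only genuine content is that the divisors $\Delta^{\Div}$ and $D^{\Div}$ are \emph{K-flat} (not merely relative Mumford), that condition (4) holds, and that the reflexive powers in (5) commute with arbitrary base change.

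**Key steps.** The argument proceeds as follows. \emph{Step 1: K-flatness.} Since $B$ is reduced, K-flatness of a relative Mumford divisor is equivalent to the divisor being flat over $B$ with reduced fibers in the sense compatible with \cite[Sec. 7.1]{KolNewBook}; here I would invoke the fact (from the theory in \cite[Ch. 7]{KolNewBook}) that over a reduced base a relative Mumford divisor whose associated cycle behaves well is automatically K-flat, or else pass to the flattening stratification and use reducedness to conclude it is already all of $B$. \emph{Step 2: condition (4).} From hypothesis (1), $N(K_{X/B}+\Delta+D)\sim 0$, so $\omega_{X/B}^{[N]}(N(\Delta+D))$ is a line bundle that is trivial on each fiber; since $B$ is reduced (hence the locus where a line bundle is fiberwise trivial is handled by cohomology-and-base-change / seesaw), it is the pullback of a line bundle from $B$, which is exactly $\cong_B \cO_X$. \emph{Step 3: Koll\'ar's condition (5).} This is the substantive point. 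One shows $\omega_{X/B}^{[m]}(m\Delta+nD^{\Div})$ commutes with base change for all relevant $m,n$. The standard approach, following \cite[Ch. 9]{KolNewBook}, is: over a reduced base, a flat family with $S_2$ fibers of slc (hence normal-crossing in codimension one) pairs has the property that these reflexive sheaves are flat over $B$ and commute with base change, because one can check this after restricting to the big open set $U$ where everything is a line bundle and the fibers are smooth or nodal, and then extend by $S_2$-ness of the fibers and \cite[Cor. 3.7]{HK}. The key mechanism is that taking the double dual commutes with restriction to fibers precisely when the base is reduced and the fibers satisfy Serre's $S_2$ condition off a codimension-two locus — this is \cite[Thm. 9.??]{KolNewBook}-style reasoning, essentially Koll\'ar's original hull-and-husk argument.

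**Main obstacle.** The hard part will be Step 3, verifying Koll\'ar's condition over the reduced (but possibly non-normal, possibly reducible) base, for \emph{all} pairs $(m,n)$ simultaneously rather than just the one power appearing in the $\cong_B$ relation. The subtlety is that condition (4) only controls the single sheaf $\omega^{[N]}(N(\Delta+D))$, whereas (5) demands base-change compatibility for the whole graded family of reflexive sheaves; bridging this gap requires the general machinery of \cite[Ch. 9]{KolNewBook} on Koll\'ar families and divisorial sheaves, and care that the coefficient constraint $N\mathbf{r}\in\bZ^2$ ensures the relevant indices $mr$ being integral interact correctly with the reflexive powers. I expect the proof to essentially cite \cite[Thm. 9.42 and surrounding results]{KolNewBook} (already invoked in Lemma \ref{l:Ncompspecialize}) to reduce everything to the DVR or spectrum-of-a-field case, where it follows from slc-ness of the fibers, and then to spread out over the reduced base. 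A secondary, more routine obstacle is confirming that $D^{\Div}$ being a relative Mumford divisor plus flatness (from Remark (vii)-type reasoning) upgrades to K-flatness — but over a reduced base this is bookkeeping rather than a real difficulty.
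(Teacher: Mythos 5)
Your reduction of the lemma to three checkpoints (K-flatness of $\Delta^{\Div}$ and $D^{\Div}$, condition (4), condition (5)) matches the paper's, and your Step 2 is fine though over-engineered: hypothesis (1) gives $\omega_{X/B}^{[N]}(N(\Delta+D))\cong\cO_X$ outright, so no seesaw argument is needed. The genuine gap is in Step 3. You assert that for a flat family of slc pairs with $S_2$ fibers over a reduced base the sheaves $\omega_{X/B}^{[m]}(m\Delta+nD^{\Div})$ automatically commute with base change, ``because one can check this after restricting to the big open set $U$ \dots and then extend by $S_2$-ness of the fibers,'' the key mechanism being that ``taking the double dual commutes with restriction to fibers precisely when the base is reduced and the fibers satisfy Serre's $S_2$ condition off a codimension-two locus.'' That principle is false, and it is exactly the failure mode Koll\'ar's condition exists to exclude: there are flat projective families of normal (even klt) surfaces over a smooth curve --- e.g.\ a one-parameter deformation of the cone over the rational normal quartic taken inside its non-$\bQ$-Gorenstein smoothing component --- for which $\omega_{X/B}^{[2]}\big|_{X_0}\not\cong\omega_{X_0}^{[2]}$. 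Reducedness of the base plus $S_2$ fibers is not sufficient, and your fallback of reducing to a DVR ``where it follows from slc-ness of the fibers'' fails for the same reason.

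What actually makes the lemma work, and what your proposal never extracts from the hypotheses, is that the relevant divisors are $\bQ$-Cartier \emph{on the total space}: Definition \ref{d:Familybasenormal} gives that $K_{X/B}+\Delta+D$ and $D$ are $\bQ$-Cartier, hence so are $K_{X/B}+\Delta\sim_{\bQ,B}-D$ and $D^{\Div}=s^{-1}D$, hence so is $m(K_{X/B}+\Delta)+nD^{\Div}$ for every admissible pair $(m,n)$ (note that the case $m=0$, $n=1$ already requires the $\bQ$-Cartierness of $D^{\Div}$ alone, which no condition on $K_{X/B}+\Delta+D$ supplies). For $\bQ$-Cartier relative Mumford divisors over a reduced base, the divisorial sheaves do commute with arbitrary base change and the divisors are K-flat; this combination of reducedness and $\bQ$-Cartierness is precisely the input to the results the paper cites, namely \cite[4.19, 4.33, 9.17]{KolNewBook} for condition (5) and \cite[7.4.2]{KolNewBook} for K-flatness. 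In the counterexample above, $K_{X/B}$ is $\bQ$-Cartier on every fiber but not on the total space, which is exactly why base change fails there. So your outline identifies the right checklist but supplies the wrong reason for its hardest item; the argument only closes once the $\bQ$-Cartier hypotheses are made explicit and put at the center of Steps 1 and 3.
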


\begin{proof}
It suffices to show that $\Delta^{\rm Div}$ and $D^{\rm Div}$  are K-flat and  	
Definition \ref{d:familyNoetherian}.5 is satisfied.
Since $B$ is reduced and the divisors $K_{X/B}+\Delta$ and $D^{\rm div}$ are $\bQ$-Cartier, 
the statements follow from  \cite[7.4.2]{KolNewBook} and
\cite[4.19, 4.33, 9.17]{KolNewBook}, respectively.
\end{proof}

In Definition \ref{d:familyNoetherian}, we assumed $B$ is  a Noetherian scheme, since the definition of K-flatness and its properties are only developed in \cite{Kol13} for families over Noetherian schemes. 
To define a family over an arbitrary scheme, we bootstrap from the Noetherian case.

\begin{defn}\label{d:nonNoeth}
A  \emph{family of boundary polarized CY pairs}
$(X,\Delta+D)\to B$  over a scheme $B$ and with coefficients ${\bf r}$ and index dividing $N$ consists of 
\begin{enumerate}
\item a flat projective morphism of schemes $X\to B$, and 
\item relative Mumford divisors $\Delta^{\Div}$ and $D^{\Div}$, where $\Delta:= r\Delta^{\Div}$ and $D:= s D^{\Div}$
\end{enumerate}
satisfying the condition 
\begin{enumerate}
\item[(3)] for each affine open set $ U\subset X$, there exists a Noetherian subring $A^{\rm N}\subset A:= \cO_X(U)$ such that $(X,\Delta+D)_U \to U$ is the pullback of a family  
$(X^{\rm N},\Delta^{\rm N}+D^{\rm N})\to% U^{\rm N}
U^{\rm N}:= \Spec(A^{\rm N})$ 
satisfying Definition \ref{d:familyNoetherian}.
\end{enumerate}
\end{defn}

We now collect a number of results about the behavior of various invariants of boundary polarized CY pairs in families. 

\begin{prop}\label{p:anticanonicalsheaf}
Let $(X,\Delta+D)\to B$ be a family of boundary polarized CY pairs with coefficients ${\bf r}$ and index dividing $N$. Fix $m \in \bN$ with $m r\in \bZ$. The following hold.
\begin{enumerate}
\item $H^i(X_b,\omega_{X_b}^{[-m]}(-m\Delta))=0$ for all $b\in B$, and $i\geq 1$.
\item The sheaf $f_* \omega_{X/B}^{[-m]}(-m\Delta)$ is locally free and commutes with  base change.
\item The  functions  $B\to \bZ$ sending
\[
b\mapsto H^0(X,\omega_{X_b}^{[-m]}(-m \Delta_b)),
\quad 
b\mapsto \deg(\Delta_b^{\Div}),
\quad \text{ and } \quad
b\mapsto \deg(D_b^{\Div})
\]
are locally constant, 
where $\deg(G_b):=G_b \cdot (-K_{X_b}- \Delta_b)^{d-1}$ for a divisor $G_b$ on $X_b$.
\item The subset of $B$ is open
\[
U:=\{ b\in B\, \vert \, \omega_{X_b}^{[-m]}(-m\Delta) \text{ is a very ample line bundle} \}
.\]

\end{enumerate}
\end{prop}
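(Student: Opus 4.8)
\textbf{Proof proposal for Proposition \ref{p:anticanonicalsheaf}.}

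The plan is to reduce everything to two classical inputs: Kodaira-type vanishing on slc Fano pairs (for part (1)) and cohomology-and-base-change together with the flatness of the Mumford divisors (for parts (2) and (3)); part (4) will then be formal from (2)--(3) plus openness of very ampleness.

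First I would establish the fiberwise vanishing in (1). Fix $b \in B$ and set $Y := X_b$, $\Theta := \Delta_b$. Since $(Y,\Theta+D_b)$ is a boundary polarized CY pair, $(Y,\Theta)$ is log Fano, so $-K_Y - \Theta$ is ample $\bQ$-Cartier and $(Y,\Theta)$ is slc. Writing $\cL := \omega_Y^{[-m]}(-m\Theta)$, we have $\cL \cong \omega_Y^{[-m]}(m(-K_Y-\Theta)\cdot\text{something})$; more precisely $\cL$ is the reflexive sheaf associated to the $\bQ$-Cartier divisor $m(-K_Y-\Theta)$ when $mr \in \bZ$ and $m(K_Y+\Theta)$ is Cartier, and in general after passing to the $N$-th reflexive power it becomes a genuine ample line bundle. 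The vanishing $H^i(Y,\cL) = 0$ for $i \geq 1$ follows from Kodaira vanishing for slc pairs (for instance \cite[Cor. 6.6 or the slc Kodaira vanishing in Kol13]{Kol13}, or one reduces to the normalization via the conductor sequence and applies lc Kodaira vanishing there, bootstrapping over the gluing as in Koll\'ar's book). The one technical point is that $m(-K_Y-\Theta)$ need not be Cartier for a single $m$ with $mr \in \bZ$, but $N$ times it is $\cO$-trivial by Definition \ref{d:familyNoetherian}(4), and $\omega_Y^{[-m]}(-m\Theta)$ is nef and big; so one invokes the version of Kodaira vanishing for nef and big $\bQ$-Cartier divisors on slc pairs. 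I expect this to be the main obstacle: making sure the cited vanishing theorem applies verbatim to the (possibly non-Cartier, possibly reducible and non-normal) sheaf $\omega_{X_b}^{[-m]}(-m\Delta_b)$ without an integrality hypothesis. The clean route is: the question is local-to-global over the fiber, so one may work up on the normalization $\bar Y$ where $(\bar Y, \bar G + \bar\Theta)$ is lc, apply Fujino's lc Kodaira vanishing there, and then descend using the conductor exact sequence together with the analogous statement for the (lower-dimensional) gluing locus and induction on dimension.

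Given (1), part (2) is immediate from cohomology and base change: Definition \ref{d:familyNoetherian}(5) says $\omega_{X/B}^{[-m]}(-m\Delta)$ commutes with base change (here $n = 0$), so its restriction to each fiber is $\omega_{X_b}^{[-m]}(-m\Delta_b)$, and by (1) the higher direct images vanish fiberwise; hence by Grauert's theorem $f_*\omega_{X/B}^{[-m]}(-m\Delta)$ is locally free and its formation commutes with arbitrary base change. For part (3), local constancy of $b \mapsto h^0(X_b, \omega_{X_b}^{[-m]}(-m\Delta_b))$ is then just the local freeness from (2) together with $\chi = h^0$ (all higher cohomology vanishing). Local constancy of the degree functions $b \mapsto \deg(\Delta_b^{\Div})$ and $b \mapsto \deg(D_b^{\Div})$ follows because these are intersection numbers $\Delta_b^{\Div}\cdot(-K_{X_b}-\Delta_b)^{d-1}$ and $D_b^{\Div}\cdot(-K_{X_b}-\Delta_b)^{d-1}$, which can be computed from Hilbert polynomials: $\Delta^{\Div}$ and $D^{\Div}$ are flat over $B$ with Cohen--Macaulay (pure-dimensional) fibers by Remark (vii) after Definition \ref{d:familyNoetherian}, and $\omega_{X/B}^{[-Nm]}(-Nm\Delta)$ is a relatively ample line bundle by Remark (vi), so the Hilbert polynomials of $\cO_{\Delta^{\Div}_b}$ and $\cO_{D^{\Div}_b}$ with respect to this polarization are locally constant in $b$; extracting the appropriate leading coefficients gives the degrees (one also uses that $(-K_{X_b}-\Delta_b)^d$ is locally constant by the same Hilbert polynomial argument applied to $\cO_{X_b}$).

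Finally, for part (4): by (2) the sheaf $\cE := f_*\omega_{X/B}^{[-Nm']}(-Nm'\Delta)$ is locally free and commutes with base change for suitable $m'$ making things Cartier; over the locus where $\omega_{X_b}^{[-m]}(-m\Delta)$ is already a line bundle (an open condition, since being a line bundle — i.e. the reflexive sheaf being invertible — is open in families of $S_2$ sheaves), the evaluation map $f^*\cE \to \omega_{X/B}^{[-Nm']}(-Nm'\Delta)$ realizes a relative morphism to projective space, and very ampleness of the fiber restriction is an open condition on the base by \cite[e.g. III.9]{Hartshorne} or the general openness of the locus where a relatively ample invertible sheaf is relatively very ample. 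Intersecting these open loci gives that $U$ is open. I would remark that the only genuinely nontrivial ingredient is the slc Kodaira vanishing underlying (1); everything else is standard cohomology-and-base-change and Hilbert polynomial bookkeeping.
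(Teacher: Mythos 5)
Your proposal is correct and follows essentially the same route as the paper: fiberwise vanishing for (1), cohomology-and-base-change for (2), Hilbert-polynomial/intersection-number constancy for (3), and openness of the line-bundle locus plus openness of very ampleness for (4). The only difference is that where you sketch proofs (the normalization/conductor-sequence reduction for slc vanishing, and the Hilbert-polynomial bookkeeping for the degrees), the paper simply cites \cite[Thm.~11.34]{KolNewBook} and \cite[Thm.~4.3.5]{KolNewBook} respectively.
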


\begin{proof}

By Definition \ref{d:familyNoetherian}.5, 
\[
\omega_{X/B}^{[-m]}(-mr\Delta)\vert_{X_b} \cong \omega_{X_b}^{[-m]}(-m\Delta),
\] which will be used throughout.
Statement (1) follows  from	\cite[Thm. 11.34]{KolNewBook}.
Statement (2)  follows from (1) and the extension of  \cite[Thm. 12.11]{Har77} to finitely presented morphisms using standard Noetherian approximation arguments. 
Statement (3) follows from (2) and \cite[Thm. 4.3.5]{KolNewBook}.
Next  note that  
$
V:= \{ b\in B\, \vert \,  \omega_{X/B}^{[-m]}(-m\Delta)\vert_{X_b} \text{ is a line bundle}\}
$ 
is open by Nakayama's Lemma. 
Since (2) holds, 
$U$ consists of $b\in V$ such that the restriction of
\[
X_{V} \dashrightarrow \bP_{V}({f}_*\omega_{X/B}^{[-m]}(-m\Delta)^* \vert_V)
\]
is a closed embedding. Since the latter set is open, (4) holds.	
\end{proof}

\subsection{Moduli stack}

We now define a moduli stack of boundary polarized CY pairs.  

\begin{defn}\label{defn:modulistack}
Let $\cM(\chi,N,{\bf r})$ denote the category fibered in groupoids over ${\rm Sch}_{\bk}$ where:
\begin{itemize}

\item The objects are families of boundary polarized CY pairs 
\[
(X,\Delta+D)\to B
\]
with coefficients ${\bf r}$, index dividing $N$, and 
$\chi(X_s,\omega_{X_b}^{[-m]}(-m\Delta))=\chi(m)$ for each $b\in B$ and  $m \in \bN$ such that $mr \in \bN$.

\item the morphisms $[(X',\Delta'+D')\to B']\to [(X,\Delta+D)\to B]$ consist of morphisms of schemes $X'\to X$ and $B'\to B$ such that the natural map $X'\to X\times_{B}B'$ is an isomorphism and $\Delta'^{\Div}$ and $D'^{\Div}$ are the divisorial pullbacks of $\Delta^{\Div}$ and $D^{\Div}$.
\end{itemize}
\end{defn}

The main result of this section is the following statement. 

\begin{thm}\label{t:stack}
 $\cM(\chi,N,{\bf r})$ is a locally  finite type algebraic stack with affine diagonal.
\end{thm}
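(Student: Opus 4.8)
\textbf{Proof plan for Theorem \ref{t:stack}.}
The strategy is to realize $\cM(\chi,N,{\bf r})$ as a locally closed substack of a suitable Hilbert-scheme–type stack, following the template used to construct the moduli stack of KSBA stable pairs in \cite[Ch.~8--9]{KolNewBook}, and then verify algebraicity, local finite type, and affineness of the diagonal separately. First I would rigidify the moduli problem by embedding: Proposition \ref{p:anticanonicalsheaf} shows that for suitable divisible $m$, $\omega_{X_b}^{[-m]}(-m\Delta_b)$ is very ample on each fiber with fixed Hilbert polynomial, and that $f_*\omega_{X/B}^{[-m]}(-m\Delta)$ is locally free of fixed rank commuting with base change; hence, after passing to the frame bundle of this pushforward, families of boundary polarized CY pairs with a chosen basis embed $X \hookrightarrow \bP^M_B$ with fixed Hilbert polynomial. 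This presents $\cM(\chi,N,{\bf r})$ as a quotient stack $[\cH/\mathrm{PGL}_{M+1}]$ (or $[\cH/\mathrm{GL}]$ before rigidifying), where $\cH$ is a locally closed subscheme of a product of Hilbert and relative-divisor (Chow/Cayley--Chow or Hilbert-of-divisors) schemes parametrizing the triple $(X, \Delta^{\Div}, D^{\Div})$ inside $\bP^M$.

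Next I would cut out the locus where all the defining conditions of Definition \ref{d:familyNoetherian} hold. This is the technical heart and proceeds condition by condition, all of which define locally closed or open subfunctors by results in \cite{KolNewBook}: (a) the locus where $X \to B$ has $S_2$ fibers of pure dimension and $\Delta^{\Div}, D^{\Div}$ are relative Mumford divisors is locally closed (\cite[\S 4]{KolNewBook}); (b) K-flatness of the two divisors defines a locally closed condition by \cite[\S 7.1]{KolNewBook}; (c) the condition that $\omega^{[m]}_{X/B}(m\Delta+nD^{\Div})$ commutes with base change for the relevant $(m,n)$—Kollár's condition—is representable by a locally closed immersion by \cite[\S 9, esp.\ Thm.~9.?]{KolNewBook} (the representability of the hull-and-husk / $K$-flatness functor), and it suffices to impose it for finitely many $(m,n)$ generating the relevant semigroup since the sheaves in question are then determined; (d) the condition $\omega_{X/B}^{[N]}(N(\Delta+D)) \cong_B \cO_X$ is locally closed—one first restricts to the locus where $\omega^{[N]}_{X/B}(N(\Delta+D))$ is a line bundle (open, by Nakayama as in the proof of Proposition \ref{p:anticanonicalsheaf}), and then being fiberwise trivial up to a twist from the base is locally closed by cohomology-and-base-change / rigidity arguments; (e) the fiberwise conditions—$(X_b,\Delta_b+D_b)$ slc, $D_b$ ample—are respectively locally closed and open: ampleness of $D_b$ is open, and the slc condition is locally closed by \cite[Thm.~4.??]{KolNewBook} (local closedness of the slc locus), using Lemma \ref{l:slcadj} to reduce to the lc condition upstairs after normalization and adding a pullback of an snc divisor. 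Intersecting these loci gives a locally closed $\cH \subset \mathrm{Hilb} \times (\text{divisor parameter scheme})$, which is of finite type over $\bk$; then $[\cH/\mathrm{GL}_{M+1}]$ is an algebraic stack of finite type. Since $m$ was fixed, taking the union over all admissible $m$ (equivalently, dropping the choice of a single $m$) exhibits $\cM(\chi,N,{\bf r})$ as an increasing union of finite-type open substacks, hence a \emph{locally} finite type algebraic stack.

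Finally I would check the diagonal is affine. The diagonal of $\cM(\chi,N,{\bf r})$ is the $\mathrm{Isom}$-functor: given two families over $B$, $\underline{\mathrm{Isom}}_B\big((X,\Delta+D),(X',\Delta'+D')\big)$. Since $-K_{X/B}-\Delta$ is relatively ample (Proposition \ref{p:anticanonicalsheaf}(4) combined with Definition \ref{d:familyNoetherian}), an isomorphism of pairs is the same as an isomorphism of the associated polarized schemes respecting the boundary divisors; so $\mathrm{Isom}$ is a closed subscheme of the $\mathrm{Isom}$-scheme of polarized projective schemes with fixed Hilbert polynomial, which is quasi-projective—in particular separated and of finite type—over $B$ by standard Hilbert-scheme theory (it sits inside a Hilbert scheme of graphs in $\bP^M_B \times_B \bP^M_B$). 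To upgrade quasi-projective to affine, one uses that $K_{X/B}+\Delta+D \sim_{\bQ,B} 0$ together with $D$ relatively ample: the line bundle $\omega^{[-m]}_{X/B}(-m\Delta)$ used for the projective embedding is, up to the twist, $\cO_X(mD)$, so the two natural polarizations (anticanonical and boundary) agree up to scaling, and an $\mathrm{Isom}$ respecting one respects the other; the standard argument (as in \cite{KolNewBook} or \cite{ABHLX20} for the Fano case, cf.\ \cite[Lem.~2.16]{ABHLX20}) then shows the $\mathrm{Isom}$-scheme is affine over $B$. I expect the main obstacle to be step (c)--(d): carefully checking that imposing Kollár's condition together with the $N$-torsion condition $\omega^{[N]}_{X/B}(N(\Delta+D)) \cong_B \cO_X$ over an \emph{arbitrary} (possibly non-reduced, non-Noetherian) base defines a locally closed substack—this requires the full strength of the representability machinery in \cite[Ch.~7--9]{KolNewBook} and the Noetherian-approximation bootstrapping built into Definition \ref{d:nonNoeth}, and is where one must be most careful that the bookkeeping of markings and K-flatness is consistent.
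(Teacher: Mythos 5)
Your proposal is correct and follows essentially the same route as the paper: rigidify by the anticanonical embedding via Proposition \ref{p:anticanonicalsheaf}, present each bounded piece as a quotient $[H/\PGL_{M+1}]$ of a subscheme of a Hilbert scheme times the parameter space of K-flat relative Mumford divisors, cut out Definition \ref{d:familyNoetherian}.3--5 using the representability results of \cite{KolNewBook}, and cover $\cM(\chi,N,{\bf r})$ by these finite-type open substacks. The only points of divergence are cosmetic: the paper gets the affine diagonal for free from the quotient presentation (a quotient of a separated scheme by an affine group) rather than via the Isom-scheme argument you sketch, and the conditions $\omega^{[N]}_{X/B}(N(\Delta+D))\cong_B\cO_X$ and Koll\'ar's condition are imposed via locally closed \emph{partial decompositions} (\cite[Prop.~9.42, Prop.~3.31]{KolNewBook}) rather than single locally closed immersions --- exactly the subtlety you flagged at the end.
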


\begin{rem}\label{r:Delta=0}
In explicit examples, we will be primarily interested in the substack of $\cM(\chi,N,{\bf r}) $ consisting of families $[(X,\Delta+D)\to B]$ such that $\Delta^{\Div} = \emptyset$.
This is an open and closed substack by Proposition \ref{p:anticanonicalsheaf}.3.
\end{rem}

To prove the theorem, we will show the stack has an open cover by finite type quotient stacks.
Fix ${\bf c} = (c,c') \in \bQ^2_{>0}$ and $k \in \bN$ such that $kr \in \bN$. Let 
\[
\cM(\chi,N,{\bf r},{\bf c},k)
\subset 
\cM(\chi,N,{\bf r})
\]
denote the full subcategory consisting of families $[(X,\Delta+D)\to B]$ such that $-k(K_{X_b}+\Delta_b)$ is a very ample Cartier divisor and ${\bf c}=(\deg(\Delta_b^{\Div}),\deg(D_b^{\Div}))$.

\begin{prop}\label{prop:localstack}
$\cM(\chi,N,{\bf r},{\bf c},k)$ is a finite type algebraic stack with affine diagonal.
\end{prop}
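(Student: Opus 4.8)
The strategy is the standard one for constructing moduli stacks of polarized varieties with extra structure: realize $\cM(\chi,N,{\bf r},{\bf c},k)$ as a quotient stack $[Z/\PGL_{M+1}]$ for a suitable locally closed subscheme $Z$ of a Hilbert scheme (together with divisor data), then check the three conclusions—algebraicity, finite type, affine diagonal—on that presentation. The very ampleness of $-k(K_{X_b}+\Delta_b)$ together with Proposition~\ref{p:anticanonicalsheaf} is what makes this possible: it embeds every fiber in a fixed $\bP^M$ with $M+1 = \chi(k)$, so we can work inside $\Hilb_{P}(\bP^M) \times \Chow \times \Chow$ parametrizing the embedded $X$ together with the Cartier divisors $\Delta^{\Div}$ and $D^{\Div}$ (using that, by Remark (vii) after Definition~\ref{d:familyNoetherian}, $D^\Div$ is flat with pure-dimensional fibers, and similarly for $\Delta^\Div$).

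\textbf{Step 1: the parameter space.} Set $M+1 := \chi(k)$. Using Proposition~\ref{p:anticanonicalsheaf}.2, a family in $\cM(\chi,N,{\bf r},{\bf c},k)$ over $B$, once we choose an isomorphism $f_*\omega_{X/B}^{[-k]}(-k\Delta) \cong \cO_B^{\oplus M+1}$, gives a closed embedding $X \hookrightarrow \bP^M_B$ with fixed Hilbert polynomial, together with two closed subschemes $\Delta^{\Div}, D^{\Div} \subset X$. Let $H$ be the product of the relevant Hilbert scheme of $\bP^M$ and two relative Hilbert schemes (or relative Chow-type parameter spaces for the Mumford divisors on the universal $X$); this $H$ is projective, hence of finite type over $\bk$. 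There is a $\PGL_{M+1}$-action on $H$ coming from the choice of basis, and $\cM(\chi,N,{\bf r},{\bf c},k)$ will be $[Z/\PGL_{M+1}]$ for an appropriate $\PGL_{M+1}$-invariant locally closed $Z \subset H$.

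\textbf{Step 2: cutting out $Z$.} I would impose the defining conditions of Definition~\ref{d:familyNoetherian} as locally closed (or open, or closed) conditions on $H$: flatness of $X\to H$ and of the divisors is the open locus where the universal objects are flat; that the embedding is by the complete linear system $|{-k(K+\Delta)}|$ and that $-k(K+\Delta)$ is relatively very ample Cartier is open (Proposition~\ref{p:anticanonicalsheaf}.4, plus Nakayama for the line-bundle locus); that the fibers are slc with $K_{X/H}+\Delta+D \sim_{\bQ} 0$ and $D$ ample is a combination of the openness of the slc locus (via \cite[Ch.~4]{KolNewBook} / \cite[Thm.~11.34]{KolNewBook}) and the representability of the conditions $\omega^{[N]}_{X/H}(N(\Delta+D)) \cong_H \cO_X$ and the Kollár condition (5), which by \cite[Prop.~9.42]{KolNewBook} (invoked already in the proof of Lemma~\ref{l:Ncompspecialize}) and the theory of \cite[Ch.~7--9]{KolNewBook} are locally closed conditions; finally $\chi(\cdot)=\chi$ and the degrees ${\bf c}$ are locally constant (Proposition~\ref{p:anticanonicalsheaf}.3), hence open and closed. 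The intersection of all these is the $\PGL_{M+1}$-invariant locally closed subscheme $Z \subset H$; it is of finite type since $H$ is.

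\textbf{Step 3: the quotient presentation and its properties.} The functor sending $B \rightsquigarrow [Z(B)/\PGL_{M+1}(B)]$ agrees with $\cM(\chi,N,{\bf r},{\bf c},k)$: an object of the latter over $B$ is, fppf-locally on $B$, equipped with a trivialization of $f_*\omega^{[-k]}_{X/B}(-k\Delta)$ hence a map to $Z$, and the two trivializations differ by $\PGL_{M+1}$; this is the usual descent argument (e.g. \cite[Ch.~3]{KolNewBook} or the construction of the KSBA moduli stack there, as the excerpt notes is the model). Therefore $\cM(\chi,N,{\bf r},{\bf c},k) \cong [Z/\PGL_{M+1}]$. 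Since $Z$ is a finite-type $\bk$-scheme and $\PGL_{M+1}$ is a finite-type affine (indeed smooth) group scheme acting on it, the quotient stack is algebraic and of finite type over $\bk$. For the affine diagonal: the diagonal of $[Z/\PGL_{M+1}]$ is computed by the $\PGL_{M+1}$-equivariant $\underline{\Isom}$ scheme over $Z\times Z$, and since $Z$ is quasi-projective and the objects carry the polarization $-k(K+\Delta)$, the isomorphism scheme is a locally closed subscheme of a product of Hilbert schemes / $\PGL_{M+1}$, hence quasi-affine; combined with the properness of the automorphisms sitting inside $\PGL$ and the standard argument that $\underline{\Isom}$ of polarized slc pairs is affine (as in \cite{KolNewBook}), one gets that the diagonal is affine. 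Alternatively one cites \cite[Thm.~3.(something)]{AHLH18}-style criteria, but the $\underline{\Isom}$-scheme argument is cleanest.

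\textbf{Main obstacle.} The genuinely delicate point is Step~2: verifying that Kollár's condition (Definition~\ref{d:familyNoetherian}.5) and the K-flatness of the marked divisors define a locally closed subscheme of $H$, and that the resulting $Z$ really represents the functor including the non-reduced base case. This is exactly where one must lean on the machinery of \cite[Ch.~7--9]{KolNewBook}—K-flatness, the representability results \cite[Prop.~9.42]{KolNewBook}, and the hull-and-husk formalism—rather than on anything elementary; the rest of the argument (very ampleness giving the embedding, openness of slc, quotient stack formalism, affine diagonal via $\underline{\Isom}$) is routine once this is in place. I would structure the write-up to quote these results from \cite{KolNewBook} as black boxes, mirroring their construction of the moduli stack of KSBA stable pairs, and only do the small amount of extra bookkeeping needed to incorporate the conditions $K_X+\Delta+D\sim_{\bQ}0$, $D$ ample, and the index-$N$ normalization.
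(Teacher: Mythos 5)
Your proposal is correct and follows essentially the same route as the paper's proof: embed each fiber in $\bP^{M}$ with $M=\chi(k)-1$ via the very ample $-k(K+\Delta)$, parametrize the pair inside the Hilbert scheme together with the space of K-flat relative Mumford divisors, cut out the conditions of Definition \ref{d:familyNoetherian} as open and locally closed loci (openness of slc, \cite[Prop.~9.42 \& 3.31]{KolNewBook} for the $\bQ$-linear equivalence and Koll\'ar's condition), and identify the stack with $[H_5/\PGL_{M+1}]$ by the standard trivialization/descent argument. The only cosmetic difference is that the paper gets the affine diagonal directly from the quotient presentation of a finite-type scheme by $\PGL_{M+1}$, whereas you route it through the $\underline{\mathrm{Isom}}$-scheme; both yield the same conclusion.
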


\begin{proof}
If $[f:(X,\Delta+D) \to B]$ is an object of $\cM(\chi,N,{\bf r},{\bf c},k)$, then
$\omega_{X/B}^{[-k]}(-k\Delta)$ is a $f$-very ample line bundle of rank $\chi(k)$ and there is an induced embedding 
\[
X\hookrightarrow \bP_{B}\big(f_* \omega_{X/B}^{[-k]}(-k\Delta)\big).
\]
We proceed to parametrize such embeddings. 
	
Let $M:= \chi(k)-1$, and consider the Hilbert scheme $H:={\rm Hilb}_{\chi(k\,\cdot)}(\bP^M)$
parametrizing subschemes $X\subset \bP^M$ with $\chi(X,\cO_X(m)) = \chi(mk )$ for all $m\in \bN$.
Write $X_H \subset \bP^M_H$ for the corresponding universal family. Note that the $\PGL_{M+1}$ action on $\bP^M$ induces a  $\PGL_{M+1}$ action on $H$.
	
We claim that the locus $h \in H$ such that 
\begin{enumerate}
\item[(a)] $X_h$ is geometrically connected and reduced, equidimensional, and deminormal,
\item[(b)] $H^i(X_h, \cO_{X_h}(1))= 0$ for all $i>0$, and
\item[(c)] the embedding $X_h \hookrightarrow \bP_{\kappa(h)}^M$ is by the complete linear series $|\cO_{X_h}(1)|$,
\end{enumerate}
is an open subscheme $H_1\subset H$. 
Indeed, (a) is open   by
\cite[12.2.1(ii) and 12.2.4(vi)]{Gro66} and \cite[Cor. 10.42]{KolNewBook}, while (b) is open by \cite[Thm. III.12.8]{Har77}. 
Since condition (b) implies 
\[
\dim H^0(X_h, \cO_{X_h}(1))= \chi(k)=\dim H^0(\bP_{k(h)}^M, \cO_{\bP_{k(h)}}(1))
,\]
and $H^i(\bP_{k(h)}^M, \cI_{X_h}(1)) = 0$ for $i \geq 2$, condition (c) holds if and only if $H^1(\bP_{k(h)}^M,\cI_{X_h}(1))=0$, 
which is again an open condition. Hence $H_1\subset H$ is open. 
	
Next, we parametrize the boundary divisors in the moduli problem.
Let 
\[
H_2: = {\rm KDiv}_{k^{n-1}c} (X_{H_1}/ H_1 ) \times_{H_1} {\rm KDiv}_{k^{n-1}c'} (X_{H_1}/H_1)
,\]
where $n:= \dim (X_{H_1}) - \dim H_1$ and ${\rm KDiv}_{k^{n-1}c}(\,)$ denotes the parameter space of K-flat,  relative Mumford divisors of degree $k^{n-1}c$ with respect to $\cO_{X_{H_1}}(1)$ as constructed in \cite[Thm. 7.3]{KolNewBook}. 
Write $\Delta_{H_2}:= r \Delta_{H_2}^{\Div}$ and $D_{H_2}:= sD_{H_2}^{\rm Div}$, where
$\Delta_{H_2}^{\rm Div}$ and $D_{H_2}^{\rm Div}$ are the universal families of relative Mumford divisors on $X_{H_2}$ of degree $c$ and $c'$, respectively.
Again, the $\PGL_{M+1}$-action on $\bP^M$ induces a natural $\PGL_{M+1}$-action on $H_2$.
	
Now, we shrink $H_2$ to the locus where Definition \ref{d:familyNoetherian}.3--5 is satisfied.
By  \cite[Prop. 9.42 \& Thm. 3.29]{KolNewBook}, there is a locally closed partial decomposition $H_3\hookrightarrow H_2$ such that $B\to H_2$ factors through $H_3$ if and only if 
\[
\omega_{X_B/B}^{[-k]}(-k\Delta_B) \cong_{B} \cO_{X_B}(1) \quad \text{ and } \quad  \omega_{X_B/B}^{[N]}(N(\Delta_B+D_B)) \cong_{B} \cO_{X_B}.
\]
By our choice of $H_3$, $\omega_{X_{H_2}/H_2}^{[m]}(m\Delta_{H_2})$
and $\cO_{X_{H_3}}(mD_{H_3})$ are line bundles for  $m>0$ sufficiently divisible.
Hence \cite[Prop. 3.31]{KolNewBook} implies there is a locally closed partial decomposition $H_4 \hookrightarrow H_3$ such that $B\to H_3$ factors through $H_4$ if and only if the formation of 
$\omega_{X_B/B}^{[m]}(m\Delta_B + nD^{\Div}_B)$
commutes with base change for all $ m,n\in \bZ$ and $ mr \in \bZ$.
 
Finally, by \cite[Cor. 4.45]{KolNewBook}, the locus  
\[
H_5:= \{ h \in H_4 \, \vert\, (X_h, \Delta_h+D_h)\text{ is slc}\}\subset H_4
\]
is open. 
Now, note that 
$(X_{H_5}\subset \bP^M_{H_5}, r\Delta_{H_5}+sD_{H_5}) \to H_5$
is an object in $\cM(\chi,N,{\bf r},{\bf c},k)$.
Furthermore, by our construction, $H_5$ represents the category  fibered over ${\rm Sch}_{\bk}$ whose objects consist of  a family 
$[f:(X,\Delta+D) \to B]$ in $ \cM(\chi,N,{\bf r},{\bf c},k)$ with an isomorphism $\bP(f_*\omega_{X/B}^{[-k]}(-k\Delta))\cong \bP^M_B$.
	
We claim $\cM(\chi,N,{\bf r},{\bf c},k)\cong [H_5/\PGL_{M+1}]$. 
Indeed, the universal family \[(X_{H_5}\subset \bP^M_{H_5}, \Delta_{H_5}+D_{H_5}) \to H_5,\] which is $\PGL_{M+1}$-equivariant, induces a morphism $H_5 \to \cM(\chi,N,{\bf r},{\bf c},k)$.
The latter induces a morphism
$
[H_5/ \PGL_{M+1}]^{\rm pre} \to \cM(\chi,N,{\bf r},{\bf c},k)$, which by Lemma \ref{l:descent}, induces a morphism 
$[H_5/ \PGL_{M+1}] \to \cM(\chi,N,{\bf r},{\bf c},k)$.
To construct the inverse, consider $[f:(X, \Delta + D) \to B]$ in $ \cM(\chi,N,{\bf r},{\bf c},k)$.
Since $f_*\omega_{X/B}^{[-k]}(-k\Delta)$ is locally free, there exists an open cover $B = \cup_i B_i$ over which its restriction is free.
Choosing trivializations induce embeddings  $X_{B_i} \subset \bP^M_{B_i}$ and, hence,  morphisms $\phi_i: B_i \to H_5$. 
Over the intersections $B_i \cap B_j$, the trivializations differ by a section $s_{ij} \in H^0(B_i \cap B_j, \PGL_{M+1})$.
Therefore the $\phi_i$ glue to a morphism $\phi: B \to [H_5/\PGL_{M+1}]$, which induces a morphism $ \cM(\chi,N,{\bf r},{\bf c},k) \to [H_5/ \PGL_{M+1}]$ that is the inverse.
	
To finish the proof, note that $H_5$ is a finite type scheme.
Thus $[H_5/\PGL_{M+1}]$, which is isomorphic to $\cM(\chi,N,{\bf r},{\bf c},k)$, is a finite type algebraic stack with affine diagonal.
\end{proof} 

The following lemma was used in the above proof.

\begin{lem}\label{l:descent}
The fibered category $\cM(\chi,N,{\bf r})$ satisfies \'etale descent.
\end{lem}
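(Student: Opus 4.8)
The plan is to verify the standard descent axioms directly from the definition of $\cM(\chi,N,{\bf r})$, using the fact that all the data involved---the scheme $X$, the morphism $X\to B$, and the divisors $\Delta^{\Div}$, $D^{\Div}$---are of a quasi-coherent or finite-type nature that is known to satisfy \'etale (indeed fppf) descent. Concretely, one must check two things: (i) for every object $[(X,\Delta+D)\to B]$ over a scheme $B$, the \emph{Isom}-presheaves between two such objects form a sheaf in the \'etale topology on $\mathrm{Sch}_B$; and (ii) every \'etale descent datum for objects is effective. Since $\cM(\chi,N,{\bf r})$ is already a category fibered in groupoids over $\mathrm{Sch}_{\bk}$ by construction, these two statements are exactly what is needed to upgrade it to a stack for the \'etale topology.

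First I would address descent for morphisms. Given two families $(X,\Delta+D)\to B$ and $(X',\Delta'+D')\to B$ and an \'etale cover $\{B_i\to B\}$, a morphism over $B$ is by definition an isomorphism $X\to X'$ over $B$ matching the divisorial data; isomorphisms of schemes over a base satisfy \'etale descent by \cite[Tag 02L0 or 023T]{Stacks} (equivalently, this is fpqc descent for morphisms of schemes, which is classical), and the condition that the divisorial pullbacks of $\Delta'^{\Div},D'^{\Div}$ agree with $\Delta^{\Div},D^{\Div}$ can be checked after the \'etale base change since relative Mumford divisors and their divisorial pullbacks commute with flat base change by construction \cite[Sec. 4.7]{KolNewBook}. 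This gives that the relevant $\mathit{Isom}$ functors are \'etale sheaves.

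Next I would address effectivity of descent data. Start with an \'etale cover $\{B_i\to B\}$ and objects $(X_i,\Delta_i+D_i)\to B_i$ together with isomorphisms over $B_i\times_B B_j$ satisfying the cocycle condition. The schemes $X_i$ with their projective morphisms to $B_i$ glue to a scheme $X\to B$: projective---hence quasi-projective, hence quasi-affine-locally affine---morphisms satisfy fppf, so in particular \'etale, descent \cite[Tag 02L0, 0245]{Stacks}, and the relative ampleness of the polarizing data descends as well. The relative Mumford divisors $\Delta_i^{\Div}$ and $D_i^{\Div}$ are closed subschemes of $X_i$ that, by the cocycle condition, glue to closed subschemes $\Delta^{\Div},D^{\Div}$ of $X$; one checks the defining properties of a relative Mumford divisor (Definition following \cite[Def. 4.68]{KolNewBook}) and of $K$-flatness \cite[Sec. 7.1]{KolNewBook} are \'etale-local on $B$, so they hold for the glued divisors. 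Finally, Definition \ref{d:nonNoeth} is local on the base by design---it is phrased via affine opens of $X$ and Noetherian approximation---so conditions (3)--(5) of Definition \ref{d:familyNoetherian} (the isomorphism $\omega_{X/B}^{[N]}(N(\Delta+D))\cong_B\cO_X$, Koll\'ar's base-change condition, and the fiberwise slc/CY condition) are \'etale-local and descend; here I would invoke that the reflexive hull $\omega_{X/B}^{[m]}(\cdots)$ is defined via an open set of codimension $\geq 2$ fibers that descends, and that isomorphisms of reflexive sheaves commuting with base change can be checked \'etale-locally.

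The main obstacle I anticipate is the bookkeeping around $K$-flatness and the base-change conditions (Definition \ref{d:familyNoetherian}.5): these are the only conditions not of obviously \'etale-local character, since $K$-flatness is defined only over Noetherian bases and involves a somewhat delicate universal construction. The resolution is to appeal to the results of \cite[Ch. 7 and Prop. 3.31]{KolNewBook}, where $K$-flatness is shown to be compatible with arbitrary base change (and in particular descends along faithfully flat maps), and to note that ``commutes with base change'' for the divisorial sheaves is by its very formulation a condition that can be tested after a faithfully flat base change. Once these compatibilities are cited, the proof is a routine gluing argument; I would keep the exposition short and refer to \cite{KolNewBook} and the Stacks Project for the standard descent inputs rather than reproving them.
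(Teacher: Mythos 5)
Your overall strategy coincides with the paper's: the paper's proof of this lemma is a two-line deferral to ``standard arguments'' for the Noetherian case (citing the analogous argument in \cite{FI22}) followed by a reduction of the general case to the Noetherian one via limit arguments (\cite[Tags 049N, 00QN]{stacks-project}), which is exactly the skeleton you fill in. Your treatment of the $\mathit{Isom}$-sheaf condition, of the \'etale-locality of $K$-flatness and of Koll\'ar's base-change condition, and of the non-Noetherian reduction is consistent with what the paper intends.

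There is, however, one step where your justification as written is not correct. You assert that effectivity of descent for the underlying scheme holds because ``projective --- hence quasi-projective, hence quasi-affine-locally affine --- morphisms satisfy fppf descent.'' A projective morphism is not quasi-affine, and proper (even projective) morphisms do \emph{not} satisfy effective descent in general: a descent datum on a projective scheme need not be effective unless one also descends a relatively ample invertible sheaf compatible with the gluing isomorphisms. The correct argument here is that the families in $\cM(\chi,N,{\bf r})$ carry a \emph{canonical} relatively ample $\bQ$-line bundle, namely $\omega_{X/B}^{[-m]}(-m\Delta)$ for $m$ sufficiently divisible (Proposition \ref{p:anticanonicalsheaf}), and any isomorphism in the fibered category preserves it since it must match $\Delta^{\Div}$. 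One then descends the quasi-coherent graded algebra $\bigoplus_k f_*\omega_{X/B}^{[-km]}(-km\Delta)$ (quasi-coherent sheaves satisfy fpqc descent) and recovers $X$ as its relative $\Proj$. With that substitution your argument goes through; without it, the effectivity step is unsupported.
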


\begin{proof}
For an \'etale cover of a Noetherian scheme, standard arguments show  $\cM(\chi,N,{\bf r})$  satisfies descent (see for example \cite[Proof of Prop. 4.2]{FI22}).
Using
\cite[\href{https://stacks.math.columbia.edu/tag/049N}{Tag 049N}]{stacks-project} and \cite[\href{https://stacks.math.columbia.edu/tag/00QN}{Tag 00QN}]{stacks-project}, the general case can be deduced from the Noetherian case.
We leave the details to the reader.
\end{proof}

\begin{proof}[Proof of Theorem \ref{t:stack}]
For each ${\bf c} \in \bQ_{>0}^2$ and $k \in \bN$, the natural morphism
\[
\cM(\chi,N,{\bf r},{\bf c},k) \longrightarrow \cM(\chi,N,{\bf r})
\]
is representable and an open immersion by Proposition \ref{p:anticanonicalsheaf}.
Additionally, the morphism
\[
\bigsqcup_{{\bf c},k} \cM(\chi,N,{\bf r},{\bf c},k) \to \cM(\chi,N,{\bf r})
\]
is surjective by construction.
Since each $\cM(\chi,N,{\bf r},{\bf c},k)$ is a finite type algebraic stack with affine diagonal by Proposition \ref{prop:localstack}, $\cM(\chi,N,{\bf r})$ is a locally finite type algebraic stack with affine diagonal.
\end{proof}

\subsection{Boundedness}

In this section, we state an elementary criterion for when a set of boundary polarized CY pairs is bounded. 

\begin{defn}
A subset $\mathfrak{F} \subset \cM(\chi, N, {\bf r}) (\bk)$ is \emph{bounded} if there exists a family of boundary polarized CY pairs $(X,\Delta+D)\to B$ over a finite type scheme $B$ such that every element of $\mathfrak{F}$ appears as a fiber of the family.
\end{defn}

In the definition we may assume $B$ is smooth.
Indeed, if $B$ is not smooth, we can choose a finite collection of smooth locally closed subschemes $B_i \subset B$ such that $B= \cup_{i=1}B_i$ and  replace $(X,\Delta+D)\to B$ by the base change by $\sqcup_i B_i \to B$.

\begin{prop}\label{p:boundedness}
A subset  $\mathfrak{F} \subset \cM(\chi, N, {\bf r}) (\bk)$
is bounded if and only if there positive integers $k$ and $c$ such that $-k(K_X+\Delta)$ is very ample Cartier divisor and $\deg(\Delta^{\Div}):= (-K_X-\Delta)^{\dim X-1} \cdot \Delta^{\Div} \leq c$ for every $(X,\Delta+D)\in \mathfrak{F}$ . 
\end{prop}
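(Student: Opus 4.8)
The plan is to prove the two implications separately; the forward direction is a short packaging of Proposition~\ref{p:anticanonicalsheaf}, while the reverse direction is the substantive one. For the ``only if'' direction, I would begin with a witnessing family $(X,\Delta+D)\to B$ over a finite type (hence quasi-compact) scheme $B$. The bound $c$ is then immediate: by Proposition~\ref{p:anticanonicalsheaf}.3 the function $b\mapsto\deg(\Delta_b^{\Div})$ is locally constant, so it takes finitely many values on $B$, and one takes $c$ to be (the ceiling of) the maximum. To extract a uniform $k$, note that for each $b\in B$ some multiple $-m_b(K_{X_b}+\Delta_b)$, with $m_br\in\bN$, is very ample Cartier since $-(K_{X_b}+\Delta_b)\sim_{\bQ}D_b$ is ample $\bQ$-Cartier; by Proposition~\ref{p:anticanonicalsheaf}.4 the locus $U_m=\{b : \omega_{X_b}^{[-m]}(-m\Delta_b)\text{ is very ample}\}$ is open, and since positive powers of very ample line bundles are very ample we have $U_m\subseteq U_{jm}$ for all $j\geq1$. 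Quasi-compactness then lets me cover $B$ by finitely many $U_{m_{b_i}}$ and set $k:=\operatorname{lcm}_i(m_{b_i})$, so that $-k(K_{X_b}+\Delta_b)$ is very ample Cartier for every $b$, in particular for every member of $\mathfrak{F}$.

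For the ``if'' direction, assume $k$ and $c$ are as in the statement (enlarging $k$ by a multiple we may also assume $kr\in\bN$). The first step is to observe that the remaining numerical invariants are then pinned down: writing $\cO_X(1):=\omega_X^{[-k]}(-k\Delta)\cong\cO_X(-k(K_X+\Delta))$, the vanishing in Proposition~\ref{p:anticanonicalsheaf}.1 gives $\dim H^0(X,\cO_X(m))=\chi(mk)$ for all $m\geq1$, so $\dim X=n$ and $\deg_{\cO_X(1)}(X)$ depend only on $\chi$ and $k$. Using $D\equiv-(K_X+\Delta)\equiv\tfrac1k\cO_X(1)$ numerically together with $D=sD^{\Div}$ and $\Delta=r\Delta^{\Div}$, the quantity $\deg(D^{\Div})=(-K_X-\Delta)^{n-1}\cdot D^{\Div}$ is also determined by $\chi,k,s$, while $\deg(\Delta^{\Div})=\tfrac1{k^{n-1}}\bigl(\cO_X(1)^{n-1}\cdot\Delta^{\Div}\bigr)$ is a nonnegative multiple of $1/k^{n-1}$ that is $\leq c$, hence lies in an explicit finite set. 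Consequently every member of $\mathfrak{F}$ lies in one of finitely many substacks $\cM(\chi,N,{\bf r},{\bf c},k)(\bk)$.

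To conclude, I would invoke Proposition~\ref{prop:localstack}: each such $\cM(\chi,N,{\bf r},{\bf c},k)$ is a finite type algebraic stack, and its proof produces a finite type scheme $H_5$ carrying a family of boundary polarized CY pairs every $\bk$-point of which is realized as a fiber (one embeds $X$ in $\bP^{\chi(k)-1}$ using a basis of $H^0(X,\cO_X(1))$ and adjoins the boundary divisors $\Delta^{\Div},D^{\Div}$). The disjoint union of these $H_5$'s over the finitely many relevant ${\bf c}$ then witnesses the boundedness of $\mathfrak{F}$. The step I expect to require the most care is the discreteness observation in the reverse direction: that the hypothesis $\deg(\Delta^{\Div})\leq c$, combined with the integrality of $\cO_X(1)^{n-1}\cdot\Delta^{\Div}$ and the constancy of $\deg(D^{\Div})$, collapses an a priori continuum of possibilities into finitely many finite type stacks. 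The borderline case $\Delta^{\Div}=0$ is handled separately via the open and closed substack of Remark~\ref{r:Delta=0}, by the same argument with the factor parametrizing $\Delta^{\Div}$ omitted.
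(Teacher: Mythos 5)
Your proposal is correct and follows essentially the same route as the paper: the forward direction is read off from Proposition~\ref{p:anticanonicalsheaf}, and the reverse direction pins down $\deg(D^{\Div})$ from $\chi$ and ${\bf r}$, notes that $\deg(\Delta^{\Div})\in k^{-\dim X+1}\bZ\cap[0,c]$ is a finite set, and then invokes the parameter space $H_5$ from the proof of Proposition~\ref{prop:localstack}. The extra detail you supply in the forward direction (covering $B$ by the open loci $U_{m}$ and taking an lcm) is exactly the content the paper leaves implicit.
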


\begin{proof}
The forward implication follows immediately from Proposition \ref{p:anticanonicalsheaf}. 
To prove the reverse implication, 
assume the existence of $k$ and $c$.
Note that  $(\deg(\Delta^{\Div}),\deg(D^{\Div}))$ takes finitely many values for $(X,\Delta+D) \in \mathfrak{F}$, since
\[
\deg(D^{\Div}) = s^{-1} (-K_X-\Delta)^{\dim X-1} \cdot D =s^{-1} (-K_X-\Delta)^{\dim  X} 
,\]
which is determined by ${\bf r}$ and $\chi$,
and $ \deg(\Delta^{\Div}) \in  k^{-\dim X+1}\bZ \cap[0,c]$.
Thus the proof of Proposition \ref{prop:localstack} shows that $\mathfrak{F}$ is bounded.
\end{proof}

\section{Test configurations}\label{s:tc}

In this section, we discuss properties of test configurations of boundary polarized CY pairs. 
These  degenerations will play a key role in the moduli theory of such pairs.

\subsection{Test configurations of polarized schemes}\label{ss:testconfigps}
We recall content from \cite{BHJ17} on  test configurations.
 Throughout,    $X$ is a projective scheme, and $L$ is an ample line bundle on $X$.

\begin{defn}\label{d:testconfig}
A \emph{test configuration} $(\cX, \cL)$ of $(X,L)$ is the data of 
\begin{itemize}
\item a $\bG_m$-equivariant flat proper morphism $\pi:\cX\to \bA^1$,
\item a relatively ample  $\bG_m$-linearized line bundle $\cL$ on $\cX$, and 
\item a $\G_m$-equivariant isomorphism 
\[
(\cX, \cL)\vert_{\A^1\setminus 0} \cong (X, L) \times (\A^1\setminus 0),
\]
where $\bG_m$ acts on the right side as the product of the trivial and standard actions.
\end{itemize}
\end{defn}

A test configuration induces a multiplicative $\Z$-filtration $\cF^\bullet$ of $H^0(X,mL)$
defined by  
\begin{equation*}
	\cF^\la H^0(X,mL) = \{ s\in H^0(X,mL) \, \vert \, t^{-\la} \overline{s} \in H^0(\cX, m\cL) \} 
\end{equation*}
for each $\la \in \Z$ and $m \in \N$, where  $\overline{s} \in H^0(\cX \setminus \cX_0, m\cL|_{\cX\setminus \cX_0})$ is the $\bG_m$-invariant section such that $\overline{s}\vert_{X_1}=s$. By the Rees construction, there are $\G_m$-equivariant isomorphisms of graded rings 
\begin{equation}\label{e:tc}
	\bigoplus_{m\in \N} H^0(\cX,m\cL)  \cong \bigoplus_{m \in \N} \bigoplus_{\la \in \Z} \cF^\la H^0(X, mL) t^{-\la}
\end{equation}
and
\begin{equation}\label{e:gr}
	\bigoplus_{m\in \N} H^0(\cX_0,m\cL_0)  \cong \bigoplus_{m \in \N} \bigoplus_{\la \in \Z} F^\la H^0(X,mL)/ F^{\la+1}H^0(X,mL),
\end{equation}
where the
$\G_m$-actions on the right  sides are induced by the $\Z$-grading \cite[Section 2.5]{BHJ17}. 

When $X$ is a normal variety and $\cX$ is normal,  $\cF^\bullet $ has an interpretation in terms of valuations
\cite[Section 4]{BHJ17}.
Each irreducible component $E\subset \cX_0$ induces a $\G_m$-equivariant valuation $\ord_E$ of $K(\cX)$.
Set
\[
b_E := \ord_E(\cX_0) = \ord_E(t)
\quad \text{ and } \quad 
v_E: = b_E^{-1} r(\ord_E),\]
where  $r(\ord_E)$ is the restriction of $\ord_E$ to $K(X) \subset K(X)(t) \cong K(\cX)$. 
By \cite[Lem. 4.1]{BHJ17}, $r(\ord_E)= c\,  \ord_F$, where $c\in \bZ_{\geq0}$ and $F$ is a prime divisor over $X$.
Let $\cY$ denote the normalization of the graph  of  $\cX\dashrightarrow X_{\A^1} :=X\times\bA^1 $ with natural morphisms 

\begin{equation*}
\begin{tikzcd}[row sep= 1.25 em]
& \cY   \arrow[rd,"g"] \arrow[ld,swap,"f"]  &  \\
\cX \arrow[rr,dashed] & & X_{\A^1}
\end{tikzcd}
	.\end{equation*}
Let $L_{\bA^1}:=p_1^* L$ where $p_1: X_{\bA^1}\to X$ is the projection to the first factor.
Since $f^*\cL $ and $g^*L_{\A^1}$ are canonically isomorphic over $\cY\setminus \cY_0$, there exists  a divisor $G$ supported on $\cY_0$  such that $ f^* \cL = g^*L_{\A^1} (G) $.
By the proof of \cite[Lem. 5.17]{BHJ17},
\begin{equation}\label{e:filttestconfig}
\cF^\la H^0(X,mL) =\bigcap_{E \subset \cX_0} \left\{ s\in H^0(X, mL) \, \vert \, v_E(s) + m b_E^{-1} \ord_E(G) \geq \la \right\}  
.\end{equation}

The following lemma says that the valuations $v_E$ for $E\subset \cX_0$ are the minimal set of valuations that defining the filtration.

\begin{lem}\label{l:tcReesvals}
Let $(\cX,\cL)$ be a test configuration of $(X,L)$ with $\cX$ normal and irreducible.
If $v_1,\ldots, v_\ell \in\DivVal_X$ and  $c_1,\ldots, c_\ell \in \bZ$
satisfy
\[
\cF^\la H^0(X,mL) = \bigcap_{i=1,\ldots, \ell} \{ s\in H^0(X,mL)\, \vert\, v_i(s)  + mc_i \geq \la  \}
\]
for each $m \in \bN$ and $\la \in \bZ$,
then for each irreducible component $E\subset \cX_0$ there exists $ i \in \{ 1,\ldots, \ell\}$ such that $v_i =v_E$ and $c_i = b_E^{-1} \ord_E(G)$.  
\end{lem}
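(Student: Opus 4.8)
The strategy is to compare the filtration $\cF^\bullet$ with the one cut out by the valuations $v_E$ as in \eqref{e:filttestconfig}, and to extract from the hypothesis a ``two-sided'' comparison of valuations that forces equality. First I would recall that, since $\cX$ is normal and irreducible, \eqref{e:filttestconfig} gives
\[
\cF^\la H^0(X,mL) = \bigcap_{E\subset \cX_0} \{ s \in H^0(X,mL) \, \vert \, v_E(s) + m b_E^{-1}\ord_E(G) \geq \la\},
\]
so both the collection $\{(v_E, b_E^{-1}\ord_E(G))\}_E$ and the collection $\{(v_i,c_i)\}_i$ define the same filtration. The key observation is that a pair $(w,c)$ with $w \in \DivVal_X$ and $c \in \bZ$ contributes an inequality $w(s) + mc \geq \la$ that holds for every $s \in \cF^\la H^0(X,mL)$ precisely when $w(\cdot) + mc \geq$ the filtration-theoretic order function; more precisely, one works with the induced filtration on the section ring $R:=\bigoplus_m H^0(X,mL)$ and observes that $\cF$ is the filtration $\fa_\bullet$ where membership in $\cF^\la R_m$ is equivalent to lying in a fixed valuation ideal. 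I would phrase this via the homogeneous ``order'' function $\ord_{\cF}(s) = \max\{\la : s \in \cF^\la\}$ on $R_m$.

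Next, from the hypothesis that the $v_i$ cut out $\cF$, I get that for each irreducible component $E \subset \cX_0$ and each $s$, $v_E(s) + m b_E^{-1}\ord_E(G) \geq \ord_{\cF}(s) = \min_i (v_i(s) + mc_i)$, while from \eqref{e:filttestconfig} the $v_E$ cut out $\cF$, so conversely $v_i(s)+mc_i \geq \min_{E}(v_E(s) + m b_E^{-1}\ord_E(G))$. The standard fact I would invoke here is that if a finite set of divisorial valuations $\{w_j\}$ on $X$ (with shifts) cuts out a graded filtration on $R$ that is ``minimally generated'' — equivalently, if the filtration cannot be defined by a proper subset — then any other finite defining set must contain all the $w_j$ with the matching shifts. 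Concretely: for each component $E \subset \cX_0$, pick $m$ large and a section $s$ on which $v_E$ (with its shift) \emph{strictly} computes the minimum among all the $v_{E'}$, i.e. $s$ realizes the filtration step dictated by $E$ but not by the other components; such $s$ exists because the $v_E$ are distinct valuations (distinct components of $\cX_0$ give distinct $\G_m$-equivariant valuations of $K(\cX)$, hence distinct pairs $(v_E, b_E^{-1}\ord_E(G))$ after passing to $K(X)$ — this uses normality of $\cX$). Then the equality $\min_i(v_i(s)+mc_i) = v_E(s) + m b_E^{-1}\ord_E(G)$ must be attained by some index $i$, and running over a Zariski-dense set of such $s$ (varying $m$) forces $v_i = v_E$ as valuations and $c_i = b_E^{-1}\ord_E(G)$ as the shift.

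The main obstacle is the step asserting that the set $\{v_E\}_{E \subset \cX_0}$ is ``irredundant'' as a defining set for $\cF$ — i.e. that no single $(v_E, b_E^{-1}\ord_E(G))$ is implied by the others — and, relatedly, producing for each $E$ enough sections $s$ where $v_E$ strictly computes the filtration. This is where normality and irreducibility of $\cX$ are essential: they guarantee that each $E$ genuinely appears in the Rees-algebra description \eqref{e:tc}--\eqref{e:gr}, so the associated graded ring $\bigoplus_m H^0(\cX_0, m\cL_0)$ ``sees'' every component, and hence removing $v_E$ would change the filtration on some graded piece. I would make this precise by looking at the generic point of $E$ in $\cX_0$: the local ring $\cO_{\cX, \eta_E}$ is a DVR, and the filtration $\cF$ restricted to this DVR is exactly the $\ord_E$-adic filtration shifted by $b_E^{-1}\ord_E(G)$; choosing a section $s$ whose image in $\cX_0$ has a pole (or zero) of the exact order prescribed by $E$ but of strictly smaller order along the other components $E'$ — possible since the $E'$ are finitely many distinct prime divisors on a normal variety and $m\cL$ is very ample for $m \gg 0$ — completes the argument. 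Once this irredundancy is in hand, the matching of indices and the uniqueness $c_i = b_E^{-1}\ord_E(G)$ follow by elementary linear-algebra/valuation bookkeeping.
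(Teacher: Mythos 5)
The overall shape of your argument --- show that $\{(v_E,\, b_E^{-1}\ord_E(G))\}_{E\subset\cX_0}$ is an irredundant defining set for $\cF^\bullet$ and then argue that any other finite defining set must contain it --- is the right one, but the two steps you lean on are exactly the hard points, and neither is justified. The ``standard fact'' that a minimal finite set of shifted divisorial valuations cutting out a graded filtration is contained in every other finite defining set is not elementary, and as stated it is essentially the lemma itself. Even granting, for some section $s$, the equality $\min_i(v_i(s)+mc_i) = v_E(s)+mb_E^{-1}\ord_E(G)$ with strict inequality against the other components, this is a single numerical coincidence; pigeonholing a fixed index $i$ over a family of such $s$ still only gives agreement of two divisorial valuations on some set of sections, which does not force $v_i=v_E$ (and the locus of sections whose $\cF$-order is strictly computed by $E$ need not be dense or structured enough to run such an argument). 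The correct statement behind this is the uniqueness of the Rees valuations of an ideal, and invoking it requires first verifying that the ideals $I_\la=\cF^\la S$ of $S=\bigoplus_m H^0(X,mL)$ are integrally closed and satisfy $I_\mu^\la=I_{\mu\la}$ --- which is precisely what the paper does, using finite generation of the Rees algebra and the fact that intersections of valuation ideals are integrally closed. Separately, your proof of irredundancy --- choosing a section of $mL$ ``whose image in $\cX_0$ has a pole of the exact order prescribed by $E$ but strictly smaller along the other components'' --- conflates sections of $H^0(X,mL)$ with divisors on $\cX$: the numbers $v_E(s)$ are computed from the $\bG_m$-invariant extension of $s$ over $\cX\setminus\cX_0$, and very ampleness of $m\cL$ does not let you prescribe these orders independently along the various $E$. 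You also do not address the cases where $v_E$ is the trivial valuation (the paper allows $r(\ord_E)=c\,\ord_F$ with $c=0$) or where two components a priori yield the same pair $(v_E,c_E)$, in which case your strictly-computing section does not exist for either.

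The paper's proof sidesteps both issues. After twisting so that the $I_\la$ become integrally closed ideals of $S$, it identifies the unique minimal defining set with the Rees valuations $\{u_j\}$ of $I_\mu$, whence $\{u_j\}\subseteq\{w_i\}$ and $\{u_j\}\subseteq\{w_E\}$ automatically; it then proves $\{u_j\}=\{w_E\}$ not by exhibiting sections but by a counting argument: the exceptional locus $V(I_\mu\cdot\cO_Z)$ of the blowup $Z\to\Spec(S)$ and the Cartier divisor $\mu\cX_0\subset\cX$ are $\Proj$'s of the same bigraded ring with respect to its two gradings, hence have the same number of irreducible components. If you want to salvage your route you would need to supply a proof of the uniqueness statement for minimal defining sets of valuations, at which point you are reproving the Rees-valuation theory the paper cites.
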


\begin{proof}
Let $S: = \bigoplus_{m \in \bN}S_m := \bigoplus_{m \in \bN}  H^0(X,mL)$ and $I_\la := \cF^\la S := \bigoplus_{m \in \bN} \cF^\la S_m$ for each $\la \in \bZ$.
Note that replacing $\cL$ with $\cL(d\cX_0)$ results in replacing
$G$ with $G+ d \cX_0$, 
 $\cF^\la H^0(X,mL)$ with $\cF^{\la -md} H^0(X,mL)$,
and $c_i$ with $c_i+d$.   
Thus, after performing such a replacement with $d \gg0$,
we may assume $G$ is effective, $\cF^0 S = S$, and  $c_i \geq 0$ for all $i$.
Using that
 \[
 S \cdot \cF^\la S = \cF^0 S \cdot \cF^\la S \subset \cF^\la S
 ,\]
 we see
  $I_{\la}:= \cF^\la S$ is an ideal of $S$.
Since the algebra in \eqref{e:tc} is finitely generated, $\bigoplus_{ \la \in \bN} I_\la $ is  a finitely generated $\bk$-algebra.
 Thus there exists $\mu>0$ such that $I_{\mu}^\la = I_{\mu \la}$ for all $\la \in \bN$.

We now interpret the ideals $I_{\la}$ valuatively. 
Consider the valuation $w_i$ of $S$ defined by 
\[
w_i ( f) = \min \{ v_i(f_m) +m c_i \, \vert\, f_m\neq 0\}
,\]
where $f= \sum_{m \geq 0} f_m \in \bigoplus_{m \in \bN}S_m$. 
Similarly, for each irreducible component $E\subset \cX_0$, define
\[
w_E ( f) := \min \{ r(\ord_E)(f_m) +m b_E^{-1} \ord_E(G) \, \vert\, f_m\neq 0\}
,\]
which is also a valuation of $S$.
Note that
\[
I_{\la}=
\cF^\la S 
=   \bigcap_{i=1,\ldots, \ell} \{ f\in S \, \vert\, w_i(s) \geq \la\} 
= \bigcap_{E\subset \cX_0} \{ f\in S \, \vert\, w_E(s) \geq \la\} 
\]
for each $\la \in \bN$.
Since  $I_{\la}$ is the intersection of valuation ideals,
 $I_{\la}$ is integrally closed by \cite[Rem. 1.1.3.6 \& Ex. 6.10.7]{HS06}.

By \cite[Thm. 1.10]{BHJ17}, there exists a finite set of Rees valuations  $\{ u_j \, \vert\, j\in J\}$ of $I_\mu$. This is the unique minimal finite set of valuations of $S$  with the property that 
\[
\overline{I_{\mu}^\la}   = \bigcap_{j\in J}\{ f\in S \, \vert\, u_j(f) \geq  \la \mu \}
\]
for all $\la \in \bN$.
Since ${\overline{I^\la_\mu}}= I_{\mu}^\la = I_{\la \mu}$, $\{u_j \, \vert\, j\in J \}$ is a subset of both $\{w_i \, \vert\, 1\leq i \leq \ell\}$ and $\{w_E \, \vert\, E\subset \cX_0\}$.

To compare  $\{u_j\, \vert\, j\in J \}$ and $\{v_E \, \vert\, E\subset \cX_0\}$,
consider the map
\[
Z:= \Proj (\bigoplus_{\la \in \bN} I_\la ) \to \Spec( S) =: Y
.\]
Note that  $\Proj (\bigoplus_{\la \in \bN} I_\la )  = \Proj (\bigoplus_{\la \in \bN} I_{\mu\la} )  = \Proj (\bigoplus_{\la \in \bN} I_\mu^\la )$, which is the blowup of $Y$ along $I_\la$.  
Since $I_{\mu}^\la = I_{\mu \la}$, which is integrally closed, $Z$ is normal. 
By \cite[Thm 1.10]{BHJ17}, the  Rees valuations of $I_\mu$ are in bijection with the irreducible components of $V(I_\mu \cdot \cO_Z)$.
Observe that
\[
 V(I_\mu  \cdot \cO_Z) = \Proj( \bigoplus_{ \la \in \bN} I_{\la}/I_{\la+\mu}))
\]
On the other hand, the effective Cartier divisor $\mu \cX_0 \subset \cX$ is the $\Proj$ of 
\[
\bigoplus_{m \in \bN}  \bigoplus_{\la  \in \bZ} \cF^\la S_m / \cF^{\la +\mu } S_m 
\cong
\bigoplus_{\la \in \bZ} I_{\la}/I_{\la+\mu}
,\]
where the $\Proj$ is taken with respect to the $m$-grading.
Therefore,  $V(I_{\mu}\cdot \cO_Z)$ and $\mu\cX_0$ are $\Proj$'s of the same ring with respect to different gradings and, hence, have the same number of irreducible components. 
Therefore  $\{u_j \, \vert\, j \in J\} $ and  $\{w_E\, \vert\, E\subset \cX_0\}$  have the same cardinality  and and so  $\{w_E\, \vert\, E\subset \cX_0\} =\{u_j \, \vert j \in J\} \subset \{w_i\, \vert\, 1\leq i \leq \ell\}$. Thus, for each $E\subset \cX_0$, there exists $i$ such that $w_E=w_i$, which gives $v_E=v_i$ and $b_E^{-1} \ord_E(G)=c_i$.
\end{proof}

\subsection{Test configurations of boundary polarized CY pairs}

\begin{defn}
A \emph{weakly special test configuration} $(\cX,\Delta_{\cX}+\cD)$ of a boundary polarized CY pair $(X,\Delta+D)$ is the data of 
\begin{itemize}
\item a $\G_m$-equivariant family of boundary polarized CY pairs $\pi:(\cX,\Delta_{\cX}+\cD)\to \A^1$ and 
\item a $\bG_m$-equivariant isomorphism 
\[
(\cX,\Delta_{\cX}+\cD)_{\bA^1\setminus 0} \cong (X,\Delta+D)\times(\bA^1\setminus 0),\]
where $\bG_m$ acts on the right side as the product of the trivial and standard actions.
\end{itemize}
A test configuration  is a \emph{product} if there exists a $\bG_m$-equivariant isomorphism 
\[
(\cX,\Delta_{\cX}+\cD)\cong (X,\Delta+D)\times \bA^1,
\]
where $\bG_m$-acts on the right side as a product of a $\bG_m$-action on $(X,\Delta+D)$ and the standard action on $\bA^1$.
It is \emph{trivial} if the isomorphism above can be chosen so that the $\bG_m$-action on $(X,\Delta+D)$ is trivial.
\end{defn}

\begin{defn}
We say that there is a \emph{weakly special degeneration} 
\[
(X,\Delta+D)\rightsquigarrow(X_0,\Delta_0+D_0)
\]
if there exists a weakly special test configuration  $(\cX,\Delta_{\cX}+\cD)$ of $(X,\Delta+D)$ such that $(X_0, \Delta_0+D_0)\cong (\cX_0,\Delta_{\cX_0}+\cD_0)$.
\end{defn}

\begin{rem}[Terminology]
The term \emph{weakly special} is a reference to  \cite[Def. 2.16]{LWX21} and highlights the assumptions that the special fiber of the test configuration is slc. 
While one could allow test configurations of boundary polarized CY pairs with worse singularities, these will not appear in the paper. 
Hence, we will often refer to a \emph{weakly special test configuration}  as simply a \emph{test configuration}.
\end{rem}

\begin{rem}[Stacky intepretation]
Test configurations can be understood in terms of   maps into   the moduli stack of boundary polarized CY pairs. 
Indeed, a test configuration $(\cX,\Delta_{\cX}+\cD)\to \bA^1$ of a pair $(X,\Delta+D)$ in $	\cM(\chi,N,{\bf r})(\bk)$ is equivalent to a map $f: [\bA_{\bk}^1/\G_m]\to \cM(\chi,N,{\bf r})$ with an isomorphism $f(1) \cong [(X,\Delta+D)]$.
\end{rem}

Let $(\cX,\Delta_{\cX}+\cD)$ be a weakly special test configuration of $(X,\Delta+D)$.
If we set  
\[
L:= \omega_{X}^{[-r]}(-r\Delta)\quad \text{ and } \quad \cL:=\omega_{\cX/\bA^1}^{[-r]}(-r\Delta_{\cX}),
\] where $r\in \bZ_{>0}$ is sufficiently divisible, and endow $\cL$ with the canonical $\bG_m$-linearization, then $(\cX,\cL)$ is a test configuration of $(X,L)$ in the sense of Definition \ref{d:testconfig}.
The following lemma describes the filtration $\cF^\bullet$ of $H^0(X,mL)$.  

\begin{lem}\label{l:lctestconfig}
Let $(\cX, \Delta_{\cX}+\cD)$ be a weakly special test configuration of a boundary polarized CY pair $(X,\Delta+D)$. If $\cX$ is normal, then 
\begin{itemize}
\item[(1)] 
$\cF^\la H^0(X,mL)
=
\bigcap_{E \subset \cX_0} \{ s\in H^0(X,mL) \, \vert \, v_E(s) -mr A_{X,\Delta}(v_E)\geq \la \}$ and
\item[(2)] $A_{X,\Delta+D}(v_E) = 0$ for each irreducible component $E \subset \cX_0$. 
\end{itemize}
\end{lem}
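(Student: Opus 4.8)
The two statements are closely linked, and I would derive (1) from (2) together with the general machinery of Section~\ref{ss:testconfigps}. Recall that $(\cX,\cL)$ is a test configuration of $(X,L)$ with $L=\omega_X^{[-r]}(-r\Delta)$ and $\cL=\omega_{\cX/\bA^1}^{[-r]}(-r\Delta_{\cX})$. Passing to the normalized graph $\cY$ with maps $f:\cY\to\cX$ and $g:\cY\to X_{\bA^1}$, there is a divisor $G$ supported on $\cY_0$ with $f^*\cL=g^*L_{\bA^1}(G)$, and formula~\eqref{e:filttestconfig} describes $\cF^\bullet$ in terms of the $v_E$ and $b_E^{-1}\ord_E(G)$ for $E\subset\cX_0$. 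So the task for (1) is purely to identify $b_E^{-1}\ord_E(G)$ with $-rA_{X,\Delta}(v_E)$. For this I would compute $G$ using the two crepant-type identities coming from the fact that both $(\cX,\Delta_{\cX})\to\bA^1$ and $(X,\Delta)\times\bA^1\to\bA^1$ are families of log Fano pairs with $K$-trivial-type boundary: on $\cY$ one has $K_{\cY/\bA^1}+\Delta_{\cY}^{\cX}+(\text{exc.\ over }\cX)=f^*(K_{\cX/\bA^1}+\Delta_{\cX})$ and similarly over $X_{\bA^1}$, where $\Delta_\cY^\cX$ and $\Delta_\cY^{X}$ denote the respective strict transforms. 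Subtracting, multiplying by $-r$, and comparing with $f^*\cL-g^*L_{\bA^1}=G$ expresses $\ord_E(G)$ as $-r$ times the discrepancy of $E$ over $X_{\bA^1}$ relative to $(X,\Delta)\times\bA^1$, plus a correction from $\ord_E(\cX_0)=b_E$. Dividing by $b_E$ and using $A_{X,\Delta}(v_E)=b_E^{-1}A_{X\times\bA^1,\,\Delta\times\bA^1+X\times 0}(\ord_E)$ (the standard relation between the log discrepancy of the Rees valuation and that of $E$ over the product) should give exactly $b_E^{-1}\ord_E(G)=-rA_{X,\Delta}(v_E)$, hence (1). A clean alternative is to quote Lemma~\ref{l:tcReesvals}: the valuations appearing in~\eqref{e:filttestconfig} are the minimal ones, so it suffices to produce \emph{one} description of $\cF^\bullet$ of the asserted form and invoke uniqueness; but since we still need to know the relevant valuations are exactly $\{v_E\}$, the direct discrepancy computation seems unavoidable.

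For statement (2), the key point is that $(\cX,\Delta_{\cX}+\cD)\to\bA^1$ is a \emph{family of boundary polarized CY pairs}, so $K_{\cX/\bA^1}+\Delta_{\cX}+\cD\sim_{\bQ,\bA^1}0$, and in particular $K_{\cX}+\Delta_{\cX}+\cD+\cX_0\sim_{\bQ}0$ near $\cX_0$ (twisting by the fiber). By Lemma~\ref{l:slcadj} applied with the one-parameter base $\bA^1$ and divisor $\{0\}$, the pair $(\cX,\Delta_{\cX}+\cD+\cX_0)$ is slc; since $\cX$ is assumed normal, it is in fact lc. Now $\cX_0$ is a reduced component of the boundary of an lc pair whose log canonical class is $\bQ$-trivial, so each irreducible component $E$ of $\cX_0$ is an lc center, i.e.\ $A_{\cX,\Delta_{\cX}+\cD+\cX_0}(\ord_E)=0$. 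This says $\ord_E$, viewed as a divisorial valuation of the total space, has log discrepancy zero; restricting this identity to the general fiber via the relation between $\ord_E$ and its restriction $v_E=b_E^{-1}r(\ord_E)$ (here $b_E=\ord_E(\cX_0)=1$ since $\cX_0$ has $E$ with multiplicity one, the family being reduced over $0$—or in general one divides by $b_E$), and using that $K_{\cX/\bA^1}+\Delta_{\cX}+\cD$ pulls back the trivial class, yields $A_{X,\Delta+D}(v_E)=0$. Concretely: take a $\bG_m$-equivariant log resolution, the crepancy of $K+\Delta_{\cX}+\cD+\cX_0$ along $E$ being zero means the coefficient of $E$ in the relative log discrepancy divisor is $-1$; restricting the corresponding divisorial identity to a general fiber gives $A_{X,\Delta+D}(v_E)=0$ after dividing by $b_E$.

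I expect the main obstacle to be bookkeeping the discrepancies carefully through the normalization-of-the-graph construction in part (1): one must track the strict transforms of $\Delta_{\cX}$, of $\Delta\times\bA^1$, and of $\cD$, the exceptional divisors over each of $\cX$ and $X_{\bA^1}$, and the two different ``fiber'' divisors $\cX_0$ and $X\times 0$, and verify the corrections involving $b_E$ cancel correctly so that the $mr\,b_E^{-1}\ord_E(G)$ term in~\eqref{e:filttestconfig} becomes precisely $-mr\,A_{X,\Delta}(v_E)$. The sign conventions (log discrepancy versus discrepancy, and the $-r$ twist from working with $\omega^{[-r]}$) are the easiest place to slip. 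Part (2) is comparatively short once one has Lemma~\ref{l:slcadj} in hand. A reasonable way to organize the write-up is: (i) record that $(\cX,\Delta_{\cX}+\cD+\cX_0)$ is lc with each component of $\cX_0$ an lc center, deducing (2); (ii) then observe that, given (2), the identity $A_{X,\Delta}(v_E)=A_{X,\Delta+D}(v_E)+v_E(D)=v_E(D)$... no—rather, compute $G$ directly and match against~\eqref{e:filttestconfig}, citing Lemma~\ref{l:tcReesvals} only to confirm no spurious valuations intrude.
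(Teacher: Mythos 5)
Your proposal takes essentially the same route as the paper's proof: for (1) you compute $\ord_E(G)$ as a difference of log discrepancies via the crepant pullback identities on the normalized graph $\cY$ and match against \eqref{e:filttestconfig} using $A_{X,\Delta}(v_E)=b_E^{-1}A_{X\times\bA^1,\Delta\times\bA^1+X\times 0}(\ord_E)$ from \cite[Prop. 4.11]{BHJ17}, and for (2) you combine the lc-ness of $(\cX,\Delta_{\cX}+\cD+\cX_0)$ (forcing $\ord_E(\cX_0)=1$ and $A_{\cX,\Delta_{\cX}+\cD+\cX_0}(E)=0$) with the crepant identification of the total space with the trivial family (Lemma \ref{l:CYbirational}.1) and the same BHJ restriction formula. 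The one cosmetic slip is the displayed identity $K_{\cY/\bA^1}+\Delta_{\cY}^{\cX}+(\mathrm{exc.})=f^*(K_{\cX/\bA^1}+\Delta_{\cX})$, whose exceptional terms should carry discrepancy coefficients rather than coefficient one, but the computation you then describe is the correct one and is exactly the paper's.
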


\begin{proof}
Fix an irreducible component $E\subset \cX_0$. 
Since $(\cX,\Delta_{\cX}+\cD+ \cX_0)$ is lc, 
\[
A_{\cX,\Delta_{\cX}+\cD+ \cX_0}(E) =0=A_{\cX,\Delta_{\cX}+ \cX_0}(E) \]
and 
$\ord_{E}(\cX_0)=1$.
Now, set
$G: =- r f^*(K_{\cX/\bA^1} +\Delta_{\cX})  +r g^*p_1^*(K_{X} +\Delta)$.
Note that
\begin{align*}
G&= -rf^*(K_{\cX} +\Delta_{\cX}) +r g^*(K_{X\times\bA^1} +\Delta\times \bA^1)\\
&= r \left( K_{\cY}- f^*(K_{\cX} +\Delta_{\cX} +\cX_0)\right) - 
r \left( K_{\cY}- g^*(K_{X\times\bA^1} +\Delta-X\times 0)\right)
,
\end{align*}
where the first equality uses   $K_{\cX/\bA^1} = K_{\cX}-\pi^*K_{\bA^1}$ and $K_{X\times\bA^1} = p_1^* K_X+p_2^*K_{\bA^1}$.
Thus
\[
r^{-1} \ord_{E}(G) 
= 
A_{\cX,\Delta_{\cX}+\cX_0}(E) - A_{X \times \A^1, \Delta\times \A^1+X\times 0}(E) = -A_{X\times \A^1, \Delta\times \A^1+X\times 0}(E) 
=-A_{X,\Delta}(v_E),
\]
where the third  equality is \cite[Prop. 4.11]{BHJ17}.
Thus (1)  follows from \eqref{e:filttestconfig}.
For (2),
\[
A_{X,\Delta+ D}(v_E) 
=
A_{X\times \A^1, \Delta\times\A^1+D\times \A^1 +X\times 0}(E)  
=
A_{\cX,\Delta_{\cX}+\cD+\cX_0} (E) 
=
0
,\]
where the first equality is  \cite[Prop. 4.11]{BHJ17}
and the second is by Lemma \ref{l:CYbirational}.1.
\end{proof}

\subsection{Lc places and test configurations}
Now, we discuss the relation between lc places of complements and test configurations of slc log Fano pairs proven in \cite{BLX19,CZ21}.

Recall, the  \emph{lc places} of an lc pair $(X,\Delta)$ is 
\[
{\rm LC}(X,\Delta):= \{ v\in \DivVal_X \, \vert \, A_{X,\Delta}(v)=0 \}.
\]
If $(X,\Delta)$ is an slc pair, we write 
$
\LC(X,\Delta) := \sqcup_{i=1}^{r}\LC(\overline{G}_i+\overline{X}_i, \overline{\Delta}_i)
$,
which is the disjoint union of the lc places of the irreducible components of the normalization.

\begin{thm}[\cite{CZ21}]\label{thm:CZ-lcplace}
If $(X,\Delta+D)$ is an lc boundary polarized CY pair, then there is a bijection between the sets of: 
\begin{enumerate}
	\item Weakly special test configurations $(\cX,\Delta_{\cX}+\cD)$ of $(X,\Delta+D)$ with
	 $\cX_0$ integral.
	\item $\Z$-valued divisorial valuations $v\in \LC(X,\Delta+D)$.
\end{enumerate}
The map is given by sending $(\cX,\Delta_{\cX}+\cD)$ to the valuation $v_{\cX_0}$.
\end{thm}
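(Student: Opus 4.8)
The plan is to construct the two maps explicitly and check they are mutually inverse, using the polarizations $L:=\omega_X^{[-r]}(-r\Delta)$ and $\cL:=\omega_{\cX/\bA^1}^{[-r]}(-r\Delta_{\cX})$ (for $r$ sufficiently divisible) together with the filtration dictionary of Lemma~\ref{l:lctestconfig}. Given a weakly special test configuration $(\cX,\Delta_{\cX}+\cD)$ with $\cX_0$ integral, one first notes that $\cX$ is demi-normal by Lemma~\ref{l:slcadj}, and passing to the normalization changes neither $\cX_0$ (still integral) nor the induced filtration, so we may assume $\cX$ is normal; then $\ord_{\cX_0}(t)=1$, whence $v_{\cX_0}$ is a genuine divisorial valuation of $X$ by \cite[Lem.~4.1]{BHJ17}, and Lemma~\ref{l:lctestconfig}(2) gives $A_{X,\Delta+D}(v_{\cX_0})=0$, i.e.\ $v_{\cX_0}\in\LC(X,\Delta+D)$. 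This produces the map $(\cX,\Delta_{\cX}+\cD)\mapsto v_{\cX_0}$. For injectivity, if two such test configurations both map to $v$, then since $\cX_0$ is a single prime divisor, Lemma~\ref{l:lctestconfig}(1) shows both induce the filtration
\[
\cF^\la_v H^0(X,mL)=\bigl\{\, s\in H^0(X,mL)\ \big|\ v(s)-mr\,A_{X,\Delta}(v)\ge \la \,\bigr\},
\]
which depends only on $v$; a normal test configuration is recovered from its filtration by the Rees construction of \cite[\S2.5]{BHJ17} (after a Veronese twist, the relevant Rees algebra being the section ring and integrally closed as an intersection of valuation ideals), so the two test configurations have isomorphic total spaces and polarizations, and $\Delta_{\cX}$, $\cD$ are forced as the closures of $\Delta\times(\bA^1\setminus 0)$ and $D\times(\bA^1\setminus 0)$. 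Lemma~\ref{l:tcReesvals} is precisely what guarantees that $v$ and the shift $-A_{X,\Delta}(v)$ are read off correctly.

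For surjectivity, fix a divisorial $v=c\,\ord_F\in\LC(X,\Delta+D)$ with $c\in\bZ_{>0}$ and $F$ a prime divisor over $X$, and form the Rees algebra $\cR:=\bigoplus_{m\in\bN}\bigoplus_{\la\in\bZ}\cF^\la_v H^0(X,mL)\,t^{-\la}$ attached to the filtration displayed above. The key input is that $\cR$ is \emph{finitely generated}: because $A_{X,\Delta+D}(F)=0$ and $K_X+\Delta+D\sim_{\bQ}0$, the divisor $F$ is an lc place of a $\bQ$-complement of the slc log Fano pair $(X,\Delta)$; in the klt case this is the finite generation theorem of \cite[\S6]{LXZ21} (building on \cite{BLX19}), and the slc case reduces to it by passing to the normalization $(\oX,\oG+\oDe+\oD)$, where $v$ restricts to lc places of $\bQ$-complements on the components, and reassembling via Koll\'ar's gluing theory, as in \cite{CZ21}. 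Granting this, set $\cX:=\Proj_{\bA^1}\cR$ and $\cL:=\cO_{\cX}(1)$; this is a $\bG_m$-equivariant flat projective family over $\bA^1$ that agrees with $(X,L)\times(\bA^1\setminus 0)$ away from the origin, and whose central fibre is $\Proj$ of $\gr_v R$. That $\cX_0$ is \emph{integral} follows from the fact that $F$ can be extracted by a proper birational $\mu\colon Y\to X$ with $\Exc(\mu)=F$ and $-F$ $\mu$-ample, so that $Y=\Proj_X\bigoplus_k\mu_*\cO_Y(-kF)$ is the blow-up of the $F$-adic filtration and $\gr_v R$ is, up to a cone construction, the coordinate ring of the prime divisor $F$; such a $\mu$ exists by running an MMP after reducing to the normal components, as in \cite{BCHM10,KX20}.

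It remains to equip $\cX$ with divisors and verify the slc condition. Let $\Delta_{\cX}$ and $\cD$ be the closures in $\cX$ of $\Delta\times(\bA^1\setminus 0)$ and $D\times(\bA^1\setminus 0)$. Since $\cL|_{\cX_t}\cong\omega_{\cX_t}^{[-r]}(-r\Delta_{\cX_t})$, the class $K_{\cX/\bA^1}+\Delta_{\cX}$ is $\bQ$-Cartier; then $K_{\cX/\bA^1}+\Delta_{\cX}+\cD\equiv_{\bA^1}0$, and since $\cX_0$ is irreducible and $\sim 0$ one twists to get $K_{\cX/\bA^1}+\Delta_{\cX}+\cD\sim_{\bQ,\bA^1}0$ (cf.\ Lemma~\ref{l:Ncompspecialize}), while $\cD$ is relatively ample; flatness, K-flatness, and the Mumford conditions over the reduced base $\bA^1$ follow from Lemma~\ref{lem:flatdiv}. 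For the slc condition on the central fibre, one uses the $(-F)$-ample extraction $\mu\colon Y\to X$ together with the deformation to the normal cone of $\mu(F)\subset X$: this produces a common modification that is crepant over $(X\times\bA^1,(\Delta+D)\times\bA^1+X\times 0)$ — which is lc by Lemma~\ref{l:slcadj} — and realizes $(\cX,\Delta_{\cX}+\cD+\cX_0)$ as a weak log canonical model of it, where the hypothesis $A_{X,\Delta+D}(F)=0$ is exactly what keeps all discrepancies $\ge -1$; hence $(\cX,\Delta_{\cX}+\cD+\cX_0)$ is lc, and adjunction (Lemma~\ref{l:slcadj}) gives that $(\cX_0,\Delta_{\cX_0}+\cD_0)$ is slc, so $(\cX,\Delta_{\cX}+\cD)\to\bA^1$ is a weakly special test configuration of $(X,\Delta+D)$. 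Finally Lemma~\ref{l:lctestconfig}(1) together with Lemma~\ref{l:tcReesvals} identifies the unique Rees valuation of $(\cX,\cL)$ with $v$, so $v_{\cX_0}=v$ and the two maps are inverse to one another. The main obstacle throughout is the finite generation statement for lc places of $\bQ$-complements in the slc setting — the klt case being the hard theorem of \cite{LXZ21}, and the slc reduction the technical heart of \cite{CZ21} — together with the attendant fact that the central fibre comes out integral; the remaining steps are routine applications of inversion of adjunction and the results on families recalled in Section~\ref{s:prelim}.
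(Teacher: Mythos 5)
Your overall architecture is the same as the paper's: well-definedness from Lemma \ref{l:lctestconfig}.2, injectivity from the fact that a test configuration is determined by its filtration (\cite[Prop.~2.15]{BHJ17}) together with Lemma \ref{l:lctestconfig}.1, and surjectivity by taking $\Proj$ of the Rees algebra of the filtration attached to $v$. The paper packages the surjectivity step as Proposition \ref{p:lcplacestestconfig} (stated for several lc places at once), and gets integrality of $\cX_0$ not from a divisorial extraction of $F$ but from the valuative description of $\cF^\bullet$ (reducedness) plus Lemma \ref{l:tcReesvals} (the number of components of $\cX_0$ equals the number of Rees valuations, which is at most $\ell=1$); your route via a $(-F)$-ample extraction and degeneration to the normal cone is plausible but leaves more unchecked (such an extraction with $\Exc(\mu)=F$ and $-F$ $\mu$-ample is not automatic when $X$ is not $\bQ$-factorial), so the Rees-valuation count is the cleaner argument. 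Two smaller remarks: since $(X,\Delta+D)$ is lc, $X$ is normal, so your normalization preamble in the well-definedness step is vacuous ($\cX$ is already normal once $\cX_0$ is integral, by \cite[Prop.~2.6.iv]{BHJ17}); and $r(\ord_{\cX_0})$ may be the trivial valuation ($c=0$), which is exactly what matches the trivial test configuration.

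The one substantive gap is in your justification of finite generation. You invoke the finite generation theorem of \cite{LXZ21} ``in the klt case'' and propose to handle ``the slc case'' by normalizing and regluing. But the theorem concerns an \emph{lc} pair, so $X$ is already normal, and the case that actually needs an argument is the strictly lc normal one: $(X,\Delta)$ an lc-but-not-klt log Fano pair with $v$ an lc place of the complement $\Delta+D$. Normalization does nothing here, and the \cite{LXZ21} finite generation theorem is stated for klt log Fano pairs, so neither of your two cases covers it. The paper closes this gap with Theorem \ref{t:fg}: one passes to the (relative) cone over $(X,\Delta+D)$ with respect to $L$, lifts $v$ to a quasi-monomial valuation $w$ on the cone with $A_{Y,\Gamma+G+Y_\kappa}(w)=0$, extracts the corresponding divisors on a $\bQ$-factorial dlt modification via \cite[Cor.~1.38]{Kol13}, and deduces finite generation from the existence of good minimal models for $\bQ$-factorial dlt pairs with $K+B\sim_{\bQ,g}0$ (\cite[Cor.~1.6]{HX13}). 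If you want a self-contained proof in the lc setting, you need this (or an equivalent) input; otherwise the surjectivity step rests entirely on the external reference \cite{CZ21}, which is the result being reproved.
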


While the theorem follows from \cite{CZ21}, we will instead deduce it from the following more general result, which produces a weakly special test configuration from the data of a collection of lc places.

\begin{prop}\label{p:lcplacestestconfig}
Let $(X,\Delta+D)$ be an lc boundary polarized CY pair and $v_1,\ldots, v_\ell\in \LC(X,\Delta+D)$. 
\begin{enumerate}
\item Then the $\bk[t]$-algebra
		${\rm Rees}(\cF^\bullet):= \bigoplus_{m\in \bN} \bigoplus_{\la \in \bZ}\cF^\la H^0(X,mL)t^{-\la}$
		is finitely generated,
where 
\[
\cF^\la H^0(X,mL):= \{ s\in \cF^\la H^0(X,m L) \, \vert\, v_i(s) -mr A_{X,\Delta}(v_i)\geq \la \}
\]
\item 
If we set 
$\cX:=  \Proj ({\rm Rees}(\cF^\bullet))$, then 
$(\cX,\Delta_{\cX}+\cD)$ is   a weakly special test configuration of $(X,\Delta+D)$ 
and each irreducible component $E\subset \cX_0$ satisfies $v_E= v_i$ for some $i$. 
\end{enumerate}
\end{prop}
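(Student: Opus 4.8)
The plan is to follow the strategy of \cite{LX20,BLX19,ABHLX20} relating lc places of complements to weakly special degenerations, handling the strictly lc and slc cases by the MMP for lc pairs \cite{HX13} and Koll\'ar's gluing theory \cite{Kol13} as in Proposition~\ref{p:bpcygluing}. After a harmless twist of the polarization by a multiple of the central fibre (as in the proof of Lemma~\ref{l:tcReesvals}), we may assume $\cF^0 H^0(X,mL)=H^0(X,mL)$ for all $m$, and we may enlarge $r$ so that $rA_{X,\Delta}(v_i)\in\bZ$ for all $i$.

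The key geometric input for (1) is a common crepant model. Write $v_i=c_i\ord_{E_i}$ with $E_i$ a prime divisor over $X$. Since the $v_i$ are lc places of $(X,\Delta+D)$, I would produce --- running an MMP in the normal case \cite{HX13,BCHM10}, and in the non-normal case by normalizing, extracting the finitely many $E_i$ on each component of $\oX$, and regluing along the involution $\tau$ fixing the different exactly as in Proposition~\ref{p:bpcygluing} --- a projective birational morphism $\mu\colon(Y,\Gamma_Y)\to(X,\Delta+D)$ with $Y$ demi-normal, each $E_i$ a prime divisor on $Y$ appearing in $\Gamma_Y$ with coefficient one, and $K_Y+\Gamma_Y=\mu^*(K_X+\Delta+D)$ crepant; in particular $(Y,\Gamma_Y)$ is a (possibly disconnected) lc CY pair. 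Reading off $A_{X,\Delta}(\ord_{E_i})=\mult_{E_i}(\mu^*D)$ from this identity, pulling sections back to $Y$ identifies $\cF^\la H^0(X,mL)$ with the space of sections of $m\mu^*L$ vanishing to a prescribed order, linear in $m$ and $\la$, along the $E_i$. Thus $\mathrm{Rees}(\cF^\bullet)$ is a multigraded section algebra on the lc Calabi--Yau pair $(Y,\Gamma_Y)$ of the semiample and big divisor $\mu^*L$ twisted by the boundary divisors $E_i$; its finite generation follows from finite generation results in the MMP (\cite{BCHM10}, together with \cite{FG14moduli}-type finite generation of log canonical type rings in the strictly lc case). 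This proves (1).

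For (2), set $\cX:=\Proj_{\bA^1}\mathrm{Rees}(\cF^\bullet)$ and $\cL:=\cO_\cX(1)$; then $\cX\to\bA^1$ is $\bG_m$-equivariant, flat, and proper, $\cL$ is relatively ample, and $(\cX,\cL)|_{\bA^1\setminus 0}\cong(X,L)\times(\bA^1\setminus 0)$, so $(\cX,\cL)$ is a test configuration of $(X,L)$ in the sense of Definition~\ref{d:testconfig}. Let $\Delta_\cX$ and $\cD$ be the closures of $\Delta\times(\bA^1\setminus 0)$ and $D\times(\bA^1\setminus 0)$. Lemma~\ref{l:tcReesvals} (applied to the normalization of $\cX$ in the slc case), together with the fact that the valuations defining $\cF^\bullet$ are the $v_i$, shows every irreducible component $E\subset\cX_0$ satisfies $v_E=v_i$ for some $i$, and that $\cX_0$ is reduced since the $v_i$ are $\bZ$-valued. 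Since $A_{X,\Delta+D}(v_i)=0$, a canonical bundle computation on the normalized graph of $\cX\dashrightarrow X\times\bA^1$ (as in the proof of Lemma~\ref{l:lctestconfig}) exhibits $(\cX,\Delta_\cX+\cD+\cX_0)$ as crepant birational to $(X\times\bA^1,(\Delta+D)\times\bA^1+X\times 0)$; the latter is lc because $(X,\Delta+D)$ is, so $(\cX,\Delta_\cX+\cD+\cX_0)$ is lc and $K_{\cX/\bA^1}+\Delta_\cX+\cD\sim_{\bQ,\bA^1}0$. Moreover, by the choice of twist in the filtration $\cL\sim_{\bQ,\bA^1}-r(K_{\cX/\bA^1}+\Delta_\cX)$ (as in Lemma~\ref{l:lctestconfig}(1)), which is relatively ample, whence $\cD\sim_{\bQ,\bA^1}\tfrac1r\cL$ is relatively ample; therefore $(\cX,\Delta_\cX+\cD)\to\bA^1$ is a family of boundary polarized CY pairs, i.e.\ a weakly special test configuration.

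The main obstacle is the finite generation in (1): a general finite intersection of divisorial filtrations need not have a finitely generated Rees algebra, so one must genuinely use that the $v_i$ are lc places of a single complement by extracting them simultaneously on a crepant log Calabi--Yau model and invoking finite generation from the MMP --- and, in the slc case, carry this out compatibly with Koll\'ar's gluing construction, which requires the chosen model and involution to match along the conductor as in Proposition~\ref{p:bpcygluing}.
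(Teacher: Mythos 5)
Your part (2) is essentially the paper's argument: take $\Proj$ of the Rees algebra, use Lemma \ref{l:tcReesvals} to see that every component $E\subset\cX_0$ has $v_E=v_i$ for some $i$ and that $\cX_0$ is reduced, and run the crepant computation of Lemma \ref{l:lctestconfig} on the normalized graph of $\cX\dashrightarrow X_{\bA^1}$ to get lc-ness of $(\cX,\Delta_\cX+\cD+\cX_0)$, the relation $K_\cX+\Delta_\cX+\cD+\cX_0\sim_{\bQ}0$, and relative ampleness of $-K_\cX-\Delta_\cX$. One remark: the hypothesis is that $(X,\Delta+D)$ is \emph{lc}, which in this paper's conventions forces $X$ normal, so your excursions into normalization, the conductor involution, and Koll\'ar's gluing are not needed for this statement and only add unverified steps.

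The genuine gap is in (1). You reduce finite generation to the claim that a $\bZ^2$-graded algebra of sections of divisors, $\bQ$-linear in $(m,\lambda)$, on a crepant lc Calabi--Yau model $(Y,\Gamma_Y)$ extracting the $E_i$, is finitely generated, and you dispose of this by citing ``finite generation results in the MMP'' (\cite{BCHM10} plus lc-type results). That is exactly the nontrivial point, and it is not a citable black box: \cite{BCHM10} and its multigraded corollaries are klt statements, whereas your $E_i$ have coefficient one in $\Gamma_Y$; and the lc-type input actually available, \cite[Cor.~1.6]{HX13}, produces a good minimal model for a \emph{single} perturbation of a dlt CY pair, i.e.\ finite generation of a \emph{singly} graded divisorial algebra, not of a two-parameter family (for which one would need a geography-of-models statement for lc pairs that you neither state nor prove). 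The paper's device for collapsing the two gradings into one is the cone construction behind Theorem \ref{t:fg}: each $v_i$ is combined with the grading direction into a single divisorial valuation $w_i=c_i\ord_{F_i}$ on the cone over $(X,\Delta+D)$ with log discrepancy zero for the induced CY structure; after splitting $\bN\times\bZ$ into two finitely generated submonoids, the nontrivial part of ${\rm Rees}(\cF^\bullet)$ becomes the singly graded algebra $\bigoplus_{p}g_*\cO_Z(\lfloor -pF\rfloor)$ on a dlt modification $Z$ of the cone, with $F=\sum_i c_i^{-1}F_i$ supported in the coefficient-one part of the boundary, and a single application of \cite[Cor.~1.6]{HX13} finishes the proof. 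Without this reduction (or the alternative reference \cite{CZ22} that the paper invokes), your proof of (1) is incomplete; you correctly flag finite generation as the main obstacle, but the resolution you propose does not overcome it.
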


In statement (2), $(\cX,\Delta_{\cX}+\cD)$ is defined as follows. 
The $\bZ$-grading on ${\rm Rees}(\cF^\bullet)$ induces a $\bG_m$-action on $\cX$ and the isomorphism 
\[
{\rm Rees}(\cF^\bullet)\bigotimes_{k[t]} k[t^\pm] \cong \bigoplus_{m \in \bN }\bigoplus_{\la \in \bZ} H^0(X,mL) t^{-\la}
\]
induces a $\bG_m$-equivariant isomorphism $\cX\vert_{\bA^1\setminus 0} \cong X\times (\bA^1\setminus 0)$. 
The $\bQ$-divisors $\Delta_{\cX}$ and $\cD$ then  denote the closures of $\Delta\times (\bA^1\setminus 0)$
and $D\times (\bA^1\setminus 0)$ in $\cX$.

\begin{proof}
Statement (1) holds by Theorem \ref{t:fg}, which appears in a later section, or by  \cite[pg. 14]{CZ22}.
Next, set $\cX:= \Proj ({\rm Rees}(\cF^\bullet))$ and $\cL:= \cO_{\cX}(1)$.
After replacing $r>0$ with a positive multiple, we may assume ${\rm Rees}(\cF^\bullet)$ is generated in degree $1$ and, hence, $\cL$ is a line bundle. 
By  \cite[\S 2.5]{BHJ17},  $(\cX,\cL)$ is a test configuration of $(X,L)$.
Using the valuative definition of $\cF^\bullet$ and the fact that $X$ is integral, 
we see $\cX_0$ is reduced.
Since $X$ is normal and $\cX_0$ is reduced, $\cX$ is normal by  \cite[Prop. 2.6.iv]{BHJ17}.

 Let $\cY$ denote the normalization of the graph of $\cX \dashrightarrow X_{\bA^1}$
with maps $f:\cY\to\cX$ and $g: \cY \to X_{\bA^1}$.
Let $G$ be the divisor supported on $\cY_0$ such that $f^* \cL =g^* L_{\bA^1}(G)$.
By Lemma \ref{l:tcReesvals} and the definition of $\cF^\bullet$, each irreducible component $E\subset \cX_0$ satisfies
\[
A_{X,\Delta+D}(v_E)= 0
\quad \text{ and } \quad
{\rm coeff}_E(G) = -rA_{X,\Delta} (r(\ord_E))
.\]
In addition, $v_E= v_i$ for some $i$.

To prove $(\cX,\Delta_{\cX}+\cD)$ is a weakly special test configuration of $(X,\Delta+D)$, it remains to show $(\cX,\Delta_{\cX}+\cD)\to \bA^1$ is a family of boundary polarized CY pairs.
Using Lemma \ref{l:slcadj}, it suffices to to prove (i) $(\cX,\Delta_{\cX}+\cD+\cX_0)$ is lc, (ii) $K_{\cX}+\Delta_{\cX}+\cD+\cX_0 \sim_{\bQ}0$, and (iii) $-K_{\cX}-\Delta_{\cX}$ is ample over $\bA^1$.
To verify (i) and (ii), note that
	\[
	A_{X_{\bA^1},\Delta_{\bA^1}+D_{\bA^1}+ X\times 0} (E) 
	= 
	A_{X,\Delta+D}(v_E)
	= 
	0
	,\]
	where the first equality is \cite[Prop. 4.11]{BHJ17}.
	Thus, if we define $\Gamma_{\cY}$ by the formula
	\[
	K_{\cY}+\Gamma_{\cY}=g^* (K_{X_{\bA^1}}+\Delta_{\bA^1}+D_{\bA^1}+X\times 0), 
	\]
	then $f_*\Gamma_{\cY} = \Delta_{\cX}+\cD+ \cX_0$.
	Since $(X_{\bA^1},\Delta_{\bA^1}+D_{\bA^1}+X\times 0)$ is lc and $K_{X_{\bA^1}}+
	\Delta_{\bA^1}+D_{\bA^1}+X\times 0\sim_{\bQ}0$, it follows that 
	(i) and (ii) hold.
For (iii), note that  
	\[
	{\rm coeff}_E(G)
	=
	-rA_{X,\Delta} (v_E)
	=
	-r A_{X_{\bA^1}+\Delta_{\bA^1}+X_0}(E)
	=
	-r {\rm coeff}_E( K_{\cY/X\times \bA^1}-g^*\Delta_{\bA^1})
	,\]
	where the second equality is \cite[Prop. 4.11]{BHJ17}.
	Next, fix a divisor $\cH$ on $\cX$ such that $\cL\cong \cO_{\cX}(\cH)$. 
	Since $\cH$ is ample over $\bA^1$ and
	\[
	\cH \sim  -r g_* f^*( K_{X_\bA^1}+\Delta_{\bA^1})+g_*G \sim 
	-rg_* ( K_{\cY}+ f_*^{-1} \Delta_{\bA^1}) 
	= 
	-r(K_\cX+ \Delta_{\cX})
	,\]
	(iii) holds. Therefore $(\cX,\Delta_{\cX}+\cD)$ is a weakly special test configuration.
\end{proof}

\begin{proof}[Proof of Theorem \ref{thm:CZ-lcplace}]
The map is well defined by Lemma \ref{l:lctestconfig}.2.
The map is injective, 
since a weakly special test configuration $(\cX,\Delta_{\cX}+\cD)$  of $(X,\Delta+D)$ is uniquely determined by its 
induced filtration of the section ring  \cite[Prop. 2.15]{BHJ17} and the filtration is uniquely determined by $v_{\cX_0}$ (Lemma \ref{l:lctestconfig}.1).
The map is  surjective by Proposition \ref{p:lcplacestestconfig}.
\end{proof}

We now prove two additional statements concerning test configurations of boundary polarized CY pairs and modifying the complement.

\begin{lem}\label{l:tclcplace}
Let $(X,\Delta+D)$ be an lc boundary polarized CY pair. 
If  $(\cX,\Delta_{\cX})$ is a weakly special test configuration of $(X,\Delta)$ and  $v_E \in \LC(X,\Delta+D)$ for each  irreducible component $E\subset \cX_0$, then 
$(\cX,\Delta_{\cX}+\cD)$ is a weakly special test configuration of $(X,\Delta+D)$.
\end{lem}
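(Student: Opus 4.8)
The strategy is to identify $(\cX,\Delta_{\cX}+\cD)$ with the weakly special test configuration produced by Proposition \ref{p:lcplacestestconfig} from the finite collection of valuations $\{v_E : E\subset\cX_0 \text{ an irreducible component}\}$, and then quote that proposition. This is the same mechanism used to deduce Theorem \ref{thm:CZ-lcplace}, only now applied with several lc places at once and starting from a test configuration of the \emph{log Fano} pair $(X,\Delta)$.

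First I would record that $\cX$ is normal: this is automatic, since $X$ is normal and $\cX_0$ is reduced (being a component of the boundary of the lc pair $(\cX,\Delta_{\cX}+\cX_0)$), so \cite[Prop.~2.6]{BHJ17} applies. Choosing a sufficiently divisible $r\in\bZ_{>0}$ and setting $L:=\omega_X^{[-r]}(-r\Delta)$ and $\cL:=\omega_{\cX/\bA^1}^{[-r]}(-r\Delta_{\cX})$ (which are $\bQ$-Cartier, indeed line bundles for $r$ divisible, because $-(K_{\cX/\bA^1}+\Delta_{\cX})$ is relatively ample), the pair $(\cX,\cL)$ is a normal test configuration of $(X,L)$ in the sense of Definition \ref{d:testconfig}. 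Let $\cF_{\cX}^{\bullet}$ be the induced filtration of $\bigoplus_m H^0(X,mL)$.

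Next I would compute $\cF_{\cX}^{\bullet}$. The argument proving Lemma \ref{l:lctestconfig}(1) uses only that $(\cX,\Delta_{\cX}+\cX_0)$ is lc with $\ord_E(\cX_0)=1$ for each component $E\subset\cX_0$, together with the relation $f^*\cL=g^*L_{\bA^1}(G)$ on the normalized graph $\cY$ of $\cX\dashrightarrow X_{\bA^1}$ (with projections $f\colon\cY\to\cX$, $g\colon\cY\to X_{\bA^1}$); all of this is available for a weakly special test configuration of $(X,\Delta)$. Hence the same computation gives $b_E^{-1}\ord_E(G)=-r\,A_{X,\Delta}(v_E)$ and therefore
\[
\cF_{\cX}^{\la}H^0(X,mL)=\bigcap_{E\subset\cX_0}\bigl\{\, s\in H^0(X,mL)\ :\ v_E(s)-mr\,A_{X,\Delta}(v_E)\ge\la \,\bigr\}
\]
for all $m,\la$. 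By hypothesis each $v_E$ lies in $\LC(X,\Delta+D)$; enumerating them as $v_1,\dots,v_\ell$, the right-hand side is exactly the filtration $\cF^{\bullet}$ of Proposition \ref{p:lcplacestestconfig} attached to $(X,\Delta+D)$ and $v_1,\dots,v_\ell$. Since a normal test configuration is recovered from its filtration via the Rees construction (\eqref{e:tc} and \cite[Prop.~2.15]{BHJ17}), it follows, after replacing $r$ by a multiple as in the proof of Proposition \ref{p:lcplacestestconfig}, that $\cX\cong\Proj({\rm Rees}(\cF^{\bullet}))=:\cX'$ as $\bG_m$-schemes over $\bA^1$, compatibly with the identifications over $\bA^1\setminus 0$. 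Under this isomorphism $\Delta_{\cX}$ and $\cD$ — defined as the closures of $\Delta\times(\bA^1\setminus 0)$ and $D\times(\bA^1\setminus 0)$ — are carried to $\Delta_{\cX'}$ and $\cD'$, so $(\cX,\Delta_{\cX}+\cD)\cong(\cX',\Delta_{\cX'}+\cD')$, and Proposition \ref{p:lcplacestestconfig}(2) says this last pair is a weakly special test configuration of $(X,\Delta+D)$.

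The only delicate point is the filtration identity in the third step, i.e. checking that the discrepancy computation from the proof of Lemma \ref{l:lctestconfig}(1) still goes through when $(\cX,\Delta_{\cX})$ degenerates only the log Fano pair rather than the boundary polarized CY pair; but that computation never refers to $\cD$, so no new difficulty arises. An alternative, more hands-on route would be to verify the three conditions of Lemma \ref{l:slcadj} for $(\cX,\Delta_{\cX}+\cD+\cX_0)$ directly on $\cY$, using $A_{X_{\bA^1},\Delta_{\bA^1}+D_{\bA^1}+X\times 0}(E)=A_{X,\Delta+D}(v_E)=0$ (via \cite[Prop.~4.11]{BHJ17}) to see that $\cD+\cX_0$ appears with the correct coefficients in the crepant pullback $g^*(K_{X_{\bA^1}}+\Delta_{\bA^1}+D_{\bA^1}+X\times 0)$; I would keep the Rees-construction argument as the primary one, since it sidesteps the $\bQ$-Cartierness bookkeeping for $\cD$ that the direct approach requires.
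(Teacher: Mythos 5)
Your proof is correct, but your primary argument takes a genuinely different route from the paper's. The paper argues directly on the normalized graph $\cY$ of $\cX\dashrightarrow X_{\bA^1}$: it sets $K_{\cY}+\Gamma=g^*(K_{X_{\bA^1}}+\Delta_{\bA^1}+D_{\bA^1}+X\times 0)$, uses $A_{X_{\bA^1},\Delta_{\bA^1}+D_{\bA^1}+X\times 0}(\ord_E)=A_{X,\Delta+D}(v_E)=0$ (via \cite[Prop.~4.11]{BHJ17}) to conclude that ${\rm coeff}_E(f_*\Gamma)=1$ for every component $E\subset\cX_0$, hence that $f_*\Gamma=\Delta_{\cX}+\cD+\cX_0$, and then deduces that $(\cX,\Delta_{\cX}+\cD+\cX_0)$ is lc with $K_{\cX}+\Delta_{\cX}+\cD+\cX_0\sim_{\bQ}0$ as a crepant pushforward of the lc CY pair $(X_{\bA^1},\Delta_{\bA^1}+D_{\bA^1}+X\times 0)$; this is exactly the ``alternative, more hands-on route'' you sketch in your final paragraph. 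Your main argument --- computing the filtration of $(\cX,\cL)$, observing that it coincides with the one fed into Proposition \ref{p:lcplacestestconfig} for the valuations $v_E\in\LC(X,\Delta+D)$, and invoking the fact that a normal ample test configuration is recovered from its filtration via the Rees construction --- also goes through: normality of $\cX$ does follow from reducedness of $\cX_0$ (forced by Lemma \ref{l:slcadj}) together with Serre's criterion, and the discrepancy computation in the proof of Lemma \ref{l:lctestconfig}(1) only uses $A_{\cX,\Delta_{\cX}+\cX_0}(E)=0$ and $\ord_E(\cX_0)=1$, both available for a weakly special test configuration of the log Fano pair $(X,\Delta)$. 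What your route buys is that the lc and $\bQ$-Cartier verifications are entirely outsourced to Proposition \ref{p:lcplacestestconfig}, and finite generation of the Rees algebra comes for free since the filtration arises from an honest test configuration; the cost is the extra bookkeeping needed to check that the resulting isomorphism $\cX\cong\Proj({\rm Rees}(\cF^{\bullet}))$ respects the trivializations over $\bA^1\setminus 0$ and carries $\Delta_{\cX}$ and $\cD$ to the corresponding closures. The paper's direct computation is shorter, but both arguments are sound.
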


By a \emph{weakly special test configuration} of an lc log Fano pair as above, we mean that
$(\cX,\Delta_{\cX})\to \bA^1$ is a $\bG_m$-equivariant family of log Fano pairs,  and there is a  $\bG_m$-equivariant isomorphism 
\[
(\cX,\Delta_{\cX})\vert_{\bA^1 \setminus 0} \cong (X,\Delta)\times (\bA^1\setminus 0),
\]
where $\bG_m$-acts on the right as the product of the trivial action and the standard action.

\begin{proof}
Let $\cY$ denote the normalization of the graph of $\cX\dashrightarrow X_{\bA^1}$ with maps $f:\cY \to \cX$ and $g:\cY\to X_{\bA^1}$. 
Define a $\bQ$-divisor on $\Gamma$ by 
\[
K_{\cY}+\Gamma= g^*(K_{X_{ \bA^1}}+\Delta_{\bA^1}+D_{\bA^1}+X\times 0)
.\]
For each irreducible component $E\subset \cX_0$,
\[
{\rm coeff}_{E}(f_*\Gamma)
=
1- A_{X_{\bA^1},\Delta_{\bA^1}+D_{\bA^1} +X\times 0}(\ord_E)
=
1 - A_{X,\Delta+D}(v_E)
=1
,\]
where the second equality is \cite[Prop. 4.11]{BHJ17}. 
Thus $f_* \Gamma= K_{\cX}+\Delta_{\cX}+\cD+\cX_0$.
Since $(X_{\bA^1},\Delta_{\bA^1}+D_{\bA^1} +X\times 0)$ is lc and $K_{X_{\bA^1}}+\Delta_{\bA^1}+D_{\bA^1} +X\times 0\sim_{\bQ} 0$,
$(\cX,\Delta_{\cX}+\cD+\cX_0)$ lc and $K_{\cX}+\Delta_{\cX}+\cD+\cX_0\sim_{\bQ}0$. 
Thus $(\cX,\Delta_{\cX}+\cD)$ is a weakly special test configuration.
\end{proof}

\begin{lem}\label{l:lcDlcB}
Let $(\cX,\Delta_{\cX}+\cD)$ be a weakly special test configuration of a boundary polarized CY pair $(X,\Delta+D)$.

If $B$ is a complement of $(X,\Delta)$ satisfying $\LC(X,\Delta+D) \subset \LC(X,\Delta+B)$, then 
\begin{enumerate}
\item $(\cX,\Delta_{\cX}+\cB)$ is a weakly special test configuration of $(X,\Delta+B)$, where $\cB$ is the closure of $B\times(\bA^1\setminus 0)$ in $ \cX$ and
\item $\LC(\cX_0,\Delta_{\cX_0}+\cD_0) \subset \LC (\cX_0,\Delta_{\cX_0}+\cB_0)$.
\end{enumerate}
\end{lem}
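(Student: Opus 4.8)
The plan is to prove (1) by reducing to the lc case and invoking Lemma \ref{l:tclcplace}, and to prove (2) by comparing the lc places of $(\cX,\Delta_{\cX}+\cD+\cX_0)$ and $(\cX,\Delta_{\cX}+\cB+\cX_0)$ through the crepant birational model $X\times\bA^1$ together with adjunction to the central fiber $\cX_0$.

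\textbf{Part (1).} First suppose $(X,\Delta+D)$ is lc, so $X$ is normal. Then $\cX$ is normal: $\cX_0=\pi^{-1}(0)$ is a reduced Cartier divisor, being a fiber of a family of slc pairs, so $\cX$ is $R_1$, and $\cX$ is $S_2$ since it is flat over a smooth base with $S_2$ fibers. Since $(\cX,\Delta_{\cX}+\cD)\to\bA^1$ is a family of boundary polarized CY pairs, $(\cX,\Delta_{\cX})\to\bA^1$ is a family of log Fano pairs, hence $(\cX,\Delta_{\cX})$ is a weakly special test configuration of $(X,\Delta)$; by Lemma \ref{l:lctestconfig}.2 every irreducible component $E\subset\cX_0$ satisfies $v_E\in\LC(X,\Delta+D)\subseteq\LC(X,\Delta+B)$, so Lemma \ref{l:tclcplace}, applied with $B$ in place of $D$, shows that $(\cX,\Delta_{\cX}+\cB)$ is a weakly special test configuration of $(X,\Delta+B)$. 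For general slc $(X,\Delta+D)$, I would pass to the normalization $(\overline{\cX},\overline{G}_{\cX}+\overline{\Delta}_{\cX}+\overline{\cD})\to\bA^1$, which by Lemma \ref{l:familyslcnormadj}.1 is a $\bG_m$-equivariant family of (possibly disconnected) boundary polarized CY pairs restricting over $\bA^1\setminus 0$ to $(\overline{X},\overline{G}+\overline{\Delta}+\overline{D})\times(\bA^1\setminus 0)$. Because the decompositions $\LC(X,\Delta+D)=\bigsqcup_i\LC(\overline{X}_i,\overline{G}_i+\overline{\Delta}_i+\overline{D}_i)$ and $\LC(X,\Delta+B)=\bigsqcup_i\LC(\overline{X}_i,\overline{G}_i+\overline{\Delta}_i+\overline{B}_i)$ are over the same normalization $\overline{X}_i$ and conductor $\overline{G}_i$, the hypothesis holds componentwise, and the lc case applied to each normal component $\overline{\cX}_i$ shows $(\overline{\cX},\overline{G}_{\cX}+\overline{\Delta}_{\cX}+\overline{\cB})\to\bA^1$ is a family of boundary polarized CY pairs. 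The gluing involution of $\overline{\cX}\to\cX$ depends only on $\cX$ and extends the product involution over $\bA^1\setminus 0$, so Proposition \ref{p:bpcygluing} descends this family; as the gluing data coincides with that defining $\cX$, the quotient is $\cX$ itself, and with the evident $\bG_m$-equivariance and identification over $\bA^1\setminus 0$ this proves $(\cX,\Delta_{\cX}+\cB)$ is a weakly special test configuration of $(X,\Delta+B)$.

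\textbf{Part (2).} By part (1) and Lemma \ref{l:slcadj}, both $(\cX,\Delta_{\cX}+\cD+\cX_0)$ and $(\cX,\Delta_{\cX}+\cB+\cX_0)$ are slc Calabi--Yau pairs, and $\cX_0$ is a reduced Cartier divisor whose components are lc centers. I first claim
\[
\LC(\cX,\Delta_{\cX}+\cD+\cX_0)\subseteq\LC(\cX,\Delta_{\cX}+\cB+\cX_0).
\]
Passing to the normalization, each component $\big(\overline{\cX}_i,\overline{G}_{\cX,i}+\overline{\Delta}_{\cX,i}+\overline{\cD}_i+(\overline{\cX}_i)_0\big)$ is, by Lemma \ref{l:CYbirational}.1, crepant birational to $\big(\overline{X}_i\times\bA^1,(\overline{G}_i+\overline{\Delta}_i+\overline{D}_i)\times\bA^1+\overline{X}_i\times 0\big)$, since the two families agree over $\bA^1\setminus 0$; the analogous statement holds for $\cB$. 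So it suffices to compare lc places of these product pairs. On $\overline{X}_i\times\bA^1$ the divisor $\overline{X}_i\times 0$ is an lc center disjoint from the rest of the boundary, so an lc place is either centered off $\overline{X}_i\times 0$---hence an lc place of $\big(\overline{X}_i\times\bA^1,(\overline{G}_i+\overline{\Delta}_i+\overline{D}_i)\times\bA^1\big)$, which, the $\bA^1$-factor carrying no boundary, is of the form $w\boxtimes\bA^1$ for some $w\in\LC(\overline{X}_i,\overline{G}_i+\overline{\Delta}_i+\overline{D}_i)$---or centered over $\overline{X}_i\times 0$, in which case, since ${\rm Diff}_{\overline{X}_i\times 0}\big((\overline{G}_i+\overline{\Delta}_i+\overline{D}_i)\times\bA^1\big)=(\overline{G}_i+\overline{\Delta}_i+\overline{D}_i)|_{\overline{X}_i}$ under $\overline{X}_i\times 0\cong\overline{X}_i$, it corresponds by adjunction to an lc place of $(\overline{X}_i,\overline{G}_i+\overline{\Delta}_i+\overline{D}_i)$. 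In either case the hypothesis $\LC(\overline{X}_i,\overline{G}_i+\overline{\Delta}_i+\overline{D}_i)\subseteq\LC(\overline{X}_i,\overline{G}_i+\overline{\Delta}_i+\overline{B}_i)$ shows the same valuation is an lc place of the corresponding $\overline{B}_i$-pair, which gives the claim. Finally, adjunction of $(\cX,\Delta_{\cX}+\cD+\cX_0)$ to the Cartier divisor $\cX_0$ gives ${\rm Diff}_{\cX_0}(\Delta_{\cX}+\cD)=\Delta_{\cX_0}+\cD_0$---adjunction being fiberwise restriction for a family---and similarly ${\rm Diff}_{\cX_0}(\Delta_{\cX}+\cB)=\Delta_{\cX_0}+\cB_0$; since the lc places of $(\cX_0,\Delta_{\cX_0}+\cD_0)$ are exactly the lc places of $(\cX,\Delta_{\cX}+\cD+\cX_0)$ centered on $\cX_0$, and these lie in $\LC(\cX,\Delta_{\cX}+\cB+\cX_0)$ and hence are lc places of $(\cX_0,\Delta_{\cX_0}+\cB_0)$, we obtain $\LC(\cX_0,\Delta_{\cX_0}+\cD_0)\subseteq\LC(\cX_0,\Delta_{\cX_0}+\cB_0)$.

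\textbf{Main obstacle.} The technical heart is the lc-place comparison in part (2): one must make rigorous, in the slc setting and across the crepant birational modification of Lemma \ref{l:CYbirational}.1, the structural facts that the divisorial lc places of a trivial product $Y\times\bA^1$ are exactly the pullbacks $w\boxtimes\bA^1$ of divisorial lc places $w$ of $Y$, and that lc places centered on a $\bQ$-Cartier lc center disjoint from the rest of the boundary are governed by the adjunction pair. A secondary point is the Koll\'ar-gluing reduction in part (1): one must verify that the conductor involution extends and fixes the different of the new polarizing boundary $\overline{\cB}$, which follows as in the proof of Proposition \ref{p:bpcygluing} via Lemma \ref{l:familyslcnormadj}.2.
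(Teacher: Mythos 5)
Your Part (1) is correct and, in the normal case, is the paper's argument verbatim: Lemma \ref{l:lctestconfig} gives $v_E\in\LC(X,\Delta+D)\subset\LC(X,\Delta+B)$ for each component $E\subset\cX_0$, and Lemma \ref{l:tclcplace} concludes. For the non-normal case your gluing construction is heavier than necessary: since $\cB$ is already defined on the given $\cX$ as a closure, one only needs that $(\cX,\Delta_{\cX}+\cB+\cX_0)$ is slc, which is a condition checked on the normalization; so the normal case applied to $(\ocX,\ocG+\Delta_{\ocX}+\overline{\cB})$ finishes in one line, and re-running Koll\'ar's gluing and identifying the quotient with $\cX$ is not needed (though not wrong).

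Part (2) has a genuine gap, which you partly flag yourself. The assertion that ``the lc places of $(\cX_0,\Delta_{\cX_0}+\cD_0)$ are exactly the lc places of $(\cX,\Delta_{\cX}+\cD+\cX_0)$ centered on $\cX_0$'' is not meaningful as stated: the former are divisorial valuations on (components of the normalization of) the $n$-dimensional $\cX_0$, the latter are divisorial valuations on the $(n+1)$-dimensional $\cX$, and there is no canonical identification between the two. Adjunction matches lc \emph{centers}, not lc places, and two pairs can share all lc centers while having different lc places, so the containment of $\LC$-sets on the total space does not by itself descend to the central fiber. The paper closes exactly this gap: it fixes a common log resolution $f:\cY\to\cX$ of both pairs, writes $K_{\cY}+\Gamma_D=f^*(K_{\cX}+\Delta_{\cX}+\cD+\cX_0)$ and likewise $\Gamma_B$, deduces $\Gamma_D^{=1}\le\Gamma_B^{=1}$ from the crepant birational comparison with the product (Lemma \ref{l:CYbirational}.1), and then invokes \cite[Prop. 4.6]{Kol13} to identify $(\overline{E},{\rm Diff}_{\overline{E}}(\Delta_{\cX}+\cD+\cX_0))$ crepant-birationally with the snc pair $(\widetilde{E},(\Gamma_D^{=1}-\widetilde{E})\vert_{\widetilde{E}})$ for each component $E\subset\cX_0$; the containment of lc places of the fibers then reduces to the coefficient inequality restricted to $\widetilde{E}$. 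You would need this identification (or an equivalent trace-of-lc-places argument) to complete your proof. A secondary error: in your analysis of the product pair you claim $\overline{X}_i\times 0$ is ``disjoint from the rest of the boundary,'' which is false whenever $D\neq 0$; the containment $\LC(X_{\bA^1},\Delta_{\bA^1}+D_{\bA^1}+X\times 0)\subset\LC(X_{\bA^1},\Delta_{\bA^1}+B_{\bA^1}+X\times 0)$ is instead seen on a product log resolution $Y\times\bA^1$, where the coefficient-one locus of the $D$-pair is contained in that of the $B$-pair.
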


\begin{proof}
We first prove the result when $X$ is normal.
 By Lemma \ref{l:lctestconfig} and our assumption,
 $v_E\in \LC(X,\Delta+B)$ for each $E\subset \cX_0$. 
Thus  Lemma \ref{l:tclcplace} implies (1) holds. 

To prove  (2) when $X$ is normal, fix a  log resolution  $f:\cY\to \cX$ of both  $(\cX,\Delta_{\cX}+\cD+\cX_0)$ 
and $(\cX,\Delta_{\cX}+\cB+\cX_0)$. 
Define $\bQ$-divisors $\Gamma_B$ and $\Gamma_D$ by 
\begin{align*}
K_\cY + \Gamma_{D}=
f^*(K_{\cX}+\Delta_{\cX}+\cD+\cX_0) \quad \text{ and} \quad
K_\cY + \Gamma_{B}=
f^*(K_{\cX} +\Delta_{\cX}+\cB+\cX_0).
\end{align*}
Since $\LC(X,\Delta+D) \subset \LC(X,\Delta+B)$, it follows that 
\[
\LC(X_{\bA^1},\Delta_{\bA^1}+D_{\bA^1}+X\times 0)
\subset 
\LC(X_{\bA^1},\Delta_{\bA^1}+B_{\bA^1}+X\times 0)
.\]
Thus Lemma \ref{l:CYbirational}.1 implies
$\Gamma_{D}^{=1} \leq \Gamma_{B}^{=1}$.
Now, fix an irreducible component $E\subset \cX_0$. 
Write $\overline{E}$ for its normalization and $\widetilde{E}$ for its birational transform on $\cY$.
By \cite[Prop 4.6]{Kol13},
\[
(\widetilde{E}, (\Gamma_D- \widetilde{E})\vert_{\widetilde{E}})
\to 
(\overline{E},{\rm Diff}_{\overline{E}}(\Delta_{\cX}+\cD+\cX_0))
\]
is crepant birational.
Also, the same holds with  the $D$ and $\cD$ replaced  by $B$ and $\cB$.
Hence
\begin{multline*}
\LC(\overline{E},{\rm Diff}_{\overline{E}}(\Delta_{\cX}+\cD+\cX_0))
=
\LC(\widetilde{E},(\Gamma_D^{=1}-\widetilde{E} )\vert_{\widetilde{E}})\\
\subset 
\LC(\widetilde{E},(\Gamma_B^{=1}-\widetilde{E} )\vert_{\widetilde{E}})
=
\LC(\overline{E},{\rm Diff}_{\overline{E}}(\Delta_{\cX}+\cB+\cX_0)).
\end{multline*}
Therefore (2) holds when $X$ is normal.

If $X$ is not normal, then the normalization $(\ocX,\ocG+\Delta_{\ocX}+\ocD)$ of $(\cX,\Delta_{\cX}+\cD)$ is a test configuration of $(\oX,\oG+\oDe+\oD)$. By (1) in the normal case, $(\ocX,\ocG+\Delta_{\ocX}+\overline{\cB}+\ocX_0)$ is a possibly disconnected lc CY pair. 
Hence $(\cX,\Delta_{\cX}+\cB+\cX_0)$ is an slc CY pair, which implies  (1) holds. 
For (2), note that $(\overline{\cX}_0, 
\overline{\cG}_0+\overline{\Delta}_{\cX_0}+ \overline{\cD}_0) \to (\cX_0,\Delta_{\cX_0}+\cD_0)$ is crepant and the same holds with each $\cD$ replaced by $\cB$. Thus the case when $X$ is normal implies the result.
\end{proof}

\subsection{Families of test configurations}

\begin{defn}
A \emph{family of test configurations} of a family of boundary polarized CY pairs $(X,\Delta+D)\to S$ is the data of a
\begin{enumerate}
\item $\bG_m$-equivariant family of boundary polarized CY pairs
\[
(\cX,\Delta_{\cX} \times \cD)\to \bA^1_S  
,\]
where $\bG_m$-acts on $\bA^1_S:= \bA^1\times S$ by the product of the standard action on $\bA^1$ and the trivial action on $S$ and
\item an isomorphism $(\cX,\Delta_{\cX} \times \cD)\vert_{\{1 \} \times S} \cong (X,\Delta+D)$ over $S$.
\end{enumerate}
Note that the restriction to  any $s\in S$ is naturally a test configuration of $(X,\Delta+D)$.
\end{defn}

\begin{lem}\label{l:tcextend}
Let $(X,\Delta+D)\to S$ be a family of boundary polarized CY pairs over a variety $S$, and let $K:= k(S)$ denote the function field of $S$.

If $(\cX_K,\Delta_{\cX_K}+ \cD_K)\to \bA^1_K$ is a test configuration of $(X_K,\Delta_K+D_K)$,
then there exists an open set $U\subset S$ and a family of test configurations
\[
(\cX_U,\Delta_{\cX_U} + \cD_U) \to \bA^1_U
\]
of $(X_U,\Delta_U+\Delta_U)\to U$ that extends $(\cX_K,\Delta_K +\cD_K) \to \bA^1_K$.
Additionally, if  $(\cX_K,\Delta_K +\cD_K) \to \bA^1_K$ is non-trivial, then so is $(\cX_s,\Delta_{\cX_s} + \cD_s) \to \bA^1_{k(s)}$ for each $s\in U$.
\end{lem}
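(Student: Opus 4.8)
The plan is to spread the given test configuration out over a dense open of $S$, shrink it until the fibrewise conditions defining a family of boundary polarized CY pairs hold, and then deduce the non-triviality statement from the fact that triviality of a test configuration is governed by an invariant that is locally constant over the base. For the first step: the datum $(\cX_K,\Delta_{\cX_K}+\cD_K)\to\bA^1_K$, together with its $\bG_m$-action, the relatively ample $\bG_m$-linearized line bundle $\cL_K:=\omega^{[-r]}_{\cX_K/\bA^1_K}(-r\Delta_{\cX_K})$ (for $r$ sufficiently divisible), and the $\bG_m$-equivariant isomorphism with $(X_K,\Delta_K+D_K)\times(\bA^1\setminus 0)$ over $\bA^1_K\setminus 0$, is of finite presentation over $K=k(S)$. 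By standard spreading-out arguments, of the type used in the proof of Lemma~\ref{l:descent} (cf.\ \cite{Gro66}), all of this descends to an analogous $\bG_m$-equivariant datum $(\cX_{U_0},\Delta_{\cX_{U_0}}+\cD_{U_0})\to\bA^1_{U_0}$ over some dense open $U_0\subset S$, with $\cX_{U_0}\to\bA^1_{U_0}$ flat and projective, $\Delta^{\Div}_{\cX_{U_0}}$ and $\cD^{\Div}_{U_0}$ relative Mumford divisors, $\cL_{U_0}$ relatively ample, and the isomorphism over $\bA^1_{U_0}\setminus 0$ in place.

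Next I would shrink $U_0$. Over the generic point of $U_0$ the morphism $\cX_K\to\bA^1_K$ is a family of boundary polarized CY pairs: $X_K$ is a boundary polarized CY pair since $(X,\Delta+D)\to S$ is a family of such, the central fibre $\cX_{K,0}$ is slc because $(\cX_K,\Delta_{\cX_K}+\cD_K)$ is a \emph{weakly special} test configuration, and the conditions of Definition~\ref{d:familyNoetherian} hold over $\bA^1_K$. Now, in $\bA^1_{U_0}$, the locus of points with slc fibre is open by \cite[Cor.~4.45]{KolNewBook}, the conditions (4) and (5) of Definition~\ref{d:familyNoetherian} cut out a locally closed subset by \cite[Prop.~9.42, Thm.~3.29, Prop.~3.31]{KolNewBook} (exactly as in the construction of $H_3\hookrightarrow H_4$ in the proof of Proposition~\ref{prop:localstack}), and the locus where $\cD_{U_0}$ is relatively ample is open. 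Hence there is a constructible $Z\subset\bA^1_{U_0}$ containing the generic fibre $\bA^1_K$ over which $\cX_{U_0}\to\bA^1_{U_0}$ restricts to a family of boundary polarized CY pairs in the sense of Definition~\ref{d:Familybasenormal} with $K_{\cX/\bA^1}+\Delta_{\cX}+\cD$ suitably $\bQ$-trivial of index dividing $N$. Since $\bA^1_{U_0}\setminus Z$ is constructible and disjoint from $\bA^1_K$, its image in $U_0$ is a constructible set avoiding the generic point, hence lies in a proper closed subset $V\subsetneq U_0$; put $U:=U_0\setminus V$, so $\bA^1_U\subset Z$. Applying Lemma~\ref{lem:flatdiv} over the reduced base $\bA^1_U$ promotes $\cX_U\to\bA^1_U$ to a family of boundary polarized CY pairs in the sense of Definition~\ref{d:familyNoetherian}, and together with the $\bG_m$-action and the isomorphism with $(X_U,\Delta_U+D_U)\times(\bA^1\setminus 0)$ away from the origin this is the required family of test configurations extending $(\cX_K,\Delta_{\cX_K}+\cD_K)$.

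For the final assertion, suppose $(\cX_K,\Delta_{\cX_K}+\cD_K)\to\bA^1_K$ is non-trivial. After replacing $L$ by a Veronese power we may assume the section ring is generated in degree one, so a test configuration of $(X_s,\Delta_s+D_s)$ is trivial if and only if for every $m$ the $\bG_m$-representation $H^0(\cX_{s,0},m\cL_{s,0})$ consists of a single weight (equivalently, the induced filtration of $H^0(X_s,mL_s)$ is trivial; see \cite[\S2]{BHJ17}), and then a single value of $m$ detects non-triviality. Shrinking $U$ once more, the pushforward to $U$ of $\cL_{U}^{\otimes m}$ restricted to the central fibre $\cX_{U,0}$ is a $\bG_m$-equivariant vector bundle whose formation commutes with base change (uniform Serre vanishing over the finite type base $U$, using that $\cL_U|_{\cX_{U,0}}$ is relatively ample); it splits into $\bG_m$-weight subbundles of locally constant rank, so the multiset of $\bG_m$-weights of $H^0(\cX_{s,0},m\cL_{s,0})$ is independent of $s\in U$, as $U$ is connected. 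Over $K$ this multiset is not a single weight for some $m$, hence the same holds for every $s\in U$, and each $(\cX_s,\Delta_{\cX_s}+\cD_s)\to\bA^1_{k(s)}$ is non-trivial.

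The main obstacle is the last step: one must isolate the right triviality-detecting invariant of a test configuration and verify its local constancy in families. Steps one and two are routine once one has the openness and constructibility statements from \cite{KolNewBook} already used in Section~\ref{s:stack}.
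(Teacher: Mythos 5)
Your proposal is correct and follows essentially the same route as the paper: spread the test configuration out over a dense open, shrink using the constructibility/openness of the slc, ampleness, and $\bQ$-triviality conditions, and then check that non-triviality persists fibrewise. The only (minor) divergence is in the last step, where the paper detects non-triviality via the non-triviality of the $\bG_m$-action on the central fibre (an open condition after shrinking $U$), whereas you use the locally constant multiset of $\bG_m$-weights on $H^0(\cX_{s,0}, m\cL_{s,0})$ — an equivalent criterion that spreads out for the same reasons.
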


\begin{proof}
Over a non-empty open set $U\subset X$, $(\cX_K,\Delta_{\cX_K}+ \cD_K)\to \bA^1_K$
extends to a projective $\bG_m$-equivariant family of pairs $(\cX_U,\Delta_{\cX_U}+\cD_U)\to \bA^1_U$. 
Since the slc condition is constructible \cite[Lem. 4.44]{KolNewBook}, after shrinking $U$, we may assume $(\cX_U,\Delta_{\cX_U}+\cD_U) \to \bA^1_U$ is a family of slc pairs. 
Further shrinking $U$, we may assume $-K_{\cX_U/\bA^1_U}-\Delta_{\cX_U}$ is relatively ample and $
K_{\cX_U/\bA^1_U}+\Delta_{\cD_U}+\cD_U\sim_{\bA^1_U,\bQ}0$. 
Hence
$(\cX_U,\Delta_{\cX_U}+\cD_U) \to \bA^1_U$ 
is a $\bG_m$-equivariant  family of boundary polarized CY pairs.
Further shrinking $U$, the $\bG_m$-equivariant isomorphism 
\[
(\cX_K,\Delta_{\cX_K}+\cD_K) \cong  (X_K,\Delta_K+D_K)\times(\bA^1\setminus 0)
\]
extends to a $\bG_m$-equivariant isomorphism 
\[
(\cX_U,\Delta_{\cX_U}+\cD_U) \cong  (X_U,\Delta_U+D_U)\times(\bA^1\setminus 0)
.\]
Hence, $(\cX_U,\Delta_{\cX_U}+\cD_U) \to \bA^1_U$ is a family of test configurations of $(X_U,\Delta_U+D_U)$. 
Additionally, if $(\cX_K,\Delta_K +\cD_K) \to \bA^1_K$ is non-trivial, then the $\bG_m$-action on the fiber over $0 \in \bA^1_K$ is non-trivial. 
Hence, after shrinking $U$, the $\bG_m$-action on the fiber of $(\cX_s,\Delta_{\cX_s} + \cD_s) \to \bA^1_{k(s)}$ over $0 \in \bA^1_{k(s)}$ will also be non-trivial.
\end{proof}

\subsection{Torus equivariant test configurations}

The following proposition shows that all test configurations of boundary polarized CY pairs with torus action are equivariant. 
See \cite[Lem. 2.18 \& 2.20]{Oda20} for related results.

\begin{prop}\label{p:TequivariantTC}
Let  $(X,\Delta+D)$ be a boundary polarized CY pair admitting an action by a torus $\T:=\bG_m^r$ for some $r\geq0$.

If $(\cX,\Delta_{\cX}+\cD)$ is a test configuration of $(X,\Delta+D)$, then it is $\T$-equivariant.
\end{prop}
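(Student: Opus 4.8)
The plan is to reduce the statement to a known equivariance phenomenon for test configurations of polarized schemes combined with uniqueness of the Rees filtration. Set $L:=\omega_X^{[-r]}(-r\Delta)$ for $r$ sufficiently divisible, and $\cL:=\omega_{\cX/\bA^1}^{[-r]}(-r\Delta_{\cX})$ with its canonical $\bG_m$-linearization, so that $(\cX,\cL)$ is a test configuration of $(X,L)$ in the sense of Definition \ref{d:testconfig}. The $\T$-action on $(X,\Delta+D)$ induces a $\T$-action on the section ring $R:=\bigoplus_{m} H^0(X,mL)$, which commutes with nothing \emph{a priori}, so I will first show that the $\bZ$-filtration $\cF^\bullet$ of $R$ induced by $(\cX,\cL)$ is preserved by $\T$. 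Since $\T$ is connected, it suffices to show that each graded piece $\cF^\la H^0(X,mL)$ is a $\T$-subrepresentation. By Lemma \ref{l:lctestconfig}(1), after passing to the normalization of $\cX$ (which does not change $\cX$ over $\bA^1\setminus 0$ and is harmless for the filtration by \cite[Prop. 2.6]{BHJ17}), we have
\[
\cF^\la H^0(X,mL)=\bigcap_{E\subset \cX_0}\{s\in H^0(X,mL)\mid v_E(s)-mrA_{X,\Delta}(v_E)\ge \la\},
\]
so it is enough to show the finite set of divisorial valuations $\{v_E\}$, together with the numbers $A_{X,\Delta}(v_E)$, is $\T$-invariant (as a set with these decorations).

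To see this, I would argue as follows. For $g\in \T$, applying $g$ to everything produces another test configuration $g\cdot(\cX,\cL)$ of $(X,L)$ whose associated filtration is $g\cdot \cF^\bullet$, and whose Rees valuations are $\{g\cdot v_E\}$ with $A_{X,\Delta}(g\cdot v_E)=A_{X,\Delta}(v_E)$ (log discrepancies are invariant under automorphisms of the pair). Now the key input is that $\T$, being connected, cannot move the test configuration $(\cX,\cL)$ in an essential way: the family of filtrations $\{g\cdot\cF^\bullet\}_{g\in\T}$ is a connected family, and by Lemma \ref{l:tcReesvals} each such filtration is determined by its finite set of Rees valuations $\{g\cdot v_E\}$ together with the $c_i$. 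Since the set of divisorial valuations appearing is discrete (there are only finitely many divisors over $X$ of bounded log discrepancy that can occur, and the construction varies algebraically), a connectedness argument forces $\{g\cdot v_E\}=\{v_E\}$ for all $g\in\T$. Concretely, I would realize the test configuration as a point of a suitable Hilbert-scheme-type parameter space with $\T$-action (as in the proof of Proposition \ref{prop:localstack}), note that the $\T$-orbit of $[(\cX,\cL)]$ is connected and contained in the locus of test configurations, and that the "type" of a test configuration — the multiset of Rees valuations $v_E$ with weights — is locally constant on that locus (again by Lemma \ref{l:tcReesvals}, which pins down the $v_E$ rigidly). Hence $\cF^\bullet$ is $\T$-invariant.

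Once $\cF^\bullet$ is $\T$-invariant, the Rees algebra ${\rm Rees}(\cF^\bullet)=\bigoplus_{m}\bigoplus_\la \cF^\la H^0(X,mL)t^{-\la}$ carries a $\T\times\bG_m$-action, and $\cX\vert_{\bA^1\setminus 0}\cong X\times(\bA^1\setminus 0)$ is $\T$-equivariantly; but by \cite[Prop. 2.15]{BHJ17} a (normal) test configuration is uniquely reconstructed from its filtration as $\Proj$ of this Rees algebra, so the $\T\times\bG_m$-action on the Rees algebra induces a $\T\times\bG_m$-action on $\cX$ extending both the original $\bG_m$-action and, on the general fiber, the $\T$-action on $X\times(\bA^1\setminus 0)$. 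Since $\Delta_{\cX}$ and $\cD$ are defined as closures of $\T$-invariant divisors on $X\times(\bA^1\setminus 0)$, they are preserved, so $(\cX,\Delta_{\cX}+\cD)$ is $\T$-equivariant. To handle the case where $\cX$ is \emph{not} normal, I would pass to the normalization $(\ocX,\ocG+\Delta_{\ocX}+\ocD)$, which is a test configuration of the normalization $(\oX,\oG+\oDe+\oD)$ (compatibly with the $\T$-action on the latter), apply the normal case to conclude $\ocX$ is $\T$-equivariant, and then observe that $\cX$ is recovered from $\ocX$ by Kollár's gluing construction (Proposition \ref{p:bpcygluing} / \cite[Cor. 5.33]{Kol13}) from the gluing involution $\tau$ on $\oG^n$, which is canonically determined and hence $\T$-equivariant; uniqueness of the geometric quotient then transports the $\T$-action down to $\cX$.

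\textbf{Main obstacle.} The delicate point is the rigidity step: showing that the connected group $\T$ cannot continuously deform the test configuration, i.e. that the multiset $\{(v_E,A_{X,\Delta}(v_E))\}$ is genuinely locally constant along the $\T$-orbit. This is where Lemma \ref{l:tcReesvals} is essential — it says the $v_E$ are the \emph{unique minimal} set of valuations cutting out $\cF^\bullet$, so they cannot jump within a connected family without the filtration itself jumping, and the filtration varies algebraically over the orbit. I expect the cleanest implementation is via the stacky interpretation: a test configuration is a map $[\bA^1/\bG_m]\to\cM(\chi,N,{\bf r})$, the $\T$-action gives a map $\T\times[\bA^1/\bG_m]\to\cM(\chi,N,{\bf r})$, and one shows this factors through the projection to $[\bA^1/\bG_m]$ because $\cM$ has affine (in particular separated) diagonal and $\T$ is connected with a fixed point at $1\in\T$ — so the map is constant in the $\T$-direction up to the $\bG_m$-gerbe, which is exactly equivariance. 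I would phrase the argument this way to keep it short and to leverage Theorem \ref{t:stack}.
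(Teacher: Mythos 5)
Your skeleton matches the paper's: reduce to $\T$-invariance of the filtration $\cF^\bullet$, which by Lemma \ref{l:lctestconfig}.1 reduces to $\T$-invariance of each $v_E$; then run the Rees construction, and handle non-normal $\cX$ by normalizing (your gluing-involution route for the last step is a fine alternative to the paper's codimension-two extension argument). The gap is exactly the step you flag as the main obstacle, and neither of your two justifications for $g\cdot v_E=v_E$ holds up. The finiteness claim --- ``there are only finitely many divisors over $X$ of bounded log discrepancy'' --- is false: by Lemma \ref{l:lctestconfig}.2 the $v_E$ are lc places of $(X,\Delta+D)$, and an lc CY pair typically has infinitely many divisorial lc places (e.g.\ every toric valuation is an lc place of $(\bP^2,\{xyz=0\})$), so the $\T$-orbit of $v_E$ could a priori move inside the lc places; Lemma \ref{l:tcReesvals} says the $v_E$ are determined by $\cF^\bullet$, not that they cannot vary in a connected family of filtrations. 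The stacky variant also fails: affineness of the diagonal of $\cM(\chi,N,{\bf r})$ gives at most \emph{uniqueness} of an extension over $\bA^1$ of an isomorphism defined over $\bA^1\setminus 0$, never its existence, and the stack itself is not separated (extensions over a punctured disc are genuinely non-unique --- that is why S-completeness is a theorem). Worse, the family $\{g\cdot\cX\}_{g\in\T}$ is literally constant as a map to the moduli stack, since only the marking $\cX\vert_{\bA^1\setminus 0}\cong X\times(\bA^1\setminus 0)$ changes with $g$ and the stack forgets markings; so no rigidity statement about maps to $\cM$ can see the issue.

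The missing idea, which is how the paper closes this step, is to use the structure of lc places instead of a rigidity argument: since $A_{X,\Delta+D}(v_E)=0$, \cite[Lem.~2.3]{BLX19} shows that $v_E$ is quasimonomial with respect to $(Y,\Supp(B))$, where $f:Y\to X$ is a log resolution of $(X,\Delta+D)$ and $K_Y+B=f^*(K_X+\Delta+D)$. Choosing $f$ to be $\T$-equivariant, $\Supp(B)$ is a $\T$-invariant snc divisor, the connected group $\T$ fixes each stratum and each component through it, and a quasimonomial valuation is determined by its stratum, components, and weights --- all $\T$-invariant data --- so $g\cdot v_E=v_E$. (One can try to rescue your connectedness argument by observing that quasimonomiality makes the set of divisorial lc places \emph{countable}, so that $g\mapsto g(\cF^\la H^0(X,mL))$ is a morphism from an irreducible variety with countable, hence finite, constructible image; but this needs the same quasimonomial input plus an uncountable base field, so it is neither simpler nor more general.) With that substitution the rest of your argument goes through.
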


In the above proposition, the term $\bT$-\emph{equivariant test configuration}
means that the $\T\times \bG_m$-action on $\cX_{\A^1\setminus 0} \cong X\times(\A^1\setminus 0)$, 
which is given by the product of the $\T$-action on $X$ and the standard $\bG_m$-action on $\A^1\setminus0$, extends to a $\bT \times \bG_m$-action on $\cX$.

\begin{proof}
We first prove the result when $X$ is normal. Fix an integer $r>0$ such that $L:=-r(K_X+\Delta)$ is a Cartier divisor.
Note that the $\bT$-action on $\cX$ induces a $\bT$-action on $H^0(X,mL)$.
We claim that the filtration  $\cF^\bullet$ of $H^0(X,mL)$ induced by $(\cX,\Delta_{\cX}+\cD)$ is $\bT$-invariant.
By the formula for $\cF^\bullet $ in  Lemma \ref{l:lctestconfig}.1, 
it suffices to show that $v_E$ is $\bT$-invariant for each  irreducible component $E\subset \cX_0$.
To verify the latter, fix a $\bT$-equivariant log resolution $f:Y\to X$ of $(X,\Delta+D)$ and a define a $\Q$-divisor $B$  by 
\[
K_{Y}+B= f^*(K_{X}+\Delta+D)
.\]
Since
$A_{Y,B}(v_E)
=
A_{X,\Delta+D}(v_E)=0$,
where the second equality is Lemma \ref{l:lctestconfig}.2, \cite[Lem. 2.3]{BLX19} implies $v_E $ is quasimonomial with respect to $(Y,\Supp(B))$.
Since $\Supp(B)$ is a $\bT$-invariant divisor on $Y$,
 $v_E$ is a $\bT$-invariant valuation.
Thus the claim holds.

Since $\cF^\bullet$ is $\bT$-invariant, there is a decomposition
$
\cF^\la H^0(X,mL) = \bigoplus_{ \mu \in \bZ^r} \cF^\la H^0(X,mL)_{\mu} $,
into  weight spaces. 
Using that
\[
\cX \cong  \Proj_{\bA^1}  \Big( \bigoplus_{m \in \bN} \bigoplus_{(\mu,\la) \in  \Z^r\times  \bZ} \cF^\la H^0(X,mL)_{\mu} t^{-\la} \Big)
,
\]
\hspace{-.2 cm}
the $\bZ^r\times\bZ$-grading  inside the $\Proj$ induces a $\bT\times\bG_m$-action on $\cX$ extending the $\bT\times\bG_m$-action on $\cX_{\bA^1\setminus 0}$.
Therefore the proposition holds when $X$ is normal.

To deduce the full result from the normal case, 
note that the $\bT$-action on $(X,\Delta+D)$ induces a $\bT$-action on $(\overline{X},\oG+\overline{\Delta}+\overline{D})$.
Additionally, the normalization $(\overline{\cX},\ocG+\overline{\Delta}_{\overline{\cX}}+\overline{\cD})$ 
 of $(\cX,\Delta_{\cX}+\cD)$  is naturally a test configuration of 
$(\overline{X},\oG+\overline{\Delta}+\overline{D})$.  By the result in the normal case,  
the $\bT\times\bG_m$-action on $\overline{\cX}\vert_{\bA^1\setminus 0}$
extends to a $\bT\times\bG_m$-action on $\overline{\cX}$.
Hence, the $\bT\times\bG_m$-action on $\cX \vert_{\bA^1\setminus 0}$ extends to a $\bT\times\bG_m$-action on $\cU:= \cX \setminus (\cX_0 \cap \cG) $, where $\cG$ is the conductor divisor on $\cX$.
Since ${\rm codim}_{\cX}(\cX\setminus \cU)\geq2$, the  natural map
\[
\Proj\Big( \bigoplus_{m\in \bN}
H^0\big(\cU, \omega_{\cX/\bA^1}^{[-m]}(-m\Delta_{\cX})\vert_\cU \big)\Big)
\to
\Proj\Big( \bigoplus_{m\in \bN} H^0\big(\cX , \omega_{\cX/\bA^1}^{[-m]}(-m\Delta_{\cX}\big)\Big)
=
\cX
\]
is an isomorphism.
Since the $\bT\times\bG_m$-action on $\cU$ induces a $\bT\times\bG_m$-action on the left hand side,   $\cX$ admits a $\bT\times\bG_m$-action extending the $\bT\times\bG_m$-action on $\cX_{\bA^1\setminus 0} $.
\end{proof}

\section{S-completeness}\label{s:Scompleteness}
The notion of S-completeness for an  algebraic stack was introduced in \cite{AHLH18} 
and is closely related to the separatedness of the stacks good moduli space when it exists. 
The goal of this section is to prove that  the moduli stack of boundary polarized CY pairs is S-complete.
\medskip 

Throughout, $R$ denotes a DVR with uniformizer $\pi$, fraction field $K$, and residue field $\kappa$.

\subsection{Defintion}\label{ss:defScom}
S-completeness is defined in terms of the stacky surface
\begin{equation*}
	\STR
	: =
	[ \Spec \left( R[s,t]/(st-\pi)\right) / \G_m  ], 
\end{equation*}
where $\G_m$ acts on $\Spec\left( R[s,t]/(st-\pi) \right)$ with weights $1$ and $-1$ on $s$ and $t$.
We write $0 \in \STR$ for the closed point defined by $s=t=0$.

To understand the geometry of $\STR$, observe that 
\[\STR \vert_{s\neq 0} 
\cong
[\Spec (R[s^{\pm1}] ) /\bG_m]
\cong
\Spec(R) \quad \text{ and } \quad	\STR \vert_{t\neq 0} \cong [\Spec (R[t^{\pm1}] ) /\bG_m]
\cong
\Spec(R).\]
Therefore 
\begin{equation}\label{e:STR-0}
\STR \setminus 0
\cong 
\Spec(R) \bigcup_{\Spec(K)} \Spec(R).
\end{equation}

Hence a morphism  $\STR  \setminus 0 \to \sM$ to a stack $\sM$ is equivalent to the data of two morphisms  $\Spec(R) \to \sM$ and $\Spec(R) \to \sM$ that agree over $\Spec(K)$. 

The loci where $s$ and $t$ vanish have the description:
\[
\STR \vert_{s= 0} 
\cong [\Spec (\kappa[t] )/ \bG_m] \cong [\bA^1_\kappa/\bG_m] 
\quad\text{ and } \quad 
\STR \vert_{t= 0} 
\cong [\Spec (\kappa[s] )/ \bG_m] \cong [\bA^1_\kappa/\bG_m] 
.\]
Thus the closed points  of   $   \STR \vert_{s\neq 0}$ and $ \STR\vert_{t\neq 0}$ specialize to $0$.

\begin{defn}[{\cite[Def. 3.7]{AHLH18}}]
A stack $\sM$ is \emph{S-complete} if for any DVR $R$ and  morphism $\STR\setminus 0 \to \sM$, there exists a unique extension $\STR \to \sM$. 
\end{defn}

To reduce notation throughout this section, we write
\[
	S: = \Spec (R[s,t]/(st-\pi) ) \quad \text{ and }\quad   S^\circ := S \setminus 0.
\]
If $\cM:= \cM(\chi,N,{\bf r})$ is the moduli stack defined in Section \ref{s:stack}, then a map $\STR\to \cM$, is equivalent to a $\bG_m$-equivariant family in $\sM(S)$. 
Hence proving that $\cM$ is S-complete is equivalent to showing that any $\bG_m$-equivariant family of boundary polarized CY pairs
\[
g^\circ: (\cX^\circ, \Delta_{\cX^\circ}+\cD^\circ)\to S^\circ
\] 
extends uniquely to a $\bG_m$-equivariant family of boundary polarized CY pairs
\[
g:(\cX,\Delta_{\cX}+\cD)\to S
.\]

\subsection{Polarized schemes }\label{ss:Scompletepol}
Following \cite[\S 3.2]{ABHLX20}, we describe the problem of extending a $\bG_m$-equivariant family of polarized schemes over $S^\circ$ to a family over $S$.

Fix a $\G_m$-equivariant family of polarized schemes
\[g^\circ :(\cX^\circ, \cL^\circ ) \to S^\circ
.
\]
By \eqref{e:STR-0}, such a family  is equivalent to the data of two families of polarized schemes
\[
(X,L) \to \Spec(R)\quad \text{and } \quad (X',L') \to \Spec(R)
\]
with an isomorphism $(X_K,L_K) \cong (X'_K,L'_K)$.\footnote{The family $(\cX^\circ,\cL^\circ) \to S^\circ$ can be recovered from the previous data by gluing $(X,L)\times \G_m \to \Spec(R)\times \G_m$  
	and $(X',L')\times \G_m \to \Spec(R) \times \G_m$ along $\Spec(K)\times \G_m$.
}
To describe when $g^\circ$ extends to a family over $S$, we consider the filtration of $H^0(X,mL)$ by $R$-modules defined by
\begin{equation}\label{e:Gfilt}
	\cF^\la H^0 (X, mL) = H^0(X, mL) \cap \pi^\la H^0(X',mL')
\end{equation}
for each $\la \in \Z$ and $m \in \N$. 
In the previous intersection, we use the isomorphism $H^0(X_K, mL_K) \cong H^0(X'_K, mL'_K)$ to view  $ H^0(X',mL')$ as an  $R$-submodule  of $H^0(X_K,mL_K)$.
By  \cite[\S3.2]{ABHLX20}, 
there is a  $\G_m$-equivariant isomorphism of $R[s,t]/(st-\pi)$-modules
\begin{equation*}
	H^0(\cX^\circ, m\cL^\circ)
	\cong 
	\bigoplus_{\la \in \Z} \cF^\la H^0(X,mL) t^{-\la}, 
\end{equation*}
where  the $\G_m$-action on the right hand side is induced by the $\Z$-grading 
and the $R[s,t]/(st-\pi)$-module structure is induced by having 
$s$ and $t$ act on $f t^{-\la} \in \cG^\la H^0(X,mL) t^{-\la}$ by 
\begin{equation*}
	s \cdot ft^{-\la} = (\pi f) t^{-\la -1} \quad \text{ and }\quad  t \cdot ft^{-\la} = f t^{-\la+1}.
\end{equation*}
The  module isomorphism induces a $\G_m$-equivariant isomorphism of $R[s,t]/(st-\pi)$-algebras
\begin{equation}\label{e:algebra}
	\bigoplus_{m \in \N} \bigoplus_{\la \in \Z} H^0(\cX^\circ , m\cL^\circ) 
	\cong 
	\bigoplus_{m \in\N} \bigoplus_{\la \in \Z} \cF^\la H^0(X,mL) t^{-\la}
\end{equation}
If the algebras are finitely generated, we set  
$ \cX: = {\Proj} \left( \bigoplus_{m \in\N} \bigoplus_{\la \in \Z} \cF^\la H^0(X,mL) t^{-\la} \right) $.

\begin{prop}\label{p:extendS}\cite[\S 3.2]{ABHLX20}
	If the algebra in \eqref{e:algebra} is finitely generated, 
	then  
	\begin{equation*}
		{g}:
		\big({\cX}, \cO_{{\cX}}(m)\big) 
		\to
		S
	\end{equation*} 
	is the unique extension of $(\cX^\circ,m\cL^\circ) \to S^\circ$ to a $\G_m$-equivariant family of polarized schemes for all $m>0$ sufficiently divisible.
\end{prop}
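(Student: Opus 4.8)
The plan is to follow \cite[\S 3.2]{ABHLX20}, where the statement is established in essence; I indicate the steps. Write
\[
A:=\bigoplus_{m\in\N}\bigoplus_{\la\in\Z}\cF^\la H^0(X,mL)\,t^{-\la}
\]
for the $R[s,t]/(st-\pi)$-algebra in \eqref{e:algebra}. It carries the $m$-grading, with respect to which $\cX=\Proj A$ is formed relative to $S:=\Spec\bigl(R[s,t]/(st-\pi)\bigr)$, and the $\la$-grading, under which $s$ and $t$ have weights $+1$ and $-1$; the latter is the $\G_m$-weight grading, so $g\colon\cX\to S$ is $\G_m$-equivariant and $\cO_\cX(1)$ is canonically $\G_m$-linearized. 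Replacing $A$ by a Veronese subalgebra we may assume it is generated in degree $1$, so $\cO_\cX(1)$ is $g$-ample; then for $m$ sufficiently divisible $\cL:=\cO_\cX(m)$ is a $g$-very ample $\G_m$-linearized line bundle with $g_*\cL$ commuting with base change and equal to $A_m$. Restricting \eqref{e:algebra} over $S^\circ$ recovers the relative section algebra of $(\cX^\circ,\cL^\circ)\to S^\circ$, which reconstructs $(\cX^\circ,m\cL^\circ)$ for $m$ sufficiently divisible because $\cL^\circ$ is relatively ample; hence $(\cX,\cL)\to S$ extends the given family.

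The one substantial point is flatness of $g$. First, $S$ is regular --- at the point $0=\{s=t=0\}$ the maximal ideal is $(s,t)$ and $\dim\cO_{S,0}=2$, while away from $0$ either $s$ or $t$ is invertible --- and connected, hence integral; and $g$ is projective with $\cO_\cX(1)$ relatively ample. By the Hilbert polynomial criterion \cite[Thm. III.9.9]{Har77} it suffices to show that the Hilbert polynomial of the fibre $\cX_y$ (with respect to $\cO_\cX(1)$) is independent of $y\in S$. Over $S^\circ$ this holds since $(\cX^\circ,\cL^\circ)\to S^\circ$ is flat, the common value being the Hilbert polynomial $P$ of the generic fibre. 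The complement $S\setminus S^\circ$ is the single closed point $0$, where $\cX_0=\Proj\bigl(A/(s,t)A\bigr)$; since $\cO_{\cX_0}(1)$ is ample it suffices to compute, for $m\gg0$,
\[
\dim_\kappa (A/(s,t)A)_m=\sum_{\la\in\Z}\dim_\kappa\frac{\cF^\la H^0(X,mL)}{\cF^{\la+1}H^0(X,mL)+\pi\,\cF^{\la-1}H^0(X,mL)}.
\]
Applying the elementary divisor theorem to the full-rank $R$-lattices $H^0(X,mL)$ and $H^0(X',mL')$ inside $H^0(X_K,mL_K)$ furnishes a basis $e_1,\dots,e_{P(m)}$ of $H^0(X,mL)$ and integers $a_i$ with $\cF^\la H^0(X,mL)=\bigoplus_i \pi^{\max(0,\la+a_i)}Re_i$; with this description the $\la$-th summand above has dimension $1$ if $\la=-a_i$ for some $i$ and $0$ otherwise, so the sum equals $P(m)$. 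Thus every fibre of $g$ has Hilbert polynomial $P$, so $g$ is flat, and $(\cX,\cL)\to S$ is a $\G_m$-equivariant family of polarized schemes.

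For uniqueness, let $(\cX',\cL')\to S$ be another $\G_m$-equivariant family of polarized schemes restricting to $(\cX^\circ,\cL^\circ)$ over $S^\circ$. For $m$ sufficiently divisible, $g'_*(\cL'^{\otimes m})$ is locally free on $S$ (cohomology and base change, using that the fibres have the constant value $h^0=P(m)$), so $\cX'=\Proj\bigoplus_m g'_*(\cL'^{\otimes m})$ after a Veronese. A locally free sheaf on the regular surface $S$ is reflexive, hence equals the pushforward of its restriction to $S\setminus\{0\}=S^\circ$; as that restriction agrees with $A_m|_{S^\circ}$, we get $g'_*(\cL'^{\otimes m})=A_m$, compatibly with $\G_m$-weights. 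Therefore $(\cX',\cL')\cong(\cX,\cL)$ over $S$.

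The main obstacle is the flatness step, concretely the elementary-divisor computation of the Hilbert function of the central fibre $\cX_0$; the remaining manipulations of $\Proj$ and graded algebras are formal and are carried out in \cite[\S 3.2]{ABHLX20}.
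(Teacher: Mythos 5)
Your proof is correct and follows the same route as the source the paper cites for this statement (the paper gives no independent argument beyond deferring to \cite[\S 3.2]{ABHLX20}): form $\Proj$ of the bigraded Rees algebra, identify the central fibre's graded pieces as $\cF^\la/(\cF^{\la+1}+\pi\cF^{\la-1})$, and use the elementary divisor decomposition of the pair of lattices $H^0(X,mL)\supset\cF^\la$ to see the Hilbert function is constant. Your packaging of flatness via the Hilbert polynomial criterion over the integral regular base $S$, and of uniqueness via reflexivity of $g'_*\cL'^{\otimes m}$ across the codimension-two point $0\in S$, are both sound and faithful to the intended argument.
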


In the case when $\cX^\circ$  is normal, the filtration $\cF^\bullet$ has a valuative interpretation.
Let $Y$ denote  the normalization of the graph  of $X\dashrightarrow X'$ with induced maps
\begin{equation*}
\begin{tikzcd}[row sep= 1.25 em]
 & Y   \arrow[rd,"h'"] \arrow[ld,swap,"h"]  &  \\
 X \arrow[rr,dashed] & & X'
     \end{tikzcd}.
\end{equation*}
Since $(X,L)$ and $(X',L')$ are isomorphic over $\Spec(K)$, we may write 
$h'^*L' = h^*L(G)$,
where $G$ is a divisor supported on the special fiber $Y_\kappa$. 

\begin{lem}\label{l:filtval}
The filtration $\cF^\bullet$ satisfies
\begin{equation*}
	\cF^\la H^0(X,mL) = \bigcap_E \left\{ f \in H^0(X,mL) \, \vert \, \ord_E(f)+m \cdot \ord_E(G) \geq \ord_{E}(\pi) \la \right\}
	,\end{equation*}
where $E$ runs through all irreducible components $ Y_\kappa$ such that $h'(E)$ is a divisor.
\end{lem}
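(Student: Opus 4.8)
\textbf{Proof proposal for Lemma \ref{l:filtval}.}

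The plan is to compare the two descriptions of the filtration $\cF^\bullet$: the one defined by intersecting with $\pi^\la H^0(X',mL')$ inside $H^0(X_K,mL_K)$ as in \eqref{e:Gfilt}, and the valuative one. First I would fix $m$ and $\la$ and unwind the definitions. A section $f \in H^0(X,mL)$ lies in $\cF^\la H^0(X,mL)$ if and only if, under the identification $H^0(X_K,mL_K)\cong H^0(X'_K,mL'_K)$, the element $f$ lies in $\pi^\la H^0(X',mL')$, i.e.\ $\pi^{-\la} f$ extends to a section of $mL'$ on $X'$. Pulling everything back to the normalization $Y$ of the graph, this is an $S_2$ (indeed normal) scheme, so a rational section of a line bundle is regular if and only if it has nonnegative order along every prime divisor; and since $f$ is already regular on $X_K = X'_K$, only the components $E$ of the special fiber $Y_\kappa$ can impose conditions. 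The relation $h'^*L' = h^*L(G)$ then translates ``$\pi^{-\la}f \in H^0(X',mL')$'' into the system of inequalities $\ord_E(h^*f) + m\,\ord_E(G) - \la\,\ord_E(\pi) \geq 0$, ranging over those $E$ for which $h'(E)$ is a divisor (the components contracted by $h'$ impose no condition because $L'$ is pulled back and $f$ is $h'$-fiberwise regular there).

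More precisely, the key steps in order: (1) By normality of $Y$ and reflexivity of $\omega$-type and line-bundle sheaves, $H^0(X',mL') = H^0(Y\setminus \Exc(h'), h'^*(mL'))$, and a section of $h'^*(mL')$ over the big open set extends over all of $Y$ iff its order along each $h$-relevant component of $Y_\kappa$ is nonnegative; this is the standard valuative criterion for sections of line bundles on normal schemes. (2) Identify $h'^*(mL') = h^*(mL)(mG)$, so for a section $f$ pulled back from $X$ the condition $\pi^{-\la}f \in H^0(X', mL')$ becomes $\mathrm{div}_Y(h^*f) + mG - \la\,\mathrm{div}_Y(\pi) \geq 0$ as divisors, where $\mathrm{div}_Y(\pi) = \sum_E \ord_E(\pi) E$ is supported on $Y_\kappa$. (3) Read off the coefficient of each $E$: this is exactly $\ord_E(f) + m\,\ord_E(G) - \la\,\ord_E(\pi) \geq 0$. (4) Observe that for components $E$ with $h'(E)$ not a divisor — i.e.\ $h'$-exceptional over $X'$ — one has $\ord_E(G)$ and the linear equivalence already forcing the inequality automatically, or more simply, such $E$ do not appear when one computes $H^0(X',mL')$ via the \emph{big} open subset of $X'$; so restricting the intersection to components $E$ with $h'(E)$ a divisor loses nothing, which gives the stated formula.

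I expect the main technical subtlety to be Step (4): justifying carefully that the components $E\subset Y_\kappa$ that are $h'$-exceptional can be dropped from the intersection. One clean way is to note that $h'$ is a proper birational morphism of normal schemes, so $h'_*\cO_Y = \cO_{X'}$ and hence $H^0(X',mL') = H^0(Y, h'^*(mL'))$ with \emph{no} exceptional conditions at all — the exceptional divisors impose no restriction since $h'^*(mL')$ is a pullback. Thus the only genuine conditions come from $h$-pullback having poles, i.e.\ from components $E$ where $G$ has negative coefficient, all of which map to divisors under $h'$ (indeed they are precisely the $h$-exceptional-but-not-$h'$-exceptional components). So the honest content is: $\cF^\la H^0(X,mL) = \{f : \ord_E(h^*f) + m\,\ord_E(G) \geq \la\,\ord_E(\pi)\text{ for all }E\subset Y_\kappa\}$, and then one checks that imposing this only over $E$ with $h'(E)$ a divisor is equivalent, since on $h'$-exceptional $E$ one has $\ord_E(G) = \ord_E(h'^*L' - h^*L) \geq -\ord_E(h^*L)$ in a way that makes the inequality automatic for $f$ regular on $X$ — this last point I would spell out by a local computation or by invoking that $h^*f + mG$ is $h'$-pulled back hence effective along $h'$-exceptional loci. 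This is the one place where I would be careful to get the bookkeeping right; everything else is a direct translation through the valuative criterion on the normal scheme $Y$.
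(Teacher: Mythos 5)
Your proposal is correct and follows essentially the same route as the paper: unwind the definition to the condition $\pi^{-\la}f\in H^0(X',mL')$, reduce to the big open set $U':=Y\setminus \Exc(h')$ using normality of $X'$ together with $\mathrm{codim}_{X'}(h'(\Exc(h')))\geq 2$, and translate via $h'^*L'=h^*L(G)$ into order conditions along the components of $Y_\kappa$ meeting $U'$, i.e.\ those $E$ with $h'(E)$ a divisor. One small caution on your step (4): of the two justifications you float for dropping the $h'$-exceptional components, the correct one is the Hartogs-type restriction to the big open subset; the purely numerical claim that $\ord_E(G)\geq -\ord_E(h^*L)$ makes the inequality automatic on exceptional $E$ is not right as stated and is not needed.
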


\begin{proof}
By definition,  $f\in \cG^\la H^0(X,mL)$ if and only if $f \pi^{-\la} \in H^0(X',mL')$. 
Since $X'$ is normal and ${\rm codim}_{X}(h'(\Exc(h'))\geq 2$, the latter holds if and only if $f \pi^{-\la} \in H^0(U, L'\vert_U)$, where $U':= Y\setminus \Exc(h')$.
Using that $h'^*L' = h^*L(G)$, the latter is equivalent to 
\[
\ord_E(f \pi^{-\la}) + m \cdot  \ord_E(G) \geq 0
\]
for each irreducible component $E\subset Y_\kappa$ such that $h'(E)$ is a divisor. 
Since $\ord_E( f \pi^{-\la})= \ord_{E}(f)- \la \ord_{E}(\pi)$, the result follows.
\end{proof}

\subsection{Boundary polarized CY pairs}\label{ss:ScompleteFano}
Fix a $\bG_m$-equivariant family of  boundary polarized CY pairs
\[
g^\circ :(\cX^\circ,\Delta_{\cX^\circ }+\cD^\circ) \to S^\circ.
\] 
By \eqref{e:STR-0}, $g^\circ$ is equivalent to the data of
 two families of boundary polarized CY pairs
\[
(X,\Delta+D) \to \Spec(R) 
\quad \text{and } \quad
(X',\Delta'+D') \to \Spec(R)
\] 
with an isomorphism $(X_K,\Delta_K+D_K) \cong (X'_K,\Delta'_K+D'_K)$.

\begin{thm}\label{t:Scomplete}
If $R$ is essentially of finite type over $\bk$, then $g^\circ$ extends  uniquely to a $\G_m$-equivariant family of   boundary polarized CY pairs
${g}: ({\cX} ,\Delta_{{\cX}}+{\cD})  \to S$.
\end{thm}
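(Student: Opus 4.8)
The strategy is to reduce the extension problem to the case of normal schemes, where we can use the valuative machinery from Section \ref{ss:Scompletepol} together with the boundary polarized CY structure, and then glue. Write $S^\circ \to \cM$ via two families $(X,\Delta+D) \to \Spec(R)$ and $(X',\Delta'+D')\to \Spec(R)$ with an identification over $\Spec(K)$, as in Section \ref{ss:ScompleteFano}. Set $L := \omega_X^{[-r]}(-r\Delta)$ and $L' := \omega_{X'}^{[-r]}(-r\Delta')$ for $r$ sufficiently divisible, so these are relatively ample line bundles that agree over $K$. First I would handle the normal case: pass to the normalizations $(\oX, \oG + \oDe + \oD) \to \Spec(R)$ and $(\oX', \oG' + \oDe' + \oD') \to \Spec(R)$, which are again families of (possibly disconnected) boundary polarized CY pairs by Lemma \ref{l:familyslcnormadj}.1, and which remain identified over $K$. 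Let $Y$ be the normalization of the graph of $\oX \dashrightarrow \oX'$ with maps $h, h'$ and write $h'^*\oL' = h^*\oL(G)$ for $G$ supported on $Y_\kappa$. The filtration $\cF^\bullet H^0(\oX, m\oL)$ of \eqref{e:Gfilt} then has the valuative description of Lemma \ref{l:filtval}, namely it is cut out by the valuations $\ord_E$ for $E \subset Y_\kappa$ with $h'(E)$ a divisor, twisted by $\ord_E(G)$ and $\ord_E(\pi)$.

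\textbf{Finite generation and the normal case.} The key point is finite generation of the Rees-type algebra in \eqref{e:algebra}. Following \cite[\S3.2]{ABHLX20}, this will follow once we identify the relevant valuations as lc places of a complement: since both $(\oX,\oG+\oDe+\oD) \to \Spec(R)$ and $(\oX',\oG'+\oDe'+\oD')\to \Spec(R)$ are log-Fano-plus-complement families with $K + (\text{boundary}) \sim_{\bQ} 0$, the crepant-birational comparison of Lemma \ref{l:CYbirational}.1 applied fiberwise over $\Spec(R)$ shows that for each $E \subset Y_\kappa$ with $h'(E)$ a divisor, the restriction $v_E := \tfrac{1}{\ord_E(\pi)} r(\ord_E)$ to $K(\oX)$ satisfies $A_{\oX_\kappa,\oG_\kappa+\oDe_\kappa+\oD_\kappa}(v_E) = 0$, i.e. $v_E$ is an lc place of the complement $\oD_\kappa$ on the log Fano pair $(\oX_\kappa, \oG_\kappa+\oDe_\kappa)$. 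Then the finite generation statement of Proposition \ref{p:lcplacestestconfig}.1 (equivalently Theorem \ref{t:fg}) applied to the klt/lc boundary polarized CY fiber $(\oX_\kappa,\oG_\kappa+\oDe_\kappa+\oD_\kappa)$ yields finite generation of the associated graded; a standard argument then lifts this to finite generation of \eqref{e:algebra} over $R[s,t]/(st-\pi)$. By Proposition \ref{p:extendS} we obtain a $\bG_m$-equivariant extension $\oX := \Proj(\cdots) \to S$ of the polarized family. One then takes $\Delta_{\oX}, \oG_{\oX}, \cD$ to be the closures of the boundary divisors over $S^\circ$, checks via Lemma \ref{l:slcadj} (applied with the fibers $s=0$, $t=0$ of $S$ playing the role of the snc divisors on the base) that $(\oX, \oG_{\oX}+\Delta_{\oX}+\cD+ \{s=0\}+\{t=0\})$ is lc, that $K + \text{boundary} \sim_{\bQ,S} 0$ by Lemma \ref{l:Ncompspecialize} plus \cite[Cor. 1.6]{HX16}, and that $\cD$ is ample over $S$ (limit of ample divisors on a family over the two-dimensional base $S$, using that $\cD$ is $\bQ$-Cartier along the central fiber). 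This proves the theorem when $\cX^\circ$ is normal.

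\textbf{Gluing and the general case.} To remove the normality hypothesis, I would invoke Koll\'ar's gluing theory exactly as in Proposition \ref{p:bpcygluing}. The involution $\tau^\circ$ of $(\oG^{\circ})^n$ over $S^\circ$ is $\bG_m$-equivariant, and over $S^\circ$ it is the gluing datum producing $\cX^\circ$ from its normalization. By the normal case just proved, the normalized family $(\oX^\circ, \oG^\circ+\oDe^\circ+\oD^\circ)\to S^\circ$ extends to $(\oX, \oG_{\oX}+\Delta_{\oX}+\cD)\to S$, and $\oG_{\oX}^n \to S$ is a family of slc pairs whose boundary is the closure of that over $S^\circ$; since $S$ is regular and $\oG_{\oX}^n \setminus (\oG^\circ)^n$ lies over the closed point, Lemma \ref{l:familyslcnormadj} shows ${\rm Diff}_{\oG_{\oX}^n}(\cdots)$ has no new components, so $\tau^\circ$ extends (uniquely, by separatedness of the relevant Hom-scheme, or by the argument in \cite[Prop. 5.12]{Kol13}) to an involution $\tau$ of $\oG_{\oX}^n$ over $S$ fixing the different. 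Now Proposition \ref{p:bpcygluing} — whose proof works verbatim over the regular base $S$ — produces the glued family $(\cX, \Delta_{\cX}+\cD)\to S$ extending $g^\circ$, slc and boundary polarized CY, with the $\bG_m$-action descending from the equivariant normalization. Uniqueness follows from uniqueness in the normal case together with the uniqueness of the geometric quotient \cite[Cor. 5.33]{Kol13}. \textbf{The main obstacle} I anticipate is the finite generation step in the normal case and, relatedly, verifying that the limiting divisor $\cD$ remains $\bQ$-Cartier and ample over all of $S$ — the $\bQ$-Cartier property can fail after taking $\Proj$ if one is not careful about which line bundle is used, so one must either argue that the index is preserved by Lemma \ref{l:Ncompspecialize} applied along the DVRs $\STR\vert_{s\neq0}$ and $\STR\vert_{t\neq0}$, or perturb $\cD$ by a general $\bQ$-linearly equivalent divisor as in the proof of Lemma \ref{l:CYbirational}.2; the slc (rather than klt) setting is what forces the gluing detour and is the source of the remaining technical subtleties.
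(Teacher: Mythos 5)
Your overall architecture — reduce to the normal case, establish finite generation of the Rees-type algebra \eqref{e:algebra}, take $\Proj$, then handle the non-normal case by extending the involution on the conductor and invoking Koll\'ar's gluing (Proposition \ref{p:bpcygluing}) — is exactly the paper's strategy, and your gluing step and your resolution of the $\bQ$-Cartier/ampleness worry (via $\cD \sim_{\bQ,S} -(K_{\cX/S}+\Delta_\cX)$) are essentially right. However, there is a genuine gap at the one step that carries the real content: finite generation.

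In the S-completeness setting the divisors $E_i$ (birational transforms on the graph $Y$ of the components of $X'_\kappa$) are divisors \emph{over the total space} $X \to \Spec(R)$ with center contained in $X_\kappa$; there is no $\bG_m$-action on $X \to \Spec(R)$, hence no restriction map $r(\ord_{E_i})$ to a valuation on the special fiber as in the test configuration setting of Section \ref{ss:testconfigps}. Your claim that ``$v_E := \tfrac{1}{\ord_E(\pi)} r(\ord_E)$ ... is an lc place of the complement $\oD_\kappa$ on $(\oX_\kappa,\oG_\kappa+\oDe_\kappa)$'' therefore does not parse, and the correct vanishing — obtained from Lemma \ref{l:CYbirational}.1 applied to the two families over $\Spec(R)$, not ``fiberwise'' — is $A_{X,\Delta+D+X_\kappa}(E_i)=0$, a log discrepancy computed on the total space pair including the central fiber. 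Consequently one cannot reduce to finite generation of an algebra attached to the central fiber (Proposition \ref{p:lcplacestestconfig}.1 concerns a single lc pair over $\bk$, not a family over $R$), and the phrase ``a standard argument then lifts this'' conceals precisely the difficulty: one would need to identify the associated graded of $\cF^\bullet$ with a section ring of a degeneration of $\oX_\kappa$, which is what the finite generation is supposed to produce in the first place. The paper instead proves finite generation directly for the family over $\Spec(R)$ (Theorem \ref{t:fg}), by passing to the relative cone over $\Spec(R)$, realizing the $w_i$ as divisorial lc places of the cone pair, and running the MMP for $\bQ$-factorial dlt pairs \cite[Cor.~1.6]{HX13}; the lc (rather than klt) special fiber and the presence of several valuations are exactly what distinguishes this from \cite{BX19}. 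A secondary, smaller gap: you assert that $(\cX,\Delta_\cX+\cD+\cX_{st=0})$ is lc ``via Lemma \ref{l:slcadj},'' but that lemma only translates between the family condition and lc-ness of the total space; the actual verification requires comparing $\cX$ with the trivial extension $(X,\Delta+D)\times_{\Spec(R)}S$ and applying the Negativity Lemma to conclude the two are crepant, which your sketch omits.
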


The essentially of finite type assumption is needed to apply Theorem \ref{t:fg}, which relies on results from the MMP. 
To prove the theorem, we first verify the case when both $X$ and $X'$ are normal  by showing that the filtration in Section \ref{ss:Scompletepol}  is finitely generated.
We  then use Koll\'ar's gluing theory for slc pairs to prove the full result.

To proceed, fix a positive integer $r$ such that
$
L:=  -r(K_{X}+\Delta)$ and 
$L':=-r(K_{X'}+\Delta')$
are Cartier divisors. 
Let $\cL^\circ:=-r(K_{\cX^\circ/S^\circ} +\Delta_{\cX^\circ})$. Write $\cF^\bullet$ for the filtration of $H^0(X,mL)$ defined in \eqref{e:Gfilt}.

\begin{prop}\label{p:finitegen}
If $X$ and $X'$ are normal, then  $\bigoplus_{(m,\la)\in \N\times \Z}  \cF^\la H^0(X,mL)t^{-\la}$ is a finitely generated $R[s,t]/(st-\pi)$-algebra.
\end{prop}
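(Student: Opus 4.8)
The plan is to reinterpret the filtration $\cF^\bullet$ valuatively via Lemma \ref{l:filtval} and then invoke a finite generation result from the MMP (the result referred to as Theorem \ref{t:fg}), exactly in the spirit of the analogous statement for K-semistable Fano varieties in \cite[\S3]{ABHLX20}. Concretely, let $Y$ be the normalization of the graph of $X \dashrightarrow X'$ with maps $h: Y\to X$ and $h':Y\to X'$, and write $h'^*L' = h^*L(G)$ with $G$ supported on $Y_\kappa$. By Lemma \ref{l:filtval},
\[
\cF^\la H^0(X,mL) = \bigcap_{E}\{ f\in H^0(X,mL)\,\vert\, \ord_E(f) + m\,\ord_E(G)\geq \la\,\ord_E(\pi)\},
\]
where $E$ ranges over the irreducible components of $Y_\kappa$ whose image $h'(E)$ is a divisor. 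Setting $b_E := \ord_E(\pi)$, $v_E := b_E^{-1} r(\ord_E)$ (restriction of $\ord_E$ to $K(X)$), and $c_E := b_E^{-1}\ord_E(G)$, this rewrites as a finite intersection of the form
\[
\cF^\la H^0(X,mL) = \bigcap_E \{ f \, \vert\, v_E(f) + m c_E \geq \la\}.
\]

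The key step is to show each $v_E$ is an lc place of a complement of $(X_\kappa, \Delta_\kappa)$, after which finite generation follows. For this I would argue as in Lemma \ref{l:lctestconfig}: the $R[s,t]/(st-\pi)$-algebra structure means the extension over $S$ (if it exists) has special fiber that is a degeneration of both $X_\kappa$ and $X'_\kappa$; more directly, the divisor $G$ measures the discrepancy. Since $(X,\Delta+D)\to \Spec R$ and $(X',\Delta'+D')\to\Spec R$ are families of boundary polarized CY pairs, Lemma \ref{l:CYbirational}.1 gives that $(X,\Delta+D+X_\kappa)$ and $(X',\Delta'+D'+X'_\kappa)$ are crepant birational over $\Spec R$; pulling back to $Y$ shows $A_{X,\Delta}(v_E)$ can be read off from $\ord_E(G)$ (analogously to the computation $r^{-1}\ord_E(G) = -A_{X,\Delta}(v_E)$ in the proof of Lemma \ref{l:lctestconfig}, with the twisting by $\pi$ accounted for), and that $A_{X,\Delta+D}(v_E)=0$, i.e. $v_E\in \LC(X_\kappa,\Delta_\kappa + D_\kappa)$. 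Hence $c_E = -r A_{X_\kappa,\Delta_\kappa}(v_E)$ and the filtration $\cF^\bullet$ is precisely the filtration associated to the finite collection of lc places $v_E$ of the complement $D_\kappa$ of the log Fano pair $(X_\kappa,\Delta_\kappa)$.

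With this identified, I would apply Theorem \ref{t:fg} (which asserts finite generation of exactly such Rees-type algebras built from finitely many lc places of a complement of a log Fano pair) to the pair $(X_\kappa,\Delta_\kappa + D_\kappa)$ and the valuations $\{v_E\}$ — this is where the hypothesis that $R$ is essentially of finite type over $\bk$ is used, since Theorem \ref{t:fg} rests on MMP results valid in that setting. This gives that $\bigoplus_{m,\la}\mathrm{gr}^\la H^0(X_\kappa,mL_\kappa)$-type algebra is finitely generated; one then lifts finite generation from the central fiber to the $R[s,t]/(st-\pi)$-algebra $\bigoplus_{(m,\la)} \cF^\la H^0(X,mL)t^{-\la}$ by a standard graded Nakayama / semicontinuity argument (noting that the associated graded in the $s,t$-direction recovers the central fiber algebra, and $R[s,t]/(st-\pi)$ is Noetherian). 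The main obstacle I anticipate is the bookkeeping in the second step: carefully matching the $\pi$-adic weights $b_E = \ord_E(\pi)$ with the log discrepancies, and verifying that the valuations arising from components of $Y_\kappa$ are genuinely divisorial lc places of a single complement — slc versus klt subtleties are avoided here since $X,X'$ are assumed normal, but the identification of $G$ with the discrepancy divisor requires the crepant-birational comparison of Lemma \ref{l:CYbirational}.1 and a clean choice of $\cL^\circ$.
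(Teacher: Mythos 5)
Your setup is right: Lemma \ref{l:filtval} gives the valuative description of $\cF^\bullet$ in terms of the components $E$ of $Y_\kappa$ dominating divisors of $X'$, the identity $\ord_E(G)=-rA_{X,\Delta+X_\kappa}(E)$ follows from comparing discrepancies on $Y$, and Lemma \ref{l:CYbirational}.1 gives $A_{X,\Delta+D+X_\kappa}(E)=0$ (also $\ord_E(\pi)=1$ automatically, since $E$ is a reduced component of the lc boundary $X'_\kappa$, so your $b_E$ is always $1$). The gap is in where you place these valuations and which finite generation result you then invoke. The $\ord_E$ are divisorial valuations on the total space $X$ over $\Spec(R)$ whose centers lie in $X_\kappa$; they do \emph{not} restrict to divisorial valuations on the central fiber. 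The operation $r(\ord_E)$ you borrow from Section \ref{ss:testconfigps} only makes sense in the test-configuration setting, where $K(\cX)=K(X)(t)$ and a $\bG_m$-invariant valuation restricts to the subfield $K(X)$; here $K(X_\kappa)$ is a quotient of $\cO_X$, not a subfield of $K(X)$, so the assertion that ``each $v_E$ is an lc place of a complement of $(X_\kappa,\Delta_\kappa)$'' does not parse, and $\cF^\bullet$, being a filtration of the $R$-modules $H^0(X,mL)$ rather than of $H^0(X_\kappa,mL_\kappa)$, is not ``the filtration associated to lc places of $D_\kappa$ on $X_\kappa$.''

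Consequently the plan ``prove finite generation on the central fiber, then lift by graded Nakayama'' does not get off the ground, and even its second half is not routine: identifying the fiber of the Rees algebra at $s=t=0$ with an algebra built from $X_\kappa$ is essentially equivalent to already knowing the extension over $S$ exists, which is what is being proved. The repair is that Theorem \ref{t:fg} is formulated precisely for the relative situation you are in: for a family $(X,\Delta+D)\to\Spec(R)$ with $X$ normal and divisorial valuations $v_i$ on the total space satisfying $A_{X,\Delta+D+X_\kappa}(v_i)=0$, the Rees algebra of $\cF^\la H^0(X,mL)=\bigcap_i\{s\,\vert\,v_i(s)\geq\la+mrA_{X,\Delta+X_\kappa}(v_i)\}$ is a finitely generated $R$-algebra (its proof runs the MMP on the relative cone over $\Spec(R)$, via \cite[Cor. 1.6]{HX13}). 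Your discrepancy computation already essentially verifies the two needed claims, provided all log discrepancies are taken on the total-space pairs $(X,\Delta+X_\kappa)$ and $(X,\Delta+D+X_\kappa)$ rather than on $(X_\kappa,\Delta_\kappa)$; with that correction Theorem \ref{t:fg} applies directly and finishes the proof with no passage to the central fiber.
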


\begin{proof}
Let $Y$ denote the normalization of the graph  of $X\dashrightarrow X'$, and consider the induced morphisms $h:Y\to X$ and $h':Y\to X'$.
Let $E_1,\ldots, E_\ell$ denote the birational transforms on $Y$ of the irreducible components of $X'_\kappa$. 
We claim that 
\begin{enumerate}
	\item[(i)] $\cF^\la H^0(X,mL) =\bigcap_{i=1}^\ell \left\{ s\in H^0(X,mL) \, \vert \, \ord_{E_i}(s) \geq \la + m rA_{X,\Delta+X_\kappa}(E_i) \right\}$
	\item[(ii)] $A_{X,\Delta+D+X_\kappa}(E_i)=0$ for  $i=1,\ldots, \ell$. 
\end{enumerate}
Assuming the claim holds, then Theorem \ref{t:fg}, which is proven in the next section, immediately implies the relevant  algebra is finitely generated.

To verify the claim, note that $(X',\Delta'+D'+X'_\kappa)$ is lc. 
Thus
\[
A_{X',\Delta'+D'+X'_\kappa}(E_i) =0=  A_{X',\Delta'+X'_\kappa}(E_i) 
\]
and $\ord_{E_i}(\pi) = \ord_{E_i} (X'_\kappa) =1$. 
Set $G:= h'^*L'- h^*L$. 
Since
\[
\ord_{E_i}(G) =
rA_{X',\Delta'+X'_\kappa}(E_i)- rA_{X,\Delta+X_\kappa} (E_i) = 0 - rA_{X,\Delta+X_\kappa} (E_i),
\]
Lemma \ref{l:filtval} implies (i) holds.
For (ii), observe that   
\[
A_{X,\Delta+D+X_\kappa}(E_i) =A_{X',\Delta'+D'+X'_\kappa}(E_i)= 0
\]
where the first equality holds by Lemma \ref{l:CYbirational}.1.
\end{proof}

\begin{proof}[Proof of Theorem \ref{t:Scomplete}  when $\cX^\circ$ is normal]
Consider the morphism
\begin{equation*}
	{g}: \cX : = \Proj  \Big( \bigoplus_{m\in \N} \bigoplus_{\la \in \Z}  \cG^\la H^0(X,mL)t^{-\la}  \Big) \to {S}.
\end{equation*}
By  Propositions \ref{p:extendS} and \ref{p:finitegen},  
$(\cX, \cO_{\cX}(m)) \to {S}$ is the unique extension of $(\cX^\circ,m\cL^\circ)\to S^\circ$ to a flat family of polarized schemes for $m>0$ sufficiently divisible. 
 Let $\Delta_{\cX}$ and $\cD$ denote the  closures of $\Delta_{\cX^\circ}$ and $\cD^\circ$ under the embedding $j: \cX^\circ \hookrightarrow \cX$.

We will now show
$(\cX, \Delta_{\cX} +\cD)\to S$ is a family of  boundary polarized CY pairs.
First, we verify that ${\cX}$ is normal. For $m>0$ sufficiently divisible,  the natural map $H^0(\cX, \cO_{\cX}(m)) \to H^0(\cX, j_* j^* \cO_{\cX}(m))$ is an isomorphism, since
\[
H^0(\cX, \cO_{\cX}(m)) = \bigoplus_{\la \in \mathbb{Z}}  \cG^\la H^0(X,mL)t^{-\la} \cong 
H^0(\cX^\circ, m\cL^\circ) 
\cong H^0(\cX, j_* j^* \cO_{\cX}(m))
,\]
where the second isomorphism is \eqref{e:algebra}. 
Therefore the natural map $ \cO_{\cX} \to j_* j^*\cO_{\cX}$ is an isomorphism as well. Since $ \cO_{\cX} \cong j_* j^*\cO_{\cX}$ and $\cX^\circ$ is normal, $\cX$ is normal. 
Next, recall that 
\[
\cO_{\cX^\circ}(-mr(K_{\cX^\circ/S^\circ}+\Delta_{\cX^\circ}))
\cong
\cO_{\cX^\circ }(m)
\quad \text{ and }
\quad
K_{\cX^\circ} +\Delta_{\cX^\circ} + \cD^\circ \sim_{S^\circ,\bQ}0
.\]
Since $\cX$ is normal
and  ${\rm codim}_{\cX}(\cX_0) =2$, 
\[
\cO_{\cX}(-mr(K_{\cX/S}+\Delta_{\cX}))
\cong
\cO_{\cX}(m)
\quad \text{ and } \quad
K_{\cX} +\Delta_{\cX} + \cD \sim_{S,\bQ}0
.\]
Finally, we show $(\cX,\Delta_{\cX}+\cD)\to S$ is a family of slc pairs. 
Consider the family 
\begin{equation*}
	(\tcX,  \Delta_{\tcX} + \tcD) := (X,\Delta+D) \times_{\Spec(R)} S
\end{equation*}
The isomorphisms $\tcX_{t\neq 0} \cong X\times \bG_m \cong \cX_{t\neq0}$
induce a birational map $\tcX \dashrightarrow \cX$. 
Over $t=0$ there may be indeterminacy, since
\begin{equation}\label{e:snot01}
\tcX_{s\neq 0} \cong X\times \bG_m \quad \text{ and  } \quad 
 \cX_{s\neq 0}\cong X' \times \G_m
	.\end{equation}
Let $\cY$ denote the normalization of the  graph of $\tcX \dashrightarrow \cX$ and write $\tilde{f}:\cY\to \tcX$ and $f:\cY\to \cX$ for the induced morphisms.
Set
\[
\Gamma := \tilde{f}^*(K_{\tcX} + \tcDe + \tcD + \tcX_{st=0})  - {f}^*(K_{\cX} + \Delta_{\cX} + \cD + \cX_{st=0}) 
.\] 
We claim  (i) $f_*\Gamma=0$ and (ii) $\Gamma \equiv_{f}0$. 
Assuming the claim holds,  the Negativity Lemma implies $\Gamma=0$. 
For (i), note that $\Supp ( \Gamma )\subset \cY_{t=0}$, since  $\tcX \dashrightarrow \cX$ is an isomorphism over $t\neq0$. 
Additionally $\Supp(\Gamma) \subset \cY_{s=0}$ by  \eqref{e:snot01} and Lemma \ref{l:CYbirational}. 
Therefore $\Supp (f_*\Gamma) \subset \cX_0$. 
Since ${\rm codim}_{\cX}( \cX_0)=2$, (i) holds. 
Since $K_{\tcX} + \tcDe + \tcD\sim_{\Q,S}0$, (ii) also holds. 
Using that $(\tcX, \tcDe +\hcD+ \tcX_{st=0} )$ is lc and $\Gamma=0$, the pair $(\cX, \Delta_{\cX} +\cD+ \cX_{st=0})$ is lc. Therefore
$(\cX, \Delta_{\cX} + \cD) \to S$ is a family of boundary polarized CY pairs by Lemma \ref{l:slcadj}. 
The family is $\bG_m$-equivariant by \cite[Lem. 2.16]{ABHLX20} and the  unique extension 
by Lemma \ref{l:CYbirational}.3.
\end{proof}

We will now deduce  Theorem \ref{t:Scomplete} from  the case when $\cX^\circ$ is normal.

\begin{proof}[Proof of Theorem \ref{t:Scomplete}]
Let $(\overline{\cX^\circ}, \overline{\cG^\circ}+\Delta_{\overline{\cX^\circ}} +\overline{\cD^\circ})$ denote the normalization of $(\cX^\circ, \Delta_{\cX^\circ} +\cD^\circ)$ and $\tau^\circ: \overline{\cG^\circ}^n \to \overline{\cG^\circ}^n$  the induced involution.
We claim   that
\begin{enumerate}
	\item 
	$\overline{g}^\circ:(\overline{\cX^\circ}, \overline{\cG^\circ}+\Delta_{\overline{\cX^\circ}} +\overline{\cD^\circ})\to S^\circ$ 
	extends to a $\bG_m$-equivariant family of  boundary polarized CY pairs 
	$\overline{g}:(\overline{\cX}, \overline{\cG}+\Delta_{\overline{\cX}} +\overline{\cD})  \to S$, and

	\item   $\tau^\circ$ extends to an involution  $\tau:\overline{\cG}^n \to \overline{\cG }^n$.
\end{enumerate}
Since each connected component of 
$\overline{g}^\circ:(\overline{\cX^\circ}, \overline{\cG^\circ}+\Delta_{\overline{\cX^\circ}} +\overline{\cD^\circ}) \to S$
is a $\bG_m$-equivariant family of boundary polarized CY pairs by Lemma \ref{l:familyslcnormadj}.1,  
the normal case of Theorem \ref{t:Scomplete} implies (1).
Next observe that Lemma \ref{l:familyslcnormadj}.2 and adjunction imply
 $( \overline{\cG}^n,  {\rm Diff}_{\overline{\cG}^n} ( \Delta_{\overline{\cX}} +\overline{\cD})) \to S$ is a family of  boundary polarized CY pairs.
Thus   (2) holds by Lemma \ref{l:CYbirational}.2. 

Since (1) and (2) hold, Proposition \ref{p:bpcygluing} implies that $g^\circ: (\cX^\circ, \Delta_{\cX^\circ}+\cD^\circ)\to S^\circ$ extends to a family of boundary polarized CY pairs $(\cX,\Delta_{\cX}+\cD) \to S$. 
The extension is $\bG_m$-equivariant by \cite[Lem. 2.16]{ABHLX20} and unique 
by Lemma \ref{l:CYbirational}.3.
\end{proof}

\subsection{Finite generation}
In this section, we prove the finite generation of certain bigraded algebras that appear when verifying the moduli stack of boundary polarized CY pairs is S-complete and $\Theta$-reductive.

To state the result, we fix the data of
a family of  boundary polarized CY pairs
\[
(X,\Delta+D) \to \Spec(R)
\] 
such that $X$ is normal,
valuations $v_1,\ldots, v_\ell$, where $v_i:= b_i \ord_{E_i}$,    $b_i\in \bQ_{\geq 0}$, and $E_i$ is a  divisor over $X$, and a positive integer $r>0$ such that  $L:=-r(K_{X}+\Delta)$ is a Cartier divisor.

The above data  induces a filtration $\cF^\bullet$ of $H^0(X,mL)$  by $R$-submodules defined by 
\begin{equation*}
	\cF^\la H^0(X,mL) : =  \bigcap_{i=1,\ldots, \ell} \{ s\in H^0(X,mL) \, \big\vert \,   v_i(s) \geq  \la + mr A_{X,\Delta+X_\kappa}(v_i)\}
\end{equation*}
for each $\la\in \Z$ and $m \in \N$. 
The \emph{Rees algebra} of $\cF^\bullet$ is
\begin{equation*}
	{\rm Rees} (\cF): = 
	\bigoplus_{(m,\la) \in \N\times \Z} \cF^\la H^0(X,mL) \subset \bigoplus_{(m,\la)\in \N\times \Z} H^0(X,mL),
\end{equation*}
which has the structure of a graded $R$-algebra.
The following theorem gives a criterion for when the algebra is finitely generated.

\begin{thm}\label{t:fg}
	For $\cF^\bullet$ as above, if  $R$ is essentially of finite type over $\bk$ and $A_{X,\Delta+D+X_\kappa}(E_i)=0$ for all $i=1,\ldots,\ell$,
	then ${\rm Rees}(\cF)$
	is a finitely generated $R$-algebra.
\end{thm}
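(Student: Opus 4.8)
The strategy is to reduce the finite generation of $\mathrm{Rees}(\cF)$ to a finite generation statement for a section ring on a birational model, where it follows from the MMP. First I would construct a $\bQ$-factorial dlt model $\mu: Y \to X$ extracting all the divisors $E_1,\ldots,E_\ell$ simultaneously, together with the divisor $X_\kappa$; such a model exists by \cite{BCHM10} (or the version for lc pairs in \cite{HX13}) since, by the hypothesis $A_{X,\Delta+D+X_\kappa}(E_i)=0$, all the $E_i$ are lc places of the pair $(X,\Delta+D+X_\kappa)$, which is lc because $(X,\Delta+D)\to\Spec(R)$ is a family of boundary polarized CY pairs and Lemma \ref{l:slcadj} applies. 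On $Y$ we may write $\mu^*(K_X+\Delta) + \sum_i a_i E_i = K_Y + \mu_*^{-1}\Delta + \sum_i E_i + (\text{other exceptionals with log discrepancy} \le 1)$, where the coefficients are dictated by $A_{X,\Delta+X_\kappa}$.

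**Translating the filtration.** Next I would observe, exactly as in the proof of Proposition \ref{p:finitegen} via Lemma \ref{l:filtval}, that the condition $v_i(s) \ge \la + mr A_{X,\Delta+X_\kappa}(v_i)$ defining $\cF^\la H^0(X,mL)$ is precisely the condition that the pullback $\mu^*s$, twisted appropriately, be a section of a line bundle on $Y$ of the form $mr(-K_Y - B) - \la F$ for a suitable reduced divisor $F$ supported on $Y_\kappa$ and a suitable boundary $B$ with $K_Y+B = \mu^*(K_X+\Delta) + (\text{anti-effective exceptional})$. Concretely, $\mathrm{Rees}(\cF)$ becomes (a Veronese of) the bigraded section ring $\bigoplus_{m,\la} H^0\!\big(Y,\, \cO_Y(-mr(K_Y+B) - \la F)\big)$. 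Here the key point is that the divisors $-K_Y - B$ (relatively ample / big since $-K_X-\Delta$ is ample) and $F$ together span a rational polyhedral cone of "movable" type; I would record that $(Y, B+F)$ is lc and that $K_Y+B+F$ is relatively trivial over $\Spec R$ after contracting $F$, which is the content of $A_{X,\Delta+D+X_\kappa}(E_i)=0$ translated to $Y$ (the $D$-part supplies the missing coefficient to make $B+F$ a complement).

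**The finite generation input.** The main obstacle is precisely the finite generation of this bigraded (multi-graded) section ring. For a single grading one could quote finite generation of the canonical ring; here the extra $\la$-grading requires a relative version of the finite generation of adjoint rings for a finite set of boundaries, i.e. \cite{BCHM10} in its multi-graded form, applied over the base $\Spec R$ — this is where the hypothesis that $R$ is essentially of finite type over $\bk$ is used, since the MMP results are stated in that generality. I would invoke the theorem that the $\cO_{\Spec R}$-algebra $\bigoplus H^0(Y, \lfloor mr(-K_Y-B)\rfloor + \text{boundary combinations})$ is finitely generated when the relevant pairs are lc (or klt after small perturbation) and the divisor classes lie in a rational polyhedral cone; the divisor $F$ being semiample-to-a-point over $\Spec R$ up to the complement relation is what keeps us inside that cone. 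Finally I would check that passing from $Y$ back to $X$ (i.e. taking $\mu_*$ and the $S_2$-ification) does not destroy finite generation — this is routine since $\mu$ is birational and $X$, being a fiber-wise normal $S_2$ scheme, satisfies $\cO_X = \mu_*\cO_Y$ — and conclude that $\mathrm{Rees}(\cF)$, being a finitely generated module-algebra over this finitely generated ring, is itself a finitely generated $R$-algebra.

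**Remark on subtleties.** One technical wrinkle I anticipate: the $v_i$ may be non-divisorial scalings $b_i \ord_{E_i}$ with $b_i$ irrational a priori — but the hypothesis only involves $A_{X,\Delta+D+X_\kappa}(E_i)$, so I may replace each $v_i$ by $\ord_{E_i}$ at the cost of rescaling the inequality, and by clearing denominators (after passing to a Veronese subalgebra in $m$) reduce to integral data; finite generation is insensitive to passing to Veronese subalgebras. A second wrinkle is that the $E_i$ need not be "compatible" (realized on one model with simple normal crossings); extracting them all on a single dlt $Y$ and then possibly taking a further log resolution handles this, and the polyhedrality of the relevant cone of divisors on the fixed model $Y$ is exactly what the multi-graded MMP finite generation theorem delivers.
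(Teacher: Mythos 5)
Your overall shape --- reduce to an MMP finite generation statement on a birational model where the $E_i$ appear as lc places --- is the right instinct, and your translation of the filtration into section spaces of twisted divisors is essentially the same computation as Lemma \ref{l:filtval}/Proposition \ref{p:finitegen}. But the crucial step is exactly the one you leave to a citation that does not exist: the finite generation of the \emph{bigraded} ring $\bigoplus_{m,\la} H^0\big(Y, -mr(K_Y+B)-\la F\big)$ in the lc setting. The multigraded finite generation of adjoint rings in \cite{BCHM10} requires klt pairs with big boundaries; here the pairs are genuinely lc along $F$ (that is the whole point of the hypothesis $A_{X,\Delta+D+X_\kappa}(E_i)=0$), and a klt perturbation destroys the lc places you need. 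What \cite{HX13} provides is the existence of a good minimal model for a \emph{single} $\bQ$-factorial dlt pair with $K+B\sim_{\bQ}0$ over the base, i.e.\ finite generation ray-by-ray in your $(m,\la)$-cone. Ray-by-ray finite generation does not imply finite generation of the multigraded ring without a polyhedrality statement for the geography of models, which is not known for lc pairs and which you do not supply. So as written the proof has a genuine gap at its central step.

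The paper's proof circumvents precisely this obstacle by a different device: it passes to the relative affine cone $Y=\Spec\big(\bigoplus_m H^0(X,mL)\big)$ over $\Spec R$, where the $m$-grading is absorbed into the coordinate ring $\cO_Y$ (already a finitely generated $R$-algebra), and the $\la$-grading becomes the grading of a \emph{single-graded} divisorial algebra $\bigoplus_{p}g_*\cO_Z(\lfloor -pF\rfloor)$ on a dlt modification $Z\to Y$ of the cone (Lemma \ref{l:coneconst} produces the divisors $F_i$ over $Y$ with $A_{Y,\Gamma+G+Y_\kappa}(F_i)=0$). Finite generation of that single-graded algebra then follows from one application of \cite[Cor.~1.6]{HX13} to the pair $(Z,B-\tfrac1d F)$. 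If you want to salvage your approach, you must either prove the multigraded statement you are invoking or find a way to collapse one of the two gradings; the cone construction is the standard way to do the latter. Two smaller points: the $b_i$ are rational by hypothesis, so your worry about irrational scalings is moot; and the region $\la\le -Nm$ of the Rees algebra (where $\cF^\la$ is everything) needs the separate elementary argument via finitely generated submonoids as in \cite[Lem.~4.8]{ELMNP06}, which your writeup omits.
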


A less general finite generation result was shown in \cite[Section 5]{BX19}
to prove the uniqueness of K-polystable degenerations  and the S-completeness of the moduli stack of K-semistable Fano varieties \cite{ABHLX20}.
Theorem \ref{t:fg} is different in that the special fiber is lc, but not necessarily klt,  and we allow multiple divisorial valuations.
To prove  Theorem \ref{t:fg}, we first relate the  finite generation of ${\rm Rees}(\cF)$ to a local finite generation result over the relative cone over $(X,\Delta+D)$ with respect to the polarization $L$ and then apply results on the Minimal Model Program for lc pairs.

\medskip

\subsubsection{Cone construction}\label{ss:coneconst}
Let $(Y,\Gamma+G) \to \Spec(R)$ denote the relative cone over $(X,\Delta+D)$ with respect to the polarization $L$. 
By this we mean, $Y := \Spec(V)$, where 
\begin{equation*}
	V 
	:= 
	\bigoplus_{m \in \N} V_m 
	:= 
	\bigoplus_{m \in \N} H^0(X,mL)
	.
\end{equation*}
There is a natural morphism  $Y\to \Spec(R)$ induced by the $R$-algebra structure on $V$ and it admits a section $\sigma: \Spec(R) \to Y$ given by the  cone points. 
The $\Q$-divisor  $\Gamma+G$ is defined as the closure of the pullback of $\Delta+D$ under the morphism 
\begin{equation*}
	Y \setminus \sigma\left(\Spec(R)\right) 
	\cong 
	X\times (\A^1\setminus 0) 
	\overset{{\rm pr}_1}{\longrightarrow}
	X
	.
\end{equation*}
Since $(X,\Delta+D+X_\kappa)$ is lc and $K_{X}+\Delta+D +X_{\kappa} \sim_{\bQ} 0$, a relative version of \cite[Lem. 3.1]{Kol13} implies
$(Y,\Gamma+G+Y_\kappa)$ is lc. 
Next, fix   an integer  $N>  r A_{X,\Delta+X_\kappa}(v_i)$ for each $i=1,\ldots, \ell$.

\begin{lem}\label{l:coneconst}
	For each $i=1,\ldots,\ell$, there is a valuation $w_i$ on $Y$ such that 
	\[w_{i} ( f) := 
	\min \left\{  v_i( f_m) + \left(N- r A_{X,\Delta+X_\kappa}(v_i)\right) m \,\, \vert \,\, f_m \neq 0 \right\},
	\]
	for all  $f \in V$, where $f= \sum_{m \geq 0} f_m$ and $f_m \in V_m$.
	Furthermore, 
	\begin{enumerate}
		\item $w_{i}= c_i \ord_{F_i}$ for some $c_i\in \Q_{>0}$ and prime divisor $F_i$ over $Y$, and
		\item  $A_{Y,\Gamma+G+ Y_\kappa}(w_{i}) =0$.
	\end{enumerate}
\end{lem}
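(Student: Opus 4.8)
The plan is to realize each $w_i$ as a monomial valuation on an explicit partial resolution of the cone. Let $\widetilde{Y}:=\Spec_X\bigoplus_{k\ge 0}\cO_X(kL)$ be the relative total space of $L^{-1}$ over the total space $X$ of the family; it is the blow-up of the cone-point section $\sigma(\Spec R)\subset Y$, with $\bG_m$-equivariant contraction $\rho\colon\widetilde{Y}\to Y$ whose only exceptional divisor is the zero section $X_0\cong X$, and with structure morphism $\pi\colon\widetilde{Y}\to X$. Since $L=-r(K_X+\Delta)$ is Cartier, there is no orbifold contribution, and the computation underlying Proposition \ref{prop:orbcone-adjunction}(1) gives
\[
K_{\widetilde{Y}}+\pi^*\Delta+\pi^*D+X_0+\pi^*X_\kappa=\pi^*(K_X+\Delta+D+X_\kappa)\sim_\bQ 0 .
\]
Comparing with the discrepancy formula for $\rho$, and using that $X_0$ is not torsion in $\Cl(\widetilde{Y})$, forces $A_{Y,\Gamma+G+Y_\kappa}(X_0)=0$; hence $(\widetilde{Y},\pi^*\Delta+\pi^*D+X_0+\pi^*X_\kappa)$ is exactly the crepant pullback of the (lc) pair $(Y,\Gamma+G+Y_\kappa)$, and in particular is lc.

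Now fix $i$, take a log resolution $\mu\colon X'\to X$ of $(X,\Delta+D+X_\kappa)$ on which $E_i$ appears as a divisor, and base change: $\widetilde{Y}':=\widetilde{Y}\times_X X'$ is the relative total space of $\mu^*L^{-1}$ over $X'$, with zero section $X_0'\cong X'$, structure morphism $\pi'\colon\widetilde{Y}'\to X'$, and ``vertical'' prime divisor $\mathcal{E}_i:=(\pi')^{-1}(E_i)$. Composing crepant pullbacks, the crepant pullback $B'$ of $(Y,\Gamma+G+Y_\kappa)$ to $\widetilde{Y}'$ equals $X_0'+(\pi')^*B_{X'}$, where $B_{X'}$ is the crepant pullback of $(X,\Delta+D+X_\kappa)$ to $X'$; since $\pi'$ is smooth and $X'$ is log smooth, $(\widetilde{Y}',B')$ is log smooth, and the hypothesis $A_{X,\Delta+D+X_\kappa}(E_i)=0$ forces $\mathcal{E}_i$, like $X_0'$, to occur in $B'$ with coefficient $1$. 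Next, a homogeneous $f\in V_m$ satisfies $\ord_{X_0'}(f)=m$, while $\ord_{\mathcal{E}_i}(f)$ is the order of vanishing of the section $f$ along $E_i$, so $v_i(f)=b_i\ord_{\mathcal{E}_i}(f)$; hence the displayed formula exhibits $w_i$ as the monomial valuation on $\widetilde{Y}'$ with weight $\beta_i:=N-rA_{X,\Delta+X_\kappa}(v_i)$ along $X_0'$ and weight $b_i$ along $\mathcal{E}_i$. This is a valuation, and since $\beta_i>0$ by the choice of $N$ while $\ord_{X_0'},\ord_{\mathcal{E}_i}\ge 0$ on $V=H^0(\widetilde{Y}',\cO)$, its center on $Y$ is the cone-point section; thus $w_i$ is a valuation on $Y$ in the required sense.

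Given this description, part (1) is immediate: $\beta_i,b_i\in\bQ_{\ge 0}$, so after clearing denominators $w_i=c_i\ord_{F_i}$ for some $c_i\in\bQ_{>0}$, where $F_i$ is the exceptional divisor of a suitable toric (weighted) blow-up of $\widetilde{Y}'$ along the stratum $X_0'\cap\mathcal{E}_i$. For part (2), since $X_0'$ and $\mathcal{E}_i$ both have coefficient $1$ in $B'$ and cross normally, every nonnegatively weighted monomial combination of $\ord_{X_0'}$ and $\ord_{\mathcal{E}_i}$ is an lc place of the log smooth pair $(\widetilde{Y}',B')$; as $B'$ is the crepant pullback of $(Y,\Gamma+G+Y_\kappa)$, such valuations are lc places of $(Y,\Gamma+G+Y_\kappa)$ as well, and $w_i$ is one of them --- equivalently, by linearity of log discrepancy on the monomial cone, $A_{Y,\Gamma+G+Y_\kappa}(w_i)=\beta_i\,A_{\widetilde{Y}',B'}(X_0')+b_i\,A_{\widetilde{Y}',B'}(\mathcal{E}_i)=0$. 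The step I expect to need the most care is the coefficient count in the second paragraph: confirming that $B'$ genuinely carries $X_0'$ and $\mathcal{E}_i$ with coefficient exactly $1$ (this is precisely where $A_{X,\Delta+D+X_\kappa}(E_i)=0$ and the invariance of log discrepancies of vertical divisors under the smooth morphism $\pi'$ enter) and checking that the monomial valuation just built reproduces the lemma's combinatorial formula, after which divisoriality in (1) and the vanishing of the log discrepancy in (2) are formal.
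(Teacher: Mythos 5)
Your proposal is correct and follows essentially the same route as the paper: both realize $w_i$ as the quasi-monomial valuation with weights $N-rA_{X,\Delta+X_\kappa}(v_i)$ and $b_i$ along the zero section and the preimage of $E_i$ in the total space of $L^{-1}$ pulled back to a log resolution of $(X,\Delta+D+X_\kappa)$, deduce (1) from the corresponding weighted blowup, and get (2) by linearity of the log discrepancy once both divisors are shown to be lc places of $(Y,\Gamma+G+Y_\kappa)$. The only cosmetic difference is that you derive $A_{Y,\Gamma+G+Y_\kappa}(X_0)=0$ by comparing the two crepant expressions on the total space and using that $X_0$ is not $\bQ$-linearly trivial, whereas the paper restricts to the generic fiber and cites the cone computation of \cite[Lem.~3.1]{Kol13}; both are fine.
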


\begin{proof}
This result follows from arguments similar to \cite[Section 2.5.1]{BX19}.
Let $\mu:W\to X$ be a log resolution of $(X,\Delta+D+X_\kappa)$ such that each $E_i$ appears as a divisor on $W$.  Consider the natural birational morphisms
\[
W_L \to X_L \to Y
,\]
where $W_L := \Spec_W( \oplus_{m \in\bN} \cO_{W}(m\mu^*L))$ and $X_L : = \Spec_X( \oplus_{m\in \bN} \cO_X(mL))$, which are the total spaces of the line bundles corresponding to the invertible sheaves $\cO_W(-\mu^*L)$ and  $\cO_X(-L)$. 
Let  $\widehat{E}_i$ denote the preimage of $E_i$ under the morphism $W_L\to W$ and $\hat{W}\subset W_L$ denote the zero section. 
By a computation similar to \cite[\S 2.5.1]{BX19}, $w_i$ is the quasi-monomial valuation with weights $1$ and $N- r A_{X,\Delta+X_\kappa}(v_i)$ along $\widehat{E}_i$ and $\widehat{W}$. 
Thus $w_i = c_i \ord_{F_i}$, where $c_i \in \bQ_{>0}$ and  $F_i$ is a prime divisor on a  weighted blowup of $Y_L$ along $\widehat{E}_i$ and $\widehat{Y}$.
For (2), note that
\[
A_{Y,\Gamma+G+Y_\kappa}(\widehat{E}_i) = A_{ (X,\Delta+D+X_\kappa)\times \bA^1}(E\times \bA^1) =A_{X,\Delta+D+X_\kappa}(E_i)=0
.
\] 
Additionally, if we let $\widehat{X} \subset X_L$ denote the zero section, then
\[
A_{Y,\Gamma+G+Y_{\kappa}}(\widehat{Y}) = A_{Y,\Gamma+G+Y_{\kappa}}(\widehat{X}) =0
,\]
where the final equality can be computed by restricting to the fiber over $\Spec(K)$
and applying the computation in the proof of \cite[Lem. 3.1]{Kol13}.
Since $w_i$ is a quasi-monomial  combination of $\widehat{E_i}$ and $\widehat{X}$, 
we conclude (2) holds.
\end{proof}

\subsubsection{Finite generation}

\begin{proof}[Proof of Theorem \ref{t:fg}]
We use the notation of Lemma \ref{l:coneconst}.  Since $A_{Y,\Gamma+G+Y_\kappa}(F_i) =0$ for each $i$,   \cite[Cor. 1.38]{Kol13}
implies that there exists a proper birational morphism $g: Z\to Y$
such that $Z$ is $\Q$-factorial,  $F_1,\ldots, F_\ell$ appear as prime divisors on $Z$, $(Z,B)$ is dlt where $B:= g_*^{-1}(\Gamma+G+Y_\kappa)+ \sum_{E\subset \mathrm{Exc}(g)}   E $, and  $K_{Z} + B = g^*(K_Y+ \Gamma+G+Y_\kappa)$.
Now, set
$F: = \sum_{c_i \neq 0}  c_i^{-1} F_i$.
	
\medskip 
	
\noindent \emph{Claim 1}: 	The $\cO_Y$-algebra
$\bigoplus_{p \in \N} g_* \cO_Z\left( \lfloor -pF \rfloor \right)$
is finitely generated. 
	
\noindent \emph{Proof:} Since $A_{Y,\Gamma+G+Y_\kappa}(F_i)=0$,  ${\rm coeff}_{F_i} (B) =1$ for $i=1,\ldots, \ell$. 
Therefore we may find  an integer $d>0$ such that $B- \frac{1}{d} F$ is effective. 
Since $(Z,B)$ is $\Q$-factorial dlt and $K_{Z}+B \sim_{\Q,g} 0$, \cite[Cor. 1.6]{HX13} 
implies that $(Z, B- \tfrac{1}{d} F)$ admits a good minimal model. 
Therefore 
\[
\bigoplus_{p \in \N} g_* \cO_{Z} (\lfloor  p d( K_Z + B- \tfrac{1}{d} F) \rfloor ) 
\]
is  a finitely generated $\cO_Y$-algebra.
Since $d(K_Z + B- \tfrac{1}{d} F)\sim_{\Q,g} - F$,
we have the $\cO_Y$-algebra $\bigoplus_{p \in \N} g_* \cO_{Z} (\lfloor  - p F \rfloor )$ is finitely generated as well.
\qed
\medskip
	
\noindent \emph{Claim 2}: For each $p \in \N$,  
$
g_* \cO_Z\left( \lfloor -pF \rfloor \right)(Y)   
\cong 
\bigoplus_{m \in \N} \cF^{p -Nm} H^0(X,mL) 
$. 
	
\noindent \emph{Proof}: First, observe that
$g_*\cO_Z\left( \lfloor -pF \rfloor \right)(Y) \subset \cO_Y(Y)$, since $F$ is effective and $Y$ is normal,
and $\cO_Y(Y) =  \bigoplus_{m \in \N} H^0(X,mL)$. 
Now, for $f = \sum_{m \geq 0} f_m \in V$,
$f  \in 
g_* \cO_Z\left( \lfloor -pF \rfloor \right)(Y) $ if and only if 
${\rm div}_{Y}(g^* f) + \lfloor - p F \rfloor $ is effective.
The latter is equivalent to the condition that 
$c_i \ord_{F_i} (f)  \geq p  $ for each $i=1,\ldots, \ell$, 
which holds if and only if
\[
{v_i}(f_m) +m rA_{X,\Delta+D +X_\kappa }(v_i ) \geq p -Nm  
\]
for each  $i=1,\ldots, \ell$.
The result now follows. \qed
\medskip
	
The theorem follows easily from the previous two claims.
Indeed, consider the monoids 
\[
S_1 : = \{ (m, \la ) \in \N\times \bZ \, \vert \, \la \leq -Nm \}
\quad \text{ and } \quad 
S_2 : = \{ (m, \la ) \in \N\times \bZ \, \vert \, \la \geq -Nm \} .
\]
Since ${\rm Rees}(\cF) = S_1 \cup S_2$, it suffices to show 
$\bigoplus_{(m,\la) \in S_i} \cF^\la H^0(X,mL)$ is finitely generated
for  $i=1,2$.
To verify the latter statement, first note that 
\[
\bigoplus_{  (m,\la) \in S_2 } \cF^{\la } H^0(X,mL)
\cong 
\bigoplus_{(m,\la) \in \N\times \N} \cF^{\la - Nm} H^0(X,mL)
\]
and the latter $R$-algebra is finitely generated by Claims 1 and 2. Next, note that 
\[
\bigoplus_{  (m,\la) \in S_1 } \cF^{\la } H^0(X,mL)
\cong 
\bigoplus_{(m,\la) \in S_1 } H^0(X,mL)
.\]
Since $\bigoplus_{ (\la,m) \in \N\times \Z} H^0(X,mL)$ is a finitely generated $R$-algebra by the assumption that $L$ is  ample over $\Spec(R)$, and $S_1 \subset \N\times \Z$ is a finitely generated sub-monoid,  \cite[Lem. 4.8]{ELMNP06} implies  
$
\bigoplus_{ (m,\la)\in S_1} H^0(X,mL)
$ is finitely generated.  Therefore ${\rm Rees}(\cF)$ is finitely generated.
\end{proof}

\subsubsection{Additional result}
The above argument also implies the following finite generation result for boundary polarized CY pairs, rather than families of such pairs over $\Spec(R)$.

\begin{thm}
Let $(X,\Delta+D)$ be an lc boundary polarized CY pair and  $r$ a positive integer such that  $L:= -r(K_X+\Delta)$ is a Cartier divisor.
Fix  $v_1,\ldots, v_{\ell }\in \DivVal_X$ and set
\[
\cG^\la H^0(X,mL):= \{ s\in H^0(X,mL)\, \vert\,  v_i(s) - m r A_{X,\Delta}(v_i) \geq \la \}
\]

If $A_{X,\Delta+D}(v_i)=0$ for $i =1,\ldots, \ell$, then the $\bk[t]$-algebra
$
\bigoplus_{(m,\la)\in \bZ } \cG^\la H^0(X,mL)t^{-\la}$ 
is finitely generated.
\end{thm}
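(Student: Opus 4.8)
The plan is to reduce the absolute statement (over $\bk$) to the relative statement already proved in Theorem \ref{t:fg} by the standard trick of degenerating to the trivial family. Concretely, I would take $R := \bk[[\pi]]$ (or $\bk[t]_{(t)}$, which is essentially of finite type over $\bk$) and consider the constant family $(X,\Delta+D)\times \Spec(R) \to \Spec(R)$. For a divisorial valuation $v$ on $X$, pulling back gives a divisorial valuation on $X_R := X\times_{\bk}\Spec R$; denote the corresponding divisor over $X_R$ by the same letter. Since we are working over the trivial family, $A_{X_R,\Delta_R+X_{R,\kappa}}(v_i) = A_{X,\Delta}(v_i)$ and $A_{X_R,\Delta_R+D_R+X_{R,\kappa}}(v_i) = A_{X,\Delta+D}(v_i) = 0$ by hypothesis, so the hypotheses of Theorem \ref{t:fg} are met. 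Therefore the Rees algebra
\[
{\rm Rees}(\cF) = \bigoplus_{(m,\la)\in \N\times\Z} \cF^\la H^0(X_R,mL_R)
\]
is a finitely generated $R$-algebra, where $\cF^\la H^0(X_R,mL_R) = \bigcap_i \{ s \mid v_i(s) \geq \la + mrA_{X,\Delta}(v_i)\}$ taken inside $H^0(X,mL)\otimes_{\bk} R$.

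The next step is to match this up with the $\bk[t]$-algebra in the statement. Writing $\pi$ for the uniformizer of $R$, one has $H^0(X_R,mL_R) = H^0(X,mL)\otimes_\bk R$, and the key point is that
\[
\cF^\la H^0(X_R, mL_R) = \bigoplus_{j\geq 0}\, \pi^{j}\, \cG^{\la - j}H^0(X,mL)\quad\text{(completed appropriately)},
\]
i.e. the $R$-module filtration is built from the $\bk$-vector-space filtration $\cG^\bullet$ exactly as in the dictionary of Section \ref{ss:Scompletepol}. Rather than fuss with completions, I would instead use $R = \bk[t]_{(t)}$ or even run the whole argument Zariski-locally: the cleanest route is to observe that $\bigoplus_{(m,\la)} \cG^\la H^0(X,mL) t^{-\la}$ is, up to a change of grading, the associated graded / fiberwise version of ${\rm Rees}(\cF)$, and that finite generation of the $R$-algebra descends to finite generation of this $\bk[t]$-algebra because the special fiber $\pi = 0$ of ${\rm Rees}(\cF)$ is precisely (a regrading of) $\bigoplus \cG^\la H^0(X,mL) t^{-\la}$, and a finitely generated algebra over $R$ has finitely generated special fiber.

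Alternatively — and this is probably the more honest way to present it — I would simply reread the proof of Theorem \ref{t:fg} and note that it goes through verbatim with $\Spec(R)$ replaced by $\Spec(\bk)$: form the affine cone $Y = \Spec\bigoplus_m H^0(X,mL)$ over $(X,\Delta+D)$, which is lc since $(X,\Delta+D)$ is an lc CY pair (by \cite[Lem. 3.1]{Kol13}), extract via Lemma \ref{l:coneconst} the divisors $F_i$ over $Y$ with $A_{Y,\Gamma+G}(F_i) = 0$, extract a $\bQ$-factorial dlt model $g\colon Z\to Y$ on which the $F_i$ appear, apply \cite[Cor. 1.6]{HX13} to get a good minimal model for $(Z, B - \tfrac1d F)$, and conclude finite generation of $\bigoplus_p g_*\cO_Z(\lfloor -pF\rfloor)$; then the two-monoid decomposition argument ($S_1\cup S_2$, with \cite[Lem. 4.8]{ELMNP06} handling $S_1$ and Claims 1–2 handling $S_2$) finishes it. The $\bk[t]$ here plays the role that $R$ played before, with $t^{-\la}$ tracking the $\la$-grading.

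The main obstacle is bookkeeping rather than mathematics: one must make sure the grading conventions line up — the $\bk[t]$-grading in the statement (variable $t$ of degree giving $t^{-\la}$, so $\Z$-graded, not $\N$-graded, hence one really needs the full $S_1\cup S_2$ split) versus the $\N\times\Z$-grading of ${\rm Rees}(\cF)$ in Theorem \ref{t:fg} — and that the shift $\la \mapsto \la - Nm$ used in Claim 2 is harmless. There is also a minor subtlety that $v_1,\dots,v_\ell$ are now arbitrary divisorial valuations rather than the specific ones arising from components of $X'_\kappa$, but Theorem \ref{t:fg} already allows arbitrary $v_i = b_i\ord_{E_i}$, so nothing new is needed there. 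I expect the proof to be essentially two or three lines: "This is the special case of Theorem \ref{t:fg} where $\Spec(R)$ is replaced by $\Spec(\bk)$ (equivalently, apply Theorem \ref{t:fg} to the constant family over a DVR essentially of finite type over $\bk$ and pass to the special fiber); the argument is identical, using that $(X,\Delta+D)$ lc CY makes the affine cone $Y$ lc."
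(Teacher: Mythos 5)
Your second (``more honest'') route is exactly the paper's proof: the paper disposes of this statement in a single sentence, saying the argument is essentially the same as that of Theorem \ref{t:fg} but simpler, since one works on the cone over $X$ with respect to $L$ rather than the relative cone over $\Spec(R)$. The base-change-to-the-trivial-family reduction you sketch first is not what the paper does and carries the grading/descent bookkeeping you yourself flag, but the rerun of the cone argument that you settle on is the intended proof.
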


\begin{proof}
The proof is  essentially the same  as the proof of Theorem \ref{t:fg}, but  simpler since we work on the cone over $X$ with respect to $L$, rather than the relative cone over $\Spec(R)$.  
\end{proof}

\section{Theta-reductivity}\label{s:Thetared}

The notion of $\Theta$-reductivity for an algebraic stack was introduced in \cite{HL14}
and is a necessary condition for the existence of a good moduli space \cite{AHLH18}. 
The goal of this section is to prove that the moduli stack of boundary polarized CY pairs is $\Theta$-reductive.
After proving the result, we discuss S-equivalence classes and stability.
\medskip

Throughout, 
 $R$ denotes a DVR with uniformizer $\pi$, fraction field $K$,  and residue field $\kappa$. 

\subsection{Definition}\label{ss:deftheta}
The definition of $\Theta$-reductivity  involves the stacky surface
\[
\Theta_R = \Theta\times \Spec \,R\cong  [\Spec(R[t]) /\bG_m],
\]
where $\Theta:= [\bA^1/\bG_m]$.
We write $0_\kappa \in \Theta_R$ for the unique closed point, which is defined by the vanishing of $t$  and a uniformizing parameter $\pi \in R$. 
Note that 
\[
\{ t\neq 0 \}\cong [\Spec(R[t^{\pm 1}])/\bG_m]  \cong \Spec(R)
\quad \text{ and } \quad
\{ \pi \neq 0\} \cong [\Spec(K[t]) / \bG_m]  \cong \Theta_K.
\]
Therefore a map $\Theta_{R}\setminus 0_\kappa \to \cM$, where $\cM$ is a stack, is equivalent to the data of maps $\Spec(R) \to \cM$ and $\Theta_K \to \cM$ with an isomorphism of there restrictions to $\Spec(K)$. 
\begin{defn}
An algebraic stack $\sM$ is \emph{$\Theta$-reductive} if for any DVR $R$ and 
map $\Theta_{R} \setminus 0_\kappa \to \sM$, there is a unique extension $\Theta_{R} \to \sM$.
\end{defn}

If $\cM:= \cM(\chi,N,{\bf r})$ is the moduli stack in Section \ref{s:stack}, then a map $\Theta_R \to \cM$ is equivalent to a $\bG_m$-equivariant family in $\cM(\bA^1_R)$. Hence, proving that $\cM$ is $\Theta$-reductive is equivalent to showing that any $\bG_m$-equivariant family of boundary polarized CY pairs
\[
g^\circ : (\cX^\circ, \Delta_{\cX^\circ}+\cD^\circ) \to \bA^1_R \setminus 0_\kappa 
\]
extends uniquely to a 
$\bG_m$-equivariant family of boundary polarized CY pairs
\[
g: (\cX, \Delta_{\cX}+\cD) \to \bA^1_R. 
\]

\subsection{Polarized schemes}\label{ss:Thetapol}
Following  \cite[\S 5.2]{ABHLX20}, we describe a criterion for when a $\bG_m$-equivariant family of polarized schemes over $\bA^1_R\setminus 0_\kappa$ extends to a family over $\bA^1_R$. 
Fix a family of $\G_m$-equivariant family of polarized schemes 
\[
g^\circ :(\cX^\circ, \cL^\circ)\to \A^1_R \setminus 0_\kappa. 
\]
By restricting over $1 \times \Spec(R)$ and $\A^1_K$, we get 
\begin{itemize}
\item[(1)] a family of polarized schemes $(X,L)\to \Spec(R)$ and 
\item[(2)] a test configuration $(\cX_K,\cL_K) \to \bA^1_K$ of $(X_K,L_K)$. 
\end{itemize}

Let $\cF_K^\bullet $  denote the filtration of $H^0(X_K, mL_K)$ induced by the test configuration $(\cX_K,\cL_K)$.
We define a filtration $\cF^\bullet $ of $H^0(X, mL) $ by $R$-submodules  by
\[
\cF^\la H^0(X,mL) : = H^0(X,mL) \cap  \cF_K^\la  H^0(X_K,mL_K)
\]
for each $m \in \N$ and $\la \in \Z$. 
As explained in \cite[\S 5.2]{ABHLX20}, there is a natural $\G_m$-equivariant isomorphism of graded $R[t]$-algebras 
\begin{equation}\label{e:thetaalgebra}
\bigoplus_{m \in \N} H^0(\cX^\circ, m\cL^\circ) \cong \bigoplus_{m \in \N} \bigoplus_{\la \in \Z} \cF^p H^0(X,mL) t^{-\la},
\end{equation}
where the $\G_m$-action on the right is induced by the $\Z$-grading.
If the algebra is finitely generated, we set $\cX: = \Proj( \bigoplus_{m \in \N} \bigoplus_{\la \in \Z} \cF^\la H^0(X,mL) t^{-\la})$.

\begin{prop}\label{p:extendtheta}
If the $R$-algebra in \eqref{e:thetaalgebra} is finitely generated, then 
\[
g: (\cX, \cO_{\cX}(m) ) \to \A^1_R
\]
is the unique extension of $(\cX^\circ, m\cL^\circ)\to \A^1_R \setminus 0_\kappa$ to a $\G_m$-equivariant family of polarized schemes over $\A^1_R$ for all $m>0$ sufficiently divisible. 
\end{prop}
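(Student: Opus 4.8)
The plan is to follow the strategy of \cite[\S 5.2]{ABHLX20}, working throughout with the bigraded Rees algebra $\mathcal{R}:=\bigoplus_{m\in\N}\bigoplus_{\la\in\Z}\cF^\la H^0(X,mL)\,t^{-\la}$ appearing in \eqref{e:thetaalgebra}. By hypothesis $\mathcal{R}$ is a finitely generated $R[t]$-algebra, so after replacing $m$ by a sufficiently divisible multiple $m_1$ we may assume that the $m_1$-Veronese subalgebra is generated in degree one; then $\cO_\cX(m_1)$ is an invertible sheaf that is relatively ample over $\A^1_R=[\Spec R[t]/\bG_m]$, and the $\bZ$-grading by $\la$ equips $\cX$ with a $\bG_m$-action making $g$ equivariant.

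The first point I would establish is that each graded piece $\mathcal{R}_m=\bigoplus_{\la}\cF^\la H^0(X,mL)\,t^{-\la}$ is a \emph{free} $R[t]$-module of rank $\chi(m)$. The key observation is that $\cF^\la H^0(X,mL)=H^0(X,mL)\cap\cF^\la_K H^0(X_K,mL_K)$ is a saturated $R$-submodule of the free module $H^0(X,mL)$, so the decreasing chain $\{\cF^\la H^0(X,mL)\}_\la$ — exhaustive for $\la\ll 0$ and trivial for $\la\gg 0$ — has all successive quotients free; lifting adapted bases produces an $R$-basis $v_1,\dots,v_{\chi(m)}$ of $H^0(X,mL)$ with jumping numbers $a_1\le\cdots\le a_{\chi(m)}$ such that $\cF^\la H^0(X,mL)=\bigoplus_{a_i\ge\la}Rv_i$, and hence $\{v_it^{-a_i}\}$ is an $R[t]$-basis of $\mathcal{R}_m$. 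Since $\mathcal{R}$ is finitely generated and each $\mathcal{R}_m$ is locally free over $\A^1_R$ of the constant rank $\chi(m)$, the morphism $g:\cX\to\A^1_R$ is flat by constancy of fibrewise Hilbert polynomials, so $(\cX,\cO_\cX(m))\to\A^1_R$ is indeed a $\bG_m$-equivariant family of polarized schemes.

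Next I would check that $g$ restricts to $g^\circ$ over $\A^1_R\setminus 0_\kappa$, which is glued from $\{t\ne 0\}\cong\Spec R$ and $\{\pi\ne 0\}\cong\Theta_K$ along $\Spec K$. Over $\{t\ne 0\}$ one inverts $t$; exhaustiveness $\bigcup_\la\cF^\la H^0(X,mL)=H^0(X,mL)$ gives $\mathcal{R}[t^{-1}]\cong\big(\bigoplus_m H^0(X,mL)\big)\otimes_R R[t^{\pm1}]$, whose $\bG_m$-quotient $\Proj$ is $(X,L)$, matching the restriction of $(\cX^\circ,\cL^\circ)$ to $1\times\Spec R$. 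Over $\{\pi\ne 0\}$ one inverts $\pi$; the identity $\cF^\la H^0(X,mL)\otimes_R K=\cF^\la_K H^0(X_K,mL_K)$, which holds because $H^0(X,mL)$ is an $R$-lattice in $H^0(X_K,mL_K)$ and $\cF^\la_K$ is a $K$-subspace, identifies $\mathcal{R}\otimes_R K$ with the Rees algebra of the test-configuration filtration $\cF^\bullet_K$, whose $\Proj$ is $(\cX_K,\cL_K)$ by the Rees construction of \cite[\S 2.5]{BHJ17}. Gluing the two identifications over $\Spec K$ recovers $(\cX^\circ,m\cL^\circ)$, so $g$ extends $g^\circ$ and $\cO_\cX(m)$ restricts to $m\cL^\circ$.

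Finally, for uniqueness I would argue that any $\bG_m$-equivariant polarized family $(\cX',\cL')\to\A^1_R$ extending $(\cX^\circ,m_1\cL^\circ)$ has $\mathcal{R}'_m:=\pi'_*\cL'^{\otimes m}$ locally free over $\A^1_R$ for $m\gg 0$ (Serre vanishing together with cohomology and base change), hence reflexive; since $\mathcal{R}_m$ is free, hence also reflexive, and the two sheaves agree over $\A^1_R\setminus 0_\kappa$, whose complement is the single codimension-two point $0_\kappa$, they coincide, and therefore $\cX'=\Proj\big(\bigoplus_{m\gg 0}\mathcal{R}'_m\big)=\Proj\big(\bigoplus_{m\gg 0}\mathcal{R}_m\big)=\cX$ compatibly with the polarizations. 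The main obstacle I expect is precisely the freeness of $\mathcal{R}_m$ over $R[t]$ and the deduction of flatness of $g$ from it: over the two-dimensional base $\Spec R[t]$, torsion-freeness of $\mathcal{R}_m$ — which is immediate — does not by itself imply flatness, so one genuinely needs the adapted-basis description, after which the remaining steps are formal.
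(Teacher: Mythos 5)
Your proof is correct and is essentially the argument of \cite[\S 5.2]{ABHLX20} (together with its S-completeness analogue in \S 3.2 of that paper), which is exactly what this paper cites in lieu of giving a proof: saturatedness of $\cF^\la$ yields an adapted basis and hence freeness of each Rees module over $R[t]$, from which flatness of the $\Proj$, the identification over the two charts of $\A^1_R\setminus 0_\kappa$, and uniqueness via reflexivity across the codimension-two point $0_\kappa$ all follow as you describe. I see no gaps.
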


\subsection{Boundary polarized CY pairs}\label{ss:ThetaFanoComp}
Fix a $\G_m$-equivariant family of  boundary polarized CY pairs
\[
g^\circ: (\cX^\circ, \Delta_{\cX^\circ}+ \cD^\circ) \to \A^1_R \setminus 0_\kappa. 
\]
By restricting over $1 \times \Spec(R)$ and $\A^1 \times \Spec(K)$, we get 
\begin{itemize}
\item a family of  boundary polarized CY pairs $(X,\Delta+D) \to \Spec(R)$ and 
\item a test configuration $(\cX_K,\Delta_{\cX_K}+\cD_K)$ of $(X_X,\Delta_K+D_K)$. 
\end{itemize}
The following theorem is the main result of this section.

\begin{thm}\label{t:Thetared}
If $R$ is essentially of finite type over $\bk$, then  $g^\circ$ admits a unique extension 
\[
g: (\cX, \Delta_{\cX}+ \cD) \to \A^1_R 
\]
to  a $\G_m$-equivariant family of  boundary polarized CY pairs over $\bA^1_R$.
\end{thm}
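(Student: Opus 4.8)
The plan is to follow the same strategy used in the proof of S-completeness (Theorem \ref{t:Scomplete}), replacing the stacky surface $\STR$ by $\Theta_R$ throughout: first treat the case where $\cX^\circ$ (equivalently, $X$) is normal by a finite generation argument, and then bootstrap to the general slc case via Koll\'ar's gluing theory, exactly as in Proposition \ref{p:bpcygluing}. Concretely, fix $r>0$ so that $L:=-r(K_X+\Delta)$ is Cartier and put $\cL^\circ := -r(K_{\cX^\circ/\bA^1_R}+\Delta_{\cX^\circ})$. The data of $g^\circ$ over $\bA^1_R\setminus 0_\kappa$ is equivalent to a family $(X,\Delta+D)\to\Spec(R)$ together with a weakly special test configuration $(\cX_K,\Delta_{\cX_K}+\cD_K)$ of $(X_K,\Delta_K+D_K)$. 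By Theorem \ref{thm:CZ-lcplace} the latter corresponds to a divisorial valuation $v\in\LC(X_K,\Delta_K+D_K)$, and via Lemma \ref{l:lctestconfig} the associated filtration $\cF_K^\bullet$ of $H^0(X_K,mL_K)$ is cut out by $v$ with the shift $-mrA_{X_K,\Delta_K}(v)$. Taking $\cF^\la H^0(X,mL):=H^0(X,mL)\cap \cF_K^\la H^0(X_K,mL_K)$ as in Section \ref{ss:Thetapol}, the key point is:

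First I would show that the Rees algebra $\bigoplus_{(m,\la)\in\bN\times\bZ}\cF^\la H^0(X,mL)t^{-\la}$ is a finitely generated $R[t]$-algebra when $X$ is normal. This is precisely the content of Theorem \ref{t:fg}: one needs to realize $\cF^\bullet$ as the filtration associated to finitely many divisorial valuations $v_1,\dots,v_\ell$ over $X$ with the property that $A_{X,\Delta+D+X_\kappa}(E_i)=0$. Here $v$ comes from the test configuration over $K$; the valuations $v_i$ are obtained by extending $v$ to a divisorial valuation over the total space $X$ (taking closures of the relevant divisor over $X_K$ inside a model of $X$). The equality $A_{X,\Delta+D+X_\kappa}(E_i)=0$ follows because $v\in\LC(X_K,\Delta_K+D_K)$ and $K_{X/R}+\Delta+D\sim_{\bQ,R}0$, so the discrepancy is forced to vanish on the nearby fiber and hence on its closure (this is where Lemma \ref{l:CYbirational}.1, or rather its analogue, and the lc condition on the total space enter). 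Granting this, Proposition \ref{p:extendtheta} produces $\cX:=\Proj\big(\bigoplus\cF^\la H^0(X,mL)t^{-\la}\big)\to\bA^1_R$ as the unique $\bG_m$-equivariant extension of the polarized family; one then defines $\Delta_{\cX}$ and $\cD$ as the closures of $\Delta_{\cX^\circ}$ and $\cD^\circ$, verifies $\cX$ is normal (comparing $H^0$'s with those of the open part $\cX^\circ$, using $\mathrm{codim}_{\cX}(\cX\setminus\cX^\circ)\ge 2$), and checks that $(\cX,\Delta_{\cX}+\cD)\to\bA^1_R$ is a family of boundary polarized CY pairs by comparing with the trivial degeneration $(X,\Delta+D)\times_{\Spec R}\bA^1_R$ via a common resolution and a Negativity Lemma argument, exactly mirroring the proof of Theorem \ref{t:Scomplete} in the normal case.

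For the general case, I would normalize: write $(\overline{\cX^\circ},\overline{\cG^\circ}+\Delta_{\overline{\cX^\circ}}+\overline{\cD^\circ})$ for the normalization of $(\cX^\circ,\Delta_{\cX^\circ}+\cD^\circ)$ with induced involution $\tau^\circ$ of the conductor. By Lemma \ref{l:familyslcnormadj}.1 each connected component is a $\bG_m$-equivariant family of boundary polarized CY pairs over $\bA^1_R\setminus 0_\kappa$, so the normal case extends it to $\overline{g}:(\overline{\cX},\overline{\cG}+\Delta_{\overline{\cX}}+\overline{\cD})\to\bA^1_R$; by Lemma \ref{l:familyslcnormadj}.2 and adjunction the restriction to the conductor is again a family of boundary polarized CY pairs, so Lemma \ref{l:CYbirational}.2 extends $\tau^\circ$ to an involution $\tau$ of $\overline{\cG}^n$. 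Then Proposition \ref{p:bpcygluing} glues these to the desired extension $g:(\cX,\Delta_{\cX}+\cD)\to\bA^1_R$; $\bG_m$-equivariance follows from \cite[Lem. 2.16]{ABHLX20} and uniqueness from Lemma \ref{l:CYbirational}.3 (applied over the DVR obtained by localizing $\bA^1_R$).

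The main obstacle I expect is the finite generation step in the normal case — specifically, correctly identifying the filtration $\cF^\bullet$ on the total space as one coming from divisorial valuations over $X$ that are lc places of $(X,\Delta+D+X_\kappa)$, so that Theorem \ref{t:fg} applies. Over $K$ this is Theorem \ref{thm:CZ-lcplace}, but one must carefully take closures to the DVR setting, control the possible extra component(s) of the central fiber of the extended test configuration, and verify the vanishing of the relevant log discrepancies on the total space; the rest of the argument is a routine adaptation of the S-completeness proof with $\Theta_R$ in place of $\STR$.
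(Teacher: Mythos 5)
Your proposal is correct and follows essentially the same route as the paper: the normal case is handled by identifying the filtration with one cut out by divisorial valuations over $X$ extending the lc places attached to the components of $\cX_{K,0}$ (so that Theorem \ref{t:fg} applies, the relevant log discrepancies vanishing because those divisors dominate $\Spec(R)$), and the general case is reduced to the normal one by normalizing, extending the involution via Lemma \ref{l:CYbirational}.2, and gluing with Proposition \ref{p:bpcygluing}. The only minor imprecision is invoking Theorem \ref{thm:CZ-lcplace} for a single valuation $v$, whereas $\cX_{K,0}$ may have several components and one works with all of the associated valuations, a point you already flag and which Lemma \ref{l:lctestconfig} handles directly.
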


The strategy to prove the Theorem \ref{t:Thetared} is similar to that of Theorem \ref{t:Scomplete}. 
We first prove the theorem when $\cX^\circ$ is normal by using  Theorem \ref{t:fg} to prove the filtration in  Section \ref{ss:Thetapol} is finitely generated. 
We  then use Koll\'ar's gluing theory for slc pairs to deduce the full result.

To proceed, fix a positive integer $r$ such that 
\[
L: =-r(K_{X} +\Delta) \quad \text{ and } \quad \cL_K: = -r(K_{\cX_K} +\Delta_{\cX_K}), 
\] 
are Cartier divisors. 
The test configuration $(\cX_K,\Delta_{\cX_K}+\cD_K)$ induces a filtration $\cF_K^\bullet $ of $H^0(X_K,mL_K)$. Following Section \ref{ss:Thetapol}, we define  a filtration $\cF^\bullet $ of $H^0(X,mL)$ by 
\[
\cF^\la H^0(X,mL) := H^0(X,mL) \cap  \cF_K^{\la} H^0(X_K,mL_K).
\]

\begin{prop}\label{p:fgTheta}
If $R$ is essentially of finite type over $\bk$ and $\cX^\circ$ is normal, then the $R[t]$-algebra $\bigoplus_{(m,\la) \in \N\times \Z} \cF^\la H^0(X,mL)$ is finitely generated.
\end{prop}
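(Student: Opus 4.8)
The plan is to reduce the finite generation of $\bigoplus_{(m,\la)\in\bN\times\bZ}\cF^\la H^0(X,mL)$ to an application of Theorem \ref{t:fg}. First I would analyze the filtration $\cF_K^\bullet$ coming from the test configuration $(\cX_K,\Delta_{\cX_K}+\cD_K)$. After possibly replacing $\cX_K$ by its normalization (which is legitimate here since $\cX^\circ$ is assumed normal, so the total space $\cX_K$ is normal), Lemma \ref{l:lctestconfig} gives the valuative description
\[
\cF_K^\la H^0(X_K,mL_K)=\bigcap_{E\subset (\cX_K)_0}\{\,s\in H^0(X_K,mL_K)\mid v_E(s)-mrA_{X_K,\Delta_K}(v_E)\ge\la\,\},
\]
together with the key vanishing $A_{X_K,\Delta_K+D_K}(v_E)=0$ for every irreducible component $E$ of the special fiber. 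Each $v_E$ is a divisorial valuation $b_E^{-1}\,\ord_{F_E}$ for a prime divisor $F_E$ over $X_K$.

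Next I would spread these valuations out to the integral model. Let $Y$ be the normalization of the graph of the birational map $X\dashrightarrow \cX_K$ (or rather work relative to $\Spec(R)$); the divisors $F_E$ over the generic fiber extend to prime divisors over $X$, which I will again call $F_E$. Restricting a section $s\in H^0(X,mL)$ to the generic fiber and invoking normality of $X$, one checks that
\[
\cF^\la H^0(X,mL)=H^0(X,mL)\cap\cF_K^\la H^0(X_K,mL_K)
=\bigcap_{E}\{\,s\in H^0(X,mL)\mid v_E(s)\ge \la+mrA_{X,\Delta+X_\kappa}(v_E)\,\},
\]
where the shift by $A_{X,\Delta+X_\kappa}(v_E)$ appears because intersecting with the integral section module automatically imposes $\ord_{X_\kappa}$-type conditions — this is exactly the bookkeeping in the proof of Proposition \ref{p:finitegen}, and the log discrepancy identity $A_{X,\Delta+X_\kappa}(v_E)=A_{X_K,\Delta_K}(v_E)$ (valuations restricted to the generic fiber) makes the two descriptions match. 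Simultaneously, from $A_{X_K,\Delta_K+D_K}(v_E)=0$ and Lemma \ref{l:CYbirational}.1 applied to the two families $(X,\Delta+D)\to\Spec(R)$ and the one obtained from $(\cX_K,\Delta_{\cX_K}+\cD_K)$, one deduces $A_{X,\Delta+D+X_\kappa}(F_E)=0$ for all $E$. This is the hypothesis needed for Theorem \ref{t:fg}.

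With the filtration $\cF^\bullet$ now presented in precisely the form required by Theorem \ref{t:fg} — an intersection of conditions coming from divisorial valuations $v_E=b_E^{-1}\ord_{F_E}$ with $A_{X,\Delta+D+X_\kappa}(F_E)=0$, over a DVR $R$ essentially of finite type over $\bk$ — Theorem \ref{t:fg} yields that $\mathrm{Rees}(\cF)=\bigoplus_{(m,\la)}\cF^\la H^0(X,mL)$ is a finitely generated $R$-algebra, hence a finitely generated $R[t]$-algebra. I expect the main obstacle to be the bookkeeping in the second step: carefully verifying that intersecting the $K$-filtration with the integral module produces exactly the $X_\kappa$-shifted conditions and nothing more (in particular that no extra components of $Y_\kappa$ beyond the birational transforms of $(\cX_K)_0$ contribute), and confirming the log-discrepancy identifications needed to land in the hypotheses of Theorem \ref{t:fg}. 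Everything after that is a direct citation.
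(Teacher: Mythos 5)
Your proposal is correct and follows essentially the same route as the paper: restrict to the generic fiber, use Lemma \ref{l:lctestconfig} to express $\cF_K^\bullet$ via the valuations $v_{E}=c_E\,\ord_{F_{E,K}}$ attached to the irreducible components of $(\cX_K)_0$ together with the vanishing $A_{X_K,\Delta_K+D_K}(v_E)=0$, extend each $F_{E,K}$ to a divisor $F_E$ over $X$, and feed the resulting description of $\cF^\bullet$ into Theorem \ref{t:fg}. One clarification: the mechanism producing the $A_{X,\Delta+X_\kappa}$-shift is not that intersecting with $H^0(X,mL)$ ``imposes $\ord_{X_\kappa}$-type conditions'' --- that is the phenomenon in Proposition \ref{p:finitegen}, where a second integral model contributes vertical divisors; here the intersection with the integral module imposes nothing extra, and the shift is simply the identity $A_{X,\Delta+X_\kappa}(F_E)=A_{X_K,\Delta_K}(F_{E,K})$ (and likewise with $D$ added), valid because each $F_E$ dominates $\Spec(R)$, combined with $\ord_{F_E}(s)=\ord_{F_{E,K}}(s\vert_K)$. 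For the same reason, the obstacle you anticipate --- extra components of $Y_\kappa$ contributing conditions --- does not arise: there is no second model over $\Spec(R)$ to compare against, so only horizontal valuations enter, and the graph construction you introduce is unnecessary.
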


\begin{proof}
Observe that $X$ is normal, since 
 $\cX^\circ  \vert_{t\neq 0} \cong X\times (\A^1\setminus 0)$ and $\cX^\circ$ is normal by assumption.
Let $E_{1,K},\ldots, E_{q,K}$  denote the irreducible components of the fiber of $\cX_K \to \bA^1_K$ over $0_K$.
Note that 
\[
v_{E_{i,K}} := r(\ord_{E_i,K})) = c_i \ord_{F_{i,K}}
\]
for some  $c_i \in \bZ_{\geq 0}$ and divisor $F_{i,K}$ over $X_K$.
For each $i$, there exists a divisor $F_i$ over $X$ such that its restriction to $X_K$ is $F_{i,K}$.
With this notation, we claim 
\begin{enumerate}
\item[(i)] $\cF^\la H^0(X, mL) = \bigcap_{i=1}^q \{ s\in H^0(X,mL) \, \vert \,   c_i \ord_{F_i}(s)  \geq \la + A_{X,\Delta+D+X_\kappa}(c_i \ord_{F_i})\}$
\item[(ii)] $A_{X,\Delta+D +X_\kappa}(c_i \ord_{F_i}) = 0$. 
\end{enumerate}
Assuming the claim holds, then Theorem \ref{t:fg} implies $\bigoplus_{(m,\la) \in \N\times \Z} \cF^\la H^0(X,mL)$  is finitely generated.
To verify the claim note that each $F_i$ dominates $\Spec(R)$.
Therefore
\[
A_{X,\Delta+X_\kappa}( F_i) = A_{X_K, \Delta_K}( F_{i,K}), \quad  A_{X,\Delta+D+X_\kappa}(F_i) = A_{X_K, \Delta_K +D_K}(F_{i,K}), \]
and 
$\ord_{F_{i,K}}(s\vert_K)$ for all $s\in H^0(X,mL)$
Thus the claim holds by  Lemma \ref{l:lctestconfig} and the definition of $\cF^\bullet$.
\end{proof}

We are now ready to prove Theorem \ref{t:Thetared} in the case when $\cX^\circ$ is  normal. 

\begin{proof}[Proof of Theorem \ref{t:Thetared}  when $\cX^\circ$ is normal]
Consider the morphism 
\[
g: \cX: = \Proj \Big( \bigoplus_{m \in \N} \bigoplus_{\la \in \Z} \cG^\la H^0(X,mL) t^{-\la} \Big) \to \A^1_R
.
\]
By Propositions \ref{p:extendtheta} and \ref{p:fgTheta}, $(\cX,\cO_{\cX}(m))\to \A^1_R$ is a flat family of polarized schemes and is the unique extension of $(\cX^\circ, m\cL^\circ) \to \A^1_R$ for all $m >0$ sufficiently divisible. 
Let $\Delta_{\cX}$ and $\cD$ be the closures of $\Delta_{\cX^\circ}$ and $\cD^\circ$  in $\cX$.

To  show $(\cX,\Delta_{\cX}+\cD) \to \A^1_R$  is a family of boundary polarized CY pairs, we follow the  proof of Theorem \ref{t:Scomplete}.
First, arguing as in the proof of Theorem \ref{t:Scomplete} shows $\cX$ is normal,
\[
\cO_{\cX}(-mr(K_{\cX/\bA^1}+\Delta_{\cX}+\cD))
\cong \cO_{\cX}(m)
\quad \text{ and } \quad  
K_{\cX/\bA^1}+\Delta_{\cX}+\cD \sim_{\bQ} 0
\]
for $m>0$ sufficiently divisble. 
Hence, it remains to show $(\cX,\Delta_{\cX}+\cD)\to \bA^1$ is a family of slc pairs.
To proceed, consider the family of  boundary polarized CY pairs
\[
(X_{\bA^1},\Delta_{\bA^1}+D_{\bA^1} ):= (X,\Delta+D)\times \bA^1 \to \bA^1_R.
\]
The isomorphism $\cX\vert_{t\neq 0} \cong (X,\Delta+D)\times (\bA^1\setminus 0)$ induces a birational map 
$\cX \dashrightarrow X_{\bA^1}$. 
Let $\cY$ denote the normalization of the graph with  maps 
$f:\cY\to \cX$ and $f': \cY\to X_{\bA^1}$. 
Set 
\[
\Gamma:=f'^*(K_{X_\bA^1}+\Delta_{\bA^1}+D_{\bA^1}+X\times 0 + (X_{\bA^1})_{\pi t=0})
-
f^*(K_{\cX}+\Delta_{\cX}+\cD+\cX_{\pi t=0 })
.\]
Since $(X_{\bA^1},\Delta_{\bA^1}+D_{\bA^1} + (X_{\bA^1})_{\pi t=0})$ is lc by Lemma \ref{l:slcadj}
and $\Gamma=0$ by arguing as in the proof of Theorem \ref{t:Scomplete}, 
$(\cX, \Delta_{\cX}+\cD+\cX_{\pi t=0 })$ is lc. 
Thus $(\cX,\Delta_{\cX}+\cD)\to \bA^1_R$ is a family of slc pairs by Lemma \ref{l:slcadj} and so a family of boundary polarized CY pairs.
The extension is $\bG_m$-equivariant by \cite[Lem. 2.16]{ABHLX20} and unique by Lemma \ref{l:CYbirational}.3.
\end{proof}

We will now  deduce Theorem \ref{t:Thetared} from the  case when $\cX^\circ$ is normal.

\begin{proof}[Proof of Theorem \ref{t:Thetared}] 
Let 
$(\overline{\cX}^\circ, \overline{\cG}^\circ+\Delta_{\overline{\cX}^\circ}+\overline{\cD}^\circ)
$ 
denote the normalization of $(\cX^\circ, \Delta_{\cX^\circ}+\cD^\circ)$ and $\tau^\circ: \overline{\cG^\circ}^n \to 
\overline{\cG^\circ}^n$
denote the  involution.
By arguing as in the proof of  Theorem \ref{t:Scomplete}, 
\begin{enumerate}
	\item 
$\overline{g}^\circ:(\overline{\cX^\circ}, \overline{\cG^\circ}+\Delta_{\overline{\cX^\circ}} +\overline{\cD^\circ})\to \bA^1_R \setminus 0_\kappa$ 
extends to a $\bG_m$-equivariant family of  boundary polarized CY pairs 
$\overline{g}:(\overline{\cX}, \overline{\cG}+\Delta_{\overline{\cX}} +\overline{\cD})  \to \bA^1_R \setminus 0_\kappa$, and

\item   $\tau^\circ$ extends to an involution  $\tau:\overline{\cG}^n \to \overline{\cG }^n$.
\end{enumerate}
By Proposition \ref{p:bpcygluing},
$g^\circ$ extends to a family of  boundary polarized CY pairs $g:(\cX,\Delta_{\cX}+\cD) \to \A^1_R$.  
The extension is $\bG_m$-equivariant by \cite[Lem. 2.16]{ABHLX20} and unique by Lemma \ref{l:CYbirational}.3.
\end{proof}

\subsection{Automorphism groups}
For a boundary polarized CY pair $(X,\Delta+D)$,  set 
\[
\Aut(X,\Delta+D):= \{ g\in \Aut(X) \, \vert \, g^*D=D \text{ and }  g^*\Delta=\Delta\}
.\]
Note  that $\Aut(X,\Delta+D)$ has the structure of a linear algebraic group.
To see this, fix an integer $r>0$ sufficiently divisible  such that $L:=\omega_{X}^{[r]}(-r \Delta)$ is a  very ample line bundle.
Since $g^*L \cong L$ for any $g\in \Aut(X,\Delta+D)$,
we can identify $\Aut(X,\Delta+D)$ with the closed subgroup of 
${\rm PGL}\left(H^0(X, L)\right)$ composed of elements fixing $\Delta$ and $D$. We let $\Aut^0 (X, \Delta + D)$ denote the connected component of $\Aut(X,\Delta+D)$ containing the identity.

\begin{thm}\label{t:AutTorus}
If $(X,\Delta+D)$ is a boundary polarized CY pair,
then  $\Aut^0 (X,\Delta+D)$ is isomorphic to an algebraic torus (i.e. $\G_m^r$ for some $r\geq 0$).
\end{thm}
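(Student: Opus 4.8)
The plan is to show that the connected algebraic group $G:=\Aut^0(X,\Delta+D)$ is reductive with trivial unipotent radical and contains no semisimple factors, hence is a torus. The key input is that $(X,\Delta)$ is slc (indeed log Fano) and $K_X+\Delta+D\sim_{\bQ}0$ with $D$ ample, so every $G$-action is forced to preserve a great deal of structure.

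\textbf{Step 1: Reductivity.} First I would reduce to the normal case. The $G$-action on $(X,\Delta+D)$ lifts to a $G$-action on the normalization $(\oX,\oG+\oDe+\oD)$ commuting with the involution $\tau$ on $\oG^n$, and $G$ acts on the (finite) set of irreducible components, so a finite-index subgroup — hence $G$ itself, being connected — preserves each component. Thus it suffices to treat a connected component, i.e. we may assume $X$ normal and $(X,\Delta)$ klt log Fano (after possibly passing to a component where the pair is lc; a further small perturbation of the boundary using ampleness of $D$, as in the proof of Lemma \ref{l:CYbirational}.2, makes it klt). Then $(X,(1-\vep)(\Delta+D))$ is a klt log Fano pair for $0<\vep\ll1$, and $G$ acts on it. Now I would invoke the fact that the automorphism group of a klt log Fano pair (equivalently, of its anticanonical polarization) is reductive — this follows from Matsushima-type results in the K-stability literature (e.g. via \cite{BX19,ABHLX20}), since $(X,(1-\vep)(\Delta+D))$ admits a Kähler–Einstein-type degeneration, or more elementarily from the fact that a log Fano pair with an action by a connected group $G$ whose unipotent radical $U$ is nontrivial would admit a non-product weakly special test configuration contradicting $K$-semistability after $\Theta$-reductivity — but the cleanest route is: reductivity of $\Aut^0$ of a klt Fano holds unconditionally by \cite{BX19}.

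\textbf{Step 2: No semisimple part.} Having reductivity, write $G = T\cdot G'$ where $T$ is a central torus and $G'$ is semisimple; I must show $G'$ is trivial. Suppose not. Then $G'$ contains a copy of $\SL_2$ or $\PGL_2$, giving a nontrivial $\G_m$-action on $(X,\Delta+D)$ that extends to an $\SL_2$-action. Using Theorem \ref{t:fg} / the cone and test-configuration machinery (Proposition \ref{p:lcplacestestconfig}, Lemma \ref{l:lctestconfig}), the one-parameter subgroup $\la:\G_m\to G'$ corresponding to a root produces a product test configuration whose central fiber $(X_0,\Delta_0+D_0)$ is again a boundary polarized CY pair isomorphic to $(X,\Delta+D)$; the key point is that in an $\SL_2$ (not just torus) the Borel produces a one-parameter degeneration that is \emph{not} a product unless the $\G_m$ acts trivially. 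More precisely, I would argue via the associated grading: for the root $\G_m$, the filtration $\cF^\bullet$ of the anticanonical section ring has $\gr_{\cF}$ strictly nontrivial, and the induced degeneration changes the isomorphism class of the linearized polarized pair, contradicting that $\Aut$ is an algebraic \emph{group} acting on the fixed pair — equivalently one uses that a reductive group acting on a projective variety $X$ with $\Pic$-action giving a $G$-linearized ample $L = \omega_X^{[-r]}(-r\Delta)$, such that $K_X+\Delta+D\sim_{\bQ}0$ and $D$ is $G$-invariant and ample, cannot have a positive-dimensional semisimple part because $D$ ample $G'$-invariant forces, after taking a dlt modification and running Kollár's gluing/$\G_m$-equivariance (Proposition \ref{p:TequivariantTC}), a contradiction with $K_X+\Delta+D\sim_\bQ 0$: the $\SL_2$-representation $H^0(X,mL)$ must be trivial for $D$ to be pointwise fixed in the relevant linear system, which is impossible for $\dim G' >0$.

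\textbf{Main obstacle.} The genuinely delicate step is \textbf{Step 2}: ruling out a semisimple part. Reductivity (Step 1) is standard, but the argument that $D$ ample plus $K_X+\Delta+D\sim_\bQ 0$ forbids any semisimple subgroup needs care — the slc (non-normal, non-klt) generality means one cannot directly quote Kähler–Einstein/K-stability obstructions, and one must instead run the argument on the normalization and through Kollár's gluing theory, tracking that the involution $\tau$ and the different are $G$-equivariant (which follows from $G$ being connected and Proposition \ref{p:bpcygluing}/Proposition \ref{p:TequivariantTC}). I expect the cleanest formulation is: pass to the normalization, use that each normal component with its klt log Fano structure has reductive automorphism group, and then observe that a nontrivial semisimple factor acting on $(\oX_i,\oG_i+\oDe_i)$ must fix the ample $\bQ$-divisor $\oD_i$ (as $K_{\oX_i}+\oG_i+\oDe_i+\oD_i\sim_\bQ 0$ makes $\oD_i$ determined, up to finitely many choices fixed by connectedness, by the anticanonical class, whose stabilizer in the semisimple group is all of it) — but a semisimple group cannot fix a divisor in a basepoint-free-enough linear system without being trivial, since its action on the corresponding projective space would then fix a hyperplane section pointwise, forcing the representation to be a sum of copies of the trivial one, contradicting $\dim >0$. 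Assembling this carefully is where the real work lies; the torus conclusion then follows since a connected reductive group with trivial semisimple part and trivial unipotent radical is precisely $\G_m^r$.
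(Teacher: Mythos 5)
There are genuine gaps in both steps, and each rests on a false claim. In Step~1 you assert that ``reductivity of $\Aut^0$ of a klt Fano holds unconditionally by [BX19]''; this is false. The automorphism group of a klt (even smooth) Fano variety need not be reductive --- $\mathrm{Bl}_p\bP^2$ has automorphism group the parabolic stabilizer of $p$ in $\PGL_3$ --- and the Matsushima-type results of \cite{BX19,ABHLX20} give reductivity only for K-\emph{polystable} Fanos, whereas a klt boundary polarized CY pair only yields a K-\emph{semistable} log Fano pair $(X,\Delta+(1-\vep)D)$, to which those results do not apply. (A smaller problem: perturbing the boundary does not make an lc-but-not-klt log Fano pair klt when the lc centers come from $X$ itself or from the conductor $\oG_i$, which has coefficient $1$.) In Step~2, the claim that a positive-dimensional semisimple group ``cannot fix a divisor in a basepoint-free-enough linear system without being trivial'' is also false: fixing a divisor means fixing a \emph{line} in $H^0(X,mL)$, not a hyperplane pointwise, and semisimple groups routinely do this --- $\PGL_2$ fixes a smooth conic $Q\subset\bP^2$. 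The true obstruction to semisimple and additive subgroups here is not representation-theoretic but positivity-theoretic: it is the lc condition together with $K_X+\Delta+D\sim_{\bQ}0$.

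The paper's proof bypasses both steps. After reducing to the normal case via the embedding $\Aut^0(X,\Delta+D)\hookrightarrow\prod_{i}\Aut^0(\oX_i,\oG_i+\oDe_i+\oD_i)$, it takes an $\Aut$-equivariant log resolution $\mu:Y\to X$ and sets $B:=\Supp(\mu_*^{-1}(\Delta+D))+\Exc(\mu)$; since all log discrepancies of the lc pair $(X,\Delta+D)$ are nonnegative and $K_X+\Delta+D\sim_{\bQ}0$, the divisor $K_Y+B$ is pseudoeffective, and \cite[Lem. 2.1]{Hu18} then states directly that every connected algebraic subgroup of $\Aut(Y,B)$ is a torus. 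If you want to keep your two-step structure, the statement you actually need is that $\Aut^0$ contains no $\G_a$ and no semisimple subgroup, and the argument for that is precisely the pseudoeffectivity of the log canonical divisor of a log-smooth model, i.e., Hu's lemma --- not K-stability and not the fixed-divisor argument.
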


The statement is known to experts and follows easily from
\cite[Lem. 2.1]{Hu18}, which more generally implies that any connected algebraic subgroup of the automorphism group of an  slc CY pair is a torus. 
We thank Joaqu\'in Moraga for providing us with the reference.

\begin{proof}
It suffices to prove the result when $X$ is normal. 
Indeed, there is an embedding
\[
\Aut^0 (X,\Delta+D) \hookrightarrow \Aut^0 (\overline{X},\overline{\Delta}+\overline{G}+\overline{D})
=
\times_{i=1}^r
\Aut^0 (\overline{X}_i,\overline{G}_i+\overline{\Delta}_i+\overline{D}_i)
,\]
where $(\overline{X},\overline{G}+\overline{\Delta}+\overline{D})
:=
\sqcup_{i=1}^r(\overline{X}_i,\overline{G}_i+\overline{\Delta}_i+\overline{D}_i)$
is the normalization of $(X,\Delta+D)$. 
Since the  connected algebraic subgroups of a torus are tori, the normal case implies the full result. 
	
Now, assume $X$ is normal.
Fix an $\Aut(X,\Delta+D)$-equivariant log resolution $\mu:Y\to X$ of $(X,\Delta+D)$
and set $B:=\Supp(\mu_*^{-1}(\Delta+D))+\Exc(\mu)$.
Since  $K_{X}+\Delta+D\sim_{\bQ}0$ 
and
\[
K_{Y}+B = f^*(K_X+\Delta+D)+ \sum_{E} A_{X,\Delta+D}(E)E,
\]
where $E$ runs through prime divisors in $\Supp(\mu_*^{-1}(\Delta+D))+\Exc(\mu)$,
$K_{Y}+B$ is psuedoeffective. 
Thus \cite[Lem. 2.1]{Hu18} states that any connected affine algebraic subgroup of $\Aut^0(Y,B)$ is a torus.
Since there is an embedding $\Aut^0(X,\Delta+D)  \hookrightarrow \Aut(Y,B)$,   $\Aut^0(X,\Delta+D)$ is a torus.
\end{proof}

The next proposition shows that the dimension of the automorphism group increases under weakly special degeneration. 
While the statement is well known, we include a proof for the lack of a suitable reference.

\begin{prop}\label{p:torusrk+1}
If $(X,\Delta+D)\rightsquigarrow(X_0,\Delta_0+D_0)$ is a weakly special degeneration of  boundary polarized CY pair and  $(X,\Delta+D)\not\cong (X_0,\Delta_0+D_0)$,
then 
\[
\dim (\Aut (X,\Delta+D) )< \dim( \Aut(X_0,\Delta_0+D_0)).
\]
\end{prop}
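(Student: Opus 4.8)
The plan is to realize the weakly special degeneration as a $\bG_m$-equivariant family over $\bA^1$ and track the automorphism group through the degeneration using the associated test configuration. Let $(\cX,\Delta_{\cX}+\cD)\to \bA^1$ be a weakly special test configuration of $(X,\Delta+D)$ with $(\cX_0,\Delta_{\cX_0}+\cD_0)\cong (X_0,\Delta_0+D_0)$. Fix $r>0$ sufficiently divisible so that $\cL:=\omega_{\cX/\bA^1}^{[-r]}(-r\Delta_{\cX})$ is a $\bG_m$-linearized relatively very ample line bundle; this gives a $\bG_m$-equivariant embedding $\cX\hookrightarrow \bP(H^0(\cX_1,\cL_1))\times\bA^1 = \bP^M\times\bA^1$, under which the test configuration is induced by a one-parameter subgroup $\lambda\colon\bG_m\to\PGL_{M+1}$. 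First I would observe that $\Aut(X,\Delta+D)$ and $\Aut(X_0,\Delta_0+D_0)$ both embed as closed subgroups $G$ and $G_0$ of $\PGL_{M+1}$ (using that any automorphism preserves $\cL_1$, resp.\ $\cL_0$, up to scaling, as in the discussion preceding Theorem~\ref{t:AutTorus}).

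The key step is a degeneration-of-subgroups argument: the subgroup $G_0\subset\PGL_{M+1}$ contains a limit, as $s\to 0$, of the conjugated subgroups $s\cdot G\cdot s^{-1}$ (where $s=\lambda(s)$). More precisely, consider the family of subschemes $\{s G s^{-1}\}_{s\in\bG_m}\subset\PGL_{M+1}$; since $\PGL_{M+1}$ has a $\bG_m$-equivariant completion, this extends to a flat family over $\bA^1$ whose central fiber $G_\infty$ is a subgroup scheme (of the same dimension as $G$) fixing $(\cX_0,\Delta_{\cX_0}+\cD_0)=(X_0,\Delta_0+D_0)$, hence $G_\infty\subseteq G_0$, so $\dim G\le \dim G_0$. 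Moreover $G_\infty$ contains the image of $\lambda$ (which commutes with itself under conjugation). So the plan reduces to showing the inequality is strict, i.e.\ that $\lambda(\bG_m)\not\subseteq G_\infty$ would force $(X,\Delta+D)\cong(X_0,\Delta_0+D_0)$ — equivalently, if $\dim G=\dim G_0$, then $\lambda$ lies in (a conjugate of) $G$, which would make the test configuration a product. Here I would invoke that a weakly special test configuration of a boundary polarized CY pair is uniquely determined by the induced filtration (Lemma~\ref{l:lctestconfig} together with the injectivity argument in the proof of Theorem~\ref{thm:CZ-lcplace}), so a $\bG_m$-action on $(X,\Delta+D)$ producing the same filtration must give a product test configuration, contradicting $(X,\Delta+D)\not\cong(X_0,\Delta_0+D_0)$.

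Concretely, the cleanest route for the strictness is: suppose $\dim\Aut^0(X,\Delta+D)=\dim\Aut^0(X_0,\Delta_0+D_0)$. By Theorem~\ref{t:AutTorus} both connected automorphism groups are tori, say of dimension $k$. The limit torus $T_\infty:=G_\infty^0$ sits inside $\Aut^0(X_0,\Delta_0+D_0)$ and has dimension $k$, so $T_\infty=\Aut^0(X_0,\Delta_0+D_0)$; in particular $\lambda(\bG_m)\subseteq T_\infty$ is one of the one-parameter subgroups of a torus that is a flat limit of the torus $T:=\Aut^0(X,\Delta+D)$. Then I would argue, using that $T$ and $T_\infty$ are conjugate under a path in $\PGL_{M+1}$ approaching $\lambda$, that $\lambda$ normalizes $T$ and in fact centralizes it (tori are abelian and the limit of conjugations by $\lambda(s)$ is the identity on $T_\infty$), so $\lambda$ together with $T$ generate a torus acting on $(X,\Delta+D)$; the test configuration induced by $\lambda$ is then a product test configuration. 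By Lemma~\ref{l:lctestconfig}, its central fiber is isomorphic to $(X,\Delta+D)$ itself, so $(X_0,\Delta_0+D_0)\cong(X,\Delta+D)$, a contradiction.

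The main obstacle I anticipate is making the ``limit of conjugated subgroups'' argument rigorous in the algebraic category — i.e.\ constructing the flat family of subgroup schemes degenerating $G$ to $G_\infty$ inside $\PGL_{M+1}$, identifying its special fiber with an actual (possibly non-reduced, but with the same dimension of identity component) subgroup, and verifying $G_\infty\subseteq\Aut(X_0,\Delta_0+D_0)$ and $\lambda(\bG_m)\subseteq G_\infty$. An alternative that sidesteps the subgroup-degeneration subtlety: work on the total space directly by noting $\Aut(\cX,\Delta_{\cX}+\cD)\to\bG_m$-equivariantly maps to $\Aut$ of each fiber, use upper semicontinuity of fiber dimension of the automorphism group scheme of the family $(\cX,\Delta_{\cX}+\cD)\to\bA^1$ (which holds since this is a family in the algebraic stack $\cM(\chi,N,\mathbf{r})$ with affine diagonal, so the inertia is of finite type), giving $\dim G\le\dim G_0$; then the strictness is exactly the statement that a non-product weakly special test configuration cannot have $\bG_m$ acting on the special fiber ``trivially modulo $\Aut^0(X,\Delta+D)$,'' which again follows from the filtration-uniqueness of test configurations.
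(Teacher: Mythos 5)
Your argument splits into a weak inequality $\dim\Aut(X,\Delta+D)\le\dim\Aut(X_0,\Delta_0+D_0)$ and a strictness step, and the gap is in the strictness step. The weak inequality is fine via your "alternative" route (upper semicontinuity of the fiber dimension of the automorphism group scheme of $(\cX,\Delta_{\cX}+\cD)\to\bA^1$ along the identity section), although even your primary route has a problem: the flat limit of the conjugates $\lambda(s)G\lambda(s)^{-1}$ inside the affine group $\PGL_{M+1}$ can drop dimension or be empty (elements can escape to infinity), so the assertion that $G_\infty$ has the same dimension as $G$ is unjustified. Likewise $\lambda(\bG_m)\subseteq G_\infty$ does not follow: $\lambda(\bG_m)$ lies in $\lambda(s)G\lambda(s)^{-1}$ only if it already lies in $G$, which is exactly what fails for a non-product test configuration; all you know is $\lambda(\bG_m)\subseteq\Aut(X_0,\Delta_0+D_0)$.

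The strictness argument then collapses at the key claim that "$\lambda$ normalizes $T$ and in fact centralizes it." The conjugates $\lambda(s)T\lambda(s)^{-1}$ genuinely vary with $s$ in a non-product test configuration, so $\lambda$ does not normalize $T$, and nothing about the limit $T_\infty$ being abelian constrains $T$ itself. Even granting $\lambda(\bG_m)\subseteq T_\infty$ and $T_\infty$ a limit of conjugates of $T$, you give no reason why $\lambda$ should preserve $\cX_1\subset\bP^M$ — but that is precisely the statement that the test configuration is a product, i.e., you are assuming what is to be proved. Filtration-uniqueness of test configurations does not help: to apply it you would need to exhibit a product test configuration inducing the same filtration, which requires first knowing the filtration comes from a one-parameter subgroup of $\Aut(X,\Delta+D)$. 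The paper avoids all of this: using the local quotient presentation $\cU\cong[Z/G]$ of the moduli stack from the proof of Theorem \ref{t:stack}, the degeneration puts $G\cdot z_0$ in $\overline{G\cdot z}\setminus G\cdot z$, and the elementary fact that boundary orbits of an orbit closure have strictly smaller dimension gives $\dim\mathrm{Stab}(z)<\dim\mathrm{Stab}(z_0)$ — both the inequality and its strictness — in one step. If you want to salvage your approach, you should replace the torus-degeneration argument by this orbit-dimension argument (or some equivalent of it); I do not see how to get strictness from the test configuration alone.
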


\begin{proof}
We may assume 	$x=[(X,\Delta+D)]$ and $x_0=[(X_0,\Delta_0+D_0)]$ are $\bk$-points of 
the stack $\sM(\chi,N,{\bf r})$ for some choice of $\chi$, $N$, and ${\bf r}$.
By the proof of Theorem \ref{t:stack}, there is an open substack 
$ \cU\subset \sM(\chi,N,{\bf r})$ such that  $x,x_0\in\cU$ and $\cU \cong [Z/ G ]$, 
where $Z$ is a finite type  $\bk$-scheme with an action by an affine algebraic group $G$.
Choose $z,z_0\in Z$ that map to $x,x_0\in \cU$, respectively.
Observe that
\[
	\dim {\rm Stab}(z) = \dim G - \dim (G\cdot z) \\
	< \dim G - \dim (G\cdot z_0)   = \dim {\rm Stab}(z_0 ) 
,\]
where the inequality uses that $G\cdot z_0 \in \overline{G\cdot z}$.
Since $\Aut(X,\Delta+D)\cong {\rm Stab}(z)$ and $\Aut(X_0,\Delta_0+D_0)\cong {\rm Stab}(z_0)$, the result follows. 
\end{proof}

\subsection{S-equivalence}\label{ss:sequiv}

We now discuss S-equivalence classes of boundary polarized CY pairs.  

\begin{defn}[S-equivalence]\label{d:Sequiv}
Two  boundary polarized CY pairs
$(X,\Delta+D)$ and $(X',\Delta'+D')$ are \emph{S-equivalent} if there exist weakly special test configurations degenerating 
\[
(X,\Delta+D) \rightsquigarrow (X_0,\Delta_0+D_0) \leftsquigarrow (X',\Delta'+D')
\]
to a common  boundary polarized CY pair $(X_0,\Delta_0+D_0)$. To denote S-equivalence, we write $(X,\Delta+D)\sim_S (X',\Delta'+D')$.
\end{defn}

\begin{rem}[Terminology]
The term S-equivalence is a reference to work of Seshadri on vector bundles.
Recall, two semistable vector bundles  $E$ and $E'$ on a smooth curve $C$ to be \emph{S-equivalent} if the graded bundles associated to the  Jordan-Holder filtrations satisfy
\[
\gr_{\rm JH}E \cong \gr_{\rm JH} E', 
\]
The latter is related to the above definition by using the Rees construction to give a degeneration $E \rightsquigarrow \gr_{\rm JH}E$ via an equivariant family over $\bA^1$.
\end{rem}

\begin{prop}\label{p:sequiv}
S-equivalence for boundary polarized CY pairs is an equivalence relation. 
\end{prop}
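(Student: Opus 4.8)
\textbf{Proof plan for Proposition \ref{p:sequiv}.} The plan is to verify the three axioms of an equivalence relation. Reflexivity is immediate: the trivial test configuration $(X \times \bA^1, (\Delta+D)\times \bA^1)$ witnesses $(X,\Delta+D) \rightsquigarrow (X,\Delta+D)$, so taking $(X_0,\Delta_0+D_0) = (X,\Delta+D)$ gives $(X,\Delta+D)\sim_S (X,\Delta+D)$. Symmetry is built into Definition \ref{d:Sequiv}, since the relation is phrased symmetrically in the two pairs: if $(X,\Delta+D) \rightsquigarrow (X_0,\Delta_0+D_0) \leftsquigarrow (X',\Delta'+D')$, then the same diagram read backwards witnesses $(X',\Delta'+D')\sim_S (X,\Delta+D)$.

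The only substantive point is transitivity. Suppose $(X,\Delta+D)\sim_S (X',\Delta'+D')$ via a common degeneration $(Y_0,\Gamma_0)$ and $(X',\Delta'+D')\sim_S (X'',\Delta''+D'')$ via a common degeneration $(Z_0,\Xi_0)$. So we have weakly special degenerations
\[
(X,\Delta+D)\rightsquigarrow (Y_0,\Gamma_0), \quad (X',\Delta'+D')\rightsquigarrow (Y_0,\Gamma_0), \quad (X',\Delta'+D')\rightsquigarrow (Z_0,\Xi_0), \quad (X'',\Delta''+D'')\rightsquigarrow (Z_0,\Xi_0).
\]
The goal is to produce a single boundary polarized CY pair to which both $(X,\Delta+D)$ and $(X'',\Delta''+D'')$ degenerate. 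The key observation is that $(X',\Delta'+D')$ admits weakly special degenerations to \emph{both} $(Y_0,\Gamma_0)$ and $(Z_0,\Xi_0)$; I will use this together with the $\Theta$-reductivity of the moduli stack (Theorem \ref{t:Thetared}) to find a common further degeneration. Concretely, the test configuration degenerating $(X',\Delta'+D')$ to $(Y_0,\Gamma_0)$ is classified, via Theorem \ref{thm:CZ-lcplace} (after possibly composing with a product test configuration to arrange integral central fiber, or by working directly with the finite-generation statement Proposition \ref{p:lcplacestestconfig}), by a collection of lc places in $\LC(X',\Delta'+D')$; similarly for the degeneration to $(Z_0,\Xi_0)$. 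One then takes the combined filtration associated to the union of these lc-place data and applies Proposition \ref{p:lcplacestestconfig} to $(Y_0, \Gamma_0)$: the relevant valuations on $\cX'_0 = Y_0$ lie in $\LC(Y_0,\Gamma_0)$ by the crepant-birational compatibility in Lemma \ref{l:CYbirational}.1 and Lemma \ref{l:tclcplace}, so the resulting Rees algebra is finitely generated and yields a weakly special test configuration of $(Y_0,\Gamma_0)$ whose central fiber $(W_0,\Psi_0)$ is a further degeneration. Running the same construction on the $(Z_0,\Xi_0)$ side — pushing forward the lc-place data along the degeneration $(X',\Delta'+D')\rightsquigarrow (Z_0,\Xi_0)$ — and using that composition of weakly special degenerations is again weakly special (itself a consequence of Lemma \ref{l:tclcplace} applied to the composite filtration), one arrives at the \emph{same} pair $(W_0,\Psi_0)$, because in both cases $(W_0,\Psi_0)$ is the degeneration of $(X',\Delta'+D')$ under the filtration generated by \emph{all} the lc places involved, which is independent of the order. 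Chaining these:
\[
(X,\Delta+D)\rightsquigarrow (Y_0,\Gamma_0) \rightsquigarrow (W_0,\Psi_0) \quad\text{and}\quad (X'',\Delta''+D'')\rightsquigarrow (Z_0,\Xi_0)\rightsquigarrow (W_0,\Psi_0),
\]
and using once more that a composition of weakly special degenerations is weakly special, we conclude $(X,\Delta+D)\sim_S (X'',\Delta''+D'')$.

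The main obstacle is the transitivity step, and within it the two technical facts one must nail down: (a) that weakly special degenerations compose to weakly special degenerations — this should follow from Lemma \ref{l:tclcplace} (the valuations defining the composite filtration are lc places of the composed complement) together with the finite-generation input of Theorem \ref{t:fg}/Proposition \ref{p:lcplacestestconfig}, so that the composite Rees algebra is finitely generated and its Proj is again a family of boundary polarized CY pairs over $\bA^1$; and (b) that the two ways of degenerating $(X',\Delta'+D')$ further — first to $Y_0$ then on, versus first to $Z_0$ then on — can be arranged to land on a common pair, which is where one invokes the lc-place description (Theorem \ref{thm:CZ-lcplace}, Proposition \ref{p:lcplacestestconfig}) to see that both coincide with the degeneration driven by the union of the underlying lc-place data. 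Alternatively, and perhaps more cleanly, one can phrase (b) purely stack-theoretically: the existence of the common further degeneration is exactly an instance of $\Theta$-reductivity (or of the combination of $\Theta$-reductivity with S-completeness) for the finite-type open substack containing all the pairs in question, so Theorem \ref{t:Thetared} does the work directly once one knows the relevant substack is finite type — which holds by restricting to a bounded family via Proposition \ref{p:boundedness}. I would present the argument using the lc-places/Rees-algebra route, with Theorem \ref{t:fg} and Proposition \ref{p:lcplacestestconfig} as the main engines, and remark that it also follows abstractly from $\Theta$-reductivity.
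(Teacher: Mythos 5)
Your reflexivity and symmetry observations are fine, and you have correctly located the real content in transitivity. The paper's actual proof is essentially the ``alternative'' you mention only in passing at the end: Proposition \ref{p:sequiv} is deduced in one line from Lemma \ref{l:sequiv}, which (1) identifies S-equivalence of $x=[(X,\Delta+D)]$ and $x'=[(X',\Delta'+D')]$ with the purely topological condition $\overline{\{x\}}\cap\overline{\{x'\}}\neq\emptyset$ in $\cM(\chi,N,{\bf r})$ --- using Theorem \ref{t:AutTorus} (stabilizers are tori, hence linearly reductive) and \cite[Lem.~3.24]{AHLH18} to convert any specialization of $\bk$-points into an actual map $\Theta\to\cM$, i.e.\ a test configuration --- and (2) shows that two points of $\overline{\{x'\}}$ have intersecting closures by gluing the two $\Theta$-maps into a map from $[\bA^2\setminus\{(0,0)\}/\bG_m^2]$ and extending over the origin via $\Theta$-reductivity (Theorem \ref{t:Thetared}). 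Transitivity is then pure point-set topology on the stack. Note that no finite-type or boundedness reduction is needed, since $\Theta$-reductivity is established for the full locally-finite-type stack.

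Your primary route through lc places and Rees algebras has genuine gaps. First, Theorem \ref{thm:CZ-lcplace} and Proposition \ref{p:lcplacestestconfig} are stated for \emph{lc} (in particular normal) pairs, whereas the common degenerations $(Y_0,\Gamma_0)$ and $(Z_0,\Xi_0)$ are in general only slc; extending the lc-place classification to the non-normal case is not in the paper and is not routine. Second, the lc places classifying $(X',\Delta'+D')\rightsquigarrow(Y_0,\Gamma_0)$ are valuations on $X'$, not on $Y_0$; applying Proposition \ref{p:lcplacestestconfig} to the union of the two collections of lc places produces a further degeneration of $(X',\Delta'+D')$, but the claim that its central fiber is simultaneously a weakly special degeneration of $Y_0$ and of $Z_0$ (``independent of the order'') is exactly the two-parameter extension problem, and you have not supplied an argument for it --- this is precisely what the $[\bA^2/\bG_m^2]$ gluing plus $\Theta$-reductivity accomplishes. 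Third, you assert that a composition of weakly special degenerations is again a weakly special degeneration; this is needed to conclude even after a common $(W_0,\Psi_0)$ is found, and it is not proved by citing Lemma \ref{l:tclcplace} (which modifies the complement on a fixed test configuration, not the composite of two test configurations). The paper sidesteps this entirely: once $W_0$ lies in $\overline{\{x\}}\cap\overline{\{x''\}}$, the reductive-stabilizer argument of \cite{AHLH18} produces single test configurations $x\rightsquigarrow W_0$ and $x''\rightsquigarrow W_0$ directly. I would recommend abandoning the Rees-algebra route here and writing up the stack-theoretic argument you sketched as the alternative.
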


The proposition is a consequence of the following lemma that interprets S-equivalence stack theoretically. Statement (2) of the lemma relies  on Theorem \ref{t:Thetared}.

\begin{lem}\label{l:sequiv}
Let $x=[(X,\Delta+D)]$ and $x'=[(X',\Delta'+D')]$ be $\bk$-points of $\cM(\chi, N, {\bf r})$. 
\begin{enumerate}
\item $(X,\Delta+D) \sim_{S} (X',\Delta'+D')$ if and only if $\overline{ \{x\}} \cap  \overline{\{x'\}} \neq \emptyset$. 
\item if $x_1, x_2 \in \overline{\{ x\}}$ are $\bk$-points, then $\overline{\{x_1\}}\cap \overline{\{x_2 \}} \neq \emptyset$.
\end{enumerate}
\end{lem}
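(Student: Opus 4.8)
\textbf{Proof plan for Lemma \ref{l:sequiv}.}

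The plan is to unwind both statements using the stack-theoretic machinery already in place, principally Theorem \ref{t:Thetared} ($\Theta$-reductivity), Theorem \ref{t:Scomplete} (S-completeness), the characterization of test configurations as maps $[\bA^1/\bG_m]\to\cM$ recorded in the ``Stacky interpretation'' remark, and Proposition \ref{p:torusrk+1} on jumping of automorphism group dimensions. For part (1), the forward direction is essentially formal: if $(X,\Delta+D)\rightsquigarrow(X_0,\Delta_0+D_0)\leftsquigarrow(X',\Delta'+D')$, then the two weakly special test configurations give maps $[\bA^1/\bG_m]\to\cM$ sending $1$ to $x$ (resp.\ $x'$) and $0$ to the common point $x_0:=[(X_0,\Delta_0+D_0)]$, which exhibits $x_0\in\overline{\{x\}}\cap\overline{\{x'\}}$. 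For the converse, suppose $y\in\overline{\{x\}}\cap\overline{\{x'\}}$; after replacing $y$ by a closed point in the intersection of the two closures (which is itself closed and nonempty, hence contains a $\bk$-point since the stack is of finite type over $\bk$ on the relevant open substack, by the proof of Theorem \ref{t:stack}), I would use the fact that a specialization of points in a locally Noetherian algebraic stack with affine diagonal is realized by a map from $\Theta_{R}$ for some DVR $R$ essentially of finite type over $\bk$ --- this is the standard ``$\Theta$-specialization'' statement, e.g.\ \cite[\S3]{AHLH18}. Such a map, together with $\Theta$-reductivity (or rather the fact that the image lands in a quotient stack $[Z/G]$ with $Z$ finite type, where $\Theta$-points are exactly one-parameter degenerations), produces a weakly special test configuration $(X,\Delta+D)\rightsquigarrow (Y,\Delta_Y+D_Y)$ with $[(Y,\Delta_Y+D_Y)]=y$, and symmetrically for $x'$; this gives $(X,\Delta+D)\sim_S(X',\Delta'+D')$.

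For part (2), let $x_1,x_2\in\overline{\{x\}}$ be $\bk$-points. By part (1) applied twice, $(X,\Delta+D)\sim_S(X_i,\Delta_i+D_i)$ for $i=1,2$, where $x_i=[(X_i,\Delta_i+D_i)]$, so it suffices to show $\sim_S$ is transitive --- but that is Proposition \ref{p:sequiv}, which is stated as a \emph{consequence} of this lemma, so I cannot invoke it. Instead I would argue directly: since $x_1,x_2\in\overline{\{x\}}$, working inside an open quotient substack $[Z/G]\ni x,x_1,x_2$ from the proof of Theorem \ref{t:stack}, lift $x$ to $z\in Z$ and $x_1,x_2$ to points $z_1,z_2\in\overline{G\cdot z}$. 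The orbit closure $\overline{G\cdot z}$ is an irreducible $G$-variety; its unique closed orbit $G\cdot z_0$ (closed orbits in affine $G$-varieties exist and, for $G$ linearly reductive, the closed orbit in an orbit closure is unique --- here $G=\operatorname{PGL}_{M+1}$, which is reductive) lies in $\overline{G\cdot z_i}$ for $i=1,2$. Hence $[(X_0,\Delta_0+D_0)]:=$ image of $z_0$ lies in $\overline{\{x_1\}}\cap\overline{\{x_2\}}$, which is therefore nonempty. The only subtlety is that $\overline{G\cdot z_i}$ need not equal $\overline{G\cdot z}$, but it is a nonempty closed $G$-invariant subset of the latter, so it contains a closed orbit, and any closed orbit of $\overline{G\cdot z}$ equals $G\cdot z_0$ by uniqueness; this forces $G\cdot z_0\subset\overline{G\cdot z_i}$.

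The main obstacle I anticipate is making the ``specialization of stack points is realized by a $\Theta_R$-point'' step fully rigorous in this setting: one needs that the relevant points lie in a common finite-type open substack with affine diagonal (which holds by Theorem \ref{t:stack} and Proposition \ref{p:anticanonicalsheaf}) and then that a degeneration in $[Z/G]$ along a DVR can be promoted to a $\bG_m$-equivariant degeneration, i.e.\ that one may take $R=\bk$ and the test configuration to be \emph{weakly special} --- the latter is exactly where $\Theta$-reductivity (Theorem \ref{t:Thetared}) and the slc-ness of the limit enter, guaranteeing the limit pair is again a boundary polarized CY pair rather than something with worse singularities. An alternative, perhaps cleaner, route for part (1)'s converse is to cite the Hilbert--Mumford-type argument for $[Z/G]$ directly: a specialization within an orbit closure under a reductive group is achieved by a one-parameter subgroup (Kempf), which yields the $\bG_m$-action and hence the test configuration; one then checks the limit is slc using Theorem \ref{t:Thetared} or the openness of the slc locus \cite[Cor. 4.45]{KolNewBook}. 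I would present whichever of these is shortest given what has already been developed, likely the Kempf/one-parameter-subgroup version since the quotient presentation $[H_5/\operatorname{PGL}_{M+1}]$ is explicitly at hand.
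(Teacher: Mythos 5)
Your part (2) has a genuine gap. You deduce that $\overline{G\cdot z}$ contains a \emph{unique} closed orbit from the fact that $G=\PGL_{M+1}$ is reductive, but that uniqueness statement is a theorem about reductive groups acting on \emph{affine} varieties (equivalently, about the semistable locus of a linearized action), and the atlas $H_5$ from the proof of Theorem \ref{t:stack} is a locally closed subscheme of a product of Hilbert schemes and spaces of K-flat divisors -- it is not affine and carries no distinguished linearization here. For a general finite type quotient stack with reductive stabilizers the conclusion of (2) is simply false: in $[\bP^1/\bG_m]$ the closure of the open point contains the two fixed points $x_1=0$ and $x_2=\infty$, and $\overline{\{x_1\}}\cap\overline{\{x_2\}}=\emptyset$. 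So statement (2) cannot be a formal consequence of the quotient presentation; some nontrivial input is unavoidable. The paper supplies it via $\Theta$-reductivity: it realizes $x\rightsquigarrow x_1$ and $x\rightsquigarrow x_2$ as maps $f_1,f_2:\Theta\to\cM(\chi,N,{\bf r})$, glues them to a map $[(\bA^2\setminus\{(0,0)\})/\bG_m^2]\to\cM(\chi,N,{\bf r})$ following \cite[Proof of Lem.~3.25]{AHLH18}, and uses Theorem \ref{t:Thetared} to extend over the origin, whose image is then a common specialization of $x_1$ and $x_2$. Your plan explicitly avoids invoking Proposition \ref{p:sequiv} (correctly, since it is a consequence of this lemma), but the affine-GIT substitute you propose does not work.

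In part (1) the forward direction matches the paper. For the converse, the tool you gesture at (\cite[Proof of Lem.~3.24]{AHLH18}, realizing a specialization of $\bk$-points by a map from $\Theta$) is the one the paper uses, but its hypothesis is that the \emph{target} point $x_0$ has linearly reductive stabilizer; this is exactly where Theorem \ref{t:AutTorus} enters, and your write-up never checks it. Your alternative route via Kempf's one-parameter subgroup theorem is also not justified for an arbitrary $\bk$-point of $\overline{\{x\}}\cap\overline{\{x'\}}$: the Hilbert--Mumford/Kempf statement produces a $\bG_m$-degeneration only onto the closed orbit (and again presupposes an affine or linearized setting). Finally, the worry that one must use $\Theta$-reductivity ``to check the limit is slc'' is misplaced: any map $\Theta\to\cM(\chi,N,{\bf r})$ is by definition a $\bG_m$-equivariant family of boundary polarized CY pairs over $\bA^1$, i.e.\ a weakly special test configuration, so no further singularity check is needed in (1); $\Theta$-reductivity is needed only for (2).
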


\begin{proof}
The forward implication of (1) is clear since a common degeneration will be contained in $ \overline{ \{x\}} \cap  \overline{\{x'\}}$.
For the converse, assume  $\overline{ \{x\} } \cap \overline{\{x'\}}\neq \emptyset$. Thus there is a $\bk$-point $x_0 \in \overline{ \{x\} } \cap \overline{\{x'\}}$.
Since $x_0$ has linearly reductive stabilizer group by Theorem \ref{t:AutTorus}, \cite[Proof of Lem. 3.24]{AHLH18} implies that the specializations  $x \rightsquigarrow x_0$ and $x'\rightsquigarrow x_0$ are realized by maps $\Theta \to \sM(\chi,N,{\bf r})$. Thus the reverse implication of (1) holds.
	
For (2), note that the specializations $x\rightsquigarrow x_1$ and $x \rightsquigarrow x_2$ are given by maps $f_1,f_2:\Theta\to \cM(\chi,N,{\bf r})$ as in the above paragraph.
Following \cite[Proof of Lem. 3.25]{AHLH18},
the maps $f_1$ and $f_2$ glue to give a map 
$[\bA^2 \setminus \{(0,0)\} / (\bG_m)^2] \to \sM(\chi,N,{\bf r})$.
By choosing one of the two $\bG_m$ factors, $\Theta$-reductivity (Theorem \ref{t:Thetared})
implies that there exists an extension $g:[\bA^2/\bG_m] \to \sM(\chi,N,{\bf r})$.
Since $\gamma(0,0)$ is a specialization of both $x_1$ and $x_2$, (2) holds. 
\end{proof}

\begin{proof}[Proof of Proposition \ref{p:sequiv}]
The proposition follows immediately from Lemma \ref{l:sequiv}.
\end{proof}

The next proposition shows that klt pairs are only S-equivalent to themselves.

\begin{prop}\label{p:sequivklt}
If  $(X,\Delta+D)$ is a klt boundary polarized CY pair and  $(X,\Delta+D)\sim_S (X',\Delta'+D')$, then $(X,\Delta+D)\cong (X',\Delta'+D')$.
\end{prop}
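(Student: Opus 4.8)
The plan is to leverage the fact, established earlier in the excerpt, that S-equivalence is realized by weakly special degenerations and that such degenerations are governed by lc places of complements (Theorem \ref{thm:CZ-lcplace}). Suppose $(X,\Delta+D) \sim_S (X',\Delta'+D')$, so there are weakly special test configurations $(X,\Delta+D) \rightsquigarrow (X_0,\Delta_0+D_0) \leftsquigarrow (X',\Delta'+D')$. The first step is to show that a weakly special degeneration of a \emph{klt} boundary polarized CY pair is trivial. Indeed, if $(\cX,\Delta_\cX+\cD)$ is a weakly special test configuration of $(X,\Delta+D)$ with $X$ normal, then $\cX$ is normal (or we may reduce to an irreducible component of $\cX_0$ via Theorem \ref{thm:CZ-lcplace}), and by Lemma \ref{l:lctestconfig}.2 each irreducible component $E \subset \cX_0$ satisfies $A_{X,\Delta+D}(v_E) = 0$, i.e. $v_E \in \LC(X,\Delta+D)$. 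But $(X,\Delta+D)$ being klt means $A_{X,\Delta+D}(v) > 0$ for every nontrivial divisorial valuation $v$, so there are no lc places at all. Hence the only weakly special test configuration is the trivial one (with $\cX_0 \cong X$), by the bijection in Theorem \ref{thm:CZ-lcplace} applied after reducing to integral central fiber, or directly: the induced filtration $\cF^\bullet$ of $H^0(X,mL)$ from Lemma \ref{l:lctestconfig}.1 is the trivial filtration since the intersection is over an empty set of constraints once we know $\cX_0$ is reduced and irreducible with $v_{\cX_0}$ trivial.

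Second, I would handle the case where $X$ is not normal by passing to the normalization. If $(X,\Delta+D)$ is klt, then in particular it is normal (klt pairs have normal underlying scheme), so this case does not actually arise; I should remark that the hypothesis ``klt boundary polarized CY pair'' forces $X$ normal, which simplifies matters considerably and means no gluing theory is needed here.

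Third, from triviality of the degeneration $(X,\Delta+D) \rightsquigarrow (X_0,\Delta_0+D_0)$ we conclude $(X_0,\Delta_0+D_0) \cong (X,\Delta+D)$; in particular $(X_0,\Delta_0+D_0)$ is itself klt. Now run the same argument on the other side: the weakly special test configuration $(X',\Delta'+D') \rightsquigarrow (X_0,\Delta_0+D_0)$ has $(X_0, \Delta_0 + D_0)$ klt as its central fiber. Here I would invoke Proposition \ref{p:torusrk+1}: if the degeneration were nontrivial, then $\dim \Aut(X',\Delta'+D') < \dim \Aut(X_0,\Delta_0+D_0)$, but also klt-ness is an open condition on the generic fiber of a family of CY pairs, so $(X',\Delta'+D')$ would be klt as well, and then the previous paragraph's argument shows the test configuration of $(X',\Delta'+D')$ with central fiber $(X_0,\Delta_0+D_0)$ must be trivial, forcing $(X',\Delta'+D') \cong (X_0,\Delta_0+D_0) \cong (X,\Delta+D)$. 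Alternatively, and more cleanly: since a weakly special test configuration $(\cX',\Delta_{\cX'}+\cD')$ of $(X',\Delta'+D')$ with central fiber klt has, by inversion of adjunction / openness of klt (e.g. via Lemma \ref{l:slcadj} applied to $(\cX', \Delta_{\cX'}+\cD'+\cX'_0)$), that the generic fiber $(X',\Delta'+D')$ is klt too, reducing to the already-treated situation.

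The main obstacle I anticipate is making the reduction to integral central fiber rigorous and correctly citing the non-normal bookkeeping — but since klt forces $X$ normal, this obstacle largely evaporates; the real content is just (a) observing klt $\Rightarrow$ no lc places $\Rightarrow$ only trivial weakly special test configuration, via Theorem \ref{thm:CZ-lcplace} and Lemma \ref{l:lctestconfig}, and (b) the ``openness of klt'' step showing that if a weakly special test configuration has klt central fiber then its generic fiber is klt, which follows from Lemma \ref{l:slcadj} together with inversion of adjunction. I would write the proof in roughly the following compressed form: reduce to $X, X'$ normal (automatic); note that any weakly special test configuration of a klt pair is trivial since $\LC(X,\Delta+D) = \emptyset$ by klt-ness and Theorem \ref{thm:CZ-lcplace} (after passing to an irreducible component of $\cX_0$); deduce $(X_0,\Delta_0+D_0) \cong (X,\Delta+D)$ is klt; then observe the test configuration $(\cX',\Delta_{\cX'}+\cD')$ of $(X',\Delta'+D')$ has klt central fiber, hence klt generic fiber by openness, hence is also trivial; conclude $(X',\Delta'+D') \cong (X_0,\Delta_0+D_0) \cong (X,\Delta+D)$.
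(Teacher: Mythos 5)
Your proof is correct, and its overall skeleton matches the paper's: both arguments reduce to the claim that a weakly special test configuration whose fiber over $0$ or over $1$ is klt must be trivial, and both use openness of klt singularities to pass from "klt special fiber" to "klt general fiber." Where you differ is in how you prove triviality for a test configuration with klt fiber over $1$: you argue valuatively, using Lemma \ref{l:lctestconfig} to see that every component $E\subset\cX_0$ gives $v_E\in\LC(X,\Delta+D)=\{0\}$, so the induced filtration is trivial and the test configuration is determined by it; the paper instead applies the uniqueness-of-extensions statement Lemma \ref{l:CYbirational} to the birational map $(\cX,\Delta_\cX+\cD)\dashrightarrow(\cX_1,\Delta_{\cX_1}+\cD_1)\times\bA^1$, i.e.\ separatedness in the KSBA style. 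Both routes are legitimate with the tools already in the paper; yours makes the "klt $\Rightarrow$ no lc places $\Rightarrow$ no degenerations" mechanism explicit, while the paper's is shorter and avoids any discussion of the filtration. Two small points to tighten: your assertion that $\cX$ is normal does need a word of justification (it follows because $\cX_0$ is reduced and $X$ is normal, e.g.\ \cite[Prop.~2.6.iv]{BHJ17}, or because a nonempty conductor of $\cX$ would lie in $\Supp(\cX_0)$ and violate the pair condition for $(\cX,\Delta_\cX+\cD+\cX_0)$); and the phrase "the intersection is over an empty set of constraints" is not right — the intersection in Lemma \ref{l:lctestconfig}.1 runs over the (nonempty) set of components of $\cX_0$, but each constraint is vacuous because each $v_E$ is the trivial valuation. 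The appeal to Proposition \ref{p:torusrk+1} in your third step is unnecessary, as you yourself note; the openness argument alone suffices.
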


\begin{proof}
The proposition is a consequence of the following claim: If $(\cX,\Delta_{\cX}+\cD)$ is a test configuration of a boundary polarized CY pair such that   the fiber  over   $0\in \bA^1$ or  $1\in \bA^1$ is klt,  then the test configuration is trivial.

To verify the claim, note that if the fiber over $0$ is klt, then so is the fiber over $1$ by the openness of klt singularities.
If the fiber over $1$  is klt, then Lemma \ref{l:CYbirational}.3 applied to $(\cX,\Delta_{\cX}+\cD)\dashrightarrow (\cX_1,\Delta_{\cX_1}+\cD_1) \times \bA^1$ implies the test configuration is trivial.
\end{proof}

We now  prove Corollary \ref{c:sequiv}, which  states that any
two degenerations of a family of boundary polarized CY pairs over a punctured curve are S-equivalent.

\begin{proof}[Proof of Corollary \ref{c:sequiv}]
Let $R:= \cO_{C,0}$ and consider the $\bG_m$-equivariant family of  boundary polarized CY pairs
$(\cX^\circ,\Delta_{\cX^\circ}+\cD^\circ)\to \Spec(R[s,t]/(st-\pi))\setminus 0$ constructed by gluing 
\[
(X,\Delta+D)\times \bG_m \to \Spec(R)\times \bG_m
\quad \text{ and } \quad 
(X',\Delta'+D')\times \bG_m \to \Spec(R)\times \bG_m
\] 
along $\Spec(K)\times \bG_m$.
By Theorem \ref{t:Scomplete}, the family extends to a $\bG_m$-equivariant family of  boundary polarized CY 
pairs  $(\cX,\Delta_{\cX}+\cD)\to \Spec(R[s,t]/(st-\pi))$.
The restrictions of the extension to $\{s=0\}$ and $\{t=0\}$ are test configurations degenerating
\[
(X_0,\Delta_0+D_0) \rightsquigarrow (\cX,\Delta_{\cX}+\cD)_{s=t=0} \leftsquigarrow (X',\Delta'_0+D'_0)
\]
Thus the pairs are S-equivalent.
\end{proof}

\subsection{Stability}

Motivated by the definition of K-stability, it is natural to introduce the following definition.

\begin{defn}\label{def:polystable}
A boundary polarized CY pair  $(X,\Delta+D)$ is called
\begin{enumerate}
	\item  \emph{polystable} if every weakly special test configuration $(\cX,\Delta_{\cX}+\cD)$ of $(X,\Delta+D)$ is a product, i.e. it satisfies $(\cX_0,\Delta_{\cX_0}+\cD_0) \cong(X,\Delta+D)$, and
	\item \emph{stable} if every weakly special test configuration  $(\cX,\Delta_{\cX}+\cD)$ of $(X,\Delta+D)$ 
	is trivial.
\end{enumerate}	
\end{defn}

Semistability is not defined above, since it is natural to call every boundary polarized CY pair \emph{semistable}.
The above two notions have the following stacky interpretation.

\begin{prop}
A pair $x=[(X,\Delta+D)]$ in $ \cM(\chi,N,{\bf r})(\bk)$ is
\begin{enumerate}
	\item  polystable if and only if the point $x$ is closed, and
	\item  stable if and only if the point $x$ is closed and has finite stabilizer group.
\end{enumerate}
\end{prop}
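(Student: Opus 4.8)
The plan is to translate each notion in Definition \ref{def:polystable} into a statement about the orbit of the point $x$ in the quotient-stack presentation of $\cM(\chi,N,\mathbf{r})$, using the correspondence between test configurations and maps $[\mathbb{A}^1/\mathbb{G}_m]\to \cM(\chi,N,\mathbf{r})$ recorded in the "Stacky interpretation" remark. Concretely, by the proof of Theorem \ref{t:stack} we may choose an open substack $\cU\subset \cM(\chi,N,\mathbf{r})$ containing $x$ with $\cU\cong [Z/G]$ for a finite type $\bk$-scheme $Z$ and an affine algebraic group $G$; pick a lift $z\in Z$ of $x$. A weakly special test configuration of $(X,\Delta+D)$ with special fiber $(X_0,\Delta_0+D_0)$ is exactly (the data underlying) a map $[\mathbb{A}^1/\mathbb{G}_m]\to \cM(\chi,N,\mathbf{r})$ sending $1\mapsto x$, and by the Hilbert–Mumford–type correspondence for quotient stacks (see \cite[\S3]{AHLH18}) such maps correspond, after possibly enlarging the linear representation in which $Z$ sits, to one-parameter subgroups $\lambda\colon\mathbb{G}_m\to G$ together with the limit $\lim_{s\to 0}\lambda(s)\cdot z$; the special fiber corresponds to the point $\lim_{s\to 0}\lambda(s)\cdot z\in Z$.

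For (1), first note that $\Aut(X,\Delta+D)\cong\operatorname{Stab}_G(z)$ is linearly reductive by Theorem \ref{t:AutTorus}. If $x$ is closed in $\cM(\chi,N,\mathbf{r})$, equivalently $G\cdot z$ is closed in $Z$, then for any one-parameter subgroup $\lambda$ the limit $z_0:=\lim_{s\to 0}\lambda(s)\cdot z$ lies in $\overline{G\cdot z}=G\cdot z$, and since the $\mathbb{G}_m$-action via $\lambda$ fixes $z_0$, the classical fact (a special case of \cite[Lem.~3.24]{AHLH18} / Hilbert–Mumford for reductive stabilizers) gives an element $g\in G$ with $g\cdot z_0\in\operatorname{Stab}_G(z)\cdot z$, so $(X_0,\Delta_0+D_0)\cong(X,\Delta+D)$; hence every weakly special test configuration is a product and $(X,\Delta+D)$ is polystable. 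Conversely, if $x$ is not closed, pick a closed point $x_0\in\overline{\{x\}}\setminus\{x\}$; by Lemma \ref{l:sequiv}(1) and its proof (using that $x_0$ has linearly reductive stabilizer), the specialization $x\rightsquigarrow x_0$ is realized by a map $\Theta\to\cM(\chi,N,\mathbf{r})$, i.e.\ a weakly special test configuration with special fiber $(X_0,\Delta_0+D_0)\not\cong(X,\Delta+D)$, so it is not a product and $(X,\Delta+D)$ is not polystable.

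For (2), combine (1) with an analysis of finiteness of the stabilizer. If $x$ is closed with finite stabilizer, then by (1) every weakly special test configuration is a product, so it suffices to rule out nontrivial products: a nontrivial product test configuration gives a nontrivial one-parameter subgroup $\lambda\colon\mathbb{G}_m\to\Aut(X,\Delta+D)\cong\operatorname{Stab}_G(z)$, contradicting finiteness; hence every weakly special test configuration is trivial and $(X,\Delta+D)$ is stable. Conversely, if $(X,\Delta+D)$ is stable it is in particular polystable, so $x$ is closed by (1); and if $\operatorname{Stab}_G(z)$ were infinite, then since it is a torus by Theorem \ref{t:AutTorus} it would contain a nontrivial $\mathbb{G}_m$, giving a nontrivial product test configuration and contradicting stability, so $x$ has finite stabilizer.

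The main obstacle I anticipate is the bookkeeping in the forward direction of (1): carefully extracting, from a weakly special test configuration of a pair whose automorphism group is merely linearly reductive (a torus) rather than finite, the statement that the limit point is $G$-conjugate into the original orbit. This is exactly the content of \cite[Lem.~3.24]{AHLH18} (and the discussion of Hilbert–Mumford for reductive stabilizers there), together with the already-established facts that $\cM(\chi,N,\mathbf{r})$ is locally a quotient stack (Theorem \ref{t:stack}) and that stabilizers are tori (Theorem \ref{t:AutTorus}); so the proof is essentially an assembly of these inputs, and I would be careful to phrase the orbit-closure argument so that no separatedness or good-moduli-space hypothesis on $\cM(\chi,N,\mathbf{r})$ is implicitly used.
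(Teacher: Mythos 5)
Your proposal is correct and follows essentially the same route as the paper: both directions of (1) reduce to Theorem \ref{t:AutTorus} (stabilizers are tori, hence reductive) plus \cite[Lem.~3.24]{AHLH18} to realize specializations of $\bk$-points by maps $\Theta\to\cM(\chi,N,{\bf r})$, and (2) follows by identifying nontrivial product test configurations with nontrivial one-parameter subgroups of $\Aut(X,\Delta+D)$. The only difference is that your forward direction of (1) invokes the quotient-stack Hilbert--Mumford correspondence, which is more than is needed --- if $x$ is closed then the special fiber of any weakly special test configuration is a point of $\overline{\{x\}}=\{x\}$, so it is isomorphic to $(X,\Delta+D)$ with no appeal to one-parameter subgroups.
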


\begin{proof}
Since the stabilizers of points in $\cM(\chi, N,{\bf r})(\bk)$ are reductive by Theorem \ref{t:AutTorus},
\cite[Lem. 3.24]{AHLH18} implies that any specialization $x\rightsquigarrow x_0$ of $\bk$-points is given by a morphism $\Theta\to \cM(\chi,N,{\bf r})$.
Therefore (1) holds. 

Statement (2) follows from (1) and the following observation.  By Theorem \ref{t:AutTorus}, the group $\Aut^0(X,\Delta+D)$ is trivial if and only if there are no non-trivial maps to $\bG_m\to \Aut(X,\Delta+D)$.
In addition, the latter holds if and only if there are no non-trivial weakly special test configurations $(\cX,\Delta_\cX+\cD)$ of $(X,\Delta+D)$ with $(X_0,\Delta_0+D_0) \cong (X,\Delta+D)$. 
\end{proof}

\begin{rem}
An unusual feature of our moduli problem is that there can be S-equivalence classes that are unbounded (see Example \ref{e:unbounded} and Theorem \ref{t:TypeIIISequiv}). Such S-equivalence classes will not admit polystable representatives.
\end{rem}

\begin{thm}
A boundary polarized CY pair $(X,\Delta+D)$ is stable if and only if it is klt. 
\end{thm}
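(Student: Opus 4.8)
The statement to prove is: a boundary polarized CY pair $(X,\Delta+D)$ is stable (in the sense of Definition \ref{def:polystable}) if and only if it is klt. I will prove both implications, using the theory of weakly special test configurations and lc places of complements developed above, together with the relationship between triviality of test configurations and the klt condition established in Lemma \ref{l:CYbirational}.3.

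\textbf{($\Leftarrow$) If $(X,\Delta+D)$ is klt, then it is stable.} This is essentially contained in the proof of Proposition \ref{p:sequivklt}. Let $(\cX,\Delta_{\cX}+\cD)$ be any weakly special test configuration of $(X,\Delta+D)$. Since the fiber over $1\in\bA^1$ is $(X,\Delta+D)$, which is klt, the pair $(\cX,\Delta_{\cX}+\cD)\to\bA^1$ is a family of klt pairs near the generic fiber, and since $(\cX,\Delta_{\cX}+\cD)\dashrightarrow(\cX_1,\Delta_{\cX_1}+\cD_1)\times\bA^1 = (X,\Delta+D)\times\bA^1$ is an isomorphism over $\bA^1\setminus 0$, Lemma \ref{l:CYbirational}.3 (applied with the two-dimensional base $\bA^1\times\Spec(\bk[t])$, or rather its localization; more precisely one localizes at the generic point of $\cX_0$ and applies the $\dim B = 1$ case of Lemma \ref{l:CYbirational}.2 after noting klt-ness of $(X,\Delta+D)$ forces irreducibility and klt-ness of $\cX_0$) implies the test configuration is trivial. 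Hence every weakly special test configuration of $(X,\Delta+D)$ is trivial, so $(X,\Delta+D)$ is stable.

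\textbf{($\Rightarrow$) If $(X,\Delta+D)$ is stable, then it is klt.} I argue the contrapositive: suppose $(X,\Delta+D)$ is not klt; I will construct a non-trivial weakly special test configuration. Since $(X,\Delta+D)$ is slc but not klt, there is a divisorial valuation $v\in\LC(X,\Delta+D)$ (either an lc place on one of the normalized components $(\oX_i,\oG_i+\oDe_i+\oD_i)$, or one can take $v$ to be induced by a node of $X$, i.e. a component of the conductor). In either case I want to feed this lc place into the machinery producing test configurations. If $X$ is normal and $v$ is a divisorial lc place of $(X,\Delta+D)$, then Theorem \ref{thm:CZ-lcplace} (or Proposition \ref{p:lcplacestestconfig}) directly produces a weakly special test configuration $(\cX,\Delta_{\cX}+\cD)$ with $v_{\cX_0}=v$; it is non-trivial because $v$ is non-trivial, i.e. $\cX_0\not\cong X\times\{0\}$ (the associated Rees filtration is non-constant since $v$ is not the trivial valuation — an lc place of a non-klt pair is a genuine divisorial valuation with positive discrepancy defect). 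The main technical point is handling the non-normal case and the case where the only lc places come from the conductor: here one passes to the normalization $(\oX,\oG+\oDe+\oD)$, produces an equivariant test configuration of the normalization using lc places, and glues back using Koll\'ar's gluing theory exactly as in Proposition \ref{p:bpcygluing} — one must check the involution $\tau$ of $\oG^n$ extends compatibly, which follows since an lc place supported on (or compatible with) the conductor is $\tau$-invariant by construction. A cleaner route, which I expect to use, is: since $(X,\Delta)$ is a log Fano pair and $(X,\Delta+D)$ is slc but not klt, the pair $(X,\Delta+D)$ has a non-klt center, and one can run the relevant degeneration directly — e.g. using the deformation to the normal cone of a non-klt center, or invoking that the moduli point $x=[(X,\Delta+D)]$ in $\cM(\chi,N,{\bf r})$ is not closed (it admits a degeneration to a pair with strictly larger automorphism group by Proposition \ref{p:torusrk+1}, since any non-klt slc pair degenerates non-trivially). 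Either way, the existence of a non-trivial weakly special test configuration contradicts stability.

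\textbf{Main obstacle.} The hard part is the non-normal case: producing a non-trivial weakly special test configuration of the (possibly non-normal) slc pair $(X,\Delta+D)$ from the mere failure of klt-ness, when the non-klt locus is concentrated on the conductor. The tools are all in place — lc places of complements (Theorem \ref{thm:CZ-lcplace}), the gluing construction (Proposition \ref{p:bpcygluing}), and equivariance of test configurations under the relevant involution — but assembling them requires care that the chosen lc place on $\oX^n$ is $\tau$-compatible so that the resulting test configuration of $\oX$ descends to one of $X$, and that the descended test configuration is genuinely non-trivial (i.e. $\cX_0\not\cong X$). I would handle this by first reducing to the normal case via the normalization, choosing the lc place to be $\tau$-invariant (which is automatic if it lies over the conductor, by symmetry of the gluing data), then descending. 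The verification that non-triviality is preserved under descent is straightforward since the normalization of the special fiber is the gluing of a non-trivial degeneration.
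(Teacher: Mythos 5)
Your overall strategy coincides with the paper's: the klt direction via Lemma \ref{l:CYbirational} (exactly as in the claim inside the proof of Proposition \ref{p:sequivklt}), the normal non-klt case via Theorem \ref{thm:CZ-lcplace}, and the non-normal case by degenerating the normalization along an lc place and gluing back with Proposition \ref{p:bpcygluing}. The first two pieces are fine. But the non-normal case --- which you correctly flag as the main obstacle --- is not actually closed by what you write, and that is where the content of the theorem lies.

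The specific gap is your assertion that the involution $\tau$ of $\oG^n$ extends to the degeneration ``automatically \ldots by symmetry of the gluing data.'' This is not automatic. To invoke Proposition \ref{p:bpcygluing} you need an involution of $\ocG^n$, where $\ocG$ is the flat limit of the conductor inside the test configuration $\ocX$ of the normalization. For a general ($\tau$-symmetric) lc place there is no reason the birational map $\oG^n\times(\bA^1\setminus 0)\dashrightarrow \ocG^n$ extends to an isomorphism, and without such an identification you cannot transport $\tau$. The paper resolves this by a specific choice: the filtration $\cF^\la S_m=\{s\in S_m : \mathrm{div}_{\oX}(s)\geq \la\oG\}$ by order of vanishing along the conductor itself (whose components are lc places of $(\oX,\oG+\oDe+\oD)$, so Proposition \ref{p:lcplacestestconfig} applies), and then an explicit Rees-algebra computation --- using that the ideal of $\ocG$ in each degree is $\cF^{1}S_m=I_m$ --- showing $\ocG\cong\oG\times\bA^1$ equivariantly, so that $\tau\times\mathrm{id}$ is the required extension. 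Some such concrete verification is unavoidable and is absent from your proposal; one must also still check that the glued family carries a $\bG_m$-action (the paper argues as in Proposition \ref{p:TequivariantTC}). Separately, the ``cleaner route'' you float is circular: ``any non-klt slc pair degenerates non-trivially'' is precisely the statement being proved, and Proposition \ref{p:torusrk+1} only compares automorphism groups once a non-trivial weakly special degeneration is already in hand --- it does not produce one. So: same skeleton as the paper, but the decisive step (identifying the degenerate conductor and descending the test configuration to the non-normal pair) is asserted rather than proven.
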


\begin{proof}
If $X$ is normal, then the statement follows immediately from Theorem \ref{thm:CZ-lcplace}.
It remains to show that a non-normal boundary polarized CY pair is not stable. 

Let $(\overline{X},\overline{G}+\overline{\Delta}+\overline{D})$ denote the normalization of a non-normal boundary polarized CY pair $(X,\Delta+D)$.
Fix an integer $r>0$ such that   $\overline{L}:=-r(K_{\overline{X}}+\overline{G}+\overline{\Delta})$ is a Cartier divisor.
Consider the filtration $\cF^\bullet$ of 
$
S:= \bigoplus_{m \in \bN} S_m : = \bigoplus_{m \in \bN} H^0(\overline{X}, m \overline{L})
$
defined by  
\[
\cF^\la S_m := \{s \in S_m \, \vert\, {\rm div}_{\overline{X}}(s ) \geq \la \overline{G}\}.
\]
By Proposition \ref{p:lcplacestestconfig}, there exists a test configuration $(\ocX, \ocG+\oDe_{\ocX}+\ocD)$ of $(\oX,\oG+\oDe+\oD)$ such that  
$
\cX:= \Proj( \bigoplus_{m \in \bN} \bigoplus_{\la \in \bZ} \cF^\la S_m)
$
and the isomorphism $\cX\vert_{\bA^1\setminus 0} \cong X\times(\bA^1\setminus 0)$ is induced by the isomorphism of graded rings
$
\Big( \bigoplus_{m \in \bN} \bigoplus_{\la \in \bZ} \cF^\la S_m \Big) \bigotimes_{k[t]} k[t^{\pm 1}]\cong \bigoplus_{m \in \bN} \bigoplus_{\la \in \bZ} S_m t^{-m}
.
$

We claim that the $\bG_m$-equivariant birational map 
$
\oG \times \bA^1 \dashrightarrow  \ocG
$
is an isomorphism.
To verify the claim, note that  the closed subscheme $\ocG\subset \ocX$ is defined by the homogeneous ideal 
\[
\bigoplus_{m\in \bN} \bigoplus_{\la \in \bZ} (\cF^\la S_m \cap I_{m} )t^{-\la}
\subset 
\bigoplus_{m\in \bN} \bigoplus_{\la \in \bZ} \cF^\la S_m t^{-\la},
\] 
where $I_{m} := H^0(\oX,m\overline{L} \otimes \cI_{\oG})\subset S_m$; see \cite[2.3.1]{BX19}.
Using that  $I_{m} = \cF^1 S_m$, we see
\[
\ocG \cong  \Proj\Big( \bigoplus_{m \in \bN} \bigoplus_{\la \leq 0}  (S_m / I_{m}) t^{-\la}\Big)
\cong 
\oG\times \bA^1,
\]
where the isomorphisms are $\bG_m$-equivariant. Thus the claim holds.

We will now construct a non-trivial test configuration of $(X,\Delta+D)$ by gluing.
Consider the involution $\tau:\oG^n\to \oG^n$.
By the previous claim, 
the involution $\tau \times {\rm id} :\oG^n\times (\bA^1\setminus 0)\to \oG^n\times(\bA^1\setminus 0)$
extends to an involution $\tau: \ocG^n\to \ocG^n$.
Thus Proposition \ref{p:bpcygluing} implies that 
$(X,\Delta+D)\times (\bA^1 \setminus 0)$
extends to a family of boundary polarized CY pairs $(\cX,\Delta_{\cX}+\cD)$ whose normalization is the extension $(\ocX,\ocG+\oDe_{\ocX}+\ocD)$.
Arguing as in the proof of Proposition \ref{p:TequivariantTC}, the $\bG_m$-action on $X\times(\bA^1\setminus 0)$ extends to a $\bG_m$-action on $\cX$.
Thus $(\cX,\Delta_{\cX}+\cD)$  is a test configuration of $(X,\Delta+D)$. 
Since $(\ocX, \ocG+\oDe_{\ocX}+\ocD)$ is non-trivial,  $(\cX,\Delta_{\cX}+\cD)$ is non-trivial and so $(X,\Delta+D)$ is not stable. 
\end{proof}

\begin{rem}
One can also consider the K-(poly/semi)stability of a boundary polarized CY pair $(X,\Delta+D)$ with respect to the polarization $L:= -K_{X}-\Delta$.
\begin{itemize}
	\item By \cite{Oda13Slope,OdakaSun15}, the above pair $(X,\Delta+D)$ is always K-semistable with respect to $L$, since the singularities are slc and $K_{X}+\Delta+D\equiv 0$. 
	\item The notion of K-polystability  differs  from polystability as defined above, since  the definition of K-(poly/semi)stability takes into account test where where the degeneration of $(X,\Delta+D)$ is no long a boundary polarized CY pair.
	For example, the pairs in Theorem \ref{t:deg6ps} are polystable, but not klt and so not K-polystable by \cite[Thm. 1.3]{Oda20}.
\end{itemize}	
\end{rem}

\section{Properness}\label{s:properness}

In this section, we show that the moduli stack of boundary polarized CY pairs satisfies the existence part of the valuative criterion for properness.

\medskip
Throughout, let $R$ be a DVR essentially of finite type over $\bk$ with fraction field $K$ and residue field $\kappa$.

\begin{thm}[Properness]\label{thm:properness}
If  $(X_K,\Delta_K+D_K)$ is a boundary polarized CY pair over $K$, then 
there is a finite extension of DVRs $R'/R$ 
 such that the base change by $K' = \mathrm{Frac}(R')$ 
\[
(X_K,\Delta_K +D_K)\times_K K',\]
extends to a family of boundary polarized CY pairs $(X',\Delta'+D') \to \Spec(R')$.
\end{thm}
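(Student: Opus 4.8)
The plan is to reduce the statement to the combination of two inputs: the properness results for the moduli of stable (KSBA) pairs after perturbation, and Kollár--Xu's extension theorem for families of Calabi--Yau pairs \cite{KX20}, which was already alluded to in the discussion following Theorem \ref{t:main1}. First I would fix the generic boundary polarized CY pair $(X_K,\Delta_K+D_K)$, write $\oX_K := X_K$, and spread it out to a family over a punctured affine curve; the real content is at the puncture $\Spec(R)$. Since $-K_{X_K}-\Delta_K$ is ample, one can choose a rational number $0<\epsilon\ll 1$ and (after a finite extension of $K$) an auxiliary $\bQ$-divisor $0\le H_K\sim_\bQ D_K$ such that $(X_K,\Delta_K+(1-\epsilon)D_K+\epsilon H_K)$ is still klt-on-the-normalization with the correct coefficients, but more importantly so that $(X_K,\Delta_K+D_K+\delta H_K)$ is KSBA stable for $0<\delta\ll 1$, its log canonical divisor being $\bQ$-linearly equivalent to $\delta H_K$, which is ample.

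Next I would invoke properness of the moduli of KSBA stable pairs (via \cite{KX20} for the construction of projective components, or directly the valuative criterion for the stable pairs functor as in \cite{KolNewBook}): after a further finite base change $R'/R$, the family $(X_K,\Delta_K+D_K+\delta H_K)\times_K K'$ extends to a family of KSBA stable pairs $(X',\Delta'+D'+\delta H')\to\Spec(R')$ with $X'$ normal (normality of the total space is automatic since the generic fiber is normal and the special fiber is slc with normal generic point; more precisely one takes the KSBA limit, which has slc fibers, and then one may pass to the normalization only if needed for the gluing step at the end). The point of the perturbation is that the limit of the KSBA family is canonically determined and has the property that $(X'_0,\Delta'_0+D'_0)$ is slc with $K_{X'_0}+\Delta'_0+D'_0\equiv 0$: indeed $K_{X'/R'}+\Delta'+D'+\delta H'$ is relatively ample while $H'\sim_\bQ D'$ up to a vertical divisor, and running the argument of Lemma \ref{l:Ncompspecialize} / Lemma \ref{l:CYbirational} one deduces $K_{X'/R'}+\Delta'+D'\sim_{\bQ,R'}0$ and that $D'$ is ample over $R'$. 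One must also check the K-flatness and Kollár conditions in Definition \ref{d:familyNoetherian}, but over a DVR these follow from \cite[7.4.2]{KolNewBook} and Lemma \ref{lem:flatdiv}.

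The subtlety — and the step I expect to be the main obstacle — is that the KSBA limit of the \emph{perturbed} pair need not have the \emph{same} boundary structure as a limit of the unperturbed pair: the perturbation can change which model is chosen, and the limiting $D'$ might fail to be $\bQ$-Cartier or ample, or the limit might not be the ``right'' CY degeneration. The fix is the perturbation argument hinted at in the introduction: one shows that as $\delta\to 0$ the KSBA-stable models stabilize, so for $0<\delta\ll 1$ the model is independent of $\delta$ and its special fiber is a genuine boundary polarized CY pair. Concretely, after extracting one KSBA limit one runs a $(K_{X'/R'}+\Delta'+D')$-MMP (which is trivial since that divisor is already $\bQ$-trivial) together with the observation that ampleness of $D'$ on the special fiber follows from ampleness on the generic fiber plus the fact that $D'$ is relatively nef and big (being a limit of ample divisors) — here one uses \cite[Cor. 1.6]{HX16} exactly as in the proof of Proposition \ref{p:bpcygluing} to upgrade $K_{X'/R'}+\Delta'+D'\equiv_{R'}0$ to $\bQ$-linear triviality, and then $-K_{X'_0}-\Delta'_0\sim_\bQ D'_0$ is ample.

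Finally, in the non-normal case one normalizes $(X_K,\Delta_K+D_K)$, applies the normal case to each component of the normalization to get an extension of $(\oX_K,\oG_K+\oDe_K+\oD_K)$, checks that the gluing involution $\tau_K$ of $\oG_K^n$ extends over $\Spec(R')$ after possibly a further finite base change (this is a separatedness statement: the two possible extensions of $\tau$ agree generically, and by Lemma \ref{l:CYbirational}.2 applied to the family $(\oG^n,\mathrm{Diff})\to\Spec(R')$ they agree), and then reglues via Proposition \ref{p:bpcygluing} to produce $(X',\Delta'+D')\to\Spec(R')$. This completes the verification of the existence part of the valuative criterion; the stronger statement referred to in Remark \ref{r:rootstack} — that no root-stack base change is needed beyond the finite extension $R'/R$ — comes for free from this construction since the KSBA moduli stack is already separated and Deligne--Mumford-like enough that the limit is unique once $R'$ is chosen.
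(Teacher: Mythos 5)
Your overall strategy (perturb by an auxiliary ample $H_K$, invoke the Koll\'ar--Xu extension/KSBA properness, then handle the non-normal case by normalizing and regluing) is the same as the paper's, and you correctly identify the main obstacle: the limit $D'$ of the polarizing boundary need not be $\bQ$-Cartier or ample. But your proposed fix for that obstacle does not work. You perturb to $(X_K,\Delta_K+D_K+\delta H_K)$ and vary the KSBA parameter $\delta$; on a common model for two values $\delta_i\neq\delta_j$ this only tells you that $(\delta_i-\delta_j)H'$, hence $H'$ and $K_{X'}+\Delta'+D'$, are $\bQ$-Cartier --- it never separates $D'$ from $K_{X'}+\Delta'$. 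The assertion that $D'$ is ``relatively nef and big, being a limit of ample divisors'' is not a valid argument: the flat limit of a $\bQ$-Cartier ample Weil divisor can simply fail to be $\bQ$-Cartier on the degenerate fiber, and even a nef and big $\bQ$-Cartier divisor need not be ample. What the paper does instead (Proposition \ref{prop:propernessaux}) is interpolate: it applies Proposition \ref{prop:KXD} to the \emph{CY} pairs $(X_K,\Delta_K+(1-\vep)D_K+\vep H_K)$ for a sequence $\vep_i\to 0$, shows via a pigeonhole argument on the contracted components of $Y_\kappa$ (plus the Proj-of-the-section-ring identification) that the resulting models $X^{\vep_i}$ all coincide, and then subtracts the two relations $K_X+\Delta+(1-\vep_i)D+\vep_iH\sim_{\bQ,R}0$ and $K_X+\Delta+(1-\vep_j)D+\vep_jH\sim_{\bQ,R}0$ to conclude $D\sim_{\bQ}H$ on the common model; since $H$ is $\bQ$-Cartier and ample there, so is $D$. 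This subtraction of two log-trivial relations on the \emph{same} total space is the step your argument is missing, and varying $\delta$ alone cannot substitute for it.

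There is a second, smaller gap in your treatment of the non-normal case. To extend the involution $\tau_{K'}$ of $\oG_{K'}^n$ over $\Spec(R')$ you appeal to Lemma \ref{l:CYbirational}.2, but over a one-dimensional base that lemma requires the special fiber of the conductor family to be klt, which need not hold (the two-dimensional statement, Lemma \ref{l:CYbirational}.3, is not available here). The paper sidesteps this by not regluing by hand: it extends the non-normal perturbed pair $(X_{K'},\Delta_{K'}+(1-\vep)D_{K'}+(\vep+\delta)H_{K'})$ directly as a family of KSBA stable pairs using \cite[Thm.\ 11.38]{KolNewBook}, whose normalization is the family already constructed, and then recovers the CY structure via abundance \cite[Cor.\ 1.4]{HX16}. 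Finally, the root-stack refinement of Remark \ref{r:rootstack} does not ``come for free'': the paper obtains it by choosing a $\bmu_d$-equivariant resolution in the semistable reduction step and using properness of the (finite) inertia of the KSBA stack to descend the $\bmu_d$-action.
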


\begin{rem}\label{r:rootstack}
In fact the proof shows we can take $R' = R[\sqrt[d]{\pi}]$, where $\pi$ is a uniformizer of $R$. Phrased another way, $(X_K, \Delta_K + D_K)$ extends to a family over a root stack of $\operatorname{Spec} R$. This stronger version of the existence part of the valuative criterion of properness holds in particular for proper Deligne--Mumford stacks and has had several applications in the literature \cite{AB192, BV}. 
\end{rem}

To prove the theorem, 
we use a perturbation argument to reduce to a result of Koll\'ar and Xu, 
which states that the moduli stack of slc pairs $(X,\Delta+\vep H)$, where $(X,\Delta)$ is a  CY pair and $H$ is an ample Weil divisor, is proper \cite{KX20}.
More precisely, we use the following technical statement shown in the proof of \cite[Lem. 4]{KX20}.

\begin{prop}\label{prop:KXD}
Let  $(X_K, \Gamma_K)$ be an lc CY pair over $K$ and $H_K$ an ample $\Q$-Cartier $\Q$-divisor on $X_K$. Assume the following hold:
\begin{enumerate}
\item $(X_K, \Gamma_K+\delta H_K)$ is lc for $0<\delta\ll 1$. 
\item There exists a log resolution  $f:Y_K\to X_K$  of $(X_K,\Gamma_K+H_K)$ and an extension of $Y_K$ to a flat projective family $Y \to \Spec(R)$
such that 
$(Y,B+Y_\kappa)$
is dlt, where $B$ is the closure of  $\Supp(f_{*}^{-1}(\Gamma_K+H_K)) +\Exc(f)$ in $Y$.
\end{enumerate}

Then  $(X_K, \Gamma_K)$  extends to   a family of   CY pairs $(X,\Gamma)\to \Spec(R)$  such that 
\begin{enumerate}
\item[(3)] $(X,\Gamma+\delta H)\to \Spec(R)$ is a family of KSBA stable pairs for $0<\delta\ll1$.
\item[(4)] $Y\dashrightarrow X$ is a birational contraction. 
\end{enumerate}
\end{prop}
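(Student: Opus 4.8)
The plan is to reduce Proposition \ref{prop:KXD} to the existing argument in \cite[Lem. 4]{KX20} (and the minimal model theory for dlt pairs of \cite{HX13}), treating the extra divisor $\delta H$ as a small perturbation of the CY boundary. First I would run the MMP for the dlt pair $(Y, B+Y_\kappa)$ relative to $\Spec(R)$. Since $(X_K,\Gamma_K)$ is an lc CY pair, on the generic fiber $K_{Y_K}+B_K \sim_{\bQ,\Q} 0$ up to the exceptional and $H$-components, so after choosing coefficients carefully the pair $(Y, B^{\flat}+Y_\kappa)$ — where $B^{\flat}$ is $B$ with the strict transform of $H_K$ and the exceptional divisors assigned suitable small coefficients so that $K_Y+B^{\flat}$ is relatively big over $\Spec R$ — has a good minimal model by \cite[Cor. 1.6]{HX13}, exactly as invoked in the finite generation argument (Theorem \ref{t:fg}). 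Running this MMP contracts precisely the divisors that are $f$-exceptional or have vanishing log discrepancy with respect to $(X_K,\Gamma_K)$, and produces a family $(X,\Gamma+\delta H)\to \Spec(R)$; the morphism $Y\dashrightarrow X$ is a birational contraction by construction, giving (4).

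Next I would verify that the output family has the claimed structure. By construction the minimal model $(X,\Gamma+\delta H)$ satisfies that $K_{X/R}+\Gamma+\delta H$ is relatively semiample and big for $0<\delta\ll 1$; taking the ample model over $\Spec R$ (which exists by the good minimal model property) gives a family where $K_{X/R}+\Gamma+\delta H$ is relatively ample, i.e. a family of KSBA stable pairs, proving (3). That $(X,\Gamma)\to\Spec R$ is itself a family of CY pairs then follows from Lemma \ref{l:slcadj}: the pair $(X,\Gamma+X_\kappa)$ is lc because it is crepant to $(Y,B^{=1}+Y_\kappa)$ under the contraction, and $K_{X/R}+\Gamma\sim_{\bQ,R}0$ because this holds on the generic fiber (as $(X_K,\Gamma_K)$ is CY) and $X_\kappa$ is the only vertical divisor, so the argument of Lemma \ref{l:Ncompspecialize} applies. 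The assumption (1) that $(X_K,\Gamma_K+\delta H_K)$ is lc for small $\delta$ is exactly what guarantees that the MMP with the perturbed boundary does not change the log discrepancies of the non-perturbed CY structure, so $(X,\Gamma)$ stays lc and the fibers remain slc.

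The main obstacle will be bookkeeping the coefficients so that the single MMP simultaneously (a) stays within the dlt/lc regime, (b) contracts exactly the right divisors — namely the $f$-exceptional ones and the lc places of $(X_K,\Gamma_K)$ not already on $X_K$ — and (c) terminates with a model on which $\delta H$ is relatively ample rather than merely nef. One must be careful that the strict transform of $H_K$ is \emph{not} contracted (it is needed to make $K_X+\Gamma+\delta H$ ample), which is automatic since $H_K$ is ample on $X_K$ and hence not contracted on the generic fiber, and a vertical component cannot be created. The cleanest route is to cite the relevant portion of the proof of \cite[Lem. 4]{KX20} essentially verbatim — it is carried out there in precisely this generality — and simply note that hypotheses (1) and (2) are what that proof requires as input; the only genuinely new point here is the explicit separation of the statement into the form (3)+(4), which is already implicit in \emph{loc. cit.}
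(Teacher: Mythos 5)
The paper does not actually prove this proposition: it is stated as a technical result extracted verbatim from the proof of \cite[Lem.~4]{KX20}, and your proposal---sketching the dlt-model MMP via \cite{HX13} and then deferring to that proof---is exactly the same approach. The minor imprecisions in your sketch (the claim about which divisors the MMP contracts, the ``crepant'' identification of $(X,\Gamma+X_\kappa)$ with the dlt model rather than a discrepancy inequality, and the need for a \emph{single} model $X$ to work for all small $\delta$ simultaneously, which follows from convexity of ampleness once one knows $K_{X/R}+\Gamma\sim_{\bQ,R}0$) are immaterial here, since the paper itself supplies no more detail than the citation.
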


Before proving Theorem  \ref{thm:properness}, we prove the following auxiliary statement, which we will deduce from Proposition \ref{prop:KXD}.

\begin{prop}\label{prop:propernessaux}
Let  $(X_K,\Delta_K+D_K)$ be an lc boundary polarized CY pair over $K$ and $0 \leq H_K  \sim_{\bQ} -K_{X_K}-\Delta_K$.

If conditions (1) and (2) of  Proposition \ref{prop:KXD} are  satisfied with 
$\Gamma_K:= \Delta_K+D_K $, then   
\[
(X_K, \Delta_K+ D_K)
\] extends to  a family of boundary polarized CY pairs
$(X,\Delta+D)\to \Spec(R)
$
 such that 
$
(X,\Delta+(1-\vep) D+ (\vep+\delta) H)\to \Spec(R)
$
is a family of KSBA stable pairs for $0<\vep,\delta\ll1$.
\end{prop}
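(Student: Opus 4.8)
The plan is to apply Proposition \ref{prop:KXD} not just to $(\Gamma_K,H_K):=(\Delta_K+D_K,H_K)$ but also to the perturbed pairs $(\Gamma_K^\vep,H_K)$, where
\[
\Gamma_K^\vep:=\Delta_K+(1-\vep)D_K+\vep H_K ,
\]
for all $\vep$ in a small interval $[0,\vep_0]$, and then to argue that the resulting extensions of $X_K$ do not depend on $\vep$. First I would check that hypotheses (1) and (2) of Proposition \ref{prop:KXD} hold for $(\Gamma_K^\vep,H_K)$ whenever $0\le\vep\le\vep_0\ll1$. For (2) one uses the \emph{same} log resolution $f\colon Y_K\to X_K$ and extension $Y\to\Spec(R)$ as in the hypothesis for $(\Gamma_K,H_K)$: since $\Supp(\Gamma_K^\vep+H_K)=\Supp(\Delta_K+D_K+H_K)=\Supp(\Gamma_K+H_K)$, the divisor $B$ (the closure of $\Supp(f_*^{-1}(\,\cdot\,))+\Exc(f)$) is literally unchanged. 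For (1), note that $D_K$ and $H_K$ are $\bQ$-Cartier on $X_K$ (for $D_K$ this is Definition \ref{d:bpcy}(3); for $H_K$ it is because $-K_{X_K}-\Delta_K$ is $\bQ$-Cartier), so $K_{X_K}+\Gamma_K^\vep+\delta H_K$ is $\bQ$-Cartier; and since $(X_K,\Delta_K+D_K)$ and $(X_K,\Delta_K+D_K+\delta_1 H_K)$ are lc, convexity of the lc condition gives that $(X_K,\Delta_K+D_K+tH_K)$ is lc for $t\le\delta_1$, hence so is the subpair $(X_K,\Gamma_K^\vep+\delta H_K)$ for $\vep+\delta\le\delta_1$.

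Applying Proposition \ref{prop:KXD} for each $\vep\in[0,\vep_0]$ then produces a family of CY pairs $(X^{(\vep)},\Gamma^{(\vep)})\to\Spec(R)$ extending $(X_K,\Gamma_K^\vep)$, with $(X^{(\vep)},\Gamma^{(\vep)}+\delta H^{(\vep)})\to\Spec(R)$ a family of KSBA stable pairs for $0<\delta\ll1$, together with a birational contraction $Y\dashrightarrow X^{(\vep)}$. Writing $\Delta,D,H$ for the closures in $X^{(\vep)}$ of $\Delta_K,D_K,H_K$, we have $\Gamma^{(\vep)}=\Delta+(1-\vep)D+\vep H$.

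The heart of the argument — and what I expect to be the main obstacle — is to show that the extension $X^{(\vep)}\to\Spec(R)$ is independent of $\vep$, i.e.\ that there are isomorphisms $X^{(\vep)}\cong X:=X^{(0)}$ over $\Spec(R)$ restricting to the identity on $X_K$ and carrying the closures of $\Delta_K,D_K,H_K$ to one another. This should follow from the construction in \cite[Lem.~4]{KX20}: there $X^{(\vep)}$ is obtained as a relative canonical (equivalently, ample) model built from $Y\to\Spec(R)$, the reduced boundary on $Y$, and the polarization coming from $H_Y$ — all of which are independent of $\vep\in[0,\vep_0]$ — while the $\vep$-dependent small-coefficient (movable) part of $\Gamma_Y^{(\vep)}$ does not affect the model, since $K_Y+\Gamma_Y^{(\vep)}$ stays in a fixed relative $\bQ$-linear equivalence class over $\Spec(R)$ modulo $f$-exceptional divisors. (Alternatively, one can compare the $(X^{(\vep)},\Gamma^{(\vep)})$ through the uniqueness of crepant birational extensions of CY pairs in Lemma \ref{l:CYbirational}, after normalizing.)

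Granting this, I would conclude as follows. Identifying all $X^{(\vep)}$ with $X$, on $X$ both $(X,\Delta+D)\to\Spec(R)$ and $(X,\Delta+(1-\vep)D+\vep H)\to\Spec(R)$ are families of CY pairs for every $\vep\in[0,\vep_0]$, and $(X,\Delta+(1-\vep)D+(\vep+\delta)H)\to\Spec(R)$ is KSBA stable for $0<\delta\ll1$. From the KSBA stable statement, $K_{X/R}+\Delta+(1-\vep)D+(\vep+\delta)H$ is $\bQ$-Cartier and relatively ample; subtracting $K_{X/R}+\Delta+(1-\vep)D+\vep H\sim_{\bQ,R}0$ shows $\delta H$, hence $H$, is $\bQ$-Cartier and relatively ample over $\Spec(R)$. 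Subtracting the two $\bQ$-linear equivalences $K_{X/R}+\Delta+D\sim_{\bQ,R}0$ and $K_{X/R}+\Delta+(1-\vep)D+\vep H\sim_{\bQ,R}0$ gives $\vep(D-H)\sim_{\bQ,R}0$, so $D\sim_{\bQ,R}H$; in particular $D$ is $\bQ$-Cartier and relatively ample. Since $(X,\Delta+D)\to\Spec(R)$ is a family of slc pairs with $K_{X/R}+\Delta+D\sim_{\bQ,R}0$ (the $\vep=0$ output) and $D$ is $\bQ$-Cartier and relatively ample, it is a family of boundary polarized CY pairs extending $(X_K,\Delta_K+D_K)$; and $(X,\Delta+(1-\vep)D+(\vep+\delta)H)\to\Spec(R)$ is a family of KSBA stable pairs for all $0<\vep,\delta\ll1$, as required.
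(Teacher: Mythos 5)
Your overall strategy (perturb the boundary, apply Proposition \ref{prop:KXD} to $(X_K,\Delta_K+(1-\vep)D_K+\vep H_K)$, identify the resulting models, and pass to $\vep\to0$) is the same as the paper's, and your verification of hypotheses (1)--(2) of Proposition \ref{prop:KXD} for the perturbed pairs is fine. But the step you yourself flag as ``the heart of the argument'' --- that $X^{(\vep)}$ is independent of $\vep$ on a whole interval $[0,\vep_0]$ --- is a genuine gap, and neither of your proposed justifications closes it. The model produced by \cite[Lem.~4]{KX20} is a relative log canonical model whose output can depend on which components of the special fiber $Y_\kappa$ get contracted, and this set of contracted components is controlled by the actual coefficients of $\Gamma_Y^{(\vep)}$ (through the lc/discrepancy conditions), not merely by the $\bQ$-linear equivalence class of $K_Y+\Gamma_Y^{(\vep)}$ over $\Spec(R)$; so ``the $\vep$-dependent part does not affect the model'' is an assertion, not a proof. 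Your fallback via Lemma \ref{l:CYbirational} also fails: part (1) only yields that two normal CY extensions over a DVR are \emph{crepant birational}, part (2) requires a klt special fiber, and part (3) requires a two-dimensional base, so none of them produces the needed isomorphism over $\Spec(R)$.

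The paper sidesteps exactly this difficulty: it never claims independence on an interval. Instead, since $Y_\kappa$ has finitely many prime components, there are only finitely many possible subsets contracted by $g^\vep\colon Y\dashrightarrow X^{\vep}$, so by pigeonhole one extracts a sequence $\vep_i\to 0$ of \emph{positive} rationals along which the contracted set is constant. Then $X^{\vep_i}\dashrightarrow X^{\vep_j}$ is an isomorphism in codimension one, and because each $H^{\vep_i}$ is relatively ample one gets $X^{\vep_i}\cong\Proj\bigoplus_m H^0(X^{\vep_i},\cO(\lfloor mH^{\vep_i}\rfloor))\cong X^{\vep_j}$ by Hartogs. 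Note also that the paper never applies Proposition \ref{prop:KXD} at $\vep=0$: the slc-ness of $(X,\Delta+D)$ over $\Spec(R)$ is obtained by letting $\vep_i\to0$ in the slc condition for $(X,\Delta+(1-\vep_i)D+\vep_i H)$, and $D\sim_{\bQ}H$ (hence $\bQ$-Cartier and ample) is obtained by subtracting the CY relations for two distinct indices $i\neq j$ on the \emph{common} model. Your final paragraph would go through verbatim once the identification is established along such a sequence, but as written the proof is incomplete without an argument of this kind.
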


\begin{proof}
For $0<\vep \ll 1$, we consider the pair 
\begin{equation}\label{eq:pairepsilon}
(X_K,\Delta_K + (1-\varepsilon) D_K + \varepsilon H_K ).
\end{equation}
Since  $H_K  \sim_{\Q} -K_{X_K}-\Delta_K$ and condition (1) holds, \eqref{eq:pairepsilon} is an lc CY pair.
Therefore, we may apply Proposition \ref{prop:KXD} to produce an extension of \eqref{eq:pairepsilon}
to a family of  slc CY pairs 
\[
(X^{\vep},\Delta^{\vep}+ (1-\vep) D^{\vep}+ \vep H^{\vep} ) \to \Spec(R)
\]
such that 
$(X^{\vep},\Delta^{\vep}+ (1-\vep) D^{\vep}+ (\vep+\delta) H^{\vep} ) \to \Spec(R)$
is a family of KSBA stable pairs for $0<\delta\ll1$ and $Y\dashrightarrow X^{\vep}$ is a birational contraction.
Note that the previous conditions imply $H^{\vep}$ is $\bQ$-Cartier and ample.

\medskip

\noindent \emph{Claim:} There exists a sequence of positive rational numbers $(\vep_i)_i $ such that 
$\lim \vep_i=0$ and the birational map $X^{\epsilon_i} \dashrightarrow  X^{\epsilon_j}$ is an isomorphism for all $i,j\geq1$.
\medskip

\noindent \emph{Proof of Claim:} 
Since  $Y_\kappa$ is a union of finitely many prime divisors,
there are finitely many  combinations of prime divisors contained in $Y_\kappa$ that can get contracted by $g^\epsilon:Y \dashrightarrow X^\epsilon$ for some $\vep$.
Therefore we may find a sequence $(\vep_i)_i$ such that $\lim \vep_i=0$ and  
the set of components of $Y_\kappa$ that get contracted by $g^{\vep_i}$  is independent of $i$.

Now note that  $X^{\vep_i} \dashrightarrow X^{\vep_j}$ is an isomorphism over $\Spec(K)$, since both varieties are  extensions of $X_{K}$.
Using that $g^{\vep_i}$ and $g^{\vep_j}$ contract the same components of $Y_\kappa$, we see $X^{\vep_i} \dashrightarrow X^{\vep_j}$ is an isomorphism in codimension one.  We now observe
\[
X^{\vep_i} \cong \Proj
\Big(
\bigoplus_{m \in \N}   H^0(X^{\vep_i},\cO_{X^{\vep_i}}(\lfloor mH^{\vep_i} \rfloor )) \Big) 
\cong 
\Proj
\Big(\bigoplus_{m \in \N}
H^0(X^{\vep_j},\cO_{X^{\vep_j}}(\lfloor mH^{\vep_j} \rfloor )) \Big)
\cong 
X^{\vep_j}
,\]
where the first and third isomorphisms use that $H^{\vep_i}$ and $H^{\vep_j}$ are ample
and the second  uses Hartog's Lemma.\qed

Let $(X,\Delta+ D+ H):= (X^{\vep_i}, \Delta^{\vep_i}+ D^{\vep_i}+H^{\vep_i})$,
which is independent of the choice of $i$ by the previous claim. 
Note that
\begin{enumerate}
\item[(a)]$(X,\Delta+ (1-\vep_i) D+ \vep_i H) \to \Spec(R)$ is a family of projective  CY pairs and 
\item[(b)] $(X,\Delta+ (1-\vep_i) D+ (\vep_i+\delta) H) \to \Spec(R)$ is a family of KSBA pairs for $0<\delta\ll1$
\end{enumerate} 
for each $i \geq 1$. 
Using (a), we see that
\[
0  \sim_{\bQ}  (K_X+\Delta+ (1-\vep_i) D+ \vep_i H) - (K_{X}+ \Delta+ (1-\vep_j) D+ \vep_j H )
\sim_{\bQ} (\vep_i-\vep_j) (D-H),
\]
for $i, j \geq1$, which implies    $D\sim_{\bQ} H$.
Thus $D$ is ample, $\bQ$-Cartier,
and satisfies  
\[
K_X+\Delta+D \sim_{\bQ}  K_{X}+\Delta+(1-\vep_i)D+\vep_i H\sim_{\bQ}0
.\]
Using (a) and that $\lim \vep_i =0$, we see $(X,\Delta+D)\to \Spec(R)$ is a family of slc pairs. 
Therefore $(X,\Delta+D)\to \Spec(R)$ is a family of boundary polarized CY pairs.
Furthermore, (b) implies the last statement of the proposition holds.
\end{proof}

We will now deduce Theorem \ref{thm:properness} from Proposition \ref{prop:propernessaux}.

\begin{proof}[Proof of Theorem \ref{thm:properness}]
Let $\nu:\overline{X}_K \to X_K$ denote the normalization morphism and $\overline{G}_K \subset \overline{X}_K$  the conductor divisor. 
We seek to apply Proposition \ref{prop:propernessaux} to the connected components of $(\overline{X}_K,  \overline{\Delta}_K+ \overline{G}_K+ \overline{D}_K) $.
Since the linear system $\nu^*|-m(K_{X_K}+\Delta_K)|$
is basepoint free for $m>0$ sufficiently divisible, there exists $H_K \sim_{\bQ} -K_{X_K}-\Delta_K$ such that 
\[
(\overline{X}_K,  \overline{\Delta}_K+ \overline{G}_K+ \overline{D}_K +\overline{H}_K)  
\]
is lc.
Fix a log resolution $f : Y_K\to \overline{X}_K$.
By \cite[Sec. IV.3, pg 198-199]{KKMSD}, 
there exists a finite extension $R' = R[\sqrt[d]{\pi}]$, where $\pi$ is the uniformizer of $R$, and an extension of the pullback $Y_{K'}$ to a flat projective morphism
$g:Y \to  \Spec(R')$
such that 
$(Y,B+Y_{\kappa})$
is dlt, where  $B$ denotes the closure of $(f_{K'})_{*}^{-1}(\overline{G}_{K'} + 
\overline{\Delta}_{K'}+ \overline{D}_{K'})+\Exc(f_{K'})$.
By Proposition \ref{prop:propernessaux}, there exists an extension of $(\overline{X}_{K'},\overline{G}_{K'}+\overline{\Delta}_{K'}+ \overline{D}_{K'})$ to a family of boundary polarized CY pairs
\begin{equation} \label{eq:X'Fano}
(\overline{X}',\overline{G}'+ \overline{\Delta}' + \overline{D}') \to \Spec(R')
\end{equation}
such that 
\begin{equation}
\label{eq:X'KSB}
(\overline{X}',\overline{G}'+ \overline{\Delta}' + (1-\vep) \overline{D}' + (\vep +\delta) \overline{H}') \to \Spec(R')
\end{equation}
is a family of  KSBA-stable pairs for $0<\vep,\delta\ll1$.
By \cite[Thm. 11.38]{KolNewBook}
$
(X_{K'},\Delta_{K'}+(1-\vep)D_{K'} +(\vep+\delta) H_{K'})
$
extends to a family of KSBA-stable pairs 
\[
({X}',{\Delta}' + (1-\vep){D}' + (\vep +\delta) {H}') \to \Spec(R')
\]
with normalization \eqref{eq:X'KSB}.
Since the normalization of $(X',\Delta'+D')$ is \eqref{eq:X'Fano}, which is a family of boundary polarized CY pairs,  $(X',\Delta'+D')\to \Spec(R')$ is a proper family of slc pairs, 
$D'$ is relatively ample, and $K_{X'}+\Delta'+D'\equiv 0$. 
By abundance for numerically trivially slc pairs \cite[Cor. 1.4]{HX16},
 $K_{X'}+\Delta'+D' \sim_{\bQ}0$.
Thus $(X',\Delta'+D')\to \Spec(R')$ is a family of boundary polarized CY pairs. 

By choosing $f$ to be a $\Gal(K'/K) = \bmu_d$-equivariant resolution, the resulting family $(X', \Delta' + D') \to \operatorname{Spec} R'$ is $\bmu_d$-equivariant. Indeed the stack of KSBA-stable pairs has finite, and in particular proper and representable, inertia. Thus by the valuative criterion of properness for representable morphisms, the $K'$-point of the inertia stack corresponding to the generator of the $\bmu_d$ action extends uniquely to an order $d$ automorphism of the KSBA-stable extension over $R'$. Thus the resulting family descends to a family of boundary polarized CY pairs over the $d$-th root stack $[\operatorname{Spec}R'/\bmu_d]$ of $\operatorname{Spec} R$.
\end{proof}

\section{Sources and regularity}\label{s:sources}

In this section, we discuss the source and regularity of  CY pairs.
After recalling their definitions,
we study their behavior in families and then we construct a good moduli space parametrizing regularity at most 0 boundary polarized CY pairs.

\subsection{Definition}\label{ss:sourceregdef}
The source \cite{Kol16} and regularity \cite{Sho00}  of a  CY pair are defined in terms of minimal lc centers on a dlt modification.

\subsubsection{Dlt modification}
A \emph{dlt modification} of an lc pair $(X,\Delta)$ is the data of a pair $(Y,\Delta_Y)$ and a proper birational morphism $f:Y\to X$ such that 
\begin{enumerate}
\item[(i)] $(Y,\Delta_Y)$ is  dlt,
\item[(ii)] $\Delta_Y= E+ f_*^{-1} \Delta$, where $E$ is the sum of the exceptional divisors of $f$, and
\item[(iii)] $K_{Y}+\Delta_Y= f^*(K_X+\Delta)$. 
\end{enumerate}
Every lc pair $(X,\Delta)$ admits a dlt modification by \cite[Thm. 1.34]{Kol13}.

If $(Y,\Delta_Y)$ is a dlt pair and $S\subset Y$ an lc center, 
then $(S,\Delta_S)$ is a dlt pair, 
where $\Delta_S:={\rm Diff}^*_S(\Delta)$ is defined via adjunction, 
and $K_S+ \Delta_S \sim_{\bQ} (K_Y+\Delta_Y) \vert_S$; see \cite[Sec 4.2]{Kol13}.

\subsubsection{Sources}

The source was  introduced in \cite{Kol16} for crepant log structures, which are roughly  CY fibrations. 
We are primarily interested in  sources of CY pairs, defined below. 
See also \cite[Defn. 2]{Sho13} for a related notion.

\begin{defn}[Source]\label{d:source}
Let $(X,\Delta)$ be a CY pair.

\begin{enumerate}
\item If $(X,\Delta)$ is klt, we set ${\rm Src}(X,\Delta):=(X,\Delta)$.

\item If $(X,\Delta)$ is not klt,  then let $(\overline{X},\oG+\overline{\Delta}) := \sqcup_{i=1}^r (\overline{X}_i,\oG_i+\overline{\Delta}_i)$ denote its normalization and fix a dlt modification 
\[
(Y,\Delta_{Y}) \to (\overline{X}_i,\oG_i+\overline{\Delta}_i)
\]
for some $i$.  We set ${\rm Src}(X,\Delta)$  equal to the crepant birational equivalence class of $(S,\Delta_S)$, where $S\subset Y$ is a minimal lc center of $(Y,\Delta_Y)$.
\end{enumerate}
\end{defn}

Note that a boundary polarized CY pair $(X,\Delta+D)$ is also a CY pair and so $\Src(X,\Delta+D)$ is defined.

\begin{lem}
Definition \ref{d:source}.2 is independent of the choice of dlt modification, minimal lc center $S$, and index $i$.
\end{lem}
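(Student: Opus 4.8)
The statement asserts the well-definedness of $\Src(X,\Delta)$ in three respects: independence of the dlt modification, of the minimal lc center $S$ chosen on a fixed dlt modification, and of the index $i$ labeling the connected component of the normalization. The plan is to reduce all three to the crepant-birational invariance theory for lc centers on dlt pairs developed in \cite{Kol13}, together with Koll\'ar's theory of sources for crepant log structures \cite{Kol16}. The unifying principle is: minimal lc centers of a fixed crepant birational equivalence class, equipped with their different, form a single crepant birational equivalence class, and this is exactly what $\Src$ records.

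\textbf{Step 1: Independence of the dlt modification.} Fix a connected component $(\overline{X}_i, \overline{G}_i + \overline{\Delta}_i)$ of the normalization and let $(Y,\Delta_Y)\to (\overline{X}_i,\overline{G}_i+\overline{\Delta}_i)$ and $(Y',\Delta_{Y'})\to (\overline{X}_i,\overline{G}_i+\overline{\Delta}_i)$ be two dlt modifications. Both are crepant over $(\overline{X}_i,\overline{G}_i+\overline{\Delta}_i)$ by condition (iii), hence crepant birational to each other. I would invoke \cite[Thm.~4.40]{Kol13} (or the corresponding statement on the crepant birational invariance of the collection of minimal lc centers with their different structures), which says precisely that for two crepant birational dlt pairs, the minimal lc centers of one, with their adjunction-induced boundaries, are crepant birational to the minimal lc centers of the other. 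This gives that the crepant birational class of $(S,\Delta_S)$ is the same for $Y$ and $Y'$.

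\textbf{Step 2: Independence of the choice of minimal lc center on a fixed $Y$.} On a single dlt pair $(Y,\Delta_Y)$ there may be several minimal lc centers $S_1, S_2$. Here I would appeal to the source theory of \cite{Kol16}: the pair $(\overline{X}_i,\overline{G}_i+\overline{\Delta}_i)$, being lc with $K+\text{boundary}\sim_\bQ 0$, is (the total space of) a crepant log structure, and \cite[Thm.~4.40 and the source/spring construction]{Kol16} establishes that all minimal lc centers of such a structure, with their different, lie in one crepant birational equivalence class — indeed they are all crepant birational to the source, which is defined intrinsically. Concretely, any two minimal lc centers $S_1,S_2$ are connected by a chain of ``tom and jerry'' type crepant birational maps (this is the content of the $P^1$-link structure between adjacent lc centers, cf.\ \cite[Def.~4.36, Prop.~4.37]{Kol13}), so $(S_1,\Delta_{S_1})$ and $(S_2,\Delta_{S_2})$ are crepant birational.

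\textbf{Step 3: Independence of the component index $i$.} This is where the conductor involution $\tau$ enters and is the step I expect to be the main obstacle. When $(X,\Delta)$ is non-normal, the conductor $\overline{G} = \sqcup \overline{G}_i$ carries the gluing involution $\tau:\overline{G}^n \to \overline{G}^n$, which identifies (up to crepant birational equivalence) lc centers on $\overline{X}_i$ lying in $\overline{G}_i$ with lc centers on possibly different components $\overline{X}_j$. The key point is that a minimal lc center $S$ of a dlt modification of $\overline{X}_i$ either maps into the interior of $X$ (in which case it is already ``internal'' and independent data) or maps into the conductor, in which case $\tau$ (restricted appropriately and extended to the dlt modification via crepant descent, as in \cite[Sec.~9 and the gluing theory]{Kol13}) carries it to the corresponding minimal lc center coming from $\overline{X}_j$; since $\tau$ is an involution fixing the different ${\rm Diff}_{\overline{G}^n}(\overline{\Delta})$, the two centers with their boundaries are crepant birational. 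One must check that no minimal lc center ``disappears'' after gluing — i.e.\ that the minimal lc centers of the non-normal $(X,\Delta)$ are in bijection (up to crepant birational equivalence and the $\tau$-action) with those on the normalization — which is exactly the stratification statement for slc pairs via their normalization plus conductor gluing. I would also note that in the applications of this paper, $\Src$ is mostly applied to pairs where the source is lower-dimensional and this bookkeeping is mild; the cleanest reference is \cite[Thm.~4.40, Cor.~4.41]{Kol13} combined with \cite[Thm.~1]{Kol16}, and I expect the proof to be a short paragraph citing these and spelling out the $\tau$-compatibility.

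\textbf{Main obstacle.} The genuinely delicate point is Step 3: making precise that the conductor involution $\tau$, a priori defined only on the normalization of the codimension-one conductor, interacts correctly with minimal lc centers of arbitrary codimension sitting on a dlt modification, so that the crepant birational class of the source is genuinely a well-defined invariant of the slc pair $(X,\Delta)$ rather than of a chosen component. The resolution is to pass to a common dlt modification compatible with the gluing data (using that $\tau$ preserves the different) and then invoke the crepant birational invariance of minimal lc centers; all the heavy lifting is already in \cite{Kol13, Kol16}, so the proof itself should be short.
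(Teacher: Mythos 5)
Your Steps 1 and 2 are correct and match the paper: for a connected lc component, independence of the dlt modification and of the chosen minimal lc center is exactly the content of \cite[Thm.~1]{Kol16}, and the paper simply cites that result rather than re-running the $\bP^1$-link argument you sketch.

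Step 3 is where your proposal diverges and is not actually carried out. You correctly identify the obstacle --- that $\tau$ is only defined on the normalization of the codimension-one conductor, while minimal lc centers live in arbitrary codimension on a dlt modification --- but your proposed fix, ``pass to a common dlt modification compatible with the gluing data,'' is not justified and is harder than what is needed: it is not clear how to lift an involution of $\overline{G}^n$ to an identification of dlt modifications of distinct components. The paper sidesteps this entirely using the adjunction theorem for sources, \cite[Thm.~1.6]{Kol16}: for any lc center, in particular for a prime conductor divisor $\overline{F}_i \subset \overline{G}_i$, one has $\Src(\overline{X}_i,\overline{G}_i+\overline{\Delta}_i) \cbir \Src(\overline{F}_i^n, {\rm Diff}_{\overline{F}_i^n}(\overline{G}_i+\overline{\Delta}_i-\overline{F}_i))$. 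Since $X$ is connected, any two components can be chained through components whose images meet; whenever $\pi(\overline{X}_i)\cap\pi(\overline{X}_j)\neq\emptyset$ the involution $\tau$ interchanges some $\overline{F}_i^n$ and $\overline{F}_j^n$ and, by \cite[Prop.~5.12]{Kol13}, identifies their differents, so the two divisorial adjunction pairs are isomorphic and the sources of $\overline{X}_i$ and $\overline{X}_j$ agree up to crepant birational equivalence. Thus no extension of $\tau$ beyond the conductor is ever required; you should replace your Step 3 with this adjunction-plus-connectedness argument.
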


\begin{proof}
The result follows  from \cite{Kol16}. 
Indeed, if $(X,\Delta)$ is lc, then ${\rm Src}(X,\Delta)$ is independent of the choice of dlt modification and minimal lc center by \cite[Thm. 1]{Kol16}. 
If $(X,\Delta)$ is slc, but not lc, it remains to show  independence of the choice of  $i$. 
To proceed, note that $X$ is connected.
Thus, it suffices to show that 
if $\pi(\overline{X}_i)$ and $\pi(\overline{X}_j)$ intersect, then 
\begin{equation}\label{eq:Srcij}
{\rm Src}(\overline{X}_i,\oG_i+\overline{\Delta}_i) \overset{cbir}{\sim} {\rm Src}(\overline{X}_j,\oG_j+\overline{\Delta}_j).
\end{equation}
Now, if the images intersect, then there exist  prime divisors $\overline{F}_i \subset \overline{G}_i$ and $\overline{F}_{j}\subset \overline{G}_j$ 
such that the involution $\tau:\overline{G}^n\to \overline{G}^n$ interchanges $\overline{F}_i^n$ and $\overline{F}_j^n$.
By  \cite[Thm. 1.6]{Kol16}, 
\[
{\rm Src}( \overline{X}_i, \oG_i+\overline{\Delta}_i ) 
\cong
{\rm Src}(\overline{F}_i^n, {\rm Diff}_{\overline{F}_i^n} (\oG_i+ \overline{\Delta}_i - \overline{F}_i))
\]
and the same holds for $i$ replaced by $j$. 
Since 
\[(\overline{F}_i^n, {\rm Diff}_{F_i^n} ( \oG_i+\overline{\Delta}_i - \overline{F}_i)) \cong (\overline{F}_j^n, {\rm Diff}_{F_j^n} (\oG_j +\overline{\Delta}_j - \overline{F}_j))
\]
by  \cite[Prop. 5.12]{Kol13}, we conclude that 
\eqref{eq:Srcij} holds.
\end{proof}

\begin{rem}[Structure]
The source of a CY pair is either a klt  CY pair or an equivalence class containing such a pair. 
Indeed, if $(X,\Delta)$ is klt, this is clear. 
If not, then the assumption that $S$ is minimal and $K_{S}+\Delta_S\sim_{\bQ} f^*(K_X+\Delta)\vert_S $ implies $(S,\Delta_S)$ is a klt  CY pair. 
\end{rem}

\begin{rem}[Surfaces]
If $(X,\Delta)$ is a  CY surface pair, then the source is either a point, the crepant birational equivalence class of a klt  CY curve pair, or $(X,\Delta)$.
In each case, the source is a single log pair.  
\end{rem}

\begin{defn}[Regularity]\label{d:coregularity}
The \emph{regularity}  and \emph{coregularity }of an slc CY pair is
\begin{align*}
\reg(X,D) &:= \dim X - \dim \Src(X,D) - 1\\
{\rm coreg}(X,D)&:= \dim \Src(X,D)
\end{align*}
For notational purposes, it will be convenient to have both definitions.
\end{defn}

When $(X,\Delta)$ is an lc CY pair that is not klt, the above definition of coregularity agrees with Shokurov's original definition, which is the dimension of the dual complex of the dlt modification \cite{Sho00}. 
When $(X,\Delta)$ is klt, $\reg(X,\Delta)=-1$, while it is $-\infty$ in \emph{loc. cit}.
The term \emph{coregularity} arose in the work of Moraga and his coauthors \cite{Mor22}.

\subsection{Behavior in families}\label{ss:sourceregfamilies}
We now analyze the behavior of the source and regularity in families. We begin with the following helpful lemma. 

\begin{lem}\label{l:sourceovercurve}
Let $(X,\Delta+D)\to C$ be a family of boundary polarized CY pairs over the germ of a smooth pointed curve $0 \in C$. If $X$ is normal, then 
\[
{\rm Src} (0,X,\Delta+D+X_0) \cbir \Src(X_0,\Delta_0+D_0)
.\]
\end{lem}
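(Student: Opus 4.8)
\textbf{Proof plan for Lemma \ref{l:sourceovercurve}.}
The statement compares the source of the crepant log structure $(X,\Delta+D+X_0)\to C$ (whose total space has the total space $X$ fibered over the curve $C$, with the special fiber $X_0$ marked) with the source of the CY pair $(X_0,\Delta_0+D_0)$ obtained by restricting to the special fiber. Both are equivalence classes of klt CY pairs, so it suffices to produce a single pair representing both classes. The plan is to reduce to the theory of crepant log structures and dlt modifications in \cite{Kol13, Kol16}, applied relative to the base curve $C$.

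First I would take a dlt modification $f\colon (Y,\Delta_Y)\to (X,\Delta+D+X_0)$, so that $K_Y+\Delta_Y = f^*(K_X+\Delta+D+X_0)$, with $\Delta_Y = f_*^{-1}(\Delta+D)+Y_0^{\red}+E$ where $E$ is the sum of the $f$-exceptional divisors. Since $X_0$ appears with coefficient $1$ in the boundary, and $(X,\Delta+D+X_0)$ is lc with $X_0$ an lc center (it is a reduced fiber of the family and $(X,\Delta+D)\to C$ is a family of boundary polarized CY pairs, so $(X,\Delta+D+X_0)$ is lc by Lemma \ref{l:slcadj}), the divisor $Y_0^{\red}$ — more precisely an irreducible component of it — is an lc center of $(Y,\Delta_Y)$. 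A minimal lc center $S$ of $(Y,\Delta_Y)$ is then contained in $Y_0^{\red}$: indeed any minimal lc center is contained in some lc center, and by the connectedness/structure results for lc centers on dlt pairs \cite[Sec. 4.1]{Kol13} one can slide a minimal lc center into the fiber. Thus $\operatorname{Src}(0,X,\Delta+D+X_0)$ is represented by $(S,\Delta_S)$ with $S\subset Y_0^{\red}$.

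Next I would compare this with the special fiber. Let $D_1$ be the component of $Y_0^{\red}$ containing $S$. By adjunction \cite[Sec. 4.2]{Kol13}, $(D_1,\operatorname{Diff}^*_{D_1}(\Delta_Y-D_1))$ is dlt and $K_{D_1}+\operatorname{Diff}^*_{D_1}(\Delta_Y-D_1)\sim_\bQ (K_Y+\Delta_Y)|_{D_1}\sim_\bQ 0$, so it is a dlt CY pair. On the other hand, $f$ restricts to a proper birational morphism $f_1\colon D_1\to f(D_1)\subseteq X_0$, and (after possibly further dlt modification, using that the source is independent of the choice of dlt modification by \cite[Thm. 1]{Kol16}) $(D_1,\operatorname{Diff}^*_{D_1}(\Delta_Y-D_1))$ dominates a dlt modification of $(X_0,\Delta_0+D_0)$, or at least is crepant over it — this uses that $K_{Y/C}+f_*^{-1}(\Delta+D)+E$ restricts crepantly to $K_{X_0}+\Delta_0+D_0$, which is the content of adjunction for the component $X_0$ combined with $X_0|_{X_0}\sim_\bQ 0$ (since $X_0\sim 0$ as $C$ is a curve germ). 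Then $S$, being a minimal lc center of $(D_1,\cdot)$, is also a minimal lc center of (a dlt modification of) $(X_0,\Delta_0+D_0)$, so $(S,\Delta_S)$ represents $\operatorname{Src}(X_0,\Delta_0+D_0)$ as well. If instead $(X_0,\Delta_0+D_0)$ is klt, then the family is isotrivial-free near the special fiber in the relevant sense and the minimal lc center of $(Y,\Delta_Y)$ in the fiber maps birationally onto $X_0$ itself, again giving the crepant birational equivalence.

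The main obstacle I anticipate is the bookkeeping in the second step: ensuring that the minimal lc center $S$ can genuinely be chosen inside a fiber component $D_1$ and that the adjunction/restriction to $D_1$ matches (up to crepant birational equivalence) a dlt modification of $(X_0,\Delta_0+D_0)$, rather than some other crepant model. This is where \cite[Thm. 1.6]{Kol16} (the "source descends through adjunction to divisorial lc centers") will do the real work — applying it to the lc center $D_1\subset Y_0^{\red}$ identifies $\operatorname{Src}$ of the total log structure with $\operatorname{Src}$ of $(D_1,\operatorname{Diff}^*_{D_1})$, and then one must recognize the latter as $\operatorname{Src}(X_0,\Delta_0+D_0)$ via the canonical bundle formula/adjunction $K_{D_1}+\operatorname{Diff}^*_{D_1}\sim_\bQ f_1^*(K_{X_0}+\Delta_0+D_0)$ coming from $X_0\sim 0$ on the normal total space $X$. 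The klt case is handled separately but is immediate from the definitions.
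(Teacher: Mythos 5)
Your proposal follows essentially the same route as the paper: take a dlt modification $f:(Y,\Gamma_Y)\to(X,\Delta+D+X_0)$, choose a minimal lc center $S$ in the special fiber, and use adjunction to a divisorial component of $Y_0^{\red}$ to recognize $S$ as a minimal lc center of a dlt modification of a component of the normalization of $(X_0,\Delta_0+D_0)$. The one place where your write-up is looser than the paper's is the choice of that component: you take $D_1$ to be whichever component of $Y_0^{\red}$ contains $S$, which may be $f$-exceptional, in which case $f|_{D_1}:D_1\to f(D_1)$ is \emph{not} birational and $(D_1,{\rm Diff}_{D_1}(\Gamma_Y-D_1))$ is not a dlt modification of a component of the normalization of $(X_0,\Delta_0+D_0)$; the paper sidesteps this by choosing $S$ inside the strict transform $F_Y$ of an irreducible component $F\subset X_0$ from the start (always possible, and harmless by the independence of the source from these choices in \cite[Thm. 1]{Kol16}), for which the crepant morphism $(F_Y,{\rm Diff}_{F_Y}(\Gamma_Y-F_Y))\to(F^n,{\rm Diff}_{F^n}(\Delta+D+X_0-F))$ of \cite[Prop. 4.6]{Kol13} is literally a dlt modification by adjunction. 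With that adjustment your argument is the paper's, and neither the appeal to \cite[Thm. 1.6]{Kol16} nor a separate klt case is needed.
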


In the above statement, ${\rm Src} (0,X,\Delta+D_0+X_0)$ denotes the source of $(X,\Delta+D_0+X_0)\to C$ over $0$, which is defined in \cite[Thm. 1]{Kol16} as the crepant birational equivalence class of a pair $(S,\Gamma_S)$, where $(Y,\Gamma_Y)\to (X,\Delta+D+X_0)$ is a dlt modification, $S\subset Y$ a minimal lc center dominating $0$,  and $\Gamma_S : = {\rm Diff}^*_S(\Gamma_Y)$. By \cite{Kol16}, it is well defined.

\begin{proof}
Fix an irreducible component $F \subset  X_0$ .
Let
$f:(Y,\Gamma_Y) \to (X,\Delta+D+X_0)$ be a dlt modification and $S\subset Y$ be a minimal lc center of $(Y,\Gamma_Y)$ contained in $F_Y := f_*^{-1}(F)$. 
By definition, $(S,\Gamma_S) \in {\rm Src} (0,X,\Delta+D)$.
Note that 
\begin{equation}\label{e:F_Y}
(F_Y, {\rm Diff}_{F_Y} (\Gamma_Y- F_Y)) \to (  F^n, {\rm Diff}_{F^n}(\Delta+D+X_0 - F))
\end{equation}
is a dlt modification, since the morphism is crepant by \cite[Prop. 4.6]{Kol13} and 
adjunction \cite[Thm. 4.19]{Kol13} implies 
$(F_Y, {\rm Diff}_{F_Y} (\Gamma_Y- F_Y))$ is dlt.
Since $(F^n, {\rm Diff}_{F^n} (\Delta+D+X_0- F))$ is a component of the normalization of $(X_0,\Delta_0+D_0)$,
$(S,\Gamma_S)$ is a source of $(X_0,\Delta_0+D_0)$.
\end{proof}

\begin{prop}\label{p:SrcSequiv}
If $(X,\Delta+D)$ and $(X',\Delta'+D')$ are S-equivalent boundary polarized CY pairs,
then $\Src(X,\Delta+D) \cbir \Src(X',\Delta'+D')$.
\end{prop}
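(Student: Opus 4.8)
The plan is to reduce the statement to the case of a common degeneration over the germ of a curve, and then apply Lemma \ref{l:sourceovercurve} together with the gluing/normalization machinery already established. By definition of S-equivalence, it suffices to treat the case of a single weakly special degeneration: if $(\cX,\Delta_{\cX}+\cD)$ is a weakly special test configuration of $(X,\Delta+D)$ with central fiber $(X_0,\Delta_0+D_0)$, I will show $\Src(X,\Delta+D) \cbir \Src(X_0,\Delta_0+D_0)$. Granting this, the statement follows: chasing through the chain $(X,\Delta+D)\rightsquigarrow(X_0,\Delta_0+D_0)\leftsquigarrow(X',\Delta'+D')$ and using transitivity of crepant birational equivalence gives the result.

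For the single-degeneration case, first restrict the family $(\cX,\Delta_{\cX}+\cD)\to\bA^1$ to the germ $0\in C$ of the affine line at the origin; this is a family of boundary polarized CY pairs over a smooth pointed curve. When $\cX$ is normal, Lemma \ref{l:sourceovercurve} applies directly and gives
\[
\Src(0,\cX,\Delta_{\cX}+\cD+\cX_0)\cbir\Src(X_0,\Delta_0+D_0).
\]
On the other hand, the generic fiber of the family over $C\setminus 0$ is $(X,\Delta+D)$ (up to the $\bG_m$-action, which doesn't affect the source), so I also need $\Src(0,\cX,\Delta_{\cX}+\cD+\cX_0)\cbir\Src(X,\Delta+D)$; this should be a variant of the same Koll\'ar source argument applied over the generic point, again via a dlt modification of $(\cX,\Delta_{\cX}+\cD+\cX_0)$ and a minimal lc center dominating $C$ — restricting such a dlt modification over the generic point yields a dlt modification of $(X,\Delta+D)$ viewed as a CY pair, or rather one computes both sources from the minimal lc centers of a single dlt modification of the total space. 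When $\cX$ is \emph{not} normal, I will pass to the normalization $(\overline{\cX},\overline{\cG}+\Delta_{\overline{\cX}}+\overline{\cD})$, which (as in the proofs in Sections \ref{s:Scompleteness} and \ref{s:Thetared}) is a test configuration of the normalization $(\overline X,\overline G+\overline\Delta+\overline D)$ with central fiber the normalization of $(X_0,\Delta_0+D_0)$; since the source is computed on the normalization by Definition \ref{d:source}.2 (and is independent of the index $i$), the normal case implies the general one, using that $\Src$ interacts correctly with the conductor-divisor adjunction via \cite[Thm. 1.6]{Kol16} and the fact that $\tau$ matches up the relevant prime divisors.

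The main obstacle I anticipate is the normality issue: a weakly special test configuration $(\cX,\Delta_{\cX}+\cD)$ need not have normal total space, and more delicately, even when $\cX$ is normal the central fiber $\cX_0$ need not be (it is only slc). The cleanest route is probably to reduce to a test configuration with \emph{integral} central fiber using Theorem \ref{thm:CZ-lcplace} / Proposition \ref{p:lcplacestestconfig}: an arbitrary weakly special degeneration can be refined or analyzed component-by-component via lc places, and Proposition \ref{p:lcplacestestconfig} produces a test configuration with normal total space from the data of lc places. One should check that Koll\'ar's source theory \cite{Kol16} — which is stated for lc pairs / crepant log structures and requires passing to a dlt modification — is robust enough that the minimal lc center of a dlt modification of $(\cX,\Delta_{\cX}+\cD+\cX_0)$ simultaneously computes the source of the (normalization of the) generic fiber and of the special fiber; the argument in Lemma \ref{l:sourceovercurve} already does exactly this on the special-fiber side via the adjunction formula \eqref{e:F_Y}, and the generic-fiber side is the ``restrict over the generic point of $C$'' version of the same computation. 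A secondary, more bookkeeping-type difficulty is verifying that when $X$ (equivalently $X_0$) is non-normal, the gluing involution $\tau$ and its extension to the test configuration (Proposition \ref{p:bpcygluing}, Proposition \ref{p:TequivariantTC}) are compatible with the crepant birational equivalence class of the source, so that the answer does not depend on which component of the normalization one starts from.
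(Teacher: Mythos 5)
Your overall skeleton (reduce to a single weakly special degeneration, handle the special fiber with Lemma \ref{l:sourceovercurve}, and reduce the non-normal case to the normal one via the normalization of the test configuration) matches the paper. But there is a genuine gap in the normal case, at exactly the step you flag as "the generic-fiber side": you need to relate $\Src(0,\cX,\Delta_{\cX}+\cD+\cX_0)$ --- which by definition is computed from minimal lc centers of a dlt modification lying \emph{over the point} $0$ --- to $\Src(X,\Delta+D)$, which via restriction over the generic point of $\bA^1$ is governed by minimal lc centers \emph{dominating} the base. These are different collections of lc centers, and your proposed "single dlt modification of the total space" argument only shows that a minimal lc center over $0$ sits inside (the fiber over $0$ of) one dominating the base; this yields at best an inequality of dimensions, which is precisely why Proposition \ref{p:regularityusc} gives only upper semi-continuity of the regularity for a general family over a curve. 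For a general family of boundary polarized CY pairs over a curve germ the source \emph{can} jump at the special point, so no argument that only uses the family $(\cX,\Delta_{\cX}+\cD+\cX_0)\to\bA^1$ intrinsically can close this gap.

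What makes test configurations special --- and what your proposal never invokes --- is that the total space $(\cX,\Delta_{\cX}+\cD+\cX_0)$ is crepant birational to the \emph{trivial} configuration $(X_{\bA^1},\Delta_{\bA^1}+D_{\bA^1}+X\times 0)$ by Lemma \ref{l:CYbirational}.1. The paper's proof runs the comparison entirely over the point $0$: by \cite[Thm.\ 4.45.7]{Kol13} crepant birational crepant log structures have crepant birational sources over corresponding points, so $\Src(0,\cX,\ldots)\cbir\Src(0,X_{\bA^1},\ldots)$, and then Lemma \ref{l:sourceovercurve} applied to the trivial family (whose fiber over $0$ is $(X,\Delta+D)$ itself) identifies the latter with $\Src(X,\Delta+D)$. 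You should replace your generic-point argument with this two-step comparison. Your secondary worries are not issues: the reduction to integral central fiber via Theorem \ref{thm:CZ-lcplace} is unnecessary, and the non-normal reduction works just as you describe, since the normalization of the test configuration is a test configuration of the normalization and $\Src$ is defined through the normalization independently of the component chosen.
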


\begin{proof}	
It suffices to show that
if $(\cX,\Delta+\cD) \to \bA^1$  is a test configuration of $(X,\Delta+D)$, then  
$\Src (X,\Delta+D) \cbir \Src (\cX_0,\Delta_{\cX_0}+\cD_0)$.
If $X$ is normal, then
\begin{multline*}
\Src(X,\Delta+D)  
\cbir
\Src(0,X_{\bA^1},\Delta_{\bA^1}+D_{\bA^1}+X\times 0)\\
\cbir
\Src(0,\cX,\Delta_{\cX}+\cD+\cX_0)
\cbir
\Src(\cX_0,\Delta_{\cX_0}+\cD_0),
\end{multline*}
Indeed, the first and third equivalence hold by Lemma \ref{l:sourceovercurve}. 
For the second, note that $(X_{\bA^1},\Delta_{\bA^1}+D_{\bA^1}+X\times 0)$ and $( \cX,\Delta_{\cX}+\cD+\cX_0)$ are crepant birational by Lemma \ref{l:CYbirational}. 
Thus \cite[Thm. 4.45.7]{Kol13} implies their sources over $0$ are equal.

If $X$ is not normal, then the normalization $(\overline{\cX},
 \overline{\cG}+\Delta_{\overline{\cX}} + \overline{\cD})$ of $(\cX,\Delta_{\cX}+\cD)$ is a test configuration of $(\overline{X},\overline{G}+\overline{\Delta}+\overline{D})$.
Note that  
\[
{\rm Src}(X,\Delta+D) \overset{cbir}{\sim} {\rm Src} (\overline{X},\overline{G}+\overline{\Delta}+\overline{D})
\quad \text{ and } \quad 
{\rm Src}(\cX_0,\Delta_{\cX_0}+\cD_0) \overset{cbir}{\sim} {\rm Src} 
(\overline{\cX}_{0},  \overline{\cG}_0+\Delta_{\overline{\cX}_{0} }+ \overline{\cD}_{0})
.
\]
Indeed, the first relation follows from the definition of source, and the second  from the fact that 
$(\cX_0,\Delta_{\cX_0}+\cD_0)$ and 
$(\overline{\cX}_{0}, \overline{\cG}_0+\Delta_{\overline{\cX}_{0} }+ \overline{\cD}_{0})$ have the same normalization.
Hence, the case when $X$ is normal implies the non-normal case.
\end{proof}

\begin{prop}\label{p:regularityusc}
If $(X,\Delta+D) \to B$ is a family of boundary polarized CY pairs over a reduced finite type scheme $B$, then the function 
\[
B\ni 
b\mapsto \reg(X_b,\Delta_b +D_b)  \in \bZ
\]
is constructible and upper semi-continuous.
\end{prop}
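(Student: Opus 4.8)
The plan is to reduce both assertions to standard constructibility and semicontinuity statements by spreading out over a stratification of $B$, together with the compatibility of sources with dlt modifications and specialization. I would first observe that, since $B$ is reduced and of finite type, it suffices by Noetherian induction to prove that $\reg(X_b,\Delta_b+D_b)$ is locally constant on a dense open subset of (each irreducible component of) $B$, and separately that it does not decrease under specialization; constructibility then follows formally, and the ``upper semicontinuous'' direction is exactly the statement that $\reg$ can only go \emph{up} under specialization, i.e. that $\coreg$ can only go down.

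For the generic local constancy, I would take a dlt modification $f\colon(\cY,\Gamma_{\cY})\to(\oX,\oG+\oDe+\oD)$ of the normalization of the total space after restricting to a suitable dense open $B^\circ\subset B$; by generic flatness and the openness of the dlt condition in families (using Lemma \ref{l:slcadj} applied to $\cY\to B^\circ$ relative to an snc divisor on the base, exactly as in Lemma \ref{l:familyslcnormadj}), after shrinking $B^\circ$ we may assume $(\cY,\Gamma_{\cY}+\cY_b)$ is dlt for every $b\in B^\circ$ and that the strata of $\Gamma_{\cY}$ and the incidence data of lc centers are all flat over $B^\circ$. Then for each $b\in B^\circ$ the fiber of a minimal lc center of $\cY$ over $B^\circ$ restricts to (a union of crepant birational models of) minimal lc centers of $(\cY_b,\Gamma_{\cY,b})$, by the adjunction/restriction argument used in the proof of Lemma \ref{l:sourceovercurve} (namely \cite[Prop. 4.6, Thm. 4.19]{Kol13}), so $\dim\Src(X_b,\Delta_b+D_b)$ is constant on $B^\circ$. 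Here one must also keep track of the non-normal case: the source of $(X_b,\Delta_b+D_b)$ agrees with that of its normalization, and after shrinking the conductor-divisor data and the gluing involution spread out, so the reduction to the normal total space is legitimate.

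For the semicontinuity (specialization) statement, it suffices by the valuative criterion for constructible functions to check: given the germ of a smooth pointed curve $0\in C$ and a family $(X,\Delta+D)\to C$ of boundary polarized CY pairs, one has $\reg(X_0,\Delta_0+D_0)\ge \reg(X_\eta,\Delta_\eta+D_\eta)$. When the total space $X$ is normal this is precisely Lemma \ref{l:sourceovercurve} combined with the semicontinuity of dimension of minimal lc centers under specialization on a dlt modification of $(X,\Delta+D+X_0)$: passing from the generic fiber to $X_0$ can only collapse or preserve the minimal lc center, never enlarge it, because a minimal lc center dominating $0$ restricts onto a (possibly smaller) lc center of the central fiber. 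When $X$ is not normal I would pass to the normalization $(\oX,\oG+\oDe+\oD)\to C$, which is again a family of boundary polarized CY pairs by Lemma \ref{l:familyslcnormadj}, use that $\Src$ of a pair equals $\Src$ of its normalization (by definition of the source for slc pairs), and apply the normal case componentwise. Combining generic constancy with specialization gives that $b\mapsto\reg(X_b,\Delta_b+D_b)$ is constructible, and that the level sets $\{\reg\ge c\}$ are closed, which is upper semicontinuity.

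\textbf{Main obstacle.} The delicate point is controlling the behavior of minimal lc centers in a family of \emph{dlt} (not snc) pairs: dlt modifications do not in general commute with restriction to a fiber, and an lc center of the total space may break into several lc centers, or fail to restrict to an lc center at all, on a special fiber. I expect the crux of the argument to be showing that after shrinking the base the relevant dlt modification \emph{can} be chosen so that its lc strata are flat over the base and restrict fiberwise to dlt pairs — this is where Lemma \ref{l:slcadj}, the adjunction formulas of \cite[\S4.2]{Kol13}, and a careful spreading-out of Koll\'ar's theory of sources \cite{Kol16} all have to be invoked in concert; the non-normal gluing bookkeeping (spreading out the involution $\tau$ and the conductor) is a further technical wrinkle but not a conceptual one.
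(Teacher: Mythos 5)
Your proposal is correct and follows essentially the same route as the paper: reduce to (a) generic local constancy via a dlt modification of the (normalized) total space whose lc strata restrict fiberwise after shrinking the base, and (b) the specialization inequality over a curve, proved by intersecting a minimal lc center dominating the base with a component of the central fiber of a dlt modification of $(X,\Delta+D+X_0)$, with Lemma \ref{l:sourceovercurve} and normalization handling the slc bookkeeping exactly as in the paper. No substantive gaps.
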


\begin{proof}
To prove the proposition, it suffices to verify the following statements. 
\begin{enumerate}
\item[(1)] There exists a  non-empty open set $ B^\circ \subset B$ such that $\reg(X_b,\Delta_b+D_b)$ is independent of $b\in B^\circ$.
\item[(2)] If $B$ is a curve and $b\in B$, then 
$\reg(X_b,\Delta_b+D_b)\geq \reg(X_{K(B)},\Delta_{K(B)})$.
\end{enumerate}
Indeed, (1) implies  constructibility 
and (2) then implies upper-semicontinuity.

We first  verify (1) in the case when $X$ is normal.
By replacing $B$ with a non-empty open subset, we may assume $B$ is smooth. 
The latter implies $(X,\Delta+D)$ is an lc pair by Lemma \ref{l:slcadj}.
Thus there exists a dlt modification $(Y,\Gamma_Y)\to (X,\Delta+D)$.
Now fix an lc center $S\subset Y$ of $(Y,\Gamma_Y)$, which is minimal among those dominating $B$.
There exists an open set $B^\circ \subset B$, such that 
$(Y_b,\Gamma_{Y_b}:= \Gamma_{Y} \vert_{Y_b})\to (X_b,\Delta_b+\Delta_b)$ is a dlt modifcation for each $b\in B^\circ$.
Furthermore, after shrinking $B^\circ$, $S_b$ is a  union of minimal lc centers of $(Y_b,\Delta_b)$
of dimension $\dim(S)- \dim (B)$. 
Hence $\reg(X_b)=\dim(X) - \dim (S)-1$ for $b\in B^\circ$, 
which proves (1) when $X$ is normal.

If $X$ is not normal, let $(\overline{X},\oG+\overline{\Delta}+\overline{D})$ denote the normalization of $(X,\Delta+D)$.
There exists an open set $U\subset B$ such  $(\overline{X}_U,\oG_U+\overline{\Delta}_U+\overline{D}_U) \to U$ is a family of boundary polarized CY pairs and $(\overline{X}_b,\oG_b+\overline{\Delta}_b+\overline{\Delta}_b)$ is the normalization  of $(X_b,\Delta_b+D_b)$ for each $b\in U$.
Hence applying the normal case to the family of over $U$ completes the proof of (1).

For  (2), it suffices to prove the result when $X$ is normal. 
Indeed, the reduction step to the normal case is similar to the last paragraph of  the proof of Proposition \ref{p:SrcSequiv}. 

Assume $X$ is normal. 
If $(X_\eta,\Delta_\eta+D_\eta)$ is klt, then 
there is nothing to prove, since ${\rm reg} (X_0,\Delta_0+D_0) = -1$.
If not, then choose a dlt modification $(Y,\Gamma_Y) \to  (X,\Delta+D+X_0)$
and a minimal lc center $S\subset Y$ of $(Y,\Gamma_Y)$ dominating $C$. 
Since $S\cap Y_0 \neq \emptyset$,
there exists an irreducible component $F\subset Y_0$ with $S\cap F \neq \emptyset$. 
Since $F$ is an lc center of $(Y,\Gamma_Y)$, 
$S\cap F$ is an lc center of $(Y,\Gamma_Y)$ as well, 
and, hence,  there is  a minimal lc center $T$ of $(Y,\Delta_Y)$ with $T\subset S\cap F$.  
Now observe
\begin{multline*}
	{\rm coreg}(X_0,\Delta_0+D_0) = \dim (\Src(X_0,\Delta_0+D_0) ) 
	= \dim (\Src(0,X,\Delta+D+X_0)) \\
	= \dim T 
	\leq \dim S -1 = \dim  S_\eta ={\rm coreg}(X_\eta,\Delta_\eta+D_\eta),
\end{multline*}
where the second equality holds by Lemma \ref{l:sourceovercurve}.
\end{proof}

\subsection{Regularity 0 pairs}\label{ss:reg0pairs}

In this section, we study regularity $0$ pairs.
Under mild hypotheses, we prove that they  are parametrized by a good moduli space (Theorem \ref{t:modulireg0}).

\subsubsection{Properties}

\begin{lem}\label{l:typeIIlcplace}
If  $(X,\Delta+D)$ is an lc regularity 0  boundary polarized CY pair, then
\begin{enumerate}
\item  $(X,\Delta+D)$ has at most two lc places and

\item $(X,\Delta)$ has at most one lc place.
\end{enumerate}
If $(X,\Delta+(1+\vep)D)$ is lc for $0<\vep\ll1$ as well, then $(X,\Delta+D)$ has exactly one lc place.

\end{lem}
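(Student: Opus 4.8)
The statement to prove is Lemma \ref{l:typeIIlcplace}, whose three parts concern the lc places of a regularity $0$ boundary polarized CY pair $(X,\Delta+D)$. The key point throughout is that ``regularity $0$'' means $\dim\Src(X,\Delta+D)=0$, i.e. the minimal lc centers on a dlt modification of (a component of) the normalization are points, and the dual complex of the dlt modification is $0$-dimensional, hence a single point or two points. I would organize the proof around the dlt modification and the structure theory of sources recalled in Section \ref{s:sources}, together with the relation between lc places and the dual complex.

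\textbf{Step 1: reduce to the normal case and set up the dlt modification.} If $(X,\Delta+D)$ is klt there are no lc places at all and every assertion (including the last) is vacuous or trivially fails the hypothesis ``lc but the $(1+\vep)$-perturbation is still lc forces a place''; so I would first dispose of that case and then assume $(X,\Delta+D)$ is lc but not klt. Passing to the normalization $(\oX,\oG+\oDe+\oD)=\sqcup_i(\oX_i,\oG_i+\oDe_i+\oD_i)$, the lc places of $(X,\Delta+D)$ are by definition the disjoint union of those of the components, and $\Src(X,\Delta+D)$ is the crepant birational class of a minimal lc center on a dlt modification of some $(\oX_i,\oG_i+\oDe_i)$. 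Regularity $0$ says this minimal lc center is $0$-dimensional. I would fix a dlt modification $g:(Y,\Gamma_Y)\to(\oX_i,\oG_i+\oDe_i+\oD_i)$ and note that the lc places of $(X,\Delta+D)$ correspond to the divisorial strata (the vertices) of the dual complex $\mathcal D(\Gamma_Y^{=1})$, possibly together with lc places that are themselves divisors on $\oX_i$ (components of $\oG_i$, $\oD$, or $\Delta$-components of coefficient $1$). The crucial dimension bound: $\dim\mathcal D(\Gamma_Y^{=1}) = \dim X - 1 - \dim(\text{minimal lc center}) = \reg(X,\Delta+D) = 0$.

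\textbf{Step 2: a $0$-dimensional dual complex has at most two vertices, and this bounds lc places.} Since the dual complex is $0$-dimensional, it is a point or a pair of points; a $1$-simplex (edge) would force $\dim\ge 1$. This gives at most two divisorial lc places ``over'' the situation. For part (1), I would argue that a regularity $0$ lc CY pair $(X,\Delta+D)$ has at most two lc places by combining the $0$-dimensionality of the dual complex with the observation that any two lc places whose centers both lie in the non-klt locus and both ``see'' each other must be joined by an edge in some common dlt modification --- forcing dimension $\ge1$ unless they coincide, and leaving room for exactly the two-vertex configuration. Concretely this is the standard dichotomy: either a single divisorial lc place (dual complex $=$ point) or a ``$\bP^1$-link / standard degeneration'' type configuration with two lc places exchanged appropriately; I would cite Koll\'ar's gluing/source theory (\cite[\S4, Thm.~1]{Kol13}, \cite{Kol16}) and Prop.~\ref{prop:orbcone-adjunction} for the two-place model (orbifold cone, sections at $0$ and $\infty$). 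Part (2) then follows because $(X,\Delta)$ is log Fano, hence its lc places form a proper subset: any lc place of $(X,\Delta)$ is an lc place of $(X,\Delta+D)$ on which moreover $A_{X,\Delta+D}(E)=0$ while $A_{X,\Delta}(E)\ge A_{X,\Delta+D}(E)$ with equality iff $\ord_E(D)=0$; I would show that at most one of the (at most two) lc places of $(X,\Delta+D)$ can have $\ord_E(D)=0$, because $D$ is ample, so $D$ must meet every divisor, in particular every divisorial center --- the one exception being a place whose center is disjoint from $\supp D$, and two disjoint such centers are incompatible with $D$ ample plus the $0$-dimensional dual complex. (The clean way: in the two-place orbifold-cone model, $X_\infty$ and $X_0$ are the two sections, $D=X_\infty$ is one of them, so only $X_0$ is an lc place of $(X,\Delta)$; in the one-place model the unique place may or may not be an lc place of $(X,\Delta)$.)

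\textbf{Step 3 (the final statement): if $(X,\Delta+(1+\vep)D)$ is lc for $0<\vep\ll1$, then $(X,\Delta+D)$ has exactly one lc place.} By part (1) there are at most two, so I must rule out the zero-place case and the two-place case under this extra hypothesis. Zero lc places would mean $(X,\Delta+D)$ is klt, but then $(X,\Delta+(1+\vep)D)$ klt/lc is automatic --- however klt contradicts ``regularity $0$,'' which requires a $0$-dimensional, hence nonempty, source that is \emph{not} the klt case (in Definition \ref{d:coregularity} a klt pair has $\reg=-1$, not $0$); so the zero-place case is excluded by the regularity $0$ hypothesis itself. For the two-place case: if $E_1,E_2$ are two distinct lc places of $(X,\Delta+D)$, then since $D$ is ample both $\ord_{E_1}(D)>0$ and $\ord_{E_2}(D)>0$ cannot \emph{both} hold if one of them is to be an lc place of $(X,\Delta+D)$ while ... — more precisely, $A_{X,\Delta+(1+\vep)D}(E_j) = A_{X,\Delta+D}(E_j) - \vep\,\ord_{E_j}(D) = -\vep\,\ord_{E_j}(D)$, which is $<0$ as soon as $\ord_{E_j}(D)>0$. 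So lc-ness of $(X,\Delta+(1+\vep)D)$ forces $\ord_{E_j}(D)=0$ for \emph{every} lc place $E_j$ of $(X,\Delta+D)$. But in the two-place model the two centers are disjoint (a standard $\bP^1$-link, Prop.~\ref{p:Seifert-P^1link}), and $D$ ample cannot be disjoint from both --- indeed $D$ restricted to a general $\bP^1$-fiber connecting the two sections is a positive-degree divisor, so $D$ meets at least one of the two sections, contradiction. Hence only one lc place survives, and that place $E$ has $\ord_E(D)=0$, i.e. $E$ is also the unique lc place of $(X,\Delta)$ (consistent with part (2)). This finishes the proof.

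\textbf{Expected main obstacle.} The genuine content is Step 2's claim that regularity $0$ forces the dual-complex/lc-place configuration to be one of exactly two normal forms (one divisorial place, or the two-section $\bP^1$-link / orbifold cone), and then tracking how the ample divisor $D$ must intersect those centers. The dimension bound on the dual complex is formal from the definition of regularity, but extracting ``at most two lc places'' (rather than ``at most two vertices in one chosen dlt modification'') requires invoking that all dlt modifications give crepant-birational dual complexes (de Fernex--Kollár--Xu, as used in \cite{Kol16}) and that non-divisorial lc centers in regularity $0$ are points contained in the divisorial ones --- so the bookkeeping between ``lc places'' (valuations) and ``strata of the dual complex'' is where care is needed. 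Once that normal-form description is in hand, the perturbation argument in Step 3 and the log-Fano argument in Step 2 for part (2) are short discrepancy computations using $A_{X,\Delta+(1\pm\vep)D}(E)=A_{X,\Delta+D}(E)\mp\vep\,\ord_E(D)$ together with ampleness of $D$.
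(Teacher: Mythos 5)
Your overall architecture (dlt modification, $0$-dimensional dual complex, discrepancy computation $A_{X,\Delta+(1+\vep)D}(E)=A_{X,\Delta+D}(E)-\vep\,\ord_E(D)$ for the last statement) matches the paper's, but two of your three steps rest on arguments that do not actually work, and in both cases the missing ingredient is a connectedness theorem rather than the geometry of $D$.

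First, for part (1): a $0$-dimensional simplicial complex is \emph{not} automatically ``a point or a pair of points'' --- it is any finite disjoint union of points, so your reasoning that an edge would force dimension $\geq 1$ rules out nothing; three pairwise disjoint divisors in $\Gamma_Y^{=1}$ would still give a $0$-dimensional dual complex. The bound ``at most two'' is the content of the connectedness theorem for dlt CY pairs, \cite[Prop.~4.37]{Kol13} (since $K_Y+\Gamma_Y\sim_{\bQ}0$, the locus $\Gamma_Y^{=1}$ has at most two connected components), which combined with the disjointness coming from $\reg=0$ gives at most two prime divisors, hence at most two lc places. You gesture at ``Koll\'ar's gluing/source theory'' but attribute the bound to a dimension count that does not give it. Second, and more seriously, your argument for part (2) --- ``$D$ is ample, so $D$ must meet every divisorial center, hence $\ord_E(D)>0$ for all but one place'' --- confuses \emph{meeting} a center with having \emph{positive multiplicity} along the corresponding valuation: a line in $\bP^2$ meets every other line but has $\ord_E(D)=0$ along it, and an ample divisor can meet both sections of a $\bP^1$-link transversally with $\ord_{E_i}(D)=0$ for both. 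The paper's proof of (2) is instead another application of connectedness: $-(K_Y+\Delta_Y)$ is the pullback of the ample $-(K_X+\Delta)$, hence big and nef, so $\Delta_Y^{=1}$ is connected by \cite[Thm.~17.4]{FlipAbund}; since $\Delta_Y^{=1}\subseteq\Gamma_Y^{=1}$ is a disjoint union of prime divisors, connectedness forces at most one component. Your Step 3 inherits the same meets-versus-multiplicity error when ruling out the two-place case, but it is easily repaired: the perturbation hypothesis forces $\ord_E(D)=0$ for every lc place $E$ of $(X,\Delta+D)$, so $\LC(X,\Delta+D)=\LC(X,\Delta)$, and then part (2) gives at most one place while $\reg=0$ (not klt) gives at least one --- which is exactly how the paper concludes, and which you essentially note in your final parenthetical.
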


\begin{proof}
Let  $(Y,\Gamma_Y) \to (X,\Delta+D)$ be a dlt modification  and $\Delta_Y\leq \Gamma_Y$   the  $\bQ$-divisor  on $Y$ such that $(Y,\Delta_Y) \to (X,\Delta)$ is also a dlt modification.
Note that
$\Gamma_{Y}^{=1}$ is a union of non-intersecting prime divisors, since $\reg(X,\Delta+D)=0$.

Since $K_Y+\Gamma_Y\sim_{\bQ}0$, \cite[Prop. 4.37]{Kol13} implies $\Gamma_Y^{=1}$
has at most two connected components and so (1) holds.
Next, note that $-K_{Y}-\Delta_Y$ is big and nef, since it is the pullback $-K_X-\Delta$, which is ample. 
Thus  $\Delta_Y^{=1}$ is connected by \cite[Thm. 17.4]{FlipAbund} and so (2) holds.  
If  $(X,\Delta+(1+\vep)D)$ is lc for $0<\vep\ll1$, then the lc places of $(X,\Delta+D)$ are the same as those of $(X,\Delta)$. Thus (2) implies the last statement.
\end{proof}

\begin{prop}\label{p:typeIIslc}
If $(X,\Delta+D)$ is a regularity 0 boundary polarized CY pair, then
\begin{enumerate}
\item $X$ is normal or
\item $X$ is non-normal, has at most two irreducible components, and the normalization $(\overline{X},\overline{\Delta}+ \overline{G})$ of $(X,\Delta)$ is plt.
\end{enumerate}
\end{prop}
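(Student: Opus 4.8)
\textbf{Proof plan for Proposition \ref{p:typeIIslc}.}

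The plan is to analyze the non-normal case and extract structural consequences from the regularity hypothesis via Koll\'ar's gluing theory. Suppose $(X,\Delta+D)$ is a regularity $0$ boundary polarized CY pair with $X$ non-normal. Let $\pi:(\overline{X},\overline{G}+\overline{\Delta}+\overline{D})\to (X,\Delta+D)$ be the normalization, writing $(\overline{X},\overline{G}+\overline{\Delta}+\overline{D})=\sqcup_{i=1}^r(\overline{X}_i,\overline{G}_i+\overline{\Delta}_i+\overline{D}_i)$, and let $\tau:\overline{G}^n\to \overline{G}^n$ be the associated involution. Since $X$ is non-normal, the conductor $\overline{G}$ is a nonempty reduced divisor, so each component $(\overline{X}_i,\overline{G}_i+\overline{\Delta}_i+\overline{D}_i)$ is an lc boundary polarized CY pair whose log Fano boundary $\overline{G}_i+\overline{\Delta}_i$ has a nonzero reduced part, i.e. $\overline{G}_i+\overline{\Delta}_i$ has at least one lc place coming from a component of $\overline{G}_i$. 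The first step is to observe that $\reg(X,\Delta+D)=0$ forces $\reg(\overline{X}_i,\overline{G}_i+\overline{\Delta}_i+\overline{D}_i)=0$ for each $i$ (the source of the non-normal pair is crepant birational to the source of each component, by the lemma following Definition \ref{d:source} together with the surjectivity of $\pi$), so Lemma \ref{l:typeIIlcplace} applies to each component.

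Next I would apply Lemma \ref{l:typeIIlcplace}.1 to each $(\overline{X}_i,\overline{G}_i+\overline{\Delta}_i+\overline{D}_i)$: it has at most two lc places. But since $\overline{G}_i$ is a nonzero reduced divisor contributing lc places distinct from any place of $\overline{D}_i$, and moreover every component of $\overline{G}_i$ is an lc place of $(\overline{X}_i,\overline{G}_i+\overline{\Delta}_i+\overline{D}_i)$, the conductor $\overline{G}_i$ has at most two components in total across the whole gluing, and the involution $\tau$ identifies components of $\overline{G}^n$ in pairs (fixed-point-freely in codimension $1$). A counting argument then shows: either there is a single component $\overline{X}_1$ whose conductor $\overline{G}_1$ has exactly two components interchanged by $\tau$, or there are exactly two components $\overline{X}_1,\overline{X}_2$ each with a single conductor component, glued to each other by $\tau$. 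In either case $X$ has at most two irreducible components. The key point making this work is that $\overline{G}_i$ contributes reduced (coefficient-one) divisors to the log Fano boundary, so they are genuine lc places and are counted by Lemma \ref{l:typeIIlcplace}.

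For the plt claim on the normalization of $(X,\Delta)$: here I apply Lemma \ref{l:typeIIlcplace}.2 to each component, giving that $(\overline{X}_i,\overline{G}_i+\overline{\Delta}_i)$ has \emph{exactly} one lc place (it has at least one, coming from $\overline{G}_i$, and at most one by the lemma). Exactly one lc place, which is a divisorial place on $\overline{X}_i$ itself (namely a component of $\overline{G}_i$), together with $(\overline{X}_i,\overline{G}_i+\overline{\Delta}_i)$ being lc, means precisely that $(\overline{X}_i,\overline{G}_i+\overline{\Delta}_i)$ is plt: the unique lc place is the single boundary component $\overline{G}_i$, $\overline{\Delta}_i$ has coefficients strictly less than $1$, and there are no exceptional lc places, which is the definition of plt. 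Assembling over $i$ gives that $(\overline{X},\overline{G}+\overline{\Delta})$ is plt, which is statement (2).

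The main obstacle I anticipate is the bookkeeping in the counting step: one must carefully track that each connected component of the conductor of each $\overline{X}_i$ genuinely contributes a \emph{distinct} lc place (not shared with $\overline{D}_i$ and not collapsing under $\tau$ to fewer places), so that the bound ``at most two lc places per component'' from Lemma \ref{l:typeIIlcplace}.1 really does bound the number of conductor components and hence the number of irreducible components of $X$. This requires knowing that the polarizing boundary $\overline{D}_i$ is disjoint (as lc places) from the conductor, which follows because $D$ is $\bQ$-Cartier and ample so its components are not among the conductor components (the conductor lies over the non-normal locus), and that $\tau$ acts without fixed components. Once these disjointness and fixed-point-freeness facts are in hand, the rest is a short combinatorial case analysis.
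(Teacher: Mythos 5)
Your overall strategy is the paper's: pass to the normalization, observe that each component is a regularity $0$ lc boundary polarized CY pair, apply Lemma \ref{l:typeIIlcplace}, and use the gluing involution. Your plt step is correct and matches the paper: each component of $\overline{G}_i$ has coefficient one in the log Fano boundary, hence is an lc place of $(\overline{X}_i,\overline{G}_i+\overline{\Delta}_i)$, so Lemma \ref{l:typeIIlcplace}.2 forces $\overline{G}_i$ to be prime and the pair to be plt.

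However, your counting of the irreducible components of $X$ has a genuine gap. You base it on Lemma \ref{l:typeIIlcplace}.1, which only says that each $(\overline{X}_i,\overline{G}_i+\overline{\Delta}_i+\overline{D}_i)$ has at most two lc places, hence that each $\overline{G}_i$ has at most two components. That bound does not yield your claimed dichotomy: a chain or a cycle of three or more normalization components, each carrying at most two conductor divisors, is connected and perfectly consistent with ``at most two lc places per component.'' In the gluing graph whose vertices are the $\overline{X}_i$ and whose edges are the $\tau$-orbits of conductor components, a degree bound of $2$ at each vertex allows connected graphs of arbitrary size, so $r\le 2$ does not follow. The input the paper actually uses is Lemma \ref{l:typeIIlcplace}.2 applied to the log Fano pair $(\overline{X}_i,\overline{G}_i+\overline{\Delta}_i)$: at most one lc place forces $\overline{G}_i$ to be prime, so each vertex of the gluing graph has degree at most one, and connectedness of $X$ then gives at most two components. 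You do establish the primeness of $\overline{G}_i$ later, in your plt paragraph; moving that observation to the front and basing the component count on it (rather than on Lemma \ref{l:typeIIlcplace}.1) repairs the argument and reproduces the paper's proof.
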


\begin{proof}
If $X$ is non-normal, then the normalization 
$ (\overline{X},\oG+\overline{\Delta} +\overline{D})= \sqcup_{i=1}^r  (\overline{X}_i,\oG_i+\overline{\Delta}_i +\overline{D}_i )$
is a union of regularity 0 boundary polarized CY pairs. 
By Lemma \ref{l:typeIIlcplace}.2,
each pair $(\overline{X}_i,\oG_i+\overline{\Delta}_i)$ is plt and $\overline{G_i}$ is prime. 
Since $X$ is the
gluing of the $\overline{X}_i$ via an involution $\tau:\overline{G}\to \overline{G}$, the normalization $\overline{X}$ has at most two components.
\end{proof}

\begin{prop}\label{prop:reg0polystable}
If $(X,\Delta+D)$ is a regularity 0 boundary polarized CY pair, then
\[
\Aut^0(X,\Delta+D) \cong \bG_m^r 
\]
for some $0 \leq r \leq 2$.
Furthermore, when $r=2$, $(X,\Delta+D)$ is polystable.
\end{prop}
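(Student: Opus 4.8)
The plan is to analyze the automorphism torus of a regularity $0$ boundary polarized CY pair by reducing to the normalization and to a dlt modification, where the combinatorics of lc places constrain the rank. First I would recall from Theorem \ref{t:AutTorus} that $\Aut^0(X,\Delta+D)$ is a torus, so it suffices to bound its rank by $2$. Suppose $\bT:=\Aut^0(X,\Delta+D)\cong\bG_m^r$. By Proposition \ref{p:TequivariantTC} (or directly via the $\bT$-action on the section ring of $L:=-r(K_X+\Delta)$), every test configuration is $\bT$-equivariant, and more to the point, the $\bT$-action lifts to the normalization $(\overline{X},\overline{G}+\overline{\Delta}+\overline{D})$ and to any $\bT$-equivariant dlt modification. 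So I would fix a $\bT$-equivariant dlt modification $(Y,\Gamma_Y)\to(X,\Delta+D)$ (using that $(X,\Delta+D)$ is crepant birational to a $\bT$-equivariant log smooth model, as in the proof of Theorem \ref{t:AutTorus}), noting $\Gamma_Y^{=1}$ is a disjoint union of prime divisors since $\reg=0$.

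The key step is to show that a nontrivial torus cannot act with too few lc places. The structure here is: by Lemma \ref{l:typeIIlcplace}, $(X,\Delta+D)$ has at most two lc places; equivalently $\Gamma_Y^{=1}$ has at most two components $E_1$ (and possibly $E_2$), and by \cite[Prop. 4.37]{Kol13} these lie over the (at most two) connected components of the dual complex. I would then pass to a minimal lc center $S\subset Y$, which is a klt CY pair and a point (since $\coreg=0$ means $\dim\Src=0$), and argue that the $\bT$-action restricts to each $E_i$ and to $S$. The crucial numerical input is that the $\bT$-weights on the conormal/normal data at the (at most two) lc strata must be nontrivial and "independent" when $r$ is large — concretely, if $r\geq 3$ then on a log smooth model $(Y,\mathrm{Supp}\,\Gamma_Y)$ the valuations $v$ with $A_{Y,\Gamma_Y}(v)=0$ are $\bT$-invariant and quasimonomial along the $\Gamma_Y^{=1}$ strata (by \cite[Lem. 2.3]{BLX19}, exactly as in the proof of Proposition \ref{p:TequivariantTC}), and a rank $\geq 3$ torus acting faithfully forces at least $3$ boundary divisors through a fixed stratum, contradicting $\#\Gamma_Y^{=1}\leq 2$. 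Thus $r\leq 2$.

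For the last assertion, suppose $r=2$. I would show $(X,\Delta+D)$ admits no nontrivial weakly special test configuration with $(\cX_0,\Delta_{\cX_0}+\cD_0)\not\cong(X,\Delta+D)$; by Proposition \ref{p:torusrk+1}, any such nonproduct degeneration would have strictly larger automorphism group, i.e.\ $\dim\Aut\geq 3$, which contradicts the bound $r\leq 2$ just established (applied to $(\cX_0,\Delta_{\cX_0}+\cD_0)$, which is again a regularity $0$ pair by upper semicontinuity of regularity, Proposition \ref{p:regularityusc}, combined with Proposition \ref{p:SrcSequiv} showing the source is preserved under S-equivalence — in fact $\reg$ is preserved along a product-free degeneration since the generic and special source are crepant birational). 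Hence every weakly special test configuration is a product, which is exactly polystability in the sense of Definition \ref{def:polystable}.

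\textbf{Main obstacle.} The delicate point is the rank bound $r\leq 2$: one must carefully translate "$\reg(X,\Delta+D)=0$, so at most two disjoint lc divisors" into "at most two independent $\bG_m$-directions." The cleanest route is probably via the weight decomposition of the section ring: a faithful $\bT=\bG_m^r$-action with $r\geq 3$ produces, via the Rees/test-configuration correspondence (Theorem \ref{thm:CZ-lcplace} and Proposition \ref{p:lcplacestestconfig}), too many distinct one-parameter degenerations whose special fibers are governed by lc places in $\LC(X,\Delta+D)$, overwhelming the bound $\#\LC\leq 2$; making this counting rigorous — especially handling the non-normal case via the gluing involution $\tau$ on $\overline{G}^n$ as in Proposition \ref{p:typeIIslc} — is where the real work lies. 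An alternative is to invoke \cite[Lem. 2.1]{Hu18} more carefully on a dlt modification to directly read off the torus rank from the combinatorics of $\Gamma_Y^{=1}$.
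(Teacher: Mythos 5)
Your argument for the final assertion (polystability when $r=2$) is correct and is essentially the paper's: combine Proposition \ref{p:SrcSequiv} (regularity is preserved under weakly special degeneration) with Proposition \ref{p:torusrk+1} to rule out any non-product degeneration. The issue is with the rank bound $r\leq 2$, which you yourself flag as the unresolved ``main obstacle,'' and where the mechanism you propose is not the right one.

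Your counting dichotomy --- ``$r\geq 3$ forces at least $3$ boundary divisors through a fixed stratum, contradicting $\#\Gamma_Y^{=1}\leq 2$'' --- is both unjustified and aimed at the wrong threshold. The correct statement, and the paper's argument, is that a \emph{normal} regularity $0$ pair cannot carry even a rank $2$ torus. Indeed, for $X$ normal each one-parameter subgroup $\rho\in\Hom(\bG_m,\Aut^0(X,\Delta+D))$ gives a product test configuration whose special-fiber valuation $v_\rho=b_\rho\,\ord_{E_\rho}$ is an lc place of $(X,\Delta+D)$ (Lemma \ref{l:tclcplace}), and the correspondence respects scaling, so distinct rays in the cocharacter lattice give distinct divisors $E_\rho$ with $A_{X,\Delta+D}(E_\rho)=0$. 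For $r\geq 2$ there are infinitely many such rays, hence infinitely many lc places, contradicting Lemma \ref{l:typeIIlcplace}. So $r\leq 1$ when $X$ is normal. The jump from $1$ to $2$ in the general statement comes not from a second or third lc divisor on one component, but from non-normality: by Proposition \ref{p:typeIIslc} the normalization has at most two connected components, each with $\Aut^0$ of rank at most $1$ by the normal case, and $\Aut^0(X,\Delta+D)$ embeds into the product of these. Your sketch does mention handling the non-normal case via Proposition \ref{p:typeIIslc}, but as written the core counting step is the part that does the work, and the version you state (a rank-versus-number-of-boundary-divisors comparison on a dlt modification) would not close the argument --- it would not even exclude $\bG_m^2$ acting on a normal pair with two lc places, which is exactly the case the proof must rule out.
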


\begin{proof} 
First, assume $X$ is normal.
By Theorem \ref{t:AutTorus}, $\Aut^0(X,\Delta+D) \cong \bG_m^r$ for some $r\geq 0$. 
Now, each  $\rho \in N:= \Hom(\bG_m , \Aut^0(X,\Delta+D))$ induces a distinct product test configuration 
\[
(\cX_{\rho}, \Delta_{\cX_\rho}+\cD_{\rho})
:= 
(X,\Delta+D)\times \bA^1, 
\]
and, hence,  a $\bZ$-valued divisorial valuation $v_\rho $ satisfying 
$A_{X,\Delta+D}(v_\rho)= 0$ by Lemma \ref{l:tclcplace}.
Thus $v_{\rho}= b_\rho \cdot \ord_{E_{\rho}}$ for some $b_\rho\in \bN$ and a divisor $E_{\rho}$ over $X$  that is an lc place of $(X,\Delta+D)$.
Since the correspondence respects the scaling of test configurations and valuations,
\[
\ord_{E_\rho} = \ord_{E_{\rho'}}\quad \quad \iff \quad \quad m \rho = m'\rho' \ \text{ for some } m,m'\in \bZ_{>0}
.\]
Therefore, if $r\geq 2$, then there are infinitely many divisors $E$ over $X$ with $A_{X,\Delta+D}(E)=0$. Since the  latter cannot occur by Lemma \ref{l:typeIIlcplace}.2,
 $ r \leq 1$ when $X$ is normal.

Next, assume  $X$ is not normal.
By the normal case and Proposition \ref{p:typeIIslc}, $\Aut(\overline{X}, \oG+\overline{\Delta}+\overline{\Delta})$ is a torus of rank at most two.
Since there is an injection
\[
\Aut^0(X,\Delta+D)\hookrightarrow \Aut^0 (\overline{X}, \oG+\overline{\Delta}+\overline{\Delta}) 
,\]
$\Aut^0(X,\Delta+D) $ is also a torus of rank at most two.

Finally, we analyze the case when  $\Aut^0(X,\Delta+D)\cong \bG_m^2$.
If
 $(X,\Delta+D)\rightsquigarrow(X_0,\Delta_0+D_0)$
is a weakly special degeneration, then 
$\reg(X_0,\Delta_0+D_0) =0$ by Proposition \ref{p:SrcSequiv} and, hence, $\Aut^0(X_0,\Delta_0+D_0)$ is a torus of rank at most two. 
Thus Proposition \ref{p:torusrk+1} implies $(X,\Delta+D)\cong (X_0,\Delta_0+D_0)$.
Therefore $(X, \Delta+D)$ is polystable as in Definition \ref{def:polystable}.
\end{proof}

\begin{prop}\label{p:tcHstableII}
If $(X,\Delta+D)$ is a regularity 0 boundary polarized CY pair
such that  $(X,\Delta+(1+\vep)D)$ is lc for $0<\vep \ll1$, then the  test configurations of $(X,\Delta+D)$ are
\begin{enumerate}
	\item the trivial test configuration and
	\item $
	(\cX_F,\Delta_{\cX_F}+\cD_F)
	$,
	where $\cX_F$ denotes the   test configuration induced by the unique lc place $F$ of $(X,\Delta)$, as well as its base changes by  $\bA^1\to \bA^1$ sending $t\mapsto t^c$.
\end{enumerate}
\end{prop}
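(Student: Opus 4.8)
The plan is to classify the weakly special test configurations $(\cX,\Delta_{\cX}+\cD)$ of $(X,\Delta+D)$ by first reducing to the normal case and then using the lc-place description of test configurations from Theorem \ref{thm:CZ-lcplace} together with the regularity hypothesis. First I would reduce to the case where $\cX_0$ is integral: a general weakly special test configuration has $\cX_0$ demi-normal and possibly reducible, but by Lemma \ref{l:tclcplace} (or rather the analysis in Lemma \ref{l:lctestconfig} and Proposition \ref{p:lcplacestestconfig}) each irreducible component $E\subset \cX_0$ gives a valuation $v_E\in\LC(X,\Delta+D)$. By Lemma \ref{l:typeIIlcplace}.1, $(X,\Delta+D)$ has at most two lc places; but the assumption that $(X,\Delta+(1+\vep)D)$ is lc for $0<\vep\ll1$ forces the lc places of $(X,\Delta+D)$ to coincide with those of $(X,\Delta)$, and by Lemma \ref{l:typeIIlcplace}.2 (the last sentence) there is exactly one lc place $F$ of $(X,\Delta)$. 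Hence there is a unique divisorial valuation $v$ (up to scaling) with $A_{X,\Delta+D}(v)=0$, namely $v=\ord_F$, and so every irreducible component of $\cX_0$ has $v_E$ a multiple of $\ord_F$.

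Next, I would handle the normal case. If $X$ is normal and $(\cX,\Delta_{\cX}+\cD)$ is a weakly special test configuration with $\cX_0$ integral, then by Theorem \ref{thm:CZ-lcplace} the test configuration is determined by the $\bZ$-valued valuation $v_{\cX_0}\in\LC(X,\Delta+D)$. Since all such valuations are of the form $c\,\ord_F$ with $c\in\bZ_{\ge0}$, we get either the trivial test configuration (when $c=0$, i.e. $v_{\cX_0}$ is trivial) or, for each $c\in\bZ_{>0}$, the test configuration $\cX_F$ associated to $F$ and its base changes $t\mapsto t^c$. For the case $\cX_0$ non-integral: by Proposition \ref{p:lcplacestestconfig} applied to the finitely many $v_i = v_{E_i}$, each $v_{E_i}$ is a multiple of $\ord_F$, so the induced filtration $\cF^\bullet$ of the section ring is the same as the one induced by $\ord_F$ alone (the minimum over a collection of parallel valuations is governed by one of them), hence by Lemma \ref{l:tcReesvals} and the uniqueness in \cite[Prop. 2.15]{BHJ17} the test configuration coincides with $\cX_F$ (up to base change), which in particular has $\cX_0$ integral. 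This rules out genuinely reducible special fibers.

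Finally, I would deduce the non-normal case by descent through the normalization. If $X$ is non-normal, Proposition \ref{p:typeIIslc} says $X$ has at most two components and the normalization $(\oX,\oG+\oDe+\oD)$ is (a disjoint union of) plt, hence klt-away-from-$\oG$, regularity-0 pairs. A test configuration $(\cX,\Delta_{\cX}+\cD)$ pulls back to a test configuration $(\ocX,\ocG+\Delta_{\ocX}+\ocD)$ of $(\oX,\oG+\oDe+\oD)$, which by the normal case is either trivial or induced by the unique lc place of $(\oX,\oG+\oDe)$; one checks this lc place is exactly the birational transform of $F$ (equivalently, $F$ restricted to the components of $\oX$), using that $F$ is an lc place of $(X,\Delta)$ and adjunction. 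Conversely, the gluing involution $\tau$ extends (as in the proof of the final theorem of Section \ref{s:Thetared}, i.e. the stability theorem, or via Proposition \ref{p:bpcygluing}), so the test configuration downstairs is recovered by gluing, and it is $\cX_F$ or its base change. I expect the main obstacle to be the bookkeeping in the non-integral/non-normal reduction: making precise that a collection of parallel divisorial valuations induces the same filtration as a single one, and that the gluing data for the test configuration is uniquely determined by the gluing data of $(X,\Delta+D)$ — this last point requires invoking the uniqueness of extensions of involutions (Lemma \ref{l:CYbirational}) and the $\bG_m$-equivariance from Proposition \ref{p:TequivariantTC}.
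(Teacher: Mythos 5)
Your proposal is correct and follows essentially the same route as the paper's proof: describe the induced filtration via Lemma \ref{l:lctestconfig}, use Lemma \ref{l:typeIIlcplace} to see that every $v_E$ for $E\subset\cX_0$ is a nonnegative multiple of the single lc place $\ord_F$ of $(X,\Delta)$, and conclude via the bijection of Theorem \ref{thm:CZ-lcplace} that the test configuration is trivial or a base change of $\cX_F$. The only substantive difference is that your third paragraph on non-normal $X$ is vacuous, since the hypothesis that $(X,\Delta+(1+\vep)D)$ is lc already forces $X$ to be normal under the paper's conventions.
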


\begin{proof}
Fix a test configuration $(\cX,\Delta_{\cX}+\cD)$ of $(X,\Delta+D)$ and $r>0$ such that $L:=-r(K_X+\Delta)$ is a Cartier divisor.
By Lemma \ref{l:lctestconfig},
the induced $\bZ$-filtration  $\cF^\bullet$ of $H^0(X,mL)$ satisfies
\[
\cF^\la H^0(X,mL) : = \bigcap_{E\subset \cX_0} \{ s\in H^0(X,mL)\, \vert\, v_E(s) \geq  \la +A_{X,\Delta}(v_E) \}
,\]
and 
$A_{X,\Delta+D}(v_E)=0$ for each $E\subset \cX_0$
Thus
$v_E = c_E \cdot \ord_F$ for some integer $c_E \geq 0$  by Lemma \ref{l:typeIIlcplace}.
Hence, if we set $c:= \max \{ c_E \, \vert\, E\subset \cX_0\}$, then
\[
\cF^\la H^0(X,mL)
=
\{ s\in H^0(X,mL)\, \vert\, c  \cdot \ord_F(s) \geq  \la  \}
\]
If $c=0$, then (1) holds.
If $c>0$, then (2) holds using the bijection  in Theorem \ref{thm:CZ-lcplace}.
\end{proof}

\subsubsection{Boundedness of degenerations}

\begin{prop}\label{p:Hstabledegbound}
Let $\mathfrak{F}$ be a set of regularity 0 boundary polarized CY pairs $(X,\Delta+D)$ satisfying $(X,\Delta+(1+\vep)D)$ is slc for $0<\vep \ll1 $.
Consider the set  of weakly special degenerations of elements in $\mathfrak{F}$:
\[
\overline{\mathfrak{F}}
:= 
\{(X_0,\Delta_0+D_0) \, \vert\, \exists (X,\Delta+D)\in \mathfrak{F} \, \text{ and w.s. degeneration } (X,\Delta+D) \rightsquigarrow (X_0,\Delta_0+D_0) 
\}
\]
If $\mathfrak{F}$ is bounded, then $\overline{\mathfrak{F}}$ is bounded.

\end{prop}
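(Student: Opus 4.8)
\textbf{Proof strategy for Proposition \ref{p:Hstabledegbound}.} The plan is to reduce the boundedness of $\overline{\mathfrak{F}}$ to the boundedness criterion of Proposition \ref{p:boundedness}: it suffices to exhibit positive integers $k$ and $c$ so that $-k(K_{X_0}+\Delta_0)$ is very ample and $\deg(\Delta_0^{\Div})\le c$ for all $(X_0,\Delta_0+D_0)\in\overline{\mathfrak{F}}$. Since $\mathfrak{F}$ is bounded, by Proposition \ref{p:anticanonicalsheaf} we may fix $k_0$ so that $-k_0(K_X+\Delta)$ is very ample on every $(X,\Delta+D)\in\mathfrak{F}$, and $\deg(\Delta^{\Div})$, $\deg(D^{\Div})$, and all the Hilbert-type data are constant along connected components of a finite-type parameter space. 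The volume $(-K_X-\Delta)^{\dim X}$ and $\deg\Delta^{\Div}$ are preserved under weakly special degeneration (the former is a flat-family invariant, the latter by Proposition \ref{p:anticanonicalsheaf}.3 applied to the test configuration over $\bA^1$), so the degree bound for $\overline{\mathfrak F}$ is immediate. The real content is the bound on the Cartier index, i.e.\ producing a uniform $k$ with $-k(K_{X_0}+\Delta_0)$ very ample.

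First I would invoke Proposition \ref{p:tcHstableII}: for each $(X,\Delta+D)\in\mathfrak F$, the only nontrivial test configurations are the one $(\cX_F,\Delta_{\cX_F}+\cD_F)$ induced by the unique lc place $F$ of $(X,\Delta)$ together with its base changes $t\mapsto t^c$; the latter all have the same special fiber as $(\cX_F)_0$. Hence every element of $\overline{\mathfrak F}$ is either an element of $\mathfrak F$ or the special fiber of the \emph{canonically defined} degeneration $\cX_F$. So I only need to bound the family of pairs $\{((\cX_F)_0,(\Delta_{\cX_F})_0+(\cD_F)_0)\}$ as $(X,\Delta+D)$ ranges over $\mathfrak F$. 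The key point is that $\cX_F=\Proj$ of the Rees algebra of the filtration $\cF^\bullet H^0(X,mL)$ determined by $F$ (Proposition \ref{p:lcplacestestconfig}), and this construction is \emph{uniform in families}: applying Lemma \ref{l:tcextend} (families of test configurations) to a finite-type family $(\fX,\Delta_\fX+\mathcal D_\fX)\to T$ realizing $\mathfrak F$, and stratifying $T$ so that on each stratum the unique lc place $F$ spreads out to a divisor over $\fX$ (this uses that the lc place is unique, hence constructible to locate), I get over each stratum a family of test configurations whose fiberwise special fibers are exactly the pairs in $\overline{\mathfrak F}\setminus\mathfrak F$ arising from that stratum. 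Since $T$ has finitely many strata, this exhibits $\overline{\mathfrak F}$ as covered by finitely many finite-type families, hence bounded.

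The main obstacle I anticipate is the spreading-out step: showing that the unique lc place $F$ of $(X,\Delta)$ varies nicely in the family $\fX\to T$, i.e.\ that after a finite stratification of $T$ there is a single divisor $\mathcal F$ over $\fX$ restricting to the lc place fiberwise. This requires that lc places of complements be detectable and that the quasi-monomial coordinates (via a relative log resolution and Lemma \ref{l:lctestconfig}) be constant on strata; constructibility-type arguments in the style of \cite{BLX19,ABHLX20} should handle this, using Proposition \ref{p:regularityusc} to first restrict to the locus of regularity exactly $0$. A secondary point is checking that the finite generation needed to form $\cX_F$ holds in families — but this follows from Theorem \ref{t:fg} (or the coarser statement at the end of Section \ref{s:Scompleteness}) applied relatively, together with the fact that finite generation is an open condition so holds after further stratifying $T$. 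Once the uniform family is in hand, Proposition \ref{p:anticanonicalsheaf}.4 gives the uniform $k$ making $-k(K_{X_0}+\Delta_0)$ very ample, and Proposition \ref{p:boundedness} concludes.
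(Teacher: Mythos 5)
Your treatment of the normal case is essentially the paper's argument, and in fact you are making it harder than it needs to be. The paper does not spread out the lc place $F$ at all: it applies Proposition \ref{p:tcHstableII} only to the \emph{generic fiber} $(Y_g,\Gamma_g+B_g)$ of a bounding family over the function field, obtains a single non-trivial test configuration there, and then uses Lemma \ref{l:tcextend} to extend that test configuration to a family of non-trivial test configurations over a dense open $U\subset S$; Noetherian induction on $S$ then finishes. The uniqueness statement of Proposition \ref{p:tcHstableII} guarantees a posteriori that the special fibers of this family exhaust $\overline{\mathfrak{F}}\setminus\mathfrak{F}$ over each stratum, so the "main obstacle" you identify (constructibility of the lc place, relative log resolutions, relative finite generation) never arises. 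Your endpoint via Proposition \ref{p:boundedness} (uniform $k$ and $c$) is a fine alternative to the paper's direct exhibition of a bounding family, and your observation that $\deg(\Delta^{\Div})$ is constant along test configurations is correct.

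The genuine gap is the non-normal case. The hypothesis only asks that $(X,\Delta+(1+\vep)D)$ be \emph{slc}, and by Proposition \ref{p:typeIIslc} regularity $0$ pairs can be non-normal (two components glued along the conductor). Proposition \ref{p:tcHstableII} is stated and proved only for \emph{lc}, hence normal, pairs — its proof runs through Lemma \ref{l:lctestconfig} and Lemma \ref{l:typeIIlcplace}, both of which require normality — so your classification of the possible degenerations simply does not apply to the non-normal members of $\mathfrak{F}$, and your argument gives no control over their weakly special degenerations. The paper closes this by passing to normalizations: the set $\mathfrak{F}^n$ of normalizations is bounded, the normal case bounds all degenerations of pairs in $\mathfrak{F}^n$, one checks that $(\overline{\mathfrak{F}})^n$ sits inside that set, and then Lemma \ref{l:reducetonormalcase} (which parametrizes the gluing involutions $\tau$ on the conductor in a finite-type family and invokes Koll\'ar's gluing theory) recovers boundedness of $\overline{\mathfrak{F}}$ itself. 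That last descent step is not automatic — boundedness of normalizations does not formally imply boundedness of the slc pairs without also bounding the gluing data — so your proof needs this additional ingredient.
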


Before proving the statement, we verify the following lemma, which allows us to reduce to the case when the pairs are normal.

\begin{lem}\label{l:reducetonormalcase}
Let $\mathfrak{F}$ be a set of regularity 0 boundary polarized CY pairs and set 
\[
\mathfrak{F}^{\nu}: = \{
(\overline{X},\overline{\Delta}+\overline{G}+\overline{D}) \, \vert\, (X,\Delta+D) \in \mathfrak{F} \}.
\]
If $\mathfrak{F}^{\nu}$ is bounded, then $\mathfrak{F}$ is bounded.
\end{lem}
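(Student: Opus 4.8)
Lemma~\ref{l:reducetonormalcase} is a descent statement: we want to recover boundedness of a family of non-normal slc pairs from boundedness of their normalizations together with gluing data. The plan is to use Koll\'ar's gluing theory (as packaged in Proposition~\ref{p:bpcygluing}) to reconstruct the members of $\mathfrak{F}$ from the members of $\mathfrak{F}^\nu$ in a family. By Proposition~\ref{p:typeIIslc}, every non-normal $(X,\Delta+D)\in\mathfrak F$ has normalization $(\overline X,\overline G+\overline\Delta+\overline D)$ with at most two irreducible components, each plt, and $\overline G$ a prime divisor on each component; the non-normal structure is determined by the involution $\tau$ of $\overline G^n$ fixing $\mathrm{Diff}_{\overline G^n}(\overline\Delta+\overline D)$. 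So the extra data beyond the normalization is just the involution $\tau$, which acts on a bounded family of pairs $(\overline G^n,\mathrm{Diff}_{\overline G^n}(\overline\Delta+\overline D))$ — themselves of dimension $n-1$ where $n=\dim X$.

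First I would set up the bounded family: by hypothesis there is a family $(\overline{\cX},\overline{\cG}+\overline{\Delta}_{\overline{\cX}}+\overline{\cD})\to B$ over a finite-type (and, after stratifying, smooth) base $B$ realizing every member of $\mathfrak F^\nu$ as a fiber. Restricting to the (constructible, hence after further stratification locally closed) locus where the fiber is the normalization of a non-normal member, I would apply Lemma~\ref{l:familyslcnormadj} to see that the different $(\overline{\cG}^n,\mathrm{Diff}_{\overline{\cG}^n}(\overline{\Delta}_{\overline{\cX}}+\overline{\cD}))\to B$ is again a bounded family of (regularity $0$, by Proposition~\ref{p:regularityusc}) pairs. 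The involutions $\tau$ that arise are automorphisms of geometric fibers of this family commuting with the structure map; since the relative automorphism/isomorphism functor of a bounded family of polarized pairs is itself of finite type (it is representable by a scheme of finite type over $B$, using that $-K$ of the different is relatively semiample and the usual Hilbert-scheme-of-graphs argument), the set of pairs $(\,\text{fiber}, \tau\,)$ that can occur is parametrized by a finite-type scheme $B'$. Base-changing the normalization family to $B'$ and applying Proposition~\ref{p:bpcygluing} fiberwise — i.e.\ forming the geometric quotient by the gluing relation $(n,n\circ\tau)$ over $B'$ — produces a family $(\cX',\Delta'+D')\to B''$ (with $B''\subset B'$ the locus where the quotient is again a family of boundary polarized CY pairs, which by Proposition~\ref{p:bpcygluing} and constructibility of slc is constructible) whose fibers include every non-normal member of $\mathfrak F$. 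Together with the trivially bounded normal members, this exhibits $\mathfrak F$ as bounded.

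The main obstacle I anticipate is making the gluing construction work \emph{in a family} rather than fiber by fiber: Proposition~\ref{p:bpcygluing} as stated glues over a fixed regular base with a dense open over which things are already known, whereas here I need to glue a whole family along a relative involution with no ``already glued'' open locus to start from. The way around this is to observe that the gluing relation $(n, n\circ\tau)\colon \overline{\cG}^n_{B''}\rightrightarrows \overline{\cX}_{B''}$ is a \emph{finite} equivalence relation (the fiberwise statement of \cite[Lem.~9.55]{Kol13} together with $B''$-flatness, since over each geometric point it is finite by Proposition~\ref{p:typeIIslc}), so \cite[Cor.~5.33]{Kol13} and \cite[Thm.~5.38]{Kol13} apply to give a family of slc pairs, and the remaining conditions ($K+\Delta+D\sim_{\bQ,B''}0$, $D$ relatively ample) descend from the normalization exactly as in the proof of Proposition~\ref{p:bpcygluing}. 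Alternatively, and perhaps more cleanly, one can avoid an explicit relative gluing altogether: since the ingredients (the normalization, the conductor, the involution) each vary in a finite-type family, the pushout $X = \overline X \amalg_{\overline G^n} (\overline G^n/\tau)$ exists as a finite-type scheme over $B''$ by \cite[Thm.~9.30--9.31]{Kol13} applied relatively, and one checks the slc and numerical conditions on fibers using that they hold on the (bounded) normalization. Either route reduces the lemma to finiteness of the parameter space $B'$ of involutions, which is the routine part.
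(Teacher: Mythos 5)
Your proposal follows essentially the same route as the paper's proof: reduce to the non-normal members, parametrize the gluing involutions by a quasi-projective subscheme of $\mathrm{Isom}_S(P,P)$ (the paper cuts this down using that the involution fixes ${\rm Diff}_P(\Gamma)$ and that $-K_{P/S}-{\rm Diff}_P(\Gamma)$ is relatively \emph{ample} --- not merely semiample, which is the property actually needed for the Hilbert-scheme-of-graphs argument to yield finite type), and then perform Koll\'ar's gluing relatively over the resulting finite-type base to exhibit every member of $\mathfrak{F}$ as a fiber. The relative gluing step is simpler than you anticipate, because by Proposition \ref{p:typeIIslc} the normalization is plt with prime conductor, so the conductor is normal and the equivalence relation generated by the involution is automatically finite, letting Koll\'ar's gluing theory apply directly over the whole base.
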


\begin{proof}
The subset of $\mathfrak{F}$ consisting of the lc pairs is bounded since it is contained in $\mathfrak{F}^\nu$. 
Thus  we may assume each pair in  $\mathfrak{F}$ is not lc.

For any pair $(X,\Delta+D) \in \mathfrak{F}$, the normalization   $(\overline{X},\oG+\overline{\Delta}+\overline{D})$ is a union of at most two lc boundary polarized CY pairs and 
the pair $(\overline{X},\oG+\overline{\Delta})$ is plt by Proposition \ref{p:typeIIslc}. Thus $G$ is normal by \cite[Prop. 5.51]{KM98}.
Let $\tau:\overline{G}\to  \overline{G}$ denote the induced generically fixed point free involution  fixing ${\rm Diff}_{\overline{G}}(\overline{\Delta})$ and ${\rm Diff}_{\overline{G}}(\overline{\Delta}+\overline{D})$. 
By \cite[Prop. 5.3]{Kol13},  $(\overline{X},\oG+\overline{\Delta}+\overline{D},\tau)$ uniquely determines $(X,\Delta+D)$. We will now parametrize such triples.

\noindent \emph{Claim}: There exists a triple $(Y,P+\Gamma+B, \Phi) \to S$ such that
\begin{enumerate}
\item[(i)] $(Y,P+\Gamma+B)\to S$ is a family of not necessarily connected lc boundary polarized CY pairs over a smooth finite type scheme $S$,
\item[(ii)] $P$ is a $\bZ$-divisor and is normal,
\item[(iii)] $\Phi:P\to P$ is an involution over $S$ that is generically fixed point free  and fixes ${\rm Diff}_{P}(\Gamma)$ and ${\rm Diff}_P(\Gamma+B)$, and 
\item[(iv)] if $(X,\Delta+D)\in \mathfrak{F}$, then there is  $s\in S$ such that $(Y_s,P_s+\Gamma_s+B_s,\Phi_s)\cong (\overline{X},\oG+\overline{\Delta}+\overline{D},\tau )$.
\end{enumerate}

\noindent \emph{Proof of Claim.}
Since $\mathfrak{F}^\nu$ is bounded, there exists a family of boundary polarized CY pairs $(Y,P+\Gamma+B) \to S$  over a smooth finite type  scheme $S$ such that if $(\overline{X},\oG+\overline{\Delta}+\overline{D})\in \mathfrak{F}^{\nu}$, then there is $s\in S$ such that $(Y_s,P_s+\Gamma_s+ B_s)\cong (\overline{X},\oG+\overline{\Delta}+D)$.
By replacing $S$ with a disjoint union of locally closed subsets, we may assume 
$(Y,\Gamma +P+B) \to S$ is a family of possibly disconnected  regularity 0 lc boundary polarized CY pairs and $P$ is a $\bZ$-divisor.

Next, consider the scheme ${\rm Isom}_S(P,P)$, which is a countable union of quasi-projective schemes.
Since $-K_{P/S}- {\rm Diff}_P(\Gamma) \sim_{\bQ} -(K_{Y/S}+P+\Gamma)\vert_P$ is ample over $S$, the subscheme of  ${\rm Isom}_S(P,P)$ fixing ${\rm Diff}_P(\Gamma)$ is  quasi-projective over $S$. 
Thus the locus parametrizing generically fixed point free involutions that fix ${\rm Diff}_{P}(\Gamma)$ and ${\rm Diff}_P(\Gamma+B)$ is a  quasi-projective scheme $S_1\subset {\rm Isom}_S(P,P)$.
Replacing $S$ with $S_1$ and $(Y,P+\Gamma+B) \to S$ with its base change by $S_1$ produces a triple satisfying (i),(iii), and (iv), except the smoothness of $S$.
By replacing $S$ with a disjoint union of locally closed subschemes, which are smooth varieties, we may assume $S$ is smooth.
Since $(Y_s,P_s+\Gamma_s+B_s)$ has regularity 0 for each $s\in S$, the pair $(Y_s,P_s+\Gamma_s)$ is plt by Proposition \ref{p:typeIIslc}. Hence $P_s$ is normal for each $s\in S$.
Thus, after replacing $S$ with a disjoint union of locally closed subsets, we may assume (ii) holds, which completes the proof of the claim.
\medskip

Consider a triple $(Y,P+\Gamma+B,\Phi)\to S$ satisfying the claim. 
Since $S$ is smooth, $(Y,\Gamma+P+B)$ is an lc pair  by Lemma \ref{l:slcadj}. 
Let $R(\phi)\subset X\times_S X$ denote the equivalence relation on $X$ induced by $\Phi$.
The involution $\Phi:P\to P$ induces a finite equivalence relation on $Y$.
By \cite[Cor. 5.51]{Kol13}, there is a demi-normal pair $(\tilde{Y},\tilde{\Gamma}+\tilde{B})$ over $S$ with normalization $(Y,P+\Gamma+B)$ and $\tilde{Y}$ is the geometric quotient of $Y$ by the equivalence relation.
Now, if $(X,\Delta+D) \in \mathfrak{F}$, then (iv) implies	$(Y_s,P_s+\Gamma_s+B_s,\Phi_s)\cong
(\overline{X},\oG+\overline{\Delta}+\overline{D},\tau)$.
Hence 	$(\tilde{Y}_s,\tilde{\Gamma}_s+\tilde{B}_s) \cong (X,\Delta+D)$. In conclusion, $\mathfrak{F}$ is  bounded.
\end{proof}

\begin{proof}[Proof of Proposition \ref{p:Hstabledegbound}]	
We first prove the proposition when each pair $(X,\Delta+D) \in \mathfrak{F}$ is normal. 
Since $\mathfrak{F}$ is bounded,  
there exists a family of boundary polarized CY pairs  $(Y,\Gamma + B)\to S$ over a smooth finite type scheme $S$ such that if $(X,\Delta+D)\in \mathfrak{F}$, then there is $s\in S$ such that $(Y_s,\Gamma_s+B_s) \cong (X,\Delta+D)$.
After replacing $S$ with a disjoint union of locally closed subsets, we may assume 
$(Y,\Gamma +B) \to S$ is a family of regularity 0 lc boundary polarized CY pairs that satisfy $(Y_s,\Gamma_s+(1+\vep) B_s)$ is lc for $0<\vep \ll1$ and $s\in S$.

We claim that after  replacing $S$ with a  disjoint union of locally closed sets, there exists a family of non-trivial test configurations $(\cY , \Gamma_{\cY}+\cB)\to \bA^1_S$ of $(Y,\Gamma+B)\to S$.
By Noetherian induction, it suffices to show that there exists a non-empty open set $U\subset S$ 
and a family of non-trivial test configurations $(\cY_U,\Gamma_{\cY_U}+\cB_U)\to \bA^1_U$ of $(Y_U,\Gamma_U+B_U)$.
To verify the latter, fix a generic point $g\in S$.
By Proposition \ref{p:tcHstableII}, $(Y_g,\Gamma_g+B_g)$  admits a non-trivial test configuration $(\cY_g,\Gamma_{\cY_g}+\cB_g)$.
Lemma \ref{l:tcextend} implies the test configuration extends to a family of non-trivial test configurations
$(\cY_U,\Gamma_{\cY_U}+\cB_U)\to \bA^1_U$ of $(Y_U,\Gamma_U+B_U)$ over an open set $g\in U\subset S$.
Therefore the claim holds.

To finish the proof in the normal case, it suffices to show that every element of $\overline{\mathfrak{F}}$ is a fiber of $(\cY , \Gamma_{\cY}+\cB)\to \bA^1_S$. 
This is clear, since if $(X,\Delta+D) \in \mathfrak{F}$, then there exists $s\in S$ such that  $(X,\Delta+D)\cong (Y_s,\Gamma_s+B_s)$
and a non-trivial test configuration $(\cY_s,\Gamma_{\cY_s}+\cB_s)$ of the latter pair. 
If $(X,\Delta+D)\rightsquigarrow (X_0,\Delta_0+D_0)$ is a weakly special degeneration, then Proposition \ref{p:tcHstableII} implies $(X_0,\Delta_0+D_0)$ must is isomorphic to a fiber of $(\cY_s,\Gamma_{\cY_s}+\cB_s) \to \bA^1$.

We will now deduce the full result from the normal case. 
By the normal case, we may assume the elements of $\mathfrak{F}$ are non-normal pairs.
Consider the set of pairs defined by normalization
\[
\mathfrak{F}^n:=\{   (\overline{X},\overline{\Delta}+\overline{G}+\overline{D}) \, \vert\, 
(X,\Delta+D) \in \mathfrak{F}\}
\quad
(\overline{\mathfrak{F}})^n:=\{   (\overline{X},\overline{\Delta}+\overline{G}+\overline{D}) \, \vert\, 
(X,\Delta+D) \in \overline{\mathfrak{F}}\}
.\]
Since $\mathfrak{F}$ is bounded, $\mathfrak{F}^n$ is bounded. 
By the normal case, the set  $\widetilde{\mathfrak{F}^n}$ consisting of weakly special degenerations of pairs in $\mathfrak{F}^n$, as well as their normalizations, is bounded.
Since $(\overline{\mathfrak{F}})^n \subset \widetilde{\mathfrak{F}^n}$, it follows that $(\overline{\mathfrak{F}})^n$ is bounded. 
Hence $\overline{\mathfrak{F}}$ is bounded by Lemma \ref{l:reducetonormalcase}.
\end{proof}

\subsubsection{Moduli of regularity 0 pairs}
In this section, we study  the substack 
\[
 \sM(\chi,N,{\bf r})^{\reg \leq 0}\subset \sM(\chi,N,{\bf r})
\]
consisting of families $(X,\Delta+D) \to B$ such that $\reg(X_b,\Delta_b+D_b)\leq 0$ for all $b\in B$. 
Note that the substack is open by Proposition \ref{p:regularityusc}. 
The following theorem shows that regularity $\leq 0$ pairs admit a good moduli theory.

\begin{thm}\label{t:modulireg0}
If $\sM \subset \sM(\chi,N,{\bf r},{\bf c})$ is an open, finite type substack, then 
\[
\overline{\sM}^{\reg \leq 0} : =\overline{\cM} \cap \sM(\chi,N,{\bf r})^{\reg \leq 0}
\]
\begin{enumerate}
\item is a finite type algebraic stack with affine diagonal and
\item admits a separated good moduli space.
\end{enumerate}
\end{thm}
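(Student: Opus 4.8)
\textbf{Proof plan for Theorem \ref{t:modulireg0}.}

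The plan is to use the existence result Theorem \ref{t:AHLH}: it suffices to show that $\overline{\sM}^{\reg \leq 0}$ is a finite type algebraic stack with affine diagonal that is S-complete and $\Theta$-reductive; then it automatically admits a separated good moduli space. The finite type and affine diagonal statements are inherited: $\overline{\sM}^{\reg \leq 0}$ is a closed substack of $\sM$ intersected with the open substack $\cM(\chi,N,{\bf r})^{\reg \leq 0}$, and $\sM$ is assumed to be finite type with affine diagonal by hypothesis (and $\cM(\chi,N,{\bf r})$ has affine diagonal by Theorem \ref{t:stack}). This gives (1). The content is in verifying S-completeness and $\Theta$-reductivity, and by Remark \ref{r:AHLHeftDVRs} it is enough to check these with respect to DVRs essentially of finite type over $\bk$.

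First I would verify $\Theta$-reductivity. Given a map $\Theta_R \setminus 0_\kappa \to \overline{\sM}^{\reg \leq 0}$, Theorem \ref{t:Thetared} (applied to the ambient stack $\cM(\chi,N,{\bf r})$, which is permissible since $R$ is essentially of finite type) provides a unique extension $\Theta_R \to \cM(\chi,N,{\bf r})$, i.e.\ a $\bG_m$-equivariant family of boundary polarized CY pairs $(\cX,\Delta_{\cX}+\cD) \to \bA^1_R$. It remains to check that this extension lands in $\overline{\sM}^{\reg\le 0}$: that the central fiber over $0_\kappa$ has regularity $\le 0$ and lies in $\overline{\sM}$. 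The regularity condition follows from Proposition \ref{p:regularityusc} (upper semicontinuity of regularity in families) together with Proposition \ref{p:SrcSequiv}: the special fiber is a weakly special degeneration of a regularity $0$ pair, hence still has regularity $\leq 0$. The membership in $\overline{\sM}$ — i.e.\ that the new central fiber is still in the stack-theoretic closure of the finite-type stack $\sM$ — follows because the closure is closed under specialization, and the central fiber over $0_\kappa$ is a specialization of the pairs appearing over the generic point. S-completeness is argued identically, using Theorem \ref{t:Scomplete} in place of Theorem \ref{t:Thetared}: a map $\STR \setminus 0 \to \overline{\sM}^{\reg\le 0}$ extends uniquely to $\STR \to \cM(\chi,N,{\bf r})$, and the unique central fiber $(\cX,\Delta_{\cX}+\cD)_{s=t=0}$ is a weakly special degeneration of the regularity $0$ pairs parametrized over $S^\circ$, hence has regularity $\le 0$ and lies in $\overline{\sM}$ by the same closure and semicontinuity arguments.

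The main obstacle I expect is not the abstract machinery but a finiteness subtlety: a priori $\cM(\chi,N,{\bf r})$ is only \emph{locally} of finite type, and $\overline{\sM}$ is the closure of a finite type substack inside it, so one must be careful that $\overline{\sM}^{\reg\le 0}$ is genuinely quasi-compact. This is where the boundedness result Proposition \ref{p:Hstabledegbound} (boundedness of weakly special degenerations of regularity $0$ pairs) is essential: it guarantees that the closure of a bounded family of regularity $0$ pairs remains bounded, so $\overline{\sM}^{\reg \leq 0}$ is of finite type and Theorem \ref{t:AHLH} genuinely applies. One should also confirm that the central fibers produced by Theorems \ref{t:Scomplete} and \ref{t:Thetared} automatically have the same Hilbert polynomial and numerical invariants $(\chi, N, {\bf r})$ — this is immediate since those theorems produce families within $\cM(\chi,N,{\bf r})$ — and that the closure $\overline{\sM}$ is preserved, which reduces to the fact that $\overline{\sM}$, being stack-theoretically closed in $\cM(\chi,N,{\bf r})$, contains all degenerations of its points. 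With quasi-compactness in hand, S-completeness and $\Theta$-reductivity give the separated good moduli space.
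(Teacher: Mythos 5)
Your overall strategy coincides with the paper's: reduce to Theorem \ref{t:AHLH}, verify S-completeness and $\Theta$-reductivity by extending over the test surfaces via Theorems \ref{t:Scomplete} and \ref{t:Thetared}, and use Proposition \ref{p:SrcSequiv} to see that the central fiber of the resulting test configuration still has regularity $\le 0$ (membership in $\overline{\sM}$ being automatic since $\overline{\sM}$ is closed in the ambient stack). That part of your argument is essentially the paper's proof of part (2), and your identification of quasi-compactness as the crux of part (1) is also correct.

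The gap is in how you dispose of that crux. Proposition \ref{p:Hstabledegbound} does not by itself ``guarantee that the closure of a bounded family of regularity $0$ pairs remains bounded,'' for two reasons. First, it carries the hypothesis that $(X,\Delta+(1+\vep)D)$ is slc for $0<\vep\ll 1$ for every pair in the set $\mathfrak{F}$; an arbitrary open finite type $\sM$ need not satisfy this, and the paper must reduce to that case by a genuine trick — changing the marking from $(r,s)$ to $(r,s/k')$ and replacing the polarizing boundary $D$ by a general member $H$ of $\tfrac{1}{Nm}\lvert -Nm(K_X+\Delta)\rvert$, which does land in the KSBA-stable locus. Second, even granting that hypothesis, Proposition \ref{p:Hstabledegbound} only bounds the set $\overline{\mathfrak{F}}$ of weakly special degenerations of pairs in $\sM(\bk)$, whereas a point of the stack-theoretic closure $\overline{\sM}$ is a priori just the special fiber of a family over a pointed curve with generic fiber in $\sM$; it is not given as a weakly special degeneration of anything. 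The paper bridges this by invoking the finite-typeness and properness of the KSBA substack (via \cite{KX20}) to produce a second, KSBA-stable filling over the same punctured curve, applying Corollary \ref{c:sequiv} to conclude that the two special fibers are S-equivalent and hence share a common weakly special degeneration lying in $\overline{\mathfrak{F}}$, and then using Proposition \ref{p:anticanonicalsheaf} to transfer the uniform very-ampleness and degree bounds from that common degeneration back to the original special fiber. This chain of arguments is Proposition \ref{p:regbounded} in the paper, and it is the real content of part (1); your sketch asserts its conclusion but does not supply it.
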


Here $\overline{\cM}$ denotes the stack theoretic closure of $\cM$ and we moreover denote $\cM^{\reg \leq 0} := \cM \cap \sM(\chi,N,{\bf r})^{\reg \leq 0}$. To prove Theorem \ref{t:modulireg0}.1, we first verify the following boundedness result, which relies on   Proposition \ref{p:Hstabledegbound} and \cite{KX20}.

\begin{prop}\label{p:regbounded}
With the notation and assumptions in Theorem \ref{t:modulireg0}, 
the set of pairs $\overline{\sM}^{\reg \leq 0}(\bk)$ is bounded.
\end{prop}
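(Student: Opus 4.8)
The goal is to show $\overline{\sM}^{\reg \leq 0}(\bk)$ is bounded, where $\sM \subset \sM(\chi,N,{\bf r},{\bf c})$ is open and finite type. The plan is to apply the boundedness criterion of Proposition \ref{p:boundedness}, which reduces the problem to bounding two invariants: the Cartier index $k$ such that $-k(K_X+\Delta)$ is very ample, and the degree $\deg(\Delta^{\Div})$, uniformly over all pairs in $\overline{\sM}^{\reg \leq 0}(\bk)$. Since we have fixed the numerical type ${\bf c} = (\deg(\Delta^{\Div}),\deg(D^{\Div}))$ throughout $\sM(\chi,N,{\bf r},{\bf c})$, and degrees are locally constant in families by Proposition \ref{p:anticanonicalsheaf}.3, the degree $\deg(\Delta^{\Div})$ is automatically controlled. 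So the real content is producing a uniform very ampleness index $k$.

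\textbf{Key steps.} First I would observe that $\cM^{\reg \leq 0}$ is a finite type stack (being an open substack of the finite type stack $\sM(\chi,N,{\bf r},{\bf c})$), hence the family of pairs it parametrizes is bounded; let $\mathfrak{F}$ denote this bounded set of boundary polarized CY pairs, all of which have regularity $\leq 0$. The pairs of regularity exactly $0$ that lie in the (closure of the) KSBA locus satisfy the hypothesis $(X,\Delta+(1+\vep)D)$ slc for $0 < \vep \ll 1$; more care is needed since $\overline{\sM}^{\reg\leq 0}$ may contain klt pairs (regularity $-1$) and pairs not of this KSBA type. For the klt pairs, boundedness of $\cM^{\reg\leq 0}$ already handles them together with their (trivial, by Proposition \ref{p:sequivklt}) degenerations. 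For the regularity $0$ pairs, the key input is Proposition \ref{p:Hstabledegbound}: if $\mathfrak{F}$ is a bounded set of regularity $0$ pairs with $(X,\Delta+(1+\vep)D)$ slc for $0<\vep\ll1$, then the set $\overline{\mathfrak{F}}$ of all weakly special degenerations is bounded. Next I would argue that every point of $\overline{\sM}^{\reg\leq 0}(\bk)$ is a weakly special degeneration of some pair in $\cM^{\reg\leq 0}(\bk)$ — this follows because $\overline{\cM}$ is the stack-theoretic closure of $\cM$, so points of $\overline{\cM}$ are limits of families with generic fiber in $\cM$, and by the valuative criterion / properness results (Theorem \ref{thm:properness}) together with Corollary \ref{c:sequiv}, such limits are S-equivalent to, hence weakly special degenerations of, pairs in the open stratum. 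One must also check that the generic fiber, being in $\cM^{\reg\leq 0}$ and in the plane-curve-type family where the KSBA hypothesis holds generically, satisfies the running assumption of Proposition \ref{p:Hstabledegbound}; alternatively one invokes the definition of $\overline{\sM}^{\reg\leq 0}$ directly. Then boundedness of $\overline{\mathfrak{F}}$, combined with the bounded klt contribution, gives a single bounding family, so a uniform $k$ and $c$ exist, and Proposition \ref{p:boundedness} concludes.

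\textbf{Main obstacle.} The hard part will be correctly bridging between the stack-theoretic closure $\overline{\cM}$ and the set-theoretic notion of weakly special degeneration: a priori, a $\bk$-point of $\overline{\cM}$ need only be a specialization inside the stack of points of $\cM$, and I must verify that every such specialization is in fact realized by a weakly special test configuration (equivalently, by a map $\Theta \to \sM(\chi,N,{\bf r})$), which uses the reductivity of automorphism groups (Theorem \ref{t:AutTorus}) and the characterization of specializations via $\Theta$ from \cite{AHLH18} as invoked in Lemma \ref{l:sequiv}. A secondary subtlety is confirming that the regularity $\leq 0$ condition propagates correctly: Proposition \ref{p:SrcSequiv} ensures the source, hence regularity, is constant in an S-equivalence class, so all degenerations of a regularity $0$ pair have regularity $0$, keeping us inside the scope of Proposition \ref{p:Hstabledegbound}. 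Once these two points are pinned down, the remaining argument is a formal assembly of the cited boundedness statements.
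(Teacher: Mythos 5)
Your overall architecture matches the paper's: reduce to the criterion of Proposition \ref{p:boundedness}, bound the set of weakly special degenerations via Proposition \ref{p:Hstabledegbound}, and bridge between the stack-theoretic closure and isotrivial degenerations using S-equivalence. However, there are two genuine gaps.

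First, and most seriously, you never obtain the hypothesis of Proposition \ref{p:Hstabledegbound} --- that $(X,\Delta+(1+\vep)D)$ is slc for $0<\vep\ll1$ --- for a general finite type $\sM\subset\sM(\chi,N,{\bf r},{\bf c})$. Your remark that ``the KSBA hypothesis holds generically'' in ``the plane-curve-type family'' is both unjustified and beside the point: the proposition is stated for arbitrary $\sM$, and increasing the coefficient of the polarizing boundary can destroy log canonicity (this is exactly why $\cP_d^{\rm H}$ is a proper open substack of $\cP_d^{\CY}$). The paper resolves this in two stages: it first proves the statement under the additional assumption $\sM\subset\overline{\sM}^{\rm KSBA}$, and then reduces the general case to this one by re-marking the moduli problem via $j:\cM(\chi,N,{\bf r})\to\cM(\chi,Nk',(r,s/k'))$ and replacing $D$ by a \emph{general} member $H\in\tfrac{1}{Nm}\lvert-Nm(K_X+\Delta)\rvert$, for which the KSBA condition does hold. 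This perturbation step, together with the essential input from \cite{KX20} (openness, properness, and finiteness of the substack $\overline{\sM}^{\rm KSBA}$, which is what produces a comparison limit lying in an a priori bounded set), is entirely absent from your argument. Theorem \ref{thm:properness} alone gives you some extension across the central fiber but no control over which bounded family that extension lands in.

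Second, your claim that every point of $\overline{\sM}^{\reg\leq0}(\bk)$ is a weakly special degeneration of some pair in $\cM^{\reg\leq0}(\bk)$ is false: a $\bk$-point of the closure lies in the closure of the \emph{generic} point of $\cM$, and the resulting specialization over a DVR is a non-isotrivial family, not a test configuration. Indeed, the Type II and III boundary points of $\cP_d^{\CY}$ are not S-equivalent to any interior point, since S-equivalence preserves the source by Proposition \ref{p:SrcSequiv}. What Corollary \ref{c:sequiv} actually gives is that the boundary point and the KSBA limit of the same punctured family admit a \emph{common} weakly special degeneration $(X_0,\Delta_0+D_0)$; this common degeneration lies in $\overline{\mathfrak{F}}$ for $\mathfrak{F}$ the bounded set of regularity $\leq 0$ pairs satisfying the KSBA condition, hence satisfies the uniform very-ampleness bound, and that bound then transfers back up the test configuration to the boundary point itself via the openness and local constancy statements of Proposition \ref{p:anticanonicalsheaf}. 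Without this reversal of direction, your application of Proposition \ref{p:Hstabledegbound} does not reach the points you need to bound.
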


\begin{proof} 
The proof uses the substack $\overline{\sM}^{\rm KSBA} \subset \overline{\sM}$
consisting of $(X,\Delta+D) \to B $ in $ \overline{\sM}$ such that, for each $b\in B$, $(X_b,\Delta_b+(1+\vep)D_b)\to B $ is slc for $0<\vep \ll1$. Note that the substack
\begin{enumerate}
	\item[(i)] is open by the openness of slc singularities \cite[Cor. 4.45]{KolNewBook},
	\item[(ii)] satisfies the the valuative criterion of properness \cite[Lem. 7]{KX20}, and 
	\item[(iii)] and is finite type by \cite[Thm. 2]{KX20}.
\end{enumerate}
Note that (iii) follows from \cite{KX20}, since $\overline{\sM}^{\rm KSBA}$ has finitely many irreducible components by the assumption that $\sM$ is finite type. (This  statements also follows from the stronger boundedness result in \cite[Cor. 1.8]{Bir22}).

We will first prove the proposition under the assumption that $\sM \subset \overline{\sM}^{\rm KSBA}$.
Since $\mathfrak{F}:=\cM^{\reg \leq 0}(\bk)$  and the above assumption holds, 
Proposition \ref{p:Hstabledegbound} implies
\[
\overline{\mathfrak{F}}:= \{(X_0,\Delta_0+D_0) \, \vert\, \exists (X,\Delta+D)\in \mathfrak{F}\, \text{ and a w.s. degeneration } (X,\Delta+D)\rightsquigarrow(X_0,\Delta_0+D_0)   \}
,\]
is bounded.
Thus, by Lemma \ref{p:boundedness}, there exist positive integers $k$ and $c$ such that
\begin{enumerate}
\item[($\dagger$)] $-k(K_X+\Delta)$ is very ample  Cartier divisor and $\deg( \Delta) \leq c$
\end{enumerate}
for every  if $(X,\Delta+D)$ in $ \overline{\mathfrak{F}}$.

We will show every pair  in  $\overline{\sM}^{\reg \leq 0} (\bk)$ satisfies ($\dagger$). 
Fix a pair $(X,\Delta+D)$ in $\overline{\sM}^{\reg \leq 0}(\bk)$.
Since the pair is in the closure of $\cM$, 
 there exists a family  
 \[
 (Y,\Gamma+G) \to C
 \] in $\overline{\cM}$
 over a pointed curve $0\in C$ such that the generic fiber $(Y_\eta, \Gamma_\eta+G_\eta)$ is in $\cM$, 
 and $(Y_0,\Gamma_0+G_0)\cong(X,\Delta+D)$.
 Using that  $\cM\subset \overline{\sM}^{\rm KSBA}$ and (iii), after possibly replacing $0\in C$ by a finite cover, there exists a family 
 \[
 (Y',\Gamma'+G')\to C\]
 in $\overline{\sM}^{\rm KSBA}$
 such that $(Y_\eta,\Gamma_\eta+G_\eta)\cong (Y'_\eta,\Gamma'_\eta+G'_\eta)$.
  Thus Corollary \ref{c:sequiv} implies that there exist weakly special degenerations to a boundary polarized CY pair:
  \[
  (X,\Delta+D)\rightsquigarrow (X_0,\Delta_0+D_0) \leftsquigarrow  (Y'_\kappa, \Gamma'_\kappa+G'_\kappa).
  \]
Since  $(X_0,\Delta_0+D_0)$ is in $\overline{\mathfrak{F}}$, $(X_0,\Delta_0+D_0)$  satisfies ($\dagger$)
and so $(X,\Delta+D)$ satisfies ($\dagger$) by Proposition \ref{p:anticanonicalsheaf}.
Therefore   $\overline{\cM}^{\reg \leq 0}(\bk)$ is bounded by Lemma \ref{p:boundedness}.

We will now deduce the full result from the above special case.
Since $\sM(\bk)$ is bounded,  Proposition \ref{p:boundedness} implies that there exist integers $k',c'>0$ such that
\begin{enumerate}
	\item[($\dagger'$)] $-k'(K_X+\Delta)$ is a very ample Cartier divisor and $\deg(\Delta) \leq c'.$
\end{enumerate}
holds for every $(X,\Delta+D)$ in $\cM(\bk)$.
Consider the morphism of stacks
\[
j:\cM(\chi,N, {\bf r})  \longrightarrow \cM(\chi,Nk',(r,s/k'))
\]
defined by modifying the markings of the boundary divisors by sending
\[
[(X,r\Delta^{\rm div},s D^{\rm div})\to B]\longmapsto [(X,r\Delta^{\rm div},\tfrac{s}{k'} (k'D^{\rm div}))\to B].
\] 
Let $\sM' \subset \cM(\chi, Nk',(r,s/k'))$ denote the finite type open substack consisting of families $(X,\Delta+D)\to B$ such that, for each $b\in B$, $(X_b,\Delta_b+D_b)$ satisfies  $(\dagger')$
and $(X_b,\Delta_b+(1+\vep)D_b)$ is slc for $0<\vep\ll1$.
Since $\sM' \subset \overline{\sM'}^{\rm KSBA}$, the previous  case implies $\overline{\sM'}^{\reg\leq 0}(\bk)$ is bounded. 

To finish the proof, note that:  if $(X,\Delta+D) $ is in $\cM(\bk)$, then 
\begin{itemize}
	\item $D \in \tfrac{1}{Nm}| -Nm(K_X+\Delta)|$ and 
	\item $(X,\Delta+H) $ is in $\sM'(\bk)$ for general $H \in \tfrac{1}{Nm}|-Nm(K_X+\Delta)|$.
\end{itemize}
Thus $j( \sM(\bk)) \subset \overline{\sM'}(\bk)$
and so
$j\big(\overline{\cM}^{\reg \leq 0}(\bk)\big)\subset \overline{\cM'}^{\reg \leq 0}(\bk)$.
Therefore $\overline{\cM}^{\reg \leq 0}(\bk)$ is bounded.
\end{proof}

\begin{proof}[Proof of Theorem \ref{t:modulireg0}]
Since $\overline{\sM}^{\reg  \leq 0}$ is a locally closed substack of $\sM(\chi,N,{\bf r})$, $\overline{\sM}^{\reg  \leq 0}$ is a locally finite type algebraic stack with affine diagonal by
Theorem \ref{t:stack}.
Using Proposition \ref{p:regbounded}, we see $\overline{\sM}^{\reg  \leq 0}$ is of finite type and so (1) holds.

To verify (2) and (3), let  $R$ be a DVR essentially of finite type over $\bk$.
Set
\[
S:=\Spec  \left( R[s,t]/(st-\pi) \right) \quad \text{ or } \quad   S:= \Spec(R[t]),\]
where $\bG_m$-acts on $S$ with weights $1$ and $-1$ on $s$ and $t$, respectively.
Let $0\in S$ denote the unique closed point fixed by $\bG_m$ and $S^\circ:= S \setminus 0$.
Given a $\bG_m$-equivariant family
$
(X^\circ, \Delta^\circ+D^\circ)\to S^\circ 
$
in $ \overline{\sM}^{\reg \leq 0}$,
Theorem \ref{t:Scomplete} implies the existance of  a unique $\bG_m$-equivariant extension 
$
(X,\Delta+D)\to S$
in $  \overline{\sM}$.
Since the restriction of the family to  $\{s= 0\} \hookrightarrow S$
is a test configuration and the fiber over $1\in \bA^1_\kappa$ has regularity at most $0$,
Propositin \ref{p:SrcSequiv} implies the fiber over $0 \in \bA^1_\kappa$ has  regularity at most $ 0$.
Thus the extension is in $ \overline{\sM}^{\reg \leq 0}$
and so $ \overline{\sM}^{\reg \leq 0}$ is S-complete and $\Theta$-reductive with respect to essentially finite type DVRs.
 Theorem \ref{t:AHLH} now implies (2) holds.
\end{proof}

\subsection{Coregularity 0 pairs}

We now prove a degeneration result for coregularity 0 pairs.
The result will play an important role in Section \ref{s:sequivcurves}
and Appendix \ref{a:coreg0}.

\begin{prop}\label{p:degenreducedboundary}
If $(X,\Delta+D)$ is  an lc boundary polarized CY pair of coregularity 0, then there exists a weakly special degeneration 
$(X,\Delta+D) 
\rightsquigarrow (X_0,\Delta_0+D_0)$
such that 
\[
(\overline{X}_0,\overline{G}_0+\overline{\Delta}_0+\overline{D}_0 ) \cbir (\bP^n_{x_i}, \{x_0 \cdots x_n =0\})
.\]
In particular, $K_{\overline{X}_0}+\overline{G}_0+\overline{\Delta}_0+\overline{D}_0 \sim 0$ 
and
$\Delta_0+D_0$ is a  $\bZ$-divisor.
\end{prop}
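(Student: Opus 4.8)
\textbf{Proof proposal for Proposition \ref{p:degenreducedboundary}.}

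The plan is to reduce to the case already understood, where the source is a point and the normalized pair looks like a product of toric boundary pieces. First I would fix a dlt modification $(Y,\Gamma_Y)\to(\overline X_i,\overline G_i+\overline\Delta_i+\overline D_i)$ of one component of the normalization, and let $S\subset Y$ be a minimal lc center. Since $\coreg(X,\Delta+D)=0$, $S$ is a point $p$, so $(S,\Gamma_S)$ is the source and it is a point. I would then invoke the classification of special degenerations of $\bP^2$ (or more precisely the reduction statement referenced in the excerpt: lc places of $1$-complements, cf.\ the discussion around Section \ref{ss:1comp} and Appendix \ref{a:specialdegenerations}) combined with the appropriate toric degeneration result. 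The key point is that any valuation $v$ with $A_{X,\Delta+D}(v)=0$ that computes a minimal lc center of dimension $0$ can, after passing to a suitable sequence of weakly special degenerations (each degenerating toward the ``most toric'' configuration), be made to produce a special fiber whose normalization is crepant birational to $(\bP^n, \sum \{x_i=0\})$.

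The key steps, in order, are as follows. Step 1: Choose a minimal lc center $p$ (a point) of a dlt modification, and let $E$ be a Koll\'ar component / lc place over $p$ as produced by the $1$-complement theory; this gives a weakly special test configuration of $(X,\Delta)$ by Theorem \ref{thm:CZ-lcplace} together with Lemma \ref{l:tclcplace}, and by Lemma \ref{l:lcDlcB} we can carry the polarizing boundary $D$ along. Step 2: Analyze the central fiber $(\cX_0,\Delta_{\cX_0}+\cD_0)$ of this test configuration. By Proposition \ref{p:SrcSequiv}, it is again coregularity $0$. Moreover it acquires a larger torus action (Proposition \ref{p:torusrk+1}), and by iterating we may assume the central fiber is polystable with a torus of maximal rank $n+1$ acting, where $n=\dim X$. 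Step 3: Conclude that a maximal-rank torus action on an $n$-dimensional coregularity $0$ boundary polarized CY pair forces the normalization of each component to be toric with the standard toric boundary: indeed the source being a point plus a rank $n+1$ (equivalently, a rank equal to $\dim+1$) torus action on an slc Fano forces $\overline X_i\cong \bP^n$ (or a toric variety) and the divisor $\overline G_i+\overline\Delta_i+\overline D_i$ to be the full toric boundary $\{x_0\cdots x_n=0\}$, up to crepant birational equivalence. Step 4: Deduce $K_{\overline X_0}+\overline G_0+\overline\Delta_0+\overline D_0\sim 0$ from $K_{\bP^n}+\{x_0\cdots x_n=0\}\sim 0$ and crepant birationality, and deduce that $\Delta_0+D_0$ is a $\bZ$-divisor since the toric boundary has integral coefficients and the different preserves this.

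The main obstacle I expect is Step 3: showing rigorously that coregularity $0$ together with a torus action of rank $\dim X+1$ (on a possibly non-normal slc pair) forces the normalized components to be $(\bP^n,\text{toric boundary})$ up to crepant birational equivalence. One has to handle the gluing carefully --- the involution $\tau$ on the conductor must be compatible --- and one has to rule out other toric Fanos appearing as components; the coregularity $0$ hypothesis (equivalently, the source being a point) should pin down that the boundary on each component is the \emph{full} toric boundary rather than a proper subset, but making this precise requires the structure theory of $\bG_m$-actions on the relevant varieties (e.g.\ the Seifert $\bG_m$-bundle and orbifold cone descriptions in Section \ref{s:seifert}) and an induction on dimension via adjunction to the horizontal divisors. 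A secondary difficulty is ensuring the degeneration can be taken to be a single weakly special degeneration rather than a chain; this should follow by composing test configurations using Proposition \ref{p:lcplacestestconfig} applied to the collection of lc places simultaneously, but the finite generation input (Theorem \ref{t:fg}) must be checked to apply.
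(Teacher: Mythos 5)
Your proposal has genuine gaps, and it also heads in a much more complicated direction than is needed. First, the machinery you lean on in Steps 1--2 is circular within this paper: Proposition \ref{p:exists1comp} (the existence of torus-invariant $1$-complements) and the classification in Appendix \ref{a:specialdegenerations} are proved \emph{using} Proposition \ref{p:degenreducedboundary} --- Proposition \ref{p:exists1comp} rests on Theorem \ref{t:TypeIIISequiv}, whose proof goes through Proposition \ref{p:typeIII-Sequiv}, which is a direct application of the statement you are trying to prove. Even setting circularity aside, those results are specific to degenerations of $\bP^2$, while the proposition concerns coregularity $0$ pairs in arbitrary dimension. Second, Step 2 fails as stated: a torus acting faithfully on an $n$-dimensional variety has rank at most $n$, not $n+1$; and, more seriously, the iteration ``degenerate until polystable'' need not terminate, since the paper itself notes (Example \ref{e:unbounded}, Theorem \ref{t:TypeIIISequiv}, and the remark following Definition \ref{def:polystable}) that coregularity $0$ S-equivalence classes can be unbounded and then admit \emph{no} polystable representative. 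Step 3, which you rightly flag as the crux, is therefore both unreachable by your route and unproven.

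The paper's proof is a single direct construction with none of this. Fix a dlt modification $(Y,\Gamma_Y)\to(X,\Delta+D)$ and a minimal lc center $Z$, which is a point by the coregularity hypothesis; since the pair is dlt, exactly $n$ components of $\Gamma_Y^{=1}$ meet transversally at $Z$, so $A_{Y,\Gamma_Y}(F)=n-n=0$ for the exceptional divisor $F$ of $B_ZY\to Y$, i.e.\ $F$ is an lc place of $(X,\Delta+D)$. Theorem \ref{thm:CZ-lcplace} then gives a weakly special test configuration with $v_{\cX_0}=\ord_F$. On the other hand, $Z\times 0$ is a $0$-dimensional lc center of the dlt pair $(Y\times\bA^1,\Gamma_Y\times\bA^1+Y\times 0)$ with $n+1$ snc branches, so blowing it up yields a crepant model whose exceptional divisor $\cE$ satisfies $(\cE,\Gamma_{\cY}|_{\cE})\cong(\bP^n,\{x_0\cdots x_n=0\})$; comparing the crepant birational models $(\cY,\Gamma_\cY+\cE)$, $(X\times\bA^1,\Delta\times\bA^1+D\times\bA^1+X\times 0)$, and $(\cX,\Delta_\cX+\cD+\cX_0)$ and applying \cite[Prop.~4.6]{Kol13} identifies $(\overline X_0,\overline G_0+\overline\Delta_0+\overline D_0)$ with $(\cE,\Gamma_\cY|_\cE)$ up to crepant birational equivalence. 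No torus actions, polystability, or induction on dimension is required; if you want to salvage your write-up, replace Steps 1--3 by this blowup argument.
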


\begin{proof}
Fix a dlt modification $f:(Y,\Gamma_Y) \to (X,\Delta+D)$
and minimal lc center $Z\subset Y$ of $(Y,\Gamma_Y)$.
Since ${\rm coreg}(X,\Delta+D)=0$, $Z$ is a point.
Let $F\subset B_{Z}Y\to Z$ denote the blowup of $Y$ along $Z$ with exceptional divisor $F$. 
Since $F$ is an lc place of $(X,\Delta+D)$, 
there exists  a test configuration $(\cX,\Delta_{\cX}+\cD)\to \bA^1$ of $(X,\Delta+D)$ such that $v_{\cX_0}= \ord_F$ by Theorem \ref{thm:CZ-lcplace}.
Now consider
the sequence of birational maps 
\[
\cY
:=
B_{Z\times 0} (Y\times{\bA^1})
\overset{g}{ \longrightarrow }
Y\times{\bA^1} 
\overset{f_{\bA^1}}\longrightarrow 
X\times {\bA^1} 
\dasharrow
\cX.
\]
Let   $\cE\subset \cY$ denote  the exceptional divisor of $g$.
By Section \ref{ss:testconfigps},
$\cX_0$ is the birational transform of $\cE$ on $\cX$
and 
$\Gamma_{\cY}:= g_*^{-1}( \Gamma_{Y} \times \bA^1 + Y\times 0)$.
Since $Z\times 0$ is a 0-dimensionsal lc place of the dlt pair 
$(Y\times {\bA^1},\Gamma_{Y}\times \bA^1 + Y\times 0)$,
\[
K_{\cY}+ \Gamma_{\cY}+ \cE = g^*(K_{Y\times\bA^1}+ \Gamma_{Y} \times \bA^1 +Y\times 0 )
\]
and $(\cE,\Gamma_{\cY}\vert_{\cE}) \cong (\bP^n, \{x_0 \cdots x_n = 0\})$.
Thus the two maps
\[
(\cY, \Gamma_\cY + \cE) \to (X\times \bA^1, \Delta\times \bA^1 +D\times \bA^1+X\times 0) \dashrightarrow (\cX,\Delta_{\cX} +\cD+\cX_0)
\]
are  crepant birational; for the second map see 
Lemma \ref{l:CYbirational}.1.
Thus
\cite[Prop. 4.6]{Kol13} implies $(\cE , \Gamma_{\cY}\vert_\cE)\dashrightarrow  (\overline{X}_0,\overline{G}_0+\overline{\Delta}_0+\overline{D}_0 )$ is crepant birational, which proves the first statement. The second statement follows from the first.
\end{proof}

As a consequence of the proposition, we deduce the following sufficient condition for a boundary polarized pair to have positive coregularity.

\begin{prop}\label{p:coreg>0condition}
Let $(X,\Delta+D)$ be a boundary polarized CY pair
with   $\Delta$  a $\bZ$-divisor.

If there exists a rational number $r>0$ such that $D=r D'$ for some $\bZ$-divisor $D'$ and $1\notin r \bZ$, then  $\coreg(X,\Delta+D)>0$.
\end{prop}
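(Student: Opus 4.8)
The strategy is to argue by contradiction using Proposition \ref{p:degenreducedboundary}. Suppose $\coreg(X,\Delta+D)=0$, i.e. $(X,\Delta+D)$ is an lc boundary polarized CY pair of coregularity $0$ (note that if $(X,\Delta+D)$ were klt it would have regularity $\dim X - 1$, hence coregularity $0$ only if $\dim X = 1$, but even that case is covered by the degeneration argument; we simply run the argument in all cases). By Proposition \ref{p:degenreducedboundary}, there is a weakly special degeneration $(X,\Delta+D)\rightsquigarrow (X_0,\Delta_0+D_0)$ for which $\Delta_0 + D_0$ is a $\bZ$-divisor. Since $D = rD'$ with $D'$ a $\bZ$-divisor and $1\notin r\bZ$, I need to transport this non-integrality of the polarizing boundary to the special fiber in order to reach a contradiction.

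The key technical point is that the coefficients of the polarizing boundary are preserved under the degeneration. First I would recall that in a weakly special test configuration $(\cX,\Delta_{\cX}+\cD)$ of $(X,\Delta+D)$, the divisor $\cD$ is by definition the closure of $D\times(\bA^1\setminus 0)$ inside $\cX$ (see the discussion after Definition \ref{thm:CZ-lcplace} and Proposition \ref{p:lcplacestestconfig}); equivalently, in the stacky formulation, the test configuration corresponds to a family in $\cM(\chi,N,{\bf r})(\bA^1)$ where the markings are ${\bf r}=(r',s)$ with fixed rational coefficients. Therefore if we write $D=sD^{\Div}$ with $D^{\Div}$ the underlying relative Mumford divisor and $s$ the marking, then $\cD = s\cD^{\Div}$ where $\cD^{\Div}$ is the relative Mumford divisor on $\cX$, and in particular $D_0 = s D_0^{\Div}$ where $D_0^{\Div}$ is a $\bZ$-divisor. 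So the coefficient $s$ of $D$ equals the coefficient of $D_0$ along any of its prime components coming from a component of $D^{\Div}$. Since $r$ divides $s$ (more precisely $s \in r\bZ_{>0}$, as $D=rD'$ forces each coefficient of $D$ to lie in $r\bZ_{>0}$) and $1\notin r\bZ$, we get that no coefficient of $D_0$ can be a positive integer unless $D_0^{\Div}=\emptyset$, i.e. $D_0=0$. But $D_0$ is ample, hence nonzero, so $D_0$ is a nonzero effective $\bZ$-divisor with all coefficients in $r\bZ_{>0}$, contradicting $1\notin r\bZ$ — wait, that is not yet a contradiction with integrality of $\Delta_0+D_0$; rather $\Delta_0+D_0$ being a $\bZ$-divisor together with $\Delta_0$ a $\bZ$-divisor (which is preserved by the same marking argument applied to $\Delta$) would force $D_0$ to be a $\bZ$-divisor, whose coefficients lie in $r\bZ$, so each such coefficient is a positive integer multiple of $r$; but a positive integer that is a multiple of $r$ need not contradict $1\notin r\bZ$. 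The actual contradiction is: $D_0$ has a coefficient equal to $s$, which lies in $r\bZ_{>0}$, so $s/r$ is a positive integer $m$, hence $s = mr$; then since $D_0$ is a $\bZ$-divisor, $s\in\bZ$, and writing the coefficient $s = mr$ as an integer forces $r = s/m \in \bZ$ only if... this still is not quite clean. The cleanest route: $\Delta_0 + D_0$ integral and $\Delta_0$ integral $\Rightarrow D_0$ integral; $D_0$ has coefficients in $r\bZ_{>0}$ and $D_0\neq 0$ $\Rightarrow$ some coefficient is $kr$ with $k\in\bZ_{>0}$ and $kr\in\bZ$; combined with $D' $ having that same component with coefficient $k$ and $rk\in\bZ$ for the \emph{specific} $k$ appearing — and the point is that the coefficients of $D$ are exactly $r$ times the (integer) coefficients of $D'$, and these integers are the \emph{same} for $D_0$ (since $D^{\Div}$ and $D_0^{\Div}$ are flat specializations with the same multiplicities along corresponding components). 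So if $D'$ has a prime component with coefficient $1$ (which we may arrange, or handle componentwise), then $D_0$ has a component with coefficient $r\notin\bZ$, contradicting integrality of $\Delta_0+D_0$. If all coefficients of $D'$ exceed $1$, reduce $r$ by absorbing the gcd; this is a harmless normalization which I would spell out at the start of the proof.

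So the proof reduces to two lemmas I would isolate: (a) \emph{coefficient preservation}: along any weakly special test configuration, the coefficient of $\cD$ along the closure of a given prime component of $D\times(\bA^1\setminus 0)$, and more importantly the coefficients of $\cD_0$ along the corresponding prime components of $D_0$, agree with the coefficients of $D$ — this follows because $\cD$ is a flat family of Mumford divisors with fixed marking ${\bf r}$, so the coefficient data is locally constant (Proposition \ref{p:anticanonicalsheaf}.3 controls degrees, and the marking structure in Definition \ref{d:familyNoetherian} fixes the coefficient directly); (b) \emph{the degeneration produces an integral boundary}: this is exactly the last assertion of Proposition \ref{p:degenreducedboundary}, which says $\Delta_0 + D_0$ is a $\bZ$-divisor. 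Combining (a), (b), and the hypothesis $1\notin r\bZ$ with $D=rD'$, $D'\neq 0$ a $\bZ$-divisor, yields that $D_0$ has a coefficient in $r\bZ_{>0}\setminus\bZ$ (after the gcd normalization so that $D'$ has a coefficient-$1$ component), contradicting integrality of $\Delta_0+D_0$. I expect the main obstacle to be pinning down precisely (a) — i.e. verifying that the non-integral coefficient $r$ really survives the limit, as opposed to several prime components of $D$ colliding in the special fiber in a way that produces an integral total coefficient. The resolution is that a boundary polarized CY pair is, by Definition \ref{d:familyNoetherian}, a \emph{marked} pair: $D = sD^{\Div}$ with $s$ fixed throughout the family, so even if components of $D^{\Div}$ collide, the resulting coefficient is an integer multiple of $s$, and since $s$ itself is an integer multiple of $r$ with $r\notin\frac1k\bZ$ appropriately normalized, the total coefficient still cannot be made to equal an arbitrary integer unless the component drops out — but $D_0$ is ample hence nonzero. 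I would present the argument componentwise on $D^{\Div}_0$ to make this airtight.
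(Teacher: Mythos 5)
Your overall strategy coincides with the paper's: normalize, assume $\coreg=0$, apply Proposition \ref{p:degenreducedboundary} to get a weakly special degeneration $(X,\Delta+D)\rightsquigarrow(X_0,\Delta_0+D_0)$ with $\Delta_0+D_0$ a $\bZ$-divisor, observe that the coefficients of $D_0$ still lie in $r\bZ_{>0}$ (your point (a), which is correct in this weak form: $\cD$ is the closure of $D\times(\bA^1\setminus 0)$, so its prime components dominate $\bA^1$ and carry the coefficients of $D$, and the coefficients of the divisorial restriction $\cD_0$ are $\bZ_{\ge 0}$-combinations of these), and derive a contradiction. However, your closing step has a genuine gap. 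You try to produce a \emph{non-integral} coefficient of $D_0$ by normalizing so that $D'$ has a coefficient-$1$ component and then claiming that the coefficient $r$ of the corresponding component of $D$ survives to $D_0$. It need not: as you yourself observe, components can collide in the special fiber, and the resulting coefficient is then some $kr$ with $k\in\bZ_{>1}$ — and integer multiples of a rational $r$ can perfectly well be integers (e.g.\ $r=2/3$, $3r=2$). Your assertion that "the total coefficient still cannot be made to equal an arbitrary integer unless the component drops out" is false as stated, so the contradiction with integrality of $\Delta_0+D_0$ does not follow from what you have proved.

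The missing ingredient is the slc bound on the special fiber: since $(X_0,\Delta_0+D_0)$ is slc, every coefficient of $\Delta_0+D_0$ is at most $1$. Combined with the integrality of $\Delta_0+D_0$ and of $\Delta_0$, this forces $D_0$ to be a \emph{reduced} $\bZ$-divisor; it is nonzero because it is ample. Hence some coefficient of $D_0$ equals $1$, and since all coefficients of $D_0$ lie in $r\bZ$, we get $1\in r\bZ$, contradicting the hypothesis. This is exactly how the paper concludes, and it makes the gcd normalization and the tracking of an individual component unnecessary: you only need the set-level statement that the coefficients of $D_0$ lie in $r\bZ_{>0}$, not that any particular coefficient of $D$ is preserved. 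With that one sentence added (and the reduction to the lc case via normalization made explicit, since Proposition \ref{p:degenreducedboundary} is stated for lc pairs), your argument closes correctly.
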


\begin{proof}
By taking the normalization, we may assume $(X,\Delta+D)$ is lc. 
If $\coreg(X,\Delta+D)=0$, then 
Proposition \ref{p:degenreducedboundary} implies that there exists a weakly special degeneration 
\[
(X,\Delta+D) \rightsquigarrow(X_0,\Delta_0+D_0)
\]
such that $\Delta_0+D_0$ has coefficient 1 on each prime divisor in its support. 
Since $\Delta_0$ is a $\bZ$-divisor and $(X_0,\Delta_0+D_0)$ is slc,  $D_0$ must also be a reduced $\bZ$-divisor.
Since $D_0= r D'_0$ for some $\bZ$-divisor $D'_0$,  $ 1\in r\bZ$, which is a contradiction.
\end{proof}

\section{Moduli stacks of plane curves}\label{s:planecurves1}

In this section, we define the moduli stack $\cP_d^{\CY}$, as well as its various open substacks defined using KSBA stability, K-stability, regularity, and index of the canonical divisor. 
When $3\nmid d$, we show $\cP_{d}^{\CY}$ admits a proper good moduli space in Theorem \ref{t:moduliexists3notd}.

\subsection{Moduli stack}
To a smooth plane curve $C\subset \bP^2$ of degree $d \geq 3$, we associate  a boundary polarized CY pair
$(\bP^2, \Delta+D)$,
where $\Delta=1 \cdot \emptyset$ and  $D:= \tfrac{3}{d}C$.
Note that 
\[
N_d(K_{\bP^2}+\tfrac{3}{d} C)\sim 0, \quad \text{ where } N_d:=
\begin{cases}\tfrac{d}{3} & \text{ if } 3 \mid d \\
d & \text{ otherwise}
\end{cases}
\]
We will proceed  to define a stack parametrizing such pairs and their slc degenerations.

\begin{defn}[Stack of smooth plane curves]
Let $\chi(m):=
 \chi(\bP^2,\omega_{\bP^2}^{[-m]}) =\binom{3+m}{m}$.
For each integer $d\geq 3$, let
\[
\cP_d
\subset \cM(\chi,N_d,(1,\tfrac{3}{d}) )
\]
denote the open substack consisting of families $(X,\Delta+D) \to B$ in $\cM(\chi,N_d,(1,\tfrac{3}{d}) )$
with  $\Delta:= 1 \cdot \emptyset$ and $D: = \tfrac{3}{d} C$ such that   $X_b$ and $C_b$ are smooth for all $b\in B$.
\end{defn}

Since $\bP^2$ is the only smooth del Pezzo surface with Hilbert function $\chi$,  if $(X,D)\to B$ is  a family in $\cP_d$ and $b\in B$, then
$X_{\overline{b}} \cong \bP^2_{k(\overline{b})}$ and $C_{\overline{b}}$ is a smooth  plane curve of degree $d$.

\begin{defn}[Moduli of slc CY degenerations]\label{defn:stackCYplanecurves}
Let
${ \cP}_d^{\rm CY}
:= \overline{\cP_d}$, which denotes
the  stack theoretic closure of 
$\cP_d$ in $\sM(\chi,d,(1,\tfrac{3}{d}))$.
\end{defn}

For any $(X,\Delta+D) \to B$ in $\cP_{d}^{\CY}$, the boundary $\Delta:=1\cdot \emptyset$. 
Hence, we will from now on simply write $(X,D)\to B$ for the family.

\begin{rem}[Points of the stack]
A pair $(X, D)$
is in ${ \cP}_d^{\rm CY}(\bk)$ if 
there exists a family of boundary polarized CY pairs $(\cX,\cD) \to T$  over the germ of a pointed curve $0\in T$ such that  the restriction to $T^\circ:= T\setminus 0$ is a family in $\cP_{d}$ and
  $(\cX_0,\cD_0) \cong (X_0,D_0)$.
\end{rem}

\begin{example}[Boundedness fails]\label{e:unbounded}
When $3 \mid d$, the stack  $\cP_d^{\rm CY}$ is not of finite type.
This follows from either of the two examples, which shows $\cP_{d}^{\rm CY}(\bk)$ is not bounded.

\begin{enumerate}
\item For positive integers $a,b,c$ satisfying Markov's inequality, i.e. $a^2+b^2+c^2=3abc$, we claim that
the toric pair with reduced toric boundary
\[
(\bP_{x,y,z}(a^2,b^2, c^2), \tfrac{3}{d} \{(xyz)^{d/3}=0\})
\]
is a $\bk$-point of $\cP_{d}^{\rm CY}$. 
Since the canonical divisor of  $\bP(a^2,b^2, c^2)$ has Cartier index $abc$, which is unbounded, $\cP_{d}^{\CY}(\bk)$ is unbounded by \cite[Lem. 2.25]{Bir19}.

The claim is a consequence of \cite[Thm. 8.3]{Hac04}, 
which states that there is a projective family of klt pairs $X\to T$, over the germ of a smooth curve $0\in T$, such that 
$X_0 \cong \bP(a^2,b^2,c^2)$ and 
$X_t\cong \bP^2$
for $ t\neq 0$. 
Since
\[
(X_0, \tfrac{3}{d}C_0 ):= (\bP(a^2,b^2, c^2), \tfrac{3}{d} \{(xyz)^{d/3}=0\}),
\]
is a boundary polarized CY pair of index 1,
Lemma \ref{l:extenddivisor} (proven in a later section) implies that $C_0$ extends to a divisor $C\subset X$ such that 
$(X,D:=\tfrac{3}{d}C)\to T $ is a family of boundary polarized CY pairs with $K_{X/T}+D\sim 0$.
Since $(X_{K(T)}, D_{K(T)})$ is in $ \cP_{d}^{\rm CY}$, so is $(X_0,D_0)$.

\item 
When $3 \mid d$, methods from toric geometry produce weakly special degenerations
\[
(\bP^2, \tfrac{3}{d} \{(xyz)^{d/3}=0\}) \rightsquigarrow(X_0,D_0)
\]
such that $X_0$ is not irreducible and can have  arbitrarily many irreducible components. 
\end{enumerate}
\end{example}

Related moduli stacks of plane curves have been studied in the canonically polarized KSBA setting by Hacking \cite{Hac04} and in the log Fano K-moduli setting  in \cite{ADL19}.

\begin{defn}
The \emph{Hacking substack} $\cP_d^{\rm H} \subset \cP_d^{\CY}$ is the substack consisting of $(X,D) \to B $ in $\cP_{d}^{\CY}$ such that, for each $b\in B$, $(X_b,(1+\vep )D_b)$ is slc for   $0<\vep \ll1$.
\end{defn}

\begin{thm}[\cite{Hac04}]\label{t:Hacking}
For each integer $d\geq 4$,
\begin{enumerate}
\item $\cP_d^{\rm H}$ is an open substack of $\cP_d^{\CY}$ containing $\cP_d$, and 
\item $\cP_d^{\rm H}$ is a finite type, proper  Deligne--Mumford stack, and admits a projective coarse moduli space $P_{d}^{\rm H}$.
\end{enumerate}
\end{thm}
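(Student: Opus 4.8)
\textbf{Proof plan for Theorem \ref{t:Hacking}.}

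This theorem is attributed to Hacking \cite{Hac04}, so the plan is to extract the statement from that reference and recast it in the language of this paper. The main point is that Hacking constructs a moduli space of stable pairs $(X, (1+\vep)D)$ where $X$ is a degeneration of $\bP^2$ and $D = \tfrac{3}{d}C$, and identifies it as a projective variety. What needs to be checked is that the moduli stack $\cP_d^{\rm H}$ defined here, using the families of boundary polarized CY pairs from Section \ref{s:stack} together with the KSBA-stability condition on $(X_b, (1+\vep)D_b)$, coincides with (an open substack of) Hacking's moduli stack. For part (1), openness of $\cP_d^{\rm H}$ inside $\cP_d^{\CY}$ follows from the openness of the slc condition \cite[Cor. 4.45]{KolNewBook} applied to the perturbed boundary $(1+\vep)D$ (one needs to observe that the relevant $\vep$ can be taken uniformly in a family, which follows from the fact that slc-ness of $(X_b, (1+\vep)D_b)$ for small $\vep$ is equivalent to lc-ness of the relevant log pairs and this is a constructible condition, cf.\ Lemma \ref{l:slcadj} and \cite[Lem. 4.44]{KolNewBook}). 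The containment $\cP_d \subset \cP_d^{\rm H}$ is immediate since a smooth pair $(\bP^2, \tfrac{3}{d}C)$ with $C$ smooth is klt, hence $(\bP^2, (1+\vep)C\cdot\tfrac{3}{d})$ is klt, a fortiori slc, for $0 < \vep \ll 1$.

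For part (2), I would proceed as follows. First, identify $\cP_d^{\rm H}$ with the stack of KSBA-stable pairs $(X, (1+\vep)D)$ of the appropriate numerical type: since $K_X + (1+\vep)D \sim_{\bQ} \vep D$ is ample (as $D$ is ample and $\bQ$-Cartier by Definition \ref{d:bpcy}(3)), every pair in $\cP_d^{\rm H}$ is a KSBA-stable pair, and conversely a KSBA-stable pair of this type that is a degeneration of $(\bP^2, \tfrac{3}{d}C)$ lies in $\cP_d^{\rm H}$. Then invoke Hacking's boundedness and properness results \cite[Thm. 3.11, etc.]{Hac04}: the family of such stable pairs is bounded (this is where the restriction $d \geq 4$ enters, and also where one uses that $X$ is a degeneration of $\bP^2$), so $\cP_d^{\rm H}$ is of finite type. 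Properness follows from the valuative criterion: given a family over a punctured curve, Theorem \ref{thm:properness} (or directly \cite{KX20, Hac04}) provides an extension after finite base change, and separatedness follows from Lemma \ref{l:CYbirational}.2, which shows that klt limits are unique — combined with the fact that $(1+\vep)$-stable limits are klt on a dense open, or more precisely from the separatedness of the KSBA moduli functor \cite[\S2.50]{KolNewBook}. Finally, to see that $\cP_d^{\rm H}$ is Deligne--Mumford, I would check that automorphism groups are finite: a pair $(X, D)$ with $(X, (1+\vep)D)$ KSBA-stable has finite automorphism group since $K_X + (1+\vep)D$ is ample and the pair is slc, so $\Aut(X, (1+\vep)D) = \Aut(X, D)$ is finite (this is the standard fact that automorphism groups of KSBA-stable pairs are finite). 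Projectivity of the coarse space $P_d^{\rm H}$ then follows from the general projectivity results for KSBA moduli spaces \cite{KP17, Fuj18, KolNewBook}, or can be quoted directly from \cite{Hac04}.

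The main obstacle I anticipate is the boundedness statement underlying finite type-ness, i.e.\ controlling the family of slc degenerations of $(\bP^2, \tfrac{3}{d}C)$. For $d \geq 4$ this is Hacking's theorem, whose proof uses the specific geometry of plane curves of degree $\geq 4$ (it fails for $d = 3$, where the boundary is anticanonical and the pairs are not of general type in the relevant sense — consistent with the footnote that $\cP_3^{\rm H}$ is empty). Since we are allowed to cite \cite{Hac04}, the work here is really just bookkeeping: matching the normalization conventions (Hacking uses coefficient $\tfrac{3}{d}(1+\vep)$ or equivalently $1-\vep'$ depending on the setup) and verifying that the moduli functor defined via Section \ref{s:stack} plus the openness condition agrees with Hacking's. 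A minor additional point is to note that $\cP_d^{\rm H}$ being an open substack of the finite-type-over-each-component stack $\cP_d^{\CY}$ does not by itself give finite type (since $\cP_d^{\CY}$ is not finite type when $3 \mid d$), so finite type-ness of $\cP_d^{\rm H}$ genuinely requires Hacking's boundedness input rather than being a formal consequence.
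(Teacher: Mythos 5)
Your proposal is correct and follows essentially the same route as the paper, which proves (1) from the openness of slc singularities \cite[Cor. 4.45]{KolNewBook} and the definition of slc, and proves (2) by citing \cite{Hac04} together with \cite{KP17} for projectivity. Your additional bookkeeping (matching Hacking's functor, finiteness of automorphisms, the remark that finite type does not follow formally from openness in $\cP_d^{\CY}$ when $3\mid d$) is all accurate but is exactly the content the paper delegates to the references.
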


\begin{proof}
Statement (1) follows from the openness of slc singularities \cite[Cor. 4.45]{KolNewBook} and the definition of slc. 
Statement (2) follows from \cite{Hac04} and \cite{KP17} for the projectivity.
\end{proof}

In addition,  Hacking \cite[Thm. 7.1]{Hac04} proved that  
if $3 \nmid d$, then the stack $\cP_d^{\rm H}$ is smooth and the underlying surfaces $X$ are either Manetti surfaces, i.e. a klt surface that admits a $\bQ$-Gorenstein smoothing to $\bP^2$  (see \cite[Def. 8.1]{Hac04} or \cite[Cor. 1.2]{HP10}), or unions of two normal surfaces glued along a smooth rational curve.

\begin{defn}
The \emph{K-moduli substack} $\cP_d^{\rm K} \subset \cP_d^{\CY}$ is the substack consisting of $(X,D) \to B$ in $ \cP_{d}^{\CY}$ such that, for each $b\in B$,  $(X_b,(1-\vep )D_b)$ is a K-semistable log Fano pair for  $0<\vep \ll1$.
\end{defn}

 We will explain in Remark \ref{r:Kdef=ADL} that the K-moduli substack $\cP_d^{\rm K}$ agrees with the stack defined in \cite{ADL19}. 
See \cite{ADL19} for the definition of K-semistability.
While the exact definition will play no role in this paper, we will frequently use that K-semistable log Fano pairs are  klt by \cite{Oda13b}.

\begin{thm}[\cite{ADL19}]\label{t:ADL}
For each integer $d\geq 3$, 
\begin{enumerate}
\item $\cP_{d}^{\rm K}$ is a  open substack of $\cP_{d}^{\rm CY}$ containing $\cP_d$, and
\item $\cP_d^{\rm K}$ is finite type and admits a projective good moduli space $P_{d}^{\rm K}$.
\end{enumerate}
\end{thm}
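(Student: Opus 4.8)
\textbf{Proof plan for Theorem \ref{t:ADL}.} The goal is to recall and re-derive the statement that for $d \ge 3$ the K-moduli substack $\cP_d^{\rm K} \subset \cP_d^{\rm CY}$ is an open substack containing $\cP_d$ and admits a projective good moduli space $P_d^{\rm K}$. Since this is a known result of Ascher--DeVleming--Liu \cite{ADL19}, the plan is primarily one of reconciliation: I will first recall that \cite{ADL19} constructs, via the general K-moduli machinery for log Fano pairs (the K-semistable locus is open and bounded, admits a good moduli space by \cite{ABHLX20, Xu20, LXZ21}, and the CM line bundle descends to an ample line bundle on it by \cite{XZ20, CP21}), a projective good moduli space parametrizing K-semistable log Fano pairs of the form $(\bP^2, \tfrac{3}{d}C)$-degenerations with coefficient $1-\vep$ for $0 < \vep \ll 1$. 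The content of the theorem as stated here is that this K-moduli stack is naturally realized as an open substack of our stack $\cP_d^{\rm CY}$ of boundary polarized CY pairs.

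The first step is to define the comparison map. Given a family $(X,D)\to B$ in $\cP_d^{\rm CY}$ such that each fiber $(X_b,(1-\vep)D_b)$ is a K-semistable log Fano pair for $0<\vep\ll1$, the pair $(X_b,(1-\vep)D_b)$ is automatically klt by \cite{Oda13b}, hence $(X_b,D_b)$ is klt, hence $X_b$ is normal (this is where positivity of $D$ and the klt condition interact). Conversely, every K-semistable log Fano pair arising as a limit of $(\bP^2,\tfrac{3}{d}C)$ in the sense of \cite{ADL19} produces a boundary polarized CY pair by Definition \ref{d:bpcy}, since $K_X + (1-\vep)D \equiv -\vep K_X$ combined with $K_X + D \sim_\bQ 0$ forces $K_X + D \sim_\bQ 0$ on the nose after accounting for index, and $D$ is ample $\bQ$-Cartier. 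The key point is that the two moduli functors — ``family of K-semistable log Fano pairs $(X,(1-\vep)D)$ with invariants matching $(\bP^2,\tfrac{3}{d}C)$'' and ``family in $\cP_d^{\rm CY}$ whose fibers are K-semistable after perturbation'' — agree, because for $0 < \vep \ll 1$ the K-semistability of $(X,(1-\vep)D)$ is independent of $\vep$ by the wall-crossing/interpolation results of \cite{ADL19,ADL21} and the boundedness of the moduli problem pins down a uniform $\vep$. This gives an isomorphism of stacks $\cP_d^{\rm K} \cong \cM^{\rm Kss}_{\chi, d}$ where the right side is the K-moduli stack of \cite{ADL19}; I will make this precise in Remark \ref{r:Kdef=ADL} as promised in the text.

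For openness (statement (1)), I would argue that the condition ``$(X_b,(1-\vep)D_b)$ is K-semistable for $0<\vep\ll1$'' is an open condition on $B$: this follows from openness of K-semistability in families of log Fano pairs \cite{BLX19, Xu20} together with the observation that, over a finite type base, one can choose $\vep$ uniformly by boundedness. The inclusion $\cP_d \subset \cP_d^{\rm K}$ holds because $(\bP^2, (1-\vep)\tfrac{d}{d}C)$ with $C$ smooth is K-semistable — indeed K-stable for $C$ general — which is classical (e.g., follows from \cite{ADL19} or can be checked directly as $\bP^2$ with a smooth divisor has no destabilizing test configurations for small $\vep$). For statement (2), finite type follows from boundedness of K-semistable log Fano pairs with fixed volume and coefficient (Jiang's boundedness \cite{Jia20}, specialized in \cite{ADL19}), and the existence of a projective good moduli space $P_d^{\rm K}$ is exactly the output of the K-moduli theorem of \cite{ABHLX20,LXZ21} with projectivity from the ampleness of the CM line bundle \cite{XZ20,CP21}, as carried out in \cite{ADL19}.

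The main obstacle I anticipate is the careful bookkeeping in identifying the two stacks: one must verify that the marking data, the Hilbert polynomial normalization $\chi$, the index $N_d$, and the coefficient $\tfrac{3}{d}$ in Definition \ref{d:familyNoetherian} match the conventions of \cite{ADL19}, and — more substantively — that the perturbation parameter $\vep$ can genuinely be taken uniformly over the whole (bounded) moduli problem rather than fiber-by-fiber. This is where one invokes the finiteness of walls in the K-stability chamber decomposition as $\vep$ varies, established in \cite{ADL19} (and in greater generality in \cite{ADL21, Zhou}). Once that uniformity is in hand, openness, the inclusion $\cP_d\subset\cP_d^{\rm K}$, and the transport of the good moduli space and its polarization are all formal consequences of the cited results, and the proof reduces to assembling these citations in the right order.
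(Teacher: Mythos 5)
Your proposal is correct and follows essentially the same route as the paper, whose entire proof consists of citing \cite{ADL19} (openness of K-semistability for statement (1), the construction of the good moduli space for statement (2)) together with \cite{XZ20} for projectivity; your longer discussion of identifying the two moduli functors is exactly what the paper defers to Remark \ref{r:Kdef=ADL}. One small inaccuracy worth noting: K-semistability of $(X_b,(1-\vep)D_b)$ makes that perturbed pair klt and hence $X_b$ normal, but it does \emph{not} force $(X_b,D_b)$ itself to be klt (the CY pair may be strictly lc, e.g.\ at a tacnode of a quartic), though this does not affect your argument since only normality of $X_b$ is used.
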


\begin{proof}
Statement (1) follows from the openness of K-semistability in families of smoothable log Fano pairs in  \cite[Cor. 3.17 \& Prop. 4.6.3]{ADL19} (see also \cite{BLX19, Xu20}). Statement (2)  follows from \cite[Thm. 1.1 \& 1.2]{ADL19} and \cite{XZ20} for the projectivity party.
\end{proof}

\begin{rem}
For a parameter $0<c< \tfrac{3}{d}$, \cite{ADL19} also constructs compactifications $\cP_d\subset \cP_{d,c}^{\rm K}$ by viewing a smooth degree $d$ plane curve $C\subset \bP^2$ as a K-polystable log Fano pair $(\bP^2,cC)$ and using K-moduli theory to compactify the stack.
The authors show $\cP_{d,c}^{\rm K}$ satisfies a wall-crossing framework as $c$ varies and $\cP_{d,c}^{\rm K}$ parametrizes GIT semistable plane curves when $0<c\ll\tfrac{3}{d}$.
\end{rem}

\begin{lem}\label{l:SequivKmod}
If $(X,D)$ is in $\cP_d^{\CY}(\bk)$, then it is S-equivalent to a pair in
$\cP_{d}^{\rm K}(\bk)$.
\end{lem}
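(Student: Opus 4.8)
The plan is to exhibit $(X,D)$ and some pair in $\cP_d^{\rm K}(\bk)$ as the two special fibers of families of boundary polarized CY pairs over a common punctured curve, and then to invoke Corollary~\ref{c:sequiv}.

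First, since $(X,D)\in\cP_d^{\CY}(\bk)=\overline{\cP_d}(\bk)$, there is a family of boundary polarized CY pairs $(\cX,\cD)\to T$ over the germ of a smooth pointed curve $0\in T$ whose restriction to $T^\circ:=T\setminus 0$ lies in $\cP_d$ and with $(\cX_0,\cD_0)\cong (X,D)$. As $K_{\cX/T}+\cD\sim_{\bQ,T}0$ and $\cD$ is relatively ample, $-(K_{\cX/T}+(1-\vep)\cD)\sim_{\bQ,T}\vep\cD$ is relatively ample, so $(\cX^\circ,(1-\vep)\cD^\circ)\to T^\circ$ is a family of log Fano pairs; by Theorem~\ref{t:ADL}.1 (which in particular gives $\cP_d\subset\cP_d^{\rm K}$) each fiber is K-semistable, so this is a family in $\cP_d^{\rm K}$.

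Next I would extend this family using properness of the K-moduli space. The induced map $T^\circ\to\cP_d^{\rm K}\to P_d^{\rm K}$ extends over $T$ since $P_d^{\rm K}$ is projective, and after a finite base change $T'\to T$ by a smooth pointed curve one lifts to a morphism $T'\to\cP_d^{\rm K}$ restricting to the given one over $T'\setminus 0'$ (this is the K-semistable reduction property of the K-moduli stack; cf.\ \cite{ADL19} and \cite[Thm.~A]{AHLH18}). This produces a family of K-semistable log Fano pairs $(\cX',(1-\vep)\cD')\to T'$ extending the base change of $(\cX^\circ,(1-\vep)\cD^\circ)$, with special fiber $(X_0',(1-\vep)D_0')$. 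One then upgrades this to a family of boundary polarized CY pairs $(\cX',\cD')\to T'$: the total space may be taken normal, $\cD'$ is $\bQ$-Cartier and relatively ample, and $K_{\cX'/T'}+\cD'\sim_{\bQ,T'}0$ by Lemma~\ref{l:Ncompspecialize} applied to the generic fiber (alternatively, one extends the polarizing divisor directly as in Example~\ref{e:unbounded}). Since its generic fiber lies in $\cP_d$, the family $(\cX',\cD')\to T'$ defines a point of $\cP_d^{\CY}$, so $(X_0',D_0')\in\cP_d^{\CY}(\bk)$; since $(X_0',(1-\vep)D_0')$ is K-semistable, in fact $(X_0',D_0')\in\cP_d^{\rm K}(\bk)$.

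Finally, the base change $(\cX,\cD)\times_T T'\to T'$ and $(\cX',\cD')\to T'$ are two families of boundary polarized CY pairs over the smooth pointed curve $0'\in T'$ agreeing over $T'\setminus 0'$, with special fibers $(X,D)$ and $(X_0',D_0')$ respectively. Corollary~\ref{c:sequiv} then gives $(X,D)\sim_S(X_0',D_0')$ with $(X_0',D_0')\in\cP_d^{\rm K}(\bk)$, which proves the lemma. The points requiring the most care are the lifting of the family from $P_d^{\rm K}$ back to the K-moduli stack after finite base change (i.e.\ invoking K-semistable reduction from \cite{ADL19}) and the verification that the resulting family of log Fano pairs genuinely upgrades to a family of boundary polarized CY pairs --- in particular that the polarizing divisor $\cD'$ is $\bQ$-Cartier on the total space.
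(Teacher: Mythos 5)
Your proposal is correct and follows essentially the same route as the paper: realize $(X,D)$ as the special fiber of a family over a pointed curve with generic fiber in $\cP_d$, produce (after finite base change) a second family over the same curve lying in $\cP_d^{\rm K}$ with isomorphic generic fiber, and conclude by Corollary~\ref{c:sequiv}. The only difference is that the paper compresses your middle steps (properness of $P_d^{\rm K}$, lifting to the stack, and upgrading the log Fano family to a family of boundary polarized CY pairs) into a single citation of \cite[Thm.~3.19]{ADL19}, which packages exactly that K-semistable reduction statement.
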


\begin{proof}
Fix a  family  $(\cX,\cD)\to T$ in $\cP_{d}^{\rm CY}$ over the germ of a curve $0 \in T$ such that $(X,D)\cong (\cX_0,\cD_0)$ and $(\cX_{K},\cD_K) \in \cP_d(K)$, 
where  $K:=K(T)$.
By \cite[Thm. 3.19]{ADL19}, after possibly replacing $0 \in T$ by a finite cover, 
there exists a family $(\cX',\cD') \to C$ in $\cP_d^{\rm K}$ 
such that $(\cX'_K,\cD'_K)\cong (\cX_K,\cD_K)$. 
Thus Corollary \ref{c:sequiv} implies $(X,D)\sim_S(\cX'_0,\cD'_0)$.
\end{proof}

\subsection{Stratification via type}

\begin{defn}[Type] A pair $(X,D)$ in $\cP_{d}^{\rm CY}(\bk)$ is called
\begin{enumerate}
\item \emph{Type I} if $(X,D)$ is klt,
\item \emph{Type II} if $\Src(X,D)$ is a curve, and
\item \emph{Type III} if $\Src(X,D)$ is a point.
\end{enumerate}
\end{defn}

\begin{rem}
A pair in $\cP_d^{\CY}(\bk)$ is Type I, II, or III if and only if the regularity is $-1$, $0$, or $1$, respectively.
When discussing degenerations of surface pairs, we use type, rather than regularity, to mirror terms used for degenerations of K3 surfaces \cite{Kul77}.
\end{rem}

\begin{rem}\label{r:Sequiv-type}
By Proposition \ref{p:SrcSequiv}, S-equivalent pairs in $\cP_{d}^{\CY}(\bk)$ are of the same type.
\end{rem}

Using Theorem \ref{t:modulireg0}, we will show that the union of the Type I and II loci of the stack admits a good moduli space. 
The Type III locus of the stack is related to the failure of boundedness in Example \ref{e:unbounded}
and will be analyzed in later sections.

\begin{defn}[Type I and II substack]
Let $\cP_{d}^{\rm CY,I+II}\subset  \cP_{d}^{\rm CY}$ be the substack consisting of  families
$(X,D) \to B$ in $\cP_{d}^{\rm CY,I+II}$ such that $(X_b,D_b)$ is Type I or II for all $b\in B$. 
\end{defn}

\begin{prop}\label{p:stackI+II}
The following hold:
\begin{enumerate}
\item $\cP_{d}^{\rm CY,I+II} \hookrightarrow \cP_d^{\rm CY}$ is an open embedding, 
\item $\cP_{d}^{\rm CY,I+II}$ is a finite type algebraic stack with affine diagonal, and
\item there exists a morphism $\cP_{d}^{\rm CY,I+II} \to P_{d}^{\rm CY,I+II}$ to a separated good moduli space.
\end{enumerate}
\end{prop}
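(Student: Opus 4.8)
\textbf{Proof strategy for Proposition \ref{p:stackI+II}.}

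The plan is to identify $\cP_d^{\rm CY,I+II}$ with $\overline{\cM}^{\reg \leq 0}$ for a suitable choice of finite type open substack $\cM$ of the moduli stack and then invoke Theorem \ref{t:modulireg0}. More precisely, recall that $\cP_d^{\CY} = \overline{\cP_d}$ is the stack-theoretic closure of $\cP_d$ inside $\cM(\chi, N_d, (1,\tfrac{3}{d}))$, and that the type of a pair (I, II, or III) corresponds exactly to regularity $-1$, $0$, or $1$ by Remark preceding Definition of the Type I and II substack. Hence $\cP_d^{\rm CY,I+II}$ is, set-theoretically, the locus of $\cP_d^{\CY}$ of regularity $\leq 0$, which is open in $\cP_d^{\CY}$ by Proposition \ref{p:regularityusc} (upper semicontinuity of regularity); this gives (1). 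Since $\cP_d^{\CY}$ is a locally finite type algebraic stack with affine diagonal by Theorem \ref{t:stack}, so is its open substack $\cP_d^{\rm CY,I+II}$; finite type will then follow from the boundedness we establish below, giving (2).

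For the boundedness and the existence of the good moduli space, the key point is to fit $\cP_d^{\rm CY,I+II}$ into the framework of Theorem \ref{t:modulireg0}. First I would fix a finite type open substack $\cM \subset \cP_d$ (for instance $\cP_d$ itself, which is finite type since smooth plane curves of fixed degree form a bounded family) such that $\overline{\cM} = \cP_d^{\CY}$; here I am using that $\cP_d$ is irreducible and dense, so its closure is all of $\cP_d^{\CY}$, and that $\cP_d$ is contained in an open substack of the form $\sM(\chi, N_d, {\bf r}, {\bf c})$ as in the statement of Theorem \ref{t:modulireg0}, since $-k K_{\bP^2}$ is very ample for $k$ large and $\deg \emptyset = 0$ is bounded. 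Applying Theorem \ref{t:modulireg0} to this $\cM$, we conclude that $\overline{\cM}^{\reg \leq 0} = \cP_d^{\CY} \cap \sM(\chi,N_d,(1,\tfrac3d))^{\reg \leq 0}$ is a finite type algebraic stack with affine diagonal admitting a separated good moduli space. Since $\cP_d^{\rm CY,I+II}$ is precisely this stack, parts (2) and (3) follow, with $P_d^{\rm CY,I+II}$ the good moduli space so produced.

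The main obstacle is verifying that $\cP_d$ (or whatever $\cM$ one picks) really is contained in a single finite type substack $\sM(\chi,N_d,{\bf r},{\bf c})$ of the type required by Theorem \ref{t:modulireg0}, and that its closure is all of $\cP_d^{\CY}$; the latter is essentially the definition of $\cP_d^{\CY}$, but one must be slightly careful that the stack-theoretic closure (as opposed to the set-theoretic one) coincides with the locus of degenerations, which is a standard fact about closures of substacks. The remaining subtlety is purely bookkeeping: Theorem \ref{t:modulireg0} produces the good moduli space for $\overline{\cM}^{\reg \leq 0}$, and one must check this stack literally equals $\cP_d^{\rm CY,I+II}$ as substacks of $\sM(\chi,N_d,(1,\tfrac3d))$, which follows by comparing the defining conditions (regularity $\leq 0$ on every geometric fiber) with the Type I/II condition via the regularity/type dictionary. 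No genuinely new input beyond Theorems \ref{t:stack}, \ref{t:modulireg0}, and Proposition \ref{p:regularityusc} is needed.
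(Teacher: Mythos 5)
Your proposal is correct and follows essentially the same route as the paper: part (1) is openness of the regularity $\leq 0$ locus via Proposition \ref{p:regularityusc}, and parts (2) and (3) come from identifying $\cP_{d}^{\rm CY,I+II}$ with $\overline{\cP_d}^{\,\reg\leq 0}$ (using that Type I/II is exactly regularity $\leq 0$, and that $\cP_d$ sits in a single finite type substack with fixed degree data ${\bf c}=(0,3d)$) and applying Theorem \ref{t:modulireg0}. The bookkeeping you flag as the "main obstacle" is genuinely all that is needed; the paper leaves it implicit.
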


\begin{proof}
Statement (1) follows from 	Proposition  \ref{p:regularityusc}, while  (2) and (3) 
from Theorem \ref{t:modulireg0}.
\end{proof}

In the case when the degree is not divisible by 3, we will show that Type III degenerations do not occur and deduce the existence of a good moduli space in that case. 

\begin{prop}\label{p:dnmid3plt}
If $(X,D)$ is in $\cP_{d}^{\CY}(\bk)$ and $3\nmid d$, then the pair is Type I or II and the normalization $(\overline{X},\overline{G})$ of $(X,0)$ is plt.
\end{prop}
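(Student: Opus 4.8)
The plan is to use Proposition \ref{p:coreg>0condition} together with the constraint that $D = \tfrac{3}{d}C$ for a Weil divisor $C$ when $3 \nmid d$. Observe that if $(X,D)$ is in $\cP_d^{\CY}(\bk)$ with $D = \tfrac{3}{d}C$, then $D = r D'$ with $r = \tfrac{3}{d}$ and $D' = C$ a $\bZ$-divisor. Since $3 \nmid d$, we have $\gcd(3,d) = 1$, so $1 \notin \tfrac{3}{d}\bZ$. Applying Proposition \ref{p:coreg>0condition} with $\Delta = 1\cdot\emptyset$ (a $\bZ$-divisor), we conclude $\coreg(X,D) > 0$, i.e.\ $\reg(X,D) \leq 0$. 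Hence $(X,D)$ is Type I or Type II, which gives the first assertion.

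\textbf{The plt claim.} For the statement that the normalization $(\overline{X},\overline{G})$ of $(X,0)$ is plt, I would split into cases on whether $X$ is normal. If $X$ is normal, then $\overline{G} = 0$ and we need $X$ to have klt singularities; since $(X, D)$ is an lc pair with $D$ effective and ample and $K_X + D \sim_{\bQ} 0$, the pair $(X, 0)$ satisfies $-K_X \sim_{\bQ} D$ ample, and in fact we want to conclude $X$ is klt. The key input here is that a Type I pair is klt by definition, so $(X,D)$ klt implies $(X,0)$ klt, hence $X$ is klt. For a Type II pair, I would invoke Proposition \ref{p:typeIIslc}: a regularity $0$ boundary polarized CY pair has either $X$ normal, in which case I still need to argue $(X,0)$ is klt, or $X$ non-normal with normalization $(\overline{X},\overline{G}+\overline{\Delta})$ of $(X,\Delta)$ being plt. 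In the non-normal case we are essentially done since $\overline{\Delta} = 0$ here. In the normal Type II case, I would argue that since $\reg(X,D) = 0$, Lemma \ref{l:typeIIlcplace}.2 shows $(X,0) = (X,\Delta)$ has at most one lc place; combined with the fact that the only lc place (if it exists) must come from the divisor $D$ in the structure of the source, and the minimality of the lc center, one deduces that $(X,0)$ is actually klt away from the contribution of $D$. More carefully: since $(X, (1+\vep)D)$ need not be lc, but the lc places of $(X,D)$ are supported on $\operatorname{Supp}(D) \cup \Exc$ of a dlt modification, and $\overline{X} = X$, I would use that the minimal lc center of $(X,D)$ being a curve forces the non-klt locus to be tied to $D$, so that $(X,0)$ is plt, hence klt since $X$ is normal.

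\textbf{Main obstacle.} The hard part will be nailing down the precise argument that, in the normal Type II case, $X$ itself has klt singularities — the source being a curve tells us the dlt modification of $(X,D)$ has a $1$-dimensional minimal lc center, but I need to transfer this to a statement purely about $(X,0)$. I expect the cleanest route is: take a dlt modification $f:(Y,\Gamma_Y) \to (X,D)$ and the corresponding $(Y,\Delta_Y) \to (X,0)$ with $\Delta_Y \leq \Gamma_Y$; since $-K_X$ is ample, $-K_Y - \Delta_Y = f^*(-K_X)$ is big and nef, so $\Delta_Y^{=1}$ is connected by \cite[Thm. 17.4]{FlipAbund} (as in the proof of Lemma \ref{l:typeIIlcplace}.2), and moreover since $\reg(X,D) = 0$ every component of $\Gamma_Y^{=1}$ is disjoint, forcing $\Delta_Y^{=1}$ to be either empty or a single divisor $F$ with $f(F)$ of positive dimension (else the source of $(X,D)$ would be a point or $X$ would be non-normal with a glued boundary). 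In either case $(X,0)$ is plt, and since $X$ is normal and plt pairs with zero boundary are klt, $X$ is klt. I would then also double-check the degenerate edge case $d = 3$ or small $d$ where $\cP_d^{\rm H}$ is empty, but the argument above does not use Hacking's stack so it should go through uniformly. Finally I would record that, combining with Proposition \ref{p:typeIIslc}, the non-normal case only arises for Type II and there $(\overline{X},\overline{G})$ is plt by that proposition, completing the proof.
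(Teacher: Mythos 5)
The first assertion of your proof (Type I or II via Proposition \ref{p:coreg>0condition} and $1\notin\tfrac{3}{d}\bZ$) is exactly the paper's argument and is correct. Your handling of the Type I case and of the non-normal Type II case (via Proposition \ref{p:typeIIslc}, where $\overline{\Delta}=0$) is also fine.

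The gap is in the normal Type II case, and it is fatal as written: your conclusion ``in either case $(X,0)$ is plt'' is false there, and tellingly your argument for it never uses the hypothesis $3\nmid d$. The counterexample you must rule out is the projective cone $X=C_p(E,L)$ over a smooth elliptic curve $E$ with $D$ the section at infinity (Theorem \ref{thm:type2Sequiv}(i)): this is a normal Type II pair (its source is $E$), but $(X,0)$ is lc and not klt, since the exceptional divisor $X_0\cong E$ of the blowup of the cone point satisfies $A_{X,0}(X_0)=0$. Concretely, in your dlt-modification setup $\Delta_Y^{=1}$ is the single \emph{exceptional} divisor $X_0$ with $f(X_0)$ equal to the cone point, so your parenthetical ``else the source of $(X,D)$ would be a point or $X$ would be non-normal'' is wrong — the source is still a curve — and a nonempty exceptional $\Delta_Y^{=1}$ means precisely that $(X,0)$ fails to be plt. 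The paper closes this gap differently: if $(\overline{X},\overline{G})$ is not plt, Proposition \ref{p:typeIIslc} forces $X$ to be normal, lc and not klt, whence $X$ is an elliptic cone by \cite[Thm.~8.5]{Hac04}; one then excludes the elliptic cone when $3\nmid d$ (via \cite[Thm.~7.1]{Hac04} or Theorem \ref{thm:type2Sequiv}, essentially because its polarizing boundary is the reduced section at infinity, which cannot be written as $\tfrac{3}{d}C$ with $C$ a Weil divisor unless $3\mid d$). Your proof needs this final exclusion step, or some substitute for it, and cannot avoid invoking $3\nmid d$ a second time.
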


\begin{proof}
Since $1\notin \tfrac{3}{d} \bZ$, Proposition \ref{p:coreg>0condition} implies $(X,D)$ must be Type I or II.
If  $(\overline{X},\overline{G})$ is not plt, then Proposition \ref{p:typeIIslc} implies $X$ is lc and not klt.
Thus  $X$ must be the cone over an elliptic curve by \cite[Thm. 8.5]{Hac04}.
This case cannot occur when $3\nmid d$ by either the proof of \cite[Proof of Thm. 7.1]{Hac04} or  Theorem \ref{thm:type2Sequiv}, which is proven in a later section.
\end{proof}

\begin{thm}\label{t:moduliexists3notd}
If $3 \nmid d$, then $ \cP_{d}^{\rm CY,I+II}=\cP_{d}^{\CY}$. Additionally,
\begin{enumerate}
\item $ \cP_{d}^{\CY}$ is a finite type algebraic stack with affine diagonal, and
\item there exists a good morphism  $\cP_{d}^{\CY} \to P_{d}^{\CY}$  to a proper good moduli space.
\end{enumerate}
\end{thm}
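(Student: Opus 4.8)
The statement asserts that when $3\nmid d$, the stack $\cP_d^{\CY}$ coincides with $\cP_d^{\CY,\mathrm{I+II}}$, is a finite type algebraic stack with affine diagonal, and admits a good moduli space $P_d^{\CY}$ that is proper. The first claim is immediate: by Proposition \ref{p:dnmid3plt}, every $\bk$-point of $\cP_d^{\CY}$ is Type I or II, so $\cP_d^{\CY}(\bk) = \cP_d^{\CY,\mathrm{I+II}}(\bk)$, and since $\cP_d^{\CY,\mathrm{I+II}}\hookrightarrow \cP_d^{\CY}$ is an open immersion (Proposition \ref{p:stackI+II}.1) which is surjective on points, it is an isomorphism. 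Thus $\cP_d^{\CY}=\cP_d^{\CY,\mathrm{I+II}}$. Then (1) is exactly Proposition \ref{p:stackI+II}.2, and the existence of a good moduli space morphism $\cP_d^{\CY}\to P_d^{\CY}$ with $P_d^{\CY}$ separated is Proposition \ref{p:stackI+II}.3. So the only remaining content is the \emph{properness} of $P_d^{\CY}$.

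For properness, I would invoke the last sentence of Theorem \ref{t:AHLH}: if $\cM\to M$ is a good moduli space of a finite type stack with affine diagonal and $\cM$ satisfies the existence part of the valuative criterion for properness, then $M$ is proper. Since $\cP_d^{\CY}$ is finite type with affine diagonal by part (1), it suffices to check the existence part of the valuative criterion for $\cP_d^{\CY}$ with respect to DVRs essentially of finite type over $\bk$ (this suffices by Remark \ref{r:AHLHeftDVRs}). Given a DVR $R$ essentially of finite type over $\bk$ with fraction field $K$ and a map $\Spec K\to \cP_d^{\CY}$, i.e.\ a boundary polarized CY pair $(X_K,D_K)$ that lies in $\cP_d^{\CY}(K)$, Theorem \ref{thm:properness} (Properness) produces a finite extension $R'/R$ and an extension of $(X_K,D_K)\times_K K'$ to a family of boundary polarized CY pairs over $\Spec R'$. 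The one point requiring care is that this extension lands in the closed substack $\cP_d^{\CY}\subset \cM(\chi,d,(1,\tfrac3d))$, not merely in the ambient moduli stack: the generic fiber lies in $\cP_d$ and $\cP_d^{\CY}$ is defined as the stack-theoretic closure of $\cP_d$, so the $\Spec R'$-point, having generic fiber in $\cP_d$, factors through this closure by flatness; alternatively, since $\cP_d^{\CY}=\cP_d^{\CY,\mathrm{I+II}}$ is already shown to be S-complete and $\Theta$-reductive (so its good moduli space exists), one can run the valuative criterion directly inside $\cP_d^{\CY,\mathrm{I+II}}$ using that its generic fiber is Type I, hence the special fiber is automatically Type I or II by Proposition \ref{p:SrcSequiv}/\ref{p:regularityusc}, keeping us inside the substack.

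I expect the main (and essentially only) obstacle to be the bookkeeping in the previous paragraph: verifying that the extension produced by Theorem \ref{thm:properness} actually defines a $K'$-point and an $R'$-point of $\cP_d^{\CY}$ rather than just of the large stack $\cM(\chi,d,(1,\tfrac3d))$, and that this is compatible with the numerical invariants $\chi$, $N=d$, $\mathbf r=(1,\tfrac3d)$ fixed in the definition of $\cP_d^{\CY}$. Once the generic fiber is identified with a pair $(\bP^2,\tfrac3d C)$ for a smooth plane curve $C$ of degree $d$ (up to base change to $K'$), the pair lies in $\cP_d(K')$, and since $\cP_d^{\CY}=\overline{\cP_d}$ is closed in the ambient stack and the extension is flat over $\Spec R'$, the $R'$-point lands in $\cP_d^{\CY}$. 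This completes the verification of the existence part of the valuative criterion for properness, and Theorem \ref{t:AHLH} then yields that $P_d^{\CY}$ is proper, proving (2).
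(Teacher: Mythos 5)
Your proposal is correct and follows essentially the same route as the paper: Propositions \ref{p:stackI+II} and \ref{p:dnmid3plt} give the identification $\cP_d^{\CY}=\cP_d^{\rm CY,I+II}$, statement (1), and the separated good moduli space, while properness follows from Theorem \ref{thm:properness} combined with the last sentence of Theorem \ref{t:AHLH}. The one detail you elaborate beyond the paper's terse proof --- that the extension produced by Theorem \ref{thm:properness} actually lands in $\cP_d^{\CY}$ --- is handled correctly by your closure argument; note only that the generic fiber need merely lie in $\cP_d^{\CY}$ (which is closed in the ambient stack $\cM(\chi,N_d,(1,\tfrac{3}{d}))$), not in $\cP_d$ itself.
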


\begin{proof}
Propositions \ref{p:stackI+II} and \ref{p:dnmid3plt} imply statement (1) and  the existence of a separated good moduli space $P_{d}^{\rm CY}$.
Since $\cP_{d}^{\rm CY}$ satisfies the existence part of the valuative criterion of properness  by Theorem \ref{thm:properness}, $P_{d}^{\rm CY}$ is proper by Theorem \ref{t:AHLH}.
\end{proof}

\subsection{Stratification via index}

\begin{defn}	
For a boundary polarized CY pair $(X,D)$, the \emph{index} at $x\in X$ is  
\[
{\rm ind}_x(K_X) := \min \{ m\in \bZ_{>0} \, \vert\, mK_X \text{ is Cartier in a neighborhood of $x$}\}
.\]
For each $m\geq 1$, we define the  substack 
$\cP_{d,m}^{\rm CY} \subset \cP_{d}^{\rm CY}$ consisting of families $(X,D)\to B$ in $\cP_{d}^{\CY}$ such that ${\rm ind}_x  (K_{X_b}) \leq m$ at each $b\in B$ and $x\in X_b$.
\end{defn}

Observe that there is a chain of inclusions
\[
\cP_{d} \subset \cP_{d,1}^{\CY}\subset 
\cP_{d,2}^{\CY}
\subset \cdots 
\quad \text{ and } \quad  \cP_{d}^{\CY}= \cup_{m \geq 1}\cP_{d,m}^{\CY}
.\]
The advantage to working with $\cP_{d,m}^{\rm CY}$ is that the stack is finite type by the following lemma.

\begin{prop}\label{p:Pdmstack} The following hold:
\begin{enumerate}
\item $\cP_{d,m}^{\rm CY} \subset \cP_{d}^{\rm CY}$ is an open substack, 
\item $\cP_{d,m}^{\rm CY} $ is a finite type algebraic stack with affine diagonal.
\end{enumerate}
\end{prop}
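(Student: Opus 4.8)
The strategy is to reduce to the general structural result already established for the full moduli stack, namely Theorem \ref{t:stack}, together with the boundedness criterion of Proposition \ref{p:boundedness}. Since $\cP_{d,m}^{\rm CY}\subset \cP_d^{\rm CY}$ is obtained from the locally finite type algebraic stack with affine diagonal $\cP_d^{\rm CY}=\overline{\cP_d}$ by imposing the pointwise condition ${\rm ind}_x(K_{X_b})\leq m$, the first task is to show this locus is open; openness immediately gives that $\cP_{d,m}^{\rm CY}$ is a locally finite type algebraic stack with affine diagonal, inheriting these properties from $\cP_d^{\rm CY}$. The remaining and more substantial task is to prove that $\cP_{d,m}^{\rm CY}$ is of finite type, for which by Proposition \ref{p:boundedness} it suffices to produce integers $k$ and $c$ (depending only on $d$ and $m$) such that $-k K_X=-k(K_X+\Delta)$ is very ample and $\deg(\Delta^{\Div})=\deg(\emptyset)=0\leq c$ for every pair $(X,D)$ in $\cP_{d,m}^{\rm CY}(\bk)$. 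Since here $\Delta=1\cdot\emptyset$, the degree bound is automatic with $c=0$, so the whole content is bounding the very ampleness index $k$.

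\textbf{Openness of the index condition.} For statement (1), I would work on the universal family over a finite type chart: given a family of boundary polarized CY pairs $(X,D)\to B$ in $\cP_d^{\rm CY}$ with $B$ Noetherian, the function $b\mapsto \max_{x\in X_b}{\rm ind}_x(K_{X_b})$ is upper semicontinuous. This follows from the fact that $\omega_{X/B}^{[m]}$ commutes with base change (a consequence of Definition \ref{d:familyNoetherian}.5 with $n=0$, using that $\Delta=\emptyset$), so the locus where $\omega_{X/B}^{[m]}$ is invertible — equivalently where $mK_{X_b}$ is Cartier at every point — is open by Nakayama's lemma, exactly as in the proof of Proposition \ref{p:anticanonicalsheaf}.4. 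Intersecting over the finitely many relevant $m$ (namely $m'\leq m$ with $m'\mid m!$, or simply noting the index at a point is the order of $[\omega_{X_b}]$ in the local class group, bounded by $m$) shows the locus $\{b : {\rm ind}_x(K_{X_b})\leq m \ \forall x\}$ is open in $B$. Hence $\cP_{d,m}^{\rm CY}\subset \cP_d^{\rm CY}$ is an open substack, and being an open substack of a locally finite type algebraic stack with affine diagonal, it is itself a locally finite type algebraic stack with affine diagonal.

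\textbf{Boundedness.} The heart of the proof is finite type-ness. The key point is that all pairs $(X,D)$ in $\cP_{d,m}^{\rm CY}(\bk)$ satisfy: $X$ is an slc degeneration of $\bP^2$ with $-K_X$ ample, $mK_X$ Cartier, and $K_X+D\sim_{\bQ}0$ with $D\sim_{\bQ}-K_X$. Passing to the normalization $(\overline X,\overline G)$, each component is an lc (indeed, a log Fano) surface pair with $m K_{\overline X}$ Cartier near the generic point of $\overline G$ and with bounded volume $(-K_{\overline X})^2\leq (-K_{\bP^2})^2=9$ (the anticanonical volume is locally constant in the family $\cP_d$, equals $9$ there, and can only drop or stay under degeneration; more carefully it is determined by $\chi$ via $\deg(D^{\Div})=\tfrac{d}{3}(-K_X)^2$). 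One then invokes boundedness of lc Fano surfaces (or slc Fano surfaces) of bounded Gorenstein/Cartier index and bounded volume — this is classical in dimension two, e.g. via \cite{Ale94} or the general boundedness results cited in the introduction, or directly: del Pezzo surfaces with Cartier index dividing $m$ and bounded degree form a bounded family, and the number of components of $X$ is bounded since each contributes positively to an anticanonical-type invariant. Boundedness of the normalizations $(\overline X,\overline G)$ plus boundedness of the gluing data (the conductor is a curve of bounded genus and degree, and ${\rm Isom}$ of the relevant polarized pairs is finite type) gives, via the gluing argument as in Lemma \ref{l:reducetonormalcase}, that the set of such $X$ is bounded. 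Finally, $D\in \tfrac{3}{d}|-N_d K_X|$ lies in a bounded linear system, so the set of pairs $(X,D)$ is bounded. By Proposition \ref{p:boundedness} this yields the required $k$, and then the proof of Proposition \ref{prop:localstack} shows $\cP_{d,m}^{\rm CY}$ is a finite type algebraic stack with affine diagonal.

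\textbf{Main obstacle.} The delicate step is the boundedness of the underlying surfaces $X$ once we only fix the index $m$ rather than the full Hilbert function — concretely, bounding the number of irreducible components of reducible degenerations and controlling the non-normal gluing. I expect this to be handled by combining two-dimensional boundedness of lc Fanos of bounded index with the structure of slc pairs via Kollár's gluing theory (as already used repeatedly in the excerpt), possibly citing \cite{Hac04} for the explicit classification of degenerations of $\bP^2$ when needed; the openness in (1) is routine by the base-change property of $\omega^{[m]}_{X/B}$.
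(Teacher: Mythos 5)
Your part (1) is essentially the paper's argument: both use the base-change property of $\omega_{X/B}^{[j]}$ to see that the invertibility locus $U_j\subset X$ is open, and then descend to the base (the paper phrases this as $B^\circ = B\setminus f\big(\bigcap_{j=1}^m Z_j\big)$ with $Z_j = X\setminus U_j$, implicitly using properness of $f$ to get a closed image; note you want the \emph{union} of the $U_j$ upstairs, not the intersection, since the index condition asks that \emph{some} $j\le m$ works at each point). For part (2), however, you take a genuinely different and much heavier route than the paper. You propose to normalize, invoke two-dimensional boundedness of lc log Fano pairs of bounded Cartier index and bounded volume, bound the number of components, and then rebound the gluing data via Koll\'ar's gluing theory. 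The paper instead observes that for $(X,D)\in\cP_{d,m}^{\CY}(\bk)$ the divisor $-m!\,K_X$ is an ample \emph{Cartier} divisor whose Hilbert polynomial is already fixed by the moduli functor (it equals $\chi(m!\,\cdot)$), so Koll\'ar's effective boundedness theorem \cite[Thm.\ 2.1.2]{Kol85} immediately produces a uniform $k$ with $-kK_X$ very ample, and Proposition \ref{p:boundedness} finishes the proof. That one-line argument is dimension-free and sidesteps the two weakest points of your sketch: (i) to bound the number of irreducible components you need a uniform positive lower bound on the per-component volume $(-K_{\oX_i}-\oG_i)^2$, which you only get \emph{after} establishing boundedness of the components (and note the index hypothesis gives Cartierness of $m(K_{\oX}+\oG)$, not of $mK_{\oX}$ itself, so the surface boundedness input must be for log pairs); and (ii) the boundedness of the gluing/involution data is asserted rather than proved. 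Your approach can likely be completed in dimension two, but the paper's use of the fixed Hilbert polynomial of $-m!K_X$ is the key simplification you missed.
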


\begin{proof}
To prove (1), it suffices to show that if $f:(X,D) \to B$ is a family in $\cP_d^{\rm CY}$, then
\[
B^\circ := \{ b\in B\, \vert\, {\rm ind}_{x} ({X_b}) \leq m \text { at each } x \in X_b\}
\]
is open in $B$.  By Definition \ref{d:nonNoeth}, we may assume $B$ is Noetherian. 
Now, for $1\leq j \leq m$,  let $U_j \subset X$ denote the open locus where $\omega_{X/B}^{[j]}$ is an invertible sheaf and $Z_j := X\setminus U_j$. 
For $b\in B$ and $x\in X_b$,  $jK_{X_b}$ is Cartier at $x$ if and only if $\omega_{X_b}^{[j]}$ is invertible at $x$. Since $\omega_{X/B}^{[j]}\vert_{X_b}\cong \omega_{X_b}^{[j]}$,
$jK_{X_b}$ is invertible at $x$  if and only if  $x\in U_j$.
Thus $B^\circ = B\setminus  f( \cap_{j=1}^m Z_j)$, which is open.

Since (1) holds and $\cP_{d}^{\rm CY}$ is a locally finite type algebraic stack with affine diagonal, $\cP_{d,m}^{\rm CY}$ satisfies the same properties.
To show the stack is of finite type, it suffices to verify  $\cP_{d,m}^{\CY}(\bk)$ is bounded. 
Now, if $(X,D:= \tfrac{3}{d}C)$ is in $\cP_{d,m}^{\CY}(\bk)$, then 
$-m!K_X$ is Cartier divisor with  Hilbert polynomial $\chi(m !\,\cdot\, )$. 
Thus \cite[Thm. 2.1.2]{Kol85} implies that there exists a fixed integer $k$ such that $-kK_X$ is a very ample Cartier divisor for every $(X,D)$ in $\cP_{d,m}^{\CY}(\bk)$. 
Thus $\cP_{d,m}^{\CY}$ is finite type by Proposition \ref{p:boundedness}.
\end{proof}

\section{Explicit S-equivalence classes}\label{s:sequivcurves}

In this section, we prove the following two theorems characterizing S-equivalence classes of Type II and III pairs in $\cP_{d}^{\rm CY}$.
Theorem \ref{thm:type2Sequiv} describes a representative element in each Type II S-equivalence class.
Theorem \ref{t:TypeIIISequiv} shows that all Type III pairs are S-equivalent.

\begin{thm}\label{thm:type2Sequiv}
Let $(X,D)\in \cP_d^{\CY}(\bk)$ be a Type II pair. Then $(X,D)$ is S-equivalent to one of the following pairs $(X_0,D_0)\in\cP_d^{\CY}(\bk)$.
\begin{enumerate}[label=(\roman*)]
    \item $X_0$ is a projective cone over a smooth elliptic curve polarized with a degree $9$ line bundle, and $D_0$ is the section at infinity;
    \item $X_0$ is a projective orbifold cone over $\bP^1$ whose orbifold divisor (see Definition \ref{def:orb-div}) is toric, and $D_0$ is the section at infinity plus a $\bQ$-linear combination of rulings;
    \item $X_0$ is a toric Manetti surface with a specific $\bG_m$-action, and $D_0$ is $\bQ$-linear combination of the closure of $\bG_m$-orbits, such that the broken $\bG_m$-orbit (see Definition \ref{def:brokenorbit}) on $X_0$ is non-reduced. 
    \item $X_0$ is the gluing of two orbifold cones over $\bP^1$ with the same toric orbifold divisor, and $D_0$ is a $\bQ$-linear combination of rulings on each irreducible component of $X_0$.
\end{enumerate}

Moreover, for a fixed source $(E,D_E)$, which is either a smooth elliptic curve or a CY pair on $\bP^1$, and a fixed integer $d\geq 3$, there exist at most finitely many $(X_0,D_0)\in\cP_d^{\CY}(\bk)$ up to isomorphism from the above list with $\Src(X_0,D_0)\cong (E,D_E)$.
\end{thm}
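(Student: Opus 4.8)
The plan is to prove Theorem \ref{thm:type2Sequiv} in two parts. For the classification part, I would first invoke Proposition \ref{p:SrcSequiv} to reduce to analyzing a single polystable (in fact, $\bG_m^2$-equivariant, by Proposition \ref{prop:reg0polystable}) representative of each Type II S-equivalence class, since every Type II pair $(X,D)$ is S-equivalent to one with maximal-dimensional torus in its automorphism group, and when $\dim\Aut^0 = 2$ the pair is polystable. Then I would perform a case analysis according to whether $X$ is normal (Proposition \ref{p:typeIIslc}): if $X$ is non-normal, it has at most two components, glued along a curve via an involution, and its normalization has plt components which (being regularity $0$ with a $\bG_m^2$-action) must be projective orbifold cones over $\bP^1$ with toric orbifold divisor by Theorem \ref{thm:Kollar-Seifert} and Proposition \ref{prop:orbcone-adjunction}; this yields case (iv). If $X$ is normal, I would split further: either $X$ is not klt, in which case $X$ is a cone over an elliptic curve by \cite[Thm. 8.5]{Hac04} (case (i)), or $X$ is klt (a Manetti surface). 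In the klt normal case, using the $\bG_m^2$-action and the classification of surfaces with torus action (Propositions \ref{p:horizontaldiv}, \ref{p:Seifert-P^1link}), the surface is either a projective orbifold cone over $\bP^1$ with toric orbifold divisor (case (ii)) or a toric Manetti surface with the relevant $\bG_m$-action, where the source being a curve forces the "broken orbit" to be non-reduced (case (iii)). Throughout, the precise shape of $D_0$ in each case follows from Proposition \ref{prop:orbcone-adjunction}(3) and the structure of $\bG_m$-invariant divisors, together with the requirement $K_X + D_0 \sim_{\bQ} 0$.

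\textbf{Finiteness part.} For the second statement, I would fix the source $(E, D_E)$ — a smooth elliptic curve polarized by a degree $9$ bundle, or a klt CY pair on $\bP^1$ — and a degree $d \geq 3$, and bound the discrete data determining $(X_0, D_0)$ in each of the four cases. In case (i) the surface $X_0$ is rigidly determined by $E$ (the cone over $E$ with its degree $9$ polarization) and $D_0$ is the section at infinity, so there is essentially one choice. In cases (ii) and (iv), the orbifold cone $C_p(\bP^1, L)$ is determined by the $\bQ$-divisor $L$ on $\bP^1$, whose fractional part is the toric orbifold divisor recovered from $(E, D_E)$; the integral part and the coefficients of the ruling components appearing in $D_0$ are constrained by $N_d(K_{X_0} + D_0) \sim 0$ with $N_d = d/3$ (recall $3 \mid d$ in the Type III-adjacent regime, but here Type II forces compatibility with the coefficient $3/d$), giving finitely many solutions. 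Case (iii) is handled by the boundedness of toric Manetti surfaces with bounded index together with the fixed source; the key point is that the fixed source pins down the tail fan / divisorial fan data up to finitely many completions. In all cases, the degree $d$ together with $K_{X_0} + D_0 \sim_{\bQ} 0$ converts the problem into counting lattice points in a bounded polytope, which is finite.

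\textbf{Main obstacle.} I expect the hard part to be the normal klt (Manetti) case: showing that a $\bG_m^2$-equivariant klt boundary polarized CY surface of regularity $0$ is either an orbifold cone over $\bP^1$ or a toric Manetti surface, and correctly identifying the non-reducedness of the broken orbit as the obstruction to the source being a point rather than a curve. This requires a careful local analysis at the fixed points of the $\bG_m$-action and matching the combinatorics of surfaces with torus action to the dlt modification computing the source. A secondary subtlety is the non-normal case: verifying that the gluing involution is compatible with the toric structures on both components and that the resulting $D_0$ has the asserted form, which relies on Koll\'ar's gluing theory (\cite[5.38]{Kol13}) and the compatibility of $\tau$ with the different, as in Proposition \ref{p:bpcygluing}. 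The finiteness statement itself should then be comparatively routine once the structural classification is in hand, reducing to effective bounds on index (via \cite[Lem. 2.25]{Bir19} or directly) and a lattice-point count.
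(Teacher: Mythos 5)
Your opening reduction is where the argument breaks. You claim that every Type II pair is S-equivalent to a $\bG_m^2$-equivariant polystable pair, citing Proposition \ref{prop:reg0polystable}. That proposition only says that a regularity-$0$ pair has $\Aut^0\cong\bG_m^r$ with $r\le 2$ and that polystability follows \emph{when} $r=2$; it does not assert that every Type II S-equivalence class contains a pair with $r=2$, and in fact most do not. Concretely: a normal klt surface pair $(X_0,D_0)$ with an effective $\bG_m^2$-action is toric with $D_0$ torus-invariant, hence $D_0\le \partial X_0$; since both are effective and $\bQ$-linearly trivial against $K_{X_0}$, $D_0=\partial X_0$ and the pair is Type III. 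So no Type II class whose polystable point is normal and klt admits a $\bG_m^2$-equivariant representative, and indeed cases (i), (ii), (iii) of the theorem typically have $\Aut^0\cong\bG_m$ only (e.g.\ translations of an elliptic curve do not preserve a degree-$9$ polarization, so the elliptic cone has $r=1$). Your plan would therefore only ever reach the non-normal $\bG_m^2$ pairs of type (iv) and would miss, or incorrectly derive, the other three cases. Note also that the theorem's list is not a list of polystable representatives (the paper never claims case (iii) is polystable), so "classify the polystable points" is not the same problem.

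Even setting that aside, the normal klt case that you flag as "the main obstacle" is the actual content of the proof, and your sketch does not contain the organizing dichotomy the paper uses: after reducing to a klt Manetti surface via Lemma \ref{l:SequivKmod}, the split is on whether an lc place of $(X,D)$ is a \emph{divisor on} $X$ (degenerate to the normal cone over it, giving (i) or (ii) via Lemma \ref{l:degnormalcone} and the fact that a Manetti surface has at most three singular points) or is \emph{exceptional} over $X$ (degenerate to a pair satisfying condition $(\dagger)$, then run the analysis of Section \ref{s:Manetti} — two exceptional lc places, a $\bP^1$-link structure, a unique broken orbit with exactly two components — and either the broken orbit is non-reduced, forcing toricity and case (iii), or it is reduced and one contracts a flipping curve on a blown-up trivial test configuration to reach case (iv)). None of this is recoverable from "Propositions \ref{p:horizontaldiv} and \ref{p:Seifert-P^1link}" alone. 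Your finiteness sketch is closer in spirit to the paper's (it does use $(-K_{X_0})^2=9$ and Proposition \ref{prop:orbcone-vol} to bound $\deg L$, plus an ACC argument in case (iv) and \cite{BB93} together with boundedness of Type II pairs in case (iii)), but it depends on the structural classification you have not supplied.
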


Since the proof of Theorem \ref{thm:type2Sequiv} relies on a lengthy and in depth analysis of the geometry of certain Type II pairs, we briefly  outline the argument.
First, by Proposition \ref{l:SequivKmod}, we may assume  $X$ is klt.
Next, if there is a non-exceptional lc place of $(X,D)$, we show in Section \ref{ss:nonexclcplaces} that the lc place induces a weakly special degeneration of $(X,D)$ to a pair satisfying (i) or (ii).
If all lc places of $(X,D)$ are exceptional, then we show that each lc place induces a  degeneration $(X,D) \rightsquigarrow (X_0,D_0)$ such that $X_0$ is klt and the pair admits an effective $\bG_m$-action.
By an in depth analysis of such pairs in Section \ref{s:Manetti}, we show in Section \ref{ss:exclcplaces} that $(X_0,D_0)$ satisfies (ii) or (iii) or degenerates to (iv).

\begin{thm}\label{t:TypeIIISequiv}
If $(X,D)\in \cP_d^{\CY}(\bk)$ is Type III, then $
(X,D) \sim_S
(\bP^2, \tfrac{3}{d}\{ (xyz)^{d/3}=0 \} )$
\end{thm}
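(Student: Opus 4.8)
The goal is to show every Type III pair $(X,D)\in\cP_d^{\CY}(\bk)$ is $S$-equivalent to the standard toric pair $(\bP^2,\tfrac{3}{d}\{(xyz)^{d/3}=0\})$. Note $3\mid d$ is automatic here: by Proposition \ref{p:coreg>0condition} a Type III (coregularity $0$) pair forces $1\in\tfrac{3}{d}\bZ$. The first reduction is to replace $(X,D)$ by a $K$-semistable representative: by Lemma \ref{l:SequivKmod}, $(X,D)$ is $S$-equivalent to a pair in $\cP_d^{\rm K}(\bk)$, and such pairs are klt log Fano (with the $(1-\vep)$ coefficient), so in particular $X$ is \emph{klt}. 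By Remark \ref{r:Sequiv-type} this representative is still Type III. So we may assume from the start that $(X,D)$ is a klt boundary polarized CY pair of coregularity $0$, with $X$ a (klt) degeneration of $\bP^2$.

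\textbf{Step 1: degenerate to a pair with large torus.} Since $\coreg(X,D)=0$, apply Proposition \ref{p:degenreducedboundary}: there is a weakly special degeneration $(X,D)\rightsquigarrow (X_0,D_0)$ with $(\oX_0,\oG_0+\oDe_0+\oD_0)\cbir(\bP^2_{x_i},\{x_0x_1x_2=0\})$, so $K_{\oX_0}+\oG_0+\oDe_0+\oD_0\sim 0$ and $D_0$ is a $\bZ$-divisor. Concretely the proof of that proposition builds $(X_0,D_0)$ from the blowup exceptional divisor $F$ over the $0$-dimensional minimal lc center $Z$ of a dlt modification, via the test configuration associated to the lc place $\ord_F$ (Theorem \ref{thm:CZ-lcplace}); its special fiber is crepant birational to the standard coordinate triangle on $\bP^2$. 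Thus $(X_0,D_0)$ is, up to isomorphism in codimension one, a (possibly non-normal) toric-type pair whose normalization is the toric boundary $(\bP^2,\{xyz=0\})$ — the nodal triangle. We still need to pin down $(X_0,D_0)$ itself: because $D_0$ is an integral ample divisor with $K_{X_0}+D_0\sim 0$ and the normalization is $(\bP^2,\{xyz=0\})$, $(X_0,D_0)$ is determined by a gluing involution $\tau$ of the conductor (a cycle of three $\bP^1$'s or its quotient), i.e. $(X_0,D_0)$ is one of the degenerate toric del Pezzo surfaces of degree $9$ with its toric boundary. Since $D\sim_{\bQ}\tfrac{3}{d}C$ with $C$ of degree $d$, we have $D_0 = \tfrac{3}{d}(\text{a boundary of degree }d\text{, i.e. }\tfrac{d}{3}\text{ times the triangle})$, matching the claimed form $\tfrac{3}{d}\{(xyz)^{d/3}=0\}$ on each component.

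\textbf{Step 2: connect the toric degenerate pair to $(\bP^2,\tfrac{3}{d}\{(xyz)^{d/3}=0\})$.} Now $(X_0,D_0)$ is a toric boundary polarized CY pair with the same discrete invariants; I want $(X_0,D_0)\sim_S(\bP^2,\tfrac{3}{d}\{(xyz)^{d/3}=0\})$. First handle the normal case: if $X_0$ is normal it is a toric degeneration of $\bP^2$ with invariant boundary, hence (by the classification of toric surfaces, e.g.\ the Markov-triple description in Example \ref{e:unbounded}(1)) a weighted projective plane $\bP(a^2,b^2,c^2)$ or one of its partial smoothings, and the $\bP^2$ pair itself degenerates to it via a toric (weakly special) test configuration — exactly the family of Example \ref{e:unbounded}(1) together with Lemma \ref{l:extenddivisor}. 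That test configuration realizes $(\bP^2,\tfrac3d\{(xyz)^{d/3}=0\})\rightsquigarrow(X_0,D_0)$, giving the $S$-equivalence. In the non-normal case, one uses the description of $X_0$ as a gluing of toric pieces; each piece is a toric surface admitting a weakly special degeneration from the corresponding piece of $(\bP^2,\text{triangle})$, and these glue (via Proposition \ref{p:bpcygluing}, respecting the involution as in the proof of the stability theorem in Section \ref{ss:sequiv}) to a weakly special degeneration $(\bP^2,\tfrac3d\{(xyz)^{d/3}=0\})\rightsquigarrow(X_0,D_0)$; alternatively invoke Example \ref{e:unbounded}(2), which asserts precisely that the standard triangle pair weakly specially degenerates to reducible toric pairs with arbitrarily many components. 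Either way $(X,D)\sim_S(X_0,D_0)\sim_S(\bP^2,\tfrac3d\{(xyz)^{d/3}=0\})$, and since $S$-equivalence is an equivalence relation (Proposition \ref{p:sequiv}) we are done.

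\textbf{Main obstacle.} The delicate point is Step 2 in the non-normal case: controlling which gluing $\tau$ actually arises and verifying that the toric test configuration of the standard triangle pair can be arranged to have \emph{exactly} that special fiber, including matching the conductor involutions so Proposition \ref{p:bpcygluing} applies. Equivalently, one must know that every Type III degeneration of $\bP^2$ with its anticanonical triangle is itself dominated, via a weakly special test configuration, by the triangle pair — this is essentially the content promised by Example \ref{e:unbounded}(2) and by the analysis of toric degenerations of $\bP^2$, and it is where the bulk of the careful (combinatorial, fan-theoretic) bookkeeping lives. The klt reduction in the preamble is what makes this tractable, since it lets us start from a $K$-semistable surface and run the coregularity-$0$ degeneration machinery of Section \ref{s:sources}.
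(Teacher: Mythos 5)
Your opening reductions match the paper's (reduce to a normal/klt representative via Lemma \ref{l:SequivKmod}, then invoke Proposition \ref{p:degenreducedboundary}), but your Step 1 already contains a misreading that propagates into a genuine gap. Proposition \ref{p:degenreducedboundary} only gives that the normalization $(\oX_0,\oG_0+\oD_0)$ is \emph{crepant birational} to $(\bP^2,\{xyz=0\})$ and that $K_{\oX_0}+\oG_0+\oD_0\sim 0$; it does not say the normalization \emph{is} $(\bP^2,\{xyz=0\})$, and it does not give $K_{X_0}+D_0\sim 0$ on the possibly non-normal $X_0$ itself. You assert both. The second point is the real content: descending the index-$1$ linear equivalence from the normalization to $X_0$ requires controlling the conductor and the involution. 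The paper does this in Proposition \ref{p:typeIII-Sequiv}: using dual-complex arguments it shows $\oG_0+\oD_0$ is a cycle of $\bP^1$'s with trivial different, uses Hacking's classification to pin down $\oG_0\cong\bP^1\cup\bP^1$ meeting at one node, and then runs a Poincar\'e-residue descent (Lemma \ref{l:1-compP1s}) to conclude $K_{X_0}+D_0\sim 0$. None of this appears in your argument, and without it your claim that $(X_0,D_0)$ ``is determined by a gluing involution of a cycle of three $\bP^1$'s'' is unsupported.

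The larger structural divergence is in Step 2. The paper does \emph{not} classify toric degenerations of $\bP^2$ or match gluing involutions. Once $K_{X_0}+D_0\sim 0$ is known, it rewrites $D_0=\tfrac{3}{d}(\tfrac{d}{3}C_0)$ with $C_0$ an integral anticanonical divisor, observes via Lemma \ref{l:extenddivisor} that the marked pair $(X_0,1\cdot C_0)$ lies in $\cP_3^{\CY}$, applies Lemma \ref{l:SequivKmod} again to land in $\cP_3^{\rm K}$, and then uses the known K-moduli of plane cubics: the only Type III pairs there are $(\bP^2,\text{nodal cubic})$, and every nodal cubic degenerates to three lines. Your alternative — a direct toric/combinatorial connection between $(X_0,D_0)$ and the triangle pair, resting on Example \ref{e:unbounded}(2) and on matching conductor involutions in Proposition \ref{p:bpcygluing} — is exactly the part you flag as ``the main obstacle,'' and it is not an obstacle the paper ever confronts: Example \ref{e:unbounded} is an unproved illustration, the family in Example \ref{e:unbounded}(1) is a degeneration over a curve germ (not a test configuration, and with no control on the generic-fiber divisor), and no classification of the possible gluings is available at that point. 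As written, Step 2 is a sketch whose load-bearing claims are unestablished; the reduction to $d=3$ and the cubic K-moduli is the missing idea that makes the theorem provable without that bookkeeping.
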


The proof of Theorem  \ref{t:TypeIIISequiv} is relatively brief, in comparison to the proof of Theorem \ref{thm:type2Sequiv}. The argument relies on degenerating $(X,D)$ to an index 1 pair.

The above two theorems play key roles in the proof of Theorem \ref{t:main2}.
In particular, the theorems will be used to prove Proposition \ref{p:1-complementS} on the existence of 1-complements on Type II and III pairs. 
The latter is a key ingredient in the construction of $P_d^{\CY}$. 
In addition, the theorems will be used in the proof of Theorem \ref{t:main2}.4 to show the Hodge line bundle is big on subvarieties contained in the Type II and III loci of the moduli space.

\subsection{Manetti surfaces with $\bG_m$-actions}\label{s:Manetti}
Throughout this subsection, we assume the following condition on $(X,D)$. 
\begin{equation*}
    \begin{split}
    &(X,D)\in \cP_d^{\rm CY}(\bk) \textrm{ is a Type II pair such that $X$ is klt, $(X,D)$ admits an effective}\\& \textrm{\quad  $\bG_m$-action, and that every lc place of $(X,D)$ is exceptional over $X$}. 
    \end{split}\tag{$\dagger$}\label{eq:dagger}
\end{equation*}

There are precisely two lc places of $(X,D)$ denoted by $E_1$ and $E_2$. This follows because $(X,D)$ admits two test configurations of product type induced by the $\bG_m$-action $\lambda$ and its inverse $\lambda^{-1}$ which gives us at least two lc places by Lemma \ref{l:lctestconfig}.2. On the other hand, a Type II boundary polarized CY surface pair can have at most two log canonical places by Lemma \ref{l:typeIIlcplace}.1. 

Next, let $\mu:\tX\to X$ be the dlt modification of $(X,D)$. Then both $E_1$ and $E_2$ are $\mu$-exceptional prime divisors on $\tX$.  Let $\tD$ be the strict transform of $D$ in $\tX$. Then $(\tX, E_1+E_2+\tD)$ is a plt CY pair with a $\bG_m$-action. Denote by $D_{E_i}$ the different of $(\tX, \tD+ E_1+E_2)$ on $E_i$. Denote by $\Delta_{E_i}$ the different of $(\tX, E_1+E_2)$ on $E_i$.

The main result of this section is the following theorem.

\begin{thm}\label{t:Manetti-Gm}
Notation as above. Assume that $(X,D)$ satisfies \eqref{eq:dagger}. Denote by $\mu_i: \tX_i\to X$ the extraction of $E_i$.  
Then one of the following is true.
\begin{enumerate}[label=(\roman*)]
    \item $-K_{\tX_i} - E_i$ is ample for some $i\in \{1,2\}$, and  $\tX$, $\tX_i$, and $X$ are all toric.
    \item $-K_{\tX_i} - E_i$ is big and nef for all $i\in \{1,2\}$, and $((-K_{\tX_i} -  E_i)\cdot  C_i) = 0$ for a $\bG_m$-invariant curve $C_i$ on $\tX_i$.
\end{enumerate}
\end{thm}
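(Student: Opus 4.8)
\textbf{Proof plan for Theorem \ref{t:Manetti-Gm}.}

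The plan is to study the two divisorial extractions $\mu_i \colon \tX_i \to X$ of the lc places $E_i$ and to leverage the $\bG_m$-action together with the classification of Koll\'ar components on surface cyclic quotient singularities (Lemma \ref{l:pltcyclicquot}). First I would observe that since $X$ is klt, each singular point of $X$ is a cyclic quotient singularity (indeed a $T$-singularity, because $X$ is a Manetti surface by \cite[Thm. 7.1, Cor. 1.2]{Hac04}/\cite{HP10}), and the lc places $E_1, E_2$ of $(X,D)$, being exceptional, are Koll\'ar components over the points $\mu_i(E_i) \in X$: one checks from the definition of the dlt modification $\tX \to X$ and the plt-ness of $(\tX, E_1 + E_2 + \tD)$ that $(\tX_i, E_i + \mu_{i*}^{-1}D)$ is plt, $-E_i$ is $\mu_i$-ample, and $\mu_i$ is an isomorphism away from the center. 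Since each Koll\'ar component is induced by a $\bG_m$-invariant (indeed toric) valuation — the product test configurations coming from $\lambda^{\pm 1}$ have central fibers with reduced irreducible components whose associated valuations are $\ord_{E_i}$ by Lemma \ref{l:lctestconfig}.2 and Theorem \ref{thm:CZ-lcplace}, and these valuations are $\bG_m$-invariant by Proposition \ref{p:TequivariantTC} applied to the product case — Lemma \ref{l:pltcyclicquot} gives that $(E_i, \Delta_{E_i})$ is toric, i.e.\ $(E_i, \Delta_{E_i}) \cong (\bP^1, \tfrac{a-1}{a}p_0 + \tfrac{b-1}{b}p_\infty)$ for coprime $a,b$.

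Next I would analyze $-K_{\tX_i} - E_i$ on the surface $\tX_i$. Since $(X,D)$ is a boundary polarized CY pair, $-K_X$ is ample, and $K_{\tX_i} + E_i = \mu_i^*(K_X + A_{X,0}(E_i) E_i)$ — more precisely $-K_{\tX_i} - E_i \sim_{\bQ, \mu_i} A_{X}(E_i) E_i$ is $\mu_i$-ample with $A_X(E_i) > 0$ since $X$ is klt, while $\mu_{i*}(-K_{\tX_i} - E_i) = -K_X$ is ample on $X$. So $-K_{\tX_i} - E_i$ is $\mu_i$-ample and its pushforward is ample; it is therefore big, and it is nef if and only if it has nonnegative intersection with every $\bG_m$-invariant curve on $\tX_i$ (the $\bG_m$-action on $\tX_i$ lifts that on $X$, and by Proposition \ref{p:horizontaldiv} there are at most two horizontal divisors, the remaining invariant curves being vertical fibers). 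The dichotomy is then: either $-K_{\tX_i} - E_i$ is ample for some $i$, which is case (i); or for both $i$ it fails to be ample, in which case, being big and nef or having a contracted invariant curve, one gets case (ii) — here one uses that a big divisor on a surface which is not ample must be zero on some curve, and a small perturbation / $\bG_m$-invariance argument shows this curve can be taken $\bG_m$-invariant.

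The remaining content is the toric statement in case (i): if $-K_{\tX_i} - E_i$ is ample, then $(\tX_i, E_i)$ is a log Fano pair with $-K_{\tX_i} - E_i$ ample and $\bG_m$-action, and its boundary $E_i$ carries the toric CY structure $(E_i, \Delta_{E_i}) \cong (\bP^1, \tfrac{a-1}{a}p_0 + \tfrac{b-1}{b}p_\infty)$ just established. I would then argue that $\tX_i$ is a compactification of a Seifert $\bG_m$-bundle over $\bP^1$: away from $E_i$ one has a $\bG_m$-variety over the log Fano base, and using Theorem \ref{thm:Kollar-Seifert} together with Proposition \ref{prop:orbcone-adjunction} one identifies $(\tX_i, E_i)$ with a projective orbifold cone $C_p(\bP^1, L)$ whose orbifold divisor is $\Delta_{E_i}$, hence toric; then $\tX$ is obtained from $\tX_i$ by a single further toric weighted blowup (extracting $E_j$, $j \neq i$, which is again a $\bG_m$-invariant Koll\'ar component, on the toric surface $\tX_i$), so $\tX$ is toric, and $X = \mu_i(\tX_i)$ is the contraction of the $\bG_m$-invariant curve $E_i$ on the toric $\tX_i$, hence toric as well.

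The main obstacle I anticipate is the case-(i) toricity: one must rule out the possibility that, even though $(E_i, \Delta_{E_i})$ is toric, the total space $\tX_i$ is not — this requires showing that a klt log Fano surface with a faithful $\bG_m$-action whose boundary is a $\bG_m$-invariant $\bP^1$ with toric different is automatically a (compactified) Seifert bundle over $\bP^1$, with no non-toric horizontal divisor appearing; by Proposition \ref{p:horizontaldiv} there are at most two horizontal divisors, and $E_i$ is one of them, so the issue is whether the second horizontal divisor (if present) together with $E_i$ splits $\tX_i$ as a $\bP^1$-fibration — here one invokes Proposition \ref{p:Seifert-P^1link} after checking the pair $(\tX_i, E_i + (\text{second horizontal divisor}))$ forms a standard $\bP^1$-link, or else shows the second horizontal divisor is contracted. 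Dealing cleanly with the distinction between these subcases, and with the Cartier-index bookkeeping needed to identify the orbifold cone structure precisely, is where the real work lies.
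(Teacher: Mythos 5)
There is a genuine gap at the heart of your dichotomy. You propose: either $-K_{\tX_i}-E_i$ is ample for some $i$ (case (i)), or it fails to be ample for both $i$, in which case ``a big divisor on a surface which is not ample must be zero on some curve'' yields case (ii). That justification is false: a big divisor that is not ample can be \emph{negative} on a curve, and nothing in your argument rules out $-K_{\tX_i}-E_i$ failing to be nef. What actually makes the two cases exhaustive is a balance relation that you never establish. The paper first shows $\rho(\tX)=3$ and builds a $\bG_m$-equivariant Mori fiber space $\pi:\tX\to B$ in which $E_1,E_2$ are disjoint sections, there is exactly one reducible fiber $m_1\tC_1+m_2\tC_2$, and $\overline{NE}(\tX_i)$ is spanned by $E_i$ and $C_i$. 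Since $((-K_{\tX_i}-E_i)\cdot E_i)>0$ always (Koll\'ar component), ampleness of $-K_{\tX_i}-E_i$ reduces to the sign of $((-K_{\tX_i}-E_i)\cdot C_i)=(\tD\cdot\tC_i)$, and because $\tD$ is supported on fibers one has $m_1(\tD\cdot\tC_1)+m_2(\tD\cdot\tC_2)=(\tD\cdot F)=0$. Hence if either intersection is negative the other is positive and one lands in case (i); if neither is positive both vanish and one lands in case (ii). Without the rank-two Mori cone and this identity, your appeal to ``nonnegative intersection with $\bG_m$-invariant curves'' is far too weak to close the argument, and your bigness claim in case (ii) also remains to be pinned down (the paper gets it by contracting $C_i$ to a Picard rank one surface).

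The second gap is the toricity statement in case (i), which you correctly identify as the hard point but do not resolve, and whose sketch contains unjustified steps: being $\bG_m$-invariant for the given one-parameter subgroup does not make $E_i$ or $E_j$ invariant under the full two-torus of a putative toric structure, so ``$\tX$ is a further toric blowup of $\tX_i$'' and ``$X$ is a toric contraction'' both need proof. The paper's mechanism is different and is worth internalizing: $(\tD\cdot\tC_i)>0$ forces the broken $\bG_m$-orbit to be non-reduced (via the relation $\tfrac{1-a_1}{m_1}=\tfrac{1-a_2}{m_2}$, since $m_1=m_2=1$ would give $a_1=a_2$ and make $\tD$ a combination of full fibers, contradicting $(\tD\cdot\tC_i)>0$); non-reducedness then forces the Seifert $\bG_m$-bundle $Z_1\to B$ to have toric base orbifold divisor (using Lemma \ref{l:pltcyclicquot} to bound the support of the different by two points), so $Z_1$ is toric, and $\tX$ is recovered as a toric extraction because $\tC_2$ is computed to be an lc place of the reduced toric boundary of $Z_1$. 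Your alternative of identifying $(\tX_i,E_i)$ with a projective orbifold cone over $\bP^1$ is not obviously available: it presupposes exactly the $\bP^1$-fibration and standard $\bP^1$-link structure (Propositions \ref{p:typeIIManetti}--\ref{p:stP^1link}) that your write-up omits.
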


We denote by $\phi_i:\tX\to \tX_i$ the birational morphism contracting $E_{3-i}$.
Then we have the following diagram:
\[
\begin{tikzcd}
\tX \arrow[r,"\phi_1"]\arrow[d,swap,"\phi_2"] \arrow[dr, "\mu"]& \tX_1\arrow[d,"\mu_1"]\\
\tX_2\arrow[r,swap,"\mu_2"] & X
\end{tikzcd}
\]
By abuse of notation, we  denote $\phi_{i,*} E_i$ by $E_i$ as the unique $\mu_i$-exceptional divisor on $\tX_i$.

\begin{lem}\label{l:typeIIKc}
Notation as above. Then $E_i$ is a Koll\'ar component over $\mu_i(E_i)\in X$ extracted by $\mu_i:\tX_i\to X$ for $i=1,2$.
\end{lem}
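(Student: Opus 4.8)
\textbf{Proof plan for Lemma \ref{l:typeIIKc}.}

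The goal is to verify the four defining properties of a Koll\'ar component for $E_i$ over $\mu_i(E_i) \in X$ extracted by $\mu_i:\tX_i\to X$. I will treat $i=1$; the case $i=2$ is symmetric. First I would establish that $\mu_i$ is an isomorphism over $X\setminus\{\mu_i(E_i)\}$ and that $\mu_i^{-1}(\mu_i(E_i)) = E_i$ is a prime divisor. Since $\mu:\tX\to X$ is a dlt modification of $(X,D)$ and $E_1, E_2$ are the only $\mu$-exceptional divisors (being the only lc places of $(X,D)$, as noted above), the center of $E_i$ on $X$ is a point (because $(X,D)$ is Type II, so an lc center has dimension at most $\dim\Src = 1$, but $X$ being klt the lc centers of $(X,D)$ come only from the exceptional divisors $E_i$, and these contract to points since $X$ is klt hence has no non-klt locus). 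I would note that $\mu_i:\tX_i\to X$ only extracts $E_i$, so it is an isomorphism away from $\mu_i(E_i)$, and $\mu_i^{-1}(\mu_i(E_i))=E_i$ is a prime divisor, giving properties (1) and (2) in the definition of a Koll\'ar component.

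For property (4), that $(\tX_i, E_i)$ is plt: I would use that $(\tX, E_1+E_2+\tD)$ is a plt pair (stated above), and that $\phi_i:\tX\to\tX_i$ contracts $E_{3-i}$ crepantly with respect to the appropriate boundary. Since $\phi_i$ is a birational morphism between normal surfaces and $(\tX, E_1+E_2)$ is plt (being obtained from the plt pair $(\tX, E_1+E_2+\tD)$ by dropping the effective divisor $\tD$), pushing forward along $\phi_i$ shows $(\tX_i, E_i)$ is plt: discrepancies only improve under pushforward of a crepant contraction in this setting, and the coefficient-$1$ component $E_i$ is preserved. Concretely, $K_{\tX}+E_1+E_2 = \mu^*(K_X+0) + a_1 E_1 + a_2 E_2$ with $a_1=a_2=0$ since $E_i$ are lc places of $(X,0)$... here I must be slightly careful: $E_i$ are lc places of $(X,D)$, not necessarily of $(X,0)$. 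So instead I would argue via $K_{\tX_i}+E_i+\tD_i \sim_{\bQ,\mu_i} 0$ where $\tD_i := \phi_{i,*}\tD$, using that $(\tX,E_1+E_2+\tD)$ is a crepant plt modification of $(X,D)$; then $(\tX_i,E_i+\tD_i)$ is plt and so is $(\tX_i,E_i)$.

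For property (3), that $-E_i$ is $\bQ$-Cartier and $\mu_i$-ample: this is the step I expect to be the main obstacle. Since $\tX_i$ is a normal surface, every Weil divisor is $\bQ$-Cartier, so $\bQ$-Cartierness is automatic. For $\mu_i$-ampleness of $-E_i$: since $\mu_i$ is a birational contraction of a single prime divisor to a point on a surface, the exceptional curve $E_i$ has negative self-intersection with respect to $\mu_i$, i.e. $(E_i^2)<0$ in the local intersection theory over $\mu_i(E_i)$; equivalently, by the standard theory of contractions of surfaces (e.g. via the Hodge index theorem applied to the components over the point, but here there is only one component $E_i$), $-E_i$ restricted to $E_i$ has positive degree, hence $-E_i$ is $\mu_i$-ample. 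I would invoke that for a proper birational morphism $\mu_i:\tX_i\to X$ from a normal surface contracting the irreducible curve $E_i$ to a point, the intersection matrix (here the $1\times 1$ matrix $(E_i^2)$) is negative definite, so $-E_i$ is $\mu_i$-nef, and being the only exceptional curve, $-E_i$ is in fact $\mu_i$-ample (a $\mu_i$-nef divisor that is positive on every exceptional curve is $\mu_i$-ample for a birational morphism of surfaces). This uses only classical surface theory. Combining all four properties, $E_i$ is a Koll\'ar component over $\mu_i(E_i)\in X$, and $\Delta_{E_i}$ (resp. $D_{E_i}$) is the corresponding different, completing the proof.
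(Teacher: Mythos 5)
Your overall conclusions are correct, but two of the key justifications are wrong, and the first one is a genuine gap in the argument. The claim that $(\tX_i, E_i+\tD_i)$ is plt (where $\tD_i:=\phi_{i,*}\tD$) is false. Since $K_{\tX}+E_1+E_2+\tD=\phi_i^*(K_{\tX_i}+E_i+\tD_i)$, the contraction $\phi_i$ is crepant, so it \emph{preserves} log discrepancies rather than improving them; in particular the contracted curve $E_{3-i}$, which appears with coefficient $1$ in the boundary upstairs, becomes an exceptional divisor with log discrepancy $0$ over $(\tX_i, E_i+\tD_i)$. That pair is therefore lc but not plt, and your deduction that $(\tX_i,E_i)$ is plt does not follow from it: dropping $\tD_i$ raises the log discrepancy of $E_{3-i}$ only if $\ord_{E_{3-i}}(\phi_i^*\tD_i)>0$, i.e.\ only if $E_{3-i}$ meets $\Supp(\tD)$, which is true here but requires its own argument (e.g.\ a degree count on the klt CY curve $(E_{3-i},D_{E_{3-i}})$ using that the different of $(\tX,E_1+E_2)$ on $E_{3-i}$ is toric of degree $<2$). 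The paper avoids all of this: $(\tX,E_i)$ is plt because it is obtained from the plt pair $(\tX,E_1+E_2+\tD)$ by discarding effective boundary; $\phi_i$ is an isomorphism near $E_i$ (as $E_1$ and $E_2$ are disjoint), so $(\tX_i,E_i)$ is plt in a neighborhood of $E_i$; and away from $E_i$ the map $\mu_i$ is an isomorphism onto the klt surface $X$, so $(\tX_i,E_i)$ is klt there — which in particular handles $E_{3-i}$, whose center on $\tX_i$ lies off $E_i$.

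The second error is in property (3): it is not true that every Weil divisor on a normal surface is $\bQ$-Cartier (a ruling of the cone over an elliptic curve is a counterexample). What saves you is that $X$ is klt, hence has quotient singularities and is $\bQ$-factorial; the paper then cites \cite[Lem.~2.62]{KM98} to conclude in one stroke that $-E_i$ is $\bQ$-Cartier and $\mu_i$-ample, $E_i$ being the unique $\mu_i$-exceptional divisor. Alternatively, once the plt step is fixed one knows $\tX_i$ is klt, hence $\bQ$-factorial, and then your negativity-of-the-intersection-matrix argument for $\mu_i$-ampleness is a perfectly valid substitute for the citation. So the route can be repaired, but as written both the plt step and the $\bQ$-Cartier step rest on false assertions.
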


\begin{proof}
Since $(\tX, E_1+E_2+\tD)$ is plt, so is $(\tX, E_i)$ for each $i=1,2$. Thus $(\tX_i, E_i)$ is plt near $E_i$ and klt elsewhere as $\mu_i$ is isomorphic away from $E_i$. This implies that $(\tX_i, E_i)$ is plt. Since $X$ has only quotient singularities, it is $\bQ$-factorial. By \cite[Lem. 2.62]{KM98} we have that  $-E_i$ is $\mu_i$-ample as it is the only $\mu_i$-exceptional divisor.
\end{proof}

\begin{lem}\label{l:typeIIfanotype}
Notation as above. Then, $\tX$ is of Fano type and $\rho(\tX)=3$.
\end{lem}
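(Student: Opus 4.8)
The goal is to show that $\tX$ is of Fano type and has Picard rank $3$. The Picard rank computation is the easy part: $X$ has Picard rank $1$ (it is a Manetti surface, hence a $\bQ$-Gorenstein degeneration of $\bP^2$, so $\rho(X)=1$), and $\mu:\tX\to X$ extracts exactly the two prime divisors $E_1$ and $E_2$ (these are all the lc places of $(X,D)$ by the discussion preceding the lemma, and $\mu$ is the dlt modification, so its exceptional locus is $E_1\cup E_2$). Hence $\rho(\tX) = \rho(X) + 2 = 3$. I would spell this out using \cite[Thm. 1.34]{Kol13} to identify the exceptional divisors and the fact that a Manetti surface has Picard rank $1$.

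\textbf{Fano type.} This is the main content. Recall $(\tX, E_1+E_2+\tD)$ is plt with $K_{\tX}+E_1+E_2+\tD = \mu^*(K_X+D)\sim_{\bQ} 0$, so in particular $(\tX, E_1+E_2+\tD)$ is an lc CY pair and $-(K_{\tX}+E_1+E_2) = \tD - (K_{\tX}+E_1+E_2+\tD) \sim_{\bQ} \tD$, which need not be big. The trick, as in the standard arguments (e.g.\ \cite[Lem.~2.62]{KM98}-style computations, or arguments in the K-stability literature on Koll\'ar components), is to perturb: since $E_i$ is a Koll\'ar component extracted by $\mu_i$ (Lemma \ref{l:typeIIKc}), $(\tX_i, E_i)$ is plt and $-E_i$ is $\mu_i$-ample, and $-K_{\tX_i} \sim_{\bQ} A_X(E_i)\, E_i + (\text{pullback of something})$; more precisely $K_{\tX_i}+E_i = \mu_i^*K_X + (A_{X,0}(E_i)-1)E_i$ is $\mu_i$-anti-ample near $E_i$. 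I would first show that $(\tX, (1-\e)(E_1+E_2)+\tD + \e \,\mu^*H)$ is klt for $0<\e\ll 1$, where $H$ is a general ample $\bQ$-divisor on $X$; since $\mu^*H$ is big and nef and $-(K_{\tX}+(1-\e)(E_1+E_2)+\tD) \sim_{\bQ} \e(E_1+E_2) - \e\,\mu^*H$... wait, this sign is wrong. Instead, write $-(K_{\tX}+(1-\e)(E_1+E_2)+\tD) = -(K_{\tX}+E_1+E_2+\tD) + \e(E_1+E_2) \sim_{\bQ} \e(E_1+E_2)$, and $E_1, E_2$ are $\mu$-exceptional so this is $\mu$-nef but not big. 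The correct move is to instead exhibit an effective $\bQ$-divisor $\Gamma$ on $\tX$ with $(\tX,\Gamma)$ klt and $-(K_{\tX}+\Gamma)$ big and nef. I would take $\Gamma := (1-\e)\tD + \e\,\mu^{-1}_*(\text{general }H) + (\text{small coefficients on }E_1,E_2)$ chosen so that $K_{\tX}+\Gamma \sim_{\bQ} -(\text{big nef})$; concretely, using $-K_{\tX} = -(K_{\tX}+E_1+E_2+\tD) + E_1+E_2+\tD \sim_{\bQ} E_1+E_2+\tD$ and that a general member of $|{-}m\mu^*K_X|$ pulls back to a big and nef divisor avoiding the relevant centers, one perturbs $E_1+E_2+\tD$ into an effective $\bQ$-divisor $\Gamma$ keeping klt and keeping $-(K_{\tX}+\Gamma)$ big and nef (the bigness comes from the $\mu^*(-K_X)$ contribution since $-K_X$ is ample on $X$). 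This shows $\tX$ is of Fano type by the standard criterion \cite[Lem.~3.1 or Cor.~2.35]{KM98}-type result, or more precisely by \cite[Cor.~1.3.2]{BCHM10} / the characterization that $X$ is of Fano type iff there is a klt pair $(X,\Gamma)$ with $-(K_X+\Gamma)$ big and nef.

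\textbf{Main obstacle.} The delicate point is arranging the perturbation so that $(\tX,\Gamma)$ is genuinely klt while $-(K_{\tX}+\Gamma)$ stays big and nef. Keeping it klt requires pushing the coefficients of $E_1$ and $E_2$ strictly below $1$, which forces us to add back an effective $\bQ$-divisor to preserve the linear equivalence class of $-K_{\tX}$; this added divisor must be chosen general enough (a general member of a basepoint-free or at least mobile-enough linear system on $\tX$ coming from $\mu^*(-mK_X)$) to avoid hurting kltness and to contribute the bigness. One must check that $\mu^*(-mK_X)$ is big and nef (clear, as $-K_X$ is ample) and that its general member, together with the reduced coefficients on $E_1,E_2$ and a small multiple of $\tD$, gives a klt pair — this is where one uses that $(\tX,E_1+E_2+\tD)$ is plt (hence small perturbations of the boundary stay klt) and Bertini. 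I expect this bookkeeping, rather than any conceptual difficulty, to be the crux; everything else (Picard rank, the identification of $E_1,E_2$) is immediate from what precedes.
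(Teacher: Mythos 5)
Your Picard-rank computation is exactly the paper's: $\rho(X)=1$ by \cite[Prop.~6.3]{Hac04} and $\mu$ extracts precisely the two lc places $E_1,E_2$, so $\rho(\tX)=3$.

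For the Fano type part your underlying idea — perturb the CY boundary to exhibit a klt pair on $\tX$ with big and nef anti-log-canonical, the bigness coming from the ampleness of $D\sim_{\bQ}-K_X$ on $X$ — is the same as the paper's, but your execution takes an unnecessary detour and is left unfinished. The paper simply scales the polarizing boundary: $(X,(1-\varepsilon)D)$ is a klt log Fano pair (its anti-log-canonical is $\varepsilon D$, ample), and its crepant pullback to $\tX$ is $(\tX,(1-\varepsilon)\tD+(1-a_1)E_1+(1-a_2)E_2)$ with $a_i=A_{X,(1-\varepsilon)D}(E_i)=\varepsilon\,\ord_{E_i}(D)$. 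Two points you were missing make this immediate: (i) $\ord_{E_i}(D)=A_{X,0}(E_i)>0$ automatically, since $E_i$ is an lc place of $(X,D)$ but $X$ is klt, so the coefficients $1-a_i$ land in $(0,1)$ with no choice to be made; (ii) the anti-log-canonical of the pulled-back pair is $\mu^*(\varepsilon D)$, big and nef on the nose. No auxiliary general $H\in|-mK_X|$, no Bertini, and no separate coefficient bookkeeping on $E_1,E_2$ is needed — the "crux" you identify dissolves once you perturb $D$ itself rather than trying to redistribute mass among $\tD$, $\mu^*H$, and the exceptional divisors (your sketch of $\Gamma$ as written never pins down the coefficients, and the natural choice in your setup makes $-(K_{\tX}+\Gamma)$ only $\bQ$-trivial rather than big unless you keep $\varepsilon'<\varepsilon$). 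So: correct strategy, but you should replace the $\Gamma$-construction by the one-line crepant pullback of $(X,(1-\varepsilon)D)$.
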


Recall, $\widetilde{X}$ of \emph{Fano type} means that there exists a $\bQ$-divisor $\widetilde{B}$ on $\widetilde{X}$ such that $(\widetilde{X},\widetilde{B})$ is klt and $-K_{\widetilde{X}}- \widetilde{B}$ is big and nef. 

\begin{proof}
For $0< \varepsilon\ll 1$, we have that $(X, (1-\varepsilon)D)$ is a klt log Fano pair, and 
\[
A_{X, (1-\varepsilon)D}(E_i) = A_{X, D}(E_i) + \varepsilon \ord_{E_i}(D) = \varepsilon \ord_{E_i}(D) < 1.
\]
Denote by $a_i:=\varepsilon \ord_{E_i}(D)$. Then $(\tX,(1-\varepsilon)\tD + (1-a_1) E_1 + (1-a_2) E_2)$ is a klt weak log Fano pair as it is crepant birational to $(X, (1-\varepsilon) D)$. Thus $\tX$ is of Fano type.

By \cite[Prop. 6.3]{Hac04}, we know that $\rho(X)=1$. Since $\tX \to X$ extracts two exceptional curves $E_1$ and $E_2$, we have $\rho(\tX) = \rho(X) +2 =3$.
\end{proof}

The following result is useful for understanding the geometry of $\widetilde{X}$.

\begin{prop}\label{p:typeIIManetti}
Notation as above. Then, there exists a $\bG_m$-equivariant contraction $\pi:\tX \to B$ to a smooth rational curve $B$ with connected fibers such that the following hold.
\begin{enumerate}
    \item $E_1$ and $E_2$ are disjoint sections of $\pi$;
    \item every fiber of $\pi$ is $\bG_m$-invariant.
\end{enumerate}
\end{prop}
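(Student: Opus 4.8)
The goal is to produce a $\bG_m$-equivariant fibration $\pi:\widetilde X\to B$ over a smooth rational curve realizing $E_1,E_2$ as disjoint sections, with all fibers $\bG_m$-invariant. The plan is to extract such a fibration from the Mori theory of $\widetilde X$, using that $\widetilde X$ is of Fano type with $\rho(\widetilde X)=3$ (Lemma \ref{l:typeIIfanotype}) and that $(\widetilde X, E_1+E_2+\widetilde D)$ is a plt CY pair with a $\bG_m$-action. First I would choose the klt weak log Fano structure on $\widetilde X$ coming from the proof of Lemma \ref{l:typeIIfanotype}, namely the pair $(\widetilde X,(1-\varepsilon)\widetilde D+(1-a_1)E_1+(1-a_2)E_2)$, so that all of $E_1,E_2,\widetilde D$ have coefficient strictly less than $1$; thus running a $K_{\widetilde X}$-MMP is the same as running an MMP for this klt pair, and each step and the final contraction is automatically $\bG_m$-equivariant by Proposition \ref{p:TequivariantTC}-type equivariance (or directly, since the relevant cones of curves are $\bG_m$-invariant).

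The key steps: (1) Since $\widetilde X$ is of Fano type, $\overline{NE}(\widetilde X)$ is a rational polyhedral cone and $-K_{\widetilde X}-\widetilde B$ is big and nef; run a $(K_{\widetilde X}+\text{small boundary})$-MMP. The divisors $E_1$ and $E_2$ each generate a $K_{\widetilde X}$-negative extremal ray that is divisorial (contracting $E_i$ gives $\widetilde X_{3-i}$ by the diagram, resp. contracting to $X$), so after contracting $E_1$ and $E_2$ we reach $X$ with $\rho(X)=1$. Instead, I want to find an extremal ray whose contraction is of fiber type. Because $\rho(\widetilde X)=3$ and two independent directions are ``used up'' by the two divisorial rays $[E_1],[E_2]$ heading toward $X$, there must be a third extremal ray $R$; I would argue that the contraction $\pi_0:\widetilde X\to B_0$ of $R$ (or of a suitable ray after one divisorial contraction) is of fiber type onto a curve. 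Here one uses: any birational contraction of a ray not equal to $[E_1]$ or $[E_2]$ would have to contract $\widetilde D$ or an exceptional curve, but crepant-birational invariance of the CY structure and the plt hypothesis on $(\widetilde X,E_1+E_2+\widetilde D)$ obstruct a Mori fiber space over a point (that would force $\rho\le 1$ after contractions, contradicting the need to still have both $E_1,E_2$). (2) Once $\pi:\widetilde X\to B$ is a $\bG_m$-equivariant Mori fiber space onto a curve $B$ (rational since $\widetilde X$ is rational), I would show $E_1,E_2$ are sections: each $E_i$ is $\mu$-exceptional and an lc place, and since $-K_{\widetilde X/B}\cdot(\text{fiber})>0$ with $(\widetilde X,E_1+E_2+\widetilde D)$ plt CY, adjunction on a general fiber $F\cong\bP^1$ gives $(E_1+E_2+\widetilde D)\cdot F=2$, with $E_1,E_2$ disjoint (plt) forcing $E_i\cdot F=1$, i.e. sections, and $\widetilde D\cdot F=0$, i.e. $\widetilde D$ vertical. (3) Finally, every fiber is $\bG_m$-invariant: since the $\bG_m$-action commutes with $\pi$ (equivariance) it acts on $B$; the fixed divisors $E_1,E_2$ are horizontal, so $\bG_m$ cannot act nontrivially on $B$ (a nontrivial action on $\bP^1$ has only two fixed points, but a section maps $\bG_m$-equivariantly onto $B$ forcing $B$ to be pointwise fixed), hence $\bG_m$ preserves each fiber.

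The main obstacle I expect is step (1): rigorously producing a fiber-type contraction onto a curve, i.e.\ ruling out that every non-$E_i$ extremal ray is again birational. The cleanest route is probably to first perform the divisorial contraction $\phi_1:\widetilde X\to\widetilde X_1$, note $\rho(\widetilde X_1)=2$ and $\widetilde X_1$ is still of Fano type with $\mu_1:\widetilde X_1\to X$ contracting $E_1$ being one of the two extremal contractions; the \emph{other} extremal contraction of $\widetilde X_1$ cannot be $\mu_1$-small or divisorial-to-a-surface without violating $\rho(X)=1$ and plt-ness, so it must be of fiber type onto a curve $B$. Then pull back along $\phi_1$ (or better, directly argue $\widetilde X\to\widetilde X_1\to B$) to get $\pi$. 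One must check the composite is still $\bG_m$-equivariant with $E_2$ a section of $\widetilde X\to B$, which follows since $\phi_1$ contracts $E_2$ to a section of $\widetilde X_1\to B$ (as $E_2$ is an lc place meeting a general fiber once by the adjunction count above). I would also invoke \cite[Cor. 3.17]{PS11} or the Seifert bundle dictionary of Section \ref{s:seifert} only afterward, when it is used in the proof of Theorem \ref{t:Manetti-Gm}; for Proposition \ref{p:typeIIManetti} itself the MMP plus adjunction argument suffices.
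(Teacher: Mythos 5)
Your overall strategy (run an MMP to reach a Mori fiber space over a curve, identify $E_1,E_2$ as disjoint sections via adjunction and plt-ness, then deduce fiberwise invariance) is the same as the paper's, but two steps as written do not go through. The paper runs the $(K_{\tX}+\tD+(1-\varepsilon)(E_1+E_2))$-MMP, whose negative rays are exactly those $R$ with $(E_1+E_2)\cdot R>0$, so neither $[E_1]$ nor $[E_2]$ is contracted (each has $(E_1+E_2)\cdot E_i=E_i^2<0$), and it imports the structural conclusion --- that the resulting fibration over a curve has $E_1,E_2$ as disjoint sections --- from \cite[Prop.~12.3.2]{FlipAbund}. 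Your proposed construction instead factors $\pi$ through the divisorial contraction $\phi_1:\tX\to\tX_1$; but $\phi_1$ contracts $E_2$ to a point, so in any composite $\tX\to\tX_1\to B$ the divisor $E_2$ is contained in a fiber rather than being a section. The correct $\pi$ has relative Picard rank $2$ (note $\rho(\tX)=3$, $\rho(B)=1$), so it is not obtained by composing with a contraction of $E_2$; the intermediate birational map must contract a \emph{vertical} curve. Relatedly, your adjunction count $(E_1+E_2+\tD)\cdot F=2$ does not by itself force $E_i\cdot F=1$: it does not exclude, say, $E_1\cdot F=2$ and $E_2$ vertical. Ruling this out is a connectedness statement for the non-klt locus over $B$, which is precisely what \cite[Prop.~12.3.2]{FlipAbund} supplies.

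The more serious gap is in part (2). You assert that $E_1,E_2$ are ``fixed divisors'' and hence horizontal, but $\bG_m$-invariance of a divisor does not imply it is pointwise fixed (torus-invariant divisors on a toric surface are generally closures of orbits), and your parenthetical argument only shows the action on $B$ is conjugate to the action on $E_i$, which yields no contradiction if that action is nontrivial with two fixed points. The missing idea is the paper's: by adjunction $(E_i,D_{E_i})$ is a klt CY pair on $E_i\cong\bP^1$, so $\Supp(D_{E_i})$ consists of at least three points; these are permuted, hence fixed, by the connected group $\bG_m$, and a $\bG_m$-action on $\bP^1$ with three fixed points is trivial. Only then does the equivariant isomorphism $\pi|_{E_i}:E_i\to B$ force the action on $B$ to be trivial, so that every fiber is $\bG_m$-invariant.
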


\begin{proof}
By \cite[Proof of Proposition 12.3.2]{FlipAbund}, running the $(K_{\tX}+ \tD + (1-\varepsilon) (E_1+E_2))$-MMP terminates with $ \tX\to Z \to B$ such that $\tX\to Z$ is birational, and $Z\to B$ is a Mori fiber space over a smooth curve with connected fibers. Since $\bG_m$ is a connected algebraic group, this MMP is automatically $\bG_m$-equivariant. Moreover, since $X$ is a rational surface, so is $\tX$. Thus $B$ is a smooth rational curve. Therefore (1) follows directly from \cite[Prop. 12.3.2]{FlipAbund}. 

For (2), notice that $(E_i, D_{E_i})$ is $\bG_m$-invariant. By adjunction, we know that $K_{E_i}+D_{E_i}\sim_{\bQ} 0$ and $(E_i, D_{E_i})$ is klt. Thus $\Supp(D_{E_i})$ contains at least three points on $E_i\cong \bP^1$, which implies that the $\bG_m$-action on $E_i$ has to be trivial. Since $\pi|_{E_i}: E_i\to B$ is a $\bG_m$-equivariant isomorphism, we know that the $\bG_m$-action on $B$ is also trivial. Thus every fiber of $\pi$ is $\bG_m$-invariant.
\end{proof}

\begin{defn}\label{def:brokenorbit}
    Notation as above. Assume that $(X,D)$ satisfies \eqref{eq:dagger}. A \emph{broken $\bG_m$-orbit} on $X$ is a divisor of the form $\mu_*\pi^*[b]$ for a point $b\in B$ such that $\pi^*[b]$ is reducible. 
\end{defn}

\begin{prop}\label{p:typeIIManetti-2exc}
Notation as above. Then there is precisely one broken $\bG_m$-orbit $\mu_*\pi^*[b]$ on $X$. Moreover,  $\pi^*[b]$ has precisely two irreducible components $\tC_1$ and $\tC_2$.

\end{prop}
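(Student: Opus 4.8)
The plan is to exploit the structure from Proposition \ref{p:typeIIManetti}: we have a $\bG_m$-equivariant Mori fiber space $\pi:\tX\to B$ over a smooth rational curve $B$, with $E_1,E_2$ disjoint sections, every fiber $\bG_m$-invariant, and $\rho(\tX)=3$ by Lemma \ref{l:typeIIfanotype}. Since $\rho(\tX/B)=\rho(\tX)-\rho(B)=2$, the morphism $\phi_i:\tX\to \tX_i$ contracts exactly one $\pi$-horizontal-or-vertical curve, and the composite $\tX_i\to X$ contracts another; keeping track of these contractions is the backbone of the argument. First I would show that a fiber $\pi^*[b]$ is reducible for at least one $b\in B$. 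If every fiber were irreducible, then $\tX\to B$ would be a $\bP^1$-bundle (after checking the generic fiber is $\bP^1$ and there are no multiple fibers, using that $\tX$ has only quotient singularities and $E_i$ are sections), forcing $\rho(\tX)=2$, contradicting $\rho(\tX)=3$. Hence at least one fiber is reducible.

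Next I would bound the number of reducible fibers and the number of components of each. Each reducible fiber $\pi^*[b]=\sum_j m_{j}\tC_{b,j}$ contributes at least one to $\rho(\tX/B)-1=1$ worth of "extra" Picard rank beyond the generic-fiber class: more precisely, the classes of the components of all reducible fibers, modulo the single fiber class, inject into $N^1(\tX/B)$, whose rank is $2$, so the total number of components over all reducible fibers is at most $2$ in excess of the number of reducible fibers. Since each reducible fiber has at least two components, this forces exactly one reducible fiber $\pi^*[b]$, having exactly two components $\tC_1,\tC_2$. Thus there is precisely one broken $\bG_m$-orbit on $\tX$, namely $\pi^*[b]$, and its pushforward $\mu_*\pi^*[b]$ is the unique broken $\bG_m$-orbit on $X$ in the sense of Definition \ref{def:brokenorbit}.

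The remaining point is to transfer "precisely one reducible fiber with two components" from $\tX$ down to $X$, i.e. to check that $\mu$ does not contract $\tC_1$ or $\tC_2$ and does not create or destroy a broken orbit. Here I would argue that $\mu=\mu_i\circ\phi_i$ and that the $\phi_i$ contract the $E_{3-i}$, which are $\pi$-sections, hence not components of any fiber; and $\mu_i$ contracts only $E_i$, again a section. So no component of $\pi^*[b]$ is contracted by $\mu$, and $\mu_*\pi^*[b]$ genuinely has two components. Any other broken orbit on $X$ would pull back (via the $\mu$-exceptional locus being contained in $E_1\cup E_2$, disjoint from fibers away from the sections) to a reducible fiber on $\tX$, contradicting uniqueness. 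This gives the statement.

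The main obstacle I anticipate is the Picard-rank counting over $B$: making rigorous the claim that the components of reducible fibers span a subspace of $N^1(\tX/B)$ of the expected dimension requires knowing that distinct reducible fibers contribute independent classes and that within a single reducible fiber the components minus the fiber class are independent — standard for smooth surfaces but needing a little care since $\tX$ is only $\bQ$-factorial with quotient singularities (one uses $\bQ$-Cartier intersection theory and that the intersection form on a single fiber's components is negative semidefinite with one-dimensional kernel, which holds because $\pi$ has connected fibers and $\tX$ is normal). Once that linear-algebra bookkeeping is set up correctly, combined with $\rho(\tX)=3$ from Lemma \ref{l:typeIIfanotype}, the conclusion is forced.
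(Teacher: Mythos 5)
Your overall strategy is sound and genuinely different from the paper's: you run a Shioda--Tate-style rank count in $N^1(\tX/B)$, whereas the paper argues by contracting curves. In the paper's proof, one first checks that any component $\tC$ of a reducible fiber satisfies $(\tC^2)<0$ (using connectedness of fibers), then runs the MMP on $\tX$ (legitimate since $\tX$ is of Fano type by Lemma \ref{l:typeIIfanotype}) to contract one component from each of two putative reducible fibers, or two components of a putative three-component fiber; each divisorial contraction drops $\rho$ by one, so one lands at $\rho \geq 4$, contradicting $\rho(\tX)=3$. The ``at least one reducible fiber'' step is identical in both treatments. The MMP route has the advantage of sidestepping the linear-algebra bookkeeping on a singular surface entirely, while your route, once set up, is more self-contained.

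There is, however, a concrete numerical slip in your counting that as written does not close the argument. You state that the components of all reducible fibers, modulo the fiber relation, inject into $N^1(\tX/B)$ of rank $2$, and conclude that $\sum_b(\#\mathrm{comp}(\pi^*[b])-1)\leq 2$. That bound still permits two reducible fibers each with two components, or a single reducible fiber with three components, so it does not force the conclusion. The correct budget is $1$, not $2$: the class of the section $E_1$ (or any $\pi$-horizontal divisor) satisfies $E_1\cdot F=1>0$ for a general fiber $F$, whereas every vertical component $V$ satisfies $V\cdot F=0$, so the horizontal class is independent of the span of all fiber components in $N^1(\tX/B)$. Hence $\sum_b(\#\mathrm{comp}(\pi^*[b])-1)\leq \rho(\tX/B)-1=1$, which together with the existence of at least one reducible fiber gives exactly one reducible fiber with exactly two components. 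You in fact write ``$\rho(\tX/B)-1=1$'' earlier in the same sentence, so you seem aware of this, but the operative inequality you then use is the rank-$2$ one. With the budget corrected to $1$ (and with Zariski's lemma supplying the independence of fiber components across distinct fibers and the one-dimensional kernel within a fiber, which does go through on a normal $\bQ$-factorial projective surface with $\bQ$-valued intersection theory), your argument is complete; the descent from $\tX$ to $X$ in your last paragraph matches the paper's one-line observation that $\mu$ contracts only the sections $E_1,E_2$.
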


\begin{proof}
Since $\mu$ only contracts $E_1$ and $E_2$, it suffices to show that there is precisely one reducible fiber $\pi^*[b]$, and such a fiber has two irreducible components. 
We first show that there is at most one reducible fiber. Assume to the contrary that fibers over two distinct points $b,b'\in B$ are reducible. Let $\tC_1$ (resp. $\tC_1'$) be an irreducible component of $\pi^*[b]$ (resp. $\pi^*[b']$). Let $m_1:=\ord_{\tC_1} \pi^*[b]>0$. Since $\pi$ has connected fibers, we have that $(\tC_1\cdot (F-m_1 \tC_1))>0$ and $(\tC_1\cdot F) =0$ for a general fiber $F$ of $\pi$. Thus, $(\tC_1^2)<0$, and similarly $(\tC_1'^2)<0$. Running the $(\tC_1+\tC_1')$-MMP over $B$ yields a birational morphism $\tX \to Z'$ that contracts both $\tC_1$ and $\tC_1'$. Note that we can run any MMP on $\tX$ since it is of Fano type by Lemma \ref{l:typeIIfanotype}. Thus we have $\rho(\tX) = \rho(Z') +2 \geq \rho(B) +3 = 4$, which is a contradiction to Lemma \ref{l:typeIIfanotype}. 

Next, we show that there is at least one reducible fiber. Assume to the contrary that every fiber is irreducible. Then all fibers define the same ray in $\overline{NE}(\tX)$, which implies that $\rho(\tX/B) = 1$. Thus $\rho(\tX) = \rho(B) + 1 = 2$, a contradiction to Lemma \ref{l:typeIIfanotype}. Thus (3) is proved.

For the last statement, similar arguments to those above show that if the reducible fiber $\pi^*[b]$ has at least three irreducible components, then we would have $\rho(\tX) \geq 4$ which contradicts Lemma \ref{l:typeIIfanotype}. Thus $\pi^*[b]$ has exactly two irreducible components $\tC_1$ and $\tC_2$.
\end{proof}

\begin{prop}\label{p:stP^1link}
Notation as above. There exists a $\bG_m$-equivariant birational morphism $\psi_i:\tX \to Z_i$ that contracts the curve $\tC_{3-i}$. Denote by $E_{i,j}$ (resp. $D_{Z_i}$) the pushforward of $E_j$ (resp. of $\tD$) to $Z_i$. Then $(Z_i, D_{Z_i}+E_{i,1}+E_{i,2})$ is a standard $\bP^1$-link over $B$. Moreover, after switching $\tC_1$ and $\tC_2$ if necessary, we have $(\tC_i\cdot E_i)>0$ and $(\tC_i\cdot E_j)=0$ for $j\neq i$.
\end{prop}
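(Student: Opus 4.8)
The strategy is to exploit the fact that $\widetilde C_1$ and $\widetilde C_2$ are the two components of the unique reducible fiber $\pi^*[b]$ (Proposition \ref{p:typeIIManetti-2exc}) together with the disjointness of the sections $E_1, E_2$ (Proposition \ref{p:typeIIManetti}.1). First I would observe that, since $\widetilde X$ is of Fano type by Lemma \ref{l:typeIIfanotype}, one may freely run MMP on $\widetilde X$ over $B$; as in the proof of Proposition \ref{p:typeIIManetti-2exc}, each $\widetilde C_i$ has negative self-intersection, so running the $\widetilde C_{3-i}$-MMP over $B$ produces a $\bG_m$-equivariant birational morphism $\psi_i:\widetilde X\to Z_i$ contracting exactly the curve $\widetilde C_{3-i}$ (this contraction is automatically $\bG_m$-equivariant because $\bG_m$ is connected). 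Since $\psi_i$ contracts one component of the reducible fiber, the induced map $Z_i\to B$ has irreducible fibers, all isomorphic to $\bP^1$ set-theoretically, and $Z_i$ is a normal rational surface with $\rho(Z_i/B)=1$.

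Next I would verify the standard $\bP^1$-link conditions for $(Z_i, D_{Z_i}+E_{i,1}+E_{i,2})$ over $B$, checking Definition \cite{Kol13}*{Def. 4.36} point by point. Condition (1), $K_{Z_i}+D_{Z_i}+E_{i,1}+E_{i,2}\sim_{\bQ,\pi_i}0$, follows because $\psi_i$ is crepant for the pair $(\widetilde X,\widetilde D+E_1+E_2)$ — indeed $(\widetilde X,\widetilde D+E_1+E_2)$ is plt CY, hence $K_{\widetilde X}+\widetilde D+E_1+E_2\sim_\bQ 0$, and pushforward preserves this — restricted over $B$ this gives relative $\bQ$-triviality. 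Condition (2), that $(Z_i,D_{Z_i}+E_{i,1}+E_{i,2})$ is plt with $E_{i,1}, E_{i,2}$ disjoint: pushforward of a plt pair under a birational morphism contracting a non-lc-center curve stays plt (the discrepancy of any exceptional divisor only goes up), and the two sections remain disjoint since they were disjoint on $\widetilde X$ and $\widetilde C_{3-i}$ is a fiber component not meeting $E_i$ after the relabelling below. Condition (3), $\pi_i|_{E_{i,j}}$ is an isomorphism onto $B$: this is inherited from Proposition \ref{p:typeIIManetti}.1 since $\psi_i$ is an isomorphism near $E_j$. Condition (4), every reduced fiber is $\bP^1$: after the contraction the fibers of $Z_i\to B$ over the old reducible point becomes a single rational curve, and all other fibers were already irreducible rational (being irreducible fibers of a Mori fiber space from a rational surface); normality of $Z_i$ and $\rho(Z_i/B)=1$ force these fibers to be reduced and isomorphic to $\bP^1$.

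Finally I would establish the intersection-number statement. Since $E_1$ and $E_2$ are disjoint sections meeting the general fiber $F=\pi^*[\text{pt}]$ each in one point, and $\pi^*[b]=m_1\widetilde C_1+m_2\widetilde C_2$ with $m_i>0$, intersecting with $E_j$ gives $m_1(\widetilde C_1\cdot E_j)+m_2(\widetilde C_2\cdot E_j)=(F\cdot E_j)=1$ for each $j\in\{1,2\}$. All intersection numbers $(\widetilde C_i\cdot E_j)$ are non-negative (distinct irreducible curves, as $E_j$ is a section and $\widetilde C_i$ a fiber component). Hence for each $j$ exactly one of $(\widetilde C_1\cdot E_j), (\widetilde C_2\cdot E_j)$ equals $1$ and the other $0$, and the corresponding multiplicity is $1$. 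It remains to rule out that the same $\widetilde C_i$ meets both $E_1$ and $E_2$: if $\widetilde C_1$ met both sections then $\widetilde C_2$ would be disjoint from $E_1\cup E_2$, forcing the image $\mu(\widetilde C_2)$ to be a curve on $X$ disjoint from the singular points $\mu_i(E_i)$, i.e. $\mu$ is an isomorphism near $\widetilde C_2$; but then $\widetilde C_2$ descends to a curve $C_2$ on $X$ with $(C_2^2)<0$ contradicting $\rho(X)=1$ and ampleness of everything on the klt del Pezzo surface $X$ (alternatively, $\widetilde C_2$ being a fiber component disjoint from both sections would contradict the $\bP^1$-link structure needed on $Z_1$). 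Relabelling $\widetilde C_1,\widetilde C_2$ so that $(\widetilde C_i\cdot E_i)=1$ and $(\widetilde C_i\cdot E_j)=0$ for $j\neq i$ gives the claim.

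\textbf{Main obstacle.} The delicate point is the relabelling step and checking $E_{i,1}, E_{i,2}$ remain disjoint after contraction — equivalently showing each fiber component $\widetilde C_i$ meets exactly one of the two sections. The intersection-number bookkeeping above handles this cleanly once one knows $(\widetilde C_i\cdot E_j)\le 1$, but the genuinely non-formal input is excluding the degenerate configuration where one section meets both components; that is where $\rho(X)=1$ (Lemma \ref{l:typeIIfanotype} / \cite{Hac04}*{Prop. 6.3}) and the geometry of the dlt modification must be invoked rather than pure numerics.
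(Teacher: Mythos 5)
Your construction of $\psi_i$ and most of the verification that $(Z_i,D_{Z_i}+E_{i,1}+E_{i,2})$ is a standard $\bP^1$-link follow the same route as the paper: run the $\tC_{3-i}$-MMP over $B$ on the Fano type surface $\tX$, and observe that the contraction is crepant for the plt CY pair $(\tX,\tD+E_1+E_2)$, so the pushed-forward pair is again plt and the conditions of \cite[Def. 4.36]{Kol13} follow as in \cite[Proof of Prop. 4.37]{Kol13}. Two remarks on your write-up of this part: the correct reason plt is preserved is crepancy of $\psi_i$ (discrepancies are preserved, and $a(\tC_{3-i})=-\ord_{\tC_{3-i}}\tD>-1$), not that ``discrepancies only go up''; and pltness of $(Z_i,D_{Z_i}+E_{i,1}+E_{i,2})$ already forces $E_{i,1}$ and $E_{i,2}$ to be disjoint, so you should not defer the disjointness to the relabelling at the end.

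The genuine gap is in the intersection-number bookkeeping. From $m_1(\tC_1\cdot E_j)+m_2(\tC_2\cdot E_j)=(F\cdot E_j)=1$ you conclude that for each $j$ exactly one of $(\tC_1\cdot E_j)$, $(\tC_2\cdot E_j)$ equals $1$ and the other vanishes. This requires these numbers to be non-negative integers, but $\tX$ is only $\bQ$-factorial with quotient singularities and each $\tC_i$ is merely $\bQ$-Cartier: at a singular point of $\tX$ lying on $E_j\cap\tC_i$ the local intersection number is a fraction (only the full fiber $\pi^*[b]=m_1\tC_1+m_2\tC_2$ is Cartier). So the numerics do not exclude, say, $(\tC_1\cdot E_j)=(\tC_2\cdot E_j)=\tfrac12$ with $m_1=m_2=1$, i.e.\ a section meeting both components. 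Since your exclusion of the configuration ``$\tC_1$ meets both $E_1$ and $E_2$'' relies on this step (you use it to conclude that $\tC_2$ is then disjoint from both sections before invoking $\rho(X)=1$), the final claim is not established. The paper avoids the numerics entirely: each $E_j$ meets at least one $\tC_i$ because it is a section, and if some $\tC_i$ met both $E_1$ and $E_2$, then contracting $\tC_i$ (passing to $Z_j$ for $j\neq i$) would force $E_{j,1}$ and $E_{j,2}$ to meet at the image point, contradicting the disjointness of the two sections of the standard $\bP^1$-link already established for $Z_j$. Substituting this argument repairs the proof and renders your $\rho(X)=1$ detour unnecessary.
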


\begin{proof}
From the proof of Proposition \ref{p:typeIIManetti-2exc}, we have $(\tC_{3-i})^2<0$. Hence, we may run a $\tC_{3-i}$-MMP on $\tX$ over $B$ which yields a birational morphism $\psi_i: \tX\to Z_i$ that only contracts $\tC_{3-i}$. Since the MMP is automatically $\bG_m$-equivariant, $\lambda$ induces an effective $\bG_m$-action on $Z_i$.  Since $(\tX, \tD + E_1 + E_2)$ is a plt CY pair, so is $(Z_i, D_{Z_i} + E_{i,1} + E_{i,2})$. By Proposition \ref{p:typeIIManetti-2exc}, we know that every fiber of $Z_i\to B$ is irreducible. Thus $(Z_i, D_{Z_i})\to B$ is a log Mori fiber space. By \cite[Proof of Proposition 4.37]{Kol13}, we know that $(Z_i, D_{Z_i}+E_{i,1} + E_{i,2})$ is a standard $\bP^1$-link over $B$. 

To prove the last statement, we notice that each $E_i$ intersects at least one of $\tC_1$ and $\tC_2$, since the fiber $\pi^*[b]$ has support $\tC_1\cup \tC_2$ by Proposition \ref{p:typeIIManetti-2exc} and $E_i$ is a section by Proposition \ref{p:typeIIManetti}. If some $\tC_i$ intersects both $E_1$ and $E_2$, then after contracting $\tC_i$ we have that $E_{j,1}$ and $E_{j,2}$ intersect each other in $Z_j$ for $j\neq i$, which contradicts that $(Z_j, D_{Z_j}+E_{j,1} + E_{j,2})$ is a standard $\bP^1$-link. Thus the proof is finished.
\end{proof}

\begin{prop}\label{p:typeIItoric}
Notation as above. Let $m_i:=\ord_{\tC_i}(\pi^*[b])$ and $ a_i:=\ord_{\tC_i}(\tD)$. Then the following hold.
\begin{enumerate}
    \item $\frac{1-a_1}{m_1} = \frac{1-a_2}{m_2}$.
    \item If the broken $\bG_m$-orbit on $X$ is non-reduced, then $\tX$ is toric.
\end{enumerate}
\end{prop}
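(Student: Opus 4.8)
The plan is to extract geometric consequences from the standard $\bP^1$-link structure established in Propositions \ref{p:typeIIManetti}--\ref{p:stP^1link}, and then combine them with a toric characterization of $\tX$. For part (1), I would work on the surface $Z_i$ from Proposition \ref{p:stP^1link}: since $(Z_i,D_{Z_i}+E_{i,1}+E_{i,2})$ is a standard $\bP^1$-link over $B$, we have $K_{Z_i}+D_{Z_i}+E_{i,1}+E_{i,2}\sim_{\bQ,B}0$, and the fiber $F_{Z_i}:=(\mu_i\circ\psi_i)$-image computations let me compute the coefficient of $\tD$ by restricting $K_{\tX}+\tD+E_1+E_2\sim_{\bQ}0$ to a general fiber and comparing with the reducible fiber $\pi^*[b]=m_1\tC_1+m_2\tC_2$. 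Concretely, intersecting $\tD+E_1+E_2+\sum(\text{other fiber contributions})$ against $\tC_1$ and $\tC_2$ using $(\tC_i\cdot\pi^*[b])=0$, $(\tC_i\cdot F)=0$ for a general fiber $F$, together with $(\tC_i\cdot E_j)=\delta_{ij}(\tC_i\cdot E_i)$ from Proposition \ref{p:stP^1link}, and the adjunction/crepancy relation $K_{\tX}+\tD+E_1+E_2=\mu^*(K_X+D)\sim_{\bQ}0$, should force the two ``slopes'' $\tfrac{1-a_i}{m_i}$ to agree. The cleanest route is probably: restrict the CY relation to $\tC_i$ via $(K_{\tX}+\tC_i)|_{\tC_i}$ adjunction, write $0=(( K_{\tX}+\tD+E_1+E_2)\cdot\tC_i)$, expand, and use $0=(F\cdot\tC_i)=(m_1\tC_1+m_2\tC_2)\cdot\tC_i+(\text{horizontal part})\cdot\tC_i$ to eliminate $(\tC_i^2)$ in favor of $m_i$; the resulting identity, symmetric in the two indices, yields $\tfrac{1-a_1}{m_1}=\tfrac{1-a_2}{m_2}$.

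For part (2), suppose the broken $\bG_m$-orbit $\mu_*\pi^*[b]$ is non-reduced, i.e.\ $(m_1,m_2)\neq(1,1)$. The idea is to produce enough $\bG_m$-invariant curves on $\tX$ with the right intersection pattern to conclude $\tX$ is toric by a Picard-rank count. We already have on $\tX$: the two sections $E_1,E_2$ (horizontal, disjoint by Proposition \ref{p:typeIIManetti}.1), the two components $\tC_1,\tC_2$ of the reducible fiber, and $\tD$; and $\rho(\tX)=3$ by Lemma \ref{l:typeIIfanotype}. Since $\tX$ is a smooth (it is a dlt modification of a surface with quotient singularities, hence has at worst cyclic quotient singularities — I would first check smoothness or reduce to it) rational surface of Picard rank $3$ with a nontrivial $\bG_m$-action, it is a $\bG_m$-surface; by the classification of $\bG_m$-surfaces (Proposition \ref{p:horizontaldiv} and the divisorial fan description), $\tX$ admits at most two horizontal divisors, which are exactly $E_1$ and $E_2$, and all other $\bG_m$-invariant prime divisors are vertical, i.e.\ closures of $\bG_m$-orbits contained in fibers of $\pi$. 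The non-reducedness of $\pi^*[b]$ together with the $\bG_m$-action forces the generic fiber of $\pi$ to be a single $\bG_m$-orbit closure (each general fiber is $\bP^1$ with $\bG_m$ acting with two fixed points lying on $E_1,E_2$), so $\tX^\circ:=\tX\setminus(E_1\cup E_2)$ is a Seifert $\bG_m$-bundle over $B$ in the sense of Definition \ref{def:seifert}; combined with the horizontal divisors $E_1,E_2$ being the two sections, Proposition \ref{p:Seifert-P^1link}-type reasoning identifies $\tX$ with a compactified Seifert $\bG_m$-bundle $\overline{Y}_L$ over $B\cong\bP^1$ for some $\bQ$-divisor $L$. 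Since $B\cong\bP^1$ is toric and a compactified Seifert $\bG_m$-bundle over a toric curve with a $\bG_m$-action is itself toric (the torus being $\bG_m\times\bG_m$), we conclude $\tX$ is toric.

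The main obstacle I anticipate is part (2): one must be careful that the non-reducedness hypothesis is genuinely used, and that the $\bG_m$-action, restricted to the generic fiber, has no fixed points other than the two on $E_1,E_2$ — equivalently, that $\tD$ does not meet a general fiber in a third $\bG_m$-fixed point. If it did, $\tD$ would be a third horizontal divisor, contradicting Proposition \ref{p:horizontaldiv}; so in fact $\tD$ is vertical (a $\bQ$-combination of fibers and of $\tC_1,\tC_2$), which is also what makes the statement ``$D_0$ is a $\bQ$-linear combination of rulings'' in Theorem \ref{thm:type2Sequiv}(iii) work. A secondary subtlety is checking that $\tX$ (or the relevant $Z_i$) is actually smooth, or else running the Seifert-bundle argument over the $\bQ$-factorial quotient-singularity setting; I would handle this by noting $X$ has only cyclic quotient singularities (Proposition \ref{p:typeIIManetti}-adjacent, or \cite[Thm. 7.1]{Hac04} for Manetti surfaces), so the Koll\'ar components $E_i$ (Lemma \ref{l:typeIIKc}) and hence $\tX$ are governed by Lemma \ref{l:pltcyclicquot}, which already tells us $(E_i,\Delta_{E_i})$ is toric — and this toricity of the ``ends'' propagates to $\tX$ once the interior is a Seifert bundle.
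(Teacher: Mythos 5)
Your argument for part (2) contains a genuine error. You claim that $\tX^\circ=\tX\setminus(E_1\cup E_2)$ is a Seifert $\bG_m$-bundle over $B$ and hence that $\tX$ is a compactified Seifert $\bG_m$-bundle. It is not: Definition \ref{def:seifert}.2 requires \emph{every} reduced fiber to be a single $\bG_m$-orbit, whereas the fiber of $\pi$ over $b$ is the union of the two orbit closures $\tC_1\cup\tC_2$; equivalently, $\rho(\tX)=3$ while a compactified Seifert bundle over $\bP^1$ has Picard rank $2$. The paper avoids this by first contracting one component, $\psi_1:\tX\to Z_1$, and applying Proposition \ref{p:Seifert-P^1link} to $Z_1$ (which does have irreducible fibers), and only then extracting $\tC_2$ back. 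Your final step is also false as stated: a compactified Seifert $\bG_m$-bundle over $\bP^1$ need not be toric (take $L$ with fractional part supported at three or more points). The correct argument bounds $\#\Supp(\Delta_{E_1})\leq 2$ via Lemma \ref{l:pltcyclicquot}, which bounds the number of non-reduced fibers of $Z_1$ and forces $\{L_1\}$ to be supported at $\leq 2$ points, so $Z_1$ is toric; the non-reducedness hypothesis $m_1>1$ is then used essentially to put $b$ into $\Supp\{L_1\}$, so that $C_{1,1}$ is a torus-invariant divisor and the extraction of the lc place $\tC_2$ of $(Z_1,C_{1,1}+E_{1,1}+E_{1,2})$ is toric — a computation that relies on part (1). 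Your sketch never uses $m_1>1$ in a load-bearing way, which cannot be right, since the conclusion fails when the broken orbit is reduced (that is exactly case (iv) of Theorem \ref{thm:type2Sequiv}).

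For part (1), your intersection-theoretic route is too vague to verify and I do not believe it closes. Expanding $0=((K_{\tX}+\tD+E_1+E_2)\cdot\tC_i)$ and eliminating $(\tC_i^2)$ via $\pi^*[b]\cdot\tC_i=0$ leaves identities involving $\deg\mathrm{Diff}_{\tC_i}(0)$ (contributions from the singularities of $\tX$ along $\tC_i$) and $(E_i\cdot\tC_i)$, which are independent unknowns for $i=1,2$ with no symmetry between them; nothing in that computation forces $\tfrac{1-a_1}{m_1}=\tfrac{1-a_2}{m_2}$. The missing input is the different computation on the two \emph{sections}: on $Z_i$ the fiber over $b$ is $m_iC_{i,i}$, so $\ord_{p_i}D_{E_i}=1-\tfrac{1-a_i}{m_i}$, and the standard $\bP^1$-link structure of $(\tX,\tD+E_1+E_2)$ over $B$ forces the differents on $E_1$ and $E_2$ to agree by \cite[Prop.~4.37]{Kol13}. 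That comparison of differents is the actual content of (1) and is not a consequence of the numerical relations you list.
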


\begin{proof}
(1) Let $p_i$ be the point in $E_i$ such that $\pi(p_i) = b$. By contracting $\tC_j$ from $\tX$ where $j\neq i$, we see that the fiber of $Z_i \to B$ over $b$ is $m_i (\psi_{i,*}\tC_i)$. Thus by Propositions \ref{prop:orbcone-adjunction}, \ref{p:Seifert-P^1link}, and \ref{p:stP^1link}, and \eqref{eq:seifert-adj-2},  we get that $\ord_{p_i} \Delta_{E_i} = 1-\frac{1}{m_i}$ and $\ord_{p_i} \tD|_{E_i} = \frac{a_i}{m_i}$. 
Thus 
\[
\ord_{p_i} D_{E_i} = \ord_{p_i} \Delta_{E_i}+ \ord_{p_i} \tD|_{E_i} = 1-\frac{1-a_i}{m_i}.
\]
Since $(\tX, \tD+E_1+E_2)$ is a $\bP^1$-link over $B$, by \cite[Prop. 4.37]{Kol13} we know that $\ord_{p_1} D_{E_1}= \ord_{p_2} D_{E_2}$. This implies (1).

(2) The non-reducedness of the broken $\bG_m$-orbit is equivalent to $m_1>1$ or $m_2>1$ by Proposition \ref{p:typeIIManetti-2exc}. Let us assume $m_1>1$ without loss of generality. Consider the birational morphism $\psi_1: \tX \to Z_1$ by contracting $\tC_2$. Denote by $C_{1,1}:=\psi_{1, *} \tC_1$. We know that  $\pi_1: Z_1\to B$ is a compactified Seifert $\bG_m$-bundle by Proposition \ref{p:Seifert-P^1link}. Then we know that $\pi_1^{-1}(b) = \psi_{1,*} \pi^*[b] = \psi_{1,*} (m_1 \tC_1 + m_2 \tC_2) = m_1 C_{1,1}$  which is a non-reduced fiber.

We claim that the standard $\bP^1$-link $\pi_1: Z_1\to B$ has at most one non-reduced fiber other than $\pi_1^{-1}(b)$. Denote by $\Delta_{E_{1,1}}$ the different of $(Z_1, E_{1,1})$ on $E_{1,1}$. Then by Proposition \ref{p:Seifert-P^1link}, we know that $\Supp(\Delta_{E_{1,1}})$ equals the intersection of $E_{1,1}$ with the union of all non-reduced fibers of $\pi_1$. Thus it suffices to show that $\Supp(\Delta_{E_{1,1}})$ has at most two points. Since $\psi_1: \tX \to Z_1$ is isomorphic over a neighborhood of $E_{1,1}$, we know that $(E_{1,1}, \Delta_{E_{1,1}})\cong (E_1, \Delta_{E_1})$ which is toric by Lemma \ref{l:pltcyclicquot}. Thus $\Supp(\Delta_{E_{1,1}})$ has at most two points.

Next, we show that $Z_1$ is toric. Assume that $\pi_1$ is induced by the $\bQ$-divisor $L_1$ on $B$. Then $\Supp(\{L_1\}) = \Supp(\pi_{1,*}\Delta_{E_{1,1}})$, which contains at most two points. Thus $(B, \{L_1\})$ is toric. Since the Seifert $\bG_m$-bundle only depends on the $\bZ$-linear equivalence class of $L_1$, we may assume that $L_1$ is toric as well. This shows that $Z_1$ is toric. 

Finally, we show that $\tX$ is toric. From the paragraph above, we know that  $C_{1,1}$, $E_{1,1}$ and $E_{1,2}$ are all torus invariant divisors on $Z_1$. Thus $(Z_1, C_{1,1} + E_{1,1} + E_{1,2})$ is a toric log canonical pair. It suffices to show $\tC_2$ is an lc place of $(Z_1, C_{1,1} + E_{1,1}+ E_{1,2})$ which would imply that $\pi_1: \tX \to Z_1$ is toric. Since $(\tX, \tD + E_1 + E_2)$ and $(Z_1, D_{Z_1} + E_{1,1} + E_{1,2})$ are crepant birational, we have
\[
1-a_2 = A_{\tX, \tD + E_1 + E_2}(\tC_2) = A_{Z_1, D_{Z_1} + E_{1,1} + E_{1,2}}(\tC_2).
\]
By construction, we know that $D_{Z_1} -a_1 C_{1,1}$ is a $\bQ$-linear combination of fibers of $\pi_1$ other than $\pi_1^{-1}(b)$. Thus we have
\begin{align*}
    A_{Z_1, C_{1,1} + E_{1,1} + E_{1,2}}(\tC_2) & = A_{Z_1, D_{Z_1} + E_{1,1} + E_{1,2}}(\tC_2) - (1-a_1)\ord_{\tC_2} (C_{1,1})\\
    & = (1-a_2) - (1-a_1) \ord_{\tC_2} \left(\frac{\pi_1^{-1}(b)}{m_1}\right)\\
    & = (1-a_2) - (1-a_1)\frac{m_2}{m_1} =0,
\end{align*}
where the last equality follows from (1). Thus the proof is finished.
\end{proof}

\begin{lem}\label{l:typeIINEcone}
Notation as above. Then $\overline{NE}(\tX_i)$ is spanned by  $E_i$ and $C_i:=\phi_{i,*} \tC_i$.
\end{lem}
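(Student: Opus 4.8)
Since $\rho(\widetilde X)=3$ by Lemma \ref{l:typeIIfanotype} and $\mu_i:\widetilde X_i\to X$ contracts exactly one of the two curves $E_{3-i}$ (equivalently $\phi_i:\widetilde X\to\widetilde X_i$ contracts $E_{3-i}$), we have $\rho(\widetilde X_i)=\rho(\widetilde X)-1=2$. Therefore $\overline{NE}(\widetilde X_i)$ is a two-dimensional closed convex cone, hence spanned by exactly two extremal rays, and the plan is simply to identify those two rays as $\mathbb R_{\geq 0}[E_i]$ and $\mathbb R_{\geq 0}[C_i]$, where $C_i=\phi_{i,*}\widetilde C_i$.

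First I would check that both classes are nonzero and lie on the boundary of the cone. For $E_i$: by Lemma \ref{l:typeIIKc}, $-E_i$ is $\mu_i$-ample, so $E_i$ is a $\mu_i$-exceptional curve with $(E_i^2)<0$ on the surface $\widetilde X_i$ (using that $\mu_i$ is a birational morphism from a smooth-enough surface contracting exactly $E_i$, so by the Hodge index / negativity considerations $(E_i^2)<0$); a curve of negative self-intersection always spans an extremal ray of $\overline{NE}$, so $\mathbb R_{\geq0}[E_i]$ is one of the two extremal rays. For $C_i$: recall from the proof of Proposition \ref{p:typeIIManetti-2exc} that $(\widetilde C_i^2)<0$ on $\widetilde X$ (both components of the reducible fiber have negative self-intersection over $B$), and the contraction $\phi_i$ contracts $\widetilde C_{3-i}$, not $\widetilde C_i$; since $\widetilde C_i$ meets $\widetilde C_{3-i}$ (they form a connected fiber of $\pi$), pushing forward under $\phi_i$ preserves negativity up to the contribution of the contracted curve — here I would compute $(C_i^2) = (\widetilde C_i^2) + \tfrac{(\widetilde C_i\cdot \widetilde C_{3-i})^2}{-(\widetilde C_{3-i}^2)}$ and observe that the fiber relation $\pi^*[b]=m_i\widetilde C_i+m_{3-i}\widetilde C_{3-i}$ together with $(\pi^*[b]\cdot\widetilde C_i)=0$ forces $(C_i^2)<0$ as well (since $C_i$ is proportional to a fiber-component image which is contracted by $\pi$ but $Z_i\to B$ still has $\rho=1$ over $B$, so in fact $C_i$ is, up to scaling, the fiber class of $\widetilde X_i\to B$, forcing $(C_i^2)<0$ via the same reducible-fiber argument is not available — better: use that $\widetilde X_i\to B$ has a reducible fiber only if there were still a reducible fiber, but by Proposition \ref{p:typeIIManetti-2exc} the unique reducible fiber of $\pi$ maps to a possibly-irreducible fiber on $\widetilde X_i$; in any case $C_i$ is a component of a fiber of $\widetilde X_i\to B$, hence numerically proportional to a fiber only if that fiber is irreducible, otherwise $(C_i^2)<0$). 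In either case $\mathbb R_{\geq0}[C_i]$ spans an extremal ray of $\overline{NE}(\widetilde X_i)$.

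Then I would argue these are two \emph{distinct} rays: $E_i$ is $\mu_i$-exceptional while $C_i$ dominates $X$ under $\mu_i$ (since $\widetilde C_i$ is not contracted by $\mu$ to a point — it maps to the broken $\bG_m$-orbit divisor image $\mu_*\pi^*[b]$ or to one of its components, and in particular $\mu_i(C_i)$ is a curve), so $\mu_{i,*}C_i\neq 0$ while $\mu_{i,*}E_i=0$; hence $[E_i]$ and $[C_i]$ are linearly independent in $N_1(\widetilde X_i)$. Since $\rho(\widetilde X_i)=2$ and a $2$-dimensional Mori cone has exactly two extremal rays, and we have exhibited two distinct extremal rays, these must be all of them, so $\overline{NE}(\widetilde X_i)=\mathbb R_{\geq0}[E_i]+\mathbb R_{\geq0}[C_i]$.

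The main obstacle I anticipate is the bookkeeping in the second paragraph: verifying cleanly that $(E_i^2)<0$ on $\widetilde X_i$ and that $(C_i^2)<0$ (or that $C_i$ at least spans a boundary ray), because one must be careful that $\widetilde X_i$ may be singular, so ``negative self-intersection implies extremal'' needs the $\mathbb Q$-factorial version together with the contraction theorem for surfaces of Fano type (available since $\widetilde X$, hence $\widetilde X_i$, is of Fano type by Lemma \ref{l:typeIIfanotype} and its pushforward). A cleaner route, which I would actually prefer, avoids self-intersection numbers entirely: the two birational morphisms $\mu_i:\widetilde X_i\to X$ (contracting $E_i$, with $\rho$ dropping by $1$ to $\rho(X)=1$) and $\pi_i:\widetilde X_i\to B$ (the induced fibration from Proposition \ref{p:typeIIManetti}, whose general fiber is irreducible so $\rho(\widetilde X_i/B)=1$) are two distinct extremal contractions of the $\rho=2$ surface $\widetilde X_i$; their exceptional/fiber classes are $[E_i]$ and $[C_i]$ respectively (the fiber of $\pi_i$ over $b$ is supported on $C_i$ after contracting $\widetilde C_{3-i}$, by Proposition \ref{p:typeIIManetti-2exc}), and the two extremal rays of $\overline{NE}$ of a $\rho=2$ Fano-type surface are precisely the rays contracted by its two extremal contractions. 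This gives the result immediately.
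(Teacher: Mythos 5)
Your overall skeleton — $\rho(\tX_i)=2$, exhibit two distinct irreducible curves of negative self-intersection, conclude they span the two extremal rays — is exactly the paper's argument, and the $E_i$ half and the distinctness argument are fine. But the $C_i$ half contains a genuine confusion: you have conflated $\phi_i$ (which contracts $E_{3-i}$; see the definition right after Theorem \ref{t:Manetti-Gm}) with $\psi_i$ (which contracts $\tC_{3-i}$; Proposition \ref{p:stP^1link}). The curve $C_i$ in the lemma is $\phi_{i,*}\tC_i$, i.e.\ the image of $\tC_i$ after contracting the \emph{section} $E_{3-i}$, not after contracting the fiber component $\tC_{3-i}$. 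Consequently your correction-term formula $(C_i^2)=(\tC_i^2)+(\tC_i\cdot\tC_{3-i})^2/(-(\tC_{3-i}^2))$ is computing the wrong pushforward, and the ensuing discussion is moot. The actual point, which you never invoke, is the last statement of Proposition \ref{p:stP^1link}: $(\tC_i\cdot E_{3-i})=0$, and $(E_i\cdot E_{3-i})=0$ since $(\tX,E_1+E_2+\tD)$ is plt. Hence $\phi_i$ is an isomorphism in a neighborhood of $E_i\cup\tC_i$, so \emph{both} self-intersections are preserved on the nose: $(C_i^2)_{\tX_i}=(\tC_i^2)_{\tX}<0$ and $(E_i^2)_{\tX_i}=(E_i^2)_{\tX}<0$, with no correction term. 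That is the paper's one-line proof.

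Your preferred ``cleaner route'' does not work as stated, for the same underlying reason: there is no morphism $\pi_i:\tX_i\to B$. The fibration $\pi:\tX\to B$ descends to $Z_i$ (where $\psi_i$ contracts the fiber component $\tC_{3-i}$), but $\phi_i$ contracts $E_{3-i}$, which is a \emph{section} of $\pi$ by Proposition \ref{p:typeIIManetti}; contracting a section destroys the fibration, so $\tX_i\dashrightarrow B$ is only a rational map and $C_i$ is not the fiber class of any contraction of $\tX_i$. The second extremal contraction of $\tX_i$ is in fact the birational contraction $g_i:\tX_i\to\tX_i'$ of $C_i$ itself (used later in the proof of Theorem \ref{t:Manetti-Gm}), which again rests on first knowing $(C_i^2)<0$.
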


\begin{proof}
 Since $\phi_i$ is an isomorphism in a neighborhood of $E_i\cup \tC_i$, we know that $(E_i)_{\tX_i}^2 = (E_i^2)_{\tX}<0$ and $(C_i^2) = (\tC_i^2)<0$. Thus $[E_i]$ and $[C_i]$ are two extremal curve classes in $\overline{NE}(\tX_i)$. By Lemma \ref{l:typeIIfanotype}, we know that $\rho(\tX_i) = \rho(\tX) - 1 =2$. Thus the result follows.
\end{proof}

\begin{proof}[Proof of Theorem \ref{t:Manetti-Gm}]
We know that $\tX_i \to X$ extracts the Koll\'ar component $E_i$. Thus we have $((-K_{\tX_i}-E_i)\cdot E_i)>0$. By Lemma \ref{l:typeIINEcone}, the ampleness of $-K_{\tX_i}-E_i$ is equivalent to the positivity of its intersection with $C_i$. Since $\phi_i$ is isomorphic in a neighborhood of $E_i\cup \tC_i$, we have
\[
((-K_{\tX_i}-E_i)\cdot C_i) = ((-K_{\tX} - E_1 - E_2) \cdot \tC_i) = (\tD\cdot \tC_i).
\]
Since $\tD$ is supported in finitely many fibers of $\pi$, we know that $(\tD\cdot (m_1 \tC_1 + m_2 \tC_2)) = (\tD\cdot F) = 0$.

If $(\tD\cdot \tC_i)>0$ for some $i$, we claim that either $m_1$ or $m_2$ is larger than $1$. Assume to the contrary that $m_1=m_2 =1$. Then Proposition \ref{p:typeIItoric}.1 implies that $a_1=a_2$. Thus $\tD$ is a $\bQ$-linear combination of fibers of $\pi$, which implies that $(\tD\cdot \tC_1)=(\tD\cdot \tC_2) = 0$, a contradiction. Thus by Proposition \ref{p:typeIItoric}.2, we know that $\tX$ is toric. Since $\tC_j$ is torus invariant for $j\neq i$, we have that $\tX_i$ is also toric. This falls into case (i).

If $(\tD\cdot \tC_1) = (\tD\cdot \tC_2) =0$, then the above discussion implies that $-K_{\tX_i}-E_i$ is nef for any $i$. To show its bigness, consider the birational morphism $g_i: \tX_i\to \tX_i'$ by contracting $C_i$. Such a contraction exists as $(C_i)^2 = (\tC_i^2) <0$ and $\tX$ is of Fano type by Proposition \ref{l:typeIIfanotype}. Thus $-K_{\tX_i}-E_i = g_i^* L_i'$ for some $\bQ$-line bundle $L_i'$ on $\tX_i'$. Since $((-K_{\tX_i}-E_i)\cdot E_i) >0$, we know that $(L_i'\cdot g_{i,*} E_i) > 0$. Since $\rho(\tX_i') = \rho(\tX_i) -1 = 1$, this implies that $L_i'$ is ample on $\tX_i'$. This shows the bigness of $-K_{\tX_i}-E_i$. The proof is finished.
\end{proof}

\subsection{Type II pairs}

The goal of this subsection is to prove Theorem \ref{thm:type2Sequiv}.

By Lemma \ref{l:SequivKmod}, it suffices to show that every Type II pair $(X,D)\in \cP_d^{\rm K}(\bk)$ 
satisfies the statement. Because $X$ is a Manetti surface, $X$ has cyclic quotient singularities and $\rho(X) = 1$ by \cite[Thm. 8.3 \& Prop. 6.3]{Hac04}. Let $E$ be an lc place of $(X,D)$. Then either $E$ is a curve on $X$ or $E$ is an exceptional divisor over $X$. 

\subsubsection{Non-exceptional lc place}
\label{ss:nonexclcplaces}
If $E$ is a curve on $X$, we may degenerate $(X,D)$ to the normal cone over $(E,\Delta_E)$, where $\Delta_E$ is the different divisor of $(X,D)$ on $E$. This gives an S-equivalence between $(X,D)$ and a projective orbifold cone over $(E, \Delta_E)$:

\begin{lem} \label{l:degnormalcone}
Let $(X,D)$ be an lc boundary polarized CY pair and $E=\lfloor D\rfloor$  a prime divisor on $X$ such that $-K_X\sim_{\bQ} rE$ for some rational number $r\geq 1$.
If $(X,E)$ is plt, then $E$ induces a weakly special degeneration $(X,D) \rightsquigarrow (X_0,D_0)$, 
where $X_0$ is a projective orbifold cone over $E$, and $E$ degenerates to the section at infinity. Moreover, $X_0$ is klt if $r>1$.
\end{lem}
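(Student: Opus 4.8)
The statement to prove is Lemma \ref{l:degnormalcone}: given an lc boundary polarized CY pair $(X,D)$ with $E=\lfloor D\rfloor$ a prime divisor, $-K_X\sim_{\bQ} rE$ for some $r\geq 1$, and $(X,E)$ plt, we want a weakly special degeneration $(X,D)\rightsquigarrow (X_0,D_0)$ to a projective orbifold cone over $E$ with $E$ degenerating to the section at infinity, and $X_0$ klt when $r>1$. The natural strategy is to identify a suitable lc place of $(X,D)$ and invoke Theorem \ref{thm:CZ-lcplace} (or more precisely Proposition \ref{p:lcplacestestconfig}). The key observation is that since $(X,E)$ is plt and $-K_X-E\sim_{\bQ}(r-1)E$ is $\bQ$-Cartier, we can perform the deformation to the normal cone of $E$: let $\sigma:\widehat{X}\to X$ be the blowup of $X$ along $E$ (or rather the extraction of the divisor obtained by the Rees-algebra / extended Rees construction associated to $E$), whose exceptional divisor $F$ restricts over $E$ to the $\bP^1$-bundle $\mybar{Y}_{L}$ with $L = -E|_E$. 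One checks $A_{X,D}(\mathrm{ord}_F)=0$, i.e. $\mathrm{ord}_F\in\LC(X,D)$: this follows because $\lfloor D\rfloor = E$ means $D = E + \{D\}$ with $\mathrm{mult}_E(D)=1$, and the blowup of a Cartier-in-codimension-1 divisor $E$ with $(X,E)$ plt gives $A_{X,E}(F)=1$, so $A_{X,D}(\mathrm{ord}_F)=1-\mathrm{mult}_F(\sigma^*D)=1-\mathrm{mult}_E(D)=0$ using that $F$ is a divisor over the generic point of $E$ (and $D-E$ doesn't pass through that generic point appropriately). This needs to be done carefully since $E$ may not be Cartier, but $(X,E)$ plt guarantees $E$ is Cartier in codimension $1$ along $E$, which is all one needs to identify $\mathrm{ord}_F$ as the divisorial valuation with the right log discrepancy.

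Having produced $v:=\mathrm{ord}_F\in\LC(X,D)$ a $\Z$-valued divisorial valuation with $\cX_0$ integral, Theorem \ref{thm:CZ-lcplace} gives a weakly special test configuration $(\cX,\cD)$ of $(X,D)$ with $v_{\cX_0}=v$, hence a weakly special degeneration $(X,D)\rightsquigarrow(X_0,D_0)$ with $(X_0,D_0)\cong(\cX_0,\cD_0)$. The next step is to identify $X_0$ concretely as a projective orbifold cone. This is the classical fact that the central fiber of the deformation to the normal cone of a $\bQ$-Cartier (in codimension 1) divisor $E$ in a surface with $-K_X\sim_\bQ rE$ is the projective cone $C_p(E, L)$ over $E$ polarized by $L=-E|_E$ (an ample $\bQ$-divisor since $-E|_E\sim_\bQ \frac{1}{r}(-K_X)|_E$ is a positive multiple of $(-K_X)|_E$ which is ample by Nakai on the curve $E$, using $\rho(X)=1$ or directly ampleness of $-K_X-$small): indeed the Rees algebra $\bigoplus_m \sigma_*\cO_{\widehat X}(-mF)$ restricted to a neighborhood of $E$ is exactly $\bigoplus_m \cO_X(\lfloor -mE\rfloor)$-type data, whose Proj over $\bA^1$ degenerates $X$ to $\mybar{Y}_L$ with the zero section contracted, i.e. to $C_p(E,L)$; and $E$ itself, being $\{D\}^{\mathrm{red}}$-transverse to the cone direction, degenerates to the section at infinity $X_\infty$. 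The boundary $D=E+\{D\}$ then degenerates with $E\mapsto X_\infty$ and $\{D\}\mapsto$ a $\bQ$-linear combination of rulings (the closures of the degenerations of the remaining components of $D$, which are pulled back from $E$ under $\mybar\pi_L$). The description of $D_0$ as section at infinity plus a $\bQ$-combination of rulings then matches the form in Proposition \ref{prop:orbcone-adjunction}(3), and in particular $(X_0, D_0)$ is automatically slc/boundary polarized CY, consistent with the test configuration being weakly special.

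For the klt statement: by Proposition \ref{prop:orbcone-adjunction}(2), if $(E, L_{\orb})$ — here $L_{\orb}$ is the orbifold divisor, a $\bQ$-divisor on $E\cong\bP^1$ — is klt log Fano satisfying $r'L\sim_\bQ -(K_E+L_{\orb})$ for $r'=r-1>0$ (which is what $-K_X-E\sim_\bQ(r-1)E$ restricts to), then $(C_p(E,L), X_\infty)$ is plt, and since $A_{C_p(E,L),X_\infty}(X_0)=r'>0$ the cone point is klt, so $X_0$ is klt. When $r=1$ this argument breaks — $X_\infty\sim_\bQ -K_{X_0}$ means $X_0$ is lc but not klt at the cone point (the cone over an anticanonically-polarized variety) — which is exactly why the klt conclusion is restricted to $r>1$; and this is the case (i) of Theorem \ref{thm:type2Sequiv} where $X_0$ is the cone over an elliptic curve.

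\textbf{Main obstacle.} The technical heart is making the blowup/Rees-algebra construction and the identification of $\mathrm{ord}_F$ as a $\Z$-valued lc place completely rigorous when $E$ is only $\bQ$-Cartier (not Cartier): one must work on the canonical model of the extraction of $E$, verify $-F$ is relatively ample and $(\widehat X, F+\sigma_*^{-1}D)$ is plt so that $A_{X,D}(F)=0$, and then check the Rees algebra is finitely generated — but this last point is free from Proposition \ref{p:lcplacestestconfig}(1). A secondary subtlety is matching the degeneration precisely with the orbifold cone $C_p(E,L)$ of Section \ref{s:seifert} rather than some crepant birational model; here one uses that $X_0=\cX_0$ is normal (test configuration is weakly special with integral, reduced central fiber since $\cX$ is normal by \cite[Prop. 2.6.iv]{BHJ17}, $X$ normal) and $\rho$-considerations, or more simply one computes the section ring directly. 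I expect the write-up to lean on the orbifold-cone machinery of Section \ref{s:seifert} (Proposition \ref{prop:orbcone-adjunction}) to get both the orbifold-cone identification and the klt statement in one stroke.
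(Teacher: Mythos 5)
Your overall route coincides with the paper's: both proofs produce the degeneration from the lc place $\ord_E$ via Theorem \ref{thm:CZ-lcplace}, identify the central fiber as the projective orbifold cone over $E$ with polarization the normal bundle, and get the klt statement from adjunction ($-K_E-\Delta_E\sim_{\bQ}(r-1)E|_E$ ample iff $r>1$). However, there are three points to fix. First, the detour through a blowup/extraction is unnecessary and as written is muddled: $E=\lfloor D\rfloor$ is already a prime divisor \emph{on} $X$ with $\mathrm{coeff}_E(D)=1$, so $A_{X,D}(\ord_E)=0$ is immediate and Theorem \ref{thm:CZ-lcplace} applies to $\ord_E$ directly; a blowup of $X$ along the divisor $E$ extracts nothing, and what you actually want (if anything) is the blowup of $X\times\bA^1$ along $E\times 0$, whose exceptional divisor $F$ satisfies $r(\ord_F)=\ord_E$. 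Second, the polarization carries a sign error: the cone is $C_p(E,L_E)$ with $L_E:=E|_E$ (ample, since $E\sim_{\bQ}\tfrac{1}{r}(-K_X)$), not $L=-E|_E$; your own ampleness verification silently uses the correct sign.

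The more substantive gap is that the identification of the central fiber with $C_p(E,L_E)$ is asserted rather than proved, and this identification is the actual content of the paper's proof. Concretely, the filtration induced by $\ord_E$ on $R_m=H^0(X,\cO_X(mE))$ has $\cF^{\lambda}R_m\cong H^0(X,\cO_X((m-\lambda)E))$, and one must show
\[
\gr^{\lambda}_{\cF}R_m\;\cong\;H^0\bigl(E,\cO_E(\lfloor (m-\lambda)L_E\rfloor)\bigr),
\]
so that $\Proj$ of the associated graded is literally the orbifold cone. This is done via the short exact sequence
$0\to\cO_X((m-\lambda-1)E)\to\cO_X((m-\lambda)E)\to\cO_E(\lfloor(m-\lambda)L_E\rfloor)\to 0$
together with Kawamata--Viehweg vanishing, $H^1(X,\cO_X((m-\lambda-1)E))=0$ for $0\le\lambda\le m-1$ (using that $(m-\lambda-1)E-K_X$ is ample, which is where $r\ge 1$ and the ampleness of $E$ enter). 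Without this vanishing the graded pieces are only subquotients of $H^0(E,\cdot)$ and the central fiber is not pinned down. Your appeal to Proposition \ref{prop:orbcone-adjunction} for the klt conclusion is then fine once the cone structure is established.
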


\begin{proof}
The proof is similar to \cite[Proof of Lemma 2.12]{LZ19}. Let $R= \oplus_{m=0}^\infty R_m$ where $R_m = H^0(X, \cO_X(mE))$ so $\Proj\, R \cong X$.  The divisor $E$ induces an $\bN$-filtration $\cF$ of $R$ as
\[
\cF^\lambda R_m : = \{s\in R_m \mid \ord_E(s) \geq \lambda \} \cong H^0(X, \cO_X(m-\lambda)E)\quad \textrm{ for }\lambda\in \bZ_{\geq 0}.
\]
Let $L_E:= E|_E$ be the $\bQ$-divisor on $E$ as defined in  \cite[Appendix A]{HLS19}, which is well-defined up to $\bZ$-linear equivalence. By similar arguments to \cite[Proof of Proposition 2.10]{LZ19}, we have the short exact sequence
\[
0\to \cO_X((m-\lambda -1 ) E) \to \cO_X((m-\lambda)E) \to \cO_E(\lfloor (m-\lambda)L_E\rfloor)\to 0.
\]
For $\lambda\in [0, m-1]$, we have $(m-\lambda-1) E -K_X$ is ample, hence Kawamata-Viehweg vanishing implies $H^1(X, \cO_X((m-\lambda-1) E)) = 0$. For $\lambda = m$, we know that $H^0(X, \cO_X)\cong \bk \cong H^0(E, \cO_E)$. As a result, we have that 
\[
\gr_{\cF} ^ \lambda R_m \cong H^0(E, \cO_E(\lfloor (m-\lambda)L_E\rfloor)) \quad \textrm{ for every }\lambda\in \bZ_{\geq 0}.
\]
Thus, by Theorem \ref{thm:CZ-lcplace}, we know that $E$ induces a weakly special degeneration $(X,D)\rightsquigarrow (X_0, D_0)$ such that the central fiber
\[
X_0 \cong \Proj \bigoplus_{m=0}^{+\infty} \bigoplus_{\lambda= 0}^{+\infty} H^0(E, \cO_E(\lfloor (m-\lambda)L_E\rfloor)).
\]
This is precisely $X_0\cong C_p(E, L_E)$.
Denote by $\Delta_E$ the different of $(X,0)$ on $E$. 
If $r>1$, then by adjunction we know that $-K_E-\Delta_E = (-K_X-E)|_E \sim_{\bQ} (r-1)L_E$ is ample. Thus $(E, \Delta_E)$ is a klt log Fano pair, which implies that $X_0$ is klt. The statement on the degeneration of $E$ follows from the same argument as \cite[Proof of Lemma 2.12]{LZ19}.
\end{proof}

\begin{prop}\label{prop:E-non-exc}
Let $(X,D)\in \cP_d^{\rm CY}(\bk)$ be a Type II pair such that $X$ is klt. Assume that there exists an lc place $E$ of $(X,D)$ as a prime divisor on $X$. Then $(X,D)$ admits a weakly special degeneration to $(X_0,D_0)$, which is one of the following:
\begin{enumerate}
 \item $X_0$ is a projective cone over an elliptic curve polarized with a degree $9$ line bundle, and $D_0$ is the section at infinity;
 \item $X_0$ is a projective orbifold cone over $\bP^1$, whose orbifold divisor is toric, and $D_0$ is the section of infinity plus a $\bQ$-linear combination of rulings.  
\end{enumerate}
\end{prop}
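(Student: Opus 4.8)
The plan is to analyze the lc place $E = \lfloor D \rfloor$, which is a prime divisor on the klt surface $X$, and run the degeneration-to-the-normal-cone machinery of Lemma \ref{l:degnormalcone}. First I would record that since $\rho(X) = 1$ (as $X$ is a Manetti surface, by \cite[Prop. 6.3]{Hac04}), the $\bQ$-divisor $-K_X$ is ample and hence $-K_X \sim_{\bQ} r E$ for some rational $r > 0$; moreover $r \geq 1$ because $K_X + D \sim_{\bQ} 0$ with $D \geq E$ having coefficient $1$ on $E$, so $-K_X - E = D - E \geq 0$ is effective, forcing $r \geq 1$. Next I would verify that $(X, E)$ is plt: since $E$ is an lc place of $(X,D)$, Lemma \ref{l:typeIIlcplace}.2 says $(X, \Delta) = (X,0)$ has at most one lc place, and a standard inversion-of-adjunction/perturbation argument (as in Proposition \ref{p:typeIIslc}) shows $(X,E)$ is plt. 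Then Lemma \ref{l:degnormalcone} applies directly and yields a weakly special degeneration $(X,D) \rightsquigarrow (X_0, D_0)$ where $X_0 = C_p(E, L_E)$ is a projective orbifold cone over $E$, with $E$ degenerating to the section at infinity $E_0 = X_\infty$, and the remaining components of $D_0$ (coming from $D - E$) become the cone over $\operatorname{Diff}_E(D-E)$, i.e. a $\bQ$-linear combination of rulings.

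The second step is to identify the base curve $E$. By adjunction, $(E, \Delta_E := \operatorname{Diff}_E(0))$ satisfies $K_E + \Delta_E \sim_{\bQ} 0$ and is klt, so $E$ is a smooth rational curve or a smooth elliptic curve — these are the only options for a klt CY curve pair (if $E \cong \bP^1$, $\Delta_E$ is an effective $\bQ$-divisor of degree $2$; if $E$ is elliptic, $\Delta_E = 0$). I would split into these two cases. If $E$ is elliptic, then $\Delta_E = 0$, and $-K_X - E = D - E$ must restrict to $0$ on $E$ (since $-K_E - \Delta_E = (-K_X-E)|_E \sim_\bQ (r-1)L_E$ and $\deg = 0$ forces either $r = 1$ or $L_E$ numerically trivial, but $L_E = E|_E$ is ample as $E$ moves in a base-point-free-ish linear system — more carefully, $\rho(X)=1$ and $E$ is ample so $E|_E$ is ample, forcing $r = 1$). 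Then $X_0$ is a genuine projective cone over the elliptic curve $E$ polarized by $L_E$, and $D_0 = E_0$ is just the section at infinity; the degree-$9$ polarization is forced by the numerical invariant: $(L_E)^{\deg} = (-K_X)^2 / r^2 = 9$ since $X$ degenerates to $\bP^2$ and $K_{\bP^2}^2 = 9$. This gives case (i). This also recovers the claim used in Proposition \ref{p:dnmid3plt} that the elliptic cone case does not occur when $3 \nmid d$, by Proposition \ref{p:coreg>0condition}.

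If instead $E \cong \bP^1$, then $L_E$ is an ample $\bQ$-divisor on $\bP^1$, so $\{L_E\}$ is supported at finitely many points; by construction the orbifold divisor $(L_E)_{\orb}$ is a $\bQ$-divisor supported at these same points. The pair $(\bP^1, (L_E)_{\orb})$ is automatically toric since any $\bQ$-divisor on $\bP^1$ is a sum of torus-invariant points after a coordinate change — but I need $(L_E)_{\orb}$ toric \emph{and} compatible with being the orbifold divisor, i.e. at most two points in its support, or else I can still arrange toricity by the uniqueness of the Seifert bundle up to $\bZ$-linear equivalence (Theorem \ref{thm:Kollar-Seifert}.2) together with the fact that $\Delta_{E_0}$ on the section at infinity has support of size at most $2$ because $E_0 \cong E$ has a cyclic quotient singularity structure induced from $X$ being a cyclic quotient singularity — this is the argument in Lemma \ref{l:pltcyclicquot}, which tells us the Koll\'ar component / its different is toric. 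Concretely: since $x \in X$ along $E$ has cyclic quotient singularities and $E$ is the relevant divisor, $\operatorname{Diff}_E$ is supported at $\leq 2$ points, hence $(L_E)_{\orb}$ is supported at $\leq 2$ points, hence toric. Then $X_0 = C_p(\bP^1, L_E)$ is a projective orbifold cone over $\bP^1$ with toric orbifold divisor, and $D_0$ equals the section at infinity $E_0$ plus the closure of $\operatorname{Diff}_E(D - E)$, which by Definition \ref{def:orb-div} and Proposition \ref{prop:orbcone-adjunction}.3 is a $\bQ$-linear combination of rulings. This gives case (ii), completing the proof.

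\textbf{Main obstacle.} The hard part will be the toricity of the orbifold divisor in case (ii): one must show $(L_E)_{\orb}$ — equivalently the different $\operatorname{Diff}_E(0)$ on the base $\bP^1$, or the different $\Delta_{E_0}$ at infinity — is supported at at most two points. This is exactly where the hypothesis that $X$ is a Manetti surface (hence has cyclic quotient, i.e. $A$-type, singularities) is essential, and the argument should mirror Lemma \ref{l:pltcyclicquot}: the dual graph of the minimal resolution along $E$ is a chain, forcing $E$ to meet the singular locus of $X$ (equivalently the support of its different) in at most two points. Everything else is a direct application of Lemma \ref{l:degnormalcone}, adjunction, and the numerical constraint $(-K_X)^2 = 9$.
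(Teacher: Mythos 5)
Your overall strategy --- apply Lemma \ref{l:degnormalcone} to degenerate to the orbifold cone over $E$, then split into the elliptic and rational cases --- is the same as the paper's, and the elliptic case is handled correctly (the paper instead quotes \cite[Thm. 8.5]{Hac04} at that point, but your numerical argument forcing $r=1$, $\Delta_E=0$, and $\deg L_E = 9$ is fine). However, the step you yourself flag as the main obstacle --- toricity of the orbifold divisor when $E \cong \bP^1$ --- contains a genuine gap, and the justification you sketch for it would not work.

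You claim that $\operatorname{Diff}_E(0)$ is supported at at most two points because $X$ has cyclic quotient ($A$-type) singularities, "mirroring" Lemma \ref{l:pltcyclicquot}. But that lemma concerns a Koll\'ar component extracted \emph{over a single point} $x \in X$: the exceptional divisor lives entirely over $x$, so its different is governed by the local chain structure of the minimal resolution at that one point. Here $E$ is a curve \emph{on} $X$, and the support of $\operatorname{Diff}_E(0)$ is the set of singular points of $X$ lying on $E$. A Manetti surface can have up to three singular points (e.g.\ $\bP(a^2,b^2,c^2)$), and an ample curve $E$ can pass through all of them; the local chain structure at each singularity says nothing about how many singularities $E$ meets globally. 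So the bound $|\Supp(\Delta_E)|\leq 2$ does not follow from your argument. The paper closes this gap with a global counting argument on the \emph{degenerate} surface: if $|\Supp(\Delta_E)|\geq 3$, then the cone $X_0 = C_p(E,L_E)$ (which is klt since $r>1$, hence again a Manetti surface) has at least three singular points along the section at infinity; moreover the cone point cannot be smooth, because a Koll\'ar component over a smooth point is toric by Lemma \ref{l:pltcyclicquot} and would force $|\Supp(\Delta_{E_0})|\leq 2$. That gives $X_0$ at least four singular points, contradicting the bound of three for Manetti surfaces from \cite[Thm. 8.3]{Hac04}. You need some version of this global argument (or an equivalent one); the purely local reasoning in your proposal does not suffice. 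A secondary, more minor point: your case split (elliptic vs.\ $\bP^1$) silently omits the combination $E\cong\bP^1$ with $r=1$, in which $X_0$ would have a strictly lc cone point and would not be klt; this case is excluded by Hacking's classification of strictly lc degenerations of $\bP^2$ as elliptic cones, but it should be addressed since your toricity argument for case (ii) implicitly assumes $X_0$ is a Manetti surface.
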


\begin{proof}
By Lemma \ref{l:degnormalcone}, $(X,D)$ admits a weakly special degeneration to $(X_0, D_0)$ where $X_0$ is a projective orbifold cone over $E$.

If $r=1$, then $E=D\sim_{\bQ} -K_X$. By adjunction, we know that $(E, \Delta_E)$ is klt and $K_E + \Delta_E \sim_{\bQ} 0$. Then, we know that $X_0$ has a strictly log canonical singularity at the cone point. By \cite[Thm. 8.5]{Hac04}, we get that $X_0$ is an elliptic cone of degree $9$, i.e. $E$ is an elliptic curve, $\Delta_E = 0$, and $\deg (L_E)=9$. By Lemma \ref{l:degnormalcone}, the degeneration of $E=D$ to $X_0$ is precisely the section at infinity.  Thus we get  degeneration (1).

If $r>1$, then $(E, \Delta_E)$ is a klt log Fano pair which implies that $E\cong \bP^1$. By Lemma \ref{l:degnormalcone} $X_0$ is a Manetti surface.
If $\Supp(\Delta_E)$ has at least $3$ points, then  the section at infinity $E_0$ in $X_0$ also satisfies $|\Supp(\Delta_{E_0})|\geq 3$. This implies that $X_0$ has at least $3$ singular points along $E_0$. Meanwhile, the cone point of $X_0$ cannot be smooth as every Koll\'ar component over a smooth point is toric by Lemma \ref{l:pltcyclicquot} (see also \cite[Lem. 2.11]{FJ04} or \cite{Kaw17}). Thus $X_0$ has at least $4$ singular points, which is a contradiction as every Manetti surface  has at most $3$ singular points by \cite[Thm. 8.3]{Hac04}. As a result, $\Supp(\Delta_E)$ has at most $2$ points which implies that  $(E,\Delta_E)$ is toric. Since $E$ is an irreducible component of $D$, by Lemma \ref{l:degnormalcone} we know that the degeneration $D_0$ of $D$ on $X_0$ is $\bG_m$-invariant which contains the section at infinity as an irreducible component. Thus we get degeneration (2), and the proof is finished.
\end{proof}

\subsubsection{Exceptional lc places}\label{ss:exclcplaces}

Next, we study the case where $E$ is exceptional over $X$.

\begin{lem}\label{lem:E-exc-deg}
Let $(X,D)\in \cP_d^{\CY}(\bk)$ be a Type II pair such that $X$ is a Manetti surface. Assume that $(X,D)$ has an lc place $E$ as an exceptional divisor over $X$. Then $E$ induces a non-trivial weakly special degeneration $(X_0, D_0)$ of $(X,D)$ such that $X_0$ is a Manetti surface equipped with an effective $\bG_m$-action and $D_0$ is $\bG_m$-invariant.
\end{lem}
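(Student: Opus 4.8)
The key tool is Theorem \ref{thm:CZ-lcplace}: since $(X,D)$ is a Type II pair with $X$ a Manetti surface, hence klt, and $E$ is an exceptional lc place of $(X,D)$, the valuation $\ord_E$ (after scaling to a primitive $\bZ$-valued divisorial valuation) lies in $\LC(X,D)$. Provided that $\ord_E$ is not the valuation of an irreducible component of a test configuration with non-integral central fiber, Theorem \ref{thm:CZ-lcplace} — or more directly Proposition \ref{p:lcplacestestconfig} applied to the single valuation $\ord_E$ — produces a weakly special test configuration $(\cX,\cD)$ of $(X,D)$ with $v_{\cX_0}$ equal to (a multiple of) $\ord_E$. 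So first I would invoke Proposition \ref{p:lcplacestestconfig}.1 to get finite generation of the associated Rees algebra, set $\cX := \Proj(\mathrm{Rees}(\cF^\bullet))$, and conclude by Proposition \ref{p:lcplacestestconfig}.2 that $(\cX,\cD)\to \bA^1$ is a weakly special test configuration of $(X,D)$ whose central fiber irreducible components $E'\subset \cX_0$ all satisfy $v_{E'}= \ord_E$ (up to scaling), i.e. there is really just one such valuation in play.

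\textbf{Non-triviality.} The test configuration is non-trivial precisely because $E$ is \emph{exceptional} over $X$: if $(\cX,\cD)$ were the trivial test configuration $(X,D)\times \bA^1$, then $v_{\cX_0}$ would be the valuation $\ord_{X\times 0}$ restricted to $K(X)$, which is the trivial valuation on $K(X)$ — contradiction, since $\ord_E$ is a genuine divisorial valuation with $A_{X,D}(E)=0<\infty$ and positive (in fact, any weakly special test configuration whose special fiber valuation is a nontrivial divisorial valuation of $K(X)$ cannot be trivial). Alternatively, if the test configuration were trivial then $X_0\cong X$ and the induced $\bG_m$-action would be trivial, again forcing $v_{\cX_0}$ to be trivial on $K(X)$. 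I would spell this out using the description in Section \ref{ss:testconfigps} of $v_E$ as $b_E^{-1}r(\ord_E)$ together with the fact that $r(\ord_E)$ recovers $\ord_E$ up to a positive constant.

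\textbf{Structure of the central fiber.} Let $(X_0,D_0):=(\cX_0,\cD_0)$. Since $(\cX,\cD)\to \bA^1$ is a $\bG_m$-equivariant family of boundary polarized CY pairs, $(X_0,D_0)$ carries an effective $\bG_m$-action and $D_0$ is $\bG_m$-invariant by construction (it is the closure of $D\times(\bA^1\setminus 0)$, which is $\bG_m$-invariant for the diagonal action). By Proposition \ref{p:SrcSequiv}, $(X_0,D_0)$ is again Type II (S-equivalent pairs have crepant-birational sources, hence the same regularity). It remains to see that $X_0$ is a Manetti surface. Here I would argue that $\cX_0$ is integral: because $X$ is klt and $(\cX, \cD+\cX_0)$ is lc with $\cX_0$ reduced (the latter follows from the valuative definition of $\cF^\bullet$ together with the fact that only a single divisorial valuation $\ord_E$ is involved, as in the proof of Proposition \ref{p:lcplacestestconfig}), and moreover $\cX_0$ is irreducible since all its components give the same valuation $\ord_E$ of $K(X)$ — two distinct prime divisors over $\cX$ dominating different components of $\cX_0$ but inducing proportional restrictions to $K(X)$ would have to be equal by normality of $\cX$. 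Thus $X_0$ is an irreducible normal projective surface with $K_{X_0}$ $\bQ$-Cartier and $-K_{X_0}$ ample (since $\rho(X)=1$ passes to the $\bG_m$-equivariant degeneration and $-K_{X_0}-\vep D_0$ stays in the closure of the K-semistable locus, or directly since $(X,(1-\vep)D)\rightsquigarrow(X_0,(1-\vep)D_0)$ is weakly special and klt degenerations of klt log Fano pairs are klt log Fano). One then checks $X_0$ admits a $\bQ$-Gorenstein smoothing to $\bP^2$: the family $\cX\to\bA^1$ itself, viewed near a general fiber together with the original smoothing of $X$ to $\bP^2$, exhibits this; more precisely $X_0$ deforms to $X$ which is a Manetti surface, hence deforms to $\bP^2$, and $\bQ$-Gorenstein deformations compose. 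By \cite[Thm. 8.3]{Hac04} (or \cite[Cor. 1.2]{HP10}) this forces $X_0$ to be a Manetti surface.

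\textbf{Main obstacle.} The delicate point is establishing that $\cX_0$ is integral (irreducible and reduced) rather than having several components — this is what makes $X_0$ an honest Manetti surface and not a non-normal or reducible degeneration. I expect to handle it exactly as in the proof of Proposition \ref{p:lcplacestestconfig}: reducedness of $\cX_0$ from the valuative formula for $\cF^\bullet$ and integrality of $X$, and irreducibility from Lemma \ref{l:tcReesvals} which identifies the central-fiber valuations with the minimal generating set of valuations of the filtration — here a single one, $\ord_E$. Once integrality is in hand, everything else is a matter of bookkeeping: $\bG_m$-invariance of $D_0$ is automatic, Type II is Proposition \ref{p:SrcSequiv}, and the Manetti property is \cite[Thm. 8.3]{Hac04} applied to the $\bQ$-Gorenstein degeneration.
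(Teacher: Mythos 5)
Your construction of the test configuration and the non-triviality argument are fine and match the paper's use of Theorem \ref{thm:CZ-lcplace} (which already packages integrality of $\cX_0$ and the bijection with $\bZ$-valued lc places, so your detour through Proposition \ref{p:lcplacestestconfig} and Lemma \ref{l:tcReesvals} is unnecessary but harmless). Likewise, preservation of Type II via Proposition \ref{p:SrcSequiv} is correct. The gap is in the passage from ``$\cX_0$ is integral'' to ``$X_0$ is a Manetti surface.'' You write that $X_0$ is ``an irreducible normal projective surface,'' but integrality of the central fiber gives only that $X_0$ is reduced and irreducible; it does not give normality, and weakly special degenerations of klt pairs can perfectly well have non-normal central fiber. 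The paper closes this by invoking Hacking's classification: a non-normal degeneration of $\bP^2$ in this setting is necessarily of Type III by \cite[Thm.~5.5 \& 6.5]{Hac04}, contradicting the fact that $(X_0,D_0)$ is Type II. Your proposal never rules out non-normality.

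Even granting normality, a second case is missing: by \cite[Thm.~8.5]{Hac04} a normal degeneration of $\bP^2$ is either a Manetti surface or the projective cone over an elliptic curve, and the latter is lc but not klt, so it is not a Manetti surface. The paper excludes it by observing that $E$ is a rational curve (being an exceptional divisor over a klt surface), so $\Src(X,D)$ is a rational curve pair, whereas the source of an elliptic cone is a smooth elliptic curve; Proposition \ref{p:SrcSequiv} then gives a contradiction. Your alternative route --- establishing the Manetti property directly by ``composing'' the $\bQ$-Gorenstein degeneration $X_0\rightsquigarrow X$ with a smoothing of $X$ to $\bP^2$ --- would in principle avoid citing \cite[Thm.~8.5]{Hac04} for the smoothability, but it still requires knowing $X_0$ is klt (which is exactly the normality plus non-elliptic-cone content you skipped), and the composition of $\bQ$-Gorenstein deformations is itself a nontrivial step that you assert rather than prove. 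In short: the skeleton is right, but the two case exclusions that constitute the actual content of the paper's proof are missing.
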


\begin{proof}
Since $X$ has klt singularities, the exceptional divisor $E$ is a rational curve.
By Theorem \ref{thm:CZ-lcplace}, we have that $E$ induces a non-trivial weakly special degeneration $(X,D)\rightsquigarrow (X_0, D_0)$ with $X_0$ integral. If $X_0$ is not normal, then by \cite[Thm. 5.5 \& 6.5]{Hac04} we know that $(X_0, D_0)$ is of Type III, which is a contradiction. Hence, $X_0$ is normal. If $X_0$ is an elliptic cone, then its source is a smooth elliptic curve, which is a contradiction as $E$ is rational and the source is preserved under weakly special degenerations by Proposition \ref{p:SrcSequiv}. Thus, by \cite[Thm. 8.5]{Hac04}, we know that $X_0$ is a Manetti surface.
\end{proof}

From now on, we may assume that $(X,D)$ satisfies \eqref{eq:dagger}.
We will also use the notation from Section \ref{s:Manetti}. 

\begin{prop}\label{prop:S-equiv-finite-iii}
   For a fixed source $(E\cong \bP^1,D_E)$ and a fixed degree $d$ there exist at most finitely many pairs $(X,D)$ up to isomorphism satisfying \eqref{eq:dagger} such that the broken $\bG_m$-orbit on $X$ is non-reduced, and $\Src(X,D)\cong (E, D_E)$.
\end{prop}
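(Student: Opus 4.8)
The strategy is to leverage Theorem \ref{t:Manetti-Gm}, which gives a complete structural description of pairs $(X,D)$ satisfying \eqref{eq:dagger}, together with Proposition \ref{p:typeIItoric}.2, which says that when the broken $\bG_m$-orbit is non-reduced the dlt modification $\tX$ is toric. Once $\tX$ is toric, all of the birational models $\tX_1,\tX_2$ and $X$ itself are toric, so we are reduced to bounding a combinatorial problem about toric surfaces with prescribed boundary data and prescribed numerical invariants. First I would fix a source $(E\cong\bP^1, D_E)$ and recall from Section \ref{s:Manetti} that the contraction $\pi\colon\tX\to B$ realizes $E_1,E_2$ as disjoint sections over $B\cong\bP^1$, and the different $D_{E_i}$ is determined by $D_E$ under the crepant identification coming from the $\bP^1$-link structure (Proposition \ref{p:stP^1link}); in particular $\mathrm{Supp}(D_E)$ has at most two points by Lemma \ref{l:pltcyclicquot} and toricity of $E_i$.

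The key steps, in order, would be: (1) observe that $\tX$ toric forces $B\cong\bP^1$ toric and forces the fibral configuration $\tC_1,\tC_2$ and the sections $E_1,E_2$ to be among the finitely many torus-invariant curves; hence $\tX$ is one of finitely many toric surfaces once we bound the fan. (2) Bound the fan: the polarization data is fixed, namely $(X,D)\in\cP_d^{\CY}(\bk)$ means $K_X+D\sim_{\bQ}0$ with $D=\frac{3}{d}C$ and $-K_X$ has bounded volume $(-K_X)^2 = (-K_{\bP^2})^2 = 9$ computed via the flat family, and the Cartier index enters through $\tX\to X$ extracting two Koll\'ar components whose log discrepancies $A_{X,D}(E_i)$ and the fibral multiplicities $m_i$ satisfy the relation $\frac{1-a_1}{m_1}=\frac{1-a_2}{m_2}$ of Proposition \ref{p:typeIItoric}.1 together with the adjunction constraint $K_{\tX}+\tD+E_1+E_2\sim_{\bQ}0$. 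Combining this with the fact that the coefficients of $D$ lie in $\frac{3}{d}\bZ$ (a finite set of possible values once $d$ is fixed, since $D$ has bounded degree) pins down $m_1,m_2,a_1,a_2$ and the self-intersections $\tC_i^2$, $E_i^2$ to finitely many values. (3) Conclude that there are finitely many possible fans for $\tX$, hence finitely many $\tX$, hence finitely many $X=\mu(\tX)$ together with the torus-invariant divisor $D$; since $X$ determines $(X,D)$ up to the finite ambiguity of which torus-invariant $\bQ$-divisor of the correct degree is $D$, we get finiteness up to isomorphism.

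I expect the main obstacle to be step (2): making the bound on the toric fan genuinely effective from the numerical data. The inputs are scattered — the CY condition $K_{\tX}+\tD+E_1+E_2\sim_{\bQ}0$, the Fano-type property and $\rho(\tX)=3$ (Lemma \ref{l:typeIIfanotype}), the $\bP^1$-link adjunction relations on $E_1$ and $E_2$, the constraint that $X$ is a Manetti surface with $\rho(X)=1$ and at most three singular points, and the degree constraint from $d$ — and one must check these jointly force the fractional parts and multiplicities into a finite set, rather than merely constraining one linear relation among them. A clean way to organize this is to note that $\tX$ toric with $\rho=3$ means its fan has exactly five rays (two for the sections $E_1,E_2$, two for $\tC_1,\tC_2$ after relabeling, one more for the remaining torus-invariant curve, or an analogous count), and the combinatorics of a five-ray complete toric surface fan with the imposed self-intersection data is a finite check; the degeneration data and $d$ then bound the denominators. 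Once the toric reduction is in place the remaining arguments are routine bookkeeping, so I would present step (2) carefully and treat steps (1) and (3) more briefly.
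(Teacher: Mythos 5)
Your opening move is the same as the paper's: use Theorem \ref{t:Manetti-Gm} and Proposition \ref{p:typeIItoric}.2 to conclude that $\tX$ (hence $\tX_i$ and $X$) is toric, and then try to make the problem combinatorial. But the two steps you flag as "routine bookkeeping" are exactly where the content lies, and both have genuine gaps. First, step (2) does not work as described: the numerical data you list --- $(-K_X)^2=9$, coefficients of $D$ in $\tfrac{3}{d}\bZ$, and the relation $\tfrac{1-a_1}{m_1}=\tfrac{1-a_2}{m_2}$ --- does not bound the fan of $\tX$, because it does not bound the singularities of $X$ (compare Example \ref{e:unbounded}: there are infinitely many toric surfaces with anticanonical volume $9$ arising as degenerations of $\bP^2$). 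The paper's proof instead gets finiteness of $X$ from the boundedness of Type II pairs (Theorem \ref{t:modulireg0}) combined with \cite{BB93}, and then bounds the two extracted divisors by a separate argument tied to the \emph{fixed source}: the identity $A_X(E_i)\cdot\deg L_i=(-K_{\tX_i}-E_i)\cdot E_i=2-\deg\Delta_{E_i}$ together with $\Delta_{E_i}\le D_E$ forces finitely many choices of $\{L_i\}$, and bounded Gorenstein index then forces finitely many values of $A_X(E_i)$ and $\deg L_i$ separately. Your proposal never uses the fixed source to bound anything, which is why the fan bound cannot close.

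Second, your step (3) is incorrect as stated: $D$ is \emph{not} determined by $X$ up to a finite choice of torus-invariant divisor. In case (iii) the divisor $D$ is only $\bG_m$-invariant, i.e.\ $\tD$ is a $\bQ$-linear combination of fibers of $\pi:\tX\to B$ over points of $B\cong\bP^1$ that can vary in a positive-dimensional family, so a fixed toric $X$ carries infinitely many such complements of the correct degree. The finiteness of $D$ again comes from the source: the paper identifies $(Z_1,D_{Z_1}+E_{1,1}+E_{1,2})$ with the compactified Seifert $\bG_m$-bundle over $(E,D_E)$, so the positions and coefficients of the fibers supporting $\tD$ are read off from $D_E$, and the non-reduced fiber $C_{1,1}$ must sit over a point of $\Supp(D_E)$; one then checks there are finitely many extractions $\psi_1:\tX\to Z_1$ of the lc place $\tC_2$. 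Relatedly, your claim that $\Supp(D_E)$ has at most two points confuses the source boundary $D_E$ with the orbifold different $\Delta_{E_i}$; in fact $\Supp(D_E)$ has at least three points since $(E,D_E)$ is a klt CY pair on $\bP^1$. So the proposal identifies the correct reduction but is missing the two inputs (boundedness of Type II pairs, and the Seifert-bundle identification of the divisor with the source) that actually deliver finiteness.
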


\begin{proof}
First of all, we will show that, given a fixed source $(E, D_E)$, there exist finitely many  $\tX$, which are necessarily toric varieties by Theorem \ref{t:Manetti-Gm}.
By the boundedness of Type II pairs (Theorem \ref{t:modulireg0}), we know that such $X$ are bounded. Since $X$ is toric and boundedness implies that such $X$ are $\epsilon$-log terminal, this implies that $X$ has finitely many choices up to isomorphism by \cite{BB93}.  Next, we show that $A_X(E_1)$ and $A_X(E_2)$ are bounded from above. For simplicity let us assume $i=1$. Let $L_1 : = -E_1|_{E_1}$ be the ample $\bQ$-divisor from \cite{HLS19}*{Appx. A}. Recall that $\Delta_{E_1}$ is the different divisor of $(\tX, E_1)$. Then we have that $\Delta_{E_1}\leq D_E$ under the identification  $E_1\cong E$. Write $\{L_1\} = \sum_{j=1}^{l} \frac{a_j}{b_j} p_j$, where $p_j$ are distinct points on $E_1$, both $a_j, b_j\in \bZ_{>0}$, and  $\gcd(a_j,b_j)=1$. Then we have that $\Delta_{E_1} = (L_1)_{\orb} = \sum_{j=1}^l \frac{b_j-1}{b_j} p_j$ by \cite[Appendix A]{HLS19}. Hence there are only finitely many choices of $l$ and $(b_1,\cdots, b_l)$, which implies that there are only finitely many choices of $\{L_1\}$ and $\Delta_{E_1}$. We have 
\begin{align*}
    A_X(E_1)\cdot \deg L_1 & = A_X(E_1)\cdot(- E_1^2) = (-K_{\tX_1}-E_1)\cdot E_1 \\ &  = \deg(-K_{E_1}-\Delta_{E_1}) = 2- \deg \Delta_{E_1}.
\end{align*}
Since $X$ is bounded, its Gorenstein index is bounded from above, which implies that $A_X(E_1)$ has bounded denominators. On the other hand, since $\{L_1\}$ has finitely many choices, we know that $\deg L_1$ has bounded denominators as well. Thus their product having finitely many choices implies that both $A_X(E_1)$ and $\deg L_1$ have finitely many choices. Since $E_1$ and $E_2$ are toric divisors over $X$ with bounded log discrepancy, we know that $\tX$ has finitely many choices.

Let $m_1$ and $m_2$ be the multiplicities of the broken $\bG_m$-orbit on $X$ along its irreducible components as defined in Proposition \ref{p:typeIItoric}. 
Without loss of generality, we may assume $m_1>1$ as the broken $\bG_m$-orbit is non-reduced. Recall that $\psi_1: \tX \to Z_1$ is the contraction of the curve $\tC_2$ such that $\pi_1: Z_1 \to E$ is a compactified Seifert $\bG_m$-bundle. Since $\tX$ has finitely many choices and $\psi_1$ is toric, we know that $Z_1$ has finitely many choices. From the construction, we know that $(Z_1, D_{Z_1} + E_{1,1} + E_{1,2})$ is the compactified Seifert $\bG_m$-bundle over $(E,D_E)$. Thus $(Z_1, D_{Z_1} + E_{1,1} + E_{1,2})$ has finitely many choices. It suffices to show that the birational map $\psi_1: \tX \to Z_1$ has finitely many choices up to isomorphism. Since $C_{1,1}=(\psi_1)_* \tC_1$ has multiplicity $m_1>1$, we know that $p_j:=\pi_1(C_{1,1})$ lies in the support of $D_E$. Thus there are finitely many choices of $(Z_1, C_{1,1}+E_{1,1}+E_{1,2})$. From the proof of Proposition \ref{p:typeIItoric}, we know that $\tC_2$ is an lc place of  $(Z_1, C_{1,1}+E_{1,1}+E_{1,2})$. Moreover, finiteness of $\tX$ and $Z_1$ implies that $A_{Z_1}(\tC_2)$ is bounded from above. Thus there are only finitely many extractions $\psi_1: \tX\to (Z_1, C_{1,1}+E_{1,1}+E_{1,2})$ of $\tC_2$. This shows that $(\tX, \tD + E_1+E_2)$ has finitely many choices up to isomorphism, which implies that $(X,D)$ has finitely many choices up to isomorphism. 
\end{proof}

\begin{prop}\label{prop:E-exc}
Let $(X,D)\in \cP_d^{\rm CY}(\bk)$ be a Type II pair satisfying \eqref{eq:dagger}. If the broken $\bG_m$-orbit on $X$ is reduced, 
then $(X,D)$ admits a weakly special degeneration to $(X_0,D_0)$, where $X_0$ is the gluing of two orbifold cones over $\bP^1$ along sections at infinity and $D_0$ is a $\bQ$-linear combination of rulings on each irreducible component of $X_0$. Moreover, the conductor of $X_0$ with its different divisor is toric. 
\end{prop}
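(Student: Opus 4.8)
\textbf{Proof proposal for Proposition \ref{prop:E-exc}.}
The plan is to use the exceptional lc place $E$ of $(X,D)$ (which exists by hypothesis \eqref{eq:dagger}) together with Theorem \ref{t:Manetti-Gm} to first arrange that $(X,D)$ is a Manetti surface with $\bG_m$-action whose central fiber dlt modification $\widetilde X \to B$ is of the form analyzed in Section \ref{s:Manetti}. Since the broken $\bG_m$-orbit on $X$ is assumed reduced, Proposition \ref{p:typeIItoric}.2 does not directly force $\widetilde X$ to be toric, so we must instead use the second alternative of Theorem \ref{t:Manetti-Gm}: for each $i\in\{1,2\}$, $-K_{\widetilde X_i}-E_i$ is big and nef, and $((-K_{\widetilde X_i}-E_i)\cdot C_i)=0$ for the $\bG_m$-invariant curve $C_i=\phi_{i,*}\widetilde C_i$. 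The vanishing of this intersection number means that $-K_{\widetilde X_i}-E_i$ contracts $C_i$; let $g_i:\widetilde X_i\to X_i'$ be this contraction, so that $X_i'$ has Picard rank $1$ and $-K_{\widetilde X_i}-E_i=g_i^*L_i'$ for an ample $\bQ$-divisor $L_i'$ on $X_i'$. I would identify $X_i'$ as a projective orbifold cone: indeed $E_i$ is a Koll\'ar component (Lemma \ref{l:typeIIKc}) which is $\bG_m$-invariant (Proposition \ref{p:typeIIManetti}), and running the argument of Lemma \ref{l:degnormalcone} / the structure theory of Seifert $\bG_m$-bundles (Proposition \ref{p:Seifert-P^1link}, Theorem \ref{thm:Kollar-Seifert}, Proposition \ref{prop:orbcone-adjunction}) over the base curve $B=E_i\cong\bP^1$ shows that $X_i'$ is a projective orbifold cone $C_p(\bP^1,L_i)$ whose section at infinity is $E_i$ and whose orbifold divisor is toric (by Lemma \ref{l:pltcyclicquot} applied to the cyclic quotient singularities of $X$).

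The next step is to construct the actual degeneration $(X,D)\rightsquigarrow(X_0,D_0)$ in which the source curve $E$ becomes the conductor. By Lemma \ref{lem:E-exc-deg}, the exceptional lc place $E$ already produces a non-trivial weakly special degeneration whose central fiber is a Manetti surface with $\bG_m$-action; but to get the non-normal glued model of the statement, I would instead apply Proposition \ref{p:lcplacestestconfig} (or Theorem \ref{thm:CZ-lcplace}) to the pair $\{v_{E_1},v_{E_2}\}$ of lc places simultaneously. Concretely, the two $\bG_m$-invariant curves $\widetilde C_1,\widetilde C_2$ in the reducible fibre $\pi^*[b]$, each with multiplicity $1$ by the reducedness hypothesis, give the birational contractions $\psi_i:\widetilde X\to Z_i$ to standard $\bP^1$-links (Proposition \ref{p:stP^1link}), and each $Z_i$ admits a further $\bG_m$-equivariant contraction to the orbifold cone $X_i'=C_p(\bP^1,L_i)$ collapsing the zero section. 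The key numerical input, $\frac{1-a_1}{m_1}=\frac{1-a_2}{m_2}$ from Proposition \ref{p:typeIItoric}.1, together with $m_1=m_2=1$, forces $a_1=a_2$; this is exactly the matching condition needed for the two orbifold cones $X_1'$ and $X_2'$ to have \emph{the same} toric orbifold divisor along $\bP^1$, so that they can be glued along their sections at infinity via an involution $\tau$ fixing the appropriate different divisor. I would then invoke Proposition \ref{p:bpcygluing} to glue $X_1'\sqcup X_2'$ along $E_1^n\sqcup E_2^n$ into a demi-normal boundary polarized CY pair $(X_0,D_0)$ whose normalization is $(X_1',D_1')\sqcup(X_2',D_2')$ and whose conductor is $\bP^1$ with the toric different divisor; the $\bG_m$-action extends by the argument in the proof of Proposition \ref{p:TequivariantTC}. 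Finally, $D_0$ restricted to each component $X_i'$ is the pushforward of $\widetilde D$, which is supported on fibres of $\pi$ away from $b$ (since the $C_i$ appear with multiplicity making $(\widetilde D\cdot\widetilde C_i)=0$, case (ii) of Theorem \ref{t:Manetti-Gm} gives $a_1=a_2$ and no component of $D$ over $b$), hence is a $\bQ$-linear combination of rulings on each component.

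To verify that the pair $(X_0,D_0)$ so constructed is genuinely a \emph{weakly special degeneration} of $(X,D)$, I would check that the test configuration produced by Proposition \ref{p:lcplacestestconfig} from $\{v_{E_1},v_{E_2}\}$ has central fibre with the claimed normalization: this is the standard argument that the Rees-algebra construction over the two lc places produces a test configuration whose normalized central fibre is the orbifold-cone degeneration component-by-component, combined with Lemma \ref{l:lcDlcB} to track the polarizing boundary. The essential point is that, on the normalization, degenerating via $v_{E_i}$ is exactly the degeneration of Lemma \ref{l:degnormalcone} applied to the Koll\'ar component $E_i$ inside $\widetilde X_i$, which yields precisely $C_p(\bP^1,L_i)$ with section at infinity.

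The main obstacle I anticipate is organizing the gluing correctly: one must match not just the abstract curves $E_1^n$ and $E_2^n$ but their different divisors ${\rm Diff}_{E_i^n}(\ldots)$, and show the gluing involution $\tau$ extending $\tau^\circ$ exists and is compatible with the $\bG_m$-action, so that Proposition \ref{p:bpcygluing} applies over $\bA^1$. This is where the identity $\frac{1-a_1}{m_1}=\frac{1-a_2}{m_2}$ (Proposition \ref{p:typeIItoric}.1) is doing the real work — it guarantees the two orbifold structures agree — and where the reducedness hypothesis $m_1=m_2=1$ is used to conclude that the glued model is itself the central fibre rather than a further degeneration. A secondary technical point is confirming that the two orbifold cones, which a priori could be distinct Manetti surfaces, actually have isomorphic orbifold $\bP^1$'s; this follows because the conductor curve and its toric different divisor are intrinsic to the source $(E,D_E)$, which is preserved under weakly special degeneration by Proposition \ref{p:SrcSequiv}.
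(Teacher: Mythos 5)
Your construction of the \emph{candidate} surface is essentially right — the identification of $g_i(\widetilde X_i)$ as orbifold cones, the matching condition $a_1=a_2$ forced by $m_1=m_2=1$ via Proposition \ref{p:typeIItoric}.1, and the toricity of the conductor's different via Lemma \ref{l:pltcyclicquot} all agree with what the paper uses. The gap is in realizing this surface as the central fibre of an actual test configuration. You propose to apply Proposition \ref{p:lcplacestestconfig} to $\{v_{E_1},v_{E_2}\}$ and assert that its central fibre is the glued pair "component-by-component" by invoking Lemma \ref{l:degnormalcone} for the Koll\'ar components $E_i\subset\widetilde X_i$. But Lemma \ref{l:degnormalcone} requires a prime divisor $E$ \emph{on} $X$ with $-K_X\sim_{\bQ}rE$; the $E_i$ are exceptional over $X$ and sit on $\widetilde X_i$ with $\rho(\widetilde X_i)=2$, so $E_i$ is not proportional to $-K_{\widetilde X_i}$ and the lemma does not apply. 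Identifying the components of the central fibre of a two-valuation Rees construction with your $X_i'$ is genuinely nontrivial and is nowhere established; note that in the paper's degeneration one of the two components is the birational transform of $X\times\{0\}$, i.e.\ corresponds to the \emph{trivial} valuation rather than to $v_{E_2}$, so the test configuration actually used is not the one you propose. You also never verify that your glued $(X_0,D_0)$ is the special fibre of a family of boundary polarized CY pairs over $\bA^1$ — in particular that $K_{\cX}+\cD+\cX_0$ is $\bQ$-Cartier and $-K_{\cX}-\cX_0$ relatively ample — which is precisely where the vanishing $((-K_{\widetilde X_1}-E_1)\cdot C_1)=0$ from Theorem \ref{t:Manetti-Gm}(ii) must be used, not merely to deduce $a_1=a_2$.

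For contrast, the paper's proof is an explicit birational construction on the trivial test configuration: it extracts the divisor $\cE$ over $(x_1,0)\in X_{\bA^1}$ given by the $(1,1)$ quasi-monomial combination of $(E_1)_{\bA^1}$ and $X\times\{0\}$, so that $\cY_0=\widetilde X_1+\cE$ with $\cE\cong C_p(E_1,L_1)$ by the Koll\'ar-component computation; it then shows $\cY$ is $\bQ$-factorial of Fano type over $\bA^1$ and runs a $\widetilde X_1$-MMP whose single step is a flip contracting $C_1$. The vanishing $((K_{\widetilde X_1}+E_1)\cdot C_1)=0$ makes $K_{\cX}+\cX_0$ $\bQ$-Cartier after the flip and $-K_{\cX}-\cX_0$ ample, and the component $X'$ is recognized as an orbifold cone because it is the Seifert bundle $Z_2$ with its negative section contracted. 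To repair your argument you would either need to carry out this birational construction, or else actually compute the central fibre of the $\{v_{E_1},v_{E_2}\}$-Rees algebra — the latter being substantially harder than the hand-wave suggests.
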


\begin{proof}
Let $x_1\in X$ be the center of $E_1$ on $X$. From Section \ref{s:Manetti}, we know that
there exists a proper birational morphism $\mu_1: \tX_1 \to X$ from a normal projective variety $\tX_1$ that extracts $E_1$. Since $(X,D)$ is of Type II, we know that $(\tX_1, (\mu_1)_{*}^{-1} D + E_1)$ is plt along $E_1$. By Lemma \ref{l:typeIIKc} we know that $\mu_1$ extracts $E_1$ as a Koll\'ar component over $x_1\in X$. Denote by $\Delta_{E_1}$ the different of $\tX_1$ on $E_1$. Denote by $L_1:= -E_1|_{E_1}$ as an ample $\bQ$-Cartier $\bQ$-divisor from \cite[Appendix A]{HLS19}.

Next, we perform the blowup on the trivial test configuration $(X_{\bA^1},D_{\bA^1}):= (X,D)\times\bA^1$ to extract a divisor $\cE$ induced by $E_1$. Let $\cE$ be the prime divisor over $X_{\bA^1}$ as the $(1,1)$ quasi-monomial combination of $(E_1)_{\bA^1}$ and $X\times\{0\}$. As a divisorial valuation, we know that $\ord_{\cE}$ is the $\bG_m$-equivariant extension of $\ord_{E_1}$ on $K(X_{\bA^1}) = K(X)(t)$ such that $\ord_{\cE}(t) = 1$, see e.g. \cite[(3.2)]{Liu18}. By the same argument as \cite[Proof of Lemma 33]{Liu18}, we know that $\oplus_{m=0}^\infty \fa_m(\ord_\cE)$ is finitely generated since $E_1$ is a Koll\'ar component over $x_1\in X$. Denote by 
\[
\cY := \Proj_{X_{\bA^1}} \bigoplus_{m=0}^\infty \fa_m(\ord_{\cE})
\]
Let $\sigma:\cY\to X_{\bA^1}$ be the projection map. By the same argument as \cite[Proof of Lemma 33]{Liu18}, we know that $\sigma$ extracts $\cE$ as a Koll\'ar component over $(x_1,0)\in X_{\bA^1}$, and \[
\cE \cong \Proj (\gr_{\ord_E} \cO_{X,x})[s],
\]
where $s$ is a free variable of degree $1$. Moreover, the central fiber $\cY_0 = \tX_1 + \cE$ where $\tX_1 \cap \cE = E_1$. By \cite[Prop. 2.10]{LZ19}, we know that $\gr_{\ord_{E_1}} \cO_{X,x}\cong \oplus_{m=0}^\infty\cO_{E_1}(\lfloor mL_1\rfloor)$ which implies that $\cE$ is isomorphic to the projective orbifold cone $C_p(E_1,L_1)$ over $(E_1,\Delta_{E_1})$. 

\medskip

\noindent \emph{Claim.} $\cY$ is $\bQ$-factorial and of Fano type over $\bA^1$.
\medskip

\noindent \emph{Proof of claim.} Since $X$ has only quotient singularities, it is $\bQ$-factorial, hence so is $X_{\bA^1}$. Since $\sigma: \cY\to X_{\bA^1}$ extracts a unique exceptional divisor $\cE$ which is $\bQ$-Cartier, we know that $\cY$ is $\bQ$-factorial. Denote by $\cD_{\cY}:=\sigma_*^{-1} D_{\bA^1}$. Since $\cE$ is an lc place of $(X_{\bA^1}, X\times\{0\} + D_{\bA^1})$, we know that $(X_{\bA^1}, (1-\varepsilon)(X\times\{0\} + D_{\bA^1}))$ is klt log Fano over $\bA^1$, and $a := A_{X_{\bA^1}, (1-\varepsilon)(X\times\{0\} + D_{\bA^1})}(\cE) \in (0,1)$ for $0<\varepsilon\ll 1$. Thus $(\cY, (1-\varepsilon)(\tX_1 + \cD_{\cY}) + (1-a) \cE)$ is klt weak log Fano over $\bA^1$, which implies that $\cY$ is of Fano type over $\bA^1$. The claim is proven. \qed

\medskip 

From the claim, by \cite[Cor. 1.3.2]{BCHM10} we may run the $\bG_m$-equivariant $\tX_1$-MMP on $\cY$ over $\bA^1$. 
Since $\tX_1|_{\tX_1} = -\cE|_{\tX_1} = -E_1$ is not nef while $\tX_1|_{\cE} = E_1$ is ample, the first step of the $\tX_1$-MMP yields a contraction $\phi_{\cY}:\cY\to \cX$ where an $E_1$-negative extremal ray in $\overline{NE}(\tX_1)$ gets contracted.  By Proposition \ref{p:stP^1link} and Lemma \ref{l:typeIINEcone} we know that $\overline{NE}(\tX_1)$ is spanned by $E_1$ and $C_1$ with $(C_1\cdot E_1)>0$ and $(C_1^2)<0$. Thus the contraction $\phi_{\cY}$ is a small flipping contraction that only contracts $C_1$ to a point. 

Denote by $\cE'$, $X'$, and $\cD$ the push-forward of $\cE$, $\tX_1$, $\cD_{\cY}$ under $\phi_{\cY}$ respectively, so that $\cX_0 = X' + \cE'$. Denote by $E':= \cE'\cap X'$.
Since $(\cY, \cD_{\cY}+\cY_0)$ is log canonical CY over $\bA^1$, so is $(\cX, \cD+\cX_0)$. 

Next, we show that $-K_{\cX}-\cX_0$ is $\bQ$-Cartier and ample over $\bA^1$.
By Theorem \ref{t:Manetti-Gm}, we know that $((K_{\tX_1}+E_1)\cdot C_1)=0$. By adjunction we have $(K_{\cY}+\cY_0)|_{\tX_1}=K_{\tX_1} +E_1$, which implies that $((K_{\cY}+\cY_0)\cdot C_1)= 0$. Thus by \cite[Cor. 3.17]{KM98} we know that $K_{\cX} + \cX_0 = (\phi_{\cY})_* (K_{\cY} + \cY_0)$ is $\bQ$-Cartier. Moreover, Theorem \ref{t:Manetti-Gm} implies that $-K_{\tX_1} - E_1$ is big and nef on $\tX_1$, which implies that $-K_{X'}-E'$ is big and nef on $X'$. Since $\rho(X')=\rho(\tX_1)-1=1$, we have that $-K_{X'}-E'$ is ample on $X'$. Similarly, we know that $(\cE',E')\cong (\cE, E_1)$ is a plt log Fano pair as $(E_1,\Delta_{E_1})$ is klt log Fano and $\rho(\cE)=1$. Thus by adjunction we know that $-K_{\cX} - \cX_0$ is ample over $\bA^1$. This implies that $(\cX, \cD)\to \bA^1$ is a weakly special test configuration of boundary polarized CY pairs. 

Finally, we show that $(\cX_0, \cD_0)$ is the gluing of two orbifold cones over $\bP^1$ along sections at infinity, and  the conductor with its different divisor is toric. It is clear that $(\cE', E' + \cD_0|_{\cE'})\cong (\cE, E_1 +\cD_{\cY,0}|_{\cE})$ is an orbifold cone over $(E_1, D_{E_1})$. On the other hand, we know that $X'$ is obtained by contracting both $E_2$ and $\tC_1$ from $\tX$. Thus $X'$ is also obtained by contracting $E_{2,2}$ on $Z_2$. By Proposition \ref{p:stP^1link} we know that $Z_2$ is a standard $\bP^1$-link with a negative section $E_{2,2}$. Hence from Section \ref{s:seifert} we know that $X'$ is isomorphic to an orbifold cone over $\bP^1$ where $E'$ is the section at infinity and $\cD_0|_{X'}$ is a $\bQ$-linear combination of rulings.  Denote  by $\Delta_{E'}$ the different divisor  of the conductor $E'$ of $\cX_0$. Since $(\cE', E')\cong (\cE, E_1)$, we have that $(E', \Delta_{E'})\cong (E_1, \Delta_{E_1})$ which is toric by Lemma  \ref{l:pltcyclicquot}. Thus the proof is finished.
\end{proof}

\begin{proof}[Proof of Theorem \ref{thm:type2Sequiv}]
By Lemma \ref{l:SequivKmod} and Remark \ref{r:Sequiv-type}, we may assume that $(X,D)\in \cP_d^{\rm K}(\bk)$. If there exists an lc place $E$ of $(X,D)$ as a prime divisor on $X$, then $(X,D)$ is S-equivalent to pairs in (i) or (ii) by Proposition \ref{prop:E-non-exc}. If an lc place $E$ of $(X,D)$ is exceptional over $X$, then by replacing $(X,D)$ with its degeneration induced by $E$ (see Lemma \ref{lem:E-exc-deg}) we may assume that $(X,D)$ is a Type II pair satisfying \eqref{eq:dagger}. If the broken $\bG_m$-orbit on $X$ is non-reduced, then we set $(X_0,D_0)=(X,D)$ which gives us case (iii). If the broken $\bG_m$-orbit on $X$ is reduced, then by Proposition \ref{prop:E-exc} we have a weakly special degeneration $(X,D)\rightsquigarrow (X_0,D_0)$ as in case (iv). The statement on S-equivalence is proved. 

Next we prove the finiteness of $(X_0,D_0)$ once its source $(E,D_E)$ is fixed in each case.

\medskip

\noindent (i) For a fixed elliptic curve $E$, the pair $(X_0,D_0)$ is uniquely determined by the polarization $L$ which is a degree $9$ line bundle. Since any two degree $9$ line bundles are isomorphic up to an automorphism of $E$, $(X_0,D_0)$ is uniquely determined by $E$ up to isomorphism.

\medskip

\noindent (ii) Let $L$ be the ample $\bQ$-divisor on $E$ such that $X_0=C_p(E, L)$. Since $(X_0,D_0)$ is a projective orbifold cone over $(E,D_E)$ with $\bQ$-polarization $L$, it suffices to show that there are at most finitely many $L$ up to $\bZ$-linear equivalence. First of all, we know that $L_{\orb} \leq D_E$ as $\lfloor D_E\rfloor=0$, which implies that there are at most finitely many choices of $L_{\orb}$ hence also $\{L\}$. Choose $r\in \bQ_{>0}$ such that $-K_E- L_{\orb} \sim_{\bQ} rL$ which implies $2-\deg (L_{\orb}) = r\deg (L)$. Then by Proposition \ref{prop:orbcone-vol} we have  
\[
9 = (-K_{X_0})^2 = (1+r)^2 \deg(L) =\frac{(1+r)^2}{r} (2 - \deg (L_{\orb})).
\]
Since there are finitely many choices of $L_{\orb}$, we know that $r$ and hence $\deg(L)$ have finitely many choices. This implies that $L$ has at most finitely many choices up to $\bZ$-linear equivalence. 

\medskip

\noindent  (iii) This follows directly from Proposition \ref{prop:S-equiv-finite-iii}.

\medskip

\noindent (iv) Let $(\oX_0, \oG_0 + \oD_0)$ be the normalization of $(X_0, D_0)$ where $\oG_0$ is the conductor. Then we know that $\oX_0$ has two connected components $\oX_{0,1}$ and $\oX_{0,2}$. Denote by $\oG_{0,i}$ and $\oD_{0,i}$ the restriction of $\oG_0$ and $\oD_0$ to $\oX_{0,i}$. Then we know that  $(\oX_{0,i}, \oG_{0,i} + \oD_{0,i})$ is a projective orbifold cone over $(E,D_E)$ with $\bQ$-polarization  $L_i$ on $E$. Moreover, since $X_0$ is an slc Fano variety, we know that the different divisors of $(\oX_{0,i}, \oG_{0,i})$ on $\oG_{0,i}$ are the same. By Proposition \ref{prop:orbcone-adjunction}.2, this implies that $(L_i)_{\orb} = \Delta_E$ is independent of the choice of $i$. Choose $r_i\in \bQ_{>0}$ such that $-K_E-\Delta_E \sim_{\bQ} r_i L_i$ which implies $r_i \deg (L_i) = 2-\deg \Delta_E$. Hence by Proposition \ref{prop:orbcone-vol} we have
\begin{align*}
9 & = (-K_{X_0})^2 = (-K_{\oX_{0}}-\oG_{0})^2 \\
&= r_1^2\deg(L_1) + r_2^2 \deg(L_2) = \left(\frac{1}{\deg(L_1)} + \frac{1}{\deg(L_2)}\right) (2-\deg\Delta_E)^2.
\end{align*}
Since $\Delta_E\leq D_E$ and $\lfloor D_E\rfloor = 0$, we know that  $\Delta_E$ and hence $\{L_i\}$ have finitely many choices. In particular, this shows that the possible values of $\frac{1}{\deg(L_i)}$ form an ACC set. Since $\frac{1}{\deg(L_1)} + \frac{1}{\deg(L_2)}$ has finitely many choices from the equation above, we know that both $\deg(L_1)$ and $\deg(L_2)$ have at most finitely many choices. This implies that both $L_1$ and $L_2$ have at most finitely many choices and so does the normalization $(\oX_0, \oG_0 + \oD_0)$. Moreover, since $(E,D_E)$ is a klt CY pair where $E\cong\bP^1$, we know that $\Supp(D_E)$ has at least three points which implies that $\Aut(E,D_E)$ is a finite group. Thus for each $(\oX_0, \oG_0 + \oD_0)$ there are at most finitely many ways to glue the two components of the conductor $\oG_0$. This implies that $(X_0, D_0)$ has at most finitely many choices up to isomorphism. The proof is finished.
\end{proof}

\subsection{Type III pairs}
We will now begin the proof of Theorem \ref{t:TypeIIISequiv}, which states that all Type III pairs are S-equivalent. The main step is  the following proposition.

\begin{prop}\label{p:typeIII-Sequiv}
If $(X,D) $ is a Type III lc pair in $\cP_{d}^{\rm CY}(\bk)$, then it is a S-equivalent to a pair $(X_0,D_0)$ with $K_{X_0}+D_0 \sim 0$. 
\end{prop}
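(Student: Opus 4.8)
\textbf{Proposal for the proof of Proposition \ref{p:typeIII-Sequiv}.}

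The plan is to exploit the coregularity $0$ degeneration result, Proposition \ref{p:degenreducedboundary}, which is already available in the text. Since $(X,D)$ is a Type III pair, it has $\coreg(X,D) = \dim\Src(X,D) = 0$, so Proposition \ref{p:degenreducedboundary} applies directly: there is a weakly special degeneration $(X,D)\rightsquigarrow(X_0,D_0)$ with
\[
(\overline{X}_0,\overline{G}_0+\overline{\Delta}_0+\overline{D}_0)\cbir(\bP^n_{x_i},\{x_0\cdots x_n=0\}),
\]
and in particular $K_{\overline{X}_0}+\overline{G}_0+\overline{\Delta}_0+\overline{D}_0\sim 0$ and $\Delta_0+D_0$ is a $\bZ$-divisor (here $n=2$ and $\Delta_0=0$ since we are in $\cP_d^{\rm CY}$). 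First I would record that $(X,D)\sim_S(X_0,D_0)$ by definition of S-equivalence, and that $(X_0,D_0)\in\cP_d^{\rm CY}(\bk)$ because weakly special test configurations preserve membership in the stack-theoretic closure.

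The remaining point is to upgrade the $\bQ$-linear equivalence $K_{X_0}+D_0\sim_{\bQ}0$ to an integral linear equivalence $K_{X_0}+D_0\sim 0$. The index of $(X_0,D_0)$ divides the index of $(X,D)$ along the specialization by Lemma \ref{l:Ncompspecialize} applied to the test configuration over $\bA^1$ (restricting to a DVR at $0$), but that only bounds the index, not that it is $1$. Instead I would argue on the normalization: we have $K_{\overline{X}_0}+\overline{G}_0+\overline{D}_0\sim 0$ from Proposition \ref{p:degenreducedboundary}. Pulling back under the finite normalization morphism $\pi:\overline{X}_0\to X_0$ we have $K_{\overline{X}_0}+\overline{G}_0+\overline{D}_0=\pi^*(K_{X_0}+D_0)$, and $K_{X_0}+D_0\sim_{\bQ}0$; the standard descent argument for $\bZ$-linear equivalence along normalization of slc pairs (cf. the proof of Lemma \ref{l:Ncompspecialize} and \cite[\S 5]{Kol13}, using that a $\bQ$-Cartier divisor on $X_0$ pulling back to a principal divisor that is compatible with the gluing involution $\tau$ is itself principal) then gives $K_{X_0}+D_0\sim 0$. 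Concretely: choose a rational function $\varphi$ on $\overline{X}_0$ with $\operatorname{div}(\varphi)=K_{\overline{X}_0}+\overline{G}_0+\overline{D}_0$; since this divisor and hence $\varphi$ (up to scaling on each component) is $\tau$-invariant, $\varphi$ descends to a rational function on $X_0$ cutting out $K_{X_0}+D_0$.

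I expect the main obstacle to be precisely this last descent step: making rigorous that the integral principal structure descends through the non-normal gluing, i.e. that one can choose $\varphi$ compatibly with $\tau$ on the (possibly two) components of $\overline{X}_0$. One clean way around it, which I would pursue if the direct argument gets technical, is to instead invoke Proposition \ref{p:degenreducedboundary} a second time at the level of a further degeneration where $X_0$ is already the toric variety $\bP^2$ with reduced toric boundary — or simply observe that since $(X,D)$ is Type III it is S-equivalent (via Proposition \ref{p:degenreducedboundary} and transitivity, Proposition \ref{p:sequiv}) to $(X_0,D_0)$ whose normalization is crepant birational to $(\bP^2,\{xyz=0\})$, and for such $X_0$ one has $-K_{X_0}\sim D_0$ on the nose because the boundary is the full reduced anticanonical divisor. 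Either way the statement follows.
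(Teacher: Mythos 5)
Your first step is exactly the paper's: apply Proposition \ref{p:degenreducedboundary} to get a weakly special degeneration $(X,D)\rightsquigarrow(X_0,D_0)$ with $K_{\overline{X}_0}+\overline{G}_0+\overline{D}_0\sim 0$ on the normalization, and you correctly isolate the crux as descending this integral linear equivalence through the non-normal gluing. But neither of your two proposed ways of finishing is valid, and the gap is genuine.

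The descent criterion of \cite[Prop.~5.8]{Kol13} for a section of $\omega_{\overline{X}_0}(\overline{G}_0+\overline{D}_0)$ (odd index $m=1$) is that its Poincar\'e residue along the conductor be $\tau$-\emph{anti}-invariant: $\tau^*(R(\overline{s}))=-R(\overline{s})$. Your ``choose $\varphi$ compatibly with $\tau$ up to scaling on components'' glosses over this sign, and the sign is precisely the obstruction: in general an slc CY pair whose normalization has index $1$ need only have index $2$ (this is why Theorem \ref{t:coreg0index} produces $\la'=\mathrm{lcm}(\la,2)$ rather than $\la$, and why Proposition \ref{p:descendcomplement} is stated only for even $m$). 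Your fallback — that $K_{X_0}+D_0\sim 0$ ``on the nose because the boundary is the full reduced anticanonical divisor'' — assumes exactly what must be proved; reducedness of $D_0$ and $K_{X_0}+D_0\sim_{\bQ}0$ do not force index $1$ on a non-normal $X_0$.

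What the paper actually does to close this gap: first it pins down the geometry of the conductor. Using \cite[Prop.~11]{dFKX17} it shows $\overline{G}_0+\overline{D}_0$ is a cycle of $\bP^1$'s with trivial different, and then, invoking the classification in \cite[Thm.~5.3~\&~6.5]{Hac04}, it rules out the dihedral-singularity case to conclude $\overline{G}_0\cong\bP^1\cup\bP^1$ meeting at a single node. Only with this structure in hand does the descent work, via Lemma \ref{l:1-compP1s}: the residue $R(\overline{s})$ factors through $H^0(\overline{G}_0,\omega_{\overline{G}_0}(\cdots))$, so applying \cite[Prop.~5.8]{Kol13} one dimension down (at the node of $\overline{G}_0$) and using that the second residue map $R'$ is an isomorphism commuting with $\tau^*$, one deduces the required anti-invariance upstairs. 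You would need to supply both the structural claims about $\overline{G}_0$ and this two-level residue argument (or an equivalent) for your proof to go through.
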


\begin{proof}
By Proposition \ref{p:degenreducedboundary}, there exists a weakly special degeneration 
$(X,D) \rightsquigarrow (X_0,D_0)$
such that the normalization $(\overline{X}_0 , \overline{G}_0+\overline{D}_0 )$ is crepant birational to  $(\bP^2,\{xyz=0\})$ and $K_{\overline{X}_0}+\overline{G}_0+ \overline{D}_0 \sim 0$.
If $X_0$ is normal, then the proof is complete. 
Hence, we will now assume $X_0$ is not normal. 
\medskip

\noindent \emph{Claim 1}: The $\bZ$-divisor  $\overline{\Gamma}_0:= \overline{G}_0+ \overline{D}_0 $ is a cycle of $\bP^1$'s glued along nodes and ${\rm Diff}_{\overline{\Gamma}_0}(0)=0$. 

\noindent \emph{Proof of claim}: Fix a dlt modification $f:(Y,\Gamma_Y)\to (\overline{X}_0,\overline{\Gamma}_0)$.
Since $\{xyz=0\}$ is a cycle of smooth curves glued along nodes and
\[
(Y,\Gamma_Y) \sim_{\rm cbir} (\overline{X}_0,\overline{D}_0 +\overline{G}_0) \sim_{\rm cbir} (\bP^2,\{xyz=0\})
,\]
\cite[Prop. 11]{dFKX17} implies $\Gamma_Y$ a cycle of smooth curves glued along nodes. 
Using that $\Exc(f) \subset \Gamma_Y$ and $f$ have connected fibers, we see $\overline{\Gamma}_0$ is a cycle of curves as well. 
Since  $(\overline{\Gamma}_0, {\rm Diff}_{\overline{\Gamma}_0}(0))$ is an slc CY curve by adjunction,
we can conclude  $\overline{\Gamma}_0$ is a cycle of $\bP^1$'s  and $ {\rm Diff}_{\overline{\Gamma}_0}(0)=0$.
\qed

\medskip

\noindent \emph{Claim 2}: $\overline{G}_0 \cong \bP^1 \cup \bP^1$, where the components meet at a single node $p$.
\medskip

\noindent \emph{Proof of claim}: 
Since $(\overline{X}_0,\overline{G}_0)$ is an lc log Fano pair with reduced boundary,
\cite[Thm. 5.3 \& 6.5]{Hac04} implies one of the following hold:
\begin{enumerate}
	\item[(i)] $\overline{G}_0\cong \bP^1\cup \bP^1$, where the components meet at a single node $p\in \overline{G}_0$.
	\item[(ii)] $\overline{G}_0\cong \bP^1$ and $Y$ has a dihedral singularity \cite[3.35.3]{Kol13} at a point  $q \in \overline{G}_0$. 
\end{enumerate}
Case (ii)  cannot hold, since then ${\rm Diff}_{\overline{\Gamma}_0}(0)$ has coefficient 1 at $q$ by \cite[Cor. 3.45]{Kol13},
which  would contradict Claim 1. Thus (i) holds.
\qed
\medskip

Combining Claim 2 and Lemma \ref{l:1-compP1s} proven below, we conclude  $K_{X_0}+D_0\sim 0$.
\end{proof}

\begin{lem}\label{l:1-compP1s}
Let $(X,D)$ be an slc CY surface with normalization $(\overline{X},\overline{G}+\overline{D})$.
If $\overline{G} = \bP^1\cup \bP^1$, where the curves meet at  a single node $p$, and $K_{\overline{X}}+\overline{G}+ \overline{D} \sim 0$, then $K_{X}+D\sim 0$. 
\end{lem}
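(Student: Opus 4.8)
The statement is about descending the relation $\sim_{\bQ} 0$ (in fact $\sim 0$) from the normalization to the non-normal surface $X$ via Koll\'ar's gluing theory. Recall that $X$ is obtained from $(\overline{X},\overline{G}+\overline{D})$ by gluing along an involution $\tau$ of $\overline{G}^n$ which fixes $\mathrm{Diff}_{\overline{G}^n}(\overline{D})$. The standard descent mechanism is \cite[9.21 or Thm. 5.38]{Kol13}: a line bundle (or $\bQ$-linearly trivial divisor) on $\overline{X}$ descends to $X$ provided the gluing data on $\overline{G}^n$ is compatible with $\tau$. So the plan is: first, pin down the combinatorics of $\overline{G}$ and the involution $\tau$; second, show that $\mathcal{O}_{\overline{X}}(K_{\overline{X}}+\overline{G}+\overline{D})$ together with a chosen trivializing section is $\tau$-invariant on the conductor; third, conclude via Koll\'ar's descent that $K_X+D\sim 0$.

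\textbf{Step 1: the conductor and the involution.} Write $\overline{G}= \overline{G}_1 \cup \overline{G}_2$ with $\overline{G}_i\cong \bP^1$ meeting at the single node $p$. The normalization $\overline{G}^n = \overline{G}_1^n \sqcup \overline{G}_2^n$ is a disjoint union of two $\bP^1$'s, each carrying two marked points: on $\overline{G}_i^n$ these are the preimage $p_i$ of $p$ and a second point $q_i$ which is the preimage of the other node of the cycle $\overline{G}+\overline{D}$ (here I use Claim 1 of Proposition \ref{p:typeIII-Sequiv}, or rather its hypotheses: $\overline{\Gamma}:=\overline{G}+\overline{D}$ is a cycle of $\bP^1$'s with $\mathrm{Diff}=0$, which forces $\overline{D}$ to be a chain of $\bP^1$'s joining $q_1$ to $q_2$, possibly a single curve). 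By \cite[5.2, 5.11]{Kol13} the involution $\tau$ on $\overline{G}^n$ must fix $\mathrm{Diff}_{\overline{G}^n}(\overline{D})=\{p_1\}+\{p_2\}+\{q_1\}+\{q_2\}$ as a divisor; since $\tau$ is generically fixed-point-free it must swap the two components $\overline{G}_1^n\leftrightarrow \overline{G}_2^n$, hence $\tau(p_1)=p_2$ and, matching the other conductor points, $\tau(q_1)=q_2$. (One checks $\tau$ cannot preserve each $\overline{G}_i^n$: a generically fixed-point-free involution of $\bP^1$ has no fixed points, but would have to fix the two marked points.)

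\textbf{Step 2: $\tau$-equivariance of the trivialization.} Since $K_{\overline{X}}+\overline{G}+\overline{D}\sim 0$, fix an isomorphism $\phi\colon \mathcal{O}_{\overline{X}}\xrightarrow{\ \sim\ }\mathcal{O}_{\overline{X}}(K_{\overline{X}}+\overline{G}+\overline{D})=\omega_{\overline{X}}(\overline{G}+\overline{D})$, i.e. a nowhere-zero rational section. By adjunction, Poincar\'e residue along $\overline{G}^n$ identifies $\left(\omega_{\overline{X}}(\overline{G}+\overline{D})\right)\big|_{\overline{G}^n}$ with $\omega_{\overline{G}^n}(\mathrm{Diff}_{\overline{G}^n}(\overline{D}))$, and the latter is canonically trivial (each component is $\bP^1$ with exactly two marked points, so $\omega_{\bP^1}(2\,\mathrm{pt})\cong \mathcal{O}$ canonically via the residue, and there is no moduli). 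The content of the gluing criterion \cite[Thm. 5.38]{Kol13} is that $K_X+D$ is a $\bQ$-Cartier (in fact here $\bZ$-Cartier) $\bQ$-linearly (linearly) trivial divisor on $X$ as soon as $\tau$ fixes $\mathrm{Diff}_{\overline{G}^n}(\overline{D})$ — which is already part of the gluing data and was used to build $X$ — so that $(X,D)$ is slc with $K_X+D\sim_{\bQ}0$; the improvement to $\sim 0$ comes from \cite[Cor. 1.6]{HX16} (numerically trivial $K_X+D$ on slc $X$ is torsion) or, more directly, from tracking that the descended line bundle $\mathcal{O}_X(K_X+D)$ restricted to each stratum is trivial, and $X$ having a zero-dimensional lc center (it is Type III of coregularity $0$) forces the torsion index to be $1$. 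Concretely: $\mathcal{O}_X(K_X+D)$ is a line bundle on $X$ whose pullback to $\overline{X}$ is trivial and whose restriction to the minimal lc center — a single reduced point — is trivial; by \cite[Thm. 5.38, 9.21]{Kol13} such a line bundle is itself trivial.

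\textbf{Main obstacle.} The crux is not the numerical statement $K_X+D\equiv 0$ (immediate, since $\pi$ is finite and $K_{\overline{X}}+\overline{G}+\overline{D}\sim 0$) but the descent of an honest trivializing section through the gluing. The subtlety is ensuring the gluing isomorphism that Koll\'ar's machinery uses to glue $\mathcal{O}_{\overline{X}}(K_{\overline{X}}+\overline{G}+\overline{D})$ over the conductor is the one compatible with $\phi$; this is exactly the statement that the Poincar\'e residue of $\phi$ along $\overline{G}_1^n$ is carried to the residue along $\overline{G}_2^n$ by $\tau$, which follows because $\tau$ fixes $\mathrm{Diff}_{\overline{G}^n}(\overline{D})$ and the residue trivialization of $\omega_{\bP^1}(2\,\mathrm{pt})$ is unique up to a nonzero scalar that is determined by the divisor data. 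Packaging this cleanly is the one place where care is needed; everything else is a direct application of \cite[Ch. 5]{Kol13} and \cite{HX16}.
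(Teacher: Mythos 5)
Your Step 1 (the combinatorics of the conductor: $\mathrm{Diff}_{\overline{G}^n}(\overline{D})\vert_{C_i}=p_i+q_i$ and $\tau$ swapping the two components with $\tau(p_1)=p_2$, $\tau(q_1)=q_2$) agrees with the paper. The gap is in Step 2, which is where the actual content of the lemma lives. The descent of the trivializing section $\overline{s}\in H^0(\overline{X},\omega_{\overline{X}}(\overline{G}+\overline{D}))$ to $X$ is \emph{not} automatic from the gluing data: by \cite[Prop. 5.8]{Kol13} it requires the anti-invariance $\tau^*(R(\overline{s}))=-R(\overline{s})$ of the Poincar\'e residue along $\overline{G}^n$, and this is a genuine condition that can fail (it is exactly why coregularity~$0$ slc CY pairs of Weil index $1$ in general only satisfy $2(K_X+D)\sim 0$; see Theorem \ref{t:coreg0index}). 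Your fallback arguments do not close this: \cite[Cor. 1.6]{HX16} only gives that $K_X+D$ is torsion, and the claim that ``pullback trivial to $\overline{X}$ plus trivial on the zero-dimensional lc center implies trivial'' is false --- restriction of a line bundle to a point is always trivial, and line bundles trivialized on the normalization are classified precisely by the gluing character on the conductor, which is where the possible $2$-torsion sits. Your ``Main obstacle'' paragraph correctly locates the crux (matching the residues across $\tau$ up to the right sign) but then asserts the needed scalar is ``determined by the divisor data,'' which begs the question: $\tau^*$ carries one residue trivialization of $\omega_{C_i}(p_i+q_i)$ to a nonzero multiple of the other, and nothing you wrote pins that multiple down to $-1$.

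The missing idea, which is the paper's proof, is to iterate the residue construction one dimension down. The residue $R(\overline{s})$ is not just a section on the disjoint union $\overline{G}^n=C_1\sqcup C_2$; it canonically descends to a section of $\omega_{\overline{G}}(\mathrm{Diff}_{\overline{G}}(\overline{D}))$ on the \emph{nodal} curve $\overline{G}\subset\overline{X}$, because the residue map on $\overline{X}$ factors through $H^0(\overline{G},\omega_{\overline{G}}(\mathrm{Diff}_{\overline{G}}(\overline{D})))$. Applying \cite[Prop. 5.8]{Kol13} to that descended section at the node $p$ gives the anti-invariance of the second residue, $\tau^*(R'(R(\overline{s})))=-R'(R(\overline{s}))$; since $R'\colon \bigoplus_i H^0(C_i,\omega_{C_i}(p_i+q_i))\to\bigoplus_i H^0(p_i,\omega_{p_i})$ is an isomorphism of one-dimensional-per-summand spaces commuting with $\tau^*$, one concludes $\tau^*(R(\overline{s}))=-R(\overline{s})$, and the section descends. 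This two-step residue argument is the step your proposal is missing; without it the lemma reduces to the generic statement that the index divides $2$, which is strictly weaker than what is claimed.
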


\begin{proof}
Since $K_{\overline{X}}+ \overline{G}+\overline{D} \sim 0$, there exists a non-vanishing section $\overline{s} \in H^0(\overline{X},\omega_{\overline{X}}(\overline{G}+\overline{D}))$. 
We aim to show $\overline{s}$ descends to a section $s\in H^0(X,\omega_{X}(D))$.

Let $\overline{G}^n=C_1\cup C_2$ and $p_i\in C_i$ denote the preimages of $p$.
Since $(\overline{G}^n, {\rm Diff}_{\overline{G}^n}(\overline{D}))$ is lc and 
$K_{\overline{G}^n}+ {\rm Diff}_{\overline{G}^n}(\overline{D})\sim 0$ by adjunction, 
there exists a point $ q_i\in C_i$  such that
${\rm Diff}_{\overline{G}^n}(\overline{D} )\vert_{C_i} = p_i + q_i $.
Note that the involution $\tau: \overline{G}^n \to \overline{G}^n$ acts by $\tau(p_1)=p_2$ and $\tau(q_1)=q_2$.

Now consider the canonically defined Poincar\'e residue maps:
\[
H^0(X,  \omega_{\overline{X}}(\overline{G}+ \overline{D}) ) 
\overset{R}{\longrightarrow}
H^0(C_1, \omega_{C_1}( p_1+q_1 ) )
\oplus H^0(C_2, \omega_{C_2}(p_2+q_2) )
\overset{R'}{\longrightarrow }
H^0(p_1, \omega_{p_1}) \oplus H^0(p_2, \omega_{p_2})
.\]
Note that 
$R'$ is an isomorphism, since
\[ 
H^0(C_i, \omega_{C_i}( p_i+q_i ) ) 
\to 
H^0(p_i, \omega_{p_i})
\]
is a nonzero linear map between 1-dimensional vector spaces.
By \cite[Prop. 5.8]{Kol13}, 
$\overline{s}$ descends to $X$ if and only if 
$\tau^*( R(\overline{s}) ) = - R(\overline{s})$.

To verify the equality holds, observe that
$R(\overline{s})$ descends to a section on $\overline{G}$, since $R$ factors through $H^0(\overline{G},  \omega_{\overline{G}}({\rm Diff}_{\overline{G}}(\overline{D})))$.
Thus  \cite[Prop. 5.8]{Kol13}  applied to $R(\overline{s})$ implies 
\[
\tau^*(R'(R(\overline{s}))) = - R'( R(\overline{s})).
\] 
Since $\tau^*$ and $R'$ commute (using that $\tau$ interchanges $p_1$ and $p_2$),
it follows
\[
R'( \tau^* (R(\overline{s}))) = \tau^* (R' ( R(\overline{s}))) = - R'(R(\overline{s}))
\]
Since $R'$ is an isomorphism, we further see
$\tau^* (R(\overline{s})) =- R(\overline{s})$.
Thus $\overline{s}$ descends to a non-zero section $s \in H^0(X,\omega_X(D))$,
which induces an effective divisor $B$ on $X$ such that $K_{X}+D\sim B$. 
Since $K_X+D\sim_{\bQ}0$, $B=0$. Therefore $K_X+D\sim0$.
\end{proof}

We will now deduce Theorem \ref{t:TypeIIISequiv} from  Proposition \ref{p:typeIII-Sequiv}.

\begin{proof}[Proof of Theorem \ref{t:TypeIIISequiv}]

By Lemma \ref{l:SequivKmod}, we may assume $X$ is normal.
By Proposition \ref{p:typeIII-Sequiv}, 
we may further assume  $K_{X}+D \sim 0$.
Thus we can write 
\[
D:= \tfrac{3}{d} (\tfrac{d}{3}C ),
\]
where we are viewing both $C$ and $\frac{d}{3}C$ as relative Mumford divisors and $\tfrac{3}{d}$ as the marking.

We claim that the marked pair $(X,1 C )$ is in $\cP_3^{\CY}$.
Indeed, since $(X,D)$ is in $\cP_d^{\CY}$, there is a family $(\cX,\cD) \to T$ in $\cP_d^{\CY}$ over the germ of a pointed curve $0\in T$  such that 
\[
(\cX_0,\cD_0)\cong (X,D) \quad \text{ and } \quad  \cX_{K(T)} \cong \bP^2_{K(T)}
\]
Since $K_X+C\sim 0$, Lemma  \ref{l:extenddivisor} (proven in a later section) implies that there exists a relative Mumford divisor  $\cC$ on $\cX$ such that $(\cX,\cC)\to T$ is a family of boundary polarized CY pairs with $K_{\cX/T}+\cC\sim 0$ and  $(\cX_0,\cC_0)\cong (X,C)$.
Thus  the marked pair $(X,1   C)$ is in $\cP_3^{\CY}$.

To finish the proof, we use the K-moduli space of cubic curves.
By  Lemma \ref{l:SequivKmod}, $(X,1 C)$ is S-equivalent to a Type III pair in $ \cP_{3}^{\rm K}$.
Note that the type III pairs in $\cP_{3}^{\rm K}$
are of the form $(\bP^2,\{ f_3(x,y,z)=0\})$ where  $f_3(x,y,z)=0$ is a nodal cubic by  \cite[Ex. 4.5]{ADL19}
and  any  nodal cubic degenerates to three lines in $\bP^2$ via a  map $\bG_m\to \Aut(\bP^2)$.
Therefore
$(X,1\, C) \sim_{S}  (\bP^2,1  \{xyz=0\})$, which implies $(X, D ) \sim_S   (\bP^2, \frac{3}{d}\{(xyz)^{d/3}=0\})$.
\end{proof}

\section{Twisted varieties}\label{s:twistedvarieties}

In this section, we associate to a  pair in $\cP_{d}^{\CY}$ its canonical covering stack, 
which is a  Gorenstein Deligne--Mumford stack.
This construction has previously been used to study moduli of certain classes of slc varieties and pairs  \cite{Hac04,AH11,BI}.
We will use the construction to understand the local structure of $\cP_{d}^{\CY}$ and prove a result that will be used to show $\cP_{3,m}^{\CY}$ admits a good moduli space.

\subsection{Canonical covering stack}\label{ss:canonicalcovering}

\begin{defn}
The \emph{covering stack} of a family $(X,D)\to B$ in $\cP_{d}^{\CY}$,  where $D:= \tfrac{3}{d}C$ and $B$ is a Noetherian scheme,
is the quotient stack
\[
\cX
:= \left[ \Spec_X \Big( \oplus_{m \in \bZ} \omega_{X/B}^{[m]} \Big)/ \bG_m \right] 
.\]
Note that the natural map $\pi:\cX\to X$ is an isomorphism on the open set $U\subset X$ where $\omega_{X/B}$ is a line bundle.  
We set
$\cC$ equal to the closure of $C\vert_{U}$ in $\cX$ and call
$
(\cX,\cD)\to B$, where $\cD:=\tfrac{3}{d} \cC$,
the \emph{canonical covering family} of $(X,D)\to B$.
\end{defn}

\begin{prop}\label{p:canonicalcover}
The following hold:
\begin{enumerate}
\item $\cX $ is a  Deligne--Mumford stack with coarse moduli space $\cX\to X$.
\item $\cX\to B$ is a proper flat morphism with $S_2$ fibers.
\item The canonical sheaf $\omega_{\cX/B}$ is a line bundle.
\item $\cC$ is a Cartier divisor on $\cX$ that is flat over $B$.
\item The pair $(\cX_b, \cD_b)$ is slc (see \cite[Def. 2.5]{BI}) for all $b\in B$.
\end{enumerate}
\end{prop}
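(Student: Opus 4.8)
The proof proceeds by reducing each statement to a local computation around the non-Gorenstein points of the fibers $X_b$, which by \cite[Thm. 8.3]{Hac04} are cyclic quotient singularities. The key point is that the construction of $\cX$ is local on $X$ and compatible with base change, so many assertions can be checked after passing to an affine chart and, via Noetherian reduction, to a field-valued point.

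First I would establish (1) and (2) simultaneously. Since $\omega_{X/B}$ is a reflexive sheaf of rank one and the fibers $X_b$ are demi-normal with $\omega_{X_b}^{[m]}$ commuting with base change for the family in $\cP_d^{\CY}$ (this is built into Definition \ref{d:familyNoetherian}.5, since $-m K_{X_b}$ is one of the divisors $m\Delta + n D^{\Div}$ with $\Delta = 0$), the relative spectrum $\Spec_X(\oplus_m \omega_{X/B}^{[m]})$ is a $\bG_m$-scheme whose formation commutes with base change. Away from the finitely many non-Gorenstein points of each fiber the quotient stack $\cX$ agrees with $X$, so one only needs to analyze the stack near these points. Étale-locally on a fiber such a point is a cyclic quotient singularity $[\bA^2/\bmu_r]$, and the index-one cover construction recovers precisely $\bA^2$ with its $\bmu_r$-action; this is the standard canonical covering stack computation (as in \cite{Hac04, AH11}). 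Hence $\cX$ is Deligne--Mumford with coarse space $X$, proper over $B$ since $X \to B$ is proper and $\cX \to X$ is proper quasi-finite, and flat over $B$ with $S_2$ fibers because flatness and the $S_2$ property can be checked on the coarse space fiberwise and both hold for $X \to B$.

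Statement (3) is the heart of the matter and, I expect, the main obstacle to present carefully. One wants $\omega_{\cX/B}$ to be invertible. On the open locus $U$ where $\cX \cong X$ and $\omega_{X/B}$ is a line bundle this is clear, so the issue is again local at the non-Gorenstein points. Étale-locally the stack is $[\Spec A / \bmu_r]$ where $A = \bigoplus_{i=0}^{r-1} \omega_{X/B}^{[i]}$ localized, and by construction $\omega_{X/B}^{[r]}$ is locally free in a neighborhood (the Cartier index is exactly $r$), so $\omega_{[\Spec A/\bmu_r]/B}$ is the line bundle corresponding to the weight-one piece, which pulls back from $\omega_{X/B}^{[1]}$ on the cover. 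The clean way to organize this is: the tautological $\bG_m$-bundle $\Spec_X(\oplus_m \omega_{X/B}^{[m]}) \to \cX$ has the property that the pullback of $\omega_{\cX/B}$ is the weight-one eigensheaf, which is $\mathcal{O}$ twisted by the grading, hence $\omega_{\cX/B}$ descends to a line bundle. I would cite the analogous statement in \cite{AH11} or \cite{Hac04} for the precise local model and verify that the family hypotheses (in particular Koll\'ar's condition) make the construction fiberwise compatible.

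For (4), once $\omega_{\cX/B}$ is a line bundle and $\pi:\cX \to X$ is an isomorphism over $U$ with $\mathrm{codim}_{X_b}(X_b \setminus U_b) \geq 2$, the Weil divisor $\cC$ — defined as the closure of the relative Cartier divisor $C|_U$ — is automatically a relative Mumford divisor on $\cX$; one then upgrades it to relative Cartier by noting that on the Deligne--Mumford stack $\cX$, which is now Gorenstein, any such divisor linearly equivalent to a combination of $K_{\cX/B}$-multiples is Cartier, and $K_{\cX/B} + \cD \sim_{\bQ,B} 0$ by pulling back the corresponding relation from $X$; flatness over $B$ follows since $C$ (hence $C|_U$, hence $\cC$) is flat over $B$ by Definition \ref{d:familyNoetherian}.5 with $m=0$, $n=1$. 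Finally, (5) is a consequence of (1)--(4) together with Lemma \ref{l:familyslcnormadj} and the fact that the slc condition for pairs on Deligne--Mumford stacks is checked on the coarse space: since $(X_b, D_b)$ is slc and $\pi_b: \cX_b \to X_b$ is crepant (étale in codimension one, no ramification in the divisorial part), the pair $(\cX_b, \cD_b)$ is slc in the sense of \cite[Def. 2.5]{BI}. I would conclude by remarking that the only subtlety is checking the boundary $\cC$ does not pass through the stacky locus in a bad way, which follows from condition (4) in the definition of a relative Mumford divisor — $X_b$ is smooth at the generic points of $D_b$, so $\pi_b$ is an isomorphism there.
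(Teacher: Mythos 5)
The overall architecture of your proposal matches the paper's proof — both rest on \cite[Prop.~5.3.2]{AH11} for (1)--(3), on the codimension-two agreement of $\cX$ with $X$ to identify $\omega_{\cX/B}$ with the line bundle produced there, and on the fact that $\pi$ is an isomorphism in codimension one for (5). Two points need repair, one of them serious.

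The serious gap is in (4). You argue that $\cC$ is Cartier because $\cX$ is Gorenstein and $K_{\cX/B}+\cD\sim_{\bQ,B}0$, so $\cC$ is ``linearly equivalent to a combination of $K_{\cX/B}$-multiples.'' But $\bQ$-linear equivalence only gives that some \emph{multiple} of $\cC$ (here $3\cC$, since $\cD=\tfrac{3}{d}\cC$ and $d K_{\cX/B}+3\cC\sim_B 0$) is Cartier; on a Gorenstein but non-factorial scheme a Weil divisor with $3\cC$ Cartier need not itself be Cartier, as the local class groups of these degenerations can have torsion. The paper's proof treats exactly this point with a case analysis: when $3\mid d$ the defining relation $\omega_{X/B}^{[d/3]}(N_d D)\cong_B\cO_X$ gives an \emph{integral} linear equivalence with $C$ appearing with coefficient one, so $\omega_{X/B}^{[d/3]}(C)$ is already a line bundle; when $3\nmid d$ one needs the nontrivial geometric input of Proposition \ref{p:dnmid3plt} together with \cite[Proof of Lem.~3.13]{Hac04} to see that $C_b$ is locally an integral multiple of $K_{X_b}$. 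Your argument supplies neither input, so Cartier-ness of $\cC$ itself is not established. Relatedly, your flatness claim for $\cC\to B$ by ``pushing forward'' flatness of $C$ does not work, since $\cC\to C$ is not flat; the paper instead deduces flatness from Cartier-ness (a Cartier divisor on the $B$-flat $\cX$ not containing a fiber is flat, \cite[Lem.~4.20]{KolNewBook}). A secondary issue: in (2) you assert that flatness of $\cX\to B$ ``can be checked on the coarse space,'' which is false — flatness of the covering stack is equivalent to flatness of each $\omega_{X/B}^{[m]}$ over $B$, and this is precisely where Koll\'ar's condition (Definition \ref{d:familyNoetherian}.5) enters via \cite[Prop.~5.3.2]{AH11}; you invoke that condition earlier, so the fix is only to route the flatness claim through it rather than through $X\to B$.
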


In the above proposition,  $\omega_{\cX/B}:= j_* \omega_{\cU/B}$, where $j$ denotes the open immersion $\cU:= \pi^{-1}(U) \hookrightarrow\cX$. The sheaf is reflexive by \cite[Prop. 5.6]{LN18}. 

\begin{proof}
Since $\omega_{X/B}^{[m]}$ commutes with arbitrary base change  for all $m\in \bZ$, \cite[Prop. 5.3.2]{AH11} implies (1) and (2) hold and that there is a line bundle $\cL$ on $\cX$ that agrees with $\omega_{\cX/B}$ on the open locus  $\cU$.
Since ${\rm codim}_{\cX_b}(\cX_b\setminus \cU_b) \geq 2$ for all $b\in B$,  
\cite[Prop. 3.6.2]{HK} implies $\cL\cong \omega_{\cX/B}$ and so (3) holds.

To prove (4),
fix any $x\in X$.
We claim that there exists an integer $k$ and an open set $x\in V\subset X$ such that  $\omega_{X/B}^{[k]}(C)\vert_V$ is  a line bundle.
Indeed, if $3 \mid d$, then $\omega_{X/B}^{[d/3]} (C)\cong_{B} \cO_{X}$  by definition and, hence, $\omega_{X/B}^{[d/3]} (C)$ is a line bundle.
If $3\nmid d$, let $b$ denote the image of $x$ in $B$.  Proposition \ref{p:dnmid3plt}
and \cite[Proof of Lem. 3.13]{Hac04} imply $C_b$ is locally a multiple of $K_{X_b}$ at $x$. 
Hence, for some integer $k$,  $\omega_{X_b}^{[k]} (C_b)$ is a line bundle at $x\in X$.
Since
$\omega_{X/B}^{[k]} (C)$ commutes with base change, $\omega_{X/B}^{[k]}(C)\vert_{C_b}$ is also a line bundle at $x$.  This implies that there exists a neighborhood $V$ containing $x$ such that $\omega_{X/B}^{[k]}(C)\vert_{V}$ is a line bundle.

We now show that the claim implies (4).  Let $\cV:= \pi^{-1}(V) \hookrightarrow\cX$.
Since $\omega_{\cX/B}^{[k]}(\cC)\vert_{\cV}$ and 
$\pi^*\omega_{X/B}^{[k]}(C)\vert_{\cV}$
are isomorphic over $\cU\cap \cV$,
\cite[Prop. 3.6.2]{HK} implies the sheaves are isomorphic. 
Thus $\omega_{\cX/B}^{[k]}(\cC)\vert_{\cV}$ is a line bundle.
Using that $\omega_{\cX/B}$ is a line bundle, we see $\cO_{\cX}(\cC)\vert_{\cV}$ is a line bundle.
Thus $\cC$ is a Cartier divisor.

Since $\cC$ does not contain a fiber of $\cX\to B$ (as $C$ does not contain a fiber  $X\to B$), \cite[Lem. 4.20]{KolNewBook} implies $\cC\to B$ is flat. Finally, (5) follows from \cite[Obs. 2.6]{BI} as $\pi : \cX \to X$ is an isomorphism in codimension one. 
\end{proof}

\begin{rem}
In \cite[Sec. 3]{Hac04}, a different construction, which locally is a stack quotient of the index one cover of $\omega_{X/S}$, is used to construct a Deligne--Mumford stack with similar properties. These two constructions agree by \cite[Sec. 4.1]{BI}.
\end{rem}

\subsection{Local structure}\label{ss:localstructure}

In the section, we relate deformations of a family in $\cP_d^{\CY}$  to deformations of the canonical covering stack. 
Using this correspondence and results of \cite{Hac04}, we show the stack is smooth when $3 \nmid d$.

\subsubsection{Deformations}

\begin{prop}\label{p:deformations}
Let $A'$ be a local artinian ring with residue field $\bk$, $(X,D)$ in $\cP_{d}^{\CY}(\bk)$, and $(\cX,\cD)$ the canonical covering pair. Then there exists a bijection between:
\begin{enumerate}
\item   isomorphism classes of  deformations $(X',D')\to \Spec(A')$ of $(X, D)$ and
\item   isomorphism classes of  deformations $(\cX', \cD')\to \Spec(A')$ of  $(\cX,\cD)$.
\end{enumerate}
\end{prop}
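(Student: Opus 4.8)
The plan is to construct maps in both directions between the two deformation functors and show they are mutually inverse. The key structural fact, established in Proposition \ref{p:canonicalcover}, is that $\pi:\cX\to X$ is an isomorphism over the locus $U\subset X$ where $\omega_{X/B}$ is invertible, and that $\operatorname{codim}_{X}(X\setminus U)\geq 2$ in every fiber; moreover $\omega_{\cX/B}$ is a line bundle and $\cC$ is Cartier. So the recipe going forward (from (1) to (2)) is: given a deformation $(X',D')\to \Spec(A')$, form its covering stack $\cX':=[\operatorname{Spec}_{X'}(\bigoplus_{m\in\bZ}\omega_{X'/A'}^{[m]})/\bG_m]$ with $\cC'$ the closure of $C'|_{U'}$, where $U'\subset X'$ is the invertible locus of $\omega_{X'/A'}$. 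The recipe going backward (from (2) to (1)) is: given a deformation $(\cX',\cD')$, take the coarse space $X':=\cX'\to\Spec(A')$ and let $D'$ be the pushforward of $\cC'$; equivalently $X'=\operatorname{Spec}_{X}(\bigoplus_m\pi'_*\omega_{\cX'/A'}^{[m]})$ relative to the zero section.

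The first step I would carry out is flatness: one must check that the covering stack $\cX'$ of a flat deformation $X'$ is itself flat over $A'$, and conversely that the coarse space of a flat deformation $\cX'$ is flat over $A'$. For the forward direction this follows once we know $\omega_{X'/A'}^{[m]}$ commutes with base change — but that is exactly the issue, since we only \emph{a priori} know Koll\'ar's condition for the central fiber $X$; here the standard trick is that $X$ has quotient singularities (a Manetti surface or a gluing of such, by \cite{Hac04}), so $X$ is Cohen--Macaulay with $\mathbb{Q}$-Gorenstein singularities and every deformation over an Artinian base is automatically $\mathbb{Q}$-Gorenstein in the sense that $\omega^{[m]}$ commutes with base change — this is precisely the content of \cite[Prop. 5.3.2]{AH11}, already invoked in the proof of Proposition \ref{p:canonicalcover}, together with the fact that demi-normality and the $S_2$ condition are preserved. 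For the converse direction, flatness of $X'=\cX'\to\Spec A'$ over the Artinian base is essentially local and follows from $\bigoplus_m\pi'_*\omega_{\cX'/A'}^{[m]}$ being flat, which in turn uses that $\cX'\to\Spec A'$ is flat with $S_2$ fibers and that the index one cover construction is compatible with the (trivial) base change $\bk\hookrightarrow A'$.

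The second step is to check the two constructions are inverse to each other. Going $X'\mapsto\cX'\mapsto X''$: the coarse space of $\cX'$ is $\operatorname{Spec}_{X'}(\mathcal{O}_{X'})=X'$ by construction of a quotient by $\bG_m$ acting with the zero-weight piece being $\mathcal{O}_{X'}$, and $D''=D'$ since both agree on the big open set $U'$ and both are divisors not containing any fiber component, hence determined by their restriction to $U'$ by $S_2$-ness (Hartogs). Going $\cX'\mapsto X'\mapsto\cX''$: one needs $\bigoplus_m\omega_{X'/A'}^{[m]}\cong\bigoplus_m\pi'_*\omega_{\cX'/A'}^{\otimes m}$ as graded $\mathcal{O}_{X'}$-algebras; this holds because $\pi'$ is an isomorphism over $U'$, $\omega_{\cX'/A'}$ is a line bundle (so $\pi'_*\omega_{\cX'/A'}^{\otimes m}$ is reflexive by \cite[Prop. 3.6.2]{HK} or \cite[Prop. 5.6]{LN18}), and two reflexive sheaves on $X'$ agreeing in codimension one are equal — again using that $\operatorname{codim}(X'\setminus U')\geq 2$ fiberwise, which propagates from the central fiber by the local criterion of flatness since the bad locus is a closed subscheme flat over the Artinian base. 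Compatibility with the divisors $\cC'$ is the analogous reflexive-sheaf argument for $\mathcal{O}(\cC')$. Finally one checks both assignments are functorial in $A'$ and send isomorphisms to isomorphisms, which is formal once the sheaf-theoretic constructions above are pinned down.

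The main obstacle I expect is the codimension-two control: showing that for an arbitrary Artinian deformation $X'$ the non-invertible locus of $\omega_{X'/A'}$ (equivalently, the non-trivial-stack locus of $\cX'$, equivalently the singular locus) still has fiberwise codimension $\geq 2$, and that $\pi':\cX'\to X'$ remains an isomorphism in codimension one — this is what makes all the reflexive-sheaf identifications work and what lets one move divisors back and forth freely. This should follow from the fact that the singular locus of $X$ is finite (quotient singularities on a surface), hence its deformation over a thickening of a point remains finite, so codimension two is preserved; but it requires care because one is deforming the whole flat family, not just a single fiber, so one argues with the relative singular locus $Z\subset X'$ and its properness/finiteness over $\operatorname{Spec} A'$. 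Once that is in hand, the rest is the bookkeeping of \cite[Sec. 4.1]{BI} and \cite[Sec. 3]{Hac04}, and I would cite those for the deformation-theoretic equivalence of the stack and its coarse space rather than redoing it.
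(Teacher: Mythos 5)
Your overall architecture --- covering stack one way, coarse space the other, then checking the two are mutually inverse via reflexive sheaves agreeing in codimension one --- is the same as the paper's. But note first that the forward direction is easier than you make it: a ``deformation'' in item (1) is by definition a family in $\cM(\chi,N_d,(1,\tfrac{3}{d}))$, so Koll\'ar's condition holds over all of $\Spec(A')$ by hypothesis and the covering-stack construction of Proposition \ref{p:canonicalcover} applies verbatim. Relatedly, your claim that every Artinian deformation is automatically $\bQ$-Gorenstein (so that $\omega^{[m]}$ commutes with base change) is false --- the cone over the rational normal quartic already has non-$\bQ$-Gorenstein infinitesimal deformations, which is exactly why Koll\'ar's condition must be imposed in the moduli functor rather than deduced; this happens not to matter here only because the condition is built into the definition of (1). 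Your ``main obstacle'' of codimension-two control is also essentially vacuous over an Artinian base: $|X'|=|X|$ as topological spaces, so the non-invertible locus of $\omega_{X'/A'}$ is the same closed set as in the central fiber, which is how the paper dispatches it in one line.

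The genuine gap is in the backward direction: you never verify that the coarse-space family $(X',D')$ of a stack deformation $(\cX',\cD')$ actually lies in $\cM(\chi,N_d,(1,\tfrac{3}{d}))$, and in particular you never check condition (4) of Definition \ref{d:familyNoetherian}, namely $\omega_{X'/A'}^{[N_d]}(N_dD')\cong_{A'}\cO_{X'}$. A flat deformation of the pair $(\cX,\cD)$ carries no a priori linear-equivalence constraint, so this must be proved; without it the map from (2) to (1) does not land in the stated deformation functor and the bijection is not well defined. The paper's argument here is a real step: $\omega_{X'/A'}^{[N_d]}(N_dD')$ restricts to $\cO_X$ on the central fiber, hence is a deformation of the invertible sheaf $\cO_X$ (invertibility coming from the pushforward identification you do discuss), and one shows $\cO_X$ is rigid by factoring $A'\to\bk$ through small extensions and using $H^1(X,\cO_X)=0$ (from \cite[Thm. 11.34]{KolNewBook}) together with \cite[Thm. 6.4]{Har10}. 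You need to add this. The rest of your outline --- the identification $\omega_{X'/A'}^{[m]}(kC')\cong\pi'_*\omega_{\cX'/A'}^{[m]}(k\cC')$ via reflexivity to get Koll\'ar's condition, and the appeal to \cite[Prop. 3.7]{Hac04} for the round trip --- matches the paper.
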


In the above proposition, a deformation means the following:
\begin{itemize}
\item 
A  \emph{deformation}  of $(X,D)$ is a family $(X',D') \to \Spec(A') $ in $ \cM(\chi, N_d, (1,\tfrac{3}{d}))$ with an isomorphism $(X',D')_{\Spec(\bk)} \cong (X,D)$. 
\item  A \emph{deformation}  of $(\cX, \cD)$, where $\cD:= \tfrac{3}{d} \cC$, is the data of $(\cX',\cD')$, where  $\cX'$ is a Deligne--Mumford stack with a flat proper morphism $\cX'\to \Spec(A')$ and $\cD':= \tfrac{3}{d} \cC'$, where  $\cC'\subset \cX'$ is a flat Cartier divisor
with an isomorphism $(\cX',\cC' )\vert_{\Spec(\bk)} \cong (\cX,\cC)$.
\end{itemize}

\begin{proof}
The construction in Section \ref{ss:canonicalcovering} produces a map  from (1) to (2) given by taking the canonical covering family $(\cX',\cD') \to \Spec(A')$ of $(X',D')\to \Spec(A')$. 
To construct its inverse,  fix a deformation 
$(\cX',\cD')\to \Spec(A')$ of $(\cX,\cD)$
and let $X'$ denote the coarse moduli space of $\cX'$.
By \cite[Prop. 4.6.i \& 4.14.x]{Alp13}, there is a cartesian diagram
\[
\begin{tikzcd}
\cX\arrow[d,"\pi"] \arrow[r,hook]& \cX' \arrow[d,"\pi'"]\\
X \arrow[r,hook ]  &X'
\end{tikzcd}
,\]
where the vertical arrows are the coarse moduli space maps, and the horizontal arrows are closed embeddings, and the map $X'\to \Spec(A')$ is flat. 
Let $\cU\subset \cX$ and $\cU'\subset \cX'$ denote the loci where $\pi$ and $\pi'$ are isomorphisms. Since $\cU= \cU' \vert_{\cX}$, 
$
{\rm codim}_{\cX'}(\cX' \setminus \cU')\geq 2$.
Hence we may set  $D':= \tfrac{3}{d}C'$, where $C'$ is the closure of the (isomorphic) image of $\cC'\vert_{\cU'}$ in $X'$.

We now prove $(X',D')\to \Spec(A')$ is a family in  $ \cM(\chi, N_d, (1,\tfrac{3}{d}))$.
Since  $(X',D')_{\Spec(\bk)} \cong (X,D)$, which is an slc pair in $\cP_{d}^{\CY}$, it suffices to show
(i) $\omega_{X'/A'}^{[m]}(kC)$ commutes with base change for all $m,k \in \bZ$ and (ii) $\omega_{X'/A'}^{[N_d]}(N_d D')\cong  \cO_{X'}$. 
For (i), note that $\pi'_*\omega_{\cX'/A'}^{[m]}(k\cC')$ is flat over $B$ with pure $S_2$ fibers and taking  reflexive powers commutes with base change by the proof of \cite[Thm. 5.3.6]{AH11}.
Since $\pi'_*\omega_{\cX'/A'}^{[m]}(k\cC')$ and $\omega_{X'/A'}^{[m]}(kC')$
agree over $U$, \cite[Prop. 3.6.2]{HK} implies the sheaves are isomorphic. 
Thus (i) holds.

To show (ii) holds, note that $\omega_{X'/A'}^{[N_d]}(N_d D') \vert_{X} \cong  
\omega_{X}^{[N_d]}(N_d D)\cong \cO_{X}$, where the first isomorphism is by (i).
Hence $ \omega_{X'/A'}^{[N_d]}(N_d D') $ is a deformation of  $\cO_X$.
To show $\cO_{X}$ does not admit trivial deformations, factor $A'\to \bk$ by a sequence of maps of Artininian rings
\[
A'= A_0 \to A_1 \to \cdots A_r = \bk
\]
such that $J_i:= \ker (A_i \to A_{i+1})$ is a prinicipal ideal annihilated by the maximal ideal  $\fm_{i} \subset A_i$.
Set $X_i  : = X\times_{\Spec(A)} \Spec(A_i)$.
Since 
\[
H^1(X_{i+1}, J_i\otimes \cO_{X_{i+1}} )=H^1(X,\cO_X) = 0
,
\] where the first isomorphism uses that $J_i \cong A_{i}/\fm_{i}\cong \bk$ 
and the second is by  \cite[Thm. 11.34]{KolNewBook}, \cite[Thm. 6.4]{Har10} implies that there is at most one deformation  of $\cO_{X_i}$ to $X_{i+1}$. 
Hence the only deformation of $\cO_{X}$ to $X'$ is $\cO_{X'}$, and we can conclude that (ii) holds.

By the above discussion, $(X',D') \to \Spec(A')$ is a deformation of $(X,D)$.
Thus taking the coarse moduli space gives a map from (2) to (1).
To see that these constructions are inverses, note that the canonical covering stack of the coarse moduli space $X'\to \Spec(A')$ is isomorphic to $\cX'\to \Spec(A')$ by \cite[Prop. 3.7]{Hac04}.
Thus we get the desired bijection.
\end{proof}

\subsubsection{Smoothness and normality results}

\begin{prop}\label{p:smoothnesPCYatpt}
If $(X_0,D_0)$ is in $\cP_d^{\CY}(\bk)$ and the normalization $(\overline{X}_0,\overline{G}_0)$ of $(X_0,0)$ is plt,  
then $\cP_{d}^{\CY}$ is smooth at $(X_0, D_0)$.
\end{prop}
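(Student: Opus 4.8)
The plan is to reduce the smoothness of $\cP_d^{\CY}$ at $(X_0,D_0)$ to a deformation-theoretic statement about the canonical covering stack $(\cX_0,\cD_0)$, which is a Gorenstein Deligne--Mumford stack, and then invoke Hacking's smoothness result \cite{Hac04}. By Proposition \ref{p:deformations}, the deformation functor of $(X_0,D_0)$ in $\cM(\chi,N_d,(1,\tfrac3d))$ is isomorphic to the deformation functor of the pair $(\cX_0,\cC_0)$, where $\cC_0$ is a Cartier divisor on $\cX_0$ flat over the base and $\omega_{\cX_0}$ is a line bundle (Proposition \ref{p:canonicalcover}). So it suffices to prove that the latter deformation functor is unobstructed, i.e. that the deformations of the pair $(\cX_0,\cC_0)$ form a smooth functor.

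\textbf{Key steps.} First I would pass to the canonical covering stack and record that, since $(\overline{X}_0,\overline G_0)$ is plt, the surface $X_0$ is klt away from its (at most one) non-normal locus, which is a smooth rational curve along which it is a normal crossing of plt surfaces; this is exactly the setting in which $\cX_0$ is a reduced Gorenstein DM stack that \'etale-locally looks like a quotient of a smoothing-friendly hypersurface singularity or a normal-crossing pair. Second, I would invoke the relevant part of \cite{Hac04} (specifically Hacking's analysis showing the deformation space of such a stacky surface pair — a Manetti surface, or a normal-crossing union of two of them glued along a rational curve, together with the curve $\cC_0$ — is smooth; cf. \cite[\S 7 and \S 8]{Hac04} and the $T^2$-vanishing arguments there, together with \cite{HP10}). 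The point is that for these canonical covering stacks $T^2(\cX_0,\cC_0)=0$ and $H^2$ of the relevant tangent complex vanishes, so the deformation functor has no obstructions. Third, I would translate unobstructedness back through the bijection of Proposition \ref{p:deformations} to conclude that the moduli stack $\cP_d^{\CY}$, which is locally of finite type with affine diagonal by Theorem \ref{t:stack}, has smooth versal deformation space at the point $[(X_0,D_0)]$, hence is smooth there.

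\textbf{Main obstacle.} The crux is verifying the obstruction-vanishing for the covering stack pair. When $(\overline X_0,\overline G_0)$ is plt, either $X_0$ is normal — in which case $X_0$ is a Manetti surface (a $\bQ$-Gorenstein smoothable klt surface with $\rho=1$) and one must check that adding the flat Cartier divisor $\cC_0$ does not introduce obstructions, using that $\cC_0$ meets the stacky points transversally in the covering stack — or $X_0$ is a gluing of two plt (hence covering-stack-smoothable) surfaces along a smooth rational curve, where the obstruction theory must be controlled along the conductor via Koll\'ar's gluing description and the normal-crossing deformation theory of \cite{AH11,BI}. In both cases the input is Hacking's computation that the canonical covering stacks have unobstructed deformations (relying on the classification of the local analytic types appearing and explicit $T^2$ vanishing), and the work is to confirm the divisor $\cC_0$ is ``general enough'' — i.e. that it avoids the locus where adding it would obstruct, which is automatic here since $\cC_0$ is Cartier on $\cX_0$ and the pair is slc. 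I expect assembling these local contributions and citing the precise form of Hacking's vanishing to be the delicate bookkeeping, but no genuinely new argument should be required beyond Proposition \ref{p:deformations} and \cite{Hac04}.
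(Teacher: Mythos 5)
Your proposal follows essentially the same route as the paper: reduce via Proposition \ref{p:deformations} to deforming the canonical covering stack pair $(\cX_0,\cC_0)$, invoke Hacking's unobstructedness results for the covering stack of the underlying surface (the paper cites \cite[Thm. 8.2 \& 9.1]{Hac04} and checks the infinitesimal lifting criterion directly rather than phrasing it as $T^2$-vanishing), and then deform the divisor. The one point you flag as delicate — that adding $\cC_0$ does not obstruct — is resolved in the paper simply by the vanishing $H^i(X_0,\cO_{X_0}(C_0))=0$ for $i\geq 1$ (from \cite[Thm. 11.34]{KolNewBook}) together with $\cC$ being Cartier, which feeds into the argument of \cite[Thm. 3.12]{Hac04}; no genuinely new input is needed.
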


\begin{proof}
Since  $\cP_d^{\CY}$ is defined as the closure of the open substack $\cP_{d}$ in $\cM(\chi, N_d,(1,\tfrac{3}{d}))$, it suffices to prove $\cM(\chi, N_d,(1,\tfrac{3}{d}))$ is smooth at $(X_0,D_0)$.
By the  infinitesimal lifting criterion for smoothness
\cite[\href{https://stacks.math.columbia.edu/tag/02HY}{Tag 02HY} and \href{https://stacks.math.columbia.edu/tag/0DP0}{Tag 0DP0}]{stacks-project}, the latter  holds if and only if for every
\begin{itemize}
\item surjective morphism of Artinian local rings $A'\to A$ with residue field $\bk$ and
\item deformation $(X,D)\to \Spec(A)$ of $(X_0,D_0)$,
\end{itemize}
there exists a deformation $(X',D')\to \Spec(A')$ of $(X_0,D_0)$ extending $(X,D)\to \Spec(A)$.

To verify the existence of such a deformation, we  will first  deform the canonical covering family $(\cX,\cD)\to \Spec(A)$ of $(X,D)\to \Spec(A)$.
Since $(\overline{X}_0,\overline{G}_0)$ is plt by assumption, \cite[Thm. 8.2 \& 9.1]{Hac04} implies  the existence of a flat proper morphism  from a Deligne--Mumford stack $\cX'\to \Spec(A')$ and an isomorphism 
$\cX' \times_{\Spec(A')} \Spec(A)\cong \cX$.
Since $ H^i(X_0,\cO_{X_0}(C_0))=0= H^i(X_0,\cO_{X_0}) $
for all $i\geq 1$ by  \cite[Thm. 11.34]{KolNewBook} and
 $\cC$ is a Cartier divisor by  Proposition \ref{p:canonicalcover},
the proof of \cite[Thm. 3.12]{Hac04} implies $\cC\subset \cX$ deforms to a closed subscheme $\cC'\subset \cX$. 
Thus $(\cX',\cD' :=\tfrac{3}{d}\cC')\to \Spec(A')$ is a deformation of $(\cX_0,\cD_0)$. 
By Proposition \ref{p:deformations}, the coarse moduli space $(X',D')\to \Spec(A')$
is a deformation of $(X_0,D_0)$ extending $(X,D)\to \Spec(A)$.
Therefore $\cP_{d}^{\CY}$ is smooth at $(X_0,D_0)$.
\end{proof}

Using the previous proposition,  we deduce the following statements. 

\begin{prop}\label{p:normalPdCY}
If $3\nmid d$, then $\cP_{d}^{\CY}$ is smooth and  $P_{d}^{\CY}$ is normal.	
\end{prop}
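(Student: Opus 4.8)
The strategy is to bootstrap from the pointwise smoothness statement in Proposition~\ref{p:smoothnesPCYatpt} via the structural description of pairs in $\cP_d^{\CY}(\bk)$ when $3\nmid d$ given in Proposition~\ref{p:dnmid3plt}. First I would note that smoothness of an algebraic stack is a local property on points, so to show $\cP_d^{\CY}$ is smooth it suffices to check smoothness at every $\bk$-point $(X_0,D_0)\in\cP_d^{\CY}(\bk)$ (the stack is locally of finite type over $\bk$ by Theorem~\ref{t:stack}, so closed points suffice). By Proposition~\ref{p:dnmid3plt}, for any such pair the normalization $(\overline{X}_0,\overline{G}_0)$ of $(X_0,0)$ is plt. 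Hence Proposition~\ref{p:smoothnesPCYatpt} applies verbatim and gives that $\cP_d^{\CY}$ is smooth at $(X_0,D_0)$. Since this holds at every $\bk$-point, $\cP_d^{\CY}$ is smooth.

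\textbf{Normality of $P_d^{\CY}$.} For the second assertion I would invoke the good moduli space morphism $\phi\colon \cP_d^{\CY}\to P_d^{\CY}$ produced in Theorem~\ref{t:moduliexists3notd}(2). The key input is that a good moduli space inherits normality (and more generally reducedness and the $S_2$ property, etc.) from its source: this follows from the fact that $\phi$ is universally closed and surjective with $\cO_{P_d^{\CY}}\xrightarrow{\ \sim\ }\phi_*\cO_{\cP_d^{\CY}}$ (Definition~\ref{d:gm}), so $\cO_{P_d^{\CY}}$ is a subring of a pushforward of a normal sheaf of rings, and normality descends along good moduli space morphisms by \cite[Thm.~4.16(viii) / Prop.~6.4]{Alp13} (or one argues directly: the normalization of $P_d^{\CY}$ pulls back to an integral, birational, finite cover of $\cP_d^{\CY}$, which must be an isomorphism since $\cP_d^{\CY}$ is smooth hence normal, and then descends). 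Concretely, since $\cP_d^{\CY}$ is smooth it is in particular normal, and normality of the source of a good moduli space morphism implies normality of the target; applying this to $\phi$ gives that $P_d^{\CY}$ is normal.

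\textbf{Expected main obstacle.} There is essentially no obstacle here: both halves are short deductions from results already established in the excerpt. The one point requiring mild care is making sure that ``smooth at every $\bk$-point'' genuinely implies ``smooth'' for the algebraic stack $\cP_d^{\CY}$ — this uses that it is locally of finite type over the base field $\bk$ (Theorem~\ref{t:stack}) together with the Jacobian/infinitesimal-lifting criterion, and the reduction to closed points is standard. The other point is to cite the precise statement that good moduli spaces inherit normality from their source; if one prefers to avoid quoting that, the alternative direct argument via the normalization $P_d^{\CY,\nu}\to P_d^{\CY}$ and the induced finite birational map on stacks works, using smoothness of $\cP_d^{\CY}$ to conclude the normalization map is an isomorphism. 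Either way the proof is a few lines.

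\begin{proof}
By Theorem~\ref{t:stack}, $\cP_d^{\CY}$ is a locally finite type algebraic stack, so it suffices to verify smoothness at every $\bk$-point. Let $(X_0,D_0)\in \cP_d^{\CY}(\bk)$. Since $3\nmid d$, Proposition~\ref{p:dnmid3plt} implies the normalization $(\overline{X}_0,\overline{G}_0)$ of $(X_0,0)$ is plt. Hence Proposition~\ref{p:smoothnesPCYatpt} shows $\cP_d^{\CY}$ is smooth at $(X_0,D_0)$. As this holds for all $\bk$-points, $\cP_d^{\CY}$ is smooth.

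In particular $\cP_d^{\CY}$ is normal. By Theorem~\ref{t:moduliexists3notd}, there is a good moduli space morphism $\phi\colon \cP_d^{\CY}\to P_d^{\CY}$. Since normality of the source of a good moduli space morphism descends to the target, $P_d^{\CY}$ is normal.
\end{proof}
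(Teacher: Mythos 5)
Your proof is correct and follows exactly the paper's argument: Proposition \ref{p:dnmid3plt} plus Proposition \ref{p:smoothnesPCYatpt} give smoothness of $\cP_d^{\CY}$, and normality of $P_d^{\CY}$ then follows from the fact that good moduli spaces inherit normality from their source (\cite[Thm.~4.16.viii]{Alp13}). The extra care you take about reducing to $\bk$-points and the alternative descent argument are fine but not needed beyond what the paper records.
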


\begin{proof}
By Propositions \ref{p:dnmid3plt} and \ref{p:smoothnesPCYatpt},
$\cP_{d}^{\CY}$ is smooth. 
Thus $P_d^{\CY}$ is normal by \cite[Thm. 4.16.viii]{Alp13}.
\end{proof}

\begin{prop}\label{p:Klocalstructure}
For each $d \geq 1$, $\cP_{d}^{\rm K}$
is smooth and $P_{d}^{\rm K}$ is normal.
\end{prop}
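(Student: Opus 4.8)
\textbf{Proof proposal for Proposition \ref{p:Klocalstructure}.}

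The plan is to mimic the argument for Proposition \ref{p:normalPdCY}, reducing the smoothness of $\cP_d^{\rm K}$ to a deformation-theoretic statement about the canonical covering stack and then invoking the results of Hacking \cite{Hac04}. First I would recall that $\cP_d^{\rm K}$ is an open substack of $\cP_d^{\CY}$ by Theorem \ref{t:ADL}.1, and that every pair $(X,D)$ in $\cP_d^{\rm K}(\bk)$ is klt; indeed K-semistable log Fano pairs are klt by \cite{Oda13b}, so $(X,(1-\vep)D)$ klt forces $X$ klt. In particular, for such $(X_0,D_0)$ the normalization of $(X_0,0)$ is just $(X_0,0)$, which is klt and hence certainly plt. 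Therefore Proposition \ref{p:smoothnesPCYatpt} applies directly at every $\bk$-point of $\cP_d^{\rm K}$, giving that $\cP_d^{\CY}$ — and hence its open substack $\cP_d^{\rm K}$ — is smooth. The one subtlety is that Proposition \ref{p:smoothnesPCYatpt} was stated for $\cP_d^{\CY}$, but since $\cP_d^{\rm K}\subset\cP_d^{\CY}$ is open, smoothness of $\cP_d^{\CY}$ at a point of $\cP_d^{\rm K}$ immediately gives smoothness of $\cP_d^{\rm K}$ there; one checks this applies to all $\bk$-points, which suffices since smoothness of a locally finite type algebraic stack over $\bk$ can be checked on $\bk$-points.

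Once smoothness of $\cP_d^{\rm K}$ is established, normality of $P_d^{\rm K}$ follows formally: $\phi_{\rm K}\colon\cP_d^{\rm K}\to P_d^{\rm K}$ is a good moduli space morphism by Theorem \ref{t:ADL}.2, and a good moduli space of a smooth (in particular normal) algebraic stack is normal by \cite[Thm. 4.16.viii]{Alp13}. This is exactly the same last step as in Proposition \ref{p:normalPdCY}.

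The main obstacle — though it is really already resolved in the excerpt — is ensuring that the hypothesis of Proposition \ref{p:smoothnesPCYatpt} is genuinely satisfied, i.e. that the plt (here even klt) hypothesis on the normalization holds for all of $\cP_d^{\rm K}$ and not merely the smooth locus $\cP_d$. This is where one uses the input that K-semistable log Fano pairs have klt singularities; without it, a priori some boundary point of $\cP_d^{\rm K}\subset\cP_d^{\CY}$ could fail to have plt normalization. Since the klt property is exactly what distinguishes $\cP_d^{\rm K}$ inside $\cP_d^{\CY}$, the argument goes through for every $d\geq 1$ with no divisibility restriction on $d$, in contrast to Proposition \ref{p:normalPdCY} which required $3\nmid d$ precisely to rule out non-klt degenerations. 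I would phrase the final proof in two sentences: cite Theorem \ref{t:ADL}.1 and \cite{Oda13b} for kltness, apply Proposition \ref{p:smoothnesPCYatpt} for smoothness, then apply \cite[Thm. 4.16.viii]{Alp13} for normality of $P_d^{\rm K}$.
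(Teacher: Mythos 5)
Your proposal is correct and follows essentially the same route as the paper: K-semistable log Fano pairs are klt, so the hypothesis of Proposition \ref{p:smoothnesPCYatpt} holds at every $\bk$-point of $\cP_d^{\rm K}$, giving smoothness, and normality of $P_d^{\rm K}$ then follows from \cite[Thm. 4.16.viii]{Alp13}. The extra discussion about openness and checking smoothness at $\bk$-points is harmless elaboration of what the paper leaves implicit.
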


\begin{proof}
Since K-semistable log Fano pairs are klt,  Proposition \ref{p:smoothnesPCYatpt} implies $\cP_d^{\rm K}$ is smooth. Thus $P_d^{\rm K}$ is normal by \cite[Thm. 4.16.viii]{Alp13}.
\end{proof}

\begin{rem}\label{r:Kdef=ADL}
The definition of $\cP_d^{\rm K}$ in \cite{ADL19} differs from the definition in this paper. 
While it is straightforward to see the definitions agree over reduced bases using Lemma \ref{lem:flatdiv} and \cite[Rem. 3.25]{ADL19}, it is not immediately obvious that the two definitions agree over all base schemes. 
However, since  $\cP_d^{\rm K}$   is reduced by Proposition \ref{p:Klocalstructure} 
and so is the stack in \cite{ADL19} by definition, the two stacks agree.
\end{rem}

\subsection{Twisted curves}\label{ss:twistedcurves}
We now discuss \textit{twisted curves}, which arise naturally from families in $\cP^{\CY}_{3}$ and study some properties of their degenerations.

\begin{defn}[\cite{AV}]\label{def_tw_curve}
A  \emph{twisted curve} over a scheme $B$ is a proper flat Deligne--Mumford stack $\mathcal{C}\to B$ such that for each geometric point $\overline{b} \to B$, 
\begin{enumerate}
\item $\cC_{\overline{b}}$ is purely one-dimensional with at worst nodal singularities,
\item the coarse moduli map $ \cC_{\overline{b}} \to C_{\overline{b}}$ is an isomorphism away from the nodes of $\cC_{\overline{b}}$, and
\item  each node of $\cC_{\overline{b}}$ admits an \'etale neighborhood which is \'etale locally isomorphic to 
\[[(\operatorname{Spec} k(\overline{b})[z,w]/(zw))/\bmu_r],
\]
where the action of $\bmu_r$ is $\zeta \cdot z = \zeta z$ and $\zeta \cdot w = \zeta^{-1}w$ for $\zeta\in \bmu_r$.
\end{enumerate}
\end{defn}

\newcommand{\sC}{\mathscr{C}}
The following proposition produces a twisted curve from a family in $\cP_{3}^{\CY}$.

\begin{prop}\label{p:planecurvetotwisted}
If $(X,D := \frac{3}{3}C)\to B$ is a family in $\cP^{\CY}_3$ over a Noetherian scheme $B$ and $(\cX,\cD:=\tfrac{3}{3}\cC)$ is the canonical covering family, then $\cC\to B$ is a twisted curve.
\end{prop}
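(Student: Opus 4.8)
The plan is to verify the three conditions in Definition \ref{def_tw_curve} for the stack $\cC\to B$, working over a geometric point $\overline{b}\to B$ throughout; since all conditions are étale-local and fiberwise, this suffices. The key structural input is Proposition \ref{p:canonicalcover}: $\cX\to B$ is proper and flat with $S_2$ fibers, $\omega_{\cX/B}$ is a line bundle, $\cC\subset\cX$ is a Cartier divisor flat over $B$, and $(\cX_b,\cD_b)$ is slc for every $b$. Thus $\cC\to B$ is automatically proper and flat, and it remains to understand the geometry of the fiber $\cC_{\overline{b}}$ and the map $\pi|_{\cC}:\cC\to C$ to the coarse space.

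First I would establish condition (1), that $\cC_{\overline{b}}$ is purely one-dimensional and nodal. Since $\cX_{\overline{b}}$ is a degeneration of $\bP^2$ (a point of $\cP_3^{\CY}$), we have $K_{X_{\overline{b}}}+D_{\overline{b}}\sim_{\bQ}0$ with coefficient $\tfrac{3}{3}=1$ on $C_{\overline{b}}$; by Lemma \ref{l:extenddivisor}-type considerations and the index-one hypothesis implicit in $d=3$, in fact $K_{\cX_{\overline{b}}}+\cC_{\overline{b}}\sim 0$ on the canonical cover, so $\cC_{\overline{b}}\in|-K_{\cX_{\overline{b}}}|$. Since $\omega_{\cX_{\overline{b}}}$ is a line bundle (Gorenstein), adjunction applies: $(\cX_{\overline{b}},\cC_{\overline{b}})$ is slc with $\cC_{\overline{b}}$ reduced Cartier, so $(\cC_{\overline{b}},{\rm Diff}_{\cC_{\overline{b}}}(0))$ is an slc curve pair with trivial different, hence $\cC_{\overline{b}}$ is a nodal curve of arithmetic genus one (the structure forced as in the proof of Proposition \ref{p:typeIII-Sequiv}, Claims 1--2, or directly from classification of anticanonical members of slc Gorenstein degenerations of $\bP^2$). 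This gives (1), and also shows $\cC_{\overline{b}}$ is purely $1$-dimensional since $\cC$ does not contain a component of $\cX_{\overline{b}}$.

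Next I would handle conditions (2) and (3), which concern the stacky structure of $\cC$. The map $\pi:\cX\to X$ is an isomorphism over the open locus $U$ where $\omega_{X/B}$ is invertible, and $\cC$ is by definition the closure of $C|_U$; so $\pi|_{\cC}:\cC\to C$ is an isomorphism away from $\pi^{-1}(X\setminus U)$. The Deligne--Mumford stack $\cX$ has cyclic quotient (in fact $\bmu_r$-gerbe-free, since it is the canonical cover) singularities at the non-Gorenstein points of $X$, and the local model at such a point is $[\operatorname{Spec}(\text{something})/\bmu_r]$ acting with canonical weights. The crucial point is that at a node of the nodal curve $\cC_{\overline{b}}$ lying over such a stacky point, the local picture of $(\cX_{\overline{b}},\cC_{\overline{b}})$ is that of a slc surface germ with its reduced Cartier anticanonical curve; by the classification of such germs (cyclic quotient surface singularities with a Cartier divisor through them whose index-one cover is smooth), the local model of the pair $(\cX,\cC)$ near such a node is $[(\operatorname{Spec}k[x,y,z]/(xy))/\bmu_r]$ with the $\bmu_r$-action $\zeta\cdot x=\zeta x$, $\zeta\cdot y=\zeta^{-1}y$, $\zeta\cdot z = z$ or similar, cutting out $\cC$ by $\{z=0\}$; restricting to $\cC$ gives exactly the local model $[\operatorname{Spec}k(\overline{b})[z,w]/(zw)/\bmu_r]$ of Definition \ref{def_tw_curve}(3), and away from nodes $\cC$ is representable, giving (2). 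Here I would invoke \cite[Sec. 4.1]{BI} and the matching with Hacking's index-one cover construction \cite[Sec. 3]{Hac04} to pin down the precise local structure.

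\textbf{Main obstacle.} The hard part will be verifying condition (3) rigorously: showing that the $\bmu_r$-action at a node of $\cC$ is \emph{balanced} (i.e.\ has opposite weights $\zeta,\zeta^{-1}$ on the two branches $z,w$), rather than some other cyclic action. This is precisely the condition that distinguishes a twisted curve from an arbitrary $1$-dimensional DM stack with cyclic stabilizers, and it is forced by the geometry of how the canonical cover $\cX$ sits at a non-Gorenstein point of $X$ through which $C$ passes: the surface singularity is a cyclic quotient $\tfrac{1}{r}(1,a)$ and $\omega_{\cX}$ being trivialized (index one) on the cover while $\cC$ is Cartier severely constrains $a$ and the weight of $\cC$'s defining equation. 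I expect to extract this from \cite[Thm. 8.3]{Hac04} on Manetti surfaces together with the local analysis in \cite{BI}, ultimately reducing to the statement that a Gorenstein slc surface germ containing a reduced Cartier nodal curve, whose canonical cover is smooth, is étale-locally the quotient of $(xy=0)\subset \bA^3$ by a balanced $\bmu_r$-action. Once this local normal form is in hand, conditions (1)--(3) all follow and the proof concludes.
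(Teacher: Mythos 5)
Your proposal correctly isolates the real difficulty (the balancedness of the $\bmu_r$-action at the nodes of $\cC_{\overline b}$), but the resolution you propose for it does not work, and this is a genuine gap. You hope to prove balancedness by a purely local classification: ``a Gorenstein slc surface germ containing a reduced Cartier nodal curve, whose canonical cover is smooth, is \'etale-locally the quotient of a node by a \emph{balanced} $\bmu_r$-action.'' This local statement is false. For example, let $\bmu_3$ act on $\bA^2_{z,w}$ with weights $(1,1)$; then $\omega_{\bA^2}$ has weight $2$, so $[\bA^2/\bmu_3]$ is the canonical covering stack of the cyclic quotient singularity $\tfrac13(1,1)$, the invariant curve $\{zw=0\}$ is Cartier on the stack with a node at the origin, and $\omega(\{zw=0\})$ has weight $0$, so the coarse pair is an lc CY surface germ of index one --- yet the weights on the two branches are $(1,1)$, not $(\zeta,\zeta^{-1})$. (A non-normal variant $[\{xy=0\}/\tfrac13(1,1,1)]$ with curve $\{z=0\}$ behaves the same way.) So no amount of local adjunction, Gorenstein-ness of the cover, or Cartier-ness of $\cC$ can force the balanced condition; it is a genuinely global constraint.

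What the paper uses instead --- and what your argument is missing --- is the smoothability of $\cC$ coming from the hypothesis that the family lies in $\cP_3^{\CY}$. One first reduces (via the base-change compatibility of $\cC$ from Proposition \ref{p:canonicalcover}.4 and the fact that every geometric point of $\cP_3^{\CY}$ lies in the closure of $\cP_3$) to $B=\Spec R$ a DVR whose generic fiber is a smooth plane cubic. Then at a node $p$ of $\cC_0$ the local structure theory of such stacks gives $\cC\times_C\Spec\cO_{C,q}^{sh}\cong[\Spec(\cO_{B,0}[z,w]/(zw-t))^{sh}/\bmu_r]$ with $t\in\fm\subset R$; since $\cC\to B$ smooths the node, $t\neq 0$, and $\bmu_r$-invariance of the ideal $(zw-t)$ with $\bmu_r$ acting trivially on $t$ forces $\zeta_1\zeta_2=1$. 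The same reduction to the DVR case also underlies condition (2): one shows ${\rm Diff}_{C_0}(0)=0$ because the support of the different is flat over $B$ and vanishes on the generic fiber, whence $X_0$ is smooth at smooth points of $C_0$ and $\pi$ is an isomorphism there; your proposal asserts the analogue only over the Gorenstein locus of $X$, which is not yet condition (2). Your treatment of condition (1) is essentially fine and matches the paper.
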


\begin{proof}
 By Proposition \ref{p:canonicalcover}.4, $\cC\to B$ is flat and its formation commutes with base change. Thus we need to check the geometric fibers are twisted curves. Since every geometric point of $\cP^{\CY}_3$ is in the closure of $\cP_3$, it suffices to prove that, when $B = \Spec\,R$ is the spectrum of a DVR with algebraically closed residue field and $(X,D) \to B$ has generic fiber in $\cP_3$, then the fiber over the closed point $0 \in B$ is a twisted curve.

We will now show $\cC_0$ satisfies (1)--(3) of Definition \ref{def_tw_curve} when $(X,D) \to B$ is as above. Part (1) holds, since  $\cC_0$ is pure 1-dimensional by definition and is at worst nodal by Proposition \ref{p:canonicalcover}.4 and adjunction.
For  (2), we will first show ${\rm Diff}_{C_0}(0)=0$. 
Since $\Supp  ({\rm Diff}_{C}(0))$ is flat over $B$ by  Lemma \ref{l:familyslcnormadj}.2
and  ${\rm Diff}_{C}(0)$ is trivial over the generic fiber, $\Diff_{C}(0)=0$.
Now we compute by adjunction that
\[
K_{C_0} + \Diff_{C_0}(0) \sim 0
\quad \text{ and } \quad
K_{C_0} \sim K_{C}\vert_{C_0} \sim  \left(K_{C}+\Diff_{C}(0)\right) \vert_{C_0} \sim 0.
\]
Thus $\Diff_{C_0}(0)\sim 0$, 
which implies $\Diff_{C_0}(0) =0$.
Therefore $X_0$ is smooth at the smooth points of $C_0$ by \cite[Prop. 2.35]{Kol13}.
Thus the canonical covering stack $\pi : \cX_0\to X_0$ is an isomorphism over smooth points of $C_0$ and so (2) holds.

Finally, we check (3). Let $p \in \cC_0$ be a nodal geometric point lying over the geometric point $q \to C_0$. By the previous argument, $\pi^{-1}(C_0^{sm}) = \cC_0^{sm}$ so $q$ is a node and by \cite[Prop. 2.2]{twisted_curves} and \cite[\href{https://stacks.math.columbia.edu/tag/04GH}{Tag 04GH}]{stacks-project} the pullback of $\cC_0$ to the strict Henselization of $C_0$ at $q$ is computed as
$$
\cC_0 \times_C \operatorname{Spec} \cO_{C, q}^{sh} \cong [\operatorname{Spec}(\cO_{B,0}[z,w]/(zw - t))^{sh}/\bmu_r]
$$
where $t \in \fm \subset R$ and a generator of $\bmu_r$ acts via $(z,w) \mapsto (\zeta_1z, \zeta_2w)$ for $\zeta_i$ primitive $r^{\rm th}$ roots of unity. Since $\cC \to B$ is a smoothing of $\cC_0$, $t \neq 0$. On the other hand, $\bmu_r$ acts trivially on $t$ and $(zw - t)$ must be a $\bmu_r$-invariant ideal so $\zeta_1\zeta_2 = 1$, concluding the proof of (3). 
\end{proof}

\begin{prop}\label{prop_from_plane_curves_to_twisted}
If $(X,D: =\tfrac{3}{3} C)$ is a pair in $\cP_3^{\CY}(\bk)$ with canonical cover $(\cX,\cD:= \tfrac{3}{3}\cC)$ and $\pi: \cX\to X$ is the coarse moduli space morphism, then
\[
{\rm ind}_{\pi(p)}(K_X)= |\Aut_{\cC}(p)| \quad \quad \text{ for all }p\in \cC .
\]
\end{prop}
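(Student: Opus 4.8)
The statement is local on $X$, so I would fix a point $p \in \cC$ lying over $q := \pi(p) \in X$, and pass to the completed (or strictly Henselized) local picture at $q$. The key fact is that the canonical covering stack $\cX \to X$ is, Zariski-locally near $q$, the stack quotient $[\Spec(A^{(1)})/\bmu_r]$ where $r := {\rm ind}_q(K_X)$ and $A^{(1)}$ is the index-one cover of $\omega_{X,q}$ (this is the standard description of $\cX$, matched to the $\bG_m$-quotient construction of Section \ref{ss:canonicalcovering} via \cite[Sec. 4.1]{BI}, as noted in the remark after Proposition \ref{p:canonicalcover}). Under this identification, the residual gerbe/automorphism group $\Aut_{\cX}(p)$ of the unique point $p$ of $\cX$ over $q$ is exactly the stabilizer $\bmu_r$, because $\bmu_r$ acts freely on $\Spec(A^{(1)}) \setminus \{\text{closed point}\}$ and fixes only the closed point; hence $|\Aut_{\cX}(p)| = r = {\rm ind}_q(K_X)$.

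It then remains to identify $\Aut_{\cC}(p)$ with $\Aut_{\cX}(p)$, i.e.\ to check that the closed immersion $\cC \hookrightarrow \cX$ (which exists by Proposition \ref{p:canonicalcover}.4, since $\cC$ is a Cartier divisor on $\cX$) induces an isomorphism on automorphism groups at $p$. For a closed substack, the automorphism group of a point of $\cC$ is a subgroup of the automorphism group of the corresponding point of $\cX$, so $\Aut_{\cC}(p) \hookrightarrow \Aut_{\cX}(p) = \bmu_r$. For the reverse inclusion, the point is that $\bmu_r$ acts on the local ring $A^{(1)}$ of $\cX$ at $p$ \emph{faithfully already on the tangent/cotangent directions cut out inside $\cC$}: concretely, since $\cC$ is a Cartier divisor not containing the fiber, its local equation is a $\bmu_r$-semi-invariant, and the induced $\bmu_r$-action on $\cO_{\cC,p}$ is still faithful — equivalently, $\cC$ does not factor through any intermediate quotient $[\Spec(A^{(1)})/\bmu_{r'}]$ with $r' \mid r$, $r' < r$. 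I would verify this by observing that if it did, then $K_X$ would be Cartier of index dividing $r'$ near $q$ after the corresponding partial quotient, contradicting $r = {\rm ind}_q(K_X)$; alternatively, one can invoke \cite[Prop. 3.7]{Hac04} (or the analogue in \cite{BI}) which says the canonical covering stack is recovered from its coarse space, so the substack $\cC$ inherits the full stacky structure at $p$.

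The main obstacle I anticipate is making the last step — faithfulness of the $\bmu_r$-action on $\cC$ at $p$ — fully rigorous rather than heuristic, since a priori a $\bmu_r$-action could become non-faithful upon restriction to a divisor. The cleanest route is probably to use the explicit local model for a family in $\cP_3^{\CY}$: by Proposition \ref{p:planecurvetotwisted} the stack $\cC$ is a twisted curve, so near $p$ it is \'etale-locally $[\Spec \bk[z]/\bmu_{r'}]$ (a smooth point of a twisted curve) or $[\Spec(\bk[z,w]/zw)/\bmu_{r'}]$ (a node), and $r' = |\Aut_{\cC}(p)|$ by definition of a twisted curve. One then computes that the coarse space $X$ of the ambient $\cX$ has canonical index exactly $r'$ at $q$ by writing down $\omega_{\cX}$ — which is a line bundle by Proposition \ref{p:canonicalcover}.3 — restricting it to $\cC$, using adjunction together with $K_{\cX} + \cC \sim_B 0$ (so $\omega_{\cX}|_\cC \cong \cO_\cC(-\cC)|_\cC$ twisted appropriately), and tracking the $\bmu_{r'}$-weight of $\omega_X$ at $q$. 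This weight computation is the technical heart; everything else is bookkeeping with quotient stacks, so I would budget most of the effort there and keep the rest brief.
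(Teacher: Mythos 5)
Your proposal is correct and is essentially the paper's argument: identify the local quotient structure of the canonical covering stack to get $|\Aut_{\cX}(p)|={\rm ind}_{\pi(p)}(K_X)$ (the paper does this by citing the description of the reduced fiber of $\cX\to X$ as $[(\bA^1\setminus 0)/\bG_m]$ with weight equal to the index, from \cite[Proof of Thm.\ 9.51]{Kol13}, which is equivalent to your $[\Spec(A^{(1)})/\bmu_r]$ picture), and then pass from $\cX$ to $\cC$. The one substantive correction is that the step you flag as the ``technical heart'' is a non-issue: for a closed immersion $\cC\hookrightarrow\cX$ the automorphism groups agree on the nose, since the inertia of a closed substack is the restriction of the ambient inertia --- in a chart $[V/G]\subset [U/G]$ with $V\subset U$ a $G$-invariant closed subscheme, the stabilizer of a lift $\tilde p\in V$ is literally the same subgroup of $G$ whether computed on $V$ or on $U$, regardless of whether the induced action on $\cO_{\cC,p}$ is faithful. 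So no faithfulness argument, adjunction, or $\bmu_r$-weight computation on $\omega_{\cX}|_{\cC}$ is needed (and, for what it is worth, your first proposed route to faithfulness --- deducing a contradiction with ${\rm ind}_q(K_X)$ from non-faithfulness of the action on the divisor --- is not actually a valid implication, since the Cartier index of $K_X$ is not controlled by the action restricted to $\cC$; this is moot here). The paper dispatches the whole proposition in two sentences for exactly this reason.
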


\begin{proof}
Since $\cC \hookrightarrow \cX$ is a closed embedding, $\operatorname{Aut}_\cC(p)= \operatorname{Aut}_\cX(p)$.
By the last paragraph of \cite[Proof of Thm. 9.51]{Kol13}, the reduced fiber of $\cX\to X$ over $\pi(p)$ equals $[(\bA^1\setminus 0 )/\bG_m]$, where $\bG_m$-acts on $\bA^1\setminus 0$ by $t\cdot  x = t^m   x$ and $m$ is the index of $K_X$ at $\pi(p)$ so $|\Aut_\cX(p)|=m$.
\end{proof}

\begin{prop}\label{p:stabilizerjump} 
Let $\cC\to S$ be a family of twisted curves over a pure two dimensional scheme $S$. Let $0\in S$ be a closed point and $1 \in I \subset \bN$ a finite subset. If  $|\Aut_{\cC}(p)| \in I$ for all $p \in \cC\setminus \cC_0$, then  $|\Aut_{\cC}(p)| \in I$ for all $p \in \cC_0$.
\end{prop}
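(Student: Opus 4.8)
The goal is to show that the locus of $\cC$ where the automorphism group order lies outside the finite set $I$ is disjoint from $\cC_0$, assuming it is disjoint from $\cC \setminus \cC_0$. The natural strategy is to reason about the order of the automorphism group of a twisted curve at a point in a family over a two-dimensional base, using the local structure given in Definition \ref{def_tw_curve}(3). Since the only points of a twisted curve with nontrivial automorphisms are the nodes (away from nodes the coarse space map is an isomorphism), and at a node of $\cC_{\overline b}$ the automorphism group is $\bmu_r$ for the integer $r$ appearing in the \'etale-local model, I would first reduce the problem to tracking how the band $r$ behaves in a family. Concretely, I would work \'etale-locally on $S$ near $0$ and near a point $p \in \cC_0$: if $p$ is not a node the claim is trivial since $|\Aut_\cC(p)| = 1 \in I$, so assume $p$ is a node, with local model $[\operatorname{Spec}(A[z,w]/(zw - t))/\bmu_r]$ for $A = \cO_{S,0}^{sh}$ and $t \in \fm_A$.

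The key observation is that $r = |\Aut_\cC(p)|$, and I want to show $r \in I$. The idea is to degenerate in a one-parameter way: choose a map $\operatorname{Spec} R \to S$ from a DVR $R$, hitting $0$ at the closed point and landing generically in $\cC \setminus \cC_0$, and chosen so that the pullback family $\cC_R \to \operatorname{Spec} R$ is a smoothing of the node $p$ — i.e., so that $t$ pulls back to a nonzero element of $R$. Since $S$ is pure two-dimensional and $0$ is a closed point, such a DVR (e.g. the local ring of a general curve through $0$, or its normalization) exists, and genericity ensures the generic point maps into $\cC \setminus \cC_0$ where the hypothesis $|\Aut| \in I$ applies; in particular the generic fiber of $\cC_R$ is a scheme near the relevant point (automorphism group trivial, or at worst of order in $I$) — actually I should be slightly careful: I want the generic point of $\operatorname{Spec} R$ to land in $S \setminus \{0\}$, where all twisted structure is controlled by $I$, and I want $t \neq 0$ in $R$ so that $\cC_R$ genuinely smooths the node. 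Then over $\operatorname{Spec} R$ the local model becomes $[\operatorname{Spec}(R[z,w]/(zw - t))/\bmu_r]$ with $t \neq 0$; the band $r$ at the node of the special fiber is unchanged under this base change, so $r = |\Aut_\cC(p)|$, and I must exhibit $r$ as the order of an automorphism group at some point of $\cC_R$ away from the special fiber — but that point has trivial stabilizer since the generic fiber is smooth there. That's a contradiction unless... hmm, this shows too much.

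Let me recalibrate the plan: the correct statement to prove is not that $r$ literally occurs in the generic fiber, but that $r$ is \emph{constrained} by $I$ through the deformation theory of twisted curves. The right tool is that a family of twisted curves $\cC \to \operatorname{Spec} R$ over a DVR with smooth generic fiber (or with generic automorphism orders in $I$) forces the special-fiber node bands to divide — or more precisely, to be controlled by — the orders in $I$, via the structure of the versal deformation of a twisted node: deforming the node $[\operatorname{Spec} k[z,w]/(zw)/\bmu_r]$ to a smooth curve requires the smoothing parameter to be $\bmu_r$-invariant, and the family is pulled back from the standard family over $[\operatorname{Spec} k[t]/\bmu_r] \to \operatorname{Spec} k[t^r]$. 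Thus if $\cC_R$ is such a family, the parameter $t \in R$ must be an $r$-th power (up to unit) in $R$ after the base change implicit in the twisted structure; combined with the hypothesis that nearby points have $\Aut$-order in $I$, one forces $r \in I$. Concretely, I would: (1) dispose of the non-nodal case; (2) reduce to a node $p \in \cC_0$ with local band $r$; (3) restrict to a DVR $\operatorname{Spec} R \hookrightarrow S$ through $0$, general, using $\dim S = 2$, so the generic point misses $0$; (4) analyze the pulled-back twisted node via its standard local model $[\operatorname{Spec}(R[z,w]/(zw-t))/\bmu_r]$, noting that the generic fiber of this family must be a twisted curve whose band at the (now smoothed or persisting) node near $p$ has order in $I$ — if $t$ is a uniformizer the node smooths and one reads off $r$ from the local monodromy / the root stack structure of $\operatorname{Spec} R \to \operatorname{Spec} R$ along $t$, forcing $r \mid$ something in $I$; (5) if instead $\cC_0$'s node persists generically, the generic band equals $r$ directly and lies in $I$. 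The main obstacle, and where I expect the real work to be, is step (4): making precise that the integer $r$ at a node of the special fiber of a family of twisted curves over a two-dimensional base is recovered from — hence constrained by — the bands occurring on a general one-dimensional slice through the point, which amounts to a careful local computation with the \'etale-local model of a twisted node and the fact (Definition \ref{def_tw_curve}(3)) that the smoothing parameter $t$ carries the trivial $\bmu_r$-action, so that the family is locally the quotient of a versal smoothing by $\bmu_r$ acting with the standard weights; I would likely cite \cite{AV} or \cite{twisted_curves} for the local structure of families of twisted curves to shorten this.
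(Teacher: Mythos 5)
Your setup is right — reduce to a node $p\in\cC_0$ and use the \'etale-local model $[\operatorname{Spec}(\cO_{S,0}[z,w]/(zw-t))/\bmu_r]$ with $t\in\fm$ — but your main line of attack (step (4)) does not work, and the correct argument is the one you relegate to the side case (5). If you slice $S$ by a DVR $\operatorname{Spec} R\to S$ on which $t$ pulls back to a nonzero element, the node smooths over the generic point: on $\{zw=t\}$ with $t$ a unit, both $z$ and $w$ are units, the $\bmu_r$-action is free, and the generic fiber is a scheme near that point. Its automorphism groups are trivial and retain no trace of $r$; there is no "local monodromy" or root-stack structure from which to read off $r$, and in any case a conclusion of the form "$r$ divides something in $I$" would be useless, since $I$ is an arbitrary finite subset of $\bN$ containing $1$ and is not assumed closed under divisors. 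So the smoothing slice is a genuine dead end, as you half-suspected when you wrote that it "shows too much."

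The argument that works is to follow the nodal locus rather than smooth it. In the local model, the stabilizer is exactly $\bmu_r$ along $\{t=0\}\subset\cC$ and trivial off it. Now $V(t)\subset S$ has codimension at most $1$; since $S$ is pure two-dimensional and $0$ is a closed point, $V(t)$ cannot be contained in $\{0\}$, so $\{t=0\}$ meets $\cC\setminus\cC_0$. At any such point the automorphism group has order $r$, so the hypothesis gives $r\in I$ directly. (This is the paper's proof, after the same reduction to the strict Henselization of the coarse space that you perform.) Note in particular that your dichotomy in steps (4)–(5) is resolved not by choosing a general curve through $0$ — which would be transverse to $V(t)$ and land you in the useless smoothing case — but by observing that the divisor $V(t)$ itself always escapes the closed point; your case (5) is not an alternative scenario but the only case one ever needs, provided one restricts attention to $V(t)$. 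No versal deformation theory of twisted nodes is required.
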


The result shows that, for  a family of twisted curves, the stabilizer does not jump up in codimension two.
An application to families in $\cP^{\operatorname{CY}}_{3}$ will be given in Proposition \ref{p:Pdmtwisted}. 
\begin{proof}

First note that the order of $\Aut_{\cC}(p)$ can be computed locally and after passing to the strict Henselization of the coarse space $C$ at $q = \pi(p)$. Indeed let $f : (\operatorname{Spec} A, q') \to (C,q)$ be any pointed \'etale map and consider the cartesian diagram

$$
\xymatrix{\cC_A \ar[r]^{f'} \ar[d]_{\pi'} & \cC \ar[d]^\pi \\ \operatorname{Spec} A \ar[r]_f & C}. 
$$
Then $\pi'$ is the coarse moduli map and $f'(p') = p$ where $p' \in \cC_A$ lies over $q' \in \operatorname{Spec} A$. Moreover, $f$ is stabilizer preserving so $f'$ is also stabilizer preserving. That is, 
$$
\Aut_{\cC_A}(p') \to \Aut_{\cC}(p)
$$
is an isomorphism. 

Thus we may assume that $(S,0)$ is a strictly Henselian local ring and replace $(C, q)$ with $\operatorname{Spec} \cO_{C,\overline{q}}^{sh}$. We may also assume that $p$ is a node of $\cC_0$ otherwise the statement is clear. Now the statement is a consequence of the description of the local structure of twisted curves \cite[Prop. 2.2]{twisted_curves}. 
Indeed by \emph{loc. cit.}, 
\[
\cC \times_C \operatorname{Spec} \cO_{C,\overline{q}}^{sh} \cong [\operatorname{Spec}(\cO_{S, 0}[z,w]/(zw - t))^{sh}/\bmu_r],
\]
where $t \in \fm \subset \cO_{S,0}$ and $\zeta \in \bmu_r$ acts by $(z,w) \mapsto (\zeta z, \zeta^{-1}w)$. This stack has stabilizer $\bmu_r$ along the nodal locus $\{t = 0\} \subset \cC$ and trivial stabilizer elsewhere. On the other hand, $V(t) \subset S$ has codimension at most $1$. Thus $\{t = 0\}$ intersects $\cC \setminus \cC_0$ nontrivially and so $r \in I$ as required. 
\end{proof}

\begin{prop}\label{p:Pdmtwisted}
Let $S$ be a Noetherian pure two dimensional scheme, $0\in S$ a closed point, and $S^\circ := S\setminus 0$.
If $(X,D)\to S$ is in $\cP_{3}^{\CY}$ and its restriction $(X^\circ, D^\circ) \to S^\circ$ is in $\cP_{3,m}^{\CY}$, 
then $(X,D)\to S$ is in $\cP_{3,m}^{\CY}$.
\end{prop}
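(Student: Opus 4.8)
The goal is to show that the bound on the index of $K_X$ along fibers, which holds over the punctured base $S^\circ$, propagates to the central fiber $X_0$. The natural strategy is to translate the index condition on $(X,D)$ into a condition on the automorphism groups of a twisted curve and then invoke Proposition \ref{p:stabilizerjump}. First I would form the canonical covering family $(\cX,\cD := \tfrac{3}{3}\cC) \to S$ associated to $(X,D)\to S$ via the construction in Section \ref{ss:canonicalcovering}. By Proposition \ref{p:planecurvetotwisted}, $\cC\to S$ is a family of twisted curves, since $(X,D)\to S$ is a family in $\cP_3^{\CY}$ over the Noetherian scheme $S$. Moreover, by Proposition \ref{prop_from_plane_curves_to_twisted} applied fiberwise, for each $s\in S$ and each $p\in \cC_s$ we have ${\rm ind}_{\pi(p)}(K_{X_s}) = |\Aut_{\cC_s}(p)| = |\Aut_{\cC}(p)|$, where $\pi$ denotes the coarse moduli space morphism.

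The key point is that the fiberwise index bound translates into a bound on automorphism group orders on $\cC$. Concretely, since $(X^\circ, D^\circ)\to S^\circ$ is in $\cP_{3,m}^{\CY}$, for every $s\in S^\circ$ and every $x\in X_s$ we have ${\rm ind}_x(K_{X_s})\leq m$; hence $|\Aut_{\cC}(p)|\leq m$ for all $p\in \cC\setminus \cC_0$. Equivalently, $|\Aut_{\cC}(p)| \in I := \{1,2,\ldots,m\}$ for all $p \in \cC \setminus \cC_0$. Here I should note that $S$ is pure two dimensional, $0\in S$ is a closed point, and $S^\circ = S\setminus 0$, so the hypotheses of Proposition \ref{p:stabilizerjump} are met (with the finite subset $1\in I\subset \bN$ being $\{1,\ldots,m\}$). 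Applying that proposition gives $|\Aut_{\cC}(p)|\in I$, i.e. $|\Aut_{\cC}(p)|\leq m$, for all $p\in \cC_0$.

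To conclude, I would translate this back. By Proposition \ref{prop_from_plane_curves_to_twisted} applied to the fiber $(X_0, D_0)$ and its canonical cover $(\cX_0,\cD_0) = (\cX,\cD)\vert_{\cX_0}$ (which is the canonical cover of $(X_0,D_0)$ because the covering stack construction commutes with base change, by Proposition \ref{p:canonicalcover} and the fact that reflexive powers of $\omega_{X/S}$ commute with base change), we get ${\rm ind}_{\pi(p)}(K_{X_0}) = |\Aut_{\cC_0}(p)|\leq m$ for all $p\in \cC_0$. It remains to check that every point $x\in X_0$ at which $K_{X_0}$ is not Cartier lies in $\pi(\cC_0)$; this holds because $\pi:\cX_0\to X_0$ is an isomorphism over the open locus $U_0\subset X_0$ where $\omega_{X_0}$ is a line bundle, and outside $U_0$ the index of $K_{X_0}$ is detected precisely by the stabilizers of the twisted curve $\cC_0$ (the support of the non-Gorenstein locus of $X_0$ meets $C_0$ since $C_0 \sim_{\bQ} -K_{X_0}$ is $\bQ$-Cartier and ample, and by the local analysis in \cite[Proof of Thm. 9.51]{Kol13} the non-trivial stacky structure of $\cX_0$ over a non-Gorenstein point is witnessed on $\cC_0$ whenever that point lies on $C_0$). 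Thus ${\rm ind}_x(K_{X_0})\leq m$ for all $x\in X_0$, so $(X,D)\to S$ is in $\cP_{3,m}^{\CY}$.

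\textbf{Main obstacle.} The routine parts are the reductions via the canonical covering construction; the step requiring the most care is verifying that the index of $K_{X_0}$ at an arbitrary point $x\in X_0$ is genuinely controlled by the automorphism groups of $\cC_0$ — i.e. that the non-Gorenstein points of $X_0$ all lie on the curve $C_0$, so that no index jump can "hide" away from $\cC_0$. This should follow from the structure of the pairs in $\cP_3^{\CY}$ (where $-K_{X_0}\sim_{\bQ} D_0 = C_0$ is ample and $\bQ$-Cartier, forcing the non-Gorenstein locus to meet every curve, in particular $C_0$) together with the local description of the canonical covering stack in \cite[\S 9]{Kol13}; I would make this precise using Proposition \ref{prop_from_plane_curves_to_twisted} and the fact that $\pi$ is an isomorphism exactly over the Gorenstein locus.
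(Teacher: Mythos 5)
Your main line of argument is the same as the paper's: form the canonical covering family, use Proposition \ref{p:planecurvetotwisted} to get a twisted curve $\cC\to S$, translate the index bound over $S^\circ$ into the bound $|\Aut_{\cC}(p)|\le m$ for $p\in\cC\setminus\cC_0$ via Proposition \ref{prop_from_plane_curves_to_twisted}, apply Proposition \ref{p:stabilizerjump} to propagate this to $\cC_0$, and translate back to conclude ${\rm ind}_{\pi(p)}(K_{X_0})\le m$ for all $p\in\cC_0$. That part is fine.

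The gap is in your final step, where you must control the index of $K_{X_0}$ at points of $X_0$ \emph{not} lying on $C_0$ (equivalently, show that every non-Gorenstein point of $X_0$ lies on $\Supp(D_0)$). Your justification — that ``$C_0\sim_{\bQ}-K_{X_0}$ is ample, forcing the non-Gorenstein locus to meet every curve, in particular $C_0$'' — does not work. The locus where $K_{X_0}$ fails to be Cartier is a finite set of points on the slc surface $X_0$ (it is Cartier at smooth points and along the nodal double curve), so ampleness of $C_0$ gives no intersection information; and even if the non-Gorenstein locus did meet $C_0$, ``meeting'' is not ``contained in,'' which is what you actually need. Your appeal to the local analysis in \cite[Proof of Thm.~9.51]{Kol13} is explicitly conditional on the point already lying on $C_0$, so it cannot close this circle.

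The correct argument here is much simpler and is what the paper uses: for $d=3$ one has $D_0=\tfrac{3}{3}C_0=C_0$, an integral Weil divisor, and $K_{X_0}+D_0\sim 0$. Hence $K_{X_0}\sim -C_0$, which restricts to the trivial (in particular Cartier) divisor class on $X_0\setminus\Supp(D_0)$, so ${\rm ind}_x(K_{X_0})=1$ there. Combined with your bound at points of $D_0$, this gives ${\rm ind}_x(K_{X_0})\le m$ everywhere on $X_0$, completing the proof. So the overall structure of your proposal is right, but the step you yourself flagged as the main obstacle is the one that is not correctly established, and it needs this replacement.
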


\begin{proof}	
By Propositions \ref{p:planecurvetotwisted}, \ref{prop_from_plane_curves_to_twisted}, and \ref{p:stabilizerjump},   ${\rm ind}_{p}(K_{X_0})\leq m$ at all $p \in D_0$.
Since $K_{X_0}+D_0\sim 0$,
${\rm ind}_{p}(K_{X_0})=1$ at all $p\in X_0 \setminus D_0$. 
Therefore, $(X,D) \to S$ is in $\cP_{3,m}^{\CY}$.
\end{proof}

\section{Existence of good moduli spaces}\label{s:gmsforcurves}

In this section, we prove that $\cP_{d,m}^{\CY}$ admits a  good moduli space 
and study  properties of the moduli space.

\subsection{Index 1 CY pairs}\label{ss:1comp}
We now  prove  technical results that will allow us to reduce to the case when $d=3$ when proving that $\cP_{d,m}^{\CY}$ is S-complete and $\Theta$-reductive.

\begin{prop}\label{p:exists1comp}
Let $(X,D)$ be a pair in $\cP_{d}^{\rm CY}$ with an action by  $\bT:=\bG_m^r$ for some $r\geq 0$.

If $(X,D)$ is Type II or III, then there exists a $\bT$-invariant divisor $B$ on $X$ such that $(X,B)$ is a boundary polarized CY pair of index 1.
\end{prop}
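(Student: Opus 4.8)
The plan is to produce the complement $B$ by combining the explicit description of S-equivalence classes (Theorems \ref{thm:type2Sequiv} and \ref{t:TypeIIISequiv}) with the torus-equivariance of test configurations (Proposition \ref{p:TequivariantTC}) and the uniqueness of $N$-complements in families (Lemma \ref{l:Ncompspecialize}). The key point is that a $1$-complement is an object with bounded denominators, so once we know a $\bT$-invariant $1$-complement exists on \emph{some} $\bT$-equivariant degeneration of $(X,D)$, we can hope to spread it out to $(X,D)$ itself.

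First I would treat the Type III case. By Theorem \ref{t:TypeIIISequiv}, $(X,D)$ is S-equivalent to $(\bP^2, \tfrac{3}{d}\{(xyz)^{d/3}=0\})$, which has index $1$ and a complement $B_0 := \{xyz = 0\}$ with $K + B_0 \sim 0$. The idea is to run the degeneration in reverse: using Proposition \ref{p:degenreducedboundary} there is a weakly special degeneration $(X,D)\rightsquigarrow (X_0,D_0)$ with $K_{X_0}+D_0 \sim 0$ on the normalization, and by Lemma \ref{l:1-compP1s}-type reasoning $X_0$ itself has index $1$. By Proposition \ref{p:TequivariantTC}, this test configuration $(\cX,\cD)\to \bA^1$ is $\bT$-equivariant. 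Now I would look for a $\bT\times\bG_m$-invariant divisor $\cB$ on $\cX$ with $K_{\cX/\bA^1}+\cB \sim 0$: since the special fiber $(X_0,D_0)$ has index $1$ and the relevant cohomology $H^0(X_0,\omega_{X_0}^{-1})$ carries a $\bT$-linearization, one can choose a $\bT$-invariant section, then extend it over $\bA^1$ $\bG_m$-equivariantly using that $H^1$ vanishes (as in the proof of Lemma \ref{l:lctestconfig} and the arguments in Section \ref{ss:1comp}). Restricting $\cB$ to the generic fiber $\cX_1 \cong X$ gives the desired $\bT$-invariant divisor $B$, and $(X,B)$ is a boundary polarized CY pair of index $1$ because $K_X + B \sim 0$ and $B$ inherits ampleness/slc from the family via Lemma \ref{l:slcadj} and abundance \cite[Cor. 1.6]{HX16}.

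For the Type II case the same strategy applies, but the target of the degeneration is now one of the four normal forms (i)--(iv) of Theorem \ref{thm:type2Sequiv}. In each of those cases the relevant surface is an (orbifold) cone over $\bP^1$ or an elliptic curve, or a gluing of two such, equipped with a $\bG_m$-action, and I would verify directly that each admits a $\bT$-invariant index-$1$ complement: for the elliptic cone (i) take the section at infinity plus a fiber cone; for the $\bP^1$-cones (ii),(iv) take the section at infinity plus the orbifold divisor's preimage plus enough torus-invariant rulings so that the coefficients clear to $1$; for the toric Manetti case (iii) take the reduced toric boundary. In all cases the existence of an index-$1$ complement on $X_0$ follows from \cite[Thm. 8.5]{Hac04} together with Proposition \ref{prop:orbcone-adjunction}. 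Then one lifts it $\bT\times\bG_m$-equivariantly to $\cX$ and specializes to $X$ exactly as above, using Lemma \ref{l:lcDlcB} to ensure the lifted complement on $\cX$ remains lc with the correct lc places on the special fiber.

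The main obstacle I anticipate is the lifting step: one needs the $\bT$-invariant index-$1$ complement chosen on $X_0$ to be the restriction of a $\bT\times\bG_m$-invariant divisor $\cB$ on the total space $\cX$ with $K_{\cX/\bA^1}+\cB\sim 0$, and then to argue that $\cB|_{\cX_1}$ still has index $1$ rather than some multiple. The cleanest way around this is to work with the $N$-complement structure: the generic fiber $X$ automatically has \emph{some} index dividing that of $X_0$ by semicontinuity of the Cartier index, so once $X_0$ has index $1$, so does $X$; and a $1$-complement $B_0$ on $X_0$ lifts because $\omega_{\cX/\bA^1}^{-1}$ is a $\bG_m$-linearized line bundle (here we crucially use that $X_0$ and hence $\cX$ has Gorenstein index $1$) whose pushforward to $\bA^1$ is locally free with $\bG_m$-equivariant fibres, so a $\bT$-invariant nonzero section of the central fibre extends. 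Then $\cB := \{s = 0\}$ works, and Lemma \ref{l:Ncompspecialize} with $N=1$ shows $K_{\cX/\bA^1}+\cB \sim 0$, so $B := \cB|_X$ satisfies $K_X+B\sim 0$ as needed. Finally, $(X,B)$ being slc (not just lc) follows from Lemma \ref{l:slcadj} applied to $(\cX,\cB+\cX_0)$.
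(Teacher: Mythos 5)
Your overall architecture — degenerate to an explicit normal form via Theorems \ref{thm:type2Sequiv} and \ref{t:TypeIIISequiv}, find an index-$1$ complement there, and transport it back along the ($\bT$-equivariant, by Proposition \ref{p:TequivariantTC}) test configuration — is the same as the paper's, but two of your key steps fail as stated. First, your lifting mechanism rests on the claim that $X_0$, and hence $\cX$, has Gorenstein index $1$, so that $\omega_{\cX/\bA^1}^{-1}$ is a $\bG_m$-linearized line bundle whose sections extend by $H^1$-vanishing. This is false for the surfaces that actually occur: the Type II normal forms include $\bP(1,1,4)$ and orbifold cones, and Type III degenerations include $\bP(a^2,b^2,c^2)$ for Markov triples (Example \ref{e:unbounded}); these satisfy $K+B\sim 0$ for a suitable $B$ but $K$ itself is not Cartier. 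The paper's Lemma \ref{l:extenddivisor} avoids this by working with the reflexive sheaf $\omega_{X/S}^{[-N]}(-N\Delta)$, whose restriction map to the central fiber is surjective by Proposition \ref{p:anticanonicalsheaf}, and by decomposing into $\bT\times\bG_m$-weight spaces to pick an equivariant lift; it then uses openness of the slc locus plus the fact that every orbit closure contains $0$ to propagate slc-ness to all fibers. You should be citing that lemma rather than a Gorenstein hypothesis.

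Second, and more seriously, you never actually produce a $\bT\times\bG_m$-invariant \emph{slc} index-$1$ complement on the special fiber of the test configuration of $(X,D)$. S-equivalence gives a common degeneration $(X,D)\rightsquigarrow(X_0,D_0)\leftsquigarrow(X',D')$, not a degeneration of $(X,D)$ to the normal form $(X',D')$; so the complements you write down on the normal forms (reduced toric boundaries, etc.) live on the wrong surface and must first be pushed to $X_0$ along the second test configuration. The paper does this with Lemma \ref{l:lcDlcB}, which requires verifying $\LC(X',D')\subset\LC(X',B')$ — a condition you don't check — and which is what guarantees the degenerated divisor $B_0^0$ still gives an lc pair. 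Your alternative of "choosing a $\bT$-invariant section of $H^0(X_0,\omega_{X_0}^{-1})$" does not work: a torus representation need only have semi-invariant vectors, and, more importantly, nothing forces the zero divisor of such a section to cut out an slc pair. Even after Lemma \ref{l:lcDlcB} the divisor $B_0^0$ need not be torus-invariant; the paper forces invariance by taking iterated limits $B_0^{i+1}=\lim_{t\to 0}\la_i(t)\cdot B_0^i$ over the $r+1$ one-parameter subgroups of $\bT\times\bG_m$, reapplying Lemma \ref{l:lcDlcB} at each step to preserve lc-ness and Lemma \ref{l:Ncompspecialize} to preserve the index. Without the lc-place containment and this iterated-limit argument, your construction establishes neither the slc-ness nor the invariance of $B_0$, and the proof does not go through.
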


In the language of complements, the divisor $B$ is a $\bT$-invariant 1-complement of $X$. 
To construct the divisor $B$, we first use results from Section \ref{s:sequivcurves} to degenerate
$(X,D)$ to a pair  $(X_0,D_0)$ admitting such a divisor $B_0$. 
The following lemma will allow us to deform $B_0$ to the desired divisor $B$ on $X$.

\begin{lem}\label{l:extenddivisor}
Let $\T:= \bG_m^r$  and $(X,\Delta+D)\to S$  be a  family of boundary polarized CY pairs over a reduced Noetherian affine scheme $S$ such that  $(X,\Delta) \to S$ is $\bT$-equivariant.

If $0 \in S$ i a $\bT$-invariant closed point and $B_0$ a $\bT$-invariant $\bQ$-divisor on $X_0$  satisfying
\begin{enumerate}
\item  $0\in  \overline{ \T \cdot s}$ for any $s\in S$,
\item  $(X_0,\Delta_0+B_0)$ is slc, and 
\item  ${0\sim N( K_{X_0}+\Delta_0+B_0)}$, 
\end{enumerate}
then $B_0$ extends to a $\bT$-equivariant $\bQ$-divisor $B$ on $X$ such that $(X,\Delta+B)\to S$ is a family of boundary polarized CY pairs with $N(K_{X/S}+\Delta+B) \sim 0$.
\end{lem}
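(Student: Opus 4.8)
The plan is to reduce to a question about lifting a single divisor in a flat family, where the key input is a semicontinuity/vanishing argument together with Koll\'ar's condition on the relevant reflexive sheaf. First I would reduce to the case where $N(K_{X_0}+\Delta_0+B_0)\sim 0$ is literal linear equivalence of a Cartier divisor on the canonical covering stack; more precisely, replacing $X$ by its canonical covering stack $\cX$ over $X$ (using the construction of Section \ref{ss:canonicalcovering}, extended to the family $(X,\Delta+D)\to S$), the divisor $N(K_{\cX/S}+\Delta_{\cX}+\cD)$ becomes a $\bT$-equivariant line bundle $\cL$ on $\cX$ with $\cL\vert_{\cX_0}\cong \cO_{\cX_0}$, and $B_0$ lifts to a Cartier divisor $\cB_0$ on $\cX_0$ in the linear system $\vert N(K_{\cX_0}+\Delta_{\cX_0})+\cL\vert_{\cX_0}\vert^{-1}$-twisted sense. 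The point is that on the stack the divisor $N B_0$ is Cartier, so extending it is a problem about sections of an invertible sheaf.

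Next I would carry out the extension of the section. Set $\cN := \omega_{\cX/S}^{[-N]}(-N\Delta_{\cX})\otimes\cO_{\cX}(-\cL')$ for an appropriate pullback twist making $\cN\vert_{\cX_0}\cong \cO_{\cX_0}(NB_0)\otimes(\text{trivial})$; the divisor $NB_0$ corresponds to a section $s_0\in H^0(\cX_0,\cN\vert_{\cX_0})$ vanishing exactly along $NB_0$. Because $(X_0,\Delta_0)$ is log Fano (Definition \ref{d:bpcy}) and $NB_0 \sim_{\bQ} -N(K_{X_0}+\Delta_0)$ is ample, Kawamata--Viehweg vanishing in the form of \cite[Thm. 11.34]{KolNewBook} gives $H^i(\cX_0,\cN\vert_{\cX_0})=0$ for $i\ge 1$ (the statement passes to the Deligne--Mumford stack since the coarse map is an isomorphism in codimension one and the higher pushforwards vanish). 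Hence, after possibly shrinking $S$ around $0$, the sheaf $f_*\cN$ is locally free and commutes with base change (as in Proposition \ref{p:anticanonicalsheaf}, adapted to the stacky setting via the proof of \cite[Thm. 5.3.6]{AH11}), so $s_0$ lifts to a section $s\in H^0(\cX,\cN)$ after further shrinking. The zero divisor $\cB := \tfrac{1}{N}\{s=0\}$ then restricts to $B_0$ on $\cX_0$ and pushes forward to the desired $\bQ$-divisor $B$ on $X$.

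Then I would verify the family-of-pairs conditions. Since $\cB\vert_{\cX_0}=\cB_0$ and $(X_0,\Delta_0+B_0)$ is slc, openness of slc singularities \cite[Cor. 4.45]{KolNewBook} shows $(X_b,\Delta_b+B_b)$ is slc for $b$ in an open neighborhood of $0$; by hypothesis (1) every orbit closure contains $0$, so combined with $\bT$-invariance (obtained by averaging $s$ over $\bT$, which is possible since $\cN$ is $\bT$-equivariant and $s_0$ is $\bT$-invariant) this open set is all of $S$. By construction $N(K_{X/S}+\Delta+B)\sim 0$, and $B\sim_{\bQ} D$ is $\bQ$-Cartier and ample over $S$ since $B-D$ is supported on fibers and numerically trivial — so it is $\bQ$-linearly trivial by \cite[Cor. 1.6]{HX16} as in the proof of Lemma \ref{l:Ncompspecialize}. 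This gives that $(X,\Delta+B)\to S$ is a family of boundary polarized CY pairs.

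\textbf{Main obstacle.} The delicate point is the vanishing and base-change step on the canonical covering stack: one must know $H^i(\cX_0,\cN\vert_{\cX_0})=0$ for $i\ge 1$ so that sections lift, and that the relevant reflexive/invertible sheaf satisfies Koll\'ar's condition so that the lift restricts correctly. On the coarse space $X_0$ the sheaf $\omega_{X_0}^{[-N]}(-N\Delta_0)$ need not be a line bundle, which is exactly why passing to the stack is essential; but one must then be careful that the vanishing theorem \cite[Thm. 11.34]{KolNewBook} and the base-change machinery of \cite{AH11} genuinely apply to the Deligne--Mumford stack $\cX$ and its fibers, and that the $\bT$-equivariant structure is preserved throughout. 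A secondary subtlety is ensuring that shrinking $S$ is harmless — this is where hypothesis (1), that $0$ lies in the closure of every $\bT$-orbit, is used, since any $\bT$-invariant open containing $0$ must then be all of $S$.
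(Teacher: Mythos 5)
Your proposal reaches the right conclusion by essentially the same mechanism as the paper: a vanishing/base-change statement makes restriction of sections surjective, a weight-space projection makes the lift $\T$-semi-invariant, and openness of slc singularities together with hypothesis (1) propagates the slc condition from $X_0$ to all of $S$. The difference is your detour through the canonical covering stack, and that detour is both unnecessary and the weakest link in your argument. The paper works directly on $X$ with the reflexive sheaf $\omega_{X/S}^{[-N]}(-N\Delta)$: since $NB_0\sim -N(K_{X_0}+\Delta_0)$, the divisor $NB_0$ is the zero locus of a section $g_0$ of $\omega_{X_0}^{[-N]}(-N\Delta_0)$, and Proposition \ref{p:anticanonicalsheaf} (which is exactly \cite[Thm.~11.34]{KolNewBook} plus cohomology and base change, already proved for these reflexive sheaves on the coarse space) gives that $f_*\omega_{X/S}^{[-N]}(-N\Delta)$ is locally free and commutes with base change; affineness of $S$ then gives surjectivity of $H^0(X,\cdot)\to H^0(X_0,\cdot)$ with no shrinking at all. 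By contrast, your stack reduction is set up in Section \ref{ss:canonicalcovering} only for the surface families in $\cP_d^{\CY}$, and even granting the construction in general, passing to the covering stack makes $\omega^{[N]}$ invertible but not $\cO(N\Delta)$ or $\cO(NB_0)$ when $\Delta\neq 0$, so the claim that $NB_0$ becomes Cartier there needs justification you do not supply. Since nothing in your argument actually requires invertibility — only sections of a reflexive sheaf whose pushforward satisfies base change — you should drop the stack entirely. Two smaller points: $s_0$ is not $\T$-invariant but a $\T$-eigenvector of some weight $\vec\lambda'$ (since $NB_0$ is $\T$-invariant), so ``averaging'' must mean projecting the lift onto the weight-$\vec\lambda'$ component, which is what the paper does; and the linear equivalence $NB\sim -N(K_{X/S}+\Delta)$ holds on the nose by construction, so $B\sim_{\bQ}D$ and the ampleness of $B$ are immediate without invoking \cite[Cor.~1.6]{HX16} or any argument about divisors supported on fibers.
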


\begin{proof}
Consider the restriction map
\[\phi:
H^0(X, \cO_X(-
N(K_{X/S}- \Delta ) ))  \twoheadrightarrow 
H^0(X_0, f_* \cO_{X_0}(-N(K_{X_0}-\Delta_0)))
,\]
which is surjective by  Proposition \ref{p:anticanonicalsheaf} and the assumption that $S$ is affine.
The $\T$-action on $(X,\Delta)\to S$ induces $\T$-actions on the source and target of $\phi$.
Thus the $\bk$-vector spaces  admit decompositions into weight spaces indexed by $\Z^r$ that respects $\phi$:
\[
\phi:
\bigoplus_{\vec{\la} \in \bZ^r} H^0(X, \cO_X(-N(K_{X/S}+\Delta) ) )_{\vec{\la}}
\twoheadrightarrow
\bigoplus_{\vec{\la} \in \bZ^r}
H^0(X_{0}, f_* \cO_{X_{0}}(-N(K_{X_0}+\Delta_0)))_{\vec{\la}}
\]
Since $NB_0 \sim -N(K_{X_0}-\Delta_0)$ is $\T$-equivariant,
there exists $\vec{\la}' \in \bZ^r$ and
\[
g_0\in  H^0(X_0, \cO_{X_0}(-N(K_{X_0}-\Delta_0)))_{\vec{\la}'}
\]
 such that $B_0= \tfrac{1}{N}\{g_0=0 \}$. 
Since $\phi$ is surjective, there exists 
\[
g\in H^0(X,  \cO_{X}(-N(K_{X/S}-\Delta)))_{\vec{\la}'}
\]
such that $\phi(g)= g_0$. 
Thus $B:= \tfrac{1}{N}\{ g=0 \} $ is a $\T$-equivariant  extension of $B_0$.
Note that
\[
0 \in S_{\rm slc} := \{ s\in S \, \vert\,  (X_s,\Delta_s + B_s) \text{ is slc} \}
\]
and $S_{\rm slc}\subset S$ is a $\bT$-equivariant open set by the openness of slc singularities \cite[Cor. 4.52]{KolNewBook}. 
Therefore assumptions (1) and (2) imply $S_{\rm slc}=S$ and so
$(X,\Delta+B) \to S$ is a $\T$-equivariant family of boundary polarized CY pairs with $N(K_{X/S}+\Delta+B)\sim 0$.
\end{proof}

\begin{proof}[Proof of Proposition \ref{p:exists1comp}]
By Theorems \ref{thm:type2Sequiv} and \ref{t:TypeIIISequiv}, $(X,D)$ is S-equivalent to a pair $(X',D')$, which is either (i)-(iv) listed in Theorem \ref{thm:type2Sequiv} or (v) $X'=\bP^2$ and $D':= \tfrac{3}{d} \{(xyz)^{d/3}=0\}$. 
Hence, the pairs admit weakly special degenerations
\[
(X,D) \rightsquigarrow (X_0,D_0) \leftsquigarrow (X',D')
\]
via test configurations, which we  denote by $(\cX,\cD)\to \bA^1$ and $(\cX',\cD')\to \bA^1$.
Since $(\cX,\cD)$ is $\bT$-equivariant by Proposition \ref{p:TequivariantTC}, the $\bT$-action on $(X,D)$ induces a $\bT\times \bG_m$-action on $(\cX,\cD)$  fixing $\cX_0$.

\medskip

\noindent \emph{Claim:} 
There is a $\bT\times \bG_m$-invariant divisor $B_0$ on $X_0$ such that $(X_0,B_0)$ is a CY pair of index $1$.

\medskip

\noindent \emph{Proof of claim}:
In each case, there exists a divisor $B'$ on $X'$ such that $(X',B')$ is a CY pair of index 1 
and  $\LC(X',D') \subset \LC(X',B')$. 
Indeed, in cases (i) and (v), set $B' :=D'$. In the remaining cases, 
\begin{enumerate}
\item[(ii)] $X'$ is a projective orbifold cone over $\bP^1$ whose orbifold divisor is toric. Let $B'$  be the reduced toric boundary divisor of $X'$.
\item[(iii)] $X'$ is a klt toric surface. Let
 $B'$ to be the reduced toric boundary divisor of $X'$.
\item[(iv)] $X'$ is the gluing of two projective orbifold cones $X'_1$ and $X'_2$ over $\bP^1$ with the same toric orbifold divisor. Denote the conductor divisor  on $X'_i$ by $G_i$. Let $B'|_{X_i'}$ be the reduced toric boundary divisor on $X'_i$ minus $G_i$.
\end{enumerate}
By Lemma \ref{l:lcDlcB},  there exists a divisor $B_0^0$ on $X_0$ and weakly special degeneration $(X',B')\rightsquigarrow (X_0,B_0^0)$ such that $\LC(X_0,D_0) \subset \LC(X_0,B_0^0)$.

While  $B_0^0$ is not necessarily $\bT\times\bG_m$-invariant, 
we can use a degeneration argument to remedy this.
Consider the inclusion of $\la : \bG_m \cong \bG_m \times 1^{\times r} \hookrightarrow \bT\times \bG_m$ and set 
\[
B_0^1 := \lim_{t\to 0}\la(t) \cdot B_0^0.
\]
By  Lemma \ref{l:lcDlcB} applied to the product test configuration of $(X_0,D_0)$ induced by $\la$, there is a weakly special degeneration $(X_0,B_0 ) \rightsquigarrow (X_0,B_0^1)$ such that $\LC(X_0,D_0) \subset \LC(X_0,B_0^1)$. Degenerating $r$-additional times using the remaining summands of $\bT\times \bG_m = \bG_{m}^{r+1}$  produces a sequence of weakly special degenerations 
\[
(X',D') \rightsquigarrow (X_0^0,B_0^0) \rightsquigarrow \cdots \rightsquigarrow (X^{r+1}_0,B^{r+1}_0)
\]
such that $B_0:= B_0^{r+1}$ is $\bT\times \bG_m$-invariant. 
Since $(X',B')$ has index 1,  Lemma \ref{l:Ncompspecialize} implies $(X_0,B_0)$ has index 1.
\qed

\medskip

By Lemma \ref{l:extenddivisor} applied to $B_0$,
there is a $\bT\times \bG_m$-invariant divisor $\cB$ on $\cX$ such that  
$(\cX,\cB) \to \bA^1$
is a family of boundary polarized CY pairs with $K_{\cX/\bA^1}+\cB\sim 0$.
Thus $(X,B) : = (\cX_1,\cB_1)$ is a $\bT$-equivariant boundary polarized CY pair of index 1.
\end{proof}

We now prove an analogue of Proposition   \ref{p:exists1comp} for families over the 
the surfaces appearing in the definition of S-completeness and $\Theta$-reductivity.

\begin{prop}\label{p:1-complementS}
Let $R$ be a DVR with uniformizer $\pi$.
Let
$S$ be the affine scheme
\[
\Spec  \left( R[s,t]/(st-\pi) \right) \quad \text{ or } \quad   \Spec(R[t]),\]
where $\bG_m$-acts on $S$ with weights 1 and -1 on $s$ and $t$, respectively,
and write  $0 \in S$ for the unique $\bG_m$-invariant closed point.

If $(X,D) \to S$ is a $\bG_m$-equivariant family in $\cP_{d}^{\rm CY}$ and $(X_0,D_0)$ is Type  II or III, then there exists a divisor $B$ on $X$ such that  $(X,B)\to S$ is a $\bG_m$-equivariant family in $\cP_{3}^{\rm CY}$.
\end{prop}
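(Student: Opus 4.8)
The strategy is to mimic the argument of Proposition \ref{p:exists1comp}: first produce a $\bG_m$-invariant index $1$ complement on the central fiber $(X_0,D_0)$, then spread it out over $S$ using the deformation-of-sections argument of Lemma \ref{l:extenddivisor}. The key point is that $S$ has a $\bG_m$-action with a unique closed fixed point $0\in S$ such that $0\in\overline{\bG_m\cdot s}$ for every $s\in S$; this is exactly hypothesis (1) of Lemma \ref{l:extenddivisor}, and it holds for both $\Spec(R[s,t]/(st-\pi))$ and $\Spec(R[t])$ by the explicit $\bG_m$-weights on $s,t$. Thus once we have the right $B_0$ on $X_0$, Lemma \ref{l:extenddivisor} (applied with $\Delta=0$, $\bT=\bG_m$, $N=N_3=1$, and the reduced affine Noetherian scheme $S$) immediately gives a $\bG_m$-equivariant divisor $B$ on $X$ with $(X,B)\to S$ a family of boundary polarized CY pairs satisfying $K_{X/S}+B\sim 0$. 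Rescaling the marking, $(X,B)=(X,\tfrac{3}{3}B)$ then lands in $\cP_3^{\CY}$ — we only need to check that the generic fiber is a degeneration of a family of pairs $(\bP^2,\tfrac{3}{3}C)$ with $C$ smooth, which follows since $X_\eta$ is a fiber of the original family in $\cP_d^{\CY}$, hence a degeneration of $\bP^2$, and any index $1$ complement on $\bP^2$ moves in a basepoint-free linear system to a smooth cubic.

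\textbf{Construction of $B_0$.} First, I would note that $(X_0,D_0)$ carries the restricted $\bG_m$-action (since $0\in S$ is $\bG_m$-invariant), and that $\reg(X_0,D_0)\in\{0,1\}$ by assumption (Type II or III). Now I apply Proposition \ref{p:exists1comp} to the pair $(X_0,D_0)$ with the torus $\bT:=\bG_m$ acting on it: this yields a $\bG_m$-invariant divisor $B_0$ on $X_0$ such that $(X_0,B_0)$ is a boundary polarized CY pair of index $1$, i.e. $K_{X_0}+B_0\sim 0$ and $(X_0,B_0)$ is slc. This is precisely hypotheses (2) and (3) of Lemma \ref{l:extenddivisor} (with $N=1$). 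I should double-check that Proposition \ref{p:exists1comp} applies: it requires $(X_0,D_0)\in\cP_d^{\CY}$, which holds since it is a fiber of the family $(X,D)\to S$ in $\cP_d^{\CY}$, and it requires a torus action, which is the $\bG_m$-action coming from $S$.

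\textbf{Completing the argument.} With $B_0$ in hand, Lemma \ref{l:extenddivisor} (its hypotheses (1),(2),(3) all verified above) produces a $\bG_m$-equivariant $\bQ$-divisor $B$ on $X$ extending $B_0$ with $(X,B)\to S$ a family of boundary polarized CY pairs and $K_{X/S}+B\sim 0$. It remains to check that $(X,\tfrac33 B)\to S$ actually lies in the stack $\cP_3^{\CY}=\overline{\cP_3}$, i.e.\ that it is a family of boundary polarized CY pairs whose fibers are slc degenerations of $(\bP^2,\tfrac33 C)$. Since every fiber $X_s$ is a fiber of the original family over $S$ — hence a degeneration of $\bP^2$, and in fact the generic fiber $X_\eta\cong\bP^2_{K(S)}$ up to passing to a dense open — and since $B$ restricts on $X_\eta$ to a divisor with $K_{X_\eta}+B_\eta\sim 0$, a general member of $|-K_{X_\eta}|$ is a smooth cubic; thus the generic fiber of $(X,B)\to S$ is isomorphic to a pair $(\bP^2,\tfrac33 C)$ after a general modification of $B_\eta$, so $(X,B)\to S$ is a flat family whose geometric generic fiber lies in $\cP_3$, which places it in $\overline{\cP_3}=\cP_3^{\CY}$ by taking stack-theoretic closure. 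The main obstacle in this argument is the invocation of Proposition \ref{p:exists1comp} on $(X_0,D_0)$: one must be sure the torus-equivariant $1$-complement it produces is genuinely $\bG_m$-invariant for the action induced from $S$ (not merely invariant for some other torus), but this is automatic since the $\bG_m$ acting in Proposition \ref{p:exists1comp} is exactly the one restricted from $S$, and the degeneration argument in its proof only further degenerates along one-parameter subgroups of this same torus. A secondary point to be careful about is that Lemma \ref{l:extenddivisor} requires $S$ affine and reduced — both our models of $S$ are affine, and they are reduced (indeed normal) since $R$ is a DVR and $R[s,t]/(st-\pi)$ is a normal domain.
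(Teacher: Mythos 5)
Your construction of $B$ is exactly the paper's: restrict the $\bG_m$-action to the fixed fiber $(X_0,D_0)$, apply Proposition \ref{p:exists1comp} to get a $\bG_m$-invariant index-$1$ complement $B_0$, and spread it out with Lemma \ref{l:extenddivisor} (whose hypotheses you verify correctly, including that $0$ lies in every orbit closure for both models of $S$ and that $S$ is reduced and affine). Up to and including the statement that $(X,B)\to S$ is a $\bG_m$-equivariant family of boundary polarized CY pairs with $K_{X/S}+B\sim 0$, the argument is correct and coincides with the paper's proof.

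The final step --- that this family lies in $\cP_3^{\CY}$ --- is where your justification breaks. The claim that ``the generic fiber $X_\eta\cong\bP^2_{K(S)}$ up to passing to a dense open'' is false: membership of $(X,D)\to S$ in $\cP_d^{\CY}=\overline{\cP_d}$ only says that each fiber is a \emph{degeneration} of a plane-curve pair, not that the generic fiber of this particular family is $\bP^2$ (in the intended application to Proposition \ref{p:PdmSTheta} the family is glued from two arbitrary families over $\Spec(R)$ in $\cP_{d,m}^{\CY}$, whose generic fibers may well be singular degenerations of $\bP^2$ such as $\bP(1,1,4)$). Relatedly, ``after a general modification of $B_\eta$'' is not a move you are allowed to make: $B$ has already been fixed by Lemma \ref{l:extenddivisor}, and if $B_\eta$ is, say, a triangle of lines then $(X_\eta,B_\eta)\notin\cP_3$, although it does lie in $\cP_3^{\CY}$. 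The correct justification (which the paper compresses into one sentence) is fiberwise: for each point $s$ of $S$, choose a family over a pointed curve germ realizing $X_s$ as a degeneration of $\bP^2$; since $K_{X_s}+B_s\sim 0$, Lemma \ref{l:extenddivisor} applied with trivial torus over that germ (as in Example \ref{e:unbounded}(1) and the proof of Theorem \ref{t:TypeIIISequiv}) extends $B_s$ to exhibit $(X_s,B_s)$ as a limit of pairs $(\bP^2,C)$ with $C$ a cubic such that the pair is lc, and every such pair is a degeneration of a smooth-cubic pair; hence every fiber lies in the closed substack $\overline{\cP_3}$, and since $S$ is reduced the whole family factors through $\cP_3^{\CY}$.
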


\begin{proof}
By Proposition \ref{p:exists1comp}, there exists a $\bG_m$-invariant divisor $B_0$  such that 
$(X_0,B_0)$ is a boundary polarized CY pair with index 1.
By Lemma \ref{l:extenddivisor}, $B_0$ extends to a divisor $B$ on $X$  such that 
$(X,B)\to S$
is a $\bG_m$-equivariant family of boundary polarized CY pairs with $K_{X/S}+B\sim 0$.
Since each fiber of $X\to S$ is a degeneration of $\bP^2$ and $K_{X/S}+B\sim 0$,
$(X,B) \to S$ is in $\cP_3^{\rm CY}$.
\end{proof}

\subsection{Existence of the moduli space}\label{sec:modulispaceofplanecurves}

\begin{thm}\label{t:existsPdm}
For each $d\geq 3$ and $m\geq 1$, there is a good moduli space
$
\phi_m:\cP_{d,m}^{\CY}\to P_{d,m}^{\CY}
$
and $P_{d,m}^{\CY}$ is a separated algebraic space.   
\end{thm}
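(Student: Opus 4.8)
The plan is to verify the hypotheses of Theorem \ref{t:AHLH} (the Alper--Halpern-Leistner--Heinloth criterion) for the finite type stack $\cP_{d,m}^{\CY}$, namely that it is S-complete and $\Theta$-reductive with respect to DVRs essentially of finite type over $\bk$. We already know from Proposition \ref{p:Pdmstack} that $\cP_{d,m}^{\CY}$ is a finite type algebraic stack with affine diagonal, so once these two conditions are checked, Theorem \ref{t:AHLH} produces a good moduli space morphism $\phi_m : \cP_{d,m}^{\CY} \to P_{d,m}^{\CY}$ onto a separated algebraic space. The content, then, is entirely in the two extension statements.

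\textbf{Reduction to the cases already handled.} For S-completeness we must show: given a $\bG_m$-equivariant family $(X^\circ, D^\circ) \to S^\circ$ in $\cP_{d,m}^{\CY}$, where $S = \Spec(R[s,t]/(st-\pi))$ and $S^\circ = S \setminus 0$, it extends uniquely to a $\bG_m$-equivariant family over $S$ lying in $\cP_{d,m}^{\CY}$. Since $\cP_{d,m}^{\CY} \subset \cP_d^{\CY} \subset \cM(\chi, N_d, (1,\tfrac 3d))$, Theorem \ref{t:Scomplete} already gives a unique extension $(X,D) \to S$ as a $\bG_m$-equivariant family of boundary polarized CY pairs; because $\cP_d^{\CY}$ is the stack-theoretic closure of $\cP_d$, the extension lands in $\cP_d^{\CY}$. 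What remains is to check the index-at-most-$m$ condition on the central fiber, i.e.\ that $(X_0, D_0)$ lies in $\cP_{d,m}^{\CY}$. Here I split into cases according to the type of $(X_0,D_0)$. By Propositions \ref{p:dnmid3plt} and \ref{p:coreg>0condition}, and by Remark \ref{r:Sequiv-type} together with Proposition \ref{p:SrcSequiv}, the type of the central fiber is controlled by the generic fiber: if the generic fiber is Type I then so is the central fiber, which is klt of index one and causes no problem. If $(X_0,D_0)$ is Type II or III, I invoke Proposition \ref{p:1-complementS}: there is a divisor $B$ on $X$ such that $(X,B) \to S$ is a $\bG_m$-equivariant family in $\cP_3^{\CY}$. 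Now the canonical covering stack construction of Section \ref{s:twistedvarieties} applies to the family $(X,B) \to S$: by Proposition \ref{p:planecurvetotwisted} the curve $\cB$ is a twisted curve over $S$, and by Proposition \ref{p:Pdmtwisted} (whose hypotheses are exactly that $S$ is pure two-dimensional with $0 \in S$ a closed point and the restriction over $S^\circ$ lies in $\cP_{3,m}^{\CY}$), the index bound propagates from $S^\circ$ to the central fiber. One must observe that the index of $K_{X_0}$ at a point of $X_0$ is controlled by whether that point lies on $D_0$ — off $D_0$ the index is one since $K_{X_0} + D_0 \sim 0$ — and that $\operatorname{Supp}(D_0) = \operatorname{Supp}(B_0)$ as sets, or more carefully that the relevant stabilizer/index jump happens only along the twisted curve $\cB_0$; combining Propositions \ref{prop_from_plane_curves_to_twisted} and \ref{p:stabilizerjump} then gives $\operatorname{ind}_x(K_{X_0}) \le m$ for all $x \in X_0$.

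\textbf{$\Theta$-reductivity.} The argument is structurally identical with $S = \Spec(R[t]) = \bA^1_R$ and $0 \in S$ the unique $\bG_m$-fixed closed point of codimension two: Theorem \ref{t:Thetared} provides the unique $\bG_m$-equivariant extension in $\cP_d^{\CY}$, and the same case analysis on the type of the central fiber — using Proposition \ref{p:1-complementS} in the Type II and III cases, then Proposition \ref{p:Pdmtwisted} — shows the extension stays in $\cP_{d,m}^{\CY}$. In both the S-completeness and $\Theta$-reductivity checks, the key point is that $\bA^1_R$ and $\Spec(R[s,t]/(st-\pi))$ are pure two-dimensional and their special points $0$ have codimension two, so Proposition \ref{p:stabilizerjump}'s ``no stabilizer jump in codimension two'' statement is exactly what is needed. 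Finally, having verified the two conditions, Theorem \ref{t:AHLH} yields the good moduli space $\phi_m : \cP_{d,m}^{\CY} \to P_{d,m}^{\CY}$ with $P_{d,m}^{\CY}$ separated.

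\textbf{Main obstacle.} The genuinely delicate step is the passage from the index bound on $S^\circ$ to the central fiber for Type II and III degenerations — this is where boundedness genuinely fails for $\cP_d^{\CY}$ itself, and the reason it works for $\cP_{d,m}^{\CY}$ is the somewhat miraculous combination of (a) the reduction to index-one $1$-complements, which replaces $\cP_{d,m}^{\CY}$ by the well-behaved stack $\cP_{3,m}^{\CY}$ of twisted plane curves, and (b) the codimension-two behavior of stabilizers of twisted curves. I would expect to spend the bulk of the write-up carefully checking that the $1$-complement $B$ can be chosen $\bG_m$-equivariantly over all of $S$ (already packaged in Proposition \ref{p:1-complementS}) and that the index of $K_{X_0}$ at a point is correctly read off from the automorphism group of the canonical covering twisted curve $\cB$ at that point, including the subtlety that a point of $X_0$ not on $D_0$ (equivalently not on $B_0$ up to support) automatically has index one.
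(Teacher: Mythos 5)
Your overall architecture is exactly the paper's: Proposition \ref{p:Pdmstack} gives finite type and affine diagonal, Theorems \ref{t:Scomplete} and \ref{t:Thetared} give the unique $\bG_m$-equivariant extension inside $\cP_d^{\CY}$, one splits on the type of the central fiber $(X_0,D_0)$, and the Type II/III case is handled by Proposition \ref{p:1-complementS} followed by Proposition \ref{p:Pdmtwisted} before invoking Theorem \ref{t:AHLH}. That part of your write-up is correct and matches the paper's proof of Proposition \ref{p:PdmSTheta} essentially verbatim.

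However, your treatment of the Type I case contains a genuine error. You assert that if the central fiber is Type I it is ``klt of index one and causes no problem.'' Klt does not imply index one here: the klt degenerations of $\bP^2$ include all Manetti surfaces $\bP(a^2,b^2,c^2)$ for Markov triples, whose Cartier index of $K_X$ is unbounded (Example \ref{e:unbounded}), so a klt central fiber could a priori violate the index-$\leq m$ condition. (Your preliminary claim that a Type I generic fiber forces a Type I central fiber is also backwards: by Proposition \ref{p:regularityusc} regularity is \emph{upper} semicontinuous, so Type I can specialize to Type II or III; this is harmless for your case division, which is on the central fiber, but it signals the confusion.) The missing idea is the one the paper uses: restrict the extended family to $\{s=0\}$ (resp. $\{\pi=0\}$) to obtain a weakly special test configuration whose special fiber is $(X_0,D_0)$ and whose fiber over $1$ is a fiber of the original family over $S^\circ$. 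A test configuration of a boundary polarized CY pair whose special fiber is klt is trivial (this is the claim in the proof of Proposition \ref{p:sequivklt}, via Lemma \ref{l:CYbirational}), so $(X_0,D_0)\cong(\cX_1,\cD_1)$, which lies over $S^\circ$ and hence satisfies the index bound by hypothesis. With that substitution the argument is complete.
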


The key step to proving the theorem is to verify the following statement.

\begin{prop}\label{p:PdmSTheta}
For each $d\geq 3$ and $m\geq 1$, $\cP_{d,m}^{\rm CY}$ is S-complete and $\Theta$-reductive.
\end{prop}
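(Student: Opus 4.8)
The plan is to reduce the $\Theta$-reductivity and S-completeness of $\cP_{d,m}^{\CY}$ to the corresponding statements for $\cP_{3,m}^{\CY}$, and the latter to a statement about twisted stacky curves that we can analyze directly. Since both S-completeness and $\Theta$-reductivity are checked via families over the stacky surfaces $\STR$ and $\Theta_R$ (Sections \ref{ss:defScom} and \ref{ss:deftheta}), in both cases we are given a $\bG_m$-equivariant family $(X^\circ,D^\circ)\to S^\circ$ in $\cP_{d,m}^{\CY}$, where $S$ is $\Spec(R[s,t]/(st-\pi))$ or $\Spec(R[t])$ and $S^\circ=S\setminus 0$, and must produce a unique $\bG_m$-equivariant extension $(X,D)\to S$ lying in $\cP_{d,m}^{\CY}$. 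By Theorems \ref{t:Scomplete} and \ref{t:Thetared}, the family extends uniquely to a $\bG_m$-equivariant family $(X,D)\to S$ in $\cP_d^{\CY}$; the content is to show this extension lands in the open substack $\cP_{d,m}^{\CY}$, i.e. that the special fiber $(X_0,D_0)$ has index $\le m$ at every point.

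\textbf{Reduction to $d=3$.} First I would dispose of the case where $(X_0,D_0)$ is Type I (klt): then $(X^\circ,D^\circ)\to S^\circ$ has klt generic fiber, so by Proposition \ref{p:sequivklt} (or rather the rigidity of klt pairs, Lemma \ref{l:CYbirational}.2--.3) the only test configurations are trivial, hence $\cX$ is a product over $S$ in a neighborhood of $0$, and the index bound $\le m$ propagates from $S^\circ$ to $0$ trivially. When $(X_0,D_0)$ is Type II or III, I would invoke Proposition \ref{p:1-complementS}: there is a divisor $B$ on $X$ such that $(X,B)\to S$ is a $\bG_m$-equivariant family in $\cP_3^{\CY}$, with the same underlying surface $X$ and the same surface degeneration $X_0$. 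Because the index condition at a point $x\in X_0$ depends only on $K_{X_0}$ — not on the divisor $D_0$ or $B_0$ — the family $(X,D)\to S$ lies in $\cP_{d,m}^{\CY}$ if and only if $(X,B)\to S$ lies in $\cP_{3,m}^{\CY}$. Thus it suffices to prove the proposition for $d=3$, and, by the same observation together with Theorems \ref{t:Scomplete}--\ref{t:Thetared}, it suffices to show: for a $\bG_m$-equivariant family $(X,B)\to S$ in $\cP_3^{\CY}$ whose restriction to $S^\circ$ lies in $\cP_{3,m}^{\CY}$, the whole family lies in $\cP_{3,m}^{\CY}$. (I should also check the uniqueness clause, but that is immediate from the uniqueness already furnished by Theorems \ref{t:Scomplete} and \ref{t:Thetared}, since $\cP_{d,m}^{\CY}\hookrightarrow\cP_d^{\CY}$ is an open immersion by Proposition \ref{p:Pdmstack}.)

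\textbf{The $d=3$ case via twisted curves.} For $d=3$ I would pass to the canonical covering family $(\cX,\cC)$ of $(X,B)$. By Proposition \ref{p:planecurvetotwisted}, $\cC\to S$ is a twisted curve, and by Proposition \ref{prop_from_plane_curves_to_twisted} the index $\mathrm{ind}_{\pi(p)}(K_X)$ equals the order $|\Aut_\cC(p)|$ of the automorphism group at the corresponding point $p\in\cC$. The hypothesis that $(X,B)|_{S^\circ}\in\cP_{3,m}^{\CY}$ says that $|\Aut_\cC(p)|\le m$ for all $p\in\cC\setminus\cC_0$. Since $S$ is a two-dimensional scheme (the stacky surfaces $\STR$ and $\Theta_R$ have two-dimensional coarse spaces, and in fact the relevant coordinate scheme $\Spec(R[s,t]/(st-\pi))$ or $\Spec(R[t])$ is pure two-dimensional), Proposition \ref{p:stabilizerjump} — stabilizers of a family of twisted curves do not jump up in codimension two — applies with $I=\{1,2,\dots,m\}$ to conclude $|\Aut_\cC(p)|\le m$ for all $p\in\cC_0$ as well. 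Running Proposition \ref{prop_from_plane_curves_to_twisted} backwards, this gives $\mathrm{ind}_p(K_{X_0})\le m$ at all $p$ lying on the image curve $B_0$; since $K_{X_0}+B_0\sim 0$, the index is $1$ away from $B_0$, so $(X_0,B_0)$ has index $\le m$ everywhere. Hence $(X,B)\to S$ lies in $\cP_{3,m}^{\CY}$, completing the proof. This last step is essentially Proposition \ref{p:Pdmtwisted} applied to the two-dimensional base underlying $\STR$ or $\Theta_R$ — one just needs to be slightly careful that the "pure two dimensional scheme $S$" hypothesis there is met by the scheme $\Spec(R[s,t]/(st-\pi))$ (which is indeed pure of dimension two over an essentially-finite-type DVR) and by $\Spec(R[t])$.

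\textbf{Main obstacle.} The genuinely delicate point is the reduction to $d=3$: it rests entirely on Proposition \ref{p:1-complementS}, which in turn rests on the explicit classification of S-equivalence classes (Theorems \ref{thm:type2Sequiv} and \ref{t:TypeIIISequiv}) and on the deformation/specialization arguments for $1$-complements (Lemmas \ref{l:extenddivisor}, \ref{l:lcDlcB}, \ref{l:Ncompspecialize}). Assuming those, the remaining work is the observation that the index condition is intrinsic to $K_{X_0}$ and hence insensitive to replacing $D$ by $B$ — which needs only that $X$ and its degeneration $X_0$ are literally unchanged in passing from $(X,D)$ to $(X,B)$, as guaranteed by the construction in Proposition \ref{p:1-complementS} — together with the codimension-two non-jumping of twisted-curve stabilizers. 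I would present the argument in exactly this order: handle Type I separately, reduce Type II/III to $d=3$ via Proposition \ref{p:1-complementS}, then finish with the twisted-curve argument of Proposition \ref{p:Pdmtwisted}.
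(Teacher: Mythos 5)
Your proposal is correct and follows essentially the same route as the paper: extend uniquely via Theorems \ref{t:Scomplete} and \ref{t:Thetared}, split into the Type I case (where rigidity of klt pairs forces the degeneration to be trivial) and the Type II/III case (where Proposition \ref{p:1-complementS} produces the $1$-complement $B$ reducing to $d=3$, and Proposition \ref{p:Pdmtwisted} supplies the codimension-two non-jumping of twisted-curve stabilizers). The only point you leave implicit is the final passage from essentially-finite-type DVRs to all DVRs, which the paper handles via Proposition \ref{p:Pdmstack} and Theorem \ref{t:AHLH}; this is a routine addendum.
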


The proof of the proposition relies on reducing to the case when $d=3$ and the theory of twisted curves to analyze the case when $d=3$.

\begin{proof}
Let $R$ be a DVR essentially of finite over $\bk$ and $\pi\in R$ a uniformizer.
Set
\[
S:=\Spec  \left( R[s,t]/(st-\pi) \right) \quad \text{ or } \quad   S:= \Spec(R[t]),\]
where $\bG_m$-acts on $S$ with weights $1$ and $-1$ on $s$ and $t$, respectively.
Let $0\in S$ denote the unique closed point fixed by $\bG_m$ and $S^\circ:= S \setminus 0$.
Fix a $\bG_m$-equivariant family 
$(X^\circ,D^\circ )\to  S^\circ$ in $\cP_{d,m}^{\CY}$.
By Theorems \ref{t:Scomplete} and \ref{t:Thetared},
the family extends uniquely to a $\bG_m$-equivariant family 
 $(X,D)\to S$  in $\cP_{d}^{\rm CY}$.
We claim that $(X,D)\to S$ is in $\cP_{d,m}^{\CY}$.

\medskip

\noindent  \emph{Case 1: $(X_0,D_0)$ is Type I}. 
Let $(\cX,\cD)\to \bA^1_\kappa$ denote the test configuration given by
\[
(X,D)_{ \{s = 0 \}} \to \{s=0 \} 
\quad \quad \text{or } \quad \quad
(X,D)_{ \pi = 0 } \to \{\pi=0 \} 
,\]
depending on which scheme $S$ denotes.
Note that $(\cX_1,\cD_1)$ is in $\cP_{d,m}^{\CY}$, since it is a fiber of $(X^\circ ,D^\circ)\to S^\circ$.
Since the special fiber of the test configuration is $(X_0,D_0)$, which is klt, 
Proposition \ref{l:CYbirational}.2  implies $(\cX,\cD) \to \bA^1_\kappa$ is a trivial test configuration.
Therefore $(X_0,D_0) \cong (\cX_1,\cD_1)$ and so $(X,D) \to S$ is in $\cP_{d,m}^{\CY}$.

\medskip

\noindent \emph{Case 2: $(X_0,D_0)$ is Type II or III.}
This is the difficult case.
By Proposition \ref{p:1-complementS},  there exists a divisor $B$ on $X$ such that $(X,B) \to S$ is a $\bG_m$-equivariant family in $\cP_{3}^{\rm CY}$. 
Since $(X^\circ, D^\circ)\to S^\circ$ is in $\cP_{d,m}^{\CY}$, $(X^\circ, B^\circ)\to S^\circ$ is in $\cP_{3,m}^{\CY}$. Now  $(X,B) \to S$ is in $ \cP_{3,m}^{\CY}$ by Proposition  \ref{p:Pdmtwisted} and so  $(X,D)\to S$ is in $ \cP_{d,m}^{\CY}$.

\medskip

The previous two cases verify the claim. 
Therefore  $\cP_{d,m}^{\CY}$ is S-complete and $\Theta$-reductive with respect to essentially of finite type DVRs.
Using that $\cP_{d,m}^{\CY}$ is a finite type algebraic stack with affine diagonal by Proposition \ref{p:Pdmstack}, Theorem \ref{t:AHLH} (with the sentence directly after) implies $\cP_{d,m}^{\CY}$ is S-complete and $\Theta$-reductive with respect to all DVRs. 
\end{proof}

\begin{proof}[Proof of Theorem \ref{t:existsPdm}]
By Propositions \ref{p:Pdmstack} and \ref{p:PdmSTheta},  we may apply Theorem \ref{t:AHLH} to deduce the theorem.
\end{proof}

\subsection{Properties of the moduli space}

We now prove various properties of $P_{d,m}^{\rm CY}$
including the existence of wall crossing morphisms, properness, and stabilization.

\begin{prop}\label{p:normalawayfromIII+ell}(Local structure)
Fix $d\geq 3$ and $m\geq1$. 
\begin{enumerate}
	\item The moduli space $P_{d,m}^{\CY}$ is reduced and irreducible.
	\item If $p \in P_{d,m}^{\CY}$ is a point where the source is not a point or an elliptic curve, then $P_{d,m}^{\CY}$ is normal at $p$.
\end{enumerate}
\end{prop}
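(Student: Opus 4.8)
The strategy is to deduce both statements from the local structure results for stacks with good moduli spaces established earlier, together with the smoothness results of Section \ref{s:twistedvarieties}.

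For (1), irreducibility follows because $\cP_{d,m}^{\CY}$ contains the irreducible dense open substack $\cP_d$ (the smooth plane curves form an irreducible family, as $\PGL_3$ acts on the space of degree $d$ forms), and $\cP_{d,m}^{\CY}$ is by definition an open substack of $\cP_d^{\CY} = \overline{\cP_d}$; thus $\cP_{d,m}^{\CY}$ is irreducible, and since $\phi_m \colon \cP_{d,m}^{\CY} \to P_{d,m}^{\CY}$ is surjective by Proposition \ref{p:Alpergmprops}.1, the moduli space $P_{d,m}^{\CY}$ is irreducible. Reducedness: $\cP_d^{\CY}$ is the stack-theoretic closure of the reduced stack $\cP_d$, hence reduced, so $\cP_{d,m}^{\CY}$ is reduced; then $\cO_{P_{d,m}^{\CY}} \cong (\phi_m)_* \cO_{\cP_{d,m}^{\CY}}$ by Definition \ref{d:gm}.2, and a subsheaf of a sheaf of reduced rings is reduced, so $P_{d,m}^{\CY}$ is reduced.

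For (2), let $p \in P_{d,m}^{\CY}$ be a point where the source is neither a point nor an elliptic curve. Since $\phi_m$ is a good moduli space morphism, the fiber $\phi_m^{-1}(p)$ contains a unique closed point $x = [(X_0, D_0)]$ with linearly reductive (indeed torus, by Theorem \ref{t:AutTorus}) stabilizer; by the \'etale-local structure theorem for good moduli spaces, there is an \'etale neighborhood of $p$ in $P_{d,m}^{\CY}$ of the form $[\Spec A/\Aut(X_0,D_0)]\!\sslash\!\Aut(X_0,D_0) = \Spec(A^{\Aut(X_0,D_0)})$ where $\Spec A$ is an affine scheme with an action of the reductive group $\Aut(X_0,D_0)$. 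The key point is that by Proposition \ref{p:SrcSequiv}, $\Src(X_0,D_0)$ is crepant birational to the source at every point specializing to $x$, so every pair in this neighborhood has source that is neither a point nor an elliptic curve; equivalently no pair in the neighborhood is Type III, and among Type II pairs none has source an elliptic curve. By Proposition \ref{p:dnmid3plt} (when $3 \nmid d$) and more generally by Proposition \ref{p:typeIIslc}, whenever the source is not a point and not an elliptic curve the normalization $(\overline X, \overline G)$ of $(X,0)$ is plt. Proposition \ref{p:smoothnesPCYatpt} then shows $\cP_d^{\CY}$, and hence the open substack $\cP_{d,m}^{\CY}$, is smooth at every such point, so $\Spec A$ is smooth, hence normal. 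Finally, invariants of a normal domain under a reductive group action form a normal domain, so $A^{\Aut(X_0,D_0)}$ is normal, and therefore $P_{d,m}^{\CY}$ is normal at $p$ (alternatively, apply \cite[Thm. 4.16.viii]{Alp13} to the saturated open substack of $\cP_{d,m}^{\CY}$ consisting of pairs whose source is neither a point nor an elliptic curve, which is smooth, hence has a normal good moduli space by Proposition \ref{p:saturated}).

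\textbf{Main obstacle.} The subtle point is checking that the locus in question is saturated (i.e.\ a union of fibers of $\phi_m$) so that Propositions \ref{p:saturated} applies cleanly; this requires knowing that the source type is constant along S-equivalence classes, which is precisely Proposition \ref{p:SrcSequiv}, and that "source not a point and not an elliptic curve" is an open condition on the stack — this follows from upper semi-continuity of regularity (Proposition \ref{p:regularityusc}) together with the boundedness/finiteness analysis of Type II sources in Theorem \ref{thm:type2Sequiv}, which shows the elliptic-curve-source locus is closed. Assembling these openness and closedness facts into the statement that the good locus is open and saturated is where the care is needed.
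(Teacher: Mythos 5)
Your proof of (1) is essentially the paper's argument (irreducibility and reducedness of $\cP_{d,m}^{\CY}$ from the closure of the smooth irreducible stack $\cP_d$, then descent to the good moduli space), so there is nothing to add there.

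For (2) you take a genuinely different route, and it has a gap. The paper's argument never needs the condition ``source is neither a point nor an elliptic curve'' to be open on the stack: it sets $\cZ$ equal to the non-smooth locus of $\cP_{d,m}^{\CY}$ (closed by definition), observes that $U := P_{d,m}^{\CY}\setminus \phi_m(\cZ)$ is open because $\phi_m$ is universally closed, notes that $\phi_m^{-1}(U)$ is automatically saturated and smooth so $U$ is normal by \cite[Thm.~4.16.viii]{Alp13}, and then only has to check that the single fiber $\phi_m^{-1}(p)$ avoids $\cZ$ --- which is exactly what Proposition \ref{p:SrcSequiv} (constancy of the source along the S-equivalence class over $p$) plus Hacking's classification plus Proposition \ref{p:smoothnesPCYatpt} give. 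Your argument instead needs smoothness of the stack on a whole \'etale neighborhood of the closed point $x$ over $p$, and for that you assert that ``the elliptic-curve-source locus is closed,'' citing Theorem \ref{thm:type2Sequiv}. That theorem classifies S-equivalence classes of Type II pairs; it says nothing about the topology of the elliptic-source locus inside the stack, and Proposition \ref{p:SrcSequiv} only controls the source along a single fiber of $\phi_m$, not across nearby fibers. Upper semi-continuity of regularity (Proposition \ref{p:regularityusc}) does exclude Type III pairs near $x$, but the claim that no nearby Type II pair can have an elliptic curve as source is left unproved, and it is precisely the openness statement your whole argument hinges on. It is plausibly true (a klt CY pair on $\bP^1$ should not specialize to a smooth elliptic curve), but it needs an argument, and the paper's formulation is designed to avoid having to supply one.

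A second, smaller issue: you justify ``source not a point and not an elliptic curve $\Rightarrow$ $(\oX,\oG)$ plt'' by citing Propositions \ref{p:dnmid3plt} and \ref{p:typeIIslc}. Proposition \ref{p:typeIIslc} only says that a \emph{non-normal} regularity $0$ pair has plt normalization; it does not rule out a normal, lc-but-not-klt surface. Excluding that case requires Hacking's classification (\cite[Thm.~5.5, 6.5 \& 8.5]{Hac04}), which identifies the elliptic cone as the only such surface --- and that is exactly why the hypothesis on the source excludes elliptic curves. The paper cites \cite{Hac04} directly at this step; your write-up should too, since Proposition \ref{p:dnmid3plt} is stated only for $3\nmid d$.
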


\begin{proof}
Note that $\cP_d$ is smooth by Proposition \ref{p:smoothnesPCYatpt} and clearly irreducible. 
Using that $\cP_{d}^{\CY}$ is the stack theoretic closure of $\cP_d$,
it follows that  $\cP_d^{\CY}$, as well as $\cP_{d,m}^{\CY}$, is  reduced and irreducible as well.
Thus \cite[Thm. 4.16.viii]{Alp13} implies (1) holds.

For (2), let  $\cZ\subset \cP_{d,m}^{\CY}$ denote  the non-smooth locus of the stack and 
\[
U :=  P_{d,m}^{\CY} \setminus \phi_m(\cZ) \subset P_{d,m}^{\CY},
\]
which is an open subset.
Since $\phi^{-1}(U) \to U$ is a good moduli space morphism by Proposition \ref{p:saturated} and $\phi^{-1}(U)$ is smooth, \cite[Thm. 4.16.viii]{Alp13} implies $U$ is normal.
By our assumption on $p$, \cite[Thm. 5.5 \& 6.5]{Hac04} implies: if $(X,D)$ is a pair 
in $\cP_{d}^{\CY}$ that maps to $p$, then the normalization of $(\oX,\oG)$ of $(X,0)$ is plt. 
Thus $p \in U$ by   Proposition \ref{p:smoothnesPCYatpt} and so  $P_{d,m}^{\CY}$ is normal at $p$.
\end{proof}

\begin{prop}[Wall crossing]\label{p:wallcrossing}
Fix $d \geq 3$. 	
If $m $ is sufficiently large,  then  $\cP_d^{\rm K}  \subset  \cP_{d,m}^{\CY}$,    $\cP_d^{\rm H}  \subset  \cP_{d,m}^{\CY}$,
and there is a commutative diagram
\[
\begin{tikzcd}
\cP_d^{ \rm K} \arrow[r,hook] \arrow[d] &
\cP_{d,m}^{\CY} \arrow[d] & 
\cP_{d}^{\rm H} \arrow[l,hook'] \arrow[d] \\
P_d^{\rm K} \arrow[r,"\psi_{ {\rm K},m}"]  & 
P_{d,m}^{\CY}  & 
P_{d}^{\rm H} \arrow[l,"\psi_{ {\rm H},m}"']
\end{tikzcd}
,\]
where the top arrows are open immersions and the vertical arrows are good moduli space morphisms. 
In addition, $\psi_{ {\rm K},m}$ and $\psi_{{\rm H},m}$ are proper surjective morphisms when $d\geq 3$ and $d\geq 4$, respectively.
\end{prop}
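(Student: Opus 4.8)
\textbf{Proof plan for Proposition \ref{p:wallcrossing}.}

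The plan is to first establish the inclusions $\cP_d^{\rm K}\subset\cP_{d,m}^{\CY}$ and $\cP_d^{\rm H}\subset\cP_{d,m}^{\CY}$ for $m\gg0$. Since $\cP_d^{\rm K}$ and $\cP_d^{\rm H}$ are both finite type open substacks of $\cP_d^{\CY}$ (Theorems \ref{t:Hacking} and \ref{t:ADL}), their $\bk$-points form bounded families. By Proposition \ref{p:anticanonicalsheaf}.3 the function $b\mapsto\deg(D_b^{\Div})$ is locally constant, and more to the point, boundedness of each stack gives a uniform bound on the Gorenstein index: there is an $m_0$ so that for every $(X,D)$ in $\cP_d^{\rm K}(\bk)$ or $\cP_d^{\rm H}(\bk)$ and every $x\in X$, one has ${\rm ind}_x(K_X)\le m_0$. (Concretely, $-kK_X$ is a very ample Cartier divisor for a uniform $k$ by \cite[Thm. 2.1.2]{Kol85}, just as in the proof of Proposition \ref{p:Pdmstack}, forcing the local index to divide $k$.) Then for $m\ge m_0$ both open substacks land inside $\cP_{d,m}^{\CY}$, and the top horizontal arrows are open immersions since $\cP_d^{\rm K}\hookrightarrow\cP_d^{\CY}$ and $\cP_d^{\rm H}\hookrightarrow\cP_d^{\CY}$ are (Theorems \ref{t:ADL}.1, \ref{t:Hacking}.1).

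Next I would produce the morphisms $\psi_{{\rm K},m}$ and $\psi_{{\rm H},m}$. The point is that $\cP_d^{\rm K}\subset\cP_{d,m}^{\CY}$ and $\cP_d^{\rm H}\subset\cP_{d,m}^{\CY}$ are saturated open substacks for the good moduli space morphism $\phi_m$. Saturatedness follows from Lemma \ref{l:sequiv}: a point of $\cP_{d,m}^{\CY}$ in the same $\phi_m$-fiber (equivalently, S-equivalence class) as a pair in $\cP_d^{\rm K}(\bk)$ is again klt, since by Proposition \ref{p:sequivklt} a klt boundary polarized CY pair is S-equivalent only to itself; hence it lies in $\cP_d^{\rm K}$. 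Similarly for $\cP_d^{\rm H}$, using that a KSBA stable (slc after perturbing the coefficient up) pair in a given S-equivalence class is the unique closed point of that class — more carefully, by Proposition \ref{p:torusrk+1} and the fact that the Hacking stack is Deligne--Mumford, the Hacking pair has finite automorphisms, so it is the unique polystable representative of its S-equivalence class, and hence the full $\phi_m$-fiber over its image meets $\cP_d^{\rm H}$; then openness of the Hacking condition forces the whole fiber into $\cP_d^{\rm H}$. By Proposition \ref{p:saturated}, $\phi_m$ restricts to good moduli space morphisms $\cP_d^{\rm K}\to\phi_m(\cP_d^{\rm K})$ and $\cP_d^{\rm H}\to\phi_m(\cP_d^{\rm H})$ onto open subspaces of $P_{d,m}^{\CY}$. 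By uniqueness of good moduli spaces, these identify $\phi_m(\cP_d^{\rm K})$ with $P_d^{\rm K}$ (resp. the coarse space $\phi_m(\cP_d^{\rm H})$ with an open subspace of $P_{d,m}^{\CY}$ receiving a map from $P_d^{\rm H}$), giving $\psi_{{\rm K},m}$ and $\psi_{{\rm H},m}$ and the commutativity of the diagram.

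Finally I would address properness and surjectivity of $\psi_{{\rm K},m}$ and $\psi_{{\rm H},m}$. Surjectivity is the substantive content: for $\psi_{{\rm K},m}$, every pair in $\cP_{d,m}^{\CY}(\bk)$ is S-equivalent to a pair in $\cP_d^{\rm K}(\bk)$ by Lemma \ref{l:SequivKmod}, so every point of $P_{d,m}^{\CY}$ is in the image; this requires $d\ge3$. For $\psi_{{\rm H},m}$ one needs $d\ge4$ (for $d=3$ the Hacking stack is empty) and an argument that every S-equivalence class in $\cP_{d}^{\CY}$ contains a KSBA stable pair — this follows by combining the valuative criterion of properness for the Hacking stack (Theorem \ref{t:Hacking}.2) with Corollary \ref{c:sequiv}: given a pair in $\cP_{d,m}^{\CY}(\bk)$, spread it out over a curve with generic fiber in $\cP_d$, replace by the Hacking limit (after a finite base change), and apply S-completeness to see the two limits are S-equivalent. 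For properness: $P_d^{\rm K}$ and $P_d^{\rm H}$ are projective (Theorems \ref{t:ADL}.2, \ref{t:Hacking}.2), hence proper over $\bk$; $P_{d,m}^{\CY}$ is separated by Theorem \ref{t:existsPdm}; and a morphism from a proper space to a separated space is proper. I expect the saturatedness of $\cP_d^{\rm H}$ in $\cP_{d,m}^{\CY}$ to be the main obstacle, since it hinges on correctly identifying the closed point in each S-equivalence class and on the interplay between the KSBA condition (coefficient $1+\vep$) and the CY condition (coefficient exactly $3/d$) being an open condition stable under S-equivalence within a fiber — this is where Theorem \ref{t:TypeIIISequiv} and the Type II analysis of Theorem \ref{thm:type2Sequiv} may be needed to rule out pathologies.
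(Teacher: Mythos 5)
Your treatment of the inclusions (boundedness of the finite type stacks $\cP_d^{\rm K}$, $\cP_d^{\rm H}$ forces a uniform bound on the local index of $K_X$, hence inclusion in $\cP_{d,m}^{\CY}$ for $m\gg0$), of properness (proper source, separated target), and of surjectivity (via Lemma \ref{l:SequivKmod}, or equivalently density plus properness) all match the paper or are valid variants of it. The genuine gap is in your construction of $\psi_{{\rm K},m}$ and $\psi_{{\rm H},m}$: the saturatedness claims on which you base it are false, and the detour is unnecessary. The paper obtains both morphisms in one line from the universal property of good moduli space morphisms with respect to maps to algebraic spaces (Proposition \ref{p:Alpergmprops}.3): the composite $\cP_d^{\rm K}\to\cP_{d,m}^{\CY}\to P_{d,m}^{\CY}$ factors uniquely through $P_d^{\rm K}$, and likewise for the Hacking stack. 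No openness of the image is asserted or needed; indeed Theorem \ref{t:main2}.3 records that the induced maps on moduli spaces are (for $d>3$) projective \emph{birational} morphisms, not open immersions.

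Concretely, your saturatedness argument fails on both sides. For the K-moduli locus, a pair $(X,D)$ in $\cP_d^{\rm K}(\bk)$ is defined by $(X,(1-\vep)D)$ being K-semistable, hence klt; this does \emph{not} make $(X,D)$ itself klt as a boundary polarized CY pair, so Proposition \ref{p:sequivklt} does not apply. For instance, $(\bP^2,\tfrac{3}{d}\{(xyz)^{d/3}=0\})$ lies in $\cP_d^{\rm K}$ when $3\mid d$ but is Type III and S-equivalent to many non-isomorphic pairs (Theorem \ref{t:TypeIIISequiv}); its $\phi_m$-fiber is therefore much larger than its $\phi_{\rm K}$-fiber, and $\cP_d^{\rm K}$ is not saturated in $\cP_{d,m}^{\CY}$. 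For the Hacking locus, finiteness of the automorphism group of a KSBA-stable pair does not make it the closed point of its S-equivalence class in $\cP_{d,m}^{\CY}$: by Proposition \ref{p:tcHstableII}, a Type II pair $(X,D)$ with $(X,(1+\vep)D)$ lc admits a non-trivial weakly special degeneration induced by the unique lc place of $(X,0)$, and the resulting central fiber carries a $\bG_m$-action and is not KSBA stable, so it lies in the $\phi_m$-fiber but outside $\cP_d^{\rm H}$. Hence $\cP_d^{\rm H}$ is not saturated either, and Proposition \ref{p:saturated} cannot be invoked. Replacing this whole step by the appeal to Proposition \ref{p:Alpergmprops}.3 repairs the proof and renders the "main obstacle" you anticipate moot.
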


Note that third column of the diagram is trivial when $d=3$, since $\cP_3^{\rm H}=\emptyset = P_3^{\rm H}$.

\begin{proof}
Since $\cP_{d}^{\rm K}$ and $\cP_d^{\rm  H}$ are finite type,  Propostion \ref{p:boundedness} implies 
\[
\cP_d^{\rm K} \subset \cP_{d,m}^{\CY}\quad \text{ and } \quad \cP_d^{\rm H}\subset \cP_{d,m}^{\CY}
\]
 for $m$ sufficiently large.
By Proposition \ref{p:Alpergmprops},
there are unique morphisms $\psi_{{\rm K},m}$ and $\psi_{{\rm H},m}$ that make the diagram commute. 
Since $P_d^{\rm K}$ and $P_{d}^{\rm H}$ are proper (Theorem \ref{t:ADL} and \ref{t:Hacking}) and $P_{d,m}^{\rm CY}$ is separated  (Theorem \ref{t:existsPdm}), $\psi_{{\rm K},m}$ and $\psi_{{\rm H},m}$ are proper. 

To verify that $\psi_{{\rm K},m}$ is  surjective for $d\ge 3$,
note that $\cP_d \subset \cP_d^{\rm K}$ by Theorem \ref{t:ADL}.
Thus  $\cP_d^{\rm K}$ is dense in $\cP_{d}^{\rm CY}$
and so $ \psi_{\rm K,m}(P_{d}^{\rm K})$ is dense in $P_{d,m}^{\CY}$.
Since   $\psi_{{\rm K},m}$ is proper, 
it follows that   $\psi_{{\rm K},m}(P_{d}^{\rm K})= P_{d,m}^{\CY}$ as desired.
Similar argument shows that  
$\psi_{{\rm H},m}$ is surjective when $d\geq 4$. 
\end{proof}

\begin{prop}[Properness]\label{p:properness}
Fix $d\geq 3$. If $m$ is sufficiently large, then $P_{d,m}^{\rm CY}$ is proper.
\end{prop}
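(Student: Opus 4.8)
The strategy is to combine the properness results already available for the K-moduli and KSBA moduli stacks with the good moduli space of $\cP_{d,m}^{\CY}$ and Theorem~\ref{thm:properness}. The key point is that, although $\cP_d^{\CY}$ is not finite type, each $\cP_{d,m}^{\CY}$ is finite type with affine diagonal and (by Proposition~\ref{p:PdmSTheta}) is S-complete and $\Theta$-reductive, so by Theorem~\ref{t:AHLH} it suffices to verify that $\cP_{d,m}^{\CY}$ satisfies the existence part of the valuative criterion for properness with respect to DVRs essentially of finite type over $\bk$.

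First I would fix a DVR $R$ essentially of finite type over $\bk$ with fraction field $K$ and a $K$-point $(X_K,D_K)$ of $\cP_{d,m}^{\CY}$. By Theorem~\ref{thm:properness}, after a finite ramified base change $R'/R$ with fraction field $K'$, the pair $(X_K,D_K)\times_K K'$ extends to a family of boundary polarized CY pairs $(X',D')\to \Spec(R')$. Since $(X_K,D_K)$ is a degeneration of smooth plane curve pairs (being a $K$-point of $\cP_d^{\CY}$), the extension $(X',D')$ is automatically in $\cP_d^{\CY}$; here one uses that $\cP_d^{\CY}$, being the stack-theoretic closure of $\cP_d$, is closed under specialization, together with the fact that $\cP_d\subset \cM(\chi,N_d,(1,\tfrac3d))$ is an open substack whose closure is cut out by closed conditions. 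The subtlety is that the special fiber of this extension might not lie in $\cP_{d,m}^{\CY}$ — i.e. its Cartier index could exceed $m$. This is exactly where I would invoke the stabilization argument: for $m$ sufficiently large $\cP_{d,m}^{\CY}$ exhausts the relevant portion of $\cP_d^{\CY}$. More precisely, one combines Proposition~\ref{p:wallcrossing} (so that $\cP_d^{\rm K}\subset \cP_{d,m}^{\CY}$ for $m\gg 0$) with Lemma~\ref{l:SequivKmod} and Corollary~\ref{c:sequiv}: the special fiber $(X'_0,D'_0)$ is S-equivalent to a pair in $\cP_d^{\rm K}(\bk)$, and hence by Proposition~\ref{p:boundedness} (applied to the bounded set $\cP_d^{\rm K}(\bk)$ together with its weakly special degenerations, which is bounded by an argument along the lines of Proposition~\ref{p:Hstabledegbound} or by Proposition~\ref{p:anticanonicalsheaf}) its Cartier index is bounded by a constant depending only on $d$. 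Thus for $m$ at least this constant, $(X',D')\to\Spec(R')$ is a family in $\cP_{d,m}^{\CY}$, which provides the required extension.

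With the existence part of the valuative criterion for properness verified for $\cP_{d,m}^{\CY}$ (for $m\gg 0$), Theorem~\ref{t:AHLH} — specifically its final sentence — applies: since $\cP_{d,m}^{\CY}$ is finite type with affine diagonal, S-complete and $\Theta$-reductive (Propositions~\ref{p:Pdmstack} and~\ref{p:PdmSTheta}), and now satisfies the existence part of the valuative criterion of properness, its good moduli space $P_{d,m}^{\CY}$ is proper. Alternatively, one may argue directly: $P_{d,m}^{\CY}$ is separated by Theorem~\ref{t:existsPdm}, so by the valuative criterion it remains to check the existence part of the valuative criterion of properness for the \emph{algebraic space} $P_{d,m}^{\CY}$; given a $K$-point of $P_{d,m}^{\CY}$, lift it (after finite base change) to a $K'$-point of $\cP_{d,m}^{\CY}$ using that $\phi_m$ is surjective with affine, hence finite-type, fibers, extend over $R'$ by the argument above, and compose with $\phi_m$.

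\textbf{Main obstacle.} The hard part is controlling the Cartier index of the special fiber of the extension produced by Theorem~\ref{thm:properness}: a priori that theorem only guarantees an extension within the (non-finite-type) stack $\cP_d^{\CY}$, and there is no reason the limit stays in a fixed $\cP_{d,m}^{\CY}$. Overcoming this requires the boundedness of all weakly special degenerations of pairs in $\cP_d^{\rm K}(\bk)$ — which by Corollary~\ref{c:sequiv} and Lemma~\ref{l:SequivKmod} captures every pair in $\cP_d^{\CY}(\bk)$ up to S-equivalence, and in particular controls the Cartier index since that invariant is determined by the boundary in a family (Proposition~\ref{p:anticanonicalsheaf}) and since S-equivalent pairs share their associated polarized scheme up to the choices controlled by $\chi$. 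Everything else is formal bookkeeping with the machinery of Theorem~\ref{t:AHLH} and the good moduli space $\phi_m$.
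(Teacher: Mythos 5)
Your overall strategy (verify the existence part of the valuative criterion for $\cP_{d,m}^{\CY}$ and invoke the last sentence of Theorem~\ref{t:AHLH}) is a legitimate route in principle, but the step where you control the Cartier index of the limit has a genuine gap. You claim that the special fiber $(X'_0,D'_0)$ of the extension produced by Theorem~\ref{thm:properness} has Cartier index bounded by a constant depending only on $d$, and you justify this by saying it is S-equivalent to a pair in $\cP_d^{\rm K}(\bk)$ and that the set of weakly special degenerations of $\cP_d^{\rm K}(\bk)$ is bounded. Both halves of that justification fail when $3\mid d$. S-equivalence does not control the Cartier index: by Theorem~\ref{t:TypeIIISequiv} every Type III pair is S-equivalent to $(\bP^2,\tfrac{3}{d}\{(xyz)^{d/3}=0\})$, yet by Example~\ref{e:unbounded} this single S-equivalence class contains pairs $(\bP(a^2,b^2,c^2),\cdot)$ of arbitrarily large Cartier index. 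Likewise, $(\bP^2,\tfrac{3}{d}\{(xyz)^{d/3}=0\})$ itself lies in $\cP_d^{\rm K}(\bk)$ and admits weakly special degenerations with arbitrarily many irreducible components, so the set of weakly special degenerations of $\cP_d^{\rm K}(\bk)$ is \emph{not} bounded; Proposition~\ref{p:Hstabledegbound} cannot be invoked here because it applies only to regularity~$0$ pairs, and Proposition~\ref{p:anticanonicalsheaf} controls degrees and Hilbert functions, not local Cartier indices. To salvage your route you would instead have to exploit the specific shape of the limit from Proposition~\ref{prop:propernessaux} (its KSBA-perturbed version is stable, and that locus is bounded by \cite{KX20}/\cite{Bir22}, as in the proof of Proposition~\ref{p:regbounded}); the argument as written does not supply this.

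The paper's own proof sidesteps all of this and is a one-liner: by Proposition~\ref{p:wallcrossing} there is, for $m$ sufficiently large, a proper surjective morphism $\psi_{{\rm K},m}\colon P_d^{\rm K}\to P_{d,m}^{\CY}$ (properness of $\psi_{{\rm K},m}$ comes from properness of $P_d^{\rm K}$, Theorem~\ref{t:ADL}, together with separatedness of $P_{d,m}^{\CY}$, Theorem~\ref{t:existsPdm}; surjectivity from density of $\cP_d^{\rm K}$ in $\cP_d^{\CY}$). A separated finite-type algebraic space admitting a surjection from a proper space is proper. I would recommend adopting that argument, or at minimum replacing your S-equivalence-based boundedness claim with the KSBA boundedness input.
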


\begin{proof}	
Since $P_{d}^{\rm K}$ is proper by Theorem \ref{t:ADL} and there is a proper surjective morphism
$P_{d}^{\rm  K}\to P_{d,m}^{\rm CY}$ for   $m$ sufficiently large by Proposition \ref{p:wallcrossing}, $P_{d,m}^{\CY}$ is proper for $m$ sufficiently large.
\end{proof}

\begin{prop}[Compactification]\label{p:compactPd}
If $d \geq 4$ and $m\geq 1$, then the natural map
\[
\cP_d \to P_d:= \phi_m(\cP_d)
\]
is a coarse moduli space morphism and 
$P_d \subset P_{d,m}^{\CY}$ is a dense open subset. 
\end{prop}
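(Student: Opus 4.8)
\textbf{Proof plan for Proposition \ref{p:compactPd}.}
The statement has two parts: that $\cP_d \to P_d := \phi_m(\cP_d)$ is a coarse moduli space morphism, and that $P_d \subset P_{d,m}^{\CY}$ is a dense open subset. The plan is to first handle openness and density, then coarseness. For density, $\cP_d$ is dense in $\cP_{d,m}^{\CY}$ by construction (the latter is an open substack of the stack-theoretic closure of $\cP_d$), so its image $P_d = \phi_m(\cP_d)$ is dense in $P_{d,m}^{\CY}$ because $\phi_m$ is surjective and closed maps or a simple topological argument: $\phi_m$ is universally closed (Proposition \ref{p:Alpergmprops}.1), hence $\overline{\phi_m(\cP_d)} = \phi_m(\overline{\cP_d}) \supseteq \phi_m(\cP_{d,m}^{\CY}) = P_{d,m}^{\CY}$ once one knows $\cP_d$ is dense in $\cP_{d,m}^{\CY}$. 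For openness, the key point is that $\cP_d \subset \cP_{d,m}^{\CY}$ is a \emph{saturated} open substack, i.e. $\phi_m^{-1}(\phi_m(\cP_d)) = \cP_d$. Indeed, $\cP_d$ parametrizes pairs $(\bP^2, \tfrac{3}{d}C)$ with $C$ smooth, and such a pair is klt, hence Type I; by Proposition \ref{p:sequivklt} a klt boundary polarized CY pair is S-equivalent only to itself, so by Proposition \ref{p:Alpergmprops}.2 the fiber of $\phi_m$ over a point of $P_d$ is a single closed point with no other pairs degenerating to it or from it — that is, $\phi_m^{-1}(P_d)$ consists precisely of the Type I pairs in $\cP_{d,m}^{\CY}$ with underlying surface $\bP^2$ and $C$ smooth, which is exactly $\cP_d$. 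Then Proposition \ref{p:saturated} gives that $P_d \subset P_{d,m}^{\CY}$ is open and that $\cP_d \to P_d$ is a good moduli space morphism.

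The remaining step is to upgrade ``good moduli space morphism'' to ``coarse moduli space morphism'' for $\cP_d \to P_d$. Since $\cP_d$ is a smooth finite type Deligne--Mumford stack when $d \geq 4$ — this follows because the pairs in $\cP_d$ are the smooth plane curve pairs, whose automorphism groups $\Aut(\bP^2, \tfrac{3}{d}C)$ are finite for $d \geq 4$ (a smooth plane curve of degree $d \geq 4$ has finite automorphism group, and these must preserve $C$ as a plane curve), so $\cP_d$ has finite inertia — a good moduli space of a Deligne--Mumford stack with finite inertia is automatically a coarse moduli space by \cite[Thm. 4.16]{Alp13} (or directly: for finite-inertia DM stacks, the good moduli space is the coarse space and the two notions coincide). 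Alternatively, one can invoke the Keel--Mori theorem to produce the coarse space and identify it with $P_d$ via the universal property. I would phrase it as: $\cP_d$ is a separated Deligne--Mumford stack of finite type (Theorem \ref{t:Hacking} combined with Remark that $\cP_d \subset \cP_d^{\rm H}$, or directly from smoothness plus finite automorphisms), so by Keel--Mori it has a coarse moduli space $\cP_d \to M$; the morphism $\cP_d \to P_d$ is a good moduli space morphism, hence also initial among maps to algebraic spaces, so it factors through and is identified with $\cP_d \to M$.

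The main obstacle is the bookkeeping in the saturation argument: one must be careful that $\cP_d$ is not only closed under isomorphism but genuinely saturated, meaning no Type I pair in $\cP_{d,m}^{\CY}$ with a non-$\bP^2$ surface or singular $C$ can be S-equivalent to a pair in $\cP_d$. This is where Proposition \ref{p:sequivklt} is essential — it forces any pair S-equivalent to $(\bP^2,\tfrac3d C)$ with $C$ smooth to be isomorphic to it, so the S-equivalence class is a singleton and the preimage $\phi_m^{-1}(P_d)$ cannot be larger than $\cP_d$. A secondary subtlety, only for the coarse space claim, is the restriction $d \geq 4$: for $d = 3$ a smooth plane cubic has a positive-dimensional automorphism group (translations), so $\cP_3$ is not Deligne--Mumford and the good moduli space $P_3 = P_3^{\rm K}$ (as in the footnote to Theorem \ref{t:main2}) is genuinely a good, not coarse, moduli space — which is why the hypothesis $d \geq 4$ appears. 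Once these points are nailed down the rest is formal.
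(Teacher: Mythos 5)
Your proof is correct and follows essentially the same route as the paper: saturation of $\cP_d$ via Proposition \ref{p:sequivklt} (klt pairs are S-equivalent only to themselves), then Proposition \ref{p:saturated} for openness and the good moduli space property, then an upgrade to a coarse moduli space. The paper's upgrade is slightly more direct than yours — it simply observes that $\cP_d \to P_d$ is a good moduli space morphism that is a bijection on $\bk$-points, hence a coarse moduli space — whereas you route through finiteness of $\Aut(\bP^2,\tfrac3d C)$, Deligne--Mumfordness, and Keel--Mori; both work.

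One factual correction to your closing remark on $d=3$: the obstruction there is \emph{not} that $\Aut(\bP^2,C)$ is positive-dimensional for a smooth cubic $C$ — only translations by $3$-torsion points extend to linear automorphisms of $\bP^2$, so that group is in fact finite (the paper's degree-3 discussion confirms these are closed points of $\cP_3^{\rm K}$ with finite stabilizer). The real reason $d=3$ is excluded is that $\tfrac3d=1$, so $(\bP^2,C)$ is lc but not klt; Proposition \ref{p:sequivklt} then fails, $(\bP^2,C)$ degenerates to the cone over $C$, and $\cP_3$ is not saturated — so the argument breaks already at the openness step, not merely at the coarse-space upgrade.
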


\begin{proof}
If $d \geq 4$, then each pair  $(X,D)$ in $ \cP_{d}(\bk)$ is klt and is not S-equivalent to non-isomorphic pairs by Proposition \ref{p:sequivklt}.
Thus Proposition \ref{p:Alpergmprops}.3 and Lemma \ref{l:sequiv} imply
\[
\cP^{\CY}_d(\bk) \to P_{d,m}^{\CY}(\bk)
\]
is a bijection over $P_d(\bk)$ 
and so  $\cP_d$ is saturated with respect to $\phi_m$.
Therefore Proposition \ref{p:saturated}  implies   $P_d \subset P_{d,m}^{\CY}$ is an open subset and $\cP_d \to P_d$ is a good moduli space.
Since $\cP_d \to P_d$ is a bijection on $\bk$-points, it is a coarse moduli space.
\end{proof}

Next, we prove a stabilization result for  $P_{d,m}^{\CY}$.  If $m'\geq m$, then there is a diagram
\begin{equation}\label{e:Pdmdiag}
\begin{tikzcd}
\cP_{d,m}^{\CY} \arrow[d] \arrow[r,hook] & \cP_{d,m'}^{\CY} \arrow[d] \\
P_{d,m}^{\CY} \arrow[r,"\psi_{m,m'}"]  & P_{d,m'}^{\CY} 
\end{tikzcd}
\end{equation}
where the existence of a morphism $\phi_{m,m'}$, which makes the diagram commute, follows from Proposition \ref{p:Alpergmprops}.2.

\begin{prop}\label{p:stabilization}
Fix $d\geq 3$. If $m$ is sufficiently large and $m'\geq m$, then the  map 
\[
\psi_{m,m'}:P_{d,m}^{\CY} \to P_{d,m'}^{\CY}
\]
is a bijection on $\bk$-points.
\end{prop}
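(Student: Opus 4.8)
The goal is to show that for $m$ sufficiently large and $m' \geq m$, the morphism $\psi_{m,m'}\colon P_{d,m}^{\CY} \to P_{d,m'}^{\CY}$ from \eqref{e:Pdmdiag} is a bijection on $\bk$-points. Since $\psi_{m,m'}$ is induced by the open immersion $\cP_{d,m}^{\CY}\hookrightarrow \cP_{d,m'}^{\CY}$ of stacks, and both sides have good moduli spaces, the key is to understand which S-equivalence classes of pairs in $\cP_{d}^{\CY}(\bk)$ contain a representative of index $\leq m$. Concretely, I would use the description: a $\bk$-point of $P_{d,m}^{\CY}$ corresponds to a closed point of $\cP_{d,m}^{\CY}$, i.e., a polystable pair $(X,D)$ with $\mathrm{ind}_x(K_X)\leq m$ for all $x$ (together with its S-equivalence class among pairs of index $\leq m$, but by Proposition \ref{p:PdmSTheta} this class is the restriction of the full S-equivalence class), and by Lemma \ref{l:sequiv} two points of $P_{d,m}^{\CY}$ are identified iff the corresponding S-equivalence classes agree.

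\textbf{Surjectivity.} First I would show $\psi_{m,m'}$ is surjective on $\bk$-points for $m$ large (and any $m' \geq m$): this is immediate since $\psi_{m,m'}$ is proper by Proposition \ref{p:wallcrossing} (for $m$ large there is a proper surjection $P_d^{\rm K}\to P_{d,m}^{\CY}$, and $P_d^{\rm K}\to P_{d,m'}^{\CY}$ factors through it, forcing $\psi_{m,m'}$ proper; alternatively one sees it directly since $\cP_d$ is dense). Properness plus dominance (the image contains the dense open $P_d$ when $d \geq 4$, or use density of $\cP_d^{\rm K}$ for $d=3$) gives surjectivity on $\bk$-points.

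\textbf{Injectivity.} This is the main point. I would argue as follows. For each $m$, let $T_m \subset P_{d,m}^{\CY}(\bk)$ be the locus of Type I pairs; these are klt, hence S-rigid by Proposition \ref{p:sequivklt}, and the Type I locus of $\cP_d^{\CY}$ is bounded of finite type independently of $m$ (it equals the klt locus, which is contained in $\cP_d^{\rm K}$ by the characterization of K-semistability as the open klt locus, hence is finite type). So for $m$ large, $T_m = T_{m'}$ and $\psi_{m,m'}$ is already a bijection on the Type I locus. For Type II and III points: by Theorems \ref{thm:type2Sequiv} and \ref{t:TypeIIISequiv}, the Type III S-equivalence classes form a single point, and the Type II S-equivalence classes are indexed by the finitely many polystable representatives over a given source (either a smooth elliptic curve of degree $9$ or one of finitely many CY pairs on $\bP^1$, and for fixed source and degree $d$ there are finitely many up to isomorphism). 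Crucially, this finite set of polystable Type II/III representatives is independent of $m$. Hence the polystable representatives of Type II and III classes all have index bounded by some integer $m_0 = m_0(d)$, and for $m \geq m_0$ every S-equivalence class in $\cP_d^{\CY}(\bk)$ has a polystable representative of index $\leq m$, which is unique up to isomorphism; therefore $P_{d,m}^{\CY}(\bk)\to \cP_d^{\CY}(\bk)/{\sim_S}$ is a bijection for all $m \geq \max(m_0, m_1)$ where $m_1$ handles Type I. Then $\psi_{m,m'}$ is a bijection on $\bk$-points because it becomes, via these bijections and Lemma \ref{l:sequiv}, the identity on the set of S-equivalence classes.

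\textbf{Main obstacle.} The step requiring the most care is verifying that the good moduli space point set $P_{d,m}^{\CY}(\bk)$ is genuinely in bijection with S-equivalence classes having an index-$\leq m$ representative — one must check that a polystable pair of index $\leq m$ remains closed in $\cP_{d,m}^{\CY}$ (it does, since being closed is detected by test configurations, Definition \ref{def:polystable}, and the relevant degenerations stay in $\cP_{d,m}^{\CY}$ by the argument in the proof of Proposition \ref{p:PdmSTheta}), and that the polystable representative of a Type II/III class produced by Theorems \ref{thm:type2Sequiv}–\ref{t:TypeIIISequiv} has index bounded independently of $m$ — this is exactly the finiteness clause in Theorem \ref{thm:type2Sequiv} (and the single point in Theorem \ref{t:TypeIIISequiv}), combined with the boundedness of the Type I locus. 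Once these are in hand the bijection statement follows formally from Lemma \ref{l:sequiv} and Proposition \ref{p:Alpergmprops}.
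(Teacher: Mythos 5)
Your overall strategy --- stratify by type, show the Type I+II part stabilizes by boundedness, and collapse the Type III part to a single point --- is the same as the paper's, which deduces the proposition from Lemma \ref{l:stabilization}. The surjectivity argument (properness plus dominance) and the treatment of Type I points are fine, modulo one mis-citation: the inclusion of the klt locus in $\cP_d^{\rm K}$ follows from Lemma \ref{l:SequivKmod} together with Proposition \ref{p:sequivklt}, not from ``K-semistability being the open klt locus.'' However, the injectivity argument has a genuine gap at its foundation. You identify $P_{d,m}^{\CY}(\bk)$ with the set of S-equivalence classes admitting an index-$\leq m$ representative, citing Proposition \ref{p:PdmSTheta}; but points of $P_{d,m}^{\CY}$ are identified if and only if their closures intersect \emph{inside} $\cP_{d,m}^{\CY}$ (Proposition \ref{p:Alpergmprops}.2), and this is a priori strictly finer than S-equivalence, since the common degeneration witnessing S-equivalence may have index $>m$. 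S-completeness and $\Theta$-reductivity of $\cP_{d,m}^{\CY}$ do not repair this. The problem is most acute for Type III: the single Type III S-equivalence class is \emph{unbounded} (Example \ref{e:unbounded}) and admits no polystable representative (see the remark after Definition \ref{def:polystable}), so Theorem \ref{t:TypeIIISequiv} alone does not show that the Type III locus of $P_{d,m}^{\CY}$ is a single point for a fixed $m$: two non-isomorphic Type III closed points of $\cP_{d,m}^{\CY}$ could fail to admit a common degeneration of index $\leq m$. The paper needs a separate argument here --- the generic-point argument in the proof of Lemma \ref{l:stabilization}, which shows that $\psi_{m,m'}$ contracts the entire Type III locus to one point for $m'\geq m\gg 0$ --- and your proposal has no substitute for it.

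There is a second, more easily repaired gap in the Type II case: the uniform index bound $m_0(d)$ on Type II polystable pairs does not follow from the finiteness clause of Theorem \ref{thm:type2Sequiv}, which gives finiteness only for a \emph{fixed} source. The sources vary in positive-dimensional families (elliptic curves of degree $9$; klt CY pairs on $\bP^1$), so per-source finiteness yields no uniform bound on the index as the source moves. What is needed --- and what the paper uses --- is the boundedness of the entire Type I+II locus, i.e.\ Theorem \ref{t:modulireg0} and Proposition \ref{p:stackI+II} combined with Proposition \ref{p:boundedness}, giving $\cP_d^{\rm CY,I+II}\subset\cP_{d,m}^{\CY}$ for $m\gg 0$. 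Once that is in place, your Type I+II argument does close up: a common degeneration of two Type I/II points is again Type I/II by Proposition \ref{p:SrcSequiv}, hence lies in $\cP_{d,m}^{\CY}$, so identification in $P_{d,m}^{\CY}$ coincides with S-equivalence on that locus and is independent of $m$. You would then still need to supply the Type III collapse to complete the proof.
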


To prove the above proposition, we first analyze the stratification via type.

\begin{lem}\label{l:stabilization}
Fix $d\geq 3$. If  $m$ is sufficiently large, then  $\cP_{d}^{\rm CY,I+II}\subset \cP_{d,m}^{\CY}$
and the induced map
$P_{d}^{\rm CY,I+II} \to  P_{d,m}^{\CY}$
is an open immersion. 
Furthermore,
\begin{enumerate}
\item if $3 \nmid d$, then $P_{d}^{\rm CY,I+II} \to P_{d,m}^{\CY}$ is an isomorphism;
\item if $3 \, \, \vert \, \, d$, then $P_{d,m}^{\CY}  \setminus P_{d}^{\rm CY,I+II}$ is the $\bk$-point corresponding to the S-equivalence class of the pair  $(\bP^2,\tfrac{3}{d}\{(xyz)^{d/3}=0\})$.
\end{enumerate}
\end{lem}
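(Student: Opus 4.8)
The statement asserts that for $m \gg 0$ the finite type open substack $\cP_d^{\rm CY,I+II}$ of Type I and II pairs is contained in $\cP_{d,m}^{\CY}$, that the induced map on good moduli spaces is an open immersion, and then describes the complement according to divisibility of $d$ by $3$. The first containment is the easiest point: since $\cP_d^{\rm CY,I+II}$ is a finite type algebraic stack by Proposition \ref{p:stackI+II}, Proposition \ref{p:boundedness} gives a uniform integer $m_0$ so that for every pair $(X,D)$ in $\cP_d^{\rm CY,I+II}(\bk)$ the divisor $-m_0!\,K_X$ is Cartier; equivalently $\mathrm{ind}_x(K_X) \le m_0!$ for all $x$, so $\cP_d^{\rm CY,I+II} \subset \cP_{d,m}^{\CY}$ once $m \ge m_0!$.

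\textbf{Openness of the map on moduli spaces.} The key input is Proposition \ref{p:saturated}: if $\cP_d^{\rm CY,I+II} \subset \cP_{d,m}^{\CY}$ is a \emph{saturated} open substack with respect to the good moduli space morphism $\phi_m$, then $P_d^{\rm CY,I+II}=\phi_m(\cP_d^{\rm CY,I+II})$ is open in $P_{d,m}^{\CY}$ and $\cP_d^{\rm CY,I+II}\to P_d^{\rm CY,I+II}$ is itself a good moduli space morphism; by the uniqueness of good moduli spaces this identifies $P_d^{\rm CY,I+II}$ (which exists by Proposition \ref{p:stackI+II}) with its image, giving the open immersion. So the crux is saturatedness: I must show $\phi_m^{-1}(\phi_m(\cP_d^{\rm CY,I+II})) = \cP_d^{\rm CY,I+II}$. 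Concretely, if $x \in \cP_{d,m}^{\CY}(\bk)$ is S-equivalent to a Type I or II pair, then $x$ is itself Type I or II. This is exactly Remark \ref{r:Sequiv-type} (S-equivalent pairs have the same type, via Proposition \ref{p:SrcSequiv} on invariance of the source under weakly special degeneration). Combined with Lemma \ref{l:sequiv}, which says two $\bk$-points of $\cM(\chi,N,{\bf r})$ have intersecting closures iff they are S-equivalent, this shows the preimage of $\phi_m(\cP_d^{\rm CY,I+II})$ consists only of Type I and II points, i.e. lies in $\cP_d^{\rm CY,I+II}$, so the substack is saturated.

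\textbf{Case $3 \nmid d$.} Here Proposition \ref{p:dnmid3plt} says every pair in $\cP_d^{\CY}(\bk)$ is Type I or II, hence $\cP_d^{\rm CY,I+II} = \cP_d^{\CY} \supset \cP_{d,m}^{\CY}$, so the inclusion $\cP_d^{\rm CY,I+II} \subset \cP_{d,m}^{\CY}$ is an equality of stacks and the induced map $P_d^{\rm CY,I+II}\to P_{d,m}^{\CY}$ is an isomorphism (both are the good moduli space of the same stack, using uniqueness of good moduli spaces, Proposition \ref{p:Alpergmprops}).

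\textbf{Case $3 \mid d$.} I must identify $P_{d,m}^{\CY}\setminus P_d^{\rm CY,I+II}$ with a single $\bk$-point. Since $P_{d,m}^{\CY}$ parametrizes S-equivalence classes (this follows from the good moduli space property, Proposition \ref{p:Alpergmprops}.2, together with Lemma \ref{l:sequiv}.1), and since the open subset $P_d^{\rm CY,I+II}$ is exactly the image of the Type I and II locus, the complement is the set of S-equivalence classes of Type III pairs in $\cP_{d,m}^{\CY}(\bk)$. But Theorem \ref{t:TypeIIISequiv} shows every Type III pair in $\cP_d^{\CY}(\bk)$ is S-equivalent to $(\bP^2, \tfrac{3}{d}\{(xyz)^{d/3}=0\})$, so there is exactly one such S-equivalence class. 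It remains to check that this class does lie in $\cP_{d,m}^{\CY}$ for $m$ large, i.e. that the polystable-or-closed representative of the class (or at least some representative) has bounded index — here one uses that $\cP_{d,m}^{\CY}$ is $\Theta$-reductive and S-complete so its points are precisely S-equivalence classes meeting $\cP_{d,m}^{\CY}$, and that the specific pair $(\bP^2,\tfrac{3}{d}\{(xyz)^{d/3}=0\})$ has index $1$ (so lies in $\cP_{d,1}^{\CY} \subset \cP_{d,m}^{\CY}$); hence the S-equivalence class is represented in $\cP_{d,m}^{\CY}$ and corresponds to a single point of $P_{d,m}^{\CY}$ outside $P_d^{\rm CY,I+II}$.

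\textbf{Main obstacle.} The routine-looking but genuinely load-bearing step is verifying that $\cP_d^{\rm CY,I+II}$ is a \emph{saturated} open substack, which requires knowing that no Type III pair degenerates to (or is a degeneration of) a Type I/II pair — this is where Proposition \ref{p:SrcSequiv} (source is a crepant-birational invariant under weakly special degeneration) and the interpretation of S-equivalence via closures in the stack (Lemma \ref{l:sequiv}) must be combined carefully. The bookkeeping with the two divisibility cases and ensuring the exceptional Type III point actually lies in $\cP_{d,m}^{\CY}$ (not just in $\cP_d^{\CY}$) is the other place where one must be precise, but it reduces cleanly to the index $1$ observation for the monomial pair.
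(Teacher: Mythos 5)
Your handling of the containment, the open immersion, and case (1) matches the paper's argument in substance: boundedness of the finite type stack $\cP_d^{\rm CY,I+II}$ gives the containment for $m\gg 0$, saturatedness follows from invariance of the type under S-equivalence (Proposition \ref{p:SrcSequiv}), and the case $3\nmid d$ reduces to Proposition \ref{p:dnmid3plt}. (The paper deduces saturation from closure under $\Theta$-degenerations via \cite[Lem.~3.32]{AHLH18} rather than from Lemma \ref{l:sequiv}; this is a cosmetic difference.)

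Case (2), however, has a genuine gap. You assert that ``$P_{d,m}^{\CY}$ parametrizes S-equivalence classes,'' citing Proposition \ref{p:Alpergmprops}.2 and Lemma \ref{l:sequiv}.1, but these only give one implication: points identified in $P_{d,m}^{\CY}$ have intersecting closures in $\cP_{d,m}^{\CY}$, hence in the ambient stack, hence are S-equivalent. The converse requires a common degeneration witnessing the S-equivalence to lie \emph{inside} $\cP_{d,m}^{\CY}$, i.e.\ to have index $\le m$, and for Type III pairs this is precisely what can fail: the S-equivalence class of $(\bP^2,\tfrac{3}{d}\{(xyz)^{d/3}=0\})$ is unbounded (Example \ref{e:unbounded}), so the degenerations supplied by Theorem \ref{t:TypeIIISequiv} may leave $\cP_{d,m}^{\CY}$ for any fixed $m$. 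Moreover, the statement that points of $P_{d,m}^{\CY}$ coincide iff they are S-equivalent is Proposition \ref{p:PdmSequiv}, whose proof begins by invoking the present lemma, so your appeal to it is circular. Your closing observation that the monomial pair has index $1$ only shows its class is \emph{represented} in $\cP_{d,m}^{\CY}$; it does not explain why every Type III point of $P_{d,m}^{\CY}$ is identified with it.

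The paper closes this gap with a stabilization argument: it first shows $\psi_{m,m'}\bigl(P_{d,m}^{\CY}\setminus P_{d}^{\rm CY,I+II}\bigr) = P_{d,m'}^{\CY}\setminus P_{d,m'}^{\rm CY,I+II}$ for $m'\geq m\gg 1$ (using density, properness and surjectivity of $\psi_{m,m'}$, and the fact that it is an isomorphism over the Type I and II locus); then, for each of the finitely many irreducible components $Z$ of the Type III locus of $P_{d,m}^{\CY}$, it exhibits a Type III pair over the geometric generic point of $Z$, applies Theorem \ref{t:TypeIIISequiv} to obtain a common degeneration $(X_0,D_0)$ with the monomial pair, and chooses $m'$ large enough that $(X_0,D_0)$ lies in $\cP_{d,m'}^{\CY}$; properness of $\psi_{m,m'}$ then forces all of $Z$ to map to the single closed point $p_{m'}$. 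This passage from $m$ to a larger $m'$, uniform over the finitely many components, is the substantive content of part (2) and is what your proposal is missing.
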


\begin{proof}
Since $\cP_{d}^{\rm CY,I+II}$ is finite type by Theorem \ref{t:modulireg0},  Proposition \ref{p:boundedness} implies $\cP_{d}^{\rm CY,I+II}\subset \cP_{d,m}^{\CY}$ for $m$ sufficiently large.
To check $P_{d}^{\rm CY,I+II} \to  P_{d,m}^{\CY}$
is an open immersion, observe that if 
 $(\cX,\cD) \to \bA^1$ a test configuration of a pair $(X,D)$ in $ \cP_{d,m}^{\rm CY,I+II}(\bk)$, then $(\cX,\cD)\to \bA^1$ is a family in  $\cP_{d,m}^{\rm CY,I+II}$ by Lemma \ref{p:SrcSequiv}.
Therefore \cite[Lem. 3.32]{AHLH18} implies 
$\cP_{d}^{\rm CY,I+II}\subset \cP_{d,m}^{\CY}$ is saturated with respect to $\phi_m$. Thus
 $P_{d}^{\rm CY,I+II} \to P_{d,m}^{\CY}$ is an open immersion by Proposition \ref{p:saturated}.

 If $3 \nmid d$, then Theorem \ref{t:moduliexists3notd} implies (1) holds.
 We now assume $3 \mid d$.
We claim that if $m'\geq m\gg1$, then
\[
\psi_{m,m'}(P_{d,m}^{\CY} \setminus P_{d}^{\rm CY,I+II} ) = P_{d,m'}^{\CY}\setminus P_{d,m'}^{\rm CY,I+II}
.\]
To verify the claim,
note that $\cP_{d,m}^{\CY} $ is dense in  $\cP_{d,m'}^\CY$
and so  
$\psi_{m,m'}(P_{d,m}^{\CY})$ is dense in $P_{d,m'}^{\CY}$.
Since both $P_{d,m}^{\CY}$ and $P_{d,m'}^{\CY}$ are proper by Proposition \ref{p:properness}, 
$\psi_{m,m'}$ is proper and so  $\psi_{m,m'}$ is surjective. 
Since $\psi_{m,m'}$ is surjective and an isomorphism over the Type I and II loci,
the claim follows.

Now, to prove (2) it suffices to show that
$\psi_{m,m'}\big(P_{d}^{\CY} \setminus P_{d}^{\rm CY,I+II}\big) $ is the $\bk$-point 
\[
p_{m'}:=
\phi_{m'}( [(\bP^2,\tfrac{3}{d}\{(xyz)^{d/3}=0\})] ) \in P_{d,m'}^{\CY}
\]
for $m'\geq m\gg0$.
To verify the previous statement,
fix an irreducible component $Z\subset P_{d,m}^{\CY}\setminus P_{d,m}^{\rm CY,I+II}$ and  set $\bK:= \overline{K(Z)}$.  
Since $\cP_d^{\rm K}(\bK)\to P_d^{\rm K}(\bK)$ is surjective by Proposition \ref{p:Alpergmprops},
there exists a Type III pair $(X,D)$ in $\cP_{d,m}^{\CY}(\bK)$ mapping to the geometric generic point of $Z$.
By Theorem \ref{t:TypeIIISequiv}, there exist weakly special degenerations
\begin{equation}\label{e:sequiveq}
(X,D) 
\rightsquigarrow
(X_0,D_0)
\leftsquigarrow 
(\bP^2_{\bK}, \tfrac{3}{d}\{(xyz)^{d/3}=0 \}).
\end{equation}
 to a pair $(X_0,D_0)$ in $\cP_{d}^{\CY}(\bK)$.
If $m'$ is sufficiently large so that $(X_0,D_0)$ is in $\cP_{d,m'}^{\CY}(\bK)$, 
then the map $\cP_{d,m'}^{\CY} (\bK) \to P_{d}^{\CY}(\bK)$ identifies the pairs in \eqref{e:sequiveq}
and, hence, $\psi_{m,m'}(Z)= p_{m'}$.
Therefore $\psi_{m,m'}\big(P_{d}^{\CY} \setminus P_{d}^{\rm CY,I+II}\big) =p_{m'}$ for $m' \geq m\gg0$ as desired.
\end{proof}

\begin{proof}[Proof of Proposition \ref{p:stabilization}]
The statement follows immediately from Lemma \ref{l:stabilization}.
\end{proof}

\begin{prop}\label{p:PdmSequiv}
	Fix $d \geq 3$ and $m$ sufficiently large. 
	Two points $x:=[(X,D)]$ and $x':=[(X',D')]$ in $\cP_{d,m}^{\CY}(\bk)$ are identified in $P_{d,m}^{\CY}$ if and only if $(X,D)\sim_S (X',D')$. 
\end{prop}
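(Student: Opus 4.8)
The statement is a ``good moduli space identifies S-equivalence classes'' result, and the strategy is to combine the general properties of good moduli spaces from \cite{Alp13} with the stack-theoretic interpretation of S-equivalence already established in Lemma \ref{l:sequiv}. First I would recall that $\phi_m : \cP_{d,m}^{\CY} \to P_{d,m}^{\CY}$ is a good moduli space morphism by Theorem \ref{t:existsPdm}, and that $\cP_{d,m}^{\CY}$ is a finite type algebraic stack with affine diagonal by Proposition \ref{p:Pdmstack}. By Proposition \ref{p:Alpergmprops}.2, two geometric points $x, x' \in \cP_{d,m}^{\CY}(\bk)$ are identified in $P_{d,m}^{\CY}$ if and only if their closures $\overline{\{x\}}$ and $\overline{\{x'\}}$ in $\cP_{d,m}^{\CY}$ intersect. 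So the task reduces to showing that, for $\bk$-points of $\cP_{d,m}^{\CY}$, the closures intersect in $\cP_{d,m}^{\CY}$ if and only if the pairs are S-equivalent.

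The key technical point is to relate intersection of closures \emph{inside the open substack} $\cP_{d,m}^{\CY}$ to intersection of closures inside the ambient stack $\cM(\chi, N_d, (1, \tfrac{3}{d}))$, where Lemma \ref{l:sequiv}.1 tells us that $\overline{\{x\}} \cap \overline{\{x'\}} \neq \emptyset$ is equivalent to $(X,D) \sim_S (X',D')$. For the forward direction, if the closures intersect in $\cP_{d,m}^{\CY}$ they certainly intersect in the larger stack, so Lemma \ref{l:sequiv}.1 gives S-equivalence. For the reverse direction, suppose $(X,D) \sim_S (X',D')$, so by Lemma \ref{l:sequiv}.1 there is a common $\bk$-point $x_0 \in \overline{\{x\}} \cap \overline{\{x'\}}$ in $\cM(\chi,N_d,(1,\tfrac{3}{d}))$. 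The point is that the degenerations realizing S-equivalence are weakly special test configurations of pairs in $\cP_d^{\CY}$, hence by Remark \ref{r:Sequiv-type} and Proposition \ref{p:SrcSequiv} preserve the type, and more importantly $x_0$ lies in $\cP_d^{\CY}$. To see that $x_0$ actually lies in $\cP_{d,m}^{\CY}$ — the crucial bookkeeping step — I would invoke exactly the argument in the proof of Proposition \ref{p:PdmSTheta}: a weakly special test configuration of a pair in $\cP_{d,m}^{\CY}$ has special fiber again in $\cP_{d,m}^{\CY}$. Indeed, if the special fiber is Type I this follows from Proposition \ref{l:CYbirational}.2 (the test configuration is trivial), and if it is Type II or III one uses the $1$-complement of Proposition \ref{p:1-complementS} together with the twisted curve stabilizer argument of Propositions \ref{p:Pdmtwisted} and \ref{p:stabilizerjump}. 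Therefore the common degeneration $x_0$ remains in $\cP_{d,m}^{\CY}$, and $\overline{\{x\}} \cap \overline{\{x'\}} \neq \emptyset$ inside $\cP_{d,m}^{\CY}$.

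Putting these together: $x$ and $x'$ are identified in $P_{d,m}^{\CY}$ $\iff$ $\overline{\{x\}} \cap \overline{\{x'\}} \neq \emptyset$ in $\cP_{d,m}^{\CY}$ (Proposition \ref{p:Alpergmprops}.2) $\iff$ $\overline{\{x\}} \cap \overline{\{x'\}} \neq \emptyset$ in $\cM(\chi,N_d,(1,\tfrac{3}{d}))$ (the content of the previous paragraph, using that $\cP_{d,m}^{\CY}$ is open in $\cM$ and stays closed under the relevant degenerations) $\iff$ $(X,D) \sim_S (X',D')$ (Lemma \ref{l:sequiv}.1). One should also record that closures may be taken after base change to an arbitrary algebraically closed extension field $\bK$; since all the pairs and test configurations involved are defined over $\bk$ and S-equivalence is defined via test configurations, no subtlety arises here.

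\textbf{Main obstacle.} The genuine content — as opposed to the formal shuffling of good moduli space properties — is verifying that the common degeneration witnessing S-equivalence does not leave the finite-type open substack $\cP_{d,m}^{\CY}$, i.e. that the Cartier index at each point stays bounded by $m$ under the weakly special degenerations in question. This is precisely the ``miraculous'' phenomenon underlying Proposition \ref{p:PdmSTheta}: an arbitrary finite type open substack of $\cP_d^{\CY}$ need not be closed under degenerations, but $\cP_{d,m}^{\CY}$ is, by the twisted-curve argument. So the proof amounts to invoking Proposition \ref{p:PdmSTheta} (or rather its proof) at the right moment; once that is in hand the rest is bookkeeping with \cite{Alp13}.
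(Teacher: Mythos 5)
The forward direction and the overall reduction to ``closures intersect in $\cP_{d,m}^{\CY}$'' are fine, but the converse direction has a genuine gap. Your key claim --- that a weakly special test configuration of a pair in $\cP_{d,m}^{\CY}$ has special fiber again in $\cP_{d,m}^{\CY}$, justified by ``exactly the argument in the proof of Proposition \ref{p:PdmSTheta}'' --- is false, and the cited argument does not transfer. The twisted-curve mechanism of Propositions \ref{p:Pdmtwisted} and \ref{p:stabilizerjump} requires a \emph{pure two-dimensional} base with the bad point removed in codimension two (the proof of Proposition \ref{p:stabilizerjump} uses that the nodal locus $V(t)\subset S$, having codimension at most one, must meet $\cC\setminus\cC_0$); that is exactly why it applies to the S-completeness and $\Theta$-reductivity surfaces but not to a test configuration over $\bA^1$, where the special fiber sits over a codimension-one point. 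Example \ref{e:unbounded} gives explicit counterexamples: $(\bP^2,\tfrac{3}{d}\{(xyz)^{d/3}=0\})$ lies in $\cP_{d,1}^{\CY}$ yet admits weakly special degenerations to pairs of arbitrarily large Cartier index. So for Type III pairs the common degeneration witnessing S-equivalence may genuinely leave $\cP_{d,m}^{\CY}$, and your chain of equivalences breaks at the step ``closures intersect in $\cM$ $\Rightarrow$ closures intersect in $\cP_{d,m}^{\CY}$.''

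The paper's proof repairs this by splitting according to type and invoking Lemma \ref{l:stabilization} rather than any index-preservation statement for test configurations. If $(X,D)$ is Type I or II, the common degeneration $(X_0,D_0)$ is again Type I or II by Proposition \ref{p:SrcSequiv}, and it lies in $\cP_{d,m}^{\CY}$ not because the index is preserved along the degeneration but because the \emph{entire} Type I+II locus $\cP_d^{\rm CY,I+II}$ is of finite type (Theorem \ref{t:modulireg0}) and hence contained in $\cP_{d,m}^{\CY}$ for $m$ sufficiently large (Lemma \ref{l:stabilization}); this is where the hypothesis ``$m$ sufficiently large'' enters. If $(X,D)$ is Type III, one does not attempt to place the common degeneration inside $\cP_{d,m}^{\CY}$ at all: both $x$ and $x'$ are Type III, and Lemma \ref{l:stabilization}.2 says the Type III locus of $P_{d,m}^{\CY}$ is a single point, so they are identified for that reason. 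You need to replace your appeal to Proposition \ref{p:PdmSTheta} with this two-case argument via Lemma \ref{l:stabilization}.
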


\begin{proof}
To begin, fix $m\gg1$ so that the conclusion of Lemma \ref{l:stabilization} holds. 	
By Proposition \ref{p:Alpergmprops}.3, $x$ and $x'$ are identified in  $P_{d,m}^{\CY}$ if and only if 
$
\overline{\{x \}} \cap \overline{\{x'\}} =\emptyset$
in $\cP_{d,m}^{\CY}$.
Thus, if $x$ and $x'$ are identified in $P_{d,m}^{\CY}$, then $(X,D)\sim_{S}(X',D')$
by Lemma \ref{l:sequiv}. 

Conversely, assume  $(X,D)\sim_S(X',D')$. Then there exist weakly special degenerations
\[
(X,D)\rightsquigarrow(X_0,D_0)\leftsquigarrow(X',D')
\]
to a pair $(X_0,D_0 ) \in \cP_{d}^{\CY}(\bk)$. 
If $(X,D)$ is Type I or II, 
then  $(X_0,D_0)$ is also Type I or II by Proposition \ref{p:SrcSequiv}
and so  $(X_0,D_0)$ is in $ \cP_{d,m}^{\CY}$ by Lemma \ref{l:stabilization}.
Thus $x$ and $x'$ are identified in $P_{d,m}^{\CY}(\bk)$.
If $(X,D)$ is Type III, then
$(X',D')$ is also Type III by Proposition \ref{p:SrcSequiv} and, hence,  $x$ and $x'$ are identified in $P_{d,m}^{\CY}$ by Lemma \ref{l:stabilization}.2.
\end{proof}

\section{Asymptotically good moduli spaces}\label{s:agm}

In this section, we  prove Theorem \ref{t:main2}.1-.3, which produces a moduli space for the seminormalization of $\cP_{d}^{\CY}$. The moduli space will be an asymptotically good moduli space as defined below.

\subsection{Asymptotically good moduli spaces}\label{ss:asgm}

We first introduce the notion of an asymptotically good moduli space and describe its properties. 
The term is motivated by \emph{asymptotic GIT} in which one  views a polarized variety  $(X,L)$ as a point
\[
[(X,L^m) ] \in {\rm Chow }(\bP^{N_m})
\]
in the appropriate Chow scheme and analyzes the stability as $m\to \infty $.

\begin{defn}\label{d:asgm}
	A morphism $\phi:\cM\to M$ from an algebraic stack to an algebraic space is an \emph{asymptotically good moduli space} if there exists an ascending chain
	\[
	\cU_1 \subset \cU_2 \subset \cU_3 \subset \cdots 
	\]
	of open substacks of $\cM$ 
	such that $\cM=\cup_{i \geq 1} \cU_i$ and the composition
	$\cU_i \hookrightarrow \cM \to M$
	is a good moduli space  for all $i\geq1$.
\end{defn}

\begin{rem}[Construction]\label{r:constructionagm}
In practice, one might  construct an asymptotically good moduli space as follows. 
Fix a locally noetherian algebraic stack $\cM$ with an ascending chain
\[
\cU_1 \subset \cU_2 \subset \cU_3 \subset \cdots 
\] 
of open substacks of $\cM$  
such that $\cM = \bigcup_{i\geq 1} \cU_i$. Assume the following hold:
\begin{enumerate}
	\item[(i)] There exists a good moduli space $\cU_i \to U_i$ for each $i\geq 1$. 
	
	\item[(ii)]  In the commutative diagram 
	\[
	\begin{tikzcd}
		\cU_1 \arrow[r,hook] \arrow[d] & \cU_2 \arrow[r,hook] \arrow[d]  &  \cU_3 \arrow[d]  \arrow[r,hook] & \cdots \\
		U_1	\arrow[r,"\psi_1" ]  & U_2	\arrow[r,"\psi_2"]  &  U_3 \arrow[r,"\psi_3"]& \cdots
	\end{tikzcd}
	,\]
	which exists by Proposition \ref{p:Alpergmprops}.2,  there exists $k$ such that $\psi_i$ is an isomorphism for all $i\geq k$. 
\end{enumerate}
Since the above diagram commutes and  $\cM = \bigcup_{i\geq 1} \cU_i$, the morphisms $(\cU_i \to U_i)_{i \geq k}$ glue to give a morphism $\phi:\cM\to M$ with $M = U_k$ such that $\phi \vert_{\cU_i} = \phi_i$ for all $i\geq k$. Thus  $\cM\to M$ is an asymptotically good moduli space.
\end{rem}

Asymptotically good moduli spaces inherit many of the properties that good moduli spaces satisfy. We prove a few of these properties in the following proposition.

\begin{prop}\label{p:agmproperties}
If $\phi:\cM\to M$ is an asymptotically good moduli space, then: 
\begin{enumerate}
	\item The morphism $\phi$ is surjective.
	\item If $\cM$ is locally Noetherian,  then $\phi$ is universal for maps from $\cM$ to algebraic spaces.
	\item Two  points $x, x' \in \cM(\bk)$ are identified in $M$ if and only if $\overline{\{x \}} \cap \overline{ \{ x'\}} \neq  \emptyset $.
\end{enumerate}
\end{prop}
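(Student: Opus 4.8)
\textbf{Proof plan for Proposition \ref{p:agmproperties}.}

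The plan is to reduce each of the three statements to the corresponding property of the good moduli space maps $\phi|_{\cU_i}\colon \cU_i\to M$ provided by Definition \ref{d:asgm}, using that $\cM=\bigcup_{i\ge 1}\cU_i$ and that (for $i$ large, after reindexing) all of these maps have the \emph{same} target $M$. By reindexing the chain we may assume $\phi|_{\cU_i}\colon \cU_i\to M$ is a good moduli space morphism for every $i\ge 1$; this is harmless since the conclusions only concern $\phi$ and the full stack $\cM$.

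For (1): since each $\phi|_{\cU_i}\colon \cU_i\to M$ is a good moduli space, it is surjective by Proposition \ref{p:Alpergmprops}.1. As $\phi$ restricted to $\cU_1$ already surjects onto $M$, so does $\phi$. For (3): if $x,x'\in\cM(\bk)$ then, since the $\cU_i$ are open and form an ascending chain covering $\cM$, there is some index $i$ with $x,x'\in\cU_i(\bk)$. By Proposition \ref{p:Alpergmprops}.2 applied to $\phi|_{\cU_i}$, the points $x,x'$ are identified in $M$ if and only if the closures $\overline{\{x\}}$ and $\overline{\{x'\}}$ \emph{taken in $\cU_i$} intersect. Here the one subtlety to address is that a priori the closure in $\cU_i$ could be smaller than the closure in $\cM$; but $\cU_i\subset\cM$ is open, and for any point $y\in\cM(\bk)$ lying in $\cU_i$, the closure of $\{y\}$ in $\cM$ meets $\cU_i$ in the closure of $\{y\}$ in $\cU_i$, and moreover by openness any point of $\overline{\{y\}}^{\cM}$ that specializes far enough still lies in some $\cU_j$. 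The clean way to handle this: enlarge $i$ if necessary so that $\cU_i$ contains a point of $\overline{\{x\}}^{\cM}\cap\overline{\{x'\}}^{\cM}$ whenever that intersection is nonempty (possible since that intersection is quasi-compact, being closed in the relevant quasi-compact locus, or simply contains a closed point which lies in some $\cU_j$). Then $\overline{\{x\}}^{\cM}\cap\overline{\{x'\}}^{\cM}\neq\emptyset \iff \overline{\{x\}}^{\cU_i}\cap\overline{\{x'\}}^{\cU_i}\neq\emptyset$, and the latter is equivalent to $x,x'$ being identified in $M$ by Proposition \ref{p:Alpergmprops}.2. Since being identified in $M$ does not depend on the index $i$, statement (3) follows.

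For (2): suppose $\cM$ is locally Noetherian, so each $\cU_i$ is locally Noetherian and $\phi|_{\cU_i}\colon\cU_i\to M$ is universal for maps to algebraic spaces by Proposition \ref{p:Alpergmprops}.3. Given a morphism $g\colon\cM\to Z$ to an algebraic space, restrict to obtain $g|_{\cU_1}\colon\cU_1\to Z$, which factors uniquely as $\cU_1\to M\xrightarrow{h_1} Z$. For each $i$, $g|_{\cU_i}$ likewise factors uniquely as $\cU_i\to M\xrightarrow{h_i}Z$; restricting the factorization of $g|_{\cU_{i+1}}$ along $\cU_i\hookrightarrow\cU_{i+1}$ and using the uniqueness part of the universal property for $\cU_i$ shows $h_{i+1}=h_i$. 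Hence all the $h_i$ agree with a single morphism $h\colon M\to Z$, and $h\circ\phi|_{\cU_i}=g|_{\cU_i}$ for all $i$; since $\cM=\bigcup\cU_i$ and morphisms of stacks are determined by a covering by open substacks, $h\circ\phi=g$. Uniqueness of $h$ follows from the uniqueness in the universal property of $\phi|_{\cU_1}$ together with surjectivity of $\phi|_{\cU_1}$ onto $M$: any $h'$ with $h'\circ\phi=g$ satisfies $h'\circ\phi|_{\cU_1}=g|_{\cU_1}=h\circ\phi|_{\cU_1}$, forcing $h'=h$. The main obstacle in the whole argument is the bookkeeping in (3) — making sure the closure operation is taken in a large enough $\cU_i$ so that intersecting-in-$\cU_i$ matches intersecting-in-$\cM$ — but this is resolved by the openness of $\cU_i\subset\cM$ and the fact that a nonempty closed substack contains a closed point, which must lie in some member of the chain.
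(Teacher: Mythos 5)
Your proposal is correct and takes essentially the same route as the paper: each statement is reduced to the corresponding property of the good moduli space morphisms $\cU_i\to M$ via Proposition \ref{p:Alpergmprops}, with the maps glued (resp.\ compared) over the ascending chain. The only difference is that you spell out the topological point in (3) — that $\overline{\{x\}}^{\cU_i}=\overline{\{x\}}^{\cM}\cap\cU_i$ and that a point of a nonempty intersection lies in some $\cU_j$ — which the paper asserts without proof; your reindexing remark is unnecessary since Definition \ref{d:asgm} already requires every $\cU_i\to M$ to be a good moduli space.
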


Statement (2) implies that an asymptotically good moduli space, if it exists, is unique up to unique isomorphism.

\begin{proof}
Since $\phi_i: \cU_i \to M$ is a good moduli space morphism, $\phi_i$ is surjective by Proposition \ref{p:Alpergmprops}.1. Therefore $\phi$ is surjective.

To prove (2), fix a morphism to an algebraic space $\rho: \cM \to Z$.
Let $\rho_i: \cU_i \to Z$ denote its restriction to $Z$. 
 Since $\phi_i$ is a good moduli space, 
  Proposition \ref{p:Alpergmprops}.2 implies that 
there exists a unique map $\pi_i: M\to Z$ such that the diagram 
\[
\begin{tikzcd}
\cU_i \arrow[r,"\phi_i"] \arrow[rr,bend left=30,"\rho_i"] &    M \arrow[r,"\pi_i"] & Z    
\end{tikzcd}
\]
commutes. 
Since $\pi_i$ is unique and satisfies 
\[
\rho_{i} = \rho_{i+1} \vert_{\cU_i}= (\pi_{i+1} \circ \phi_{i+1}) )\vert_{\cU_i} 
=
\pi_{i+1} \circ \phi_i
,\]
 $\pi_i$ is independent of $i$ and we can denote it without confusion by $\pi$. 
Using  that  $\cM = \bigcup_{i \geq 1} \cU_{i}$, we conclude   $\rho = \pi \circ \phi$. 
Thus (2) holds. 

For (3), 
note that the closure of $\{x \} $ and $\{x'\}$ in $\cM$ intersect if and only if the closure of $\{x\}$ and $\{x'\}$ in $\cU_i$ intersect for all $i\gg 1$. Thus  (3) follows from Proposition \ref{p:Alpergmprops}.2.
\end{proof}

\begin{prop}
Let $\cM$ be a locally finite type algebraic stack with affine diagonal. If there exists an asymptotically good moduli space $\cM \to M$
such that $M$ is finite type and separated, then $\cM\to M$ is $\Theta$-reductive and S-complete. 
\end{prop}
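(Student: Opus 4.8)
The statement asserts that if $\cM$ is a locally finite type algebraic stack with affine diagonal admitting an asymptotically good moduli space $\phi:\cM\to M$ with $M$ finite type and separated, then $\cM$ is $\Theta$-reductive and S-complete (with respect to essentially finite type DVRs, as in Remark \ref{r:AHLHeftDVRs}). The plan is to reduce to the finite type case already covered by Theorem \ref{t:AHLH}. Write $\cM = \bigcup_{i\ge 1}\cU_i$ for the ascending chain of open substacks with good moduli space morphisms $\phi_i:\cU_i\to M$ as in Definition \ref{d:asgm}. Since each $\phi_i$ is a good moduli space and $M$ is separated, Theorem \ref{t:AHLH} (applied to $\cU_i$, which is finite type with affine diagonal) gives that each $\cU_i$ is S-complete and $\Theta$-reductive.

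The key point is that both S-completeness and $\Theta$-reductivity are tested by maps out of $\STR\setminus 0$ (resp. $\Theta_R\setminus 0_\kappa$), and that any such test reduces to a question on one of the quasi-compact opens $\cU_i$. Concretely, suppose we are given a map $f:\STR\setminus 0\to \cM$ for a DVR $R$ essentially of finite type over $\bk$. Composing with $\phi$ gives a map $\STR\setminus 0\to M$; since $M$ is separated (and $\STR\setminus 0$ is obtained by gluing two copies of $\Spec R$ along $\Spec K$ as in \eqref{e:STR-0}), this map extends uniquely to $\STR\to M$ by the valuative criterion of separatedness applied to the two $\Spec R$'s together with the observation that both closed points of $\STR\setminus 0$ specialize to $0$. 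Let $m_0\in M$ be the image of $0$. Then I would take $i$ large enough that $m_0\in \phi_i(\cU_i)$; since $\phi_i$ is universally closed (Proposition \ref{p:Alpergmprops}.1) and $\cU_i$ is open in $\cM$, the preimage $\phi^{-1}(\phi_i(\cU_i))$ is an open saturated neighborhood, and after possibly enlarging $i$ we may arrange that the image of $f$ together with the locus over $0$ lands inside $\cU_i$. More carefully: the image of $\STR\setminus 0$ under $f$ is contained in $\cU_i$ for $i\gg 0$ because $\STR\setminus 0$ is quasi-compact (it is covered by two copies of $\Spec R$), and then the image of the desired extension over $0$ is forced to lie in $\cU_i$ as well since $\phi_i^{-1}(m_0)\subset \cU_i$ and any extension $\STR\to\cM$ must send $0$ into $\phi^{-1}(m_0)$ — which, once $m_0\in\phi_i(\cU_i)$, is contained in $\cU_i$ by saturatedness of $\cU_i$ with respect to $\phi$ (this uses that $\cU_i$ is an open substack whose good moduli space is all of $M$, hence $\phi^{-1}(\phi(\cU_i))=\cU_i$). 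Now apply S-completeness of $\cU_i$ to get a unique extension $\STR\to\cU_i\hookrightarrow\cM$. For uniqueness of the extension in $\cM$: any two extensions agree after composing with $\phi$ by the separatedness argument above, so both land in $\cU_i$ for $i\gg 0$, where uniqueness holds by S-completeness of $\cU_i$.

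The argument for $\Theta$-reductivity is entirely parallel: given $g:\Theta_R\setminus 0_\kappa\to\cM$, compose with $\phi$, extend over $M$ using separatedness (noting $\Theta_R\setminus 0_\kappa$ glues $\Spec R$ and $\Theta_K$ along $\Spec K$, and the closed point $0_\kappa$ is in the closure of the image), locate the image $m_0$ of $0_\kappa$, pass to $\cU_i$ with $m_0\in\phi_i(\cU_i)$ and the image of $g$ contained in $\cU_i$ (using quasi-compactness of $\Theta_R\setminus 0_\kappa$), and invoke $\Theta$-reductivity of $\cU_i$ from Theorem \ref{t:AHLH}. Uniqueness follows the same way.

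\textbf{Main obstacle.} The one step requiring genuine care is verifying that the extension over the special point $0$ (resp. $0_\kappa$) is forced to land inside $\cU_i$, i.e. that $\cU_i$ is saturated with respect to $\phi$ and that the value at the special point is pinned down by the map to $M$. This rests on the fact that a good moduli space morphism $\phi_i:\cU_i\to M$ identifies a point of $\cU_i$ with its image only up to closure-intersection (Proposition \ref{p:Alpergmprops}.2), so one must check that the unique closed point of $\cU_i$ over $m_0$ is the limit produced by S-completeness/$\Theta$-reductivity of $\cU_i$ — but this is automatic since that construction already produces a point over $m_0$, and closed points over a given point of a good moduli space are unique up to the closure relation, while S-completeness/$\Theta$-reductivity outputs a bona fide extension. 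Everything else is formal manipulation with quasi-compactness and the valuative criterion of separatedness.
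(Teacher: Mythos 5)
Your overall strategy is the same as the paper's: reduce to the finite type open substacks $\cU_i$, apply Theorem \ref{t:AHLH} to conclude each $\cU_i$ is S-complete and $\Theta$-reductive, and use quasi-compactness of $\STR\setminus 0$ (resp.\ $\Theta_R\setminus 0_\kappa$) to see that any test map factors through some $\cU_i$, where the extension exists and is unique. Two points to fix. First, you assert in passing that $\cU_i$ is finite type; this is not part of Definition \ref{d:asgm} and needs the one-line justification the paper gives: $\cU_i\to M$ is quasi-compact (Definition \ref{d:gm}) and $M$ is quasi-compact, so $\cU_i$ is quasi-compact, and being an open substack of a locally finite type stack it is then of finite type. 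Second, your claim that $\cU_i$ is saturated for $\phi$, i.e.\ $\phi^{-1}(\phi(\cU_i))=\cU_i$, is false: since $\phi_i:\cU_i\to M$ is surjective we have $\phi(\cU_i)=M$, so $\phi^{-1}(\phi(\cU_i))=\cM$, and in general $\phi^{-1}(m_0)$ is not contained in $\cU_i$ (this is exactly what happens for the unbounded Type III fiber in $\cP_d^{\CY}$ when $3\mid d$). Fortunately that claim is not needed: S-completeness of $\cU_i$ already produces an extension landing in $\cU_i$, and for uniqueness any competing extension $\STR\to\cM$ has quasi-compact source, hence factors through some $\cU_j\supseteq\cU_i$, where uniqueness holds. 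The detour through the separatedness of $M$ and the location of $m_0$ can be deleted entirely.
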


The converse to this statement is false. In a future paper, we will give an example of a moduli stack of certain polarized CY pairs that is S-complete and $\Theta$-reductive, but does not admit an asymptotically good moduli space.

\begin{proof}
Note that $\cU_i$ is quasi-compact, since  $\cU_i \to M$  and $M$ are quasi-compact. Since $\cU_i$ is locally of finite type as well, it follows that $\cU_i$ is finite type. Therefore we can apply Theorem \ref{t:AHLH} to deduce that $\cU_i$ is S-complete and $\Theta$-reductive. 
Since $\cM= \bigcup_{i \geq 1} \cU_i$,  $\cM$ is  S-complete and $\Theta$-reductive.
\end{proof}

\begin{rem}
    In general, an asymptotically good moduli space $\phi : \cM \to M$ need not be universally closed and $\phi_*$ need not preserve quasi-coherence.
\end{rem}

\subsection{Seminormalization of a stack}
In this section, we discuss the seminormalization of a stack and its relation to good moduli spaces.

For the definition of  a \emph{seminormal} scheme and the \emph{seminormalizaiton} of a scheme, see \cite[\S 10.8]{KolNewBook}. 
The definitions of these terms for stacks are built from the definitions for schemes.

\begin{defn}
An algebraic stack $\sM$ is \emph{seminormal} if there is a smooth morphism  from a seminormal scheme $U\to \sM$.
A \emph{seminormalization} of an algebraic stack $\sM$ is a representable morphism $\sM^{\rm sn }\to \sM$ such that  
$
\sM^{\rm sn}\times_{\sM}U \to U
$ is a seminormalizaiton for every smooth morphism from a scheme $U\to \cM$.
\end{defn}

The seminormalization of a locally Noetherian algebraic stack exists and is unique up to isomorphism. 
The proofs of these statements are similar to the corresponding proofs for the normalization in \cite[App. A]{AB21}, though the case of seminormalization is actually simpler, since the seminormalization of a scheme is functorial, while the normalization is not.

\begin{lem}\label{l:sngm}
Let $\sM$ be a finite type  algebraic stack with affine diagonal.
If there exists a good moduli space  $\sM\to M$,
 then the induced morphism of  seminormalizations
$\sM^{\rm sn}\to M^{\rm sn}$
is  a good moduli space.
\end{lem}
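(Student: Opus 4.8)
\textbf{Proof proposal for Lemma \ref{l:sngm}.}

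The plan is to leverage two facts: that good moduli space morphisms are preserved under base change along affine morphisms (or more precisely, the interaction of good moduli spaces with the seminormalization construction), and that the formation of seminormalization commutes with the relevant operations. First I would recall from \cite[Prop. 4.7.i]{Alp13} that the good moduli space morphism $\phi : \sM \to M$ is universally closed and, by \cite[Thm. 4.16.viii]{Alp13}-type statements, that $M$ inherits geometric properties (normality, reducedness) from $\sM$ but not seminormality directly. So the key input will instead be that $M^{\rm sn} \to M$ is the seminormalization, which is a finite, integral, universal homeomorphism that is an isomorphism over the seminormal locus of $M$, and that the fiber product $\sM \times_M M^{\rm sn} \to M^{\rm sn}$ is again a good moduli space morphism by \cite[Prop. 4.7.i]{Alp13} (good moduli space morphisms are stable under arbitrary base change on the target).

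The main steps, in order, would be: (1) Form the base change $\widetilde{\sM} := \sM \times_M M^{\rm sn}$, which maps to $M^{\rm sn}$ via a good moduli space morphism $\widetilde\phi$ by base-change stability of good moduli spaces. (2) Identify $\sM^{\rm sn}$ with the seminormalization of $\widetilde{\sM}$: since $\widetilde{\sM} \to \sM$ is a universal homeomorphism (being the base change of the universal homeomorphism $M^{\rm sn}\to M$) that is an isomorphism in codimension zero away from the non-seminormal locus, the seminormalization of $\widetilde{\sM}$ agrees with the seminormalization of $\sM$; one checks this on a smooth atlas, where it reduces to the scheme-theoretic statement that seminormalization is insensitive to such universal-homeomorphism base changes and is a functorial, idempotent construction. (3) Conclude that it suffices to show that if $\widetilde\phi : \widetilde{\sM} \to M^{\rm sn}$ is a good moduli space morphism with $M^{\rm sn}$ seminormal, then $\widetilde{\sM}^{\rm sn} \to M^{\rm sn}$ is also a good moduli space morphism. (4) For this final reduction, note that a good moduli space morphism to a seminormal target factors through the seminormalization of the source in the sense that $\sM^{\rm sn} \to M^{\rm sn}$ is again good: one verifies the two defining properties of Definition \ref{d:gm} by working on an étale (or smooth) cover $\operatorname{Spec} A \to M^{\rm sn}$ with $A$ seminormal, so that $\sM \times_{M^{\rm sn}} \operatorname{Spec} A = [\operatorname{Spec} B / G]$ with $B^G = A$ seminormal, $G$ linearly reductive, and then checking that $(B^{\rm sn})^G = A^{\rm sn} = A$ because invariants of a seminormal ring under a linearly reductive group, extracted from the seminormalization, recover the seminormal base — here one uses that $A \to B$ is a split injection (the Reynolds operator) and that seminormality descends along such splittings together with the fact that seminormalization commutes with taking $G$-invariants for linearly reductive $G$ (which can be seen via the functoriality of seminormalization and the retraction).

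The step I expect to be the main obstacle is step (4), specifically the commutation of seminormalization with the formation of invariants under a linearly reductive group, i.e. the claim $(B^{\rm sn})^G \cong (B^G)^{\rm sn}$ when $B^G$ is already seminormal. The subtlety is that seminormalization is defined by a universal property involving all finite birational universal homeomorphisms, and while the Reynolds operator gives a splitting $B^G \hookrightarrow B$, one must argue that the seminormalization $B^{\rm sn}$, which is a subring of the total ring of fractions generated by elements whose powers and certain multiples lie in $B$, has $G$-invariant part exactly $A = B^G$ (using that $A$ is seminormal). I would handle this by the following argument: $A \subseteq (B^{\rm sn})^G \subseteq \overline{A}$ inside the total ring of fractions (where $\overline{A}$ is the normalization, or the appropriate integral closure in the reduced case), and $(B^{\rm sn})^G \to \operatorname{Spec} A$ is a finite universal homeomorphism since $B^{\rm sn}\to \operatorname{Spec} B$ is and invariants preserve this (again via universal closedness of good moduli spaces and the homeomorphism $\operatorname{Spec}(B^{\rm sn})^G \cong \operatorname{Spec}(B^{\rm sn})/G \to \operatorname{Spec}(B)/G \cong \operatorname{Spec} A$ being a homeomorphism); then the universal property of seminormalization of $A$, combined with $A$ being seminormal, forces $(B^{\rm sn})^G = A$. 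Once this is established, properties (1) and (2) of Definition \ref{d:gm} for $\sM^{\rm sn}\to M^{\rm sn}$ follow: property (2) is exactly $\cO_{M^{\rm sn}} = A = (B^{\rm sn})^G = (\phi^{\rm sn}_*\cO_{\sM^{\rm sn}})$, and exactness of pushforward (property (1)) is inherited from $\sM \to M$ since on the atlas it is again the passage to $G$-invariants for linearly reductive $G$, which is exact.
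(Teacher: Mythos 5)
Your proposal is correct in substance but takes a genuinely different, and heavier, route than the paper. The paper's proof is short: since $\sM^{\rm sn}\to\sM$ is finite and $\sM$ has a good moduli space, \cite[Lem.~4.14 \& Thm.~4.16.x]{Alp13} immediately produce a good moduli space $\sM^{\rm sn}\to P$ with $P\to M$ finite; one then identifies $P$ with $M^{\rm sn}$ globally, by observing that $P\to M$ is a bijection on $\bk$-points (good moduli spaces are bijections on closed points, seminormalizations on all points), and that $P$ is seminormal because the good moduli space morphism $\sM^{\rm sn}\to P$ factors through $P^{\rm sn}$ (as $\sM^{\rm sn}$ is seminormal), forcing $P^{\rm sn}\to P$ to be an isomorphism by the universality of good moduli space morphisms. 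Your steps (1)--(3) (base change along $M^{\rm sn}\to M$ and the identification $\widetilde{\sM}^{\rm sn}\cong\sM^{\rm sn}$) are fine but are essentially a detour: they only serve to reduce to the case of a seminormal target, which the paper's argument handles without the reduction. Your step (4) proves the same key fact as the paper — that the good moduli space of the seminormalization is the seminormalization of the good moduli space — but locally, via a quotient presentation $[\operatorname{Spec}B/G]$ and the ring-theoretic claim $(B^{\rm sn})^G=A$ when $A=B^G$ is seminormal. This is workable, but it imports the local structure theorem of Alper--Halpern-Leistner--Rydh as an extra input, and two points need more care than you give them: (i) the finiteness of $(B^{\rm sn})^G$ over $A$ does not follow from integrality over $B$ (as $B$ is not finite over $A$) but rather from coherence of pushforward along good moduli space morphisms \cite[Thm.~4.16.x]{Alp13} — exactly the result that, invoked once globally, makes the whole local analysis unnecessary; and (ii) the reducedness of $\widetilde{\sM}$ and the residue-field comparison at closed points require a word (the latter is harmless here since $\bk$ is algebraically closed). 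What your approach buys is a more explicit, hands-on verification of the two axioms of Definition~\ref{d:gm}; what the paper's buys is brevity and independence from the local structure theorem.
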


\begin{proof}
Since $\sM$ admits a good moduli space and  $\sM^{\rm sn}\to \sM$ is finite, \cite[Lem. 4.14 \& Thm. 4.16.x]{Alp13} implies
that there exists a good moduli space  $\sM^{\rm sn} \to P$ and, in the commutative diagram, which exists by Proposition \ref{p:Alpergmprops}.2,
\[
\begin{tikzcd}
\sM^{\rm sn} \arrow[r] \arrow[d] & \sM \arrow[d]\\
P \arrow[r] & M
\end{tikzcd}
\]
the bottom row is finite.
Note that $P\to M$ is a bijection on $\bk$-points, since good moduli spaces are bijections on closed $\bk$-points and seminormalizations are bijections on $\bk$-points. 
Thus $P\to M$ is a partial seminormalization. 

Next, observe that $P$ is seminormal.
Indeed, since $\sM^{\rm sn}$ is seminormal, $\sM^{\rm sn } \to P$ factors through $P^{\rm sn} \to P$ and so $P^{\rm sn} \to P$ is an isomorphism by Proposition \ref{p:Alpergmprops}.2. Therefore $P \to M$ is a seminormalization and the result follows.
\end{proof}

\subsection{Existence of the asymptotically good moduli space}
We now prove that the seminormalization of $\cP_d^{\rm CY}$ admits an asymptotically good moduli space.

\begin{prop}\label{p:existsagm}
There exists an asymptotically  good moduli space  
\[
\phi: (\cP_d^{\CY})^{\rm sn} \to P_{d}^{\CY}
\]
where $P_{d}^{\CY}$ is the seminormalization of $P_{d,m}^{\CY}$ for $m$ sufficiently large. 
\end{prop}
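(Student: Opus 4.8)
The strategy is to apply the abstract construction of an asymptotically good moduli space outlined in Remark \ref{r:constructionagm} to the ascending chain of open substacks
\[
(\cP_{d,1}^{\CY})^{\rm sn} \subset (\cP_{d,2}^{\CY})^{\rm sn} \subset (\cP_{d,3}^{\CY})^{\rm sn} \subset \cdots,
\]
whose union is $(\cP_d^{\CY})^{\rm sn}$ since $\cP_d^{\CY} = \bigcup_{m\geq 1}\cP_{d,m}^{\CY}$ and seminormalization commutes with taking open substacks. To check hypothesis (i) of Remark \ref{r:constructionagm}, recall that by Theorem \ref{t:existsPdm} each $\cP_{d,m}^{\CY}$ is a finite type algebraic stack with affine diagonal (Proposition \ref{p:Pdmstack}) admitting a good moduli space $\phi_m:\cP_{d,m}^{\CY}\to P_{d,m}^{\CY}$ with $P_{d,m}^{\CY}$ a separated algebraic space. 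Then Lemma \ref{l:sngm} applies to $\phi_m$ and shows that the induced morphism on seminormalizations $(\cP_{d,m}^{\CY})^{\rm sn}\to (P_{d,m}^{\CY})^{\rm sn}$ is again a good moduli space. So set $U_m := (P_{d,m}^{\CY})^{\rm sn}$ and $\cU_m := (\cP_{d,m}^{\CY})^{\rm sn}$; hypothesis (i) holds.

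For hypothesis (ii), I need the transition maps $\psi_{m,m'}:U_m \to U_{m'}$ to be isomorphisms for all $m,m'\geq k$ with $k$ sufficiently large. By Proposition \ref{p:stabilization}, for $m$ sufficiently large and any $m'\geq m$ the morphism $P_{d,m}^{\CY}\to P_{d,m'}^{\CY}$ is a bijection on $\bk$-points. Since $P_{d,m}^{\CY}$ is proper for $m$ large (Proposition \ref{p:properness}), this morphism is proper, and being a proper bijection on $\bk$-points between finite type algebraic spaces over $\bk$ it is finite and a universal homeomorphism, in particular integral and radicial; hence it is a partial seminormalization (as in the proof of Lemma \ref{l:sngm}). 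Taking seminormalizations, the induced map $U_m = (P_{d,m}^{\CY})^{\rm sn}\to (P_{d,m'}^{\CY})^{\rm sn} = U_{m'}$ is a morphism of seminormal algebraic spaces that is a bijection on $\bk$-points and integral, hence an isomorphism by the universal property of seminormalization. Thus $\psi_{m,m'}$ is an isomorphism for all $m'\geq m\geq k$, verifying (ii).

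With (i) and (ii) in hand, Remark \ref{r:constructionagm} produces the asymptotically good moduli space: the morphisms $(\cU_m\to U_m)_{m\geq k}$ glue, since the relevant diagram commutes by Proposition \ref{p:Alpergmprops}.2 and $(\cP_d^{\CY})^{\rm sn}=\bigcup_{m\geq 1}\cU_m$, to give $\phi:(\cP_d^{\CY})^{\rm sn}\to P_d^{\CY}$ with $P_d^{\CY} = U_k = (P_{d,k}^{\CY})^{\rm sn}$ and $\phi\vert_{\cU_m}$ a good moduli space morphism for every $m\geq k$.

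\textbf{Main obstacle.} The routine verifications are the identification of the chain and hypothesis (i) via Lemma \ref{l:sngm}. The step requiring the most care is hypothesis (ii): one must upgrade the bijection-on-$\bk$-points statement of Proposition \ref{p:stabilization} to the assertion that the transition maps between the \emph{seminormalized} moduli spaces are genuine isomorphisms. This uses properness of $P_{d,m}^{\CY}$ (Proposition \ref{p:properness}) to get finiteness, then the fact that a finite universal homeomorphism is a partial seminormalization, then the functoriality and the characterizing universal property of seminormalization to conclude the induced map on seminormalizations is an isomorphism — essentially the same argument already carried out inside the proof of Lemma \ref{l:sngm}, applied here to the transition morphisms rather than to $\phi_m$ itself.
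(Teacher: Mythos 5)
Your proposal is correct and follows essentially the same route as the paper: take seminormalizations of the chain $\cP_{d,1}^{\CY}\subset\cP_{d,2}^{\CY}\subset\cdots$, use Lemma \ref{l:sngm} for hypothesis (i) of Remark \ref{r:constructionagm}, and upgrade the bijectivity of Proposition \ref{p:stabilization} to an isomorphism of the seminormalized transition maps via properness and the universal property of seminormalization. Your write-up of step (ii) is in fact somewhat more detailed than the paper's one-line justification, but the content is the same.
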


\begin{proof}
The existence of the asymptotically good moduli space follows from taking seminormalizations of the construction in Section \ref{s:gmsforcurves}.
Indeed, consider the commutative diagram
which arises from taking the seminormalization of the diagram in \eqref{e:Pdmdiag}.
\[
\begin{tikzcd}
	(\cP_{d,1}^{\CY})^{\rm sn} \arrow[r,hook ] \arrow[d] &  (\cP_{d,2}^{\CY})^{\rm sn} \arrow[r,hook] \arrow[d]  &  (\cP_{d,3}^{\CY})^{\rm sn} \arrow[d]  \arrow[r,hook] & \cdots \\
	(P_{d,1}^{\CY})^{\rm sn} 	\arrow[r, "\phi_1"]  &  (P_{d,2}^{\CY})^{\rm sn}    	\arrow[r,"\phi_2"]  &  (P_{d,3}^{\CY})^{\rm sn} \arrow[r, "\phi_3"]& \cdots
\end{tikzcd}
.\]
By Proposition \ref{p:stabilization}, there exists $k \gg 1$ such that  $\phi_m$ is a bijection on $\bk$-points for $m\geq k$. 
Using that $(P_{d,m+1}^{\CY})^{\rm sn}$ is seminormal and proper by Proposition \ref{p:properness}, we see $\phi_m$ is an isomorphism for $m\geq k$.
Thus Remark \ref{r:constructionagm} implies
 that there exists  an asymptotically good moduli space $\phi: (\cP_{d}^{\CY})^{\rm sn}\to P_{d}^{\CY}$ such that the restriction of  $\phi$ to $(\cP_{d,m}^{\CY})^{\rm sn}$ is the morphism $(\cP_{d,m}^{\CY})^{\rm sn}\to (P_{d,m}^{\CY})^{\rm sn}$.
\end{proof}

\begin{rem}\label{r:notgms}
The morphism $\phi:(\cP_d^{\CY})^{\rm sn} \to P_{d}^{\CY}$ is not  a good moduli space when $3 \mid d$.  
Indeed, since $(\cP_{d}^{\CY})^{\rm sn}$ is  locally of finite type, but not of finite type by Example \ref{e:unbounded}, $(\cP_{d}^{\CY})^{\rm sn}$ is not quasi-compact.
On the other hand, $P_d^{\CY}$ is quasi-compact since it is proper.
Therefore $\phi$ cannot be quasi-compact, which violates Definition \ref{d:gm}.
\end{rem}

\section{Ampleness of the Hodge line bundle}\label{s:hodgelinebundle}

In this section, we prove that the Hodge line bundle is ample on $P_{d,m}^{\CY}$ for $m\gg0$ (Theorem \ref{t:HodgeAmple}).
As a consequence of the result, we prove the b-semiampleness conjecture in relative dimension 2 (Theorem \ref{t:bsemiample}).

\subsection{Canonical bundle formula}\label{s:canonicalbundle}

We first recall the canonical bundle formula of an lc-trivial fibration and  relevant positivity results. 
See \cite{Amb05,Kol07,FG14moduli} for further details on the topic.

\begin{defn}\label{def:lctrivial}
An \emph{lc-trivial fibration} $f:(X,D)\to Y$ is a surjective morphism  $f:X\to Y$ between normal proper varieties with connected fibers and a $\bQ$-divisor $D$ on $X$ satisfying
\begin{enumerate}
\item $(X,D)$ is sub-lc over the generic point of $Y$,
\item ${\rm rank} (f \circ g)_* \cO_{Z}( \lceil  A_Z^*(X,D) \rceil) =1$, 
where $g:Z\to X$ is a log resolution of $(X,D)$, $ A_Z (X,D):= K_Z - f^*(K_X+D) = \sum_i a_i D_i$, and $ A_Z^*(X,D) := \sum_{a_i >-1} a_i D_i$, and
\item there exists a $\bQ$-Cartier $\bQ$-divisor $B$ on $Y$ such that $K_{X}+D\sim_{\Q} f^*B$.
\end{enumerate}
Note that (2) is automatically satisfied if $D$ is effective on the general fiber, 
since then $\lceil A^*_{Z}(X,D)\rceil$ is effective and $g$-exceptional.
\end{defn}

Given an lc-trivial fibration $(X,D)\to Y$ and $Y'\to Y$ a generically finite morphism with $Y'$ normal, 
there is an induced lc-trivial fibration $(X',D')\to Y'$. Here, $\varphi':X'\to X$ is the main component of the normalization of $X\times_Y Y'$ and $D'$ is defined by $K_{X'}+D'= \varphi'^*(K_{X}+D)$.

\begin{defn}[Canonical bundle formula]\label{def:cbformula}
Let $f:(X,D)\to Y$ be an lc-trivial fibration.
There is a $\bQ$-linear equivalence
\[
K_{X}+D \sim_{\bQ} f^*(K_{Y} + D_Y + M_Y),
\]
where $D_Y$ and $M_{Y}$ are $\bQ$-divisors on $Y$ defined as follows:
\begin{enumerate}
\item $D_Y$ is the \emph{discriminant divisor}, which is defined by $D_Y:= \sum_{P} (1-b_P)P$, 
where the sum runs through prime divisors $P\subset Y$ and 
\[
b_P := \max\{ \la \in \bQ\, \vert\, (X,D+ \la f^* P) \text{ is sub-lc over the generic point of $P$}\}.
\]
\item $M_Y :=  G-K_Y-D_Y$ is the \emph{moduli divisor},
where $G$ is a $\bQ$-Cartier  $\bQ$-divisor on $Y$ such that $K_X+D\sim_{\bQ} f^*B$.
Note that $M_Y$  is only defined  up to $\bQ$-linear equivalence, 
since it depends on the choice of  $B$.\footnote{If $N(K_{X_\eta} +D_\eta) \sim 0$ for some integer $N>0$, then we can choose $B$ such that $N(K_{X}+ D - f^*B) \sim 0$ and, hence, $NM_Y$ will be defined up to  $\bZ$-linear equivalence; see \cite[\S 7.5]{PS09}.}
\end{enumerate}
\end{defn}

\begin{rem}\label{rmk:bandm}
If $\varphi:Y'\to Y$ is a proper birational morphism with $Y$ normal and
$(X',D') \to Y'$ is the induced lc-trivial fibration, then
the  divisors in Definition \ref{def:cbformula} satisfy
$\varphi_* M_{Y'} = M_{Y}$ and $\varphi_* D_{Y'}=D_Y$.
Hence there exist b-$\bQ$-divisors ${\bf M}$ and ${\bf D}$ on $Y$ such that ${\bf M}_{Y'} := M_{Y'}$ and ${\bf D}_{Y'}:= D_{Y'}$ for each proper birational morphism $Y'\to Y$ with $Y'$ normal.
\end{rem}

The following positivity statements for the moduli divisor, which follow from \cite{Amb05,Kol07,FG14moduli}, will be used to prove the ampleness
of the Hodge line bundle on $P_{d,m}^{\rm CY}$.

\begin{thm}\label{t:positivitymoduli}
If $(X,D) \to Y$ is an lc-trivial fibration, then
\begin{enumerate}
\item[(1)]   ${\bf M}$ is b-$\bQ$-Cartier and b-nef;
\item[(2)]  if the  generic fiber $(X_{\eta},D_{\eta})$ is a projective klt pair and there is a non-empty open set $U\subset Y$ satisfying that for every $y\in U(\bk)$ 
\[
\#\{ z \in U(\bk)\, \vert\, (X_y,D_y) \cong (X_{z},D_{z}) \}<+\infty,
\]
then ${\bf  M}$ is b-big. 
\end{enumerate}
\end{thm}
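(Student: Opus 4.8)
\textbf{Proof strategy for Theorem \ref{t:positivitymoduli}.}

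The plan is to deduce both statements from the foundational results on the moduli b-divisor in \cite{Amb05,Kol07,FG14moduli}, after reducing to a situation where those results apply directly. First I would recall that by Definition \ref{def:cbformula} and Remark \ref{rmk:bandm}, the moduli b-divisor $\bf M$ is characterized by its traces $\mathbf{M}_{Y'} = M_{Y'}$ on higher models, and that all of the cited positivity results are birational in nature: they assert statements about $\mathbf{M}_{Y'}$ for a suitably chosen model $Y' \to Y$. So the two claims are really statements about a single well-chosen higher model, and I would fix such a model at the outset.

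For part (1), the b-nefness of $\bf M$ is precisely Ambro's theorem \cite{Amb05} (see also \cite{Kol07,FG14moduli}): there exists a proper birational morphism $\varphi: Y' \to Y$ with $Y'$ smooth such that $M_{Y'}$ is a nef $\mathbb Q$-Cartier divisor, and moreover the formation of $M_{Y'}$ is stable under further birational pullback. This immediately gives that $\bf M$ is b-$\mathbb Q$-Cartier and b-nef. The only point requiring care is that the cited references sometimes assume $D$ is effective or that $(X,D)$ is globally (sub-)lc, rather than just sub-lc over the generic point of $Y$; I would check that our hypothesis in Definition \ref{def:lctrivial} — sub-lc over the generic point plus the rank-one condition (2) — is exactly the setting in which \cite{Amb05} and the refinement in \cite{FG14moduli} are stated, so that no extra reduction is needed. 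This part I expect to be essentially a citation.

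Part (2) is the substantive one. Here I would invoke the b-bigness criterion from \cite{FG14moduli} (building on \cite{Kol07}): when the generic fiber $(X_\eta, D_\eta)$ is klt and the associated moduli map to the relevant coarse moduli space (of klt CY pairs, or of polarized varieties of Calabi--Yau type after an appropriate covering construction) is generically finite, the moduli divisor $M_{Y'}$ on a high enough model is big. The hypothesis we are given — that over a non-empty open $U \subseteq Y$ the isomorphism classes of the fibers $(X_y, D_y)$ have finite fibers, i.e. only finitely many $z \in U$ give a pair isomorphic to a fixed $(X_y,D_y)$ — is exactly the "maximal variation" condition that makes the moduli map generically finite onto its image. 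So the plan is: (i) pass to a finite cover / semistable-type model of $Y$ along which the lc-trivial fibration has a nice moduli interpretation, using the variation of Hodge structure / covering construction of \cite{Kol07}; (ii) apply the main positivity theorem of \cite{FG14moduli} to conclude that the moduli divisor pulled back to this model is big; (iii) descend bigness back down to a model dominating $Y$, using that bigness is preserved under the generically finite pushforward and that $\bf M$ is a b-divisor so its trace is compatible under these operations.

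The main obstacle will be step (ii)–(iii) of part (2): namely, making precise that the combinatorial/scheme-theoretic finiteness condition on fibers stated in the theorem is equivalent to (or at least implies) the hypothesis under which \cite{FG14moduli} proves b-bigness — their statement is phrased in terms of the variation $\mathrm{Var}(f)$ being maximal, or equivalently the moduli map being generically finite. One must argue that "finitely many $z$ with $(X_z,D_z)\cong(X_y,D_y)$ for all $y$" forces the moduli map (to whatever coarse moduli space is available for the klt CY fibers) to be generically finite onto its image, which is a standard but not entirely formal point because one needs the existence of such a coarse moduli space with the right properties, or alternatively one works directly with the period map as in \cite{Kol07}. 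Once that translation is in hand, the bigness is a black box. I would also need to verify that the b-big conclusion for $\bf M$ (which is a priori only a statement on one high model) is what we want for the later application to ampleness of the Hodge line bundle on $P_{d,m}^{\CY}$, but that is deferred to the next subsection.
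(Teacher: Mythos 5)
Your overall strategy is the same as the paper's: part (1) is a citation of \cite{Amb05,Kol07,FG14moduli}, and part (2) is proved by translating the finiteness hypothesis into a maximal-variation statement and then quoting a b-bigness theorem. The one place where your plan and the paper's proof genuinely diverge is precisely the step you flag as the main obstacle, and your proposed resolution there is the weak point. You suggest arguing that the finiteness condition makes a moduli map "to whatever coarse moduli space is available for the klt CY fibers" generically finite; but no such coarse moduli space with the required properties is available here (constructing moduli of CY pairs is, after all, the subject of the paper), and the hypothesis of the theorem is about abstract isomorphism of fibers, not about points of a moduli space. The paper avoids this entirely by invoking Ambro's Theorem 3.3 of \cite{Amb05} directly: it produces a diagram $Y \xleftarrow{\ \tau\ } \overline{Y} \xrightarrow{\ \rho\ } Y^!$ with $\tau$ generically finite surjective, an lc-trivial fibration $(X^!,D^!)\to Y^!$ whose moduli b-divisor ${\bf M}^!$ is already known to be b-nef and big, the compatibility $\tau^*{\bf M}=\rho^*{\bf M}^!$, and --- crucially --- an identification of the two pulled-back families over a common nonempty open $V\subset\overline{Y}$. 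Given that identification, the translation you were worried about becomes a two-line set-theoretic argument: after shrinking $V$ so that $\tau(V)\subset U$ and $\tau|_V$ has finite fibers, the fiber $\rho^{-1}(\rho(y))\cap V$ is contained in $\{z\in V : (X_{\tau(y)},D_{\tau(y)})\cong(X_{\tau(z)},D_{\tau(z)})\}$, which is finite by the hypothesis on $U$; hence $\rho$ is generically finite and b-bigness of ${\bf M}^!$ transports to ${\bf M}$ via $\tau^*{\bf M}=\rho^*{\bf M}^!$. I would recommend replacing your steps (i)--(iii) with this appeal to \cite[Thm.\ 3.3]{Amb05}; as written, your route has a gap at the existence of the coarse moduli space, whereas the rest of your outline is sound.
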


The statement that ${\bf M}$  is \emph{b-$\bQ$-Cartier} means that there exists a proper birational morphism $Y'\to Y$ such that ${\bf M}_{Y'}$ is $\bQ$-Catier and ${\bf M}= \overline{{\bf M}_{Y'}}$ (i.e. ${\bf M}_{Y''} = \psi^* {\bf M}_{Y'}$ for any proper birational morphism of normal varieties $\psi : Y'' \to Y'$).
The terms \emph{b-nef} (resp.,  \emph{b-big}, \emph{b-semiample}, \emph{b-free}) mean we can additionally choose $Y'$ such that ${\bf M}_{Y'}$ is nef (resp., big, semiample, basepoint free).

\begin{proof}
Statement (1) is \cite[Thm. 8.3.7]{Kol07} and \cite[Thm. 3.6]{FG14moduli} (see also \cite{Amb05}).
Under the assumptions of (2), \cite[Thm. 3.3]{Amb05} states that there  exists a diagram 
\[
\begin{tikzcd}
(X,D)  \arrow[d, "f"] & & (X^!,D^!) \arrow[d,"f^!"']\\
Y  & \overline{Y} \arrow[l, "\tau"'] \arrow[r, "\rho"]& Y^!
\end{tikzcd}
\]
such that
(i) $(X^!,D^!)\to Y^!$ is an lc-trivial fibration, (ii) $\overline{Y}$ a smooth projective variety, (iii)
$\tau$ is generically finite and surjective, and $\rho$ is surjective, 
(iv)  there exists a non-empty open set $V\subset \overline{Y}$  such that $(X,D) \times_Y V \cong (X^!,D^!)\times_{Y^!} V$ over $V$, and
(v) ${\bf M}^!$ is b-nef and big with $\tau^* {\bf M} = \rho^* {\bf M}^!$, where ${\bf M}$ and ${\bf M}^!$ denote the moduli b-${\bf Q}$-divisors of $f$ and $f^!$.

We aim to show  that $\rho$ is generically finite. 
By shrinking $V$, we may assume $\tau(V) \subset U$ and $\tau\vert_V$ has finite fibers. 
For a closed point $y \in V$,  observe
\begin{align*}
\rho^{-1}(\rho(y)) \cap V &\subset \{ z\in  V \, \vert\, (X^!_{\rho(y)}, D^{!}_{\rho(y)}) \cong (X^!_{\rho(z)},D^!_{\rho(z)})  \} \\
& = \{ z\in  V \, \vert\, (X_{\tau(y)}, D_{\tau(y)}) \cong (X_{\tau(z)},D_{\tau(z)})  \}.
\end{align*}
where the equality is by (iv).  
The last set is finite since $\tau(V) \subset U$ and $\tau\vert_V$ has finite fibers.
Therefore, $\rho^{-1}(\rho(y)) \cap V$ is finite for each $y \in V$, which implies $\rho$ is generically finite.
Since $\rho$ and $\tau$ are both generically finite, (v) implies  ${\bf M}$ is b-big. 
\end{proof}

\subsection{Hodge line bundle}

\begin{defn}[Hodge line bundle]
If $f:(X,\Delta+D)\to S$ is  a family of boundary polarized CY pairs with index dividing $N$, we set 
\[
\la_{{\rm Hodge},f,N}
:= 
f_* \omega_{X/S}^{[N]}(N(\Delta+D)).
\]
\end{defn}

\begin{rem}
There is also a natural line bundle on the base of a family of boundary polarized CY pairs  called the CM line bundle, which has relations to K-stability.
The CM line bundle of $(X,\Delta+D)\to S$  with respect to the polarization  $L:=-K_{X}-\Delta$ agrees with the Hodge line bundle up to a positive constant; see e.g. \cite[pg. 14]{ADL19}.
\end{rem}

We state a few  basic properties of the Hodge line bundle.

\begin{prop}\label{p:Hodgebasics}
Let $f:(X,\Delta+D)\to S$ be a family of boundary polarized CY pairs with index dividing $N$ over an arbitrary scheme $S$. The following hold:
\begin{enumerate}
\item[(1)]  $\la_{{\rm Hodge},f,N}$ is the unique line bundle (up to isomorphism)
satisfying 
\[
\omega_{X/S}^{[N]}(N(\Delta+D)) \cong f^* \la_{\Hodge,f,N}.
\]
\item[(2)] If $\varphi:S'\to S$ is a morphism and $f':(X',\Delta'+ D')\to S'$ denotes the pullback of $(X,\Delta+S)\to S$ by $\varphi$, then there is a canonical isomorphism\
\[
 \varphi^* \la_{\Hodge,f,N} 
 \overset{\sim}{\longrightarrow} \la_{ \Hodge,f',N} 
.\]
\item[(3)]  If $S$ is smooth and the generic fiber of $X\to S$ is normal, then $(X,\Delta+D)\to S$ is an lc-trivial fibration with $\cO_S(NM_{S}) \cong \la_{\Hodge,f,N}$ and ${\bf M}:= \overline{ M_S}$.
\end{enumerate}
\end{prop}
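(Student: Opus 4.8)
\textbf{Proof proposal for Proposition \ref{p:Hodgebasics}.}

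The plan is to prove the three statements in order, each reducing to a standard fact about reflexive sheaves combined with the hypotheses built into the definition of a family of boundary polarized CY pairs (Definition \ref{d:familyNoetherian}).

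For (1), the key input is condition (4) of Definition \ref{d:familyNoetherian}, which says $\omega_{X/S}^{[N]}(N(\Delta+D)) \cong_S \cO_X$; unwinding the definition of $\cong_S$, this means there is a line bundle $L_S$ on $S$ with $\omega_{X/S}^{[N]}(N(\Delta+D)) \cong f^*L_S$. I would take $\la_{\Hodge,f,N} := f_*\omega_{X/S}^{[N]}(N(\Delta+D))$ and verify the canonical map $f^* f_* \omega_{X/S}^{[N]}(N(\Delta+D)) \to \omega_{X/S}^{[N]}(N(\Delta+D))$ is an isomorphism. Since $f$ is proper with geometrically connected fibers and $\omega_{X/S}^{[N]}(N(\Delta+D))$ is, fppf-locally on $S$, the pullback of a line bundle, the projection formula gives $f_*\omega_{X/S}^{[N]}(N(\Delta+D)) \cong L_S$ and hence the pullback recovers the sheaf; uniqueness is then immediate because $f_*f^*L'_S \cong L'_S$ for any line bundle $L'_S$ (using $f_*\cO_X \cong \cO_S$, which holds as the fibers are geometrically connected and reduced, indeed $S_2$). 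I would cite the relevant pushforward-and-base-change facts from \cite{KolNewBook} (or just argue directly). A small point to be careful about is that $f$ is only flat with $S_2$ fibers, not smooth, but $f_*\cO_X \cong \cO_S$ still holds and suffices.

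For (2), the content is that $\omega_{X/S}^{[N]}(N\Delta + N D^{\Div})$ commutes with base change — this is precisely condition (5) of Definition \ref{d:familyNoetherian} applied with $m = N$, $n = N$ (note $Nr \in \bZ$ by the standing hypothesis $N\mathbf{r} \in \bZ^2$). Given $\varphi \colon S' \to S$ with $\varphi' \colon X' \to X$ the projection, condition (5) yields a canonical isomorphism $\varphi'^*\omega_{X/S}^{[N]}(N(\Delta+D)) \overset{\sim}{\to} \omega_{X'/S'}^{[N]}(N(\Delta'+D'))$. Pushing forward along $f'$, using flat base change $f'_*\varphi'^* \cong \varphi^* f_*$ (valid since $\omega^{[N]}(\cdots)$ is, locally on $S$, a pullback from $S$, hence $f$-flat, so ordinary base change applies) and statement (1), I get the desired canonical isomorphism $\varphi^*\la_{\Hodge,f,N} \overset{\sim}{\to} \la_{\Hodge,f',N}$.

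Statement (3) is where the real work lies, and I expect it to be the main obstacle. First I would check $(X,\Delta+D) \to S$ is an lc-trivial fibration in the sense of Definition \ref{def:lctrivial}: since $S$ is smooth and the generic fiber is normal, $(X,\Delta+D)$ is lc near the generic fiber by Lemma \ref{l:slcadj} applied to a generic complete-intersection curve through a general point (or directly, since a family of slc pairs over a smooth base with normal generic fiber is lc over the generic point), giving condition (1); condition (2) is automatic because $\Delta+D$ is effective on the general fiber; and condition (3) holds with $B$ any $\bQ$-divisor on $S$ such that $\cO_S(NB) \cong \la_{\Hodge,f,N}$, because by (1) we have $N(K_{X/S}+\Delta+D) \sim N f^*B$, hence $N(K_X+\Delta+D) \sim f^*(NK_S + NB)$. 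Then I would identify the moduli divisor: since $S$ is smooth and $N(K_{X}+\Delta+D - f^*(K_S+B)) \sim 0$, the footnote in Definition \ref{def:cbformula} lets us normalize so that $NM_S$ is $\bZ$-linearly equivalent to $NB - ND_S$. The crucial claim is that the discriminant divisor $D_S$ vanishes — I would prove this by showing that for every prime divisor $P \subset S$, the pair $(X, \Delta+D + f^*P)$ is sub-lc over the generic point of $P$ with $b_P = 1$; this is exactly the content of Lemma \ref{l:slcadj} (equivalence of $(X,\Delta+D)\to S$ being a family of slc pairs with $(X,\Delta+D+\sum f^*D_i)$ being slc, localized at the generic point of $P$ after choosing coordinates), so $b_P = 1$ for all $P$ and $D_S = 0$. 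Therefore $NM_S \sim NB \sim$ a divisor with $\cO_S(NM_S) \cong \la_{\Hodge,f,N}$, and since $S$ is smooth the b-divisor $\mathbf{M}$ descends already on $S$, i.e. $\mathbf{M} = \overline{M_S}$, by Theorem \ref{t:positivitymoduli}(1) together with Remark \ref{rmk:bandm} — or, more carefully, I would invoke that over a smooth base with trivial discriminant the moduli part is already b-Cartier and descended, which follows from the fact that $M_{S'} = \psi^* M_S$ for any birational $\psi \colon S' \to S$ once $D_{S'} = \psi^* D_S$, and the latter holds since the fibration has slc (hence lc after pullback) total space over all of $S$. The subtlety to handle with care is the compatibility between the $\bQ$-linear-equivalence ambiguity in $M_S$ and the genuine isomorphism class of $\la_{\Hodge,f,N}$; the footnote's $\bZ$-linear normalization is what makes this precise, and I would spell that out.
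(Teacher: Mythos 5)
Your proposal is correct and follows essentially the same route as the paper: $f_*\cO_X \cong \cO_S$ together with condition (4) of Definition \ref{d:familyNoetherian} for (1), Koll\'ar's condition (5) plus cohomology-and-base-change for (2), and Lemma \ref{l:slcadj} to establish lc-ness and kill the discriminant for (3). One caution on (3): your first suggested justification for $\mathbf{M}=\overline{M_S}$ (b-$\bQ$-Cartierness via Theorem \ref{t:positivitymoduli}(1)) does not by itself give descent to $S$ itself, only to some birational model, but your ``more careful'' fallback — rerunning the discriminant computation on every smooth birational model of $S$ and invoking the base-change statement (2) — is exactly the argument the paper uses.
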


\begin{proof}
First note that $\pi_* \cO_X\cong \cO_S$. 
Indeed, since $H^0(X_s,\cO_{X_s})$ for all $s\in S$, Proposition  \ref{p:anticanonicalsheaf} implies $f_* \cO_X$ is a line bundle and the natural map $f_*\cO_{X} \otimes k(s) \to H^0(X_s, \cO_{X_s})$ is an isomorphism for all $s\in S$. 
 Thus the natural map $\cO_S \to f_*\cO_X$ restricts to an isomorphism  $\cO_S\otimes k(s) \to f_* \cO_X \otimes k(s)$ for all $s\in S$
 and so $\cO_S \to f_* \cO_X$ is an isomorphism.

To prove (1),  note that $\omega_{X/S}^{[N]}(N(\Delta+D))$ and $\cO_X$ are locally isomorphic over $S$. 
Since $f_*\cO_X$ is a line bundle and the natural map $ f^*f_* \cO_X\to \cO_X$ is an isomorphism, $\la_{\Hodge, f,N}$ is a line bundle and the natural map  $f^*\la_{\Hodge,f,N} \to  \omega_{X/S}^{[N]}(N(\Delta+D))$ is an isomorphism.
If $\cL_S$ is a line bundle satisfying  $\omega_{X/S}^{[N]}(N(\Delta+D))\cong f^*\cL_S$, then
\[\la_{\Hodge,f,N} \cong f_* f^*\cL_S \cong f_* \cO_X\otimes  \cL_S \cong \cL_S,
\]
which completes (1).
Next, observe that there are natural morphisms
\[
\varphi^*f_*\omega_{X/S}^{[N]} (m(\Delta+D)) \longrightarrow f'_* \varphi'^* \omega_{X/S}^{[N]}(N(\Delta+D))
\overset{\cong}{\longrightarrow}
f'_* \omega_{X'/S'}^{[N]}(N(\Delta'+D')).
\]
Since the first map is locally of the form
$\varphi^* f_* \cO_X \to f'_*  \varphi'^*\cO_{X}$, 
it is an isomorphism.
Thus (2) holds.

Finally, we prove (3). Lemma \ref{l:slcadj} implies  $(X,\Delta+D)$ is an slc pair and the conductor divisor does not contain any fibers.
Since the generic fiber of $X\to S$ is normal,  the conductor divisor on $X$ must be 0 and, hence, $(X,\Delta+D)$ is lc. 
Next, observe that 
\[
0 \sim_{\bQ,S} K_{X/S}+\Delta+D  \sim_{\bQ,S} K_{X}+\Delta+D
,\]
where the second equivalence uses that $K_{S}$ is Cartier.
Therefore $(X,\Delta+D)\to S$ is an lc-trivial fibration. 

We claim that the discriminant b-divisor equal to 0. 
To see this, fix a proper birational morphism $\varphi:S'\to S$ with $S'$ smooth. 
Since the pullback $f':(X',\Delta'+D')\to S'$ is a family of  boundary polarized CY pairs, $f'$ is  an lc-trivial fibration. 
By Lemma \ref{l:slcadj}, 
 $(X',\Delta'+D'+f'^*P')$ is lc  for any prime divisor $P'$ on $X'$.
 Therefore  the discriminant b-divisor is 0, which implies 
$N(K_{X/S}+ \Delta+D )\sim N f^* M_S$ and so
 $\cO_S(N{\bf M}_{S}) \cong \la_{\Hodge,f,N}$.
Using (2), we additionally see ${\bf M} =\overline{{\bf M}_S}$
\end{proof}

The following  statement is a well-known    consequence of \cite{FG14brep}  
and can also be deduced from the K-semistability of   polarized slc CY pairs proved in \cite{Oda13b} and Theorem \ref{t:AutTorus}.

\begin{prop}\label{p:actstrivHodge}
If $(X,\Delta+D)$ is a boundary polarized CY pair, 
then the identity component of $\Aut(X,\Delta+D)$ acts trivially on 
$H^0(X, \omega_{X}^{[N]}(N(\Delta+D)) )$.
\end{prop}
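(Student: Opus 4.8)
The statement asserts that $G := \Aut^0(X,\Delta+D)$ acts trivially on the one-dimensional space $V := H^0(X, \omega_X^{[N]}(N(\Delta+D)))$ (which is indeed one-dimensional since $N(K_X+\Delta+D) \sim 0$). By Theorem \ref{t:AutTorus}, $G$ is an algebraic torus $\bG_m^r$, so the action on the line $V$ is given by a character $\chi : \bG_m^r \to \bG_m$, and the claim is that $\chi$ is trivial. The plan is to reduce to the case $r=1$, i.e. to show that every one-parameter subgroup $\lambda : \bG_m \to G$ acts trivially on $V$; since a torus is generated by its one-parameter subgroups, this suffices.

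\textbf{Main steps.} First I would fix a one-parameter subgroup $\lambda : \bG_m \to \Aut(X,\Delta+D)$ and form the associated product test configuration $(\cX, \Delta_\cX + \cD) := (X,\Delta+D) \times \bA^1$ with the twisted $\bG_m$-action (product of $\lambda$ and the standard action on $\bA^1$); this is a weakly special test configuration in the sense of Section \ref{s:tc}. Setting $L := \omega_X^{[-r']}(-r'\Delta)$ for $r'$ sufficiently divisible and $\cL := \omega_{\cX/\bA^1}^{[-r']}(-r'\Delta_\cX)$, we get a test configuration $(\cX,\cL)$ of $(X,L)$ whose induced filtration $\cF^\bullet$ of $\bigoplus_m H^0(X,mL)$ is computed by Lemma \ref{l:lctestconfig}: it is $\bigcap_{E \subset \cX_0}\{ v_E(s) - mr' A_{X,\Delta}(v_E) \geq \la\}$, and moreover $A_{X,\Delta+D}(v_E) = 0$ for each component $E \subset \cX_0$. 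Since the test configuration is a product, $\cX_0 \cong X$ and the only such "valuation" is the trivial one $v_E = 0$ (this is the content of $\cX_0$ being reduced and isomorphic to $X$, giving $b_E = 1$ and $r(\ord_E)$ trivial); hence $\cF^\bullet$ is the trivial filtration, which means the $\bG_m$-action on $\cX$ — hence the $\lambda$-action on $H^0(X,mL)$ — is trivial for all $m$. In particular $\lambda$ acts trivially on the section ring with respect to $L = \omega_X^{[-r']}(-r'\Delta)$. The point is then to transfer this to the dual line $V$: since $N(K_X+\Delta+D) \sim 0$ and $-r'(K_X+\Delta)$ carry, via tensor powers and reflexive operations, compatible $\bG_m$-linearizations (all canonically defined from the $\lambda$-action on the pair, which fixes both $\Delta$ and $D$), the character by which $\lambda$ acts on $V$ is determined by the characters on the $H^0(X,mL)$, which we have shown are trivial. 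Concretely, $\omega_X^{[N]}(N(\Delta+D))$ is canonically trivial, and the canonical trivialization is $\Aut(X,\Delta+D)$-equivariant because $N(K_X+\Delta+D)\sim 0$ is an honest linear equivalence of $\bG_m$-linearized reflexive sheaves, forcing the character to vanish.

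\textbf{Alternative route and the main obstacle.} There is a cleaner high-level argument: by \cite{Oda13b} (or \cite{Oda13Slope,OdakaSun15}) a polarized slc CY pair is K-semistable, and combined with Theorem \ref{t:AutTorus} the automorphism group is reductive; then the Futaki-type invariant / the weight of $G$ on $V$ must vanish since $G$ acts through a torus on a proper object with trivial canonical class — this is exactly the K-semistability statement that the Donaldson--Futaki invariant of a product test configuration is $\leq 0$ with both signs available, forcing it to be $0$, and for these pairs the Donaldson--Futaki invariant of a product test configuration equals (up to a positive constant) the weight of the induced $\bG_m$-action on the Hodge line, so that weight is $0$. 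The reference \cite{FG14brep} gives this directly. I expect the main obstacle to be purely bookkeeping: making the $\bG_m$-linearizations on the various reflexive sheaves $\omega_X^{[m]}(\cdots)$ canonical and checking they are compatible under tensor product and reflexive hull, so that "trivial character on $H^0(X,mL)$ for all $m$" genuinely implies "trivial character on $H^0(X,\omega_X^{[N]}(N(\Delta+D)))$". Once the linearizations are set up canonically — which they are, since every sheaf in sight is functorially built from $(X,\Delta+D)$ and the $\bG_m$-action fixes $\Delta$ and $D$ — the conclusion is immediate, so I would cite \cite{FG14brep} (or \cite{Oda13b}) for the clean statement and include the test-configuration argument above only as a remark.
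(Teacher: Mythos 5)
Your main argument contains a genuine error. For a product test configuration $(\cX,\Delta_{\cX}+\cD)=(X,\Delta+D)\times\bA^1$ induced by a \emph{nontrivial} one-parameter subgroup $\lambda$, the special fiber $\cX_0$ is indeed reduced and isomorphic to $X$, so $b_{\cX_0}=1$; but $r(\ord_{\cX_0})$ is computed through the \emph{twisted} trivialization $\cX\vert_{\bA^1\setminus 0}\cong X\times(\bA^1\setminus 0)$, and it equals the weight valuation of $\lambda$, which is a nontrivial divisorial valuation whenever $\lambda$ acts nontrivially on $X$. The induced filtration $\cF^\bullet$ is then the (shifted) weight filtration of the $\lambda$-action on $H^0(X,mL)$, not the trivial filtration. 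Your conclusion that "$\lambda$ acts trivially on $H^0(X,mL)$ for all $m$" is false: take $X=\bP^1$, $\Delta=0$, $D=\{0\}+\{\infty\}$ with $\bG_m$ acting by scaling; the action on $H^0(\bP^1,\cO(2m))$ is visibly nontrivial. Indeed, the paper's own Proposition on regularity $0$ pairs uses precisely the fact that nonproportional one-parameter subgroups yield \emph{distinct nontrivial} lc places $v_\rho$ of $(X,\Delta+D)$ to bound the torus rank, which is incompatible with your claim that $v_{\cX_0}$ is always trivial for products. Relatedly, the assertion that "$N(K_X+\Delta+D)\sim 0$ is an honest linear equivalence of linearized sheaves, forcing the character to vanish" is circular: the sheaf $\omega_X^{[N]}(N(\Delta+D))$ is canonically linearized and abstractly trivial, but the trivialization need not be equivariant, and the obstruction is exactly the character in question. (For the non-lc pair $(\bP^1,2\{0\})$ one still has $K+D\sim 0$, yet $\bG_m$ acts on $H^0(\omega(2\{0\}))=\bk\cdot dz/z^2$ with weight $-1$; the slc hypothesis is essential and cannot be absorbed into "bookkeeping of linearizations.")

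Your fallback — citing \cite{FG14brep} — is in fact the paper's actual proof: the finiteness of B-representations implies the image of $\Aut^0(X,\Delta+D)\to\GL(H^0(X,\omega_X^{[N]}(N(\Delta+D))))$ is finite, hence trivial by connectedness. But this is the substantive input, not a clean restatement of an argument you have already given, and one additional step is needed that you omit: \cite[Thm.~3.15]{FG14brep} applies to lc (in particular normal) pairs, so one must first reduce to the normal case via the $\Aut^0$-equivariant isomorphism $H^0(X,\omega_X^{[N]}(N(\Delta+D)))\cong H^0(\oX_i,\omega_{\oX_i}^{[N]}(N(\oG_i+\oDe_i+\oD_i)))$ obtained by pulling back pluri-log-canonical forms to a component of the normalization. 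With that reduction and the citation in place the proof is complete; without it, neither branch of your proposal closes the argument.
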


\begin{proof}
The action of $\Aut^0(X,\Delta+D)$ on $(X,\Delta+D)$ induces an action on  $(\overline{X}, \overline{G}+\overline{\Delta} +\overline{D}):=\sqcup_{i=1}^r 
(\overline{X}_i,\overline{G}_i+ \overline{\Delta}_i +\overline{D}_i)$ and there is an  $\Aut^0(X,\Delta+D)$ equivariant isomorphism 
\[
H^0 (X, \omega_{X}^{[N]}(N(\Delta+D)))  \cong H^0 (\overline{X}_i, \omega_{\overline{X}_i}^{[N]}(N( \overline{G}_i + \overline{\Delta}_i +\overline{D}_i))) 
\]
induced by pulling back forms. Hence it suffices to prove the result in the case when $X$ is normal.
If $(X,\Delta+D)$ is normal, then the finiteness of B-representations \cite[Thm. 3.15]{FG14brep} implies that the image of the natural map
\[
\Aut^0 (X,\Delta+D)\to {\rm GL}  ( H^0(X,\omega_{X}^{[N]}(N(\Delta+D)) ))
\] 
is finite. 
Since $\Aut^0 (X,\Delta+D)$ is connected, the image is trivial.
\end{proof}

Let $\sM\subset \cM(\chi, N,{\bf r})$ be a locally closed substack
and $k$ be a positive multiple of $N$.
Since any family $f: (X,\Delta + D) \to S$  in $\cM(\chi, N,{\bf r})$ has index dividing $k$,
Proposition \ref{p:Hodgebasics} implies the collection of line bundles $\lambda_{\Hodge,f,k}$ induces  a line bundle on $\cM$
and we denote it by $\la_{\Hodge,k}$. Note that $\la_{\Hodge,k}^{\otimes l} = \la_{\Hodge,kl}$ by \ref{p:Hodgebasics}.1.

\begin{prop}\label{p:Hodgedescends}
Let $\cM\subset  \cM(\chi, N,{\bf r})$ be a finite type locally closed substack that admits a good moduli space $\sM\to M$.
If $k$ is a sufficiently divisible positive integer, then $\la_{\Hodge,k}$ descends to a unique line bundle $L_{\Hodge,k}$ on $M$.
\end{prop}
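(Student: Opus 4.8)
The natural tool is Kempf's descent lemma (also called the Kempf--Drinfeld--Simpson descent criterion): a line bundle $\lambda$ on a finite type algebraic stack $\cM$ with affine diagonal that admits a good moduli space $\phi:\cM\to M$ descends to $M$ if and only if for every geometric point $x$ of $\cM$ with closed orbit, the stabilizer group $\Aut(x)$ acts trivially on the fiber $\lambda\otimes\kappa(x)$. This is precisely the form of the statement in \cite[Thm. 10.3]{Alp13} (or the version for good moduli spaces in \cite[Thm. 4.16]{Alp13} combined with the reductivity of stabilizers). So the first step is to recall this criterion and set up the reduction: by Proposition \ref{p:Alpergmprops}.2, the closed points of $\cM$ correspond to S-equivalence classes, and a geometric point $x = [(X,\Delta+D)]$ has closed orbit in $\cM$ exactly when $(X,\Delta+D)$ is polystable, in which case $\Aut(x) = \Aut(X,\Delta+D)$.

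\textbf{Key steps.} First I would observe that by Theorem \ref{t:AutTorus}, the identity component $\Aut^0(X,\Delta+D)$ is an algebraic torus, and the component group $\Aut(X,\Delta+D)/\Aut^0(X,\Delta+D)$ is finite. The fiber of $\lambda_{\Hodge,k}$ at $x$ is canonically $H^0(X,\omega_X^{[k]}(k(\Delta+D)))$ by Proposition \ref{p:Hodgebasics}.1, which is one-dimensional (since $k$ is a multiple of the index $N$, so $\omega_X^{[k]}(k(\Delta+D))\cong\cO_X$ and $h^0 = 1$). Second, Proposition \ref{p:actstrivHodge} tells us that $\Aut^0(X,\Delta+D)$ acts trivially on this one-dimensional space. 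Third, the remaining issue is the finite component group: the induced character $\Aut(x)\to\bG_m$ on the one-dimensional fiber is trivial on $\Aut^0(x)$, hence factors through the finite group $G := \Aut(x)/\Aut^0(x)$, giving a character $\chi_x: G\to\bG_m$ whose image lies in the group $\mu_{|G|}$ of $|G|$-th roots of unity. This character need not be trivial for a single $k$, but it is a torsion character of bounded order as $x$ ranges over the (finitely many up to deformation, by finite type) closed points. Fourth, I would invoke that $\cM$ is of finite type with affine (hence quasi-compact) diagonal, so there is a uniform bound $e$ on the order of all these component groups $G$ (e.g. the inertia stack $I_\cM\to\cM$ is of finite type, and the degrees of its components over the closed-orbit locus are bounded); replacing $k$ by a multiple divisible by $e!$ — equivalently passing from $\lambda_{\Hodge,k}$ to $\lambda_{\Hodge,k}^{\otimes e!} = \lambda_{\Hodge,ke!}$ — makes every $\chi_x^{e!}$ trivial. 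Then Kempf's descent criterion applies to $\lambda_{\Hodge,ke!}$, and we rename $k$. Uniqueness of the descended bundle $L_{\Hodge,k}$ follows since $\phi^*$ is fully faithful on the category of line bundles (as $\phi_*\cO_\cM = \cO_M$ by the good moduli space property, Proposition \ref{p:Alpergmprops}, and the projection formula).

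\textbf{Main obstacle.} The genuinely nontrivial point is controlling the finite component groups of the automorphism groups of polystable pairs in a uniform way, so that a single divisibility condition on $k$ kills all the obstructing characters simultaneously. For an individual pair this is automatic, but one needs the bound to be uniform over the whole finite-type stack; this is where the affine diagonal and finite-type hypotheses on $\cM$ are used — I would argue via the inertia stack $I_\cM$ being of finite type and its relevant components having bounded degree over $\cM$, which gives the uniform bound $e$ on $|\Aut(x)/\Aut^0(x)|$. Everything else — triviality of the $\Aut^0$-action, one-dimensionality of the fiber, the identification of the fiber with the Hodge cohomology group — is handed to us by the results already established, namely Propositions \ref{p:actstrivHodge}, \ref{p:Hodgebasics}, and \ref{p:anticanonicalsheaf} together with Theorem \ref{t:AutTorus}.
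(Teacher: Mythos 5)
Your proposal is correct and follows essentially the same route as the paper: the paper also applies Kempf's descent criterion \cite[Thm. 10.3]{Alp13} at closed points, uses Proposition \ref{p:actstrivHodge} to kill the action of the identity component, and then invokes the finite-type and affine-diagonal hypotheses to bound the orders of the finite component groups uniformly so that a sufficiently divisible $k$ trivializes all the residual characters (citing \cite[Thm. 5.2]{LWX18} and \cite[Lem. 10.2]{CP21} for that last step, which you spell out via the inertia stack). Your write-up is just a more detailed version of the paper's argument.
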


\begin{proof}
By \cite[Thm. 10.3]{Alp13}, $\lambda_{\Hodge,k}$  descends uniquely to a line bundle on $M$ if and only if, for each closed point $z \in \cM(\bk)$, the stabilizer of $z$ acts trivially on $\lambda_{\Hodge,k} \vert_{z}$. 
The latter is equivalent to the condition  that $\Aut(X,\Delta+D)$ acts trivially on $H^0(X, \omega_{X}^{[k]}(k(\Delta+D)) )$
 for each  each closed point $[(X,\Delta+D)]$ in $\cM(\bk)$.
Using  Proposition \ref{p:actstrivHodge} and the fact that $\cM$ is finite type with affine diagonal, the previous condition holds for $k\gg0$; see 
\cite[Thm. 5.2]{LWX18} or \cite[Lem. 10.2]{CP21} for a similar argument.
\end{proof}

\begin{defn}
We define the \emph{Hodge $\bQ$-line bundle} on various moduli spaces as follows. 
\begin{enumerate}
	\item[(1)] In the setup of Proposition \ref{p:Hodgedescends}, we define $L_{\rm Hodge}:= L_{\rm Hodge,k}^{1/k}$. 
 Using the uniqueness statement of the proposition and that $\la_{\Hodge,kl}= \la_{\Hodge,k}^{\otimes l}$ for a positive integer $l$, we see $L_{\Hodge,kl} = L_{\Hodge,k}^{\otimes l}$.
Thus $L_{\Hodge}$ is independent of the choice of $k$ when viewed as a $\bQ$-line bundle.

\item[(2)] In the case when $\cM:=\cP_{d,m}^{\CY}$ and $M:= P_{d,m}^{\CY}$, we define the Hodge $\bQ$-line bundle $L_{\rm Hodge}$ as in (1).
Using the commutative diagram \eqref{e:Pdmdiag} and the uniquenes part of Proposition \ref{p:Hodgedescends}, it follows that the Hodge $\bQ$-line bundle pulls back to the Hodge $\bQ$-line bundle via the map $P_{d,m}^{\CY}\to P_{d,m+1}^{\CY}$.

\item[(3)]
Let $P_d^{\CY}$ be the asymptotically good moduli space in Proposition \ref{p:existsagm}.
By construction,  the natural map 
$
P_d^{\CY}\to P_{d,m}^{\CY}
$
is a seminormalization for $m\gg0$. 
We define the Hodge $\bQ$-line bundle on $P_d^{\CY}$ as the pullback of the Hodge $\bQ$-line bundle on $P_{d,m}^{\CY}$ when $m\gg0$. 
This $\bQ$-line bundle is independent of the choice of $m\gg0$ by (2).
\end{enumerate} 
\end{defn}

\subsection{Existence of covers by schemes}

We now  prove a stack theoretic result on the existence of generically finite covers of certain stacks by schemes. 
The result will be used in the proof of the ampleness of the Hodge line bundle on $P_{d,m}^{\CY}$ in order to cook up a generically finite cover of a closed subspace $Z\subset P_{d,m}^{\CY}$ such that the cover supports a universal family of boundary polarized CY pairs.

\begin{thm}\label{p:finitecoverbyscheme}
Suppose that $\cX$ is a finite type algebraic stack with a separated good moduli space $\pi : \mathcal{X} \to X$ and let $Z \subseteq X$ be an irreducible closed subspace. Suppose that the connected component of the identity of the automorphism group $\Aut^0_{\cX}(x) \subset \Aut_{\cX}(x)$ is an algebraic torus for each $x \in \mathcal{X}$. Then there exists a commutative diagram
\[ 
\begin{tikzcd}
S  \arrow[d] \arrow[r] &  \mathcal{X} \arrow[d]\\
Z \arrow[r,hook]  & X
\end{tikzcd}
\] 
where $S$ is a smooth quasi-projective variety and $S\to Z$ is proper, surjective, and generically finite. Moreover, there exists a dense open subset $U \subset S$ such that closed points of $U$ map to closed points of $\cX$.
\end{thm}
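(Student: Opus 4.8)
The strategy is to reduce to the case of a quotient stack via the \'etale slice theorem of Alper--Hall--Rydh, then to exploit the torus structure of the stabilizers to trim down to a well-behaved sublocus. First I would replace $X$ by $Z$ and $\cX$ by the preimage $\cX_Z := \cX\times_X Z$, which still has a separated good moduli space by Proposition \ref{p:saturated}-type arguments (good moduli spaces are preserved by base change along $Z \hookrightarrow X$). Then I would choose a closed point $z\in Z$ whose fibre in $\cX_Z$ contains a closed point $x$ with stabilizer $G := \Aut_{\cX}(x)$, which by hypothesis has identity component an algebraic torus $T$. The \'etale slice theorem (Alper--Hall--Rydh, or \cite[Thm. 4.12]{AHLH18} in the form cited in the excerpt) provides an affine scheme $W$ with a $G$-action, a point $w\in W$ fixed by $G$, and an \'etale morphism $[W/G] \to \cX_Z$ sending $w\mapsto x$ and inducing an \'etale morphism $W/\!\!/G \to Z$ on good moduli spaces, with $W/\!\!/G \to Z$ hitting $z$.

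The heart of the argument is to produce the scheme $S$ from the local model $[W/G]$. Since $\cX_Z$ is finite type, I would cover $Z$ by finitely many such \'etale charts; it suffices to handle one chart and then take a disjoint union and pass to a suitable component, because properness and surjectivity onto $Z$ can be arranged by a further finite cover (using that $Z$ is irreducible, generic flatness, and that an \'etale cover of a dense open can be completed to a proper generically finite cover after normalizing in a function field extension). Working with a single chart $[W/G] \to \cX_Z$: the variety $W/\!\!/G$ maps \'etale to $Z$, so it is generically finite over $Z$, but $W \to W/\!\!/G$ need not be generically finite. To fix this I would use the torus structure: $G$ is an extension of a finite group $F$ by the torus $T$, and a generic $G$-orbit in $W$ has dimension equal to $\dim T$ minus the dimension of the generic stabilizer. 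The key point is that after shrinking $W$ to a dense $G$-invariant open, the generic stabilizer is \emph{constant} (a subtorus $T_0 \subset T$ times a finite group); I would then quotient $W$ by $T_0$ first — $W/T_0$ is still a $G/T_0$-variety with good quotient, and now $G/T_0$ acts with generically finite stabilizers — and then slice by a generic $(G/T_0)$-invariant subvariety of complementary dimension transverse to the orbits. Concretely, taking $W' = W/T_0$ and a generic linear section $V \subset W'$ of dimension $\dim(W'/\!\!/(G/T_0))$, the map $V \to W'/\!\!/(G/T_0) = W/\!\!/G$ is generically finite and surjective (this uses Kleiman-type transversality and that $W'$, being affine with a reductive group action admitting a good quotient, has orbits meeting a generic section appropriately), while $V \to [W/G] \to \cX_Z$ is the desired map to the stack. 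After resolving singularities of $V$ we may take it smooth and quasi-projective.

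For the last sentence, I would argue as follows: the composite $S \to \cX_Z$ factors (on a dense open) through the locus of closed points. A point $s\in S$ maps to a closed point of $\cX$ if and only if the corresponding orbit is closed, equivalently the map to the good moduli space $X$ is "stabilizer-minimal" at that point. Since $S \to Z$ is generically finite and $Z$ lies in $X$, the generic point of $S$ maps to the generic point of $Z$, which (being in the image of the good moduli space map) is the image of a closed point; by openness of the locus in $\cX_Z$ where a point specializes to nothing within its fibre (equivalently, the locus of closed points is constructible, and contains the generic fibre structure over a dense open of $Z$ by generic flatness and the fact that good moduli space morphisms are bijective on closed points over a dense open where stabilizers are generically constant), the preimage $U \subset S$ of this open locus is dense and open, and its closed points map to closed points of $\cX$. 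The main obstacle is the middle step: controlling the generic stabilizer and producing the transverse section so that $V \to W/\!\!/G$ is genuinely generically finite — this is where the torus hypothesis is essential and where one must be careful that quotienting by the generic stabilizer subtorus $T_0$ interacts correctly with the good quotient and with the finite part $F$ of $G$. I expect this to require either a direct argument using the structure theory of torus actions on affine varieties (Luna-type stratification, or the fact that $W/\!\!/T_0 \to W/\!\!/G$ has generically finite fibres) or an appeal to \cite[Thm. 6.6]{LMB00} applied after replacing $\cX$ by the rigidification $\cX \!\!\fatslash\!\! T_0$, which is Deligne--Mumford over a dense open.
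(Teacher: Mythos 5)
Your local analysis via the \'etale slice theorem is reasonable as far as it goes (and the observation that the generic stabilizer subtorus $T_0$ acts trivially, so that $[W/G]\to[W/(G/T_0)]$ is a $T_0$-gerbe over a generically DM stack, is essentially the right structural picture), but the proposal has a genuine gap at the globalization step. The theorem demands a morphism $S\to\cX$ defined on \emph{all} of $S$, with $S\to Z$ \emph{proper} and surjective. Your construction produces maps to $\cX$ only over \'etale charts, i.e.\ over dense opens of $Z$; you then propose to "complete" to a proper generically finite cover $\tilde Z\to Z$ by normalizing in a function field extension. But the resulting map $\tilde Z\dashrightarrow\cX$ is only a rational map to an Artin stack, and such maps do not extend across the complement of a dense open: extending amounts to extending a $G$-torsor together with an equivariant map to $W$, and the obstruction is precisely a gerbe/Brauer class. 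Taking the closure of the graph does not help since $\cX$ is not proper, and resolving indeterminacy of maps to Artin stacks is not available. Your closing suggestion to rigidify by $T_0$ and apply \cite[Thm.~6.6]{LMB00} does not close the gap either: that theorem requires a Deligne--Mumford stack, whereas $\cX\thickslash T_0$ is only DM over a dense open (the stabilizers jump on closed subloci). A secondary, related issue is that surjectivity of your generic linear section $V\to W\sslash G$ is not automatic (it can miss closed orbits over special points of the quotient), which again feeds into the properness/surjectivity requirement.

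For comparison, the paper's proof is organized precisely to avoid this extension problem: after reducing to the case where the stable locus is dense, it invokes the Edidin--Rydh saturated-blowup theorem to produce a \emph{global} modification $\cY\to\cX$ (a morphism of stacks, defined everywhere) such that $\cY\to Y$ factors as a gerbe over a Deligne--Mumford stack followed by a coarse space; it then applies \cite[Thm.~6.6]{LMB00} (twice, after rigidifying by the connected inertia) to get proper surjective scheme covers, and finally splits the residual torus gerbe globally by identifying its class with elements of $H^2_{\et}(-,\bG_m^r)=\mathrm{Br}(-)^{\oplus r}$ and killing them with an Azumaya-algebra cover. Each step yields a morphism to the stack defined on the entire (proper, surjective) cover, which is exactly what the slice-by-slice approach cannot deliver. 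If you want to salvage your route, you would have to confront the same Brauer-theoretic obstruction that the paper's Proposition on torus gerbes handles.
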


The proof relies on  \cite{ER21} to reduce to the case when $\cX$ is a gerbe over a Deligne--Mumford stack and then rigidification to further reduce to when $\cX$  is a Deligne--Mumford stack.

\begin{proof} Since the property of being a good moduli space is preserved under base change, it suffices to prove the statement when $Z = X$. Moreover, by \cite[Thm. 4.16.viii]{Alp13}, we may assume $\cX$ and $X$ are reduced. We use throughout that for any closed substack $\cZ \subset \cX$, $\pi(\cZ) \subset X$ is closed and $\cZ \to \pi(\cZ)$ is a good moduli space. Let $\cX^{\rm ps}$ be the locus of closed points in $\cX$. 

First we claim that $\cX^{\rm ps}$ is the set of closed points of a constructible subset, which we also denote by $\cX^{\rm ps}$. Indeed by the Luna \'etale slice theorem \cite[Thm. 4.12]{luna_slice} and condition $\mathrm{I}$ in the proof of \cite[Thm. 5.3]{luna_slice}, for any $x \in \cX^{\rm ps}$, there exists an affine scheme $\operatorname{Spec} A$ with an $\Aut_{\cX}(x)$ action, an $\Aut_{\cX}(x)$ fixed closed point $w \in \operatorname{Spec}A$, and a Cartesian diagram of pointed stacks
$$
\xymatrix{(\cW,w) \ar[d]_{\pi'} \ar[r]^f & (\cX, x) \ar[d]^\pi \\ (W, \pi'(w)) \ar[r] & (X, \pi(x))}
$$
where $\cW = [\operatorname{Spec} A/\Aut_{\cX}(x)]$, $\pi'$ is a good moduli space map, the horizontal maps are \'etale, and for every closed point $w' \in \cW$, $f$ is stabilizer preserving at $w'$, and $f(w')$ is closed in $\cX$. Then $f^{-1}(\cX^{\rm ps}) = \cW^{\rm ps}$. Since the property of being constructible is \'etale local, it suffices to show that $\cW^{\rm ps}$ is constructible in $\cW$. As $\Aut_{\cX}(x)$ is reductive and $\cW^{\rm ps}$ is the polystable locus of an affine GIT quotient, the claim now follows as in the proof of \cite[Thm. 4.5]{ABHLX20}. 

For each closed point $y \in X$, there exists a unique closed point $x \in \cX^{\rm ps}$ lying over it so $\pi$ induces a bijection between $\cX^{\rm ps}$ and closed points of $X$. Let $\cZ \subset \cX$ be the closure of $\cX^{\rm ps}$. Then $\pi(\cZ) = X$, $\pi|_{\cZ} : \cZ \to X$ is a good moduli space, and $\cX^{\rm ps} = \cZ^{\rm ps}$. Thus it suffices to replace $\cX$ with $\cZ$ and prove the statement when $\cX^{\rm ps}$ is dense in $\cX$. Moreover, $\cX$ has finitely many irreducible components $\cX_1, \ldots, \cX_n$ so $\pi(\cX_i) = X$ for some $i$. Then $\cX_i \to X$ is a good moduli space and up to replacing $\cX$ with $\cX_i$ we may assume that $\cX$ is irreducible. 

Following \cite[Def. 2.5]{ER21}, we let $\cX^{\rm s}$ denote the set of $x \in \cX$ such that $\pi^{-1}(\pi(x)) = \{x\}$. By \cite[Prop. 2.6 \& 2.7]{ER21}, $\cX^{\rm s}$ is the largest saturated open substack of $\cX$ such that $\pi$ induces a homeomorphism $|\cX^{\rm s}| \to \pi(\cX^{\rm s})$. Now $\cX^{\rm ps}$ is constructible and dense so there exists an open $\cU \subset \cX$ contained in $\cX^{\rm ps}$. The restriction of $\pi$ to $\cX^{\rm ps}$ is a bijection onto $X$ so the restriction to $\cU$ is a bijection onto its image. Thus up to shrinking $\cU$, $\pi|_{\cU}$ is a homeomorphism onto its image.
In particular, $\cU \subset \cX^{\rm s}$ so $\cX^{\rm s} \neq \emptyset$. Since $\cX$ is irreducible, it follows that $\cX^{\rm s} \subset \cX$ is dense. 

Now we may apply \cite[Thm. 2.11]{ER21} to obtain a stack $\cY$ and a morphism $f : \cY \to \cX$ satisfying the following properties: 
\begin{enumerate}
    \item $\cY$ admits a good moduli space $Y$, 
    \item $f : \cY \to \cX$ factors as a sequence of saturated blowups, 
    \item the induced map $g: Y \to X$ factors as a sequence of blowups, 
    \item $f$ is an isomorphism over $\cX^{\rm s}$, and
    \item $\cY \to Y$ factors as a gerbe over a Deligne--Mumford stack $\cY \to \cY_{DM}$ and a coarse moduli space $\cY_{DM} \to Y$. 
\end{enumerate}

Since $\cX$ is reduced and irreducible, so are $\cY$ and $\cY_{DM}$. By \cite[Thm. 6.6]{LMB00}, there exists a quasi-projective scheme $Y'$ and a proper, surjective and generically \'etale morphism $Y' \to \cY_{DM}$. Up to resolving singularities, $Y'$ is smooth and irreducible.

Consider the pullback
$$
\xymatrix{\cY' \ar[r] \ar[d] & \cY \ar[d] \\ Y' \ar[r] & \cY_{DM}}.
$$
Then $\cY' \to Y'$ is a gerbe by \cite[\href{https://stacks.math.columbia.edu/tag/06QE}{Tag 06QE}]{stacks-project}. Moreover, $\cY' \to Y'$ is a good moduli space since it is a finite type gerbe with affine diagonal and reductive stabilizer groups. 

Consider the inertia stack $I\cY' \to \cY'$ which is flat by \cite[\href{https://stacks.math.columbia.edu/tag/06QJ}{Tag 06QJ}]{stacks-project} and affine. In particular, the connected component of the identity $A^0 \subset \cY'$ is an open and closed subgroup flat over $\cY'$. Thus we can take the rigidification $\rho : \cY' \to \cY' \thickslash A^0$. By \cite[Thm. A.1]{AOV}. $\rho$ is a gerbe and for all $x \in \cY'$, $\Aut_{\cY'\thickslash A^0}(\rho(x)) = \Aut_{\cY'}(x)/\Aut^0_{\cY'}(x)$. In particular, $\cY' \thickslash A^0$ is Deligne--Mumford so we can apply \cite[Thm. 6.6]{LMB00} again to find a smooth $Y''$ with a proper, surjective, generically \'etale morphism $Y'' \to \cY \thickslash A^0$ and consider the pullback 
$$
\xymatrix{\cY'' \ar[r] \ar[d] & \cY' \ar[d] \\ Y'' \ar[r] & \cY' \thickslash A^0}.
$$

Then $\cY'' \to Y''$ is a gerbe such that for all $x \in \cY''$, $\Aut_{\cY''}(x) = \Aut_{\cY'}^0(x)$ is connected. Moreover, the composition $\tau : \cY'' \to \cX$ is representable as saturated blowups and pullbacks of representable morphisms are representable. Thus $\Aut_{\cY''}(x) \subset \Aut_{\cX}^0(\tau(x))$ is a torus for each $x \in \cY''$. By Proposition \ref{p:torusgerbe} below, there exists a proper, surjective and generically finite map $S \to Y''$ with $S$ smooth and quasi-projective which trivializes the gerbe, that is, which admits a section $s : S \to \cY''$. The composition $S \to X$ is then proper, surjective and generically finite and admits a map to $\cX$ via the composition $\tau \circ s$. 
\end{proof}

\begin{prop}\label{p:torusgerbe}
    Let $\cX \to X$ be an fppf gerbe with affine diagonal over an algebraic space. Suppose that $X$ is separated and finite type and that $\Aut_{\cX}(x)$ is an algebraic torus for all $x \in \cX$. Then there exists a proper, surjective, and generically finite morphism $S \to X$ with $S$ smooth and quasi-projective and a section $S \to \cX$. 
\end{prop}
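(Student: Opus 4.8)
The strategy is to reduce the structure of the gerbe $\cX \to X$ to a Brauer-type class and then kill it by a generically finite cover. First I would observe that since $\Aut_{\cX}(x)$ is a torus for every point and the gerbe has affine diagonal over the finite type base $X$, the band of the gerbe is locally (in the fppf or \'etale topology) a torus; after passing to a suitable finite \'etale cover $X_1 \to X$ we may assume the band is a \emph{split} torus $\bG_m^r$, since the automorphism group scheme of $\bG_m^r$ is an extension of a finite group ($\GL_r(\bZ)$) by the trivial group, and such a finite cover is itself proper, surjective, generically finite, and can be resolved to be smooth. So the problem reduces to trivializing a gerbe banded by $\bG_m^r$.

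A gerbe banded by $\bG_m^r$ decomposes (via the decomposition $\bG_m^r = \prod \bG_m$ and the additivity of gerbe classes under products of bands) into a product of $r$ gerbes each banded by $\bG_m$, and these are classified by $H^2_{\et}(X, \bG_m)$. Thus it suffices to treat the case of a single $\bG_m$-gerbe with class $\alpha \in H^2_{\et}(X, \bG_m)$. Because $X$ is separated and of finite type over a field, $X$ has the resolution property after passing to a quasi-projective scheme dominating it; more directly, by Gabber's theorem (or by the classical theory of Brauer groups for quasi-projective varieties, once we replace $X$ by a quasi-projective scheme mapping to it properly, surjectively, and generically finitely via a Chow-lemma type argument followed by resolution of singularities), the cohomological Brauer group agrees with the Azumaya Brauer group on the part of $\alpha$ that is torsion, and a $\bG_m$-gerbe class is always torsion over a scheme. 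Concretely, $\alpha$ comes from a Brauer class represented by an Azumaya algebra $A$ of rank $n^2$; then the associated Severi--Brauer scheme (or more simply, a projective bundle twisted form) $Y \to X$ has the property that $Y$ is a scheme and the pullback of $\alpha$ to $Y$ vanishes, hence the pullback gerbe $\cX \times_X Y \to Y$ is trivial and admits a section $Y \to \cX$. However $Y \to X$ is smooth but not generically finite, so instead I would take a generic linear section of $Y$ over $X$ (a complete intersection of relative hyperplanes), or pass to a flag of the Azumaya algebra, to cut down to something generically finite over $X$: the splitting locus of $A$ is governed by the reduced norm, and a general such section gives a proper surjective generically finite $S' \to X$ over which $\alpha$ dies. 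Finally resolve $S'$ to obtain a smooth quasi-projective $S$.

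The main obstacle I expect is the bookkeeping at the non-split and non-reduced locus: ensuring that after reducing to a split band the gerbe class genuinely lies in the image of the Azumaya Brauer group (this needs $X$ quasi-projective, or at least the existence of an ample family of line bundles, which is why one must first replace $X$ by a quasi-projective scheme dominating it — and this replacement must itself be proper, surjective and generically finite, obtained from Chow's lemma applied to the finite type separated $X$ followed by resolution of singularities in characteristic zero), and checking that the section of the pulled-back gerbe extends over all of $S$ rather than just a dense open, which follows since a trivial gerbe over a scheme has a global section by definition. A cleaner alternative to the explicit Azumaya-algebra route, which I would fall back on, is: the gerbe $\cX$ with affine diagonal and torus stabilizers is a quotient stack locally, so by \cite{luna_slice} it admits an fppf cover by a scheme; then one applies generic flatness and Chow's lemma to this cover to extract the desired $S \to X$, noting that the section $S \to \cX$ exists because $S$ maps to a scheme-theoretic atlas of $\cX$. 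Either way, one finally replaces $S$ by a resolution of singularities to make it smooth and quasi-projective, completing the proof; the properness and generic finiteness over $X$ are preserved under resolution and under the preliminary finite and Chow-lemma steps.
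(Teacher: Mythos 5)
Your main route is essentially the paper's proof: reduce to a quasi-projective (and, after resolution, smooth) base via Chow's lemma, split the band of the gerbe after a finite cover, decompose the resulting $\bG_m^r$-gerbe into $r$ Brauer classes, represent each by an Azumaya algebra, kill it on the associated projective bundle (the paper cites \cite[Thm. 3.6]{EHKV} for a flat projective surjection trivializing the class), slice by relatively ample divisors to get something generically finite, and resolve. So the architecture is right, but three points need attention.

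First, $\GL_r(\bZ)$ is not finite for $r\geq 2$, so your stated reason for splitting the band by a finite \'etale cover is wrong as written; what saves the step is that a torus over a normal connected base is classified by a continuous representation of $\pi_1^{\et}$ on the cocharacter lattice, and a continuous homomorphism from a profinite group to the discrete group $\GL_r(\bZ)$ has finite image. (The paper instead normalizes, passes to the normalization of $X$ in a finite extension of $k(X)$ splitting the generic fiber, and uses surjectivity of $\pi_1^{\et}(U)\to\pi_1^{\et}(X)$ to propagate the splitting; either route works, but yours needs the corrected justification and needs $X$ normal and connected first.) Second, ``a $\bG_m$-gerbe class is always torsion over a scheme'' is false in general; $H^2_{\et}(X,\bG_m)$ can have non-torsion elements on singular $X$. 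You must resolve singularities \emph{before} invoking the identification of the cohomological and Azumaya Brauer groups, since torsion-ness of $H^2_{\et}(X,\bG_m)$ is a consequence of regularity (injection into $\mathrm{Br}(k(X))$). Your narrative has the resolution floating around; the order matters. Third, your ``cleaner alternative'' fallback does not work: a smooth atlas $U\to\cX$ gives a smooth surjection $U\to X$, but $U$ is not proper over $X$, so slicing it by divisors cannot produce a morphism to $X$ that is simultaneously proper, surjective, and generically finite --- the image of a slice need not be closed, let alone all of $X$. The properness of the Severi--Brauer (or \cite{EHKV}) cover is exactly what makes the slicing step legitimate, so you should discard the fallback and rely on the Azumaya route.
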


\begin{proof} By Chow's lemma for algebraic spaces \cite[\href{https://stacks.math.columbia.edu/tag/088U}{Tag 088U}]{stacks-project}, we may assume that $X$ is quasi-projective. Moreover, we may normalize and work one connected component at time and assume $X$ is normal and irreducible. Since $\pi: \cX \to X$ is an abelian gerbe, by \cite[\href{https://stacks.math.columbia.edu/tag/0CJY}{Tag 0CJY}]{stacks-project} there exists a band, that is, an fppf abelian group scheme $G \to X$, such that for all points $x \in \cX$, we have $G_{\pi(x)} \cong \Aut_\cX(x)$.

By assumption, $G_{x}$ is a torus for all $x \to \cX$ so  $G \to X$ is a torus by \cite[Cor. 3.1.8]{Conrad}. There exists some finite extension $F/k(X)$ such that $G_F \cong \mathbb{G}_{m,F}^r$. Up to replacing $X$ with the normalization $X' \to X$ of $X$ in $F$, we may assume that $G_{k(X)} \cong \mathbb{G}_{m,k(X)}^r$. Since $\mathrm{Isom}_{X\text{-Grp}}(\mathbb{G}_{m,X}^r, G)$ is representable by a smooth and separated scheme over $X$, by \cite[XI 4.1]{SGA3}, the $k(X)$ point of $\mathrm{Isom}_{X\text{-Grp}}(\mathbb{G}_{m,X}^r, G)$ spreads out to a section over some dense open set $U \subset X$. By \cite[Cor. B.3.6]{Conrad}, algebraic tori over $X$ are classified by $\pi_1^{\et}(X, \overline{x})$ representations on a lattice. By \cite[\href{https://stacks.math.columbia.edu/tag/0BN6}{Tag 0BN6}]{stacks-project}, the map $\pi_1^{\et}(U,\overline{x}) \to \pi_1^{\et}(X,\overline{x})$ is surjective for any base point $\overline{x} \to U \subset X$. Thus, the triviality of $G|_U$ implies that $G \cong \mathbb{G}_{m,X}^r$ and we are reduced to trivializing the class of the gerbe $\cX \to X$ in 
$$
H^2_{\et}(X, \mathbb{G}_{m}^r) = \bigoplus_{i = 1}^r H^2_{\et}(X, \mathbb{G}_{m}) = \bigoplus_{i = 1}^r {\rm Br}(X). 
$$

Without loss of generality, we may assume that $r = 1$. Thus we need to show that a cohomological Brauer class $\alpha \in {\rm Br}(X)$ can be trivialized after a proper, surjective and generically finite morphism $S \to X$ with $S$ smooth. Up to resolving singularities and pulling back $\alpha$, we may assume $X$ is smooth. By \cite[Thm. 3.5.5 \& 4.2.1]{CTS}, $\alpha$ is the class of an Azumaya algebra so by \cite[Thm. 3.6]{EHKV}, there exists a flat projective surjection $\rho : Y \to X$ such that $\rho^*\alpha = 0$. Slicing by $\rho$-ample divisors, we can find $S_0 \subset Y \to X$ such that the composition is projective, surjective and generically finite. Let $S \to S_0$ be a resolution of singularities. Then the composition $S \to X$ trivializes $\alpha$, thus completing the proof.  \end{proof}

\subsection{Ampleness of the Hodge line bundle}

In this section, we prove the Hodge line bundle is ample on $P_{d,m}^{\CY}$ for $m\gg0$.  We will deduce the result from the following more general theorem 
and the description of the S-equivalence classes of the pairs at the boundary in Section \ref{s:sequivcurves}.

\begin{thm}\label{p:ftHodgeampleonsubspace}
Let $\cM\subset  \cM(\chi, N,{\bf r})$ be a finite type locally closed substack that admits a good moduli space $\sM\to M$.	If $Z\subset M$ is an irreducible closed subspace which is proper, then $(L_{\Hodge}\vert_Z)^{\dim Z}\geq 0$. Furthermore, the inequality is strict if one of the following holds:
    \begin{enumerate}
		\item   The  fiber of $\cM(\bk)\to M(\bk)$ over a general point of $Z$ contains only  klt pairs.

		\item  There exists an open set $U\subset Z$ such that for every $y \in U(\bk)$,
		\[
		\#\{z\in   U(\bk) \, \vert\, \Src(z) = \Src(y) \} < +\infty.
		\]
	\end{enumerate}
\end{thm}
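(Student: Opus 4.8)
The plan is to reduce the statement to the positivity of the moduli $b$-divisor via the canonical bundle formula, after passing to a suitable generically finite cover of $Z$ that supports a genuine family. First I would use Theorem~\ref{p:finitecoverbyscheme}: since $\cM$ has affine diagonal and, by Theorem~\ref{t:AutTorus}, the identity component of every stabilizer is a torus, there is a smooth quasi-projective variety $S$ with a proper surjective generically finite map $\rho\colon S\to Z$ and a compatible map $S\to \cM$, with a dense open $U_S\subset S$ whose closed points map to closed points of $\cM$. Pulling back the universal family along $S\to\cM$ gives a family of boundary polarized CY pairs $f\colon(X_S,\Delta_S+D_S)\to S$. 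By Proposition~\ref{p:Hodgebasics}.2 the Hodge line bundle satisfies $\lambda_{\Hodge,f,k}\cong (\text{composite }S\to\cM\to M)^*L_{\Hodge,k}$, so it is the pullback of $(\rho^* L_{\Hodge})^{\otimes k}$; consequently
\[
(L_{\Hodge}\vert_Z)^{\dim Z}=\frac{1}{\deg\rho}\,\bigl(\rho^*L_{\Hodge}\bigr)^{\dim S}=\frac{1}{k^{\dim S}\deg\rho}\,\bigl(c_1(\lambda_{\Hodge,f,k})\bigr)^{\dim S},
\]
and it suffices to prove the corresponding inequality for $\lambda_{\Hodge,f,k}$ on $S$.

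Next I would invoke Proposition~\ref{p:Hodgebasics}.3: after possibly shrinking and resolving (replacing $S$ by a smooth birational model, which changes neither the intersection number nor the hypotheses, using Remark~\ref{rmk:bandm} and that $\rho$ stays generically finite), the generic fiber of $X_S\to S$ is normal — indeed, a general pair in $\cP_d$-type families is $\bP^2$, or more generally for the applications the generic fiber will be normal — so $(X_S,\Delta_S+D_S)\to S$ is an lc-trivial fibration with $\cO_S(N{\bf M}_S)\cong\lambda_{\Hodge,f,N}$ and ${\bf M}=\overline{{\bf M}_S}$. By Theorem~\ref{t:positivitymoduli}(1) the moduli $b$-divisor ${\bf M}$ is $b$-nef and $b$-$\bQ$-Cartier, so there is a proper birational $S'\to S$ with ${\bf M}_{S'}$ nef; since $S'$ is proper (as $Z$ is), $({\bf M}_{S'})^{\dim S'}\geq 0$, which gives $(L_{\Hodge}\vert_Z)^{\dim Z}\geq 0$. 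For strict positivity it is enough to show ${\bf M}$ is $b$-big, for then ${\bf M}_{S'}$ is nef and big and $({\bf M}_{S'})^{\dim S'}>0$. Under hypothesis (1), after shrinking $U_S$ the generic fiber $(X_s,\Delta_s+D_s)$ is klt; the locus where two fibers over $U_S$ are isomorphic is constructible, and because closed points of $U_S$ map to closed points of $\cM$ and klt boundary polarized CY pairs are only S-equivalent to themselves (Proposition~\ref{p:sequivklt}), an isomorphism class corresponds to finitely many points of $M$, hence to finitely many points of the generically finite cover $U_S$; so Theorem~\ref{t:positivitymoduli}(2) applies and ${\bf M}$ is $b$-big. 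Under hypothesis (2), one argues similarly: the source is constant on crepant-birational equivalence classes, is constructible in families by Proposition~\ref{p:regularityusc}-type semicontinuity of $\reg$ together with boundedness, and the finiteness-of-fibers condition on $\Src$ over $U$ pulls back to $U_S$; one then replaces the role of ``isomorphism class of fiber'' by ``source of fiber'' and runs the variation-of-Hodge-structure argument of \cite{Amb05} on the source fibration — since the source is a klt CY pair of lower dimension, its moduli divisor is $b$-big when the sources vary maximally, and the canonical bundle formula is compatible with passing to the source (adjunction down to a minimal lc center on a dlt modification as in Section~\ref{ss:sourceregdef}).

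The main obstacle I expect is the last point: making precise that maximal variation of the \emph{source} forces $b$-bigness of ${\bf M}$, because ${\bf M}$ a priori only sees the full pair, not the source. The clean way is to note that $(X_s,\Delta_s+D_s)$ and its source are crepant-birational after a dlt modification, so the canonical bundle formula for the pair and for the source-fibration agree on moduli $b$-divisors up to the discriminant contributions; one then reduces ``$\Src$ has finite fibers over $U$'' to a genuine maximal-variation statement for a family of klt CY pairs (of dimension $\le\dim X-1$, in our applications a family of elliptic curves or of marked $\bP^1$'s) and applies Theorem~\ref{t:positivitymoduli}(2) to that family. Verifying that the discriminant $b$-divisor of the source-fibration is trivial, or at worst nonnegative and not eating up the bigness, and that $b$-bigness descends through the (finite) source map back to ${\bf M}$ on $S$, is the delicate bookkeeping; once it is in place, the intersection-number inequality follows as above.
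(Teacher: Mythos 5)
Your overall strategy is the paper's: pass to a generically finite scheme cover $S\to Z$ via Theorem~\ref{p:finitecoverbyscheme} and Theorem~\ref{t:AutTorus}, identify $\la_{\Hodge,f,k}$ with (a multiple of) the moduli divisor of an lc-trivial fibration over $S$, and invoke Theorem~\ref{t:positivitymoduli}. The klt case (hypothesis (1)) is handled correctly. But there are two genuine gaps in the non-klt case, and you have flagged the second one yourself without closing it.

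First, you assert that after shrinking and resolving, the generic fiber of $X_S\to S$ is normal, justified only by ``for the applications the generic fiber will be normal.'' The theorem is stated for an arbitrary locally closed $\cM\subset\cM(\chi,N,\mathbf{r})$ and an arbitrary irreducible closed $Z\subset M$, so $Z$ may lie entirely in the locus of non-normal (or normal but strictly lc) pairs. In that situation $(X_S,\Delta_S+D_S)\to S$ is not an lc-trivial fibration in the sense of Definition~\ref{def:lctrivial}, so even the baseline inequality $(L_{\Hodge}|_Z)^{\dim Z}\ge 0$ does not follow from Theorem~\ref{t:positivitymoduli}(1) applied to this family. The paper's proof never assumes normality of the generic fiber: when it is not klt, one takes a connected component of the normalization, a dlt modification $(Y,\Gamma)\to(\oX,\oG+\oDe+\oD)$, and a minimal lc center $T\subset Y$ dominating $S$, and works with the source fibration $(T,\Gamma_T:=\Diff^*_T(\Gamma))\to S$ throughout --- for nefness as well as for bigness.

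Second, the step you call ``delicate bookkeeping'' is precisely the heart of the argument, and your proposed route through it is off. The pair and its source are not crepant birational (their dimensions differ); the correct relation is adjunction to a minimal lc center, which gives
\[
K_{T/S}+\Gamma_T \sim (K_{Y/S}+\Gamma)|_T \sim \bigl(K_{X/S}+\Delta+D\bigr)|_T,
\]
hence $h^*\cO_S(k\mathbf{M}_S)\cong\omega_{T/S}^{[k]}(k\Gamma_T)\cong(\mu^*f^*\la_{\Hodge,f,k})|_T\cong h^*\la_{\Hodge,f,k}$, and therefore $\cO_S(k\mathbf{M}_S)\cong\la_{\Hodge,f,k}$ because $h:T\to S$ is proper surjective with connected fibers (this requires first replacing $S$ by a further generically finite cover so that the geometric generic fiber of $T\to S$ is connected --- a step you omit). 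There is no ``finite source map'' through which bigness must descend: the source fibration has base $S$ itself, and its discriminant b-divisor vanishes for the same reason as in Proposition~\ref{p:Hodgebasics}.3, because $(T,\Gamma_T)\to S$ is a family of slc pairs (Lemma~\ref{l:familyslcnormadj}), so no divisor on any birational model of $S$ contributes. With the identification $\cO_S(k\mathbf{M}_S)\cong\la_{\Hodge,f,k}$ in hand, nefness gives the general inequality and, under hypothesis (2), Theorem~\ref{t:positivitymoduli}(2) applied to the klt CY family $(T,\Gamma_T)\to S$ gives strict positivity. Without these verifications your argument does not establish the statement in the generality claimed.
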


In condition (2), $\Src(z)$ denotes the source of an element in the S-equivalence class of a pair in the preimage of $\cM(\bk) \to M(\bk)$ over $z$ 
and is independent of the choice of pair in the preimage by Lemma \ref{l:sequiv} and Proposition \ref{p:SrcSequiv}.

\begin{proof}
Let $k$ be a positive integer multiple of $N$ such that $\la_{\Hodge,k}$ on $\sM$ descends to a line bundle $L_{\Hodge,k}$ on $M$.
By Theorem \ref{p:finitecoverbyscheme} and Theorem \ref{t:AutTorus}, there is a diagram 
\[
\begin{tikzcd}
S\arrow[r] \arrow[d] & \sM \arrow[d] \\
Z \arrow[r,hook] & M
\end{tikzcd}
,\]
where $S$ is a smooth projective variety and $S\to Z$ is proper, surjective, and a generically finite morphism. 
Let $f:(X,\Delta+D)\to S$ be the family of boundary polarized CY pairs induced by the morphism $S\to \sM$.
Since
\[
(L_{\Hodge}\vert_Z)^{\dim Z}
=
k^{-\dim Z} (L_{\Hodge,k}\vert_Z)^{\dim Z}
=
(\deg(S/Z) k)^{-\dim Z}
({\la_{\Hodge,f,k}})^{\dim Z}
\]
it suffices to show that $\la_{\Hodge,f,k}$
is nef and, when (1) or (2) holds,  big.
		
To verify these statements, first assume the generic fiber of $(X,\Delta+D)\to S$ is klt. 
Fix a dense open set $U\subset S$ such that the fibers of $(X_U,\Delta_U+D_U)\to U$ are klt and  $U\to M$ has finite fibers.
By Proposition \ref{p:Hodgebasics}, $(X,\Delta+D)\to S$ is an lc-trivial fibration  $\cO_S(k{\bf M}_S) \cong \la_{\Hodge,f,k}$ and ${\bf M} = \overline{{\bf M}_S}$.
Thus Theorem \ref{t:positivitymoduli} implies
$\la_{\Hodge,f,k}$ is big and nef.
	
Now assume the generic fiber of $(X,\Delta+D)\to S$ is not klt. In a number of steps, we will proceed to construct a new family where a general fiber is replaced by its source.

First,  let $(\overline{X},\overline{G}+\overline{\Delta}+\overline{D})$ be a connected component of the normalization of $(X,\Delta+D)$
and $\overline{f}$ denote the composition $\overline{X}\overset{\pi}{\to} X\overset{f}{\to} S$.
Note that
\[
\overline{f}: (\overline{X},\overline{G}+\overline{\Delta}+\overline{D})\to S
\]
 is a family of possibly disconnected boundary polarized CY pairs by Lemma \ref{l:familyslcnormadj}.1 and adjunction.

Second, fix a dlt modification $(Y,\Gamma)\to (\overline{X},\overline{G}+\overline{\Delta}+\overline{D})$.
Let $g$ denote the composition $Y\to \overline{X}\to S$
and 
$\mu$ the composition $Y\to \overline{X} \to X$. We claim that 
$
g: (Y,\Gamma) \to S
$
is a projective family of slc pairs.
Indeed, since $(\overline{X},\overline{G}+\overline{\Delta}+\overline{D})\to S$ is a projective family of slc pairs, for each point $s\in S$ and regular system of parameters $x_1, \ldots, x_r \in \cO_{S,s}$, the pair $(\overline{X},\overline{G}+\overline{\Delta}+\overline{D}	+ \overline{f}^*\{x_1 \cdots x_r = 0\})$ is slc over a neighborhood of $s$. 
	Thus its crepant pullback $(Y, \Gamma+ g^* \{ x_1\cdots x_r=0\})$ is slc  over a neighborhood of $s$. 
	Therefore  the claim holds by Lemma \ref{l:slcadj}. 
 
Third, fix a minimal lc center $T\subset (Y,\Gamma)$, and write $h:T\to S$ for the induced morphism.
	Since $(Y,\Gamma)\to S$ is a family of slc pairs, $T$ must dominate the base $S$ as a consequence of Lemma \ref{l:slcadj}.

After replacing $S$ by a generically finite cover by a smooth projective variety, 
we may assume that the geometric generic fiber of $h$ is connected. Note that
\[
h:(T, \Gamma_T:  = \Diff_T^*(\Gamma)) \to S
\]
is a family of projective slc pairs by  Lemma \ref{l:familyslcnormadj}.2  applied multiple times.
Additionally, there exists a dense open subset $V\subset S$ such that $\overline{X}_s\to X_s$ is a normalization and $(Y_s,\Gamma_s)\to (\overline{X}_s, \overline{G}_s+\overline{\Delta}_s+\overline{D}_s)$ is a dlt modification. Therefore, after possibly shrinking $V$,  $(T_s, (\Gamma_T)_s) $ is a source of $(X_s, \Delta_s+D_s)$ for all $s\in V$.

Since $(T, \Gamma) \to S$ is a projective family of slc pairs with klt generic fiber  and 
\[
N(K_{T/S}+\Gamma)
 \sim  N(K_{Y/S}+\Gamma) \vert_{T} \sim N(K_{X/S}+\Delta+D)\vert_T \sim_{S} 0,
\] 
the proof of Proposition \ref{p:Hodgebasics}.4 shows $	(T,\Gamma ) \to S$
is an lc-trivial fibration with moduli divisor satisfying ${\bf M}= \overline{{\bf M}_S}$ and discriminant $b$-divisor equal to zero.
By Theorem \ref{t:positivitymoduli}.1, ${\bf M}_S$ is nef.
Additionally, if (2) holds, then, by shrinking $V$ further, we may assume $V\to Z$ has finite fibers and factors through $U$.  
Thus Theorem \ref{t:positivitymoduli}.2 implies ${\bf M}_S$ is also big. 
Since
\[
h^* \cO_S(k{\bf M}_S)
\cong 
\omega_{T/S}^{[k]}(k\Gamma)
\cong 
(\mu^*
\omega_{X/S}^{[k]}(k(\Delta+D)))\vert_{T}
\cong 
 (\mu^* f^*\la_{\Hodge,f,k})\vert_T
\cong 
h^*\la_{\Hodge,f,k}
,
\]
it follows that  $\cO_S(k{\bf M}_S)\cong \la_{\Hodge,f,k}$.
Thus $ \la_{\Hodge,f,k}$ is nef and when (2) holds is big.
\end{proof}

\begin{thm}\label{t:HodgeAmple}
Fix $d\geq 3$. If $m\gg0$, then the Hodge $\bQ$-line bundle $L_{\rm Hodge}$ on $P_{d,m}^{\CY}$ is ample.
As a consequence, the Hodge $\bQ$-line bundle is ample on $P_d^{\CY}$. 
\end{thm}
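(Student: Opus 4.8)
The goal is to prove ampleness of $L_{\rm Hodge}$ on $P_{d,m}^{\CY}$ for $m \gg 0$ and then deduce it on $P_d^{\CY}$. Since $P_{d,m}^{\CY}$ is a proper algebraic space (Proposition \ref{p:properness}), it suffices to check positivity on curves, or more robustly, to apply a Nakai--Moishezon type criterion: show that $(L_{\rm Hodge}\vert_Z)^{\dim Z} > 0$ for every irreducible closed subspace $Z \subseteq P_{d,m}^{\CY}$. The main tool is Theorem \ref{p:ftHodgeampleonsubspace}, applied with $\cM = \cP_{d,m}^{\CY}$ and $M = P_{d,m}^{\CY}$: this already gives $(L_{\rm Hodge}\vert_Z)^{\dim Z} \geq 0$ unconditionally, and strict positivity provided either the general pair over $Z$ is klt (case (1)) or the sources have finite variation on a dense open of $Z$ (case (2)). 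So the heart of the argument is a case analysis on $Z$ according to the type (I, II, or III) of a general pair parametrized by $Z$, using Proposition \ref{p:SrcSequiv} and Remark \ref{r:Sequiv-type} to see that ``type'' is constant on a dense open of $Z$.

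\textbf{Step 1: reduce to checking positivity on closed subspaces.} First I would invoke a Nakai--Moishezon criterion for $\bQ$-line bundles on proper algebraic spaces (via the associated proper algebraic space being covered by a projective scheme, or by passing to a generically finite projective cover as in Theorem \ref{p:finitecoverbyscheme}) to reduce ampleness of $L_{\rm Hodge}$ to: $L_{\rm Hodge}$ is nef and $(L_{\rm Hodge}\vert_Z)^{\dim Z} > 0$ for every irreducible closed $Z \subseteq P_{d,m}^{\CY}$. Nefness and the non-strict inequality are immediate from Theorem \ref{p:ftHodgeampleonsubspace}.

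\textbf{Step 2: the three type-strata.} Fix an irreducible closed $Z$ and let $(X,D)$ be a pair parametrized by a general point of $Z$; by Remark \ref{r:Sequiv-type} and Proposition \ref{p:SrcSequiv} its type is well-defined and constant on a dense open of $Z$. If the general pair is Type I (klt), then case (1) of Theorem \ref{p:ftHodgeampleonsubspace} applies directly and $(L_{\rm Hodge}\vert_Z)^{\dim Z}>0$. If the general pair is Type II, then by Theorem \ref{thm:type2Sequiv} there is a fixed source $(E,D_E)$ — a smooth elliptic curve or a CY pair on $\bP^1$ — and only finitely many pairs in $\cP_d^{\CY}(\bk)$ up to isomorphism in its S-equivalence class with that source; hence $Z$ is a point (the single S-equivalence class with that source, up to the finitely many representatives), so $\dim Z = 0$ and $(L_{\rm Hodge}\vert_Z)^0 = 1 > 0$ trivially. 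Alternatively, if one worries about elliptic sources varying in a one-parameter family, one checks that fixing $d$ the projective cone over an elliptic curve with degree $9$ polarization is determined up to isomorphism by the $j$-invariant, so $Z$ is at most a curve, and the source has finite variation — but in fact Theorem \ref{thm:type2Sequiv} says each source gives only finitely many pairs, so $Z$ is finite. If the general pair is Type III, then by Theorem \ref{t:TypeIIISequiv} every Type III pair is S-equivalent to $(\bP^2, \tfrac3d\{(xyz)^{d/3}=0\})$, so all Type III points of $P_{d,m}^{\CY}$ coincide (by Proposition \ref{p:PdmSequiv}), $Z$ is a point, and again the inequality is trivial. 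In every case $(L_{\rm Hodge}\vert_Z)^{\dim Z}>0$, so $L_{\rm Hodge}$ is ample on $P_{d,m}^{\CY}$ for $m\gg 0$.

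\textbf{Step 3: descend to $P_d^{\CY}$.} By construction (Proposition \ref{p:existsagm}), for $m \gg 0$ the natural map $P_d^{\CY} \to P_{d,m}^{\CY}$ is a seminormalization, hence finite and surjective, and the Hodge $\bQ$-line bundle on $P_d^{\CY}$ is by definition the pullback of $L_{\rm Hodge}$ from $P_{d,m}^{\CY}$. Ampleness pulls back along finite morphisms, so $L_{\rm Hodge}$ is ample on $P_d^{\CY}$.

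\textbf{Main obstacle.} The delicate point is confirming that the non-klt strata (Types II and III) of $P_{d,m}^{\CY}$ are low-dimensional — ideally finite — so that the strict-inequality hypotheses of Theorem \ref{p:ftHodgeampleonsubspace} are vacuously satisfied there, or more precisely so that hypothesis (2) holds. This is exactly what the detailed S-equivalence analysis of Section \ref{s:sequivcurves} buys us: Theorem \ref{thm:type2Sequiv}'s finiteness clause (for each fixed source there are finitely many pairs) and Theorem \ref{t:TypeIIISequiv} (all Type III pairs are S-equivalent). One must be slightly careful that ``finitely many pairs per source'' combined with the one-parameter family of elliptic sources still yields a low-dimensional locus in the moduli space on which the source map has finite fibers — but since the source is literally constant on an S-equivalence class and each class is a single point of $P_{d,m}^{\CY}$, hypothesis (2) is satisfied with $U = Z$ whenever $Z$ is contained in the Type II locus. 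I expect the only genuinely technical verification is the Nakai--Moishezon reduction in Step 1 for a (possibly non-scheme) proper algebraic space, which is handled by the projective generically-finite cover of Theorem \ref{p:finitecoverbyscheme} together with the fact that ampleness can be checked after such a cover.
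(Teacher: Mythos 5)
Your overall strategy --- Nakai--Moishezon on the proper algebraic space $P_{d,m}^{\CY}$, the nef/big dichotomy from Theorem \ref{p:ftHodgeampleonsubspace}, a case analysis according to the type of a general pair over $Z$, and pullback along the finite seminormalization $P_d^{\CY}\to P_{d,m}^{\CY}$ for the final sentence --- is exactly the paper's proof. Steps 1 and 3, the Type I case (via hypothesis (1)), and the Type III case (the Type III locus is a single point by Lemma \ref{l:stabilization}, so no positive-dimensional $Z$ lives there) are all fine.

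The problem is your primary argument for the Type II case. You assert that over an irreducible closed $Z$ whose general point is Type II ``there is a fixed source $(E,D_E)$'' and conclude from the finiteness clause of Theorem \ref{thm:type2Sequiv} that $Z$ is a point, and later that $Z$ is finite. This is false: the source is not constant over $Z$. Theorem \ref{thm:type2Sequiv} gives finitely many S-equivalence classes \emph{for each fixed source}, but the sources themselves move in positive-dimensional families --- elliptic curves vary with the $j$-invariant, and klt CY pairs on $\bP^1$ vary with the positions and weights of the marked points --- so the Type II locus of $P_{d,m}^{\CY}$ is in general positive-dimensional and $Z$ need not be finite. What the finiteness clause actually buys is that the assignment $z\mapsto \Src(z)$ has finite fibers on a dense open $U\subset Z$: finitely many S-equivalence classes per source, and each class is a single point of $P_{d,m}^{\CY}$ by Proposition \ref{p:PdmSequiv}. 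That is precisely hypothesis (2) of Theorem \ref{p:ftHodgeampleonsubspace}, and it is what the paper uses. You do state this correct version in your closing paragraph (``hypothesis (2) is satisfied with $U=Z$''), and with that substituted for the ``$Z$ is a point'' claim the Type II case goes through exactly as in the paper; but as the main text of Step 2 stands, the Type II argument is wrong and needs to be replaced by the hypothesis-(2) argument rather than offered as a fallback.
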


\begin{proof}[Proof of Theorem \ref{t:HodgeAmple}]
Fix $m\gg0$ so that the conclusions of Propositions \ref{p:wallcrossing} and \ref{p:properness} and  Lemma \ref{l:stabilization} hold.
By the Nakai--Moishezon criterion for ampleness on proper algebraic spaces \cite[Thm. 3.11]{Kol90}, it suffices to show $(L_{\Hodge}\vert_Z)^{\dim Z}>0$ for each irreducible closed subspace $Z\subset P_{d,m}^\CY$ of positive dimension.
Note that for a general element $z\in Z(\bk)$, $\Src(z)$ is either  (i) a surface pair, (ii) a curve pair or (iii) a point. 
We analyze each case separately. 
\begin{enumerate}
\item[(i)] In this case, the preimage  of a general point in $Z(\bk)$ under $\cP_{d,m}^{\CY}(\bk)\to P_{d,m}^{\CY}(\bk)$ contains only klt pairs.
Thus Theorem \ref{p:ftHodgeampleonsubspace}.1 implies  $(L_{\Hodge}\vert_Z)^{\dim Z}>0$.
	
\item[(ii)] In this case, there exists a nonempty open set $U\subset Z$ such that $\Src(z)$ is a curve pair for all $z\in U$. 
Since $\cP_{d,m}^{\CY}(\bk)\to P_{d,m}^{\CY}(\bk)$ identifies S-equivalent pairs by Theorem  \ref{p:PdmSequiv} and each curve pair is the source of finitely many different  S-equivalence classes  by Theorem \ref{thm:type2Sequiv}, 
\[
\#\{ z\in U\, \vert\, \Src(z)\cong \Src(y) \}< \infty
\]
for each $y \in Z$. 
Thus Proposition  \ref{p:ftHodgeampleonsubspace}.2 implies  $(L_{\Hodge}\vert_Z)^{\dim Z}>0$.

\item[(iii)]
This case does not  occur, since the Type III locus of $P_{d,m}^{\CY}$ is a point  by Lemma \ref{l:stabilization} and $\dim(Z)>0$ by assumption.
\end{enumerate}
By the above three cases, $(L_{\Hodge}\vert_Z)^{\dim Z}>0$ as desired.
Therefore $L_{\Hodge}$ is ample on $P_{d,m}^{\CY}$. 
Since $P_d^{\CY}\to P_{d,m}^{\CY}$ is a seminormalization and in particular finite, $L_{\Hodge}$ pulls back to an ample $\bQ$-line bundle on $P_d^{\CY}$.
\end{proof}

\subsection{B-semiampleness}

We  now use Theorem \ref{t:HodgeAmple} to prove cases of the  b-semiampleness conjecture
and its effective variant due to Prokhorov and Shokurov \cite[Conj. 7.13]{PS09}.

\begin{prop}\label{p:effectivenessP2}
If $d\geq 4$, then there exists an integer $I:= I(d)$ satisfying the following:

If $(X,D)\to Y$ is an lc-trivial fibration such that the  geometric generic fiber  $(X_{\overline{\eta}}, D_{\overline{\eta}})$ is a klt pair with $X_{\overline{\eta}}\cong \bP^2_{k(\overline{\eta})}$  and $\tfrac{d}{3}D_{\overline{\eta}}$ is a Weil divisor, then $I {\bf M}$ is  b-free.  
\end{prop}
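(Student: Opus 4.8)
The plan is to reduce the statement to the ampleness of the Hodge $\bQ$-line bundle on the moduli space $P_d^{\CY}$ established in Theorem \ref{t:HodgeAmple}, following Shokurov's strategy. Given an lc-trivial fibration $f\colon (X,D)\to Y$ whose geometric generic fiber is a klt pair $(\bP^2, D_{\overline\eta})$ with $\tfrac{d}{3}D_{\overline\eta}$ a Weil divisor, the fiber $(X_{\overline\eta},D_{\overline\eta})$ is a boundary polarized CY pair defining a $\bk(\overline\eta)$-point of $\cP_d^{\CY}$ (indeed of the smooth locus parametrizing a smooth plane curve, or more generally of $\cP_d^{\CY}$). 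First I would replace $Y$ by a suitable generically finite cover and birational modification so that the induced lc-trivial fibration $(X',D')\to Y'$ extends the generic one, $Y'$ is smooth projective, and there is a morphism $Y'\to P_{d,m}^{\CY}$ (for $m\gg 0$) classifying the family; this uses the properness of $\cP_d^{\CY}$ (Theorem \ref{thm:properness}), Proposition \ref{p:bpcygluing} to glue/extend the family over codimension-one points, and the existence of the good moduli space morphism $\phi_m\colon \cP_{d,m}^{\CY}\to P_{d,m}^{\CY}$ (Theorem \ref{t:existsPdm}). The key compatibility is Proposition \ref{p:Hodgebasics}.3: on a smooth base with normal generic fiber, the moduli $b$-divisor ${\bf M}$ satisfies $\cO_Y(N{\bf M}_{Y'})\cong \lambda_{\Hodge,f',N}$, and by the descent statement in Proposition \ref{p:Hodgedescends} together with the definition of the Hodge $\bQ$-line bundle, $\lambda_{\Hodge,f',k}$ is the pullback of $L_{\Hodge,k}$ under $Y'\to P_{d,m}^{\CY}$ for $k$ a sufficiently divisible multiple of $N_d$.

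Carrying this out, the moduli $b$-divisor ${\bf M}$ descends (after the modification) to a $\bQ$-Cartier divisor ${\bf M}_{Y'}$ on $Y'$ which is the pullback of the ample $\bQ$-line bundle $L_{\Hodge}$ on $P_{d,m}^{\CY}$ along the classifying morphism $g\colon Y'\to P_{d,m}^{\CY}$. Since $L_{\Hodge}$ is ample and $P_{d,m}^{\CY}$ is a proper algebraic space, some fixed multiple $I_0 L_{\Hodge}$ is a very ample line bundle pulled back from (a projective model of) $P_{d,m}^{\CY}$; hence $I_0 {\bf M}_{Y'} = g^*(I_0 L_{\Hodge})$ is globally generated, i.e. $I_0 {\bf M}$ is $b$-free, where $I_0$ depends only on $d$ (through $N_d$, $m$, and the projective very-ampleness bound on the fixed proper space $P_{d,m}^{\CY}$). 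To get an integer $I$ that works uniformly I would take $I$ to be a fixed multiple of $I_0$ and $N_d$; the point is that $m$ can be chosen to depend only on $d$ by Proposition \ref{p:wallcrossing}/Lemma \ref{l:stabilization} (the moduli stacks stabilize), and $P_{d,m}^{\CY}$ is then a fixed space, so every constant involved depends only on $d$.

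The two points requiring care are: (a) passing from the generically finite cover $Y'\to Y$ back to $Y$ itself — here one uses that the moduli $b$-divisor is compatible with generically finite base change (Remark \ref{rmk:bandm} and Definition \ref{def:lctrivial}), so $b$-freeness descends along finite covers after possibly enlarging $I$ by the degree, or rather: $b$-semiampleness/$b$-freeness is insensitive to such covers because ${\bf M}$ pulls back to ${\bf M}$; and (b) ensuring the classifying map $Y'\to P_{d,m}^{\CY}$ genuinely exists, which requires knowing the family of boundary polarized CY pairs extends over all of $Y'$ after modification — this is where one invokes properness (existence part of the valuative criterion, Theorem \ref{thm:properness}) fiber-by-fiber over codimension-one points of $Y'$ and the local-to-global gluing of Proposition \ref{p:bpcygluing}, combined with the fact that the discriminant $b$-divisor of an lc-trivial fibration coming from a family of boundary polarized CY pairs over a smooth base is zero (Proposition \ref{p:Hodgebasics}.3), so no correction from ${\bf D}$ is needed. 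I expect step (b) — producing the global classifying morphism to $P_{d,m}^{\CY}$, equivalently extending the family over the base after a birational modification — to be the main obstacle, since it requires carefully combining the abstract properness and gluing results with the canonical bundle formula bookkeeping; once that morphism is in hand, the conclusion is immediate from ampleness of $L_{\Hodge}$ and Serre vanishing / effective very-ampleness on the fixed proper space $P_{d,m}^{\CY}$.
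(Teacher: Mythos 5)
Your high-level strategy — reduce to the ampleness of the Hodge $\bQ$-line bundle on the moduli space and pull back a very ample multiple along a classifying morphism — is exactly the paper's strategy, and your identification of the two delicate points (a) and (b) is accurate. However, your proposed treatments of both points have genuine gaps.

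On point (b), extending the family over a compactification: you propose to use the existence part of the valuative criterion for $\cP_d^{\CY}$ over codimension-one points together with Proposition \ref{p:bpcygluing}. This will not work as stated. The valuative criterion only produces extensions over DVRs after ramified base change, and these local extensions are highly non-unique (the stack is not separated and, when $3\mid d$, not even of finite type), so there is no way to glue them into a single family over a smooth projective $Y'$; Proposition \ref{p:bpcygluing} glues a family to its normalization, not local extensions over different codimension-one points. The paper sidesteps this entirely: since the generic fiber is klt with $\tfrac{d}{3}D_{\oeta}$ a Weil divisor and $d\geq 4$, the generic fiber lies in the \emph{Hacking} stack $\cP_d^{\rm H}$, which is a finite type, proper Deligne--Mumford stack (Theorem \ref{t:Hacking}). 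Lemma \ref{l:genfinitecover} then takes a finite surjective cover of $\cP_d^{\rm H}$ by a scheme $S$ (\cite[Thm.\ 16.6]{LMB00}), normalizes $U\times_{\cP_d^{\rm H}}S$, compactifies and resolves, producing a smooth projective $Y'$ with a genuine morphism $Y'\to \cP_d^{\rm H}\subset\cP_{d,m}^{\CY}$ and hence an honest family over all of $Y'$. This is the mechanism you are missing, and it is also where the hypothesis $d\geq 4$ enters.

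On point (a), descending b-freeness from $Y'$ to $Y$: your claim that b-freeness is ``insensitive'' to generically finite covers because ${\bf M}$ pulls back to ${\bf M}$ is false. Global generation of $\tau^*L$ does not imply global generation of $L$; the descent (via the trace argument in \cite[Lem.\ 2.1]{FL19}) costs a factor of $\deg(\tau)$, so one needs $\deg(\tau)$ bounded by a constant depending only on $d$ to obtain a uniform $I(d)$. Your fallback (``enlarging $I$ by the degree'') does not give this, since for an arbitrary cover the degree depends on the particular fibration. Again Lemma \ref{l:genfinitecover} is the fix: the cover it produces has $\deg(\tau)\leq c(d)$, the maximal fiber degree of the fixed finite cover $S\to\cP_d^{\rm H}$, and the paper takes $I=N\cdot b\cdot c!$ so that $N\,b\,\deg(\tau)$ always divides $I$. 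Without a bounded-degree cover, the uniformity of $I$ — which is the entire content of the effectivity statement — is lost.
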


The result follows from the ampleness of the Hodge line bundle on $P_{d,m}^{\CY}$ and the next lemma.

\begin{lem}\label{l:genfinitecover}
For each $d\geq 4$, there exists a constant $c:=c(d)$ satisfying the following:

If $(X_U,D_U) \to U$ is a family in $\cP_d^{\rm H}$ and there is a compactification  $U\subset Y$ with $Y$ a normal projective variety, then there is a diagram
\[
\begin{tikzcd}
(X_U,D_U) \arrow[d] & & (X',D')\arrow[d]\\
U \arrow[r , hook] & Y  & Y' \arrow[l, "\tau" ']
\end{tikzcd}
\]
such that 
(i) $Y'$ is a smooth projective variety,  
(ii) $\tau$ is  generically finite with $\deg(\tau) \leq  c$, and 
(iii) $(X',D')\to Y'$ is a family in $\cP_{d}^{\rm H}$ with $(X',D')\times_{Y} U \cong (X_U,D_U)$ over $U$.
\end{lem}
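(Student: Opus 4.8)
\textbf{Proof strategy for Lemma \ref{l:genfinitecover}.} The plan is to bound the degree of a generically finite cover that resolves the monodromy obstruction to extending the family $(X_U,D_U)\to U$ over a compactification. The key point is that the pair $\cP_d^{\rm H}$ is a finite type, proper \emph{Deligne--Mumford} stack by Theorem \ref{t:Hacking}, and its coarse moduli space $P_d^{\rm H}$ is projective. The classifying map $U\to \cP_d^{\rm H}$ induced by $(X_U,D_U)\to U$ is what we want to extend over a compactification, and the failure to do so is governed by the fact that $\cP_d^{\rm H}\to P_d^{\rm H}$ is a coarse moduli space with (possibly nontrivial) automorphism groups, together with Brauer-type obstructions. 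These obstructions can be killed after a generically finite cover of bounded degree, using boundedness of the stabilizer groups on a finite type stack.

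\textbf{Main steps.} First I would extend the map $U\to \cP_d^{\rm H}$ to a rational map from $Y$; since $P_d^{\rm H}$ is projective and proper, the composition $U\to P_d^{\rm H}$ extends to a morphism on a blowup of $Y$, so after replacing $Y$ with such a blowup (which does not change the generic point) we get a morphism $\overline{g}\colon Y\to P_d^{\rm H}$ restricting to the coarse map of the classifying morphism over $U$. Second, I would form the fiber product $Y\times_{P_d^{\rm H}}\cP_d^{\rm H}$, which is a proper Deligne--Mumford stack over $Y$ agreeing with the classifying map over $U$ (up to the automorphism gerbe); since $\cP_d^{\rm H}$ has finite, hence bounded, stabilizer groups, and since $\cP_d^{\rm H}\to P_d^{\rm H}$ is proper and quasi-finite, this fiber product is a tame proper DM stack of bounded degree over $Y$. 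Third, I would invoke the existence of a generically finite cover by a scheme in the style of \cite[Thm.~6.6]{LMB00}: there is a smooth projective $Y'$ with a proper generically finite morphism $\tau\colon Y'\to Y$ through which the stacky structure is trivialized, i.e., $Y'\to Y\times_{P_d^{\rm H}}\cP_d^{\rm H}$ lifts to a map $Y'\to\cP_d^{\rm H}$, yielding a family $(X',D')\to Y'$ in $\cP_d^{\rm H}$ pulling back to $(X_U,D_U)$ over the preimage of $U$. Fourth, and crucially, I would bound $\deg(\tau)$ by a constant $c=c(d)$ depending only on $d$: this uses that $\cP_d^{\rm H}$ is a \emph{fixed} finite type stack (depending only on $d$ through $\chi, N_d, {\bf r}$), so the orders of automorphism groups of its objects are bounded by some $c_0(d)$, and the Brauer classes one must kill (after passing to the generic point and spreading out, exactly as in the proof of Proposition \ref{p:torusgerbe}) have index bounded in terms of these automorphism orders; combining these gives a uniform bound on $\deg(\tau)$. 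One must also shrink/modify $U$ only by the preimage $\tau^{-1}(U)$, and since $\tau$ is generically finite this preimage is dense open in $Y'$, so restricting $(X',D')$ there recovers $(X_U,D_U)$ up to the finite cover, as required in (iii).

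\textbf{Expected main obstacle.} The genuinely delicate point is obtaining the \emph{uniform} bound $\deg(\tau)\le c(d)$ rather than just the existence of some finite $\tau$. Existence is immediate from \cite[Thm.~6.6]{LMB00} applied to the proper DM stack $Y\times_{P_d^{\rm H}}\cP_d^{\rm H}$, but that reference gives no control on the degree. To get uniformity one argues as in the proof of Theorem \ref{p:finitecoverbyscheme} and Proposition \ref{p:torusgerbe}: after normalizing $Y$ and passing to the generic point, one trivializes the finite automorphism group scheme (a bounded-order finite \'etale group) by a cover of degree at most the order of that group, then kills a residual Brauer class which, being represented by an Azumaya algebra of bounded index (again controlled by the automorphism orders, since $\cP_d^{\rm H}$ is Deligne--Mumford and tame), can be split by a cover of bounded degree via \cite[Thm.~3.6]{EHKV}; resolving singularities of the resulting cover produces the desired smooth $Y'$. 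Since every numerical input (order of stabilizers, index of Brauer classes) depends only on the fixed stack $\cP_d^{\rm H}$, and hence only on $d$, the total degree is bounded by a constant $c(d)$, completing the proof.
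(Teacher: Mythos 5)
Your overall architecture — cover the stack by a scheme, pull back along the classifying map, compactify, resolve — is the right one and matches the paper's. But the step you yourself flag as the delicate one, the uniform bound $\deg(\tau)\le c(d)$, is where your argument has a genuine gap. You propose to bound the degree by trivializing the (finite, bounded-order) stabilizer group scheme and then splitting a residual Brauer class whose ``index is bounded in terms of the automorphism orders.'' This conflates the \emph{period} of a Brauer class with its \emph{index}: the period is indeed bounded by the stabilizer order, but the minimal degree of a splitting field is the index, which is not controlled by the period on bases of unbounded dimension (the period–index problem is open in general, and even the conjectural bound is index $\mid$ period$^{\dim-1}$). Since the lemma quantifies over arbitrary $U\subset Y$ with no bound on $\dim Y$, neither the field-extension route nor slicing the Severi--Brauer scheme from \cite[Thm.~3.6]{EHKV} yields a degree bounded by a constant depending only on $d$. (Proposition \ref{p:torusgerbe} and Theorem \ref{p:finitecoverbyscheme}, which you model this on, never claim a degree bound — and indeed cannot.)

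The paper gets the uniform bound essentially for free by using a different input: since $\cP_d^{\rm H}$ is a \emph{separated, finite type Deligne--Mumford} stack (Theorem \ref{t:Hacking}), \cite[Thm.~16.6]{LMB00} — not the generically finite statement of \cite[Thm.~6.6]{LMB00} that you cite — provides a scheme $S$ with a \emph{finite} surjective morphism $S\to\cP_d^{\rm H}$. Finiteness of $S\to\cP_d^{\rm H}$ together with quasi-compactness of the target bounds the fiber degree by a constant $c$ depending only on $d$ (constructibility of $\deg(S_z)$). One then forms $U\times_{\cP_d^{\rm H}}S$ \emph{over the stack} (not over the coarse space), takes a component $\widetilde U$ of its normalization dominating $U$ (finite over $U$ of degree $\le c$), compactifies inside $Y\times S$ using that $S$ is proper, and resolves; the composite $Y'\to S\to\cP_d^{\rm H}$ is then automatically a family over all of $Y'$ restricting to $(X_U,D_U)$ over the preimage of $U$. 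Working over the stack rather than over $P_d^{\rm H}$ also avoids the secondary issue in your write-up that a lift of $Y\to P_d^{\rm H}$ to $\cP_d^{\rm H}$ need only agree with the original classifying map fiberwise, not as a family (twisted forms), so compatibility with $(X_U,D_U)$ would require an extra argument.
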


\begin{proof}
Since $\cP_{d}^{\rm H}$ is a finite type Deligne--Mumford stack by Theorem \ref{t:Hacking}, 
\cite[Thm. 16.6]{LMB00} implies that there exists a scheme $S$ with  a finite surjective morphism $f:S\to\cP_d^{\rm H}$.  
Let $\widetilde{U}$ denote a component of the normalization of $U \times_{\cP_{d}^{\rm H}} S$ dominating $U$. 
Since  $S \to \cP_d^{\rm H}$ is representable and finite, $U \times_{\cP_{d}^{\rm H}} S$ is a scheme and finite over $U$.
Thus $\widetilde{U}$ is a scheme and finite over $U$. 
Choose a compactification  $\widetilde{U} \subset \widetilde{Y}$ such that $\widetilde{Y}$ is a proper normal variety and the morphisms  $\widetilde{U}\to U$ and $\widetilde{U}\to Y$ extend to morphisms $\widetilde{Y}\to Y$ and $\widetilde{Y}\to S$. Thus we have a diagram:
\[
\begin{tikzcd}
\widetilde{U} \arrow[r, hook] \arrow[d] & \widetilde{Y} \arrow[r]  \arrow[d] &  S \arrow[d] \\
U \arrow[r, hook]  \arrow[rr, bend right=6mm] & Y & \cP_{d}^{\rm H} .
\end{tikzcd}
\]
Let $Y' \to \widetilde{Y}$ denote a projective resolution of singularities. 
Let $(X',D') \to Y'$ denote the family induced by  the morphism $Y' \to \cP_d^{\rm H}$. 
Thus  (i) and (iii) are satisfied.

For (ii), note that
\[
c: = \max \{ \deg(S_z) \, \vert\, z\in
\cP_{d}^{\rm H}(\bk) 
\}
\]
 is finite, since $S\to \cP_{d}^{\rm H}$ is finite and $\deg(S_z)$ is constructible.
Now
$\deg (\tau)
=
\deg(\widetilde{Y}/Y) \leq c$ as desired.
\end{proof}

\begin{proof}[Proof of Proposition \ref{p:effectivenessP2}]
By Proposition \ref{p:Hodgedescends} and Theorem \ref{t:HodgeAmple}, there exist $m,N>0$ such that the line bundle $\la_{\Hodge,N}$ on $\cP_{d,m}^{\CY}$
descends to an ample line bundle $L_{\Hodge,N}$ on $P_{d,m}^{\CY}$.
Fix a positive integer $b$ such that $bL_{\Hodge,N}$ is very ample.
Fix $c>0$ satisfying the conclusion of Lemma \ref{l:genfinitecover}. 
Now set $I:= N\cdot b\cdot c!$.

By replacing $(X,D)\to Y$ by a pullback via a proper birational morphism, we may assume ${\bf M}:= \overline{{\bf M}_Y}$.
Since 
$(X_{\overline{\eta}},D_{\overline{\eta}})$ is a projective klt pair with $X_{\oeta} \cong \bP^2_{k(\overline{\eta})}$ and $\tfrac{d}{3} D_{\overline{\eta}}$  a Weil divisor,
 there exists a non-empty open set $U\subset S$ such that $(X_U,D_U) \to U$ is a family of klt boundary polarized CY pairs and $\tfrac{d}{3} D_U$ is  Weil divisor. 
Hence $(X_U,D_U)\to U$ is a family in $\cP_d^{\rm H}$
and we can find  a diagram
\[
\begin{tikzcd}
(X_U,D_U) \arrow[d,"f"] & & (X',D')\arrow[d,"f'"]\\
U \arrow[r , hook] & Y  & Y' \arrow[l, "\tau" ']
\end{tikzcd}
\]
satisfying the conclusion of Lemma \ref{l:genfinitecover}.
Let $(X'',D'')\to Y'$ denote the lc-trivial fibration induced via  base changing $(X,D)\to Y$ by $Y'\to Y$. 
Since $(X',D')\to Y'$ and $(X'',D'')\to Y'$ have isomorphic generic fiber, their moduli divisors on $Y'$ are equal by \cite[Prop. 8.4.9]{Kol07}.
Therefore we may write ${\bf M}_{Y'}$ without confusion.
By Propositions \ref{p:Hodgebasics}.3 and \ref{p:Hodgedescends},
\[
\cO_{Y'}(N{\bf M}_{Y'})
\cong 
{\la_{\Hodge, f', N}} =\rho^* L_{\Hodge, N},
\]  
where $\rho:Y' \to P_{d,m}^{\rm CY}$ is the  moduli map.
Since $bL_{\Hodge, N}$ is very ample, 
$N\cdot b{\bf M}$ is b-free. 
Since ${\bf M}= \tau^* {\bf M}$ by \cite[Prop. 3.1]{Amb05}, \cite[Proof of Lem. 2.1]{FL19} implies $N\cdot b\cdot  \deg (\tau) {\bf M}$ is b-free.
Using that $N\cdot b\cdot \deg(\tau)$ divides $ I$,  we conclude $I {\bf M}$ is b-free.
\end{proof}

\begin{thm}\label{t:bsemiample}
If $(X,D)\to Y$ is an lc-trivial fibration
such that the  generic fiber $(X_{\eta},D_{\eta})$ is a projective lc pair and $\dim(X)-\dim(Y)\leq 2$, then ${\bf M}$ is b-semiample.
\end{thm}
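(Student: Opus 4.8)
The plan is to reduce the statement to the cases already established in the literature together with the new case $\dim X - \dim Y = 2$ with $X_{\overline\eta}\cong \bP^2$, which is handled by Proposition \ref{p:effectivenessP2} (equivalently by the ampleness of the Hodge line bundle on $P_d^{\rm CY}$, Theorem \ref{t:HodgeAmple}). The argument proceeds by case analysis on $n := \dim X - \dim Y \in \{0,1,2\}$ and on the geometry of the geometric generic fiber $(X_{\overline\eta}, D_{\overline\eta})$, after a sequence of standard preliminary reductions.

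\textbf{Preliminary reductions.} First I would reduce to the case $D\geq 0$ on the generic fiber: since $(X_\eta, D_\eta)$ is assumed lc (not merely sub-lc) and ${\bf M}$ is a birational invariant, replacing $X$ by a suitable crepant birational model lets us assume $(X,D)$ is lc with $D$ effective over the generic point, which puts us in the setting of Definition \ref{def:lctrivial}. Second, ${\bf M}$ is a b-birational invariant in the sense that it is unchanged under passing to higher birational models of $Y$, so it suffices to exhibit one normal projective model $Y'$ of $Y$ on which ${\bf M}_{Y'}$ is semiample; by Theorem \ref{t:positivitymoduli}.1 we may already fix $Y'$ so that ${\bf M}_{Y'}$ is $\bQ$-Cartier and nef, reducing b-semiampleness to honest semiampleness of a single nef $\bQ$-divisor. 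Third, by passing to the Stein factorization and then to a generically finite cover of $Y$ one may assume the fibers of $X\to Y$ are geometrically connected; b-semiampleness descends along finite covers by \cite[Prop. 3.1]{Amb05} combined with an argument as in \cite[Lem. 2.1]{FL19} (descent of semiampleness under pullback by finite morphisms), so this does not lose generality.

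\textbf{Case analysis.} If $n=0$ then $X\to Y$ is generically finite and ${\bf M}$ is $\bQ$-linearly trivial over $Y$, so the statement is trivial (alternatively this is subsumed in the case-of-a-point results). If $n=1$, the geometric generic fiber $(X_{\overline\eta}, D_{\overline\eta})$ is a curve pair with $K + D \sim_{\bQ} 0$, and b-semiampleness of ${\bf M}$ in this situation is exactly \cite{PS09} (the relative-dimension-one case). If $n=2$, I split according to whether $X_{\overline\eta}$ is isomorphic to $\bP^2$ or not. When $X_{\overline\eta}\not\cong\bP^2$, b-semiampleness is the theorem of \cite{Fuj03, Fil18} for fibrations whose geometric generic fiber is a surface not isomorphic to $\bP^2$ (here one uses that the generic fiber of a CY surface pair, after a crepant model, is such a surface or is handled by adjunction to lower dimension via the source construction of Section \ref{s:sources}). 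When $X_{\overline\eta}\cong \bP^2_{k(\overline\eta)}$, the pair $(X_{\overline\eta}, D_{\overline\eta})$ is a boundary polarized CY pair on $\bP^2$; writing $D_{\overline\eta} = \tfrac{3}{d}C_{\overline\eta}$ for the appropriate $d\geq 3$ (with $d=3$ allowed, the $K+D\sim 0$ subcase), and using that ${\bf M}$ is insensitive to base change of $Y$, I would apply Proposition \ref{p:effectivenessP2} for $d\geq 4$ to conclude $I{\bf M}$ is b-free, hence b-semiample; the remaining $d=3$ case follows either from Proposition \ref{p:effectivenessP2} applied after the reduction in the proof of Theorem \ref{t:TypeIIISequiv} to an index-one family, or directly since then $K_X+D\sim_{\bQ} 0$ forces $M_Y$ to be a torsion b-divisor once the discriminant is computed (b-freeness of a torsion b-$\bQ$-Cartier b-nef divisor is immediate). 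Finally, one must also handle the possibility that $(X_{\overline\eta}, D_{\overline\eta})$ is lc but not klt, or not normal: here the source $\Src(X_{\overline\eta}, D_{\overline\eta})$ of Section \ref{s:sources} has dimension $\leq 1$, and applying the canonical bundle formula on the normalization (Lemma \ref{l:familyslcnormadj}) followed by adjunction to a minimal lc center reduces to an lc-trivial fibration of relative dimension $\leq 1$ with the same moduli b-divisor (by the behavior of ${\bf M}$ under the source construction, as in the proof of Theorem \ref{p:ftHodgeampleonsubspace}), which is covered by the $n\leq 1$ case.

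\textbf{Main obstacle.} The delicate point is the bookkeeping in the reduction to standard cases: one must verify that the moduli b-divisor ${\bf M}$ of the original fibration coincides with that of the fibration obtained after replacing the generic fiber by its source (or after base change/crepant modification), so that b-semiampleness really transfers. This is precisely the kind of identification carried out in the proof of Theorem \ref{p:ftHodgeampleonsubspace} — using that the discriminant b-divisor vanishes for families of boundary polarized CY pairs over smooth bases (Proposition \ref{p:Hodgebasics}.3) and that the Hodge line bundle pulls back compatibly — and the proof of Theorem \ref{t:bsemiample} should cite that mechanism rather than redo it. The $\bP^2$ case is the genuinely new input and rests entirely on Theorem \ref{t:HodgeAmple}; everything else is an assembly of \cite{PS09, Fuj03, Fil18} with the source-and-normalization reductions.
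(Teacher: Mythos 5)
Your proposal is essentially the paper's proof: the paper likewise splits into (i) non-klt generic fiber, reduced to relative dimension $\leq 1$ by adjunction to an lc center (citing \cite[Rem. 4.1]{FG14moduli}) and then \cite{PS09}; (ii) klt with $X_{\oeta}\not\cong\bP^2$, by \cite{Fil18} (which reduces to \cite{Fuj03,PS09}); and (iii) klt with $X_{\oeta}\cong\bP^2$, by Proposition \ref{p:effectivenessP2}. One side remark in your treatment of the $\bP^2$ case is false: when $D_{\oeta}$ is an integral cubic, $K_{X_\eta}+D_\eta\sim 0$ does \emph{not} force ${\bf M}$ to be torsion (a family of smooth plane cubics with varying $j$-invariant already has non-torsion moduli divisor); but the worry is moot, since you may simply rescale the marking and take $d=6$ (so $\tfrac{d}{3}D_{\oeta}=2D_{\oeta}$ is Weil and $d\geq 4$), after which Proposition \ref{p:effectivenessP2} applies directly.
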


\begin{proof}
If $(X_{\eta},D_{\eta})$ is not klt, then the statement holds by \cite[Rem. 4.1]{FG14moduli}, 
which reduces the problem to the case when $\dim(X)-\dim(Y)\leq 1$ and then applies \cite[Thm. 8.1]{PS09}.
If $(X_{\eta},D_{\eta})$ is klt, then the statements holds   when $X_{\oeta}\not\cong \bP^2_{k(\oeta)}$  by \cite[Thm. 1.7]{Fil18}, which reduces to cases proven in \cite{Fuj03,PS09},  and when $X_{\oeta}\cong \bP^2_{k(\oeta)}$ by Proposition \ref{p:effectivenessP2}.
\end{proof}

Proposition \ref{p:effectivenessP2} and arguments similar to \cite{Fil18}
also imply an effective version of Theorem \ref{t:bsemiample} stated in the introduction (Theorem \ref{t:effectiveb}).

Before proving the statement, we verify the following two lemmas. 

\begin{lem}\label{l:effectivebsemi}
	For each positive integer $r$, there exists an integer $I_r>0$ satisfying:

If $(X,D)\to Y$ is an lc-trivial fibration such that the generic fiber $(X_{{\eta}},D_{{\eta}})$ is an lc pair, $r(K_{X_\eta}+D_\eta)\sim 0$, and either
	(i) $\dim(X)-\dim(Y)=1$, (ii) $X_{\oeta}\cong \bP^2_{k(\oeta)}$, or
	(iii) there exists a proper normal variety $Z$ and  commutative diagram
\[
		\begin{tikzcd}
			X\arrow[rd,"f",swap] \arrow[r,"g"]& Z\arrow[d,"h"]\\
			& Y	,
		\end{tikzcd}
\]
	of surjective morphisms with connected fibers such that $g$ and $h$ are of relative dimension 1 and the general fiber of $g$ is a  genus 0 curve,
	then $I_r {\bf M}$ is b-free.
\end{lem}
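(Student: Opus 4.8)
\textbf{Proof strategy for Lemma \ref{l:effectivebsemi}.}
The plan is to prove a uniform b-freeness statement in each of the three cases separately and then take $I_r$ to be the least common multiple of the resulting constants (together with an auxiliary constant coming from the reduction of general lc-trivial fibrations). The three cases are governed by the three known instances of b-semiampleness quoted earlier: case (i) is the curve case handled by \cite{PS09}; case (ii) is the $\bP^2$ case, which is exactly Proposition \ref{p:effectivenessP2} and already supplies an integer $I(d)$ depending only on the degree $d$, hence only on the coefficient set of $D_\eta$ (the relation $r(K_{X_\eta}+D_\eta)\sim 0$ forces $\tfrac{d}{3}D_\eta$ to be a Weil divisor for $d$ determined by $r$); and case (iii) is the ``conic bundle'' situation treated by Fujino \cite{Fuj03} and Filipazzi \cite{Fil18}. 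So the lemma is really a bookkeeping statement asserting that each of these three results can be made effective with a bound depending only on $r$.

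First I would treat case (ii), which is the one that genuinely uses the new moduli space $P_{d,m}^{\CY}$. Given $r$, there are only finitely many $d\ge 3$ with $3\mid d$ or $3\nmid d$ such that a plane curve pair of degree $d$ has index dividing $r$; for each such $d\ge 4$ Proposition \ref{p:effectivenessP2} gives $I(d)$, and for $d=3$ one uses instead that $K_X+D\sim_{\bQ} f^*(K_Y+{\bf D}_Y+{\bf M}_Y)$ with the generic fiber $(\bP^2,\{xyz=0\})$ of index $1$, so a direct argument (or the canonical bundle formula combined with the fact that the moduli of marked cubic pairs has trivial moduli part) shows ${\bf M}$ is b-trivial. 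Taking the lcm over this finite set of degrees produces a constant depending only on $r$. For case (i) I would invoke the effective statements of \cite{PS09,Flo14a} for lc-trivial fibrations over a curve (more precisely for relative dimension one), where the bound depends only on the coefficients of $D_\eta$ on the general fiber, which in turn are controlled by $r$. For case (iii) I would follow Filipazzi's argument in \cite{Fil18}: one pushes forward along $g$ to get an lc-trivial fibration $(Z,D_Z)\to Y$ of relative dimension one and relates $\mathbf{M}$ for $f$ to $\mathbf{M}$ for $h$ via the canonical bundle formula applied to $g$; tracking denominators gives a bound depending only on $r$ (the coefficient of $D_Z$ being determined by those of $D$ via the adjunction/residue formula along the genus-$0$ fibers of $g$).

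The main obstacle is ensuring the effective bounds in cases (i) and (iii) depend \emph{only} on $r$ and not on other invariants of the fibration such as the dimension of $Y$ or the number of components of $D$. For case (i) this is essentially in the literature (Ambro's and Prokhorov--Shokurov's canonical bundle formula for fiber type curves, refined by Floris), where the index of $\mathbf{M}$ is bounded in terms of the ACC-type data of the coefficient set, which here is the single number $r$. For case (iii), Filipazzi's reduction is what makes the bound uniform, but one must verify that his argument, stated for b-semiampleness, actually yields b-\emph{freeness} with a controlled multiplier; this amounts to checking that when one passes from $h:(Z,D_Z)\to Y$ (relative dimension one, so case (i)) back to $f$, the multiplier $g$ introduces only a factor bounded in terms of $r$, which follows from the fact that the coefficients of $D_Z$ lie in a finite set determined by $r$.

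\textbf{Conclusion of the argument.} Having produced constants $I_r^{(\mathrm i)}$, $I_r^{(\mathrm{ii})}$, $I_r^{(\mathrm{iii})}$ in the three cases, set $I_r:=\mathrm{lcm}\big(I_r^{(\mathrm i)},I_r^{(\mathrm{ii})},I_r^{(\mathrm{iii})}\big)$. In each of the three hypotheses the b-divisor $I_r{\bf M}$ is then b-free (the relevant multiple being a divisor of $I_r$), which is the assertion of the lemma. I expect the write-up to be short: cases (i) and (iii) are citations with a remark on effectivity, and case (ii) is a direct appeal to Proposition \ref{p:effectivenessP2} plus the finiteness of admissible degrees $d$.
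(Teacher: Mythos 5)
Your decomposition into cases (i)–(iii) with citations to \cite{PS09}, Proposition \ref{p:effectivenessP2}, and \cite{Fil18} only covers the situation where $(X_\eta,D_\eta)$ is \emph{klt}: Proposition \ref{p:effectivenessP2} explicitly assumes the geometric generic fiber is a klt pair, and the same is true of the results of Filipazzi you would invoke in case (iii). The lemma, however, only assumes $(X_\eta,D_\eta)$ is lc, and the lc-but-not-klt case is where essentially all of the work lies. The paper handles it by restricting to a minimal lc center $T$ of a dlt modification of the geometric generic fiber: by (the proof of) \cite[Thm.~5.2]{FL19} this produces a new lc-trivial fibration $(S^1,D^1)\to Y^1$ of relative dimension $\leq 1$ over a generically finite cover $\tau:Y^1\to Y$ with ${\bf M}^1=\tau^*{\bf M}$, so case (i) applies upstairs and one concludes $\deg(\tau)\,I'_r\,{\bf M}$ is b-free. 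The crux is then to bound $\deg(\tau)$, which equals the number of Galois conjugates of $T$; this is where hypotheses (ii) and (iii) are actually used in the non-klt case, via an analysis of the dual complex of the non-klt locus (a chain or cycle of rational curves on $\bP^2$, or lc centers horizontal/vertical over $Z$ in the conic-bundle case), yielding $\deg(\tau)\leq 6$ and hence the uniform constant $60\,I'_r$. None of this reduction, nor the need to bound the degree of the Stein factorization, appears in your proposal, so as written it does not prove the statement under the stated hypothesis.

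A secondary but genuine error: your treatment of case (ii) for $d=3$ asserts that the moduli part of a family of pairs $(\bP^2,C)$ with $C$ a cubic is b-trivial. This is false — for smooth cubics the pair is lc and not klt, its minimal lc center is the elliptic curve $C$ itself, and ${\bf M}$ is (a multiple of) the Hodge class of the resulting elliptic fibration; indeed the whole point of Theorem \ref{t:HodgeAmple} is that this class is \emph{ample} on $P_3^{\CY}$. The correct treatment of this subcase is exactly the minimal-lc-center reduction described above, which sends it to case (i) for an elliptic fibration over a degree $\le 2$ cover of $Y$.
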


\begin{proof}
	Case (i) holds by  \cite[Thm. 8.1]{PS09}. 
	If additionally $(X_\eta,D_\eta)$ is klt, then case (ii) holds by  Proposition \ref{p:effectivenessP2}
	and case (iii) holds by \cite[Proofs of Thm. 1.7 and 1.8]{Fil18}, which reduces the problem to  \cite[Cor. 7.4]{Fil18}.
	
	It remains to consider the case when $(X_\eta,D_\eta)$ is lc, but not klt.
	In this case, the problem can be reduced to (i) by using  \cite[Thm. 5.2]{FL19} as follows.
 By replacing $(X,D)\to Y$ with a pullback, we may assume ${\bf M}:= \overline{{\bf M}_Y}$.
	Fix a sequence of crepant proper birational morphisms
	\[
	(X^{\rm res},D^{\rm res})\overset{\pi}{\longrightarrow} (X^{\rm dlt},D^{\rm dlt}) \overset{\rho}{\longrightarrow} (X,D)
	\]
	such that $\rho$ restricts to a dlt modification over the generic point of $Y$ and $\pi$ is a log resolution.
	By \cite[Thm. 10.45]{Kol13}, we may additionally choose $\pi$ to be an isomorphism over the snc locus.
	Now fix a minimal lc center $T$ of the geometric generic fiber  $(X^{\rm dlt}_{\oeta},D^{\rm dlt}_{\oeta})$.
	Write $S$ for the closure of the image of $T$ on $X^{\rm dlt}$.
Let $S'$ be the birational transform of $S$ on $X^{\rm res}$ and
$	S'\to Y^1 \overset{\tau}{\to} Y$
	 the Stein factorization. 
	By the proof of \cite[Thm. 5.2]{FL19}, there exists 
	an lc-trivial fibration $(S^1,D^1)\to Y^1$ such that
	\begin{itemize}
		\item $\dim(Y_1)-\dim(S_1) \leq 1$
		\item ${\bf M}^1 = \tau^* {\bf M}$, where ${\bf M}$ and ${\bf M}^1$ are the moduli b-$\bQ$-divisors of $(X,D)\to Y$ and $(S^1,D^1)\to Y^1$, respectively, and 
		\item  $r( K_{S^1}+D^1)$ is linearly equivalent to 0 over the generic fiber.
	\end{itemize} 
	Thus case (i) implies there exists an integer $I'_{r}>0$ depending only on $r$ such that $ I'_r {\bf M}^1$ is b-free.
	Since ${\bf M}^1=\tau^* {\bf M}$, \cite[Proof of Lem. 2.1]{FL19} implies $\deg(\tau) I'_r {\bf M}$ is b-free.
	Hence it suffices to show that $T$ can be chosen such that $\deg(\tau)$ is bounded.
 One approach is to reduce to the case when $T$ is a curve and then apply \cite[Thm. 1.15]{Bir22}. 
 We give a more explicit approach below.

Note that $\deg(\tau)$ equals the number of connected components of $S'_{\oeta}$, which equals the number of connected components of $S_{\oeta}$.
	Since $S_{\eta}$ is the image of $T$ under the map $X_{\oeta}\to X_\eta$, 
	\cite[\href{https://stacks.math.columbia.edu/tag/04KZ}{Lem. 04KZ}]{stacks-project}
	implies $S_{\oeta}$ is the orbit of $T$ under the Galois group of $k(\oeta)/k(\eta)$.
	Hence, to complete the proof, it suffices to choose $T$ such that its Galois orbit has a bounded number of components.
	
\medskip

\noindent \emph{Case 1: $(X_{\overline{\eta}},D_{\overline{\eta}})$ has regularity 0.}

Since  $D^{\rm dlt,=1}_{\overline{\eta}}$ has at most two connected components
by \cite[Prop. 4.37.3]{Kol13}, 
$D^{\rm dlt,=1}_{\overline{\eta}}$ is either a prime divisor or a union of two disjoint divisors.
 Hence the Galois orbit of any choice of minimal lc center $T$ of $(X^{\rm dlt}_{\oeta}, D^{\rm dlt}_{\oeta})$ has  at most two components.
\medskip

\noindent \emph{Case 2: $(X_{\overline{\eta}},D_{\overline{\eta}})$ has regularity 1 and (ii) holds.}

First assume $D_{\overline{\eta}}$ has a coefficient 1 along a curve $C\subset X_{\overline{\eta}}$. 
Since $D_{\overline{\eta}} \sim_{\bQ} - K_{X_{\overline{\eta}}}$ and (ii) holds,
 there are  at most 3 such curves.
Let $T$ be a minimal lc center of $X^{\rm dlt}_{\overline{\eta}}$  that is contained in the strict transform of $\widetilde{C}$ of $C$.
Since  the restriction of $(X^{\rm dlt}, D^{\rm dlt})$ to $\widetilde{C}$ is an lc  CY curve pair, 
there are at most two such lc centers. 
Thus the Galois orbit of $T$ has at most six components.

Next, assume  $D_{\overline{\eta}}$ does not have coefficient 1 along a curve.
By \cite[Prop. 4.37]{Kol13},
$D_{\overline{\eta}}^{\dlt,=1}$  is  connected. 
Denote by $x\in X_{\oeta}$ the image of $D_{\oeta}^{\dlt, =1}$.  
Since the exceptional divisor of every log resolution of the smooth point $x\in X_{\oeta}$ is a tree of rational curves, $D_{\oeta}^{\dlt, =1}$ is a chain of rational curves. 
Hence, if we let $T$ be a minimal lc center of $(X^{\dlt}_{\overline{\eta}},D_{\overline{\eta}}^{\dlt,=1})$, which  is furthest to an endpoint of the chain, then the Galois orbit of $T$ has at most two components.

\medskip

\noindent \emph{Case 3: $(X_{\overline{\eta}},D_{\overline{\eta}})$ has regularity 1 and  (iii) holds.}

First assume there is an lc center $T'$ of  $(X^{\rm dlt}_{\overline{\eta}},D^{\rm dlt}_{\overline{\eta}})$ dominating $Z_{\overline{\eta}}$.
Since the general fiber of 
$X^{\rm dlt}_{\overline{\eta}} \to Z_{\overline{\eta}}$ 
is an lc CY curve pair,
there are at most two such lc centers.
Since $(T', D^{\rm dlt}_{T'})$ is again an lc CY curve, there are at most 2 lc centers contained in $T\subset T'$.
Hence, the  Galois orbit of such a $T$ has at most four components.

Next, assume no  lc center of  $(X^{\rm dlt}_{\overline{\eta}},D^{\rm dlt}_{\overline{\eta}})$ dominates $Z_{\overline{\eta}}$. By \cite[Prop. 4.37]{Kol13}, $D_{\oeta}^{\dlt, =1}$ is connected, and, hence, is contained in a fiber of $ X_{\oeta}^{\dlt} \to Z_{\oeta}$.  Since $X'_{\oeta}\to Z_{\oeta}$ is birational to a  smooth ruled surface, we know that every fiber is a tree of rational curves. Thus $D_{\oeta}^{\dlt, =1}$ is a chain of rational curves.
Hence, if $T$ is a minimal lc center chosen furthest along the chain, the Galois orbit has at most two components.
\medskip

By the previous cases, $T$ can be chosen so that the Galois orbit has at most 6 components and, hence, $\deg(\tau)\leq 6$. Therefore $60 I_r {\bf M}$ is b-free.
\end{proof}

\begin{lem}\label{l:delpezzoK}
There exists a constant $d>0$ such that the following holds:

If $X$ is a smooth del Pezzo surface over a field $K$ of characteristic zero and $X\times_K \overline{K}\not\cong \bP^2_{\overline{K}}$, then there exists a field extension $K'/K$ of  degree $\leq d$ such that $X_{K'}$ is either isomorphic to a blowup of $\bP^2_{K'}$ at distinct $K'$-rational points or isomorphic to $\bP^1_{K'}\times \bP^1_{K'}$.
\end{lem}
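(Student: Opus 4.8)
\textbf{Proof strategy for Lemma \ref{l:delpezzoK}.} The plan is to use the classification of smooth del Pezzo surfaces over a field together with Galois descent, bounding at each step the degree of the field extension needed to trivialize the relevant arithmetic obstructions. Recall that a smooth del Pezzo surface $X$ over $K$ becomes, over $\overline{K}$, either $\bP^1_{\overline{K}}\times\bP^1_{\overline{K}}$ or the blowup of $\bP^2_{\overline{K}}$ at $9-K_X^2$ points in general position; here $1\leq K_X^2\leq 9$. The hypothesis $X\times_K\overline{K}\not\cong\bP^2_{\overline{K}}$ rules out the case $K_X^2=9$ with Picard rank $1$ over $\overline{K}$, so we are left with finitely many deformation types.

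First I would handle the case $K_X^2 = 8$. Over $\overline{K}$ such an $X$ is either $\bP^1\times\bP^1$ or the blowup of $\bP^2$ at one point; the latter has a unique $(-1)$-curve, hence this curve is Galois-stable and descends to $K$, giving a $K$-morphism $X\to\bP^2$ contracting it — but then $X$ is the blowup of $\bP^2_K$ at a $K$-rational point, so no extension is needed. If $X\times_K\overline{K}\cong\bP^1\times\bP^1$, the two rulings are either both Galois-stable or swapped by an index-$2$ subgroup; after a degree $\leq 2$ extension both rulings descend, and one checks that $X$ becomes $\bP^1\times\bP^1$ over that extension (a del Pezzo of degree $8$ with two distinct rulings over a field admitting a rational point on each ruling is $\bP^1\times\bP^1$; the rational points come for free since a conic with a rational divisor class of odd degree has a rational point).

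Next I would treat the general case $1\leq K_X^2\leq 7$. Over $\overline{K}$, $\operatorname{Pic}(X_{\overline{K}})$ is a finite-rank lattice with a distinguished set of $(-1)$-curves, and the absolute Galois group acts through a finite subgroup $W$ of the Weyl group of the corresponding root system $E_n$ ($n = 9-K_X^2$), which acts faithfully on the configuration of $(-1)$-curves and on $\operatorname{Pic}$. Let $K'/K$ be the fixed field of the kernel of this action; then $[K':K]\leq |W(E_8)|$, a universal bound, and $X_{K'}$ has all its $(-1)$-curves defined over $K'$ with trivial Galois action on $\operatorname{Pic}$. Choosing a Galois-compatible sequence of disjoint $(-1)$-curves to contract (which exists since the action on $\operatorname{Pic}(X_{K'})$ is trivial), we obtain a birational morphism $X_{K'}\to\bP^2_{K'}$ defined over $K'$ realizing $X_{K'}$ as a blowup of $\bP^2_{K'}$ at distinct $K'$-points — after possibly enlarging $K'$ by a further bounded degree so that the images of the contracted curves, which are $K'$-rational $0$-cycles of degree $1$ on $\bP^2$, become honest $K'$-rational points (again this is automatic once $\operatorname{Pic}$ is split, as each blown-up point corresponds to a $(-1)$-curve hence a Galois-fixed point). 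Setting $d := \max(2, |W(E_8)|)$, or rather the maximum over $n\leq 8$ of $|W(E_n)|$, gives the desired universal constant.

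The main obstacle I anticipate is the passage from "$(-1)$-curves and $\operatorname{Pic}$ are Galois-stable" to "$X$ is genuinely a blowup of $\bP^2_{K'}$ at rational points": one must produce the contraction morphism over $K'$ and verify that the exceptional loci are disjoint reduced $K'$-rational points rather than higher-degree closed points or infinitely-near configurations. This is standard descent for del Pezzo surfaces (it is where one uses that a Galois-invariant pencil of conics, or a Galois-invariant disjoint collection of $(-1)$-curves, actually descends and that del Pezzo surfaces over a field with split Picard group and a rational point are rational), but it requires care; I would cite the standard references on arithmetic of del Pezzo surfaces (e.g. Manin's book or Kollár--Smith--Corti) for the precise descent statements rather than reprove them. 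The bookkeeping of which root system $E_n$ occurs and the explicit Weyl group orders is routine.
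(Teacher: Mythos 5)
Your overall strategy is the same as the paper's: the Galois action on $\Pic(X_{\oK})$ preserves the negative definite lattice $[K_X]^{\perp}$, hence factors through a finite group of order bounded in terms of the (finitely many) deformation types, and after passing to the corresponding fixed field one descends the $(-1)$-curves and the contraction to $\bP^2$ (the target is a priori a Severi--Brauer surface, trivialized by the $K'$-rational images of the exceptional curves), treating the $\bP^1\times\bP^1$ case via the two rulings. So the architecture is fine.

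There is, however, one incorrect sub-claim in your $\bP^1\times\bP^1$ case: you assert that once the two rulings are Galois-stable, "the rational points come for free since a conic with a rational divisor class of odd degree has a rational point." The implication about conics is true, but the base conics of the two rulings need not carry any odd-degree rational divisor class. Take $Q$ a conic over $\bQ$ with no rational point and $X=Q\times Q$: this is a smooth del Pezzo surface of degree $8$, both rulings are already defined over $\bQ$, yet $\Pic(Q)$ is generated by a degree-$2$ class, so $X\not\cong\bP^1_{\bQ}\times\bP^1_{\bQ}$. The Galois-invariance of the ruling classes in $\Pic(X_{\oK})$ does not force them to be represented by honest divisors over the base field (the obstruction lives in the Brauer group). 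The fix is trivial and is what the paper does: after the rulings descend you obtain $X_{K_1}\cong Q_1\times Q_2$ for two smooth conics, and you pass to a further extension of degree at most $4$ splitting both conics; this only enlarges the universal constant $d$. With that correction your argument goes through.
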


\begin{proof}
Let $\oX:=X\times_K \oK$ and $\Lambda: = [K_X]^{\perp} \subset \Pic(\oX)$.
Then $(\Lambda, \cdot)$ is a negative definite lattice by the Hodge index theorem. Hence the action of Galois group $\Gal(\oK/K)$ on  $\Pic(\oX)$ factors through the finite group $W$, which is the automorphism group of the lattice $(\Lambda, \cdot)$. Thus there exists a finite   extension of fields $K_1/K$ with $[K_1:K]\leq |W|$ such that $\Gal(\oK/K_1)$ acts trivially on $\Pic(\oX)$. 

If $\oX$ contains $(-1)$-curves,  we set $K'=K_1$. Then every $(-1)$-curve is fixed by the  $\Gal(\oK/K')$-action. We know that $\oX$ is isomorphic to the blowup of $\bP^2_{\oK}$ at $m=9-(K_{\oX}^2)$ distinct points $\{p_i\}_{i=1}^{m}$. Denote by $\oE_i$ the $(-1)$-curve on $\oX$ as the exceptional divisor over $p_i$. Since each $\oE_i$ is fixed by the $\Gal(\oK/K')$-action, we know that it is defined over $K'$ as a $(-1)$-curve $E_{i}'\subset X_{K'}$. Thus, the birational morphism $\oX \to \bP^2_{\oK}$ is also defined over $K'$, which implies that we have a birational morphism $\phi:X_{K'}\to S$ over $K'$ where $S$ is a Severi--Brauer surface over $K'$. Since $\phi(E_i')$ is a $K'$-rational point of $S$, we know that $S\cong \bP^2_{K'}$. 

If $\oX \cong \bP^1_{\oK}\times \bP^1_{\oK}$, then both classes of curves $[\oF_1]$ and $[\oF_2]$ of bidegree $(1,0)$ and $(0,1)$ respectively are invariant under the $\Gal(\oK/K_1)$-action. Since each projection map $\mathrm{pr}_j: \oX \to \bP^1_{\oK}$ for $j=1,2$ is an extremal contraction of $[\oF_{3-j}]$, it is defined over $K_1$ as $\pi_j: X_{K_1} \to Q_j$ where $Q_j$ is a smooth conic curve over $K_1$. Thus we get an isomorphism $(\pi_1,\pi_2): X_{K_1} \xrightarrow{\cong} Q_1 \times Q_2$. Then we let $K'/K_1$ be a finite field extension such that both $Q_1$ and $Q_2$ admit $K'$-rational points, which can be chosen such that $[K':K_1]\leq 4$. This implies that $X_{K'}\cong \bP^1_{K'}\times \bP^1_{K'}$. In the previous two cases, $\deg(K'/K)$ is bounded by a constant $d$ depending only on the lattice $\Lambda$ of $\overline{X}$, which only depends on the deformation family of $\oX$. Since there are only $9$ deformation families of $\oX$, the proof is complete.
\end{proof}

\begin{thm}\label{t:effectivebprecise}
For each finite subset $\Lambda \subset \mathbb{Q}\cap [0,1]$, there exists an integer $I:= I(\Lambda)$ satisfying:

If $(X,D)\to Y$ is an lc-trivial fibration such that $\dim(X) - \dim(Y)\leq 2$,  the generic fiber $(X_\eta,D_\eta)$ is an lc pair, and $D_\eta$ is a nonzero divisor with coefficients in $\Lambda$, then $I {\bf M}$ is b-free.
\end{thm}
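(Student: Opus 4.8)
The plan is to reduce Theorem \ref{t:effectivebprecise} to Lemma \ref{l:effectivebsemi} by a case analysis on the geometric generic fiber, using the finiteness of the coefficient set $\Lambda$ and an ACC/boundedness argument to control the index $r$ of the fiber. First I would replace $(X,D)\to Y$ by a pullback via a log resolution so that ${\bf M} = \overline{{\bf M}_Y}$. Since $\dim(X)-\dim(Y)\le 2$, the geometric generic fiber $(X_{\overline\eta},D_{\overline\eta})$ is an lc CY pair of dimension at most $2$ with $D_{\overline\eta}\ne 0$ having coefficients in $\Lambda$; by the global ACC for lc CY pairs (a consequence of \cite{HMX18}, applicable since $\Lambda$ is finite hence satisfies DCC) there is an integer $r = r(\Lambda)$ such that $r(K_{X_{\overline\eta}}+D_{\overline\eta})\sim 0$, and hence $r(K_{X_\eta}+D_\eta)\sim 0$ as well (the index does not change under the field extension $\overline{k(\eta)}/k(\eta)$, since $h^0$ of the relevant sheaves is preserved).

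With $r$ fixed, I would invoke Lemma \ref{l:effectivebsemi} once its hypotheses (i), (ii), or (iii) are verified. If $\dim(X)-\dim(Y)\le 1$, hypothesis (i) holds directly. If $\dim(X)-\dim(Y) = 2$ and $(X_{\overline\eta},D_{\overline\eta})$ is \emph{not} klt, then by Theorem \ref{t:bsemiample}'s proof strategy (reduction via \cite[Thm. 5.2]{FL19} to a minimal lc center, exactly as carried out in the lc-but-not-klt case of Lemma \ref{l:effectivebsemi}), one reduces to relative dimension $\le 1$, and the bound on the degree of the base extension needed there is controlled by the structure of $D_{\overline\eta}^{\mathrm{dlt},=1}$ as in the cited proof; so hypothesis (i) applies to the reduced fibration with index still divisible by $r$. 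If $(X_{\overline\eta},D_{\overline\eta})$ is klt of dimension $2$, I would split according to whether $X_{\overline\eta}\cong \bP^2_{\overline{k(\eta)}}$: in that case hypothesis (ii) applies (and $\tfrac d3 D_{\overline\eta}$ being Weil is automatic once we note $D_{\overline\eta}\in \tfrac 3d\,\mathbb Z$-divisor or, more robustly, one passes to the degree $d = d(\Lambda)$ forced by $K_{X_{\overline\eta}}+D_{\overline\eta}\sim_{\mathbb Q}0$ with $X_{\overline\eta}\cong\bP^2$). If $X_{\overline\eta}\not\cong\bP^2_{\overline{k(\eta)}}$, then since $-K_{X_{\overline\eta}}$ is big (as $-K_{X_{\overline\eta}}\sim_{\mathbb Q}D_{\overline\eta}\ge 0$ is $\mathbb Q$-Cartier and... ) one needs $X_\eta$ to be covered by a $\bP^1$-fibration after a bounded base extension: this is where Lemma \ref{l:delpezzoK} enters, giving a field extension $K'/k(\eta)$ of bounded degree $d$ over which $X_{K'}$ is a blowup of $\bP^2_{K'}$ at distinct rational points or $\bP^1_{K'}\times\bP^1_{K'}$, both of which carry a morphism $g$ to $\bP^1_{K'}$ of relative dimension $1$ with genus $0$ general fiber, spreading out to a diagram as in hypothesis (iii) over a generically finite base cover of bounded degree. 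Applying Lemma \ref{l:effectivebsemi} to the pulled-back fibration over this cover gives $I_r{\bf M}' = I_r\,\tau^*{\bf M}$ b-free with $\tau$ of degree $\le d$, and then \cite[Prop. 3.1]{Amb05} together with \cite[Proof of Lem. 2.1]{FL19} yields that $d!\cdot I_r\,{\bf M}$ is b-free.

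Assembling these, I would set $I := \mathrm{lcm}\{\,d!\cdot I_r, \,I_r\,\}$ where $r = r(\Lambda)$ comes from global ACC, $d = d(\Lambda)$ is the maximum of the base-extension degrees appearing in Lemma \ref{l:delpezzoK} and in the lc-but-not-klt reduction (both bounded in terms of $\Lambda$ alone), and $I_r$ is the constant of Lemma \ref{l:effectivebsemi}; this $I$ depends only on $\Lambda$ and makes $I{\bf M}$ b-free in all cases. I expect the main obstacle to be the bookkeeping of base extensions and the verification that all the degrees involved — the degree $d$ coming from del Pezzo geometry, the Galois-orbit size of the minimal lc center in the non-klt reduction, and the degree of the cover needed to spread out hypothesis (iii) from the geometric generic fiber to an open subset of the base — are simultaneously bounded in terms of $\Lambda$ only; the del Pezzo case in particular requires checking that $K_{X_{\overline\eta}}+D_{\overline\eta}\sim_{\mathbb Q}0$ with $D_{\overline\eta}$ having DCC (indeed finite) coefficients forces $(-K_{X_{\overline\eta}})^2$, and hence the relevant degree invariant, into a finite set, which is where global ACC does the real work. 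The remaining steps are routine once $r$ and the base-extension degrees are pinned down.
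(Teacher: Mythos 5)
Your overall architecture matches the paper's: bound the index $r$ of the generic fiber using the finiteness of $\Lambda$ (the paper cites \cite[Cor. 1.11]{PS09} rather than global ACC, but either works), then case-split so as to land in hypothesis (i), (ii), or (iii) of Lemma \ref{l:effectivebsemi}, using Lemma \ref{l:delpezzoK} to produce a conic-bundle structure after a bounded base extension when the fiber is not $\bP^2$, and take $I = d!\,I_r$. However, there is a genuine gap at the step where you invoke Lemma \ref{l:delpezzoK}: that lemma is stated only for \emph{smooth del Pezzo surfaces}, and its proof relies on the lattice $[K_X]^\perp$ and the finiteness of its automorphism group. The geometric generic fiber $(X_{\overline\eta},D_{\overline\eta})$ of your lc-trivial fibration is merely a (possibly singular) lc log CY surface pair with $D_{\overline\eta}\neq 0$; the condition $-K_{X_{\overline\eta}}\sim_{\bQ}D_{\overline\eta}\geq 0$ gives an effective nonzero anticanonical class, which is far from implying that $X_{\overline\eta}$ is a smooth del Pezzo (your own ellipsis at ``$-K_{X_{\overline\eta}}$ is big \dots'' flags that effectivity does not yield bigness, let alone ampleness and smoothness). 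So Lemma \ref{l:delpezzoK} cannot be applied to $X_\eta$ as it stands.

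The missing idea is the paper's reduction step: first make $Y$ and $X$ projective, replace $(X,D)$ by the crepant pullback to a resolution $\widetilde{X}$ restricting to the \emph{minimal resolution} on the generic fiber (this preserves the moduli b-divisor and keeps $\widetilde{D}_\eta>0$ with coefficients in $r^{-1}\bZ$, since $K_{\widetilde{X}_\eta/X_\eta}\leq 0$), observe that $K_{\widetilde{X}}$ is not pseudo-effective over $Y$, and run a relative $K_{\widetilde{X}}$-MMP to a Mori fiber space $X^{\rm m}\to Z^{\rm m}$ over $Y$. If $\dim Z^{\rm m}>\dim Y$ one is already in hypothesis (iii); otherwise $X^{\rm m}_\eta$ is a genuine smooth del Pezzo surface (terminality of $X^{\rm m}$ forces smoothness of the generic fiber, and relative Picard rank one over $Y$ forces $-K$ ample), and only then do the cases $X^{\rm m}_{\overline\eta}\cong\bP^2$ (hypothesis (ii)) versus Lemma \ref{l:delpezzoK} (hypothesis (iii)) apply. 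Note also that this reduction is needed even in your lc-but-not-klt branch: Lemma \ref{l:effectivebsemi} bounds the Galois orbit of the minimal lc center only under one of (i)--(iii), so for a relative-dimension-two non-klt fiber that is neither $\bP^2$ nor a conic bundle you still must pass through the MMP reduction before the lemma gives a bounded constant.
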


\begin{proof}	
By \cite[Cor. 1.11]{PS09}, there exists a positive integer $r$ that depends  only on the coefficient set $\Lambda$ such that $r(K_{X_\eta}+D_\eta)\sim 0$. 
Thus $r {\bf M}$ is a ${\bf b}$-divisor and the coefficients of $D_\eta$ are contained in $r^{-1}\bZ \cap[0,1]$.
We will proceed to show that the theorem holds with $I:= d! I_r $, where $I_r$ and $d$ are the constants in Lemmas \ref{l:effectivebsemi} and \ref{l:delpezzoK}.

We first  reduce to the case when $X_\eta$ is a smooth del Pezzo surface.
By replacing $(X,D)\to Y$ with a base change,
we may assume $Y$ is projective. 
By \cite[Cor. 1.2]{HX13}, we may assume $X$ is projective. 
Fix a minimal resolution $\widetilde{X}_\eta \to X_\eta$, which exists 
by \cite[Thm. 2.25]{Kol13},
and a resolution $f:\widetilde{X}\to X$ that restricts to the minimal resolution on the general fiber. 
Write $\widetilde{D}$ for the divisor such that $K_{\widetilde{X}}+\widetilde{D}=f^*(K_X+D)$.
Note that $\widetilde{D}_\eta>0$  and  the coefficients of $\widetilde{D}_\eta$ are in $r^{-1}\bZ$, since $D_\eta>0$, $K_{\widetilde{X}_{\eta}/ X_\eta}\leq0$, and $r(K_{X_\eta}+{D}_\eta)\sim0$.
Using that $K_{\widetilde{X}} +\widetilde{D}\sim_{\bQ}0$, we see  $K_{X'}$ is not pseudo-effective over $Y$.
Thus \cite[Cor. 1.3.3]{BCHM10} implies that  a $K_{\widetilde{X}}$-MMP over $Y$ terminates with a Mori fibre space:
\[
\widetilde{X}\dashrightarrow \cdots \dashrightarrow X^{
	\rm m} \to Z^{\rm m}
.\]
Since $\widetilde{X}$ is terminal,  $X^{\rm m}$ is terminal and, hence, $X^{\rm m}_\eta$ is smooth. 
Let $D^{\rm m}$ denote the pushforward of $\widetilde{D}$ to $X^{\rm m}$.
Since $K_{\widetilde{X}}+\widetilde{D}\sim_{\bQ,Y}0$, $K_{X^{\rm m}}+D^{\rm m}\sim_{Y,\bQ}0$
and the pairs  are crepant birational over $Y$. 
Thus the moduli b-divisors of 
$(\widetilde{X},\widetilde{D})\to Y$ and $(X^{\rm m},D^{\rm m})\to Y$ are equal by \cite[Rem. 3.5]{FL19}.
If $\dim Z^{\rm m}>\dim Y$, then we are in the setting of Lemma \ref{l:effectivebsemi}.iii
and, hence, $I_r {\bf M}$ is b-free.
If not, then $X^{\rm m}_\eta$ is a smooth del Pezzo surface.
Thus, by  replacing $(X,D)$ with $(X^{\rm m},D^{\rm m})$, we may assume $X_\eta$ is a smooth del Pezzo surface. 

If $X_{\overline{\eta}} \cong \bP^2_{k(\overline{\eta})}$, then  $I_r {\bf M}$ is b-free  by Lemma \ref{l:effectivebsemi}.ii.
If not, then Lemma \ref{l:delpezzoK} implies there  exists  a field extension  $K'$ of $K(\eta)$ of degree $\leq d$ such that $X_{K'}$ is  isomorphic to $\bP^1_{K'}\times \bP^1_{K'}$ or $\bP^2_{K'}$ blown up  at  $K'$-points. 
In both cases, there is a surjective morphism
$X_{K'}\to \bP^1_{K'}$.
Thus there exists of finite morphism of normal varieties $\rho:Y'\to Y$ such that $\deg(Y'/Y)\leq d$ and  the generic fiber of $X\times_{Y} Y' \to Y'$
admits a surjective  morphism to $\bP^1_{K'}$. 
Now let $(X',D')\to Y'$ denote the lc-trivial fibration induced by pullback.
The moduli divisors of the two lc-trivial fibrations are related by that  ${\bf M}'= \rho^* {\bf M}$ by \cite[Prop. 3.1]{Amb05}.
There exists  a proper normal variety $Z'$ and a commutative diagram of dominant  maps
\[
\begin{tikzcd}
X' \arrow[r,dashed,"g'"] \arrow[rd,"f'",swap] & Z'\arrow[d,dashed,"h'"] \\
& Y',
\end{tikzcd}
\]
such that the generic fiber of $h'$ is $\bP^1_{K'}$.
By taking birational modifications of $X'$ and $Z'$ that are isomorphisms over the generic  point of $Y'$, we may assume that $g'$ and $h'$ are morphisms. 
This does not change the moduli divisor  by \cite[Prop. 8.4.9.1]{Kol07}. 
Since $f'$ has connected fibers, so does $g'$ and $h'$. 
Now Lemma \ref{l:effectivebsemi} implies $I_r {\bf M}'$ is b-free. 
Therefore $\deg(Y'/Y)I_r {\bf M}$ is b-free by \cite[Proof of Lem. 2.1]{FL19} and so  $I{\bf M}$ is b-free.
\end{proof}

\section{Proofs of main theorems}\label{s:proofofmainthms}

In this section, we prove   Theorems \ref{t:main1}, \ref{t:main2}, and \ref{t:effectiveb}, which were stated in the introduction.
Note that Corollary \ref{c:sequiv} was previously proven in Section \ref{ss:sequiv}.

\begin{proof}[Proof of Theorem \ref{t:main1}]
The first part of the theorem follows from Theorem \ref{t:stack}.
Theorems \ref{t:Scomplete}, \ref{t:Thetared}, and   \ref{thm:properness} combined with Lemma \ref{l:Ncompspecialize} imply (1)-(3) hold.
\end{proof}

\begin{proof}[Proof of Theorem \ref{t:main2}]
Consider the asymptotically good moduli space 
\[
\phi: (\cP_d^{\CY})^{\rm sn} \to P_{d}^{\CY}
\]
in Proposition \ref{p:existsagm}. Note that $P_d^{\CY}$ is the seminormalization of $P_{d,m}^{\CY}$ for $m$ sufficiently large.
The morphism $\phi$ is surjective by Proposition \ref{p:agmproperties}.
The algebraic space $P_{d,m}^{\CY}$ is reduced, irreducible,  and proper by Propositions \ref{p:normalawayfromIII+ell} and \ref{p:properness}.
We  now  verify  (1)-(4) of 
Theorem \ref{t:main2} hold.
\begin{enumerate}
\item  Proposition \ref{p:agmproperties}.2 implies $\phi$ is universal among maps to algebaic spaces.
\item   Lemma \ref{l:sequiv} and Proposition \ref{p:agmproperties} imply the map 
\[
(\cP_{d}^{\CY})^{\rm sn}(\bk) \to  (P_{d}^{\CY})^{\rm sn}(\bk)
\]
is surjective and identifies two pairs $(X,D) $ and $(X',D')$ if and only if they are S-equivalent. 
		
\item 	
Note that $	\cP_{d}^{\rm K}$ and $	P_{d}^{\rm K}$ are  seminormal by Proposition \ref{p:Klocalstructure}.
Thus there exists a commutative diagram as in (3) by Proposition \ref{p:agmproperties}.2.
Since $P_d^{\rm K}$ and $(P_d^{\rm H})^{\rm sn}$ are projective by Theorems \ref{t:ADL} and \ref{t:Hacking} and $P_d^{\CY}$ is separated, 
the maps in the bottom row are projective morphisms.
When $d\geq 4$,  $\cP_d \subset \cP_d^{\rm K}$ and $\cP_d \subset \cP_d^{\rm H}$.
Using Proposition \ref{p:compactPd}, we see the maps in the bottom row are bijections on $\bk$-points  over $\phi(\cP_d^{\rm sn})$. 
Thus the maps are birational.
		
\item Theorem \ref{t:HodgeAmple} implies  the Hodge line bundle on $P_{d,m}^{\CY}$ is ample.
Thus the pullback of the line bundle by $P_d^{\CY}\to P_{d,m}^{\CY}$ is also ample.

	\end{enumerate}
We have now shown that $P_d^{\CY}$ is a proper algebraic space that is reduced, irreducible, and admits  an ample line bundle. Thus $P_d^{\CY}$ is a projective variety.
\end{proof}

\begin{proof}[Proof of Theorem \ref{t:effectiveb}]
The results follows from Theorems \ref{t:bsemiample} and \ref{t:effectivebprecise}.
\end{proof}

\section{Examples}\label{sec:K3}

In this section we give concrete examples of the moduli space $P_d^{\CY}$ and apply Theorem \ref{t:main1} to additional moduli problems.

\subsection{Low degree plane curves}
We now describe the moduli space $P_{d}^{\CY}$   when $3 \leq d\leq  6$. When  $d=4$ and $d=6$, we assume the base field $\bk=\mathbb{C}$ in order to relate $P_d^{\CY}$ to certain Baily-Borel compactifications of moduli of K3 surfaces that are defined over the complex numbers.

\subsubsection{Degree 3}
By \cite[Ex. 4.5.3]{ADL19}, 
$\cP_3^{\rm K}$ parametrizes GIT semistable cubics and $P_3^{\rm K}$ is isomorphic to the GIT compactification of plane cubics $P_3^{\rm GIT}$.
Thus, a pair $(X,D)$ is in $\cP_3^{\rm K}(\bk)$ if and only if $(X,D)\cong (\bP^2, C)$, where
either
\begin{enumerate}
	\item[(a)] $C$ is a smooth cubic curve, or 
	\item[(b)] $C$ is a nodal cubic curve.
\end{enumerate}
Additionally,  in case (a), each pair is a closed point of $\cP_d^{\rm K}$ with finite stabilizer. In case (b), all pairs are S-equivalent, and $(\bP^2, \{xyz=0\})$ is the unique closed point in $\cP_d^{\rm K}$ from this S-equivalence class. 

\begin{prop}
The natural map $P_3^{\rm K} \to P_{3}^{\CY}$ is an isomorphism. 
\end{prop}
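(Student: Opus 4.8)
The plan is to show that the natural morphism $\psi_{\mathrm{K},m} \colon P_3^{\mathrm{K}} \to P_{3,m}^{\CY}$ from Proposition \ref{p:wallcrossing} (equivalently its seminormalization, since both sides will turn out to be normal) is a bijection on $\bk$-points, and then upgrade this to an isomorphism using normality. Recall that when $d = 3$ we have $\cP_3^{\mathrm{H}} = \emptyset$, so the only wall-crossing contraction to analyze is the one from the K-moduli side; also $3 \mid 3$, so $\cP_3^{\CY}$ is genuinely unbounded and the target must be understood as $P_{3,m}^{\CY}$ for $m \gg 0$ (then $P_3^{\CY} = (P_{3,m}^{\CY})^{\mathrm{sn}}$).

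First I would recall from Lemma \ref{l:SequivKmod} that every pair in $\cP_3^{\CY}(\bk)$ is S-equivalent to a pair in $\cP_3^{\mathrm{K}}(\bk)$, so by Theorem \ref{t:main2}.2 (or Proposition \ref{p:PdmSequiv}) the map $P_3^{\mathrm{K}}(\bk) \to P_3^{\CY}(\bk)$ is surjective. For injectivity, I must show that two pairs in $\cP_3^{\mathrm{K}}(\bk)$ that are S-equivalent (as boundary polarized CY pairs) already define the same point of $P_3^{\mathrm{K}}$, i.e.\ are S-equivalent in the weaker sense detected by the K-moduli space. This reduces to the two cases recalled just above: for case (a) (smooth cubics) the pairs are klt, hence by Proposition \ref{p:sequivklt} are S-equivalent only to themselves, so there is nothing to glue; for case (b) (nodal cubics) we already know all such pairs are S-equivalent to each other and to $(\bP^2,\{xyz=0\})$ on both sides, so the two equivalence relations agree. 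Hence $P_3^{\mathrm{K}}(\bk) \to P_3^{\CY}(\bk)$ is a bijection. Combined with properness/separatedness (Propositions \ref{p:wallcrossing}, \ref{p:properness}, Theorem \ref{t:existsPdm}) this gives that $\psi_{\mathrm{K},m}$ is a finite, birational, bijective morphism.

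To conclude that it is an isomorphism I would invoke normality: $P_3^{\mathrm{K}}$ is normal by Proposition \ref{p:Klocalstructure}, and a finite birational bijective morphism onto a reduced target with normal source is an isomorphism onto the seminormalization of the target; since $P_3^{\CY} = (P_{3,m}^{\CY})^{\mathrm{sn}}$ by construction, this is exactly what we want. Alternatively, and perhaps more cleanly, one can observe that away from the single Type III point both stacks/spaces literally coincide (every Type II phenomenon is absent for $d = 3$ — indeed $\cP_3^{\rm H}$ is empty and there are no strictly lc degenerations other than the elliptic-cone-free case, by the analysis in Section \ref{s:sequivcurves}), so the map is an isomorphism over a dense open set, and then bijectivity plus normality of the source finishes the argument. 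The main obstacle I anticipate is bookkeeping the unboundedness: one must be careful that $P_3^{\CY}$ is the stabilized $(P_{3,m}^{\CY})^{\mathrm{sn}}$ for $m \gg 0$ (Propositions \ref{p:stabilization}, \ref{p:existsagm}) and that the Type III locus really does collapse to one point on the $\CY$ side (Lemma \ref{l:stabilization}.2), matching the single S-equivalence class of nodal cubics on the K side; once these identifications are in place the isomorphism is immediate.
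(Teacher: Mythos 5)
There is a genuine gap in your injectivity argument, and it is the crux of the proof. When $d=3$ the polarizing boundary is $D=\tfrac{3}{3}C=C$, so a smooth plane cubic gives the pair $(\bP^2,C)$ with $C$ appearing with coefficient $1$. This pair is lc (indeed plt) but \emph{not} klt, so Proposition \ref{p:sequivklt} does not apply, and the pair is not S-equivalent only to itself: as the paper notes immediately after this proposition, $(\bP^2,C)$ admits a nontrivial weakly special degeneration to the cone over $C$ with the section at infinity. For the same reason your ``alternative'' argument fails: Type II phenomena are not absent for $d=3$ --- on the contrary, every smooth-cubic pair is Type II (its source is the elliptic curve $C$), and in fact no pair in $\cP_3^{\CY}(\bk)$ is Type I. So the case (a) half of your injectivity claim is unproven.

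The correct way to close this, and what the paper does, is to use the source as an invariant of S-equivalence classes: if two pairs of $\cP_3^{\rm K}(\bk)$ are identified in $P_3^{\CY}$ they are S-equivalent (Theorem \ref{t:main2}.2), hence have crepant birational sources by Proposition \ref{p:SrcSequiv}. For $(\bP^2,C)$ and $(\bP^2,C')$ with $C,C'$ smooth cubics the sources are the elliptic curves $C$ and $C'$, so $C\cong C'$, and isomorphic smooth plane cubics are projectively equivalent, hence already identified in $P_3^{\rm K}\cong P_3^{\rm GIT}$; the nodal case is handled exactly as you say. Your surjectivity argument and the final upgrade from a finite bijection between projective seminormal varieties to an isomorphism are correct and agree with the paper, but as written the proof does not establish injectivity over the smooth-cubic locus.
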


\begin{proof}
Since $P_3^{\rm K}$ and $P_{3}^{\CY}$ are projective 
seminormal varieties, it suffices to show that $P_3^{\rm K} \to P_{3}^{\CY}$ is a bijection on $\bk$-points.
Proposition \ref{p:wallcrossing} states that the map is surjective.
Additionally, if $(X,D)$ and $(X',D')$ are pairs in $\cP_{3}^{\rm K}(\bk)$ that are identified in $P_{3}^{\CY}(\bk)$, then $(X,D)$ and $(X',D')$ are S-equivalent by Theorem \ref{t:main2} and, hence, have isomorphic source by Proposition \ref{p:sequiv}. 
The above classification of pairs in $\cP_3^{\rm K}$ implies $(X,D)$ and $(X',D')$ are identified in $P_3^{\rm K}$. 
Therefore $P_3^{\rm K} \to P_{3}^{\CY}$ is bijective on $\bk$-points  as desired.
\end{proof}

The inclusion $\cP^{\rm K}_3\subset \cP_3^{\CY}$ is not an equality.
Indeed,  a pair $(X,D)$ in case (a) admits a  degeneration to $(X,D) \rightsquigarrow(X_0,D_0)$, where $X_0$ is the cone over $D$ and $D_0$ is the divisor at infinity. In addition, the S-equivalence class of Type III pairs in $\cP_3^{\CY}$ is unbounded by Example \ref{e:unbounded}.

\subsubsection{Degree 4}

In \cite{Kondo}, Kond\={o}  uses the observation 
that a degree 4 cyclic cover of $\bP^2$ along a smooth quartic curve is a degree K3 surface to construct a compactification $P_4\subset P_4^*$, which is the Baily-Borel compactification of a period domain parametrizing 
degree 4 K3 surfaces with a $\mathbb{Z}/4\mathbb{Z}$ symmetry.

\begin{prop}\label{prop:quartics}
The birational map $P_4^{\CY} \dashrightarrow P_4^{*}$ is an isomorphism.
\end{prop}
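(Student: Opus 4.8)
The plan is to compare the two projective varieties $P_4^{\CY}$ and $P_4^*$ by exhibiting a common open dense locus and checking that the maps on the boundary strata agree. Since both $P_4^{\CY}$ and $P_4^*$ are projective varieties containing $P_4$ as a dense open subset (for $P_4^{\CY}$ this is Proposition \ref{p:compactPd}; for $P_4^*$ it is Kond\={o}'s construction), and $P_4^{\CY}$ is seminormal by construction while $P_4^*$ is normal (being a Baily--Borel compactification), it suffices to show that the birational map $P_4^{\CY}\dashrightarrow P_4^*$ extends to a morphism which is a bijection on $\bk$-points; a birational morphism of projective varieties that is bijective on points and whose target is normal (equivalently, whose source is seminormal and target normal, applied in the appropriate direction) is an isomorphism. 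Concretely, I would first recall that $3\nmid 4$, so by Theorem \ref{t:moduliexists3notd} and Proposition \ref{p:normalPdCY} the stack $\cP_4^{\CY}$ is smooth, $P_4^{\CY}$ is normal, and $\phi:\cP_4^{\CY}\to P_4^{\CY}$ is a good moduli space; moreover by Proposition \ref{p:dnmid3plt} every pair in $\cP_4^{\CY}(\bk)$ is Type I or II.

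Next I would use the degree-$4$ cyclic cover construction to produce a morphism between the two moduli spaces. To a boundary polarized CY pair $(X,D)=(X,\tfrac{3}{4}C)$ in $\cP_4^{\CY}$ one associates the degree $4$ cyclic cover of $X$ branched along $C$ (this is encoded in the canonical covering stack of Section \ref{ss:canonicalcovering}, since $N_4=4$ and $\omega_{X}^{[4]}(4D)\cong \cO_X$), which is a (possibly singular, slc) K3 surface with a $\bZ/4\bZ$-action; the construction works in families and the resulting family of K3's has a well-defined period map to Kond\={o}'s period domain, extending to the Baily--Borel compactification by the extension theorem for period maps of degenerating K3 families (Kulikov--Persson--Pinkham together with the surjectivity/properness of the period map on the Baily--Borel level). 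This produces a morphism $P_4^{\CY}\to P_4^*$ extending the identity on $P_4$. Because the Hodge line bundle on $P_4^{\CY}$ is ample (Theorem \ref{t:HodgeAmple}) and pulls back from the natural ample $\bQ$-line bundle on the Baily--Borel compactification $P_4^*$ (the Hodge/automorphic line bundle) under this map --- compatibility of Hodge bundles under the cyclic cover construction --- the morphism is finite. It remains to check it is bijective on $\bk$-points, and since $P_4^{\CY}$ is normal and $P_4^*$ is normal, a finite birational bijective morphism between normal varieties is an isomorphism (Zariski's main theorem).

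To verify bijectivity on $\bk$-points I would argue stratum by stratum using the explicit description of S-equivalence classes. On the Type I (klt) locus, Proposition \ref{p:sequivklt} says each pair is S-equivalent only to itself, and the cyclic cover of a klt pair $(\bP^2,\tfrac34 C)$ with $C$ at worst ADE-type gives an ADE K3 surface, recovering exactly the degree $4$ points of Kond\={o}'s interior and its discriminant; injectivity here follows from the Torelli theorem for such K3's with the $\bZ/4\bZ$-symmetry, which is precisely what Kond\={o}'s period map encodes. On the Type II locus, Theorem \ref{thm:type2Sequiv} classifies the S-equivalence classes --- elliptic cones, orbifold cones over $\bP^1$, toric Manetti surfaces, and gluings of two orbifold cones --- and one checks case by case that the associated degenerate K3 surface (the cyclic cover) is of Kulikov Type II, landing in the corresponding rational boundary curve of $P_4^*$, and that distinct S-equivalence classes with distinct sources land in distinct boundary points (the finiteness clause of Theorem \ref{thm:type2Sequiv}, that for a fixed source only finitely many such $(X_0,D_0)$ occur, is exactly what matches the finiteness of the boundary of the Baily--Borel compactification). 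I expect the \textbf{main obstacle} to be this last matching of Type II strata: one must carefully compute, for each of the four families in Theorem \ref{thm:type2Sequiv}, the limiting mixed Hodge structure of the degree $4$ cyclic cover and identify it with the correct rational boundary component of Kond\={o}'s arithmetic quotient, and conversely show every Type II boundary point of $P_4^*$ is hit. This is a concrete but somewhat delicate lattice-theoretic and Hodge-theoretic computation; the rest of the argument (ampleness, finiteness, normality, reduction to bijectivity) is formal given the results already established in the paper.
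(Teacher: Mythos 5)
Your proposal takes a genuinely different route from the paper, and the route you chose leaves the hardest step unresolved. The paper's proof is a three-line formal argument: by \cite[Thm.\ 6.5]{ADL19}, $P_4^{*}$ is the \emph{ample model} of the Hodge line bundle $L_{\Hodge}^{\rm K}$ on $P_4^{\rm K}$, i.e. $P_4^*\cong \Proj\bigl(\oplus_m H^0(P_4^{\rm K}, mL_{\Hodge}^{\rm K})\bigr)$; since $L_{\Hodge}$ is ample on the normal variety $P_4^{\CY}$ (Theorem \ref{t:main2}.4, Proposition \ref{p:normalPdCY}) and $L_{\Hodge}^{\rm K}$ is its pullback under the proper birational morphism $P_4^{\rm K}\to P_4^{\CY}$ of Proposition \ref{p:wallcrossing}, the space $P_4^{\CY}$ is \emph{also} the ample model of $L_{\Hodge}^{\rm K}$, and ample models are unique. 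All of the Hodge-theoretic content is thus outsourced to the already-established identification in \cite{ADL19}, which you never invoke.

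By contrast, you propose to build the morphism $P_4^{\CY}\to P_4^*$ directly, by taking degree-$4$ cyclic covers, forming a period map, and extending it over the boundary to the Baily--Borel compactification. This is where the gap lies: the extension of the period map from a moduli space of degenerate pairs to a Baily--Borel compactification, and the algebraicity of the resulting map, is a substantial theorem (this is exactly the role Looijenga's work plays in the degree~$6$ discussion in Section \ref{sec:K3}), and the compatibility $L_{\Hodge}=f^*A$ with the automorphic bundle that you use to deduce finiteness is likewise asserted rather than proved. You correctly flag the matching of Type~II strata as the ``main obstacle,'' but that obstacle, together with the construction of the extended period map itself, \emph{is} the content of the proposition; as written the proposal reduces the statement to an unproved Hodge-theoretic computation rather than proving it. (A smaller point: once you have a finite birational morphism onto the normal variety $P_4^*$, Zariski's main theorem gives an isomorphism immediately, so your subsequent stratum-by-stratum bijectivity check would be redundant --- the real work is entirely in constructing the morphism and the line-bundle compatibility.) If you want a complete argument with the tools in this paper, you should instead quote \cite[Thm.\ 6.5]{ADL19} and run the ample-model comparison.
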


\begin{proof}
Note that $P_4^{\CY}$ and $P_4^{\rm K}$ are both normal by 
Propositions \ref{p:normalPdCY}.
By \cite[Thm. 6.5]{ADL19}, $P_4^{*}$ the ample model of the Hodge line bundle  $L_{\rm Hodge}^{\rm K}$ on $P_4^{\rm K}$, which means
\[
P_4^{*} \cong 
\Proj \left( \oplus_{m \in \bN} H^0\left(P_d^{\rm K}, m L_{\rm Hodge}^{\rm K}\right)\right)
.\]
Since $L_{\rm Hodge}$ is ample on $P_d^{\CY}$ by  Theorem \ref{t:main2}.4 and $L_{\Hodge}^{\rm K}$ is the pullback of $L_{\rm Hodge}$,
$P_4^{\CY}$ is also the ample model of $L_{\rm Hodge}^{\rm K}$. Thus $P_4^{\CY} \dashrightarrow P_4^{*}$ extends to an isomorphism.
\end{proof}

\begin{thm}
	The polystable pairs parametrized by $P_4^{\rm CY}(\bk)$ are the following:
	\begin{enumerate}
		\item $(\bP^2,\frac{3}{4}C)$ where $C$ is a plane quartic curve with at worst cuspidal singularities;
		\item $(\bP(1,1,4), \frac{3}{4}C)$ where $C$ is a degree $8$ curve not passing through the cone point with at worst cuspidal singularities;
		\item $(\bP(1,1,2)\cup \bP(1,1,2), \frac{3}{4}C)$ where $C$ is of degree $4$ on each component, and has a tacnodal singularity on each component, illustrated in Figure \ref{f:polystablequartics}. 
	\end{enumerate}
\end{thm}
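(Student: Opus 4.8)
The plan is to combine the identification $P_4^{\CY} \cong P_4^{*}$ from Proposition \ref{prop:quartics} with the explicit analysis of K-polystable and KSBA-stable quartic pairs already in the literature, together with the structural results on S-equivalence classes from Section \ref{s:sequivcurves}. By Theorem \ref{t:main2}.2, the points of $P_4^{\CY}$ parametrize S-equivalence classes of pairs in $\cP_4^{\CY}$, and since $3 \nmid 4$, every pair is Type I or II (Proposition \ref{p:dnmid3plt}) and $\phi \colon \cP_4^{\CY} \to P_4^{\CY}$ is a good moduli space by Theorem \ref{t:moduliexists3notd}. Thus each closed point of $P_4^{\CY}$ has a unique polystable representative, so the task is to enumerate the polystable pairs. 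First I would recall from \cite[Thm. 1.4 or Sec. 5]{ADL19} (the analysis of the K-moduli space $P_4^{\rm K}$ at the wall $c = \tfrac34$) the description of the K-semistable/K-polystable degenerations of $(\bP^2, cC)$ as $c \to \tfrac34$: these are exactly the three families (1), (2), (3) above, where $\bP(1,1,4)$ and $\bP(1,1,2)$ arise as the natural degenerations of $\bP^2$ at the relevant walls.

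The key steps I would carry out in order are: (i) show that every polystable pair in $\cP_4^{\CY}(\bk)$ is klt, hence Type I — this follows because any Type II pair degenerates to a cone or glued-cone configuration by Theorem \ref{thm:type2Sequiv} and Proposition \ref{prop:reg0polystable}, and then I would check that among these only the configurations in (2) (the $\bP(1,1,4)$-cone, which is a Type I Manetti surface degeneration) and (3) (glued-cone over $\bP^1$, Type II) actually survive as polystable points for degree $4$; (ii) invoke $P_4^{\CY}\cong P_4^{\rm K}$ on the locus where the source is not a point or elliptic curve — here I would use that for $d = 4$ the map $\psi_{\rm K} \colon P_4^{\rm K} \to P_4^{\CY}$ is a projective birational morphism between normal varieties (Theorem \ref{t:main2}.3, Proposition \ref{p:normalPdCY}), and argue it is an isomorphism because K-polystable quartic pairs have finite automorphisms away from the cone cases, so no contraction occurs; (iii) translate the K-polystable list from \cite{ADL19} at $c = \tfrac34$ directly into the CY polarization $(X, \tfrac34 C)$. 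For the "at worst cuspidal" conditions in (1) and (2), I would note that a quartic with a worse (tacnodal or triple-point) singularity makes $(\bP^2, (1-\vep)C)$ K-unstable and its K-polystable replacement is precisely case (2) or (3); the tacnodal condition in (3) comes from the gluing: the conductor curve on each $\bP(1,1,2)$ must be tangent to the $A_1$-singularity ruling to satisfy the slc and $K + D \sim_{\bQ} 0$ conditions (Proposition \ref{p:bpcygluing}).

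The main obstacle I expect is step (i)–(ii): carefully ruling out spurious polystable configurations and confirming that the K-moduli description transfers \emph{exactly} without new CY-polystable pairs appearing (or disappearing) at the CY wall. Concretely, I must verify that no Type II pair other than the glued double-$\bP(1,1,2)$ configuration in (3) is polystable for $d=4$, which requires the finiteness statement at the end of Theorem \ref{thm:type2Sequiv} (for each fixed source $(E, D_E)$ over $\bP^1$ there are finitely many such $(X_0,D_0)$) combined with a volume computation: using Proposition \ref{prop:orbcone-vol} with $(-K_{X_0})^2 = 9$ to solve for the admissible orbifold polarizations $L_i$ on $\bP^1$, and checking which give degree-$4$ boundary curves compatible with index dividing $N_4 = 4$. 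I would also need to confirm via Proposition \ref{prop:reg0polystable} that the rank-2 torus case (polystable Type II) indeed occurs and corresponds to (3), while the projective-cone-over-elliptic-curve case (case (i) of Theorem \ref{thm:type2Sequiv}) does \emph{not} arise for $d = 4$ — this is where the restriction $3 \nmid d$ and \cite[Proof of Thm. 7.1]{Hac04} enter, as already used in Proposition \ref{p:dnmid3plt}. Once these exclusions are in place, matching the surviving list to (1), (2), (3) and pinning down the singularity types is a finite, essentially combinatorial check on the possible degenerations of plane quartics, which I would not grind through here.

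\begin{figure}[ht]
\centering
\includegraphics[width=0.35\textwidth]{polystablequartics}
\caption{The degree $4$ curve $C$ on $\bP(1,1,2) \cup \bP(1,1,2)$ in case (3), with a tacnode on each component meeting the conductor.}
\label{f:polystablequartics}
\end{figure}
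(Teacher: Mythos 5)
Your overall route—identifying $P_4^{\CY}$ with Kond\=o's $P_4^{*}$ via Proposition \ref{prop:quartics} and reading off the wall-crossing $P_4^{\rm K}\to P_4^{\CY}$ from \cite{ADL19}—is the same as the paper's, but your execution has two concrete errors and one essential omission. First, your step (i) asserts that every polystable pair in $\cP_4^{\CY}(\bk)$ is klt, hence Type I; this is false and is contradicted by case (3) of the very statement you are proving (and by your own later remark that the rank-$2$ torus Type II case ``indeed occurs''). The correct dichotomy is: the klt pairs (1) and (2) are polystable because klt pairs are stable (Proposition \ref{p:sequivklt}), while (3) is a non-normal Type II pair whose polystability follows from Proposition \ref{prop:reg0polystable} because $\Aut^0\cong\bG_m^2$. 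Second, you claim tacnodal quartics make $(\bP^2,(1-\vep)\tfrac34 C)$ K-unstable; in fact the cat-eye and ox are K-polystable and sweep out the curve $\Sigma_{A_3}\subset P_4^{\rm K}$, which is exactly the locus contracted by $P_4^{\rm K}\to P_4^{\CY}$. Relatedly, your step (ii) argument that ``no contraction occurs'' because of finite automorphisms is not valid and contradicts the known picture: the map does contract $\Sigma_{A_3}$ to a point $p$, and contraction is governed by S-equivalence (common weakly special degenerations), not by automorphism groups.

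The essential missing step is the identification of the contracted point $p$ with the pair in (3). It is not enough to know abstractly (via Theorem \ref{thm:type2Sequiv} and a volume count) that some glued pair $\bP(1,1,2)\cup\bP(1,1,2)$ is polystable of Type II; you must exhibit it as a weakly special degeneration of a pair lying over $\Sigma_{A_3}$. The paper does this with the explicit test configuration
\[
\cX = \{x_0x_2 = tx_1^2\}\subset \bP(1,1,1,2)_{[x_0:x_1:x_2:x_3]}\times \bA^1_t, \qquad \cD = \tfrac{3}{4}\{x_3^2 = x_1^4\}\big|_{\cX},
\]
with $\bG_m$ acting by $s\cdot([x_0:x_1:x_2:x_3],t)=([sx_0:x_1:x_2:x_3],st)$: the fiber at $t=1$ is $(\bP(1,1,4),\tfrac34\{y_2^2=y_0^4y_1^4\})\in\Sigma_{A_3}$ and the central fiber is the pair in (3), which carries an effective $\bG_m^2$-action and hence is polystable by Proposition \ref{prop:reg0polystable}. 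Without this (or an equivalent explicit degeneration from the cat-eye/ox), your argument does not pin down which Type II pair represents the S-equivalence class of the tacnodal quartics, and the proof is incomplete.
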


\begin{proof}
By \cite[Section 6.2.1]{ADL19} and the isomorphism $P_4^{\CY}\cong P_4^*$ from Proposition \ref{prop:quartics}, we know that the birational morphism $P_4^{\rm K} \to P_4^{\CY}$ contracts an irreducible curve $\Sigma_{A_3}$ to a point $p$, and is an isomorphism elsewhere. Here $\Sigma_{A_3}$ is the closed locus of $P_d^{\rm K}$ parametrizing curves with tacnodal singularities.  Moreover, $P_4^{\rm K}\setminus \Sigma_{A_3}$ precisely parametrizes klt boundary polarized CY pairs $(X,D)$ which are the pairs from (1) or (2).  On the other hand, $\Sigma_{A_3}$ parametrizes pairs $(\bP^2, \frac{3}{4}C)$ where $C$ is a cat-eye or an ox, together with $(\bP(1,1,4)_{[y_0: y_1: y_2]}, \frac{3}{4}\{y_2^2 = y_0^4 y_1^4\})$. Thus it suffices to show that the point $p\in P_4^{\CY}$ is precisely the pair from (3). Indeed, consider the test configuration $(\cX,\cD) \to \bA^1$ where 
\[
\cX = \{x_0x_2 = tx_1^2\}\subset \bP(1,1,1,2)_{[x_0:x_1:x_2:x_3]}\times \bA^1_t, \quad \cD = \tfrac{3}{4}\{x_3^2 = x_1^4\}|_{\cX},
\]
and the $\bG_m$-action is given by $s\cdot ([x_0:x_1:x_2:x_3],t) = ([sx_0:x_1:x_2:x_3], st)$. Then the fiber over $t=1$ is isomorphic to $(\bP(1,1,4), \frac{3}{4}\{y_2^2 = y_0y_1^4\})$ via the embedding $\bP(1,1,4)\hookrightarrow \bP(1,1,1,2)$ given by $[y_0:y_1:y_2]\mapsto [y_0^2: y_0y_1 : y_1^2 : y_2]$. It is clear that the central fiber over $t=0$ is isomorphic to the pair in (3). Since the central fiber is of Type II and  admits an effective $\bG_m^2$-action by $(s_1,s_2)\cdot [x_0:x_1:x_2:x_3]=[s_1x_0:x_1: s_2 x_2:x_3]$, we know that it is polystable by Proposition \ref{prop:reg0polystable}. Thus the central fiber gives the polystable point $p$.
\end{proof}

\begin{figure}[h]
\caption{The common polystable degeneration of the cat-eye and ox.}
\label{f:polystablequartics}
\begin{tabular}{ccc}
{\resizebox{.2\textwidth}{!}{\begin{tabular}{c}
\begin{tikzpicture}[gren0/.style = {draw, circle,fill=greener!80,scale=.7},gren/.style ={draw, circle, fill=greener!80,scale=.4},blk/.style ={draw, circle, fill=black!,scale=.08},plc/.style ={draw, circle, color=white!100,fill=white!100,scale=0.02},smt/.style ={draw, circle, color=gray!100,fill=gray!100,scale=0.02},lbl/.style ={scale=.2}] 
\node[smt] at (0,0) (1){};
\node[smt] at (0,2) (2){};
\node[smt] at (2,2) (3){};
\node[smt] at (2,0) (4){};
\draw [-,color=gray] (1) to (2) to (3) to (4) to (1);
\draw (1,1) circle (0.5);
\draw (1,1) ellipse (0.25 and 0.5);
\node[blk] at (1,0.5) {};
\node[blk] at (1,1.5) {};

%to remove labels, remove the text below 
\node[above right, node font=\tiny] at (0,0) {$\bP^2$};
\node[below right, node font=\tiny] at (1,0.5) {$A_3$};
\node[above right, node font=\tiny] at (1,1.5) {$A_3$};

\end{tikzpicture}
\end{tabular}}} & \rotatebox[origin=c]{-30}{\Large{$\rightsquigarrow$}} 
& \multirow{2}{*}{\resizebox{.26\textwidth}{!}{\begin{tabular}{c}
\begin{tikzpicture}[gren0/.style = {draw, circle,fill=greener!80,scale=.7},gren/.style ={draw, circle, fill=greener!80,scale=.4},blk/.style ={draw, circle, fill=black!,scale=.08},plc/.style ={draw, circle, color=white!100,fill=white!100,scale=0.02},smt/.style ={draw, circle, color=gray!100,fill=gray!100,scale=0.02},lbl/.style ={scale=.2}] 
\node[smt] at (0,0) (1){};
\node[smt] at (0,1.5) (2){};
\node[plc] at (1,1.25) (3){};
\node[plc] at (0.98,1.23) (3r){};
\node[plc] at (-1,1.25) (4){};
\node[plc] at (-0.98,1.23) (4r){};
\node[blk] at (0.3,0.7) (5){};
\node[plc] at (0,0.9) (6){};
\node[plc] at (0,0.6) (7){};
\node[plc] at (0.64,0.8) (8){};
\node[plc] at (0.4,0.5) (9){};
\node[blk] at (-0.3,0.7) (10){};
\node[plc] at (-0.64,0.8) (11){};
\node[plc] at (-0.4,0.5) (12){};
\draw [-,color=gray] (1) to (2);
\draw [-,color=gray] (1) to (3r); 
\draw [-,color=gray] (1) to (4r);
\draw [-,color=gray,rounded corners=1pt] (2) to [bend left=80] (3) to [bend left = 60] (2);
\draw [-,color=gray,rounded corners=1pt] (2) to [bend left=60] (4) to [bend left = 80] (2);
\draw [-] (6) to [bend right=30] (5) to [bend right=20] (8);
\draw [-] (7) to [bend left=30] (5) to [bend left=30] (9);
\draw [-] (11) to [bend right=30] (10) to [bend right=20] (6);
\draw [-] (12) to [bend left=37] (10) to [bend left=30] (7);

%to remove labels, remove the text below 
\node[above, node font=\tiny] at (5) {$A_3$};
\node[above, node font=\tiny] at (10) {$A_3$};
\node[below, node font=\tiny] at (1) {$\bP(1,1,2) \cup \bP(1,1,2)$};

\end{tikzpicture}
\end{tabular}}} \\ {\resizebox{.2\textwidth}{!}{\begin{tabular}{c}
\begin{tikzpicture}[gren0/.style = {draw, circle,fill=greener!80,scale=.7},gren/.style ={draw, circle, fill=greener!80,scale=.4},blk/.style ={draw, circle, fill=black!,scale=.08},plc/.style ={draw, circle, color=white!100,fill=white!100,scale=0.02},smt/.style ={draw, circle, color=gray!100,fill=gray!100,scale=0.02},lbl/.style ={scale=.2}] 
\node[smt] at (-.3,-.15) (1){};
\node[smt] at (-.3,1.85) (2){};
\node[smt] at (1.7,1.85) (3){};
\node[smt] at (1.7,-.15) (4){};
\node[blk] at (1.2,.7) (5){};
\node[blk] at (0.5,1.16) (6){};
\node[blk] at (1.2,1.47) (7){};
\node[plc] at (1.2, 1.7) (8){};
\node[plc] at (1.4,1.56) (9){};
\node[plc] at (1.2, 0.15) (10){};
\node[plc] at (0,0.94) (11){};

\draw [-,color=gray] (1) to (2) to (3) to (4) to (1);
\draw (0.7,0.7) circle (0.5);
\draw [-] (10) to (8);
\draw [-] (11) to (9);

%to remove labels, remove the text below 
\node[above right, node font=\tiny] at (5) {$A_3$};
\node[above left, node font=\tiny] at (6) {$A_3$};
\node[above right, node font=\tiny] at (1) {$\bP^2$};

\end{tikzpicture}
\end{tabular}}} & \rotatebox[origin=c]{30}{\Large{$\rightsquigarrow$}}  &
\end{tabular}
\end{figure}
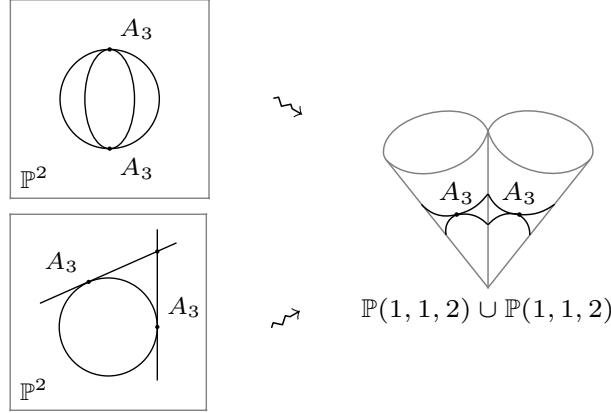

\subsubsection{Degree 5}
As in the degree 4 case above, we can describe the polystable representatives in $P_5^{\rm CY}$.

\begin{thm}
	The polystable pairs $(X,D)$ parametrized by $P_5^{\rm CY}(\bk)$ satisfy that either $(X, D)$ is klt, or it is isomorphic to one of the following two pairs, illustrated in Figure \ref{f:polystablequintics}:
 \begin{enumerate}
     \item $(\bP(1,1,5)\cup \bP(1,4,5) = (\{z_2z_0 = 0\}\subset \bP(1,1,4,5)_{[z_0:z_1:z_2:z_3]}), \frac{3}{5}\{z_3^2 = z_1^{10}\})$;
     \item $(\bP(1,1,2)\cup \bP(1,1,2) = (\{x_0x_2=0\}\subset \bP(1,1,1,2)_{[x_0:x_1:x_2:x_3]}), \frac{3}{5}\{x_1(x_3^2 - x_1^4)=0\})$.
 \end{enumerate}
 Moreover, the birational map $P_5^{\rm K} \to P_5^{\CY}$ contracts the $A_9$-locus and $D_6$-locus  to (1) and (2) respectively, and is isomorphic elsewhere. 
\end{thm}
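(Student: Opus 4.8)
The plan is to study the projective birational morphism $\psi\colon P_5^{\rm K}\to P_5^{\CY}$ furnished by Theorem \ref{t:main2}.3 and Proposition \ref{p:wallcrossing}. Since $3\nmid 5$, both $P_5^{\rm K}$ and $P_5^{\CY}$ are normal projective varieties by Propositions \ref{p:Klocalstructure} and \ref{p:normalPdCY}, and every pair in $\cP_5^{\CY}(\bk)$ is Type I or II by Proposition \ref{p:dnmid3plt}. I would first identify the non-klt locus of $\cP_5^{\rm K}$: a pair $(\bP^2,\tfrac35 C)$ fails to be klt exactly when $C$ has a singularity of log canonical threshold $\tfrac35$, and since $\lct(A_9)=\tfrac12+\tfrac1{10}=\tfrac35$ with $\lct(A_n)>\tfrac35$ for $n<9$, and a log canonical threshold computation singles out $D_6$ as the corresponding threshold case in the $D$-series, the classification of singularities on K-semistable quintics in \cite{ADL19} shows the non-klt locus of $P_5^{\rm K}$ is $\Sigma_{A_9}\sqcup\Sigma_{D_6}$, where $\Sigma_{A_9}$ (resp.\ $\Sigma_{D_6}$) is the irreducible locus whose general point parametrizes $(\bP^2,\tfrac35 C)$ with $C$ having a single $A_9$ (resp.\ $D_6$) singularity; they are disjoint since the total $\delta$-invariant of a plane quintic is at most $6<\delta(A_9)+\delta(D_6)$.

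Next I would show $\psi$ restricts to an isomorphism over the open klt locus $P_5^{\rm CY,klt}\subset P_5^{\CY}$. By Proposition \ref{p:sequivklt} a klt boundary polarized CY pair is S-equivalent only to itself, so, using Remark \ref{r:Sequiv-type} and Lemma \ref{l:SequivKmod}, the preimage of $P_5^{\rm CY,klt}$ under $\psi$ is exactly the klt locus of $P_5^{\rm K}$, namely $P_5^{\rm K}\setminus(\Sigma_{A_9}\cup\Sigma_{D_6})$, and $\psi$ is bijective there. A proper bijective birational morphism between normal varieties is an isomorphism, giving the claim; in particular $\psi$ is an isomorphism away from $\Sigma_{A_9}\cup\Sigma_{D_6}$.

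It then remains to show $\psi$ contracts $\Sigma_{A_9}$ and $\Sigma_{D_6}$ to the points $[(1)]$ and $[(2)]$. A general member of $\Sigma_{A_9}$ is a Type II pair whose only lc center sits at the $A_9$ point, so its source is computed purely locally by a weighted blow-up to be a fixed orbifold $\bP^1$ — concretely $(\bP^1,\tfrac35 p_1+\tfrac35 p_2+\tfrac45 p_3)$, which indeed satisfies $K_{\bP^1}+D\sim_{\bQ}0$. By the finiteness clause of Theorem \ref{thm:type2Sequiv}, only finitely many Type II S-equivalence classes of degree $5$ have this source, so $\psi$ is constant on a dense open of the irreducible variety $\Sigma_{A_9}$, hence on all of it; the same argument applies to $\Sigma_{D_6}$ with a different orbifold $\bP^1$ as source. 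To pin down the two contracted points, I would exhibit the weighted-hypersurface test configurations
\[
\cX_1=\{z_0z_2=tz_1^5\}\subset \bP(1,1,4,5)\times\bA^1,\qquad \cX_2=\{x_0x_2=tx_1^2\}\subset\bP(1,1,1,2)\times\bA^1,
\]
equipped with the $\bG_m$-actions scaling $z_0$ (resp.\ $x_0$) and $t$ and the boundary divisors $\tfrac35\{z_3^2=z_1^{10}\}$ and $\tfrac35\{x_1(x_3^2-x_1^4)=0\}$: a direct check identifies the general fiber with $(\bP^2,\tfrac35 C)$ for $C$ a quintic with an $A_9$ (resp.\ $D_6$) singularity, and the central fiber with the pair (1) (resp.\ (2)). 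This simultaneously shows that (1) and (2) lie in $\cP_5^{\CY}(\bk)$ and that $[(1)]\in\psi(\Sigma_{A_9})$, $[(2)]\in\psi(\Sigma_{D_6})$. Since (1) and (2) are Type II and carry $\bG_m^2$-actions (scaling the two homogeneous coordinates cut out by their reducible underlying surfaces), Proposition \ref{prop:reg0polystable} shows they are polystable; as $P_5^{\CY}$ is a good moduli space of $\cP_5^{\CY}$ (Theorem \ref{t:moduliexists3notd}), every point of $P_5^{\CY}$ is the image of a polystable pair, unique up to isomorphism, so (1) and (2) are the polystable representatives of the two contracted points, these are distinct because non-isomorphic polystable pairs are not S-equivalent, and every non-klt polystable pair in $P_5^{\CY}(\bk)$ is therefore isomorphic to (1) or (2).

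The main obstacle I expect is the explicit verification in the last step — that the general fibers of $\cX_1$ and $\cX_2$ are genuinely $(\bP^2,\tfrac35 C)$ with the stated singularity, that the central fibers are exactly the pairs (1) and (2), and that the local source at an $A_9$ (resp.\ $D_6$) point matches — together with extracting from \cite{ADL19} the precise description of $\Sigma_{A_9}$, $\Sigma_{D_6}$ and the fact that their union exhausts the non-klt locus of $P_5^{\rm K}$.
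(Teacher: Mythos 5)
Your overall architecture matches the paper's: cite the ADL19 description of $\Sigma_{A_9}\sqcup\Sigma_{D_6}$ as the non-klt locus of $P_5^{\rm K}$, show these loci are contracted, and get polystability of (1) and (2) from Proposition \ref{prop:reg0polystable} (Type II plus a $\bG_m^2$-action). Your source-plus-finiteness argument for \emph{why} each $\Sigma$ is contracted to a single point is a genuinely different and attractive route: the paper instead imports the contraction directly from \cite[Prop.~9.21, 9.22]{ADL19}, which construct explicit weakly special degenerations of every pair in $\Sigma_{A_9}$ (resp.\ $\Sigma_{D_6}$) to (1) (resp.\ (2)). However, your execution of the final identification step has two concrete errors.

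First, the general fibers of your test configurations are not $(\bP^2,\tfrac35 C)$. The equation $z_0z_2-tz_1^5$ does not involve $z_3$, so for $t\neq 0$ the fiber of $\cX_1$ is a singular degree-$9$ del Pezzo (it has a $\tfrac14(1,1)$ point on the chart $z_2\neq 0$ and a further quotient singularity at $[0:0:0:1]$); likewise the general fiber of $\cX_2=\{x_0x_2=tx_1^2\}\subset\bP(1,1,1,2)$ is $\bP(1,1,4)$, exactly as in the paper's own degree-$4$ computation. So the "direct check" you describe fails, and as written you have not shown that (1) and (2) lie in $\cP_5^{\CY}(\bk)$ at all, nor that $[(1)]\in\psi(\Sigma_{A_9})$ and $[(2)]\in\psi(\Sigma_{D_6})$. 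The test configurations are salvageable, but only if you separately verify that their general fibers are K-semistable pairs lying over $\Sigma_{A_9}$ and $\Sigma_{D_6}$ — which is precisely the content you were trying to avoid importing from \cite{ADL19}, or else requires an additional chain of degenerations starting from a genuine $(\bP^2,\tfrac35 C)$.

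Second, your remark that $\Sigma_{D_6}$ has "a different orbifold $\bP^1$ as source" is false, and this matters for how the two contracted points get distinguished. For the $A_9$ point $y^2=x^{10}$ the $(1,5)$-weighted blow-up gives source $(\bP^1,\tfrac35 p_1+\tfrac35 p_2+\tfrac45 p_3)$; for the $D_6$ point $x(y^2-x^4)$ the $(1,2)$-weighted blow-up gives different $\tfrac45$ at the $\tfrac12(1,1)$ point (through which the branch $x=0$ passes) plus $\tfrac35$ at each of the two points cut out by $y^2=x^4$ — i.e.\ the \emph{same} orbifold $\bP^1$ up to isomorphism. Your contraction argument survives (it only needs the source to be constant along each $\Sigma$ separately, and the finiteness clause of Theorem \ref{thm:type2Sequiv} still applies), but $p_{A_9}\neq p_{D_6}$ cannot be read off from the sources; it must come from exhibiting the two non-isomorphic polystable representatives, which again hinges on the degenerations you have not yet correctly established.
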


\begin{proof}
By \cite[Section 9.4]{ADL19}, we know that there are two disjoint irreducible closed subvarieties $\Sigma_{A_9}$ and $\Sigma_{D_6}$ of $P_5^{\rm K}$ parametrizing curves with $A_9$ or $D_6$ singularities respectively. Moreover, $P_5^{\rm K}\setminus (\Sigma_{A_9}\cup\Sigma_{D_6})$ is precisely the locus parametrizing klt boundary polarized CY pairs. By \cite[Prop. 9.21 and 9.22]{ADL19}, we know that every boundary polarized CY pair in $\Sigma_{A_9}$ (resp. in $\Sigma_{D_6}$) admits a weakly special degeneration to the pair from (1) (resp. from (2)). Since the pair in (1) or (2) is of Type II and admits an effective $\bG_m^2$-action, we know that it is polystable by Proposition \ref{prop:reg0polystable}. Thus the birational map $P_5^{\rm K}\to P_5^{\CY}$ contracts $\Sigma_{A_9}$ and $\Sigma_{D_6}$ to (1) and (2) respectively, and is an isomorphism on  $P_5^{\rm K}\setminus (\Sigma_{A_9}\cup\Sigma_{D_6})$.
\end{proof}

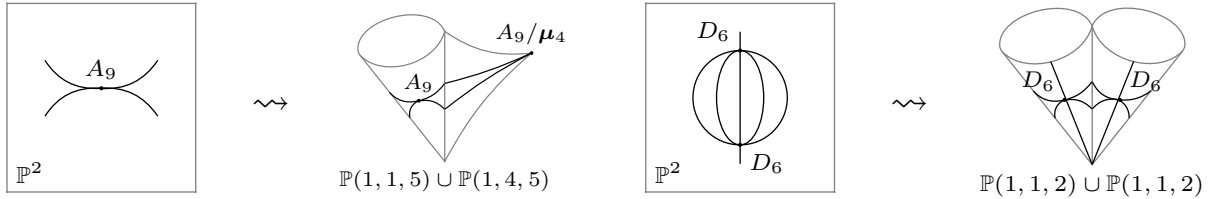
\begin{figure}[h]
\caption{The polystable degenerations of the $A_9$ and $D_6$ quintic curves.}
\label{f:polystablequintics}
\begin{tabular}{cccccc}
{\resizebox{.19\textwidth}{!}{\begin{tabular}{c}
\begin{tikzpicture}[gren0/.style = {draw, circle,fill=greener!80,scale=.7},gren/.style ={draw, circle, fill=greener!80,scale=.4},blk/.style ={draw, circle, fill=black!,scale=.08},plc/.style ={draw, circle, color=white!100,fill=white!100,scale=0.02},smt/.style ={draw, circle, color=gray!100,fill=gray!100,scale=0.02},lbl/.style ={scale=.2}] 
\node[smt] at (0,0) (1){};
\node[smt] at (0,2) (2){};
\node[smt] at (2,2) (3){};
\node[smt] at (2,0) (4){};
\node[plc] at (0.4,1.4) (5){};
\node[plc] at (1.6,1.4) (6){};
\node[blk] at (1,1.1) (7){};
\node[plc] at (0.4,0.8) (8){};
\node[plc] at (1.6,0.8) (9){};
\draw [-,color=gray] (1) to (2) to (3) to (4) to (1);
\draw [-] (5) to [bend right=30] (7) to [bend right=30] (6);
\draw [-] (8) to [bend left=30] (7) to [bend left=30] (9);

%to remove labels, remove the text below 
\node[above right, node font=\tiny] at (0,0) {$\bP^2$};
\node[above, node font=\tiny] at (7) {$A_9$};
\end{tikzpicture}
\end{tabular}}} & {\Large{$\rightsquigarrow$}} &{\resizebox{.23\textwidth}{!}{\begin{tabular}{c}
\begin{tikzpicture}[gren0/.style = {draw, circle,fill=greener!80,scale=.7},gren/.style ={draw, circle, fill=greener!80,scale=.4},blk/.style ={draw, circle, fill=black!,scale=.08},plc/.style ={draw, circle, color=white!100,fill=white!100,scale=0.02},smt/.style ={draw, circle, color=gray!100,fill=gray!100,scale=0.02},lbl/.style ={scale=.2}] 
\node[smt] at (0,0) (1){};
\node[smt] at (0,1.5) (2){};
\node[blk] at (1,1.25) (3){};
\node[plc] at (0.98,1.23) (3r){};
\node[plc] at (-1,1.25) (4){};
\node[plc] at (-0.98,1.23) (4r){};
\node[plc] at (0,0.9) (6){};
\node[plc] at (0,0.6) (7){};
\node[plc] at (0.54,0.8) (8){};
\node[plc] at (0.39,0.6) (9){};
\node[blk] at (-0.3,0.7) (10){};
\node[plc] at (-0.64,0.8) (11){};
\node[plc] at (-0.4,0.5) (12){};
\draw [-,color=gray] (1) to (2);
\draw [-,color=gray] (1) to [bend left=10] (3); 
\draw [-,color=gray] (1) to (4r);
\draw [-,color=gray,rounded corners=1pt] (2) to [bend right=20] (3);
\draw [-,color=gray,rounded corners=1pt] (2) to [bend left=60] (4) to [bend left = 80] (2);
\draw [-] (6) to [bend right=5] (3);
\draw [-] (7) to [bend left=5] (3);
\draw [-] (11) to [bend right=30] (10) to [bend right=20] (6);
\draw [-] (12) to [bend left=37] (10) to [bend left=30] (7);

%to remove labels, remove the text below
\node[above, node font=\tiny] at (3) {$A_9/\bmu_4$};
%\node[below right, node font=\tiny] at (3) {$\frac{1}{4}(1,1)$};
\node[above, node font=\tiny] at (10) {$A_9$};
\node[below, node font=\tiny] at (1) {$\bP(1,1,5) \cup \bP(1,4,5)$};

\end{tikzpicture}
\end{tabular}}} & 
{\resizebox{.19\textwidth}{!}{\begin{tabular}{c}
\begin{tikzpicture}[gren0/.style = {draw, circle,fill=greener!80,scale=.7},gren/.style ={draw, circle, fill=greener!80,scale=.4},blk/.style ={draw, circle, fill=black!,scale=.08},plc/.style ={draw, circle, color=white!100,fill=white!100,scale=0.02},smt/.style ={draw, circle, color=gray!100,fill=gray!100,scale=0.02},lbl/.style ={scale=.2}] 
\node[smt] at (0,0) (1){};
\node[smt] at (0,2) (2){};
\node[smt] at (2,2) (3){};
\node[smt] at (2,0) (4){};
\draw [-,color=gray] (1) to (2) to (3) to (4) to (1);
\draw (1,1) circle (0.5);
\draw (1,1) ellipse (0.25 and 0.5);
\draw [-] (1,0.3) to (1,1.7);
\node[blk] at (1,0.5) (5){};
\node[blk] at (1,1.5) (6){};

%to remove labels, remove the text below 
\node[below right, node font=\tiny] at (5) {$D_6$};
\node[above left, node font=\tiny] at (6) {$D_6$};
\node[above right, node font=\tiny] at (0,0) {$\bP^2$};
\end{tikzpicture}
\end{tabular}}} & {\Large{$\rightsquigarrow$}} &{\resizebox{.23\textwidth}{!}{\begin{tabular}{c}
\begin{tikzpicture}[gren0/.style = {draw, circle,fill=greener!80,scale=.7},gren/.style ={draw, circle, fill=greener!80,scale=.4},blk/.style ={draw, circle, fill=black!,scale=.08},plc/.style ={draw, circle, color=white!100,fill=white!100,scale=0.02},smt/.style ={draw, circle, color=gray!100,fill=gray!100,scale=0.02},lbl/.style ={scale=.2}] 
\node[smt] at (0,0) (1){};
\node[smt] at (0,1.5) (2){};
\node[plc] at (1,1.25) (3){};
\node[plc] at (0.98,1.23) (3r){};
\node[plc] at (-1,1.25) (4){};
\node[plc] at (-0.98,1.23) (4r){};
\node[blk] at (0.3,0.7) (5){};
\node[plc] at (0,0.9) (6){};
\node[plc] at (0,0.6) (7){};
\node[plc] at (0.64,0.8) (8){};
\node[plc] at (0.4,0.5) (9){};
\node[blk] at (-0.3,0.7) (10){};
\node[plc] at (-0.64,0.8) (11){};
\node[plc] at (-0.4,0.5) (12){};
\node[plc] at (0.45,1.12) (13){};
\node[plc] at (-0.45,1.12) (14){};
\draw [-,color=gray] (1) to (2);
\draw [-,color=gray] (1) to (3r); 
\draw [-,color=gray] (1) to (4r);
\draw [-,color=gray,rounded corners=1pt] (2) to [bend left=80] (3) to [bend left = 60] (2);
\draw [-,color=gray,rounded corners=1pt] (2) to [bend left=60] (4) to [bend left = 80] (2);
\draw [-] (6) to [bend right=30] (5) to [bend right=20] (8);
\draw [-] (7) to [bend left=30] (5) to [bend left=30] (9);
\draw [-] (11) to [bend right=30] (10) to [bend right=20] (6);
\draw [-] (12) to [bend left=37] (10) to [bend left=30] (7);
\draw [-] (1) to (13);
\draw [-] (1) to (14);

%to remove labels, remove the text below 
\node[above right, node font=\tiny] at (5) {$D_6$};
\node[above left, node font=\tiny] at (10) {$D_6$};
\node[below, node font=\tiny] at (1) {$\bP(1,1,2) \cup \bP(1,1,2)$};

\end{tikzpicture}
\end{tabular}}}
\end{tabular}
\end{figure}

\subsubsection{Degree 6}
The double cover of $\bP^2$ branched along a smooth sextic plane curve is a degree 2 K3 surface and this construction induces a birational map
\[
P_6
\to \sF_2
,\]
where $\sF_2$ is the coarse moduli space of primitively polarized  degree 2 K3 surfaces with ADE singularities. 
Using that $\sF_{2}$ can be endowed with the structure of a Hermitian symmetric domain by the global Torelli theorem, $\sF_2$ admits a Baily-Borel compactification $\sF_2^{\rm BB}$, which is a normal projective variety.
The compactifications $P_6^{\rm K}$ and $P_6^{\rm H}$ and their relation to $\sF_2^{\rm BB}$ are well-understood. 
\begin{itemize}
	\item The moduli space $ P_6^{\rm H}$ was studied in detail by Alexeev, Engel, and Thompson \cite{AET}. 
	In particular, the authors analyzed the boundary strata and proved that the normalization of $P_6^{\rm H}$ is a semitoroidal compactification of $\sF_2$.
	
\item By \cite[\S 6.2.2]{ADL19}, the moduli space $ P_6^{\rm K}$ agrees with  a compactification of Shah \cite{Sha80} that was constructed as a partial Kirwan desingularization of the moduli space $P_6^{\rm GIT}$ of GIT semistable plane sextics. 
	In addition, Shah constructed a set theoretic map $P_6^{\rm K}\to \sF_2^{\rm BB}$ extending the map $P_6 \to \sF_2$. The map was later shown to be algebraic by work of Looijenga \cite{Loo03} (see  \cite[Thm. 1.9]{Laz16}).
\end{itemize}
Using that the Hodge line bundle is ample on $P_6^{\rm CY}$, we deduce the following.

\begin{prop}
The  map $P_6 \to \sF_2$ extends to an isomorphism  $P_6^{\CY} \to  \sF_2^{\rm BB}$. 
\end{prop}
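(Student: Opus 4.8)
The strategy mirrors the proof of Proposition \ref{prop:quartics}. The map $P_6 \to \sF_2$ extends to a morphism $\Phi : P_6^{\rm K} \to \sF_2^{\rm BB}$ by the combined work of Shah \cite{Sha80} and Looijenga \cite{Loo03}, as recalled above. The first step is to identify $\sF_2^{\rm BB}$ with the ample model of the Hodge line bundle $L_{\rm Hodge}^{\rm K}$ on $P_6^{\rm K}$. By \cite[\S 6.2.2]{ADL19} together with \cite{Laz16, Loo03}, the morphism $\Phi$ realizes $\sF_2^{\rm BB}$ as precisely this ample model, i.e.
\[
\sF_2^{\rm BB} \cong \Proj\left( \bigoplus_{m \in \bN} H^0\!\left(P_6^{\rm K}, m L_{\rm Hodge}^{\rm K}\right)\right),
\]
since the Baily--Borel compactification is $\Proj$ of the ring of automorphic forms, which corresponds to sections of powers of the Hodge bundle.

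Next I would invoke Theorem \ref{t:main2} and Theorem \ref{t:HodgeAmple}: the Hodge $\bQ$-line bundle $L_{\rm Hodge}$ is ample on $P_6^{\CY}$, and by construction (via the wall-crossing morphism $\psi_{\rm K, m} : P_6^{\rm K} \to P_{6,m}^{\CY}$ of Proposition \ref{p:wallcrossing} and the stabilization $P_6^{\CY} = (P_{6,m}^{\CY})^{\rm sn}$ for $m \gg 0$), the pullback of $L_{\rm Hodge}$ to $P_6^{\rm K}$ is $L_{\rm Hodge}^{\rm K}$. Therefore $P_6^{\CY}$ is also an ample model of $L_{\rm Hodge}^{\rm K}$: indeed $P_6^{\rm K} \to P_6^{\CY}$ is a projective birational morphism (Theorem \ref{t:main2}.3, noting $P_6^{\rm H}$ dominates $P_6^{\CY}$ as well when $d \geq 4$) whose target carries an ample $\bQ$-line bundle pulling back to $L_{\rm Hodge}^{\rm K}$. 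Since $P_6^{\CY}$ is seminormal (and in fact normal on the relevant locus by Proposition \ref{p:normalPdCY}, as $3 \nmid 6$ fails — so here one should be slightly careful and note $P_6^{\CY}$ is at least seminormal, which suffices) and $\sF_2^{\rm BB}$ is normal, the two ample models are canonically isomorphic, and the birational map $P_6^{\CY} \dashrightarrow \sF_2^{\rm BB}$ extends to an isomorphism compatible with $P_6 \to \sF_2$.

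\textbf{Main obstacle.} The subtle point is establishing that $\sF_2^{\rm BB}$ really is the ample model of $L_{\rm Hodge}^{\rm K}$ on $P_6^{\rm K}$ — i.e., that the morphism $\Phi$ contracts exactly the curves on which $L_{\rm Hodge}^{\rm K}$ has degree zero and is finite (indeed an isomorphism onto its image, up to normalization) elsewhere. This requires carefully quoting the description of $\Phi$ from Shah's and Looijenga's work and matching it with the Hodge-theoretic interpretation of $L_{\rm Hodge}^{\rm K}$ as (a multiple of) the CM/Hodge line bundle, whose restriction to a family computes the moduli part of the canonical bundle formula; the vanishing loci of this bundle on $P_6^{\rm K}$ are precisely the strata parametrizing pairs with larger automorphism groups / degenerate Hodge structure, which are the loci $\Phi$ collapses. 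One must also confirm that no further contraction happens on $P_6^{\CY}$ beyond what $\Phi$ does — but this follows since $L_{\rm Hodge}$ is already ample on $P_6^{\CY}$, so $P_6^{\rm K} \to P_6^{\CY}$ and $\Phi$ have the same Stein factorization, forcing $P_6^{\CY} \cong \sF_2^{\rm BB}$ (after noting both are normal or checking seminormality is enough since an ample model is determined up to unique isomorphism among normal varieties, and here $\sF_2^{\rm BB}$ normal plus the universal property of Proposition \ref{p:agmproperties} pin down the map).
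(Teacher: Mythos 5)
Your overall route — identify $\sF_2^{\rm BB}$ with the ample model of $L_{\rm Hodge}^{\rm K}$ on $P_6^{\rm K}$ and compare with $P_6^{\CY}$, on which $L_{\rm Hodge}$ is ample — is the same as the paper's, but there is a genuine gap at exactly the point where degree $6$ differs from degree $4$. Since $3 \mid 6$, Proposition \ref{p:normalPdCY} does not apply and $P_6^{\CY}$ is only known to be seminormal, not normal. The ample-model argument (taking $\Proj$ of $\bigoplus_m H^0(P_6^{\rm K}, mL_{\rm Hodge}^{\rm K})$, with $P_6^{\rm K}$ normal) produces a \emph{normal} variety, so what you actually obtain is a factorization $P_6^{\rm K} \to \sF_2^{\rm BB} \xrightarrow{\nu} P_6^{\CY}$ in which $\nu$ is the normalization of $P_6^{\CY}$, not a priori an isomorphism. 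Your claims that "the two ample models are canonically isomorphic" and that "seminormality is enough since an ample model is determined up to unique isomorphism among normal varieties" beg the question: uniqueness of the ample model among normal varieties says nothing until you know $P_6^{\CY}$ is normal, and the appeal to the universal property of Proposition \ref{p:agmproperties} would require a morphism $(\cP_6^{\CY})^{\rm sn} \to \sF_2^{\rm BB}$ defined on the whole stack, which is not available (the period map is only defined on $\cP_6^{\rm K}$, and extending it is precisely what is at stake).

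The missing step is to prove that $\nu$ is a bijection on $\bk$-points, after which seminormality of $P_6^{\CY}$ and reducedness of $\sF_2^{\rm BB}$ force $\nu$ to be an isomorphism. This requires geometric input beyond the Hodge-bundle formalism: surjectivity is clear, and injectivity amounts to showing that two pairs in $P_6^{\rm K}(\bk)$ identified in $P_6^{\CY}$ (equivalently, S-equivalent, hence with crepant birational sources by Proposition \ref{p:SrcSequiv}) are already identified in $\sF_2^{\rm BB}$. The paper does this by cases on the source: away from the Type III point and the elliptic-source locus, $P_6^{\CY}$ is normal by Proposition \ref{p:normalawayfromIII+ell} so there is nothing to check; on the elliptic-source locus one must invoke the explicit classification of the Type II boundary of $P_6^{\rm K}$ (Shah/Laza) to see that an isomorphism of sources forces an isomorphism of pairs; and the Type III locus is handled because both $P_6^{\rm K} \to \sF_2^{\rm BB}$ and $P_6^{\rm K} \to P_6^{\CY}$ contract it to a single point. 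Without this case analysis your argument does not close.
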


\begin{proof}
Note that $P_6^{\rm K}$ is normal by Proposition \ref{p:Klocalstructure} and  $P_6^{\CY}$ is seminormal by Proposition \ref{p:existsagm}.
By \cite[Thm. 6.6]{ADL19},
 $\sF_2^{\rm BB}$ is the ample normal model of  $L_{\rm Hodge}^{\rm K}$ on $P_6^{\rm K}$.
Since $L_{\rm Hodge}$ is ample on $P_6^{\CY}$  by Theorem \ref{t:main2}.4 and $L_{\rm Hodge}^{\rm K}$ is the pullback of $L_{\rm Hodge}$ to $P_6^{\rm K}$, the normalization of $P_6^{\CY}$ is the ample normal model of $L_{\Hodge}^{\rm K}$ as well. Thus  $P_6^{\rm K} \to P_6^{\CY} $ factors as
\[
\begin{tikzcd}
P_6^{\rm K} \arrow[r] & \sF_2^{\rm BB}\arrow[r,"\nu"] & P_{6}^{\rm CY}
\end{tikzcd}
,\]
where $\nu$ is the normalization of $P_6^{\CY}$.

We claim that $\nu$ is a bijection on $\bk$-points. 
Since $\nu$ is a normalization, surjectivity holds.
To show injectivity, it suffices to show: if $(X,D)$ and $(X',D')$  are pairs in $P_6^{\rm K}(\bk)$ that are identified in $P_d^{\CY}$, then  $(X,D)$ and $(X',D')$ are identified in $\sF_2^{\rm BB}$.
Note that as $(X,D)$ and $(X',D')$ are identified in $P_6^{\CY}$, $\Src(X,D)= \Src(X',D')$ by Proposition \ref{p:SrcSequiv}.  We consider three cases: 
\vspace{.1 cm}

\noindent \emph{Case 1: The source is not a smooth elliptic curve or a point.} By Proposition \ref{p:normalawayfromIII+ell}, 
$P_6^{\CY}$ is normal at the image of $(X,D)$ and $(X',D')$. Therefore, 
$\nu$ is a bijection at those points and the pairs are  identified in $\sF_2^{\rm BB}$.

\noindent \emph{Case 2: The source is a smooth elliptic curve.} The classification of the boundary of $P_6^{\rm K}$ in  \cite[pg. 233]{Laz16} implies $(X,D)\cong (\bP^2, C)$ and $(X',D')\cong (\bP^2, C')$ for some smooth cubic curves $C$ and $C'$ in $\bP^2$. Since the sources  are isomorphic, we must have $(X,D)\cong (X',D')$. 

\noindent \emph{Case 3: The source is a point.} 
Since the map $P_6^{\rm K}\to \sF_2^{\rm BB}$ contracts the Type III locus to a single point \cite[pg. 234]{Laz16}, 
$(X,D)$ and $(X',D')$ are identified in $\sF_2^{\rm BB}$.
\vspace{.1 cm}

By the above cases, $\nu$ is a bijection on $\bk$-points. Using that $P_6^{\rm CY}$ seminormal and  $\sF_2^{\rm BB}$ is reduced,  we conclude that $\nu$ is an isomorphism.
 \end{proof}

\begin{thm}\label{t:deg6ps}
The polystable pairs of Type II in $P_6^{\CY}(\bk)$, as illustrated in Figure \ref{f:polystablesextics}, are the following:
\begin{enumerate}
    \item $(C_p(E,L), E_\infty)$ where $E$ is a smooth elliptic curve, $L$ is a degree $9$ line bundle on $E$, and $E_\infty$ is the section at infinity;
    \item $(\bP(1,1,2)\cup \bP(1,1,2), \frac{1}{2}C)$ where on each component of the surface, $C$ is the sum of three sections tangent at a common point away from the non-normal locus;
    \item $(\bP^2\cup \bF_1, \frac{1}{2}C)$ where the surface is glued along a line $l$ in $\bP^2$ and the negative section in $\bF_1$,  $C|_{\bP^2}$ is the sum of four distinct lines passing through a common point away from $l$, and $C|_{\bF_1}$ is the sum of four distinct fibers together with  a doubled positive section;
    \item $(\bP(1,1,4)\cup \bF_4, \frac{1}{2}C)$ where the surface is glued along the section at infinity of $\bP(1,1,4)$ and the negative section of  $\bF_4$, $C|_{\bP(1,1,4)}$ is the sum of four distinct rulings, and $C|_{\bF_4}$ is the sum of four distinct fibers together with a doubled positive section.
\end{enumerate}
\end{thm}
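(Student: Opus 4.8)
\textbf{Proof proposal for Theorem \ref{t:deg6ps}.}

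The plan is to combine the analysis of the boundary of $P_6^{\rm K}$ in \cite{Laz16, AET} with the structural classification of polystable Type II pairs from Theorem \ref{thm:type2Sequiv} and Proposition \ref{prop:reg0polystable}. Recall that a Type II pair $(X,D)\in P_6^{\CY}(\bk)$ is polystable if and only if its orbit in $\cP_6^{\CY}$ is closed, which by Proposition \ref{prop:reg0polystable} holds automatically once $\Aut^0(X,D)\cong\bG_m^2$; conversely, if $\Aut^0(X,D)$ has rank $\leq 1$ the pair admits a non-trivial weakly special degeneration and hence is not polystable. So the strategy is twofold: (a) show each of the four pairs (1)--(4) is indeed a boundary polarized CY pair of Type II with $\bG_m^2$-action, hence polystable; and (b) show these are the only polystable Type II pairs, by running through the possible sources. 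For (a), in each case one checks directly that the surface is slc Fano (it is a union of toric-type or orbifold-cone pieces glued along sections, so Proposition \ref{prop:orbcone-adjunction} and Koll\'ar's gluing theory apply), that $D$ is ample $\bQ$-Cartier with $K_X+D\sim_\bQ 0$, and that the evident $\bG_m^2$ acts preserving $D$. Since the coefficient is $\tfrac{3}{6}=\tfrac12$, we are in the index-$2$ regime; the pairs (2),(3),(4) have non-reduced or glued structure exactly matching cases (iii),(iv) of Theorem \ref{thm:type2Sequiv}, and (1) is case (i) of that theorem.

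For part (b), I would use Theorem \ref{thm:type2Sequiv} to enumerate: every Type II S-equivalence class in $\cP_6^{\CY}$ contains a pair whose source $(E,D_E)$ is either a smooth elliptic curve or a klt CY pair on $\bP^1$, and the polystable representative (when it exists) is the unique closed point in that S-equivalence class. When the source is a smooth elliptic curve $E$, Theorem \ref{thm:type2Sequiv}(i) and its finiteness statement force the polystable pair to be the projective cone $C_p(E,L)$ with $\deg L=9$, giving (1). When the source is a CY pair $(\bP^1, D_E)$ with $\lfloor D_E\rfloor=0$, one must determine for which orbifold data $D_E$ there is a pair with $\Aut^0\cong\bG_m^2$ sitting over it; by the degree constraint $(-K_X)^2=9$ together with Propositions \ref{prop:orbcone-vol} and \ref{prop:orbcone-adjunction}, and the requirement that the polarizing boundary have coefficient $\tfrac12$, only finitely many $(\bP^1,D_E)$ occur, and for each the polystable model is an orbifold cone over $\bP^1$ (or gluing of two such) with $\bG_m^2$-action — these are precisely the surfaces $\bP(1,1,2)\cup\bP(1,1,2)$, $\bP^2\cup\bF_1$, $\bP(1,1,4)\cup\bF_4$ listed. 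Concretely I would match these against the known boundary description of $P_6^{\rm K}$ and $P_6^{\rm H}$ (e.g. the "cat-eye/ox" style degenerations and the $\bF_1$, $\bF_4$ pieces appearing in \cite{AET, Laz16}): each maximal boundary stratum of $\sF_2^{\rm BB}$ corresponding to a Type II degeneration has a unique polystable representative, and one computes its automorphism group to confirm it is $\bG_m^2$ and identifies the boundary divisor $D$ explicitly by tracking how the sextic branch curve degenerates. The doubled sections in (3) and (4) and the tacnodal/tangent configurations in (2),(3) come out of the explicit adjunction computation for the glued orbifold cones (cf. the computation in the degree $4$ and $5$ cases above).

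The main obstacle I anticipate is the explicit identification of the boundary divisor $D$ on each surface in (2)--(4): showing, for instance, that $C|_{\bF_4}$ is exactly "four fibers plus a doubled positive section" rather than some other $\bG_m^2$-invariant divisor in $|-K_{\bF_4}-E'|$ requires carefully running the relevant test configuration (as in the degree $4$ computation with $\cX=\{x_0x_2=tx_1^2\}\subset\bP(1,1,1,2)\times\bA^1$) and computing the limit of the branch curve, together with the different-divisor bookkeeping of Proposition \ref{prop:orbcone-adjunction} on the conductor. A secondary difficulty is verifying completeness of the list — i.e. that no Type II polystable pair is missed — which I would handle by invoking the finiteness part of Theorem \ref{thm:type2Sequiv} (finitely many $(X_0,D_0)$ per source and degree) and the fact that the birational morphisms $P_6^{\rm K}\to P_6^{\CY}$ and $(P_6^{\rm H})^{\rm sn}\to P_6^{\CY}$ of Theorem \ref{t:main2}(3) are surjective, so every point of $P_6^{\CY}$ — in particular every polystable point — is hit by a pair whose degeneration type is already understood from \cite{AET, Laz16}.
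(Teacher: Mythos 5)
There is a genuine gap in part (a) of your proposal, and it sits exactly at case (1). You assert that a Type II pair is polystable if and only if $\Aut^0(X,D)\cong \bG_m^2$, and in particular that ``if $\Aut^0(X,D)$ has rank $\leq 1$ the pair admits a non-trivial weakly special degeneration and hence is not polystable.'' This converse is false, and Proposition \ref{prop:reg0polystable} does not claim it: that proposition only says rank $2$ \emph{implies} polystable. The elliptic cone $(C_p(E,L),E_\infty)$ of case (1) has $\Aut^0\cong \bG_m$ only (the scaling action; automorphisms of $(E,L)$ form a finite group), so your criterion would ``prove'' that (1) is \emph{not} polystable, contradicting the very statement you are trying to establish. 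Note also that polystability does not mean the absence of non-trivial test configurations — it means every weakly special test configuration is of product type — so a rank-one torus action is perfectly compatible with polystability.

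The paper closes this gap with a separate argument for (1): if $(C_p(E,L),E_\infty)$ were not polystable, then by Propositions \ref{p:torusrk+1} and \ref{prop:reg0polystable} its polystable degeneration $(X_0,D_0)$ would have $\Aut^0\cong\bG_m^2$; by the classification in \cite[Thm.~5.5 \& 6.5]{Hac04} and the invariance of the source (Proposition \ref{p:SrcSequiv}, the source must remain a smooth elliptic curve) $X_0$ must be normal, hence toric with $D_0$ torus-invariant; but then $D_0$ equals the reduced toric boundary (since $K_{X_0}+D_0'\sim_{\bQ}0$ forces $D_0=D_0'$), so the source of $(X_0,D_0)$ is a point — a contradiction. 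You need some version of this argument; your framework as stated cannot produce it. For cases (2)--(4) your reasoning (exhibit the $\bG_m^2$-action and invoke Proposition \ref{prop:reg0polystable}) matches the paper, and your part (b) — enumerating via Theorem \ref{thm:type2Sequiv} and matching against the known Type II boundary of $P_6^{\rm K}$ from \cite{Laz16,AET} — is essentially the same (brief) completeness argument the paper gives, including the explicit test-configuration computations (degeneration to the normal cone for (1), blowup of the double curve in the trivial test configuration for (3) and (4)) needed to pin down the boundary divisor $D$.
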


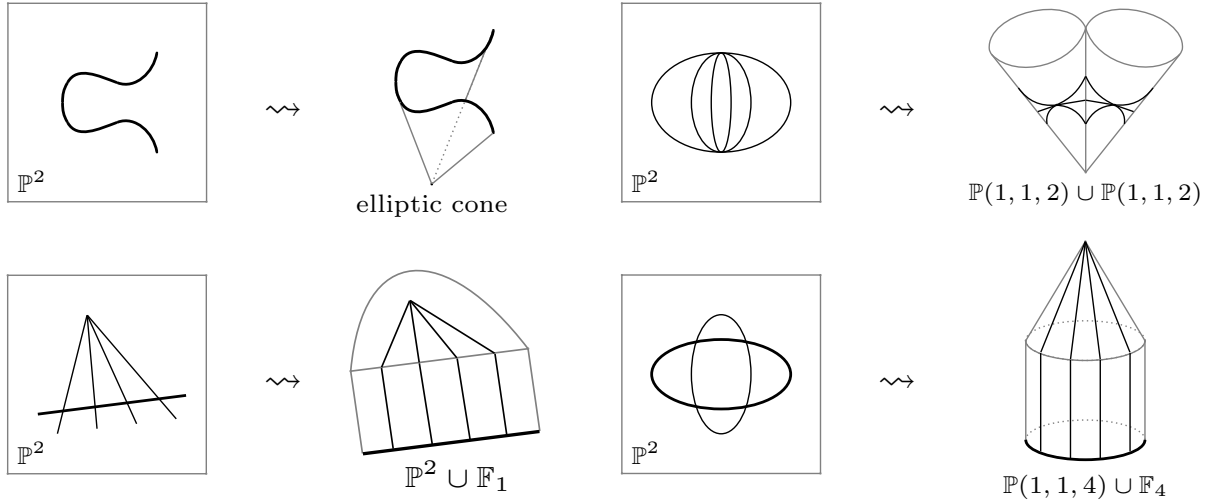
\begin{figure}[h]
\caption{The polystable degenerations of Type II sextic curves.}
\label{f:polystablesextics}
\begin{tabular}{cccccc}
{\resizebox{.2\textwidth}{!}{\begin{tabular}{c}
\begin{tikzpicture}[gren0/.style = {draw, circle,fill=greener!80,scale=.7},gren/.style ={draw, circle, fill=greener!80,scale=.4},blk/.style ={draw, circle, fill=black!,scale=.03},plc/.style ={draw, circle, color=white!100,fill=white!100,scale=0.02},smt/.style ={draw, circle, color=gray!100,fill=gray!100,scale=0.02},lbl/.style ={scale=.2}] 
\node[smt] at (0,0) (1){};
\node[smt] at (0,2) (2){};
\node[smt] at (2,2) (3){};
\node[smt] at (2,0) (4){};
\node[blk] at (1.5,0.5) (5){};
\node[blk] at (1.1,0.8) (6){};
\node[blk] at (0.6,0.8) (7){};
\node[blk] at (.55,1) (8){};
\node[blk] at (0.6,1.2) (9){};
\node[blk] at (1.1,1.2) (10){};
\node[blk] at (1.5,1.5) (11){};
\draw [-,color=gray] (1) to (2) to (3) to (4) to (1);
\draw [-,thick] (5) to [in=20,out=100] (6) to [in=300,out=200] (7) to [in=270,out=120] (8) to [in=240,out=90] (9) to [in=160,out=60] (10) to [in=260,out=340] (11);

%to remove labels, remoe the text below
\node[above right, node font=\tiny] at (0,0) {$\bP^2$};

\end{tikzpicture}
\end{tabular}}} & {\Large{$\rightsquigarrow$}} & {\resizebox{.2\textwidth}{!}{\begin{tabular}{c}
\begin{tikzpicture}[gren0/.style = {draw, circle,fill=greener!80,scale=.7},gren/.style ={draw, circle, fill=greener!80,scale=.4},blk/.style ={draw, circle, fill=black!,scale=.03},plc/.style ={draw, circle, color=white!100,fill=white!100,scale=0.02},smt/.style ={draw, circle, color=gray!100,fill=gray!100,scale=0.02},lbl/.style ={scale=.2}] 
\node[plc] at (.3,-.2) (A){};
\node[plc] at (.3,2) (B){};
\node[plc] at (2,2) (C){};
\node[plc] at (2,0) (D){};
\node[blk] at (0.9,0) (1){};
\node[blk] at (1.5,0.5) (5){};
\node[blk] at (1.1,0.8) (6){};
\node[blk] at (0.6,0.8) (7){};
\node[blk] at (.55,1) (8){};
\node[blk] at (0.6,1.2) (9){};
\node[blk] at (1.1,1.2) (10){};
\node[blk] at (1.5,1.5) (11){};
\draw[-,color=gray] (1) to (5);
\draw[-,color=gray] (1) to (.55,.9);
\draw[color=gray,densely dotted] (1) to (11);
\draw[-,color=gray] (1.22,0.8) to (11);
\draw [-,thick] (5) to [in=20,out=100] (6) to [in=300,out=200] (7) to [in=270,out=120] (8) to [in=240,out=90] (9) to [in=160,out=60] (10) to [in=260,out=340] (11);

%to remove labels, remove the text below
\node[below, node font=\tiny] at (1) {elliptic cone};

\end{tikzpicture}
\end{tabular}}} & {\resizebox{.2\textwidth}{!}{\begin{tabular}{c}
\begin{tikzpicture}[gren0/.style = {draw, circle,fill=greener!80,scale=.7},gren/.style ={draw, circle, fill=greener!80,scale=.4},blk/.style ={draw, circle, fill=black!,scale=.03},plc/.style ={draw, circle, color=white!100,fill=white!100,scale=0.02},smt/.style ={draw, circle, color=gray!100,fill=gray!100,scale=0.02},lbl/.style ={scale=.2}] 
\node[smt] at (0,0) (1){};
\node[smt] at (0,2) (2){};
\node[smt] at (2,2) (3){};
\node[smt] at (2,0) (4){};
\draw [-,color=gray] (1) to (2) to (3) to (4) to (1);
\draw (1,1) ellipse (0.7 and 0.5);
\draw (1,1) ellipse (0.3 and 0.5);
\draw (1,1) ellipse (0.1 and 0.5);

%to remove labels, remove the text below 
\node[above right, node font=\tiny] at (0,0) {$\bP^2$};
\end{tikzpicture}
\end{tabular}}} & {\Large{$\rightsquigarrow$}} & {\resizebox{.24\textwidth}{!}{\begin{tabular}{c}
\begin{tikzpicture}[gren0/.style = {draw, circle,fill=greener!80,scale=.7},gren/.style ={draw, circle, fill=greener!80,scale=.4},blk/.style ={draw, circle, fill=black!,scale=.03},plc/.style ={draw, circle, color=white!100,fill=white!100,scale=0.02},smt/.style ={draw, circle, color=gray!100,fill=gray!100,scale=0.02},lbl/.style ={scale=.2}] 
\node[smt] at (0,0) (1){};
\node[smt] at (0,1.5) (2){};
\node[plc] at (1,1.25) (3){};
\node[plc] at (0.98,1.23) (3r){};
\node[plc] at (-1,1.25) (4){};
\node[plc] at (-0.98,1.23) (4r){};
\node[blk] at (0.3,0.7) (5){};
\node[plc] at (0,1) (6){};
\node[plc] at (0,0.5) (7){};
\node[plc] at (0.7,0.88) (8){};
\node[plc] at (0.4,0.5) (9){};
\node[blk] at (-0.3,0.7) (10){};
\node[plc] at (-0.7,0.88) (11){};
\node[plc] at (-0.4,0.5) (12){};
\node[plc] at (.5,0.63) (13){};
\node[plc] at (0,0.75) (14){};
\node[plc] at (-.5,0.63) (15){};
\draw [-,color=gray] (1) to (2);
\draw [-,color=gray] (1) to (3r); 
\draw [-,color=gray] (1) to (4r);
\draw [-,color=gray,rounded corners=1pt] (2) to [bend left=80] (3) to [bend left = 60] (2);
\draw [-,color=gray,rounded corners=1pt] (2) to [bend left=60] (4) to [bend left = 80] (2);
\draw [-] (6) to [bend right=20] (5) to [bend right=30] (8);
\draw [-] (7) to [bend left=30] (5) to [bend left=37] (9);
\draw [-] (14) to [bend left=0] (5) to [bend left=0] (13);
\draw [-] (11) to [bend right=30] (10) to [bend right=20] (6);
\draw [-] (12) to [bend left=37] (10) to [bend left=30] (7);
\draw [-] (14) to [bend left=0] (10) to [bend left=0] (15);

%to remove labels, remove the text below 
\node[below, node font=\tiny] at (1) {$\bP(1,1,2) \cup \bP(1,1,2)$};

\end{tikzpicture}
\end{tabular}}} \\
{\resizebox{.2\textwidth}{!}{\begin{tabular}{c}
\begin{tikzpicture}[gren0/.style = {draw, circle,fill=greener!80,scale=.7},gren/.style ={draw, circle, fill=greener!80,scale=.4},blk/.style ={draw, circle, fill=black!,scale=.03},plc/.style ={draw, circle, color=white!100,fill=white!100,scale=0.02},smt/.style ={draw, circle, color=gray!100,fill=gray!100,scale=0.02},lbl/.style ={scale=.2}] 
\node[smt] at (-.1,-.1) (1){};
\node[smt] at (-.1,1.9) (2){};
\node[smt] at (1.9,1.9) (3){};
\node[smt] at (1.9,-.1) (4){};
\node[plc] at (0.2,0.5) (5){};
\node[plc] at (1.7,0.69) (6){};
\node[plc] at (0.7,1.5) (7){};
\node[plc] at (0.4,0.3) (8){};
\node[plc] at (0.8,0.35) (9){};
\node[plc] at (1.2,0.4) (10){};
\node[plc] at (1.6,0.45) (11){};
\draw [-,color=gray] (1) to (2) to (3) to (4) to (1);
\draw [-, thick] (5) to (6);
\draw [-] (7) to (8);
\draw [-] (7) to (9);
\draw [-] (7) to (10);
\draw [-] (7) to (11);

%to remove labels, remove the text below 
\node[above right, node font=\tiny] at (1) {$\bP^2$};
\end{tikzpicture}
\end{tabular}}} & {\Large{$\rightsquigarrow$}} & {\resizebox{.2\textwidth}{!}{\begin{tabular}{c}
\begin{tikzpicture}[gren0/.style = {draw, circle,fill=greener!80,scale=.7},gren/.style ={draw, circle, fill=greener!80,scale=.4},blk/.style ={draw, circle, fill=black!,scale=.03},plc/.style ={draw, circle, color=white!100,fill=white!100,scale=0.02},smt/.style ={draw, circle, color=gray!100,fill=gray!100,scale=0.02},lbl/.style ={scale=.2}] 

\node[smt] at (0.2,0.5) (5){};
\node[smt] at (1.7,0.69) (6){};
\node[plc] at (0.7,1.1) (7){};
\node[plc] at (0.46,0.53) (8){};
\node[plc] at (0.78,0.57) (9){};
\node[plc] at (1.1,0.61) (10){};
\node[plc] at (1.42,0.65) (11){};
\node[plc] at (0.3,-.2) (12){};
\node[plc] at (1.8,-0.01) (13){};
\node[plc] at (0.56,-0.16) (14){};
\node[plc] at (0.89,-0.12) (15){};
\node[plc] at (1.21,-0.08) (16){};
\node[plc] at (1.52,-0.04) (17){};
\draw [-,color=gray] (5) .. controls (0.3,1.6)and(1,1.6) .. (6);
\draw [-,color=gray] (5) to (12) to (13) to (6);
\draw [-,color=gray] (5) to (6);
\draw [-,thick] (12) to (13);
\draw [-] (7) to (8);
\draw [-] (7) to (9);
\draw [-] (7) to (10);
\draw [-] (7) to (11);
\draw [-] (8) to (14);
\draw [-] (9) to (15);
\draw [-] (10) to (16);
\draw [-] (11) to (17);

%to remove labels, remove the text below 
\node[below right, node font=\tiny] at (14) {$\bP^2 \cup \mathbb{F}_1$};

\end{tikzpicture}
\end{tabular}}} & {\resizebox{.2\textwidth}{!}{\begin{tabular}{c}
\begin{tikzpicture}[gren0/.style = {draw, circle,fill=greener!80,scale=.7},gren/.style ={draw, circle, fill=greener!80,scale=.4},blk/.style ={draw, circle, fill=black!,scale=.03},plc/.style ={draw, circle, color=white!100,fill=white!100,scale=0.02},smt/.style ={draw, circle, color=gray!100,fill=gray!100,scale=0.02},lbl/.style ={scale=.2}] 
\node[smt] at (0,0) (1){};
\node[smt] at (0,2) (2){};
\node[smt] at (2,2) (3){};
\node[smt] at (2,0) (4){};
\draw [-,color=gray] (1) to (2) to (3) to (4) to (1);
\draw[thick] (1,1) ellipse (0.7 and 0.35);
\draw (1,1) ellipse (0.3 and 0.6);

%to remove labels, remove the text below 
\node[above right, node font=\tiny] at (1) {$\bP^2$};
\end{tikzpicture}
\end{tabular}}} & {\Large{$\rightsquigarrow$}} & {\resizebox{.2\textwidth}{!}{\begin{tabular}{c}
\begin{tikzpicture}[gren0/.style = {draw, circle,fill=greener!80,scale=.7},gren/.style ={draw, circle, fill=greener!80,scale=.4},blk/.style ={draw, circle, fill=black!,scale=.03},plc/.style ={draw, circle, color=white!100,fill=white!100,scale=0.02},smt/.style ={draw, circle, color=gray!100,fill=gray!100,scale=0.02},lbl/.style ={scale=.2}] 
\node[smt] at (0,2) (1){};
\node[smt] at (-.6,1) (2){};
\node[smt] at (.6,1) (3){};
\node[smt] at (-.6,0) (4){};
\node[smt] at (.6,0) (5){};
\node[plc] at (-1,1) (A){};
\node[plc] at (1,1) (B){};
\node[plc] at (-.45,.88) (6){};
\node[plc] at (-.15,.81) (7){};
\node[plc] at (.15,.81) (8){};
\node[plc] at (.45,.88) (9){};
\node[plc] at (-.45,-.12) (10){};
\node[plc] at (-.15,-.19) (11){};
\node[plc] at (.15,-.19) (12){};
\node[plc] at (.45,-.12) (13){};
\draw[gray,densely dotted] (0,0) ellipse (0.6 and 0.2);
\draw[gray,densely dotted] (0,1) ellipse (0.6 and 0.2);
\draw[gray] (-0.6,0) arc(180:360:0.6 and 0.2);
\draw[gray] (-0.6,1) arc(180:360:0.6 and 0.2);
\draw[-,gray] (1) to (2);
\draw[-,gray] (1) to (3);
\draw[-,gray] (2) to (4);
\draw[-,gray] (3) to (5);
\draw[thick] (-0.6,0) arc(180:360:0.6 and 0.2);
\draw[-] (1) to (6) to (10);
\draw[-] (1) to (7) to (11);
\draw[-] (1) to (8) to (12);
\draw[-] (1) to (9) to (13);

%to remove labels, remove the text below 
\node[below, node font=\tiny] at (0,-.25) {$\bP(1,1,4) \cup \mathbb{F}_4$}; 
\end{tikzpicture}
\end{tabular}}} \\
\end{tabular}
\end{figure}

\begin{proof}
The polystability of these pairs in (2), (3), and (4)  follows from Proposition \ref{prop:reg0polystable}. Let $(X,D)$ be a pair from (1) whose polystable degeneration is $(X_0, D_0)$. Assume to the contrary that $(X,D)$ is not polystable so by Propositions \ref{p:torusrk+1} and \ref{prop:reg0polystable} we know that $\Aut^0(X_0,D_0)\cong \bG_m^2$. By \cite[Thm. 5.5 \& 6.5]{Hac04} we know that  $X_0$ is either normal or obtained by gluing two normal surfaces along a smooth rational curve. The latter is not possible as $\Src(X_0,D_0)\cong \Src(X,D)$ is a smooth elliptic curve.  Thus $X_0$ is normal, and $\Aut^0(X_0,D_0)\cong \bG_m^2$ implies that $X_0$ is toric and $D_0$ is torus invariant. Let $D_0'$ denote the reduced toric boundary of $X_0$. Then we have $K_{X_0} + D_0' \sim_{\bQ} 0$ and $D_0\leq D_0'$, which implies that $D_0 = D_0'$. Thus the source of $(X_0, D_0)$ is a point which cannot be a smooth elliptic curve, a contradiction.  Thus we conclude that pairs from (1) are polystable.

One can work out the polystable degenerations from the Type II pairs in $P_6^{\rm K}$ (see e.g. \cite[pg. 233]{Laz16})  to these four cases. For (1), one simply take the degeneration to normal cone. For (2), the construction is similar to  cases from degree $4$ and $5$ where the degeneration of the underlying surface is also $\bP(1,1,2)\cup\bP(1,1,2)$. For (3) and (4), one takes the blowup of the trivial test configuration along the double curve in the central fiber.
\end{proof}

\subsection{Points on $\bP^1$}
We now use the machinery of the paper to compactify the moduli space of unordered points on $\bP^1$. For other approaches using KSBA or K-stability, see e.g. \cite{Has03, Fuj17}.

\begin{defn}
Fix an integer $d \ge 2$.  Let $\chi(m) := \chi(\bP^1, \omega_{\bP^1}^{[-m]})$ and 
\[
\sC_d^{\CY} \subset \cM(\chi, d, (1, \tfrac{2}{d})))
\]
denote the open and closed substack of 
$\cM(\chi, d, (1, \frac{2}{d}))$ consisting of  families $[(X,\Delta +D)\to B]$ with $\Delta = 1 \cdot \emptyset$ (see \ref{r:Delta=0}).
Let 
\[
\sC_d \subset \sC_d^{\GIT}\subset \sC_d^{\CY}
\]
denote open substacks such that  $\sC_d^{\GIT}(\bk)$ parametrizes pairs $(X,D)$, where $X\cong \bP^1$, and $\sC_d(\bk)$ parametrizes pairs $(X,D=\frac{2}{d}D^{\Div})$, where $X\cong \bP^1$ and  $D^{\Div}$ is reduced on $X$. 
\end{defn}

\begin{lem}\label{lem:P^1-stack-sm}
Let $d\geq 2$ be an integer.
The following properties of $\sC_d^{\CY}$ hold.
\begin{enumerate}
    \item If $(X,D)$ is a pair in $\sC_d^{\CY}(\bk)$,  then  $X\cong \bP^1$ or $X\cong \bP^1\vee \bP^1$, where  $\bP^1\vee \bP^1$ denotes the nodal curve obtained by gluing two copies of $\bP^1$ at a point.
    \item $\sC_d^{\CY}$ is smooth.
    \item $\sC_d^{\GIT}$ is dense in $\sC_d^{\CY}$.
\end{enumerate}
\end{lem}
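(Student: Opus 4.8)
\textbf{Proof proposal for Lemma \ref{lem:P^1-stack-sm}.}

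The plan is to establish the three statements essentially in order, with the bulk of the work being a local deformation-theoretic analysis (analogous to the plane curve case treated via the canonical covering stack in Section \ref{s:twistedvarieties}), but in dimension one the situation is much simpler. For \emph{(1)}, if $(X,\Delta+D)$ is in $\sC_d^{\CY}(\bk)$ then $X$ is a demi-normal projective curve with $\omega_X^{[N]}(N D)\cong\cO_X$ and $-K_X$ ample, so $X$ is an slc Fano curve of arithmetic genus $0$ whose components are smooth rational curves; since $\deg(-K_X)>0$ forces $p_a(X)=0$, the dual graph is a tree, and a genus zero demi-normal curve with ample anticanonical bundle is either $\bP^1$ or a chain $\bP^1\cup\cdots\cup\bP^1$. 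The constraint that $D$ be ample and $\bQ$-Cartier, combined with $K_X+D\sim_\bQ 0$ and the degree bookkeeping from Proposition \ref{p:anticanonicalsheaf}.3 (the degree of $D$ is fixed), pins the chain to have at most two components; one then rules out longer chains because on a middle component $\bP^1$ one would need $\deg(-K_X|_{\text{mid}})=\deg D|_{\text{mid}}>0$ while $-K_X|_{\text{mid}}$ would have degree $2-2=0$ (two nodes), a contradiction, exactly mirroring the argument in the proof of Lemma \ref{l:Ncompspecialize}. So $X\cong\bP^1$ or $X\cong\bP^1\vee\bP^1$.

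For \emph{(2)}, smoothness of $\sC_d^{\CY}$ is checked via the infinitesimal lifting criterion, as in the proof of Proposition \ref{p:smoothnesPCYatpt}. In the case $X\cong\bP^1$ this is immediate since $\bP^1$ and any effective divisor on it are unobstructed and $H^1(\bP^1,\cO_{\bP^1})=H^1(\bP^1,\cO_{\bP^1}(D))=0$. For the nodal case $X=\bP^1\vee\bP^1$, one deforms using the same strategy: the curve $\bP^1\vee\bP^1$ is a nodal curve, hence has unobstructed deformations (its versal deformation space smooths the node and is smooth of the expected dimension), and the marked divisor $D$ (which by (1) is supported away from the node, since $D$ is $\bQ$-Cartier and ample and the node contributes to $K_X$) deforms freely because $H^1(X,\cO_X(D))=0$ by \cite[Thm. 11.34]{KolNewBook}. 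Gluing theory as in Proposition \ref{p:bpcygluing} is not even needed here because the node is a hypersurface singularity; one can alternatively invoke the dimension-one analogue of Proposition \ref{p:deformations}, identifying deformations of $(X,D)$ with deformations of its canonical covering stack (a twisted curve with at worst a $\bmu_r$-node), and twisted curves have smooth, unobstructed deformation theory. The main obstacle here is simply to write down the $H^1$ vanishing and the lifting of the divisor cleanly; once that is in place, smoothness follows from \cite[\href{https://stacks.math.columbia.edu/tag/02HY}{Tag 02HY}]{stacks-project} as in Proposition \ref{p:smoothnesPCYatpt}.

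For \emph{(3)}, density of $\sC_d^{\GIT}$ in $\sC_d^{\CY}$: since $\sC_d^{\CY}$ is smooth by (2) and of finite type (which follows from Proposition \ref{p:Pdmstack}-type boundedness, or directly since the index is bounded and $-K_X$ is uniformly very ample by \cite[Thm. 2.1.2]{Kol85}), it suffices to show that every pair $(X,D)$ with $X\cong\bP^1\vee\bP^1$ is a limit of pairs with irreducible $X$. This is achieved by an explicit smoothing: the partial smoothing of the node of $\bP^1\vee\bP^1$ realizes $X$ as a degeneration of $\bP^1$, and one uses Lemma \ref{l:extenddivisor} to extend the divisor $D_0$ on the central fiber to a relative Mumford divisor on the total space of the smoothing (the hypotheses of that lemma are met because $D_0$ is supported away from the node, $(X_0,D_0)$ is slc, and $N(K_{X_0}+D_0)\sim 0$). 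Thus every point of $\sC_d^{\CY}\setminus\sC_d^{\GIT}$ lies in the closure of $\sC_d^{\GIT}$. Alternatively, and perhaps more cleanly, one notes that $\sC_d^{\CY}$ is irreducible (being smooth with the irreducible dense open $\sC_d$, which is the quotient of a dense open in $\Sym^d\bP^1$) and that $\sC_d^{\GIT}$ is a nonempty open substack, hence automatically dense. I expect step (2) — specifically the verification of unobstructedness and divisor-lifting in the nodal case — to be the main technical point, though it is quite short given the machinery of Section \ref{s:twistedvarieties}.
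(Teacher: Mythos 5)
Your proposal follows essentially the same route as the paper on all three parts. For (1) you argue via $p_a(X)=0$ and the tree structure of the dual graph before using ampleness of $-K_X$ to bound the number of nodes per component; the paper goes directly through the normalization $(\oX,\oG)$ being log Fano, but both arguments reduce to "each component meets the conductor in at most one point," so this is the same proof in different clothing. For (2) your argument is the paper's: unobstructedness of the nodal curve plus unobstructedness of the Cartier divisor $D^{\Div}$ (which is forced to avoid the node by the Mumford-divisor/slc conditions, not really because "the node contributes to $K_X$"); the paper kills the obstruction via $H^1(D^{\Div},\cO_X(D^{\Div})|_{D^{\Div}})=0$, which is immediate since $D^{\Div}$ is zero-dimensional, rather than your $H^1(X,\cO_X(D^{\Div}))=0$, but both vanishings hold and both suffice.

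The one genuine misstep is in (3), in the mechanism for extending the divisor across the smoothing of the node. Lemma \ref{l:extenddivisor} does not apply as you invoke it: its hypotheses require $B_0$ to be $\bT$-invariant and every $\bT$-orbit closure in the base to contain $0$ (these are what let the proof promote "slc near $0$" to "slc everywhere" and find the lifting section in a single weight space), and a general configuration of $d$ points on $\bP^1\vee\bP^1$ is certainly not invariant under the $\bG_m$-action on the central fiber of the blowup family $\mathrm{Bl}_{\rm pt}(\bP^1\times\bA^1)\to\bA^1$. The repair is exactly the tool you already set up in (2): since deformations of the Cartier divisor $D^{\Div}$ are unobstructed, the smoothing of $X_0$ lifts to a deformation of the pair $(X_0,D_0^{\Div})$, whose nearby fibers lie in $\sC_d^{\GIT}$; this is precisely the paper's argument. (Your "cleaner" alternative for (3) is circular: unlike $\cP_d^{\CY}$, the stack $\sC_d^{\CY}$ is \emph{not} defined as the closure of $\sC_d$ but as an open and closed substack of $\cM(\chi,d,(1,\tfrac{2}{d}))$, so irreducibility of $\sC_d^{\CY}$ — equivalently, that $\sC_d^{\GIT}$ meets every component — is exactly what the smoothing argument is needed to establish.)
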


\begin{proof}
    (1) Since $(X,D)$ is a boundary polarized CY pair of dimenison $1$, we know that $X$ is a nodal curve with $-K_X$ ample. If $X$ is normal, then it is isomorphic to $\bP^1$. If $X$ is non-normal, then its normalization $(\oX, \oG)$ is a log Fano pair. Thus every component of $\oX$ is $\bP^1$ containing a unique point in the conductor divisor. Thus $X\cong \bP^1\vee \bP^1$. 

    (2) To show the stack is smooth, it suffices to verify the infinitesimal lifting criterion for smoothness as in the proof of Proposition \ref{p:smoothnesPCYatpt}. Here the situation is much simpler as every pair $(X,D)$ in $\sC_d^{\CY}(\bk)$ satisfies that $X$ is a reduced Gorenstein curve with at worst nodal singularities, and $D^{\Div}$ is a Cartier divisor not passing through the singular locus of $X$. 
    By \cite[\href{https://stacks.math.columbia.edu/tag/0DZX}{Tag 0DZX}]{stacks-project} we know that $X$ has unobstructed deformations. By \cite[Thm. 6.2.b]{Har10} we know that the Cartier divisor $D^{\Div}$ has unobstructed deformations as $H^1(D^{\Div}, \cO_X(D^{\Div})|_{D^{\Div}}) = 0$.
    Thus the statement follows by combining the unobstructedness of deformations of both $X$ and $D^{\Div}$.

    (3) We know that $\bP^1\vee \bP^1$ is an  isotrivial degeneration of $\bP^1$, as one can just take the blowup of $\bP^1\times \bA^1$ at a point in the central fiber. Thus by the unobstructedness of deformations of $D$ from (2) we know that every pair $(X, D)$ in $\sC_d^{\CY}(\bk)$ with $X\cong \bP^1\vee\bP^1$ deforms to a family in $\sC_d^{\GIT}$. 
\end{proof}

\begin{thm}\label{thm:pts-P^1}
    For any $d \ge 2$, the stack $\sC_d^{\rm CY}$ is of finite type and admits a projective good moduli space ${\rm C}_d^{\rm CY}$. Moreover,
    \begin{enumerate}
        \item The stack $\sC_d^{\GIT}$ is isomorphic to the GIT quotient stack $[\bP(H^0(\bP^1,\cO(d)))^{\rm ss}/\PGL_2]$ and hence admits a projective good moduli space ${\rm C}_d^{\GIT}$ isomorphic to the GIT quotient space $\bP(H^0(\bP^1,\cO(d)))^{\rm ss}\sslash \PGL_2$. 
        \item There is a commutative diagram
    \[
	\begin{tikzcd}
\sC_{d}^{\GIT} \arrow[r,hook]\arrow[d, "\phi_{\GIT}",swap] & \sC_{d}^{\CY}\arrow[d,"\phi"]\\
		{\rm C}_{d}^{\GIT} \arrow[r,"\cong"] & {\rm C}_{d}^{\CY} 
	\end{tikzcd}
	\]
 where the bottom row is an isomorphism.
        \item The Hodge line bundle on ${\rm C}_d^{\rm CY}$ is ample.
    \end{enumerate}
\end{thm}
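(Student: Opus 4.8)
The plan is to reduce the whole statement to classical geometric invariant theory for binary $d$-forms. \textbf{Step 1 (finite type and the good moduli space).} We will first note that $\sC_d^{\CY}$ is of finite type: by Lemma~\ref{lem:P^1-stack-sm} every pair in $\sC_d^{\CY}(\bk)$ has underlying curve $\bP^1$ or $\bP^1\vee\bP^1$, each a Gorenstein nodal curve on which $-2K_X$ is very ample, so (with $\Delta=0$) the set $\sC_d^{\CY}(\bk)$ is bounded by Proposition~\ref{p:boundedness}. Since $\sC_d^{\CY}$ is an open and closed substack of $\cM(\chi,d,(1,\tfrac2d))$ it is algebraic with affine diagonal, and by Theorem~\ref{t:main1} it is S-complete, $\Theta$-reductive, and satisfies the existence part of the valuative criterion for properness; Theorem~\ref{t:AHLH} then produces a proper separated algebraic space ${\rm C}_d^{\CY}$ together with a good moduli space morphism $\phi\colon\sC_d^{\CY}\to{\rm C}_d^{\CY}$. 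Projectivity of ${\rm C}_d^{\CY}$ will only be available at the end, from part (3).

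\textbf{Step 2 (the GIT description, part (1)).} We will identify $\sC_d^{\GIT}$ with a quotient stack. For $X=\bP^1$, a divisor $D=\tfrac2d\cD^{\Div}$ with $\cD^{\Div}\in|\cO_{\bP^1}(d)|=\bP(H^0(\bP^1,\cO(d)))=\bP(\mathrm{Sym}^dV^*)$ makes $(\bP^1,D)$ slc exactly when every point of $\cD^{\Div}$ has multiplicity $\leq d/2$, which by the Hilbert--Mumford criterion is precisely GIT-semistability for $\PGL_2=\mathrm{PGL}(V)$. The tautological $\PGL_2$-equivariant family over $\bP(H^0(\bP^1,\cO(d)))^{\rm ss}$ gives a morphism $[\bP(H^0(\bP^1,\cO(d)))^{\rm ss}/\PGL_2]\to\sC_d^{\GIT}$, and we will build an inverse by sending a family $(X,D)\to B$ to the $\PGL_2$-torsor $\underline{\mathrm{Isom}}_B(\bP^1_B,X)$ --- a torsor because the smooth conic bundle $X\to B$ is \'etale-locally $\bP^1_B$ --- equipped with the $\PGL_2$-equivariant classifying map to $\bP(H^0(\bP^1,\cO(d)))^{\rm ss}$ determined by $D$. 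Granting this, the good moduli space of $\sC_d^{\GIT}$ is the GIT quotient ${\rm C}_d^{\GIT}=\bP(H^0(\bP^1,\cO(d)))^{\rm ss}\sslash\PGL_2$, a projective variety.

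\textbf{Step 3 (the isomorphism ${\rm C}_d^{\GIT}\cong{\rm C}_d^{\CY}$, part (2)) --- the main obstacle.} Since $\sC_d^{\GIT}\subset\sC_d^{\CY}$ is dense open (Lemma~\ref{lem:P^1-stack-sm}(3)), the induced morphism $\psi\colon{\rm C}_d^{\GIT}\to{\rm C}_d^{\CY}$ is dominant and proper, hence surjective; both spaces are normal (${\rm C}_d^{\GIT}$ as a GIT quotient of a smooth variety, ${\rm C}_d^{\CY}$ because $\sC_d^{\CY}$ is smooth by Lemma~\ref{lem:P^1-stack-sm}(2) and good moduli spaces of smooth stacks are normal by \cite[Thm.~4.16.viii]{Alp13}). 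In characteristic zero it will then be enough to show $\psi$ is injective on $\bk$-points, and this requires a classification of the S-equivalence classes of $\sC_d^{\CY}$. We expect to argue: a GIT-stable form gives a klt, hence S-rigid, pair (Proposition~\ref{p:sequivklt}, or the characterization of stable boundary polarized CY pairs as the klt ones); for $d$ even the unique strictly polystable form $x^{d/2}y^{d/2}$ gives $(\bP^1,p+q)$, whose only nontrivial weakly special degeneration is to $(\bP^1\vee\bP^1,\,{\rm pt}+{\rm pt})$ --- here one combines Theorem~\ref{thm:CZ-lcplace} and Proposition~\ref{p:lcplacestestconfig} with the fact that over a curve the only divisorial lc places of a log Calabi--Yau pair are the valuations at the boundary points; and conversely every non-normal pair in $\sC_d^{\CY}$ (where the numerical constraint $K_X+D\sim0$ forces $d$ even and $\deg\cD^{\Div}|_{C_i}=d/2$ on each component) degenerates to $(\bP^1\vee\bP^1,\,{\rm pt}+{\rm pt})$ and is therefore S-equivalent to $(\bP^1,p+q)$. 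Thus distinct closed points of $\sC_d^{\GIT}$ remain distinct in ${\rm C}_d^{\CY}$, so $\psi$ is a proper birational bijection of normal varieties, hence an isomorphism, and the square commutes by the universal property in Proposition~\ref{p:Alpergmprops}. This S-equivalence analysis is a baby case of the one carried out in Section~\ref{s:sequivcurves}, but it is where the real content lies.

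\textbf{Step 4 (ampleness of the Hodge line bundle, part (3)).} Finally, using the presentation of Step 2 we will compute the Hodge line bundle. Choosing $N$ divisible by $d$ (so that $\lambda_{\rm Hodge,N}$ descends by Proposition~\ref{p:Hodgedescends}), on $\bP(V)\times\bP(H^0(\bP^1,\cO(d)))$ one has $\cO(\cD^{\Div})=\cO(d,1)$ and $\omega_{X/B}=\cO(-2,0)$, so $\omega_{X/B}^{N}(N\tfrac2d\cD^{\Div})=\cO(0,\tfrac{2N}{d})$; hence $\lambda_{\rm Hodge,N}$ restricts on $\bP(H^0(\bP^1,\cO(d)))^{\rm ss}$ to $\cO(\tfrac{2N}{d})$ with its natural $\PGL_2$-linearization, a positive power of the ample generator of $\mathrm{Pic}^{\PGL_2}$ defining the GIT quotient. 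Its descent to ${\rm C}_d^{\GIT}$ is therefore ample, and via the isomorphism of Step 3 this identifies the Hodge $\bQ$-line bundle on ${\rm C}_d^{\CY}$ with an ample $\bQ$-line bundle; in particular ${\rm C}_d^{\CY}$ is projective, which completes the proof.
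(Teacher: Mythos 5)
Your proposal is correct, and all three parts go through, but at several points you take a genuinely different route from the paper. For finite typeness the paper simply observes that every pair in $\sC_d^{\CY}(\bk)$ is one--dimensional, hence has regularity $\leq 0$, and invokes Theorem \ref{t:modulireg0} to get both boundedness and the separated good moduli space in one stroke; your route via Lemma \ref{lem:P^1-stack-sm} and Proposition \ref{p:boundedness} followed by Theorems \ref{t:main1} and \ref{t:AHLH} is equally valid (the open--and--closed condition $\Delta=\emptyset$ guarantees fillings stay in $\sC_d^{\CY}$) but reproves what \ref{t:modulireg0} already packages. For part (2) you classify all S-equivalence classes to get bijectivity of $\psi$ on $\bk$-points; the paper gets away with much less, namely injectivity on the stable locus from Proposition \ref{p:sequivklt} plus the fact that ${\rm C}_d^{\GIT}\setminus{\rm C}_d^{\rm s}$ is at most one point, which makes $\sigma$ quasi-finite and birational, and then Zariski's main theorem finishes. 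Your extra classification of the degenerations of $(\bP^1,p+q)$ and of the non-normal pairs is correct but not needed for injectivity (it appears in the paper only as a remark after the theorem); on the other hand it yields a complete description of the polystable points, which the leaner argument does not. The largest divergence is part (3): the paper applies Nakai--Moishezon together with the general positivity result Theorem \ref{p:ftHodgeampleonsubspace}.1, using that the non-klt locus of ${\rm C}_d^{\CY}$ is at most a point, whereas you compute $\lambda_{\Hodge,N}$ explicitly as $\cO(2N/d)$ on $\bP(H^0(\bP^1,\cO(d)))^{\rm ss}$ and identify it with a power of the GIT linearization, whose descent is ample by standard GIT. Your computation is correct (with $N$ sufficiently divisible so that the $\PGL_2$-linearization exists and Proposition \ref{p:Hodgedescends} applies) and is more elementary and self-contained for this example, though it exploits the isomorphism ${\rm C}_d^{\GIT}\cong{\rm C}_d^{\CY}$ in a way the paper's Hodge-theoretic argument does not need.
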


A version of the moduli space ${\rm C}_d^{\rm GIT}$ for marked points on $\bP^1$ notably appears in the work of Deligne and Mostow \cite{DM86}.

\begin{proof}
    Every pair  $(X,D)$ in $\sC_d^{\rm CY}(\bk)$ has regularity at most 0 as $\dim X = 1$.  Therefore, Theorem \ref{t:modulireg0} implies that $\sC_d^{\CY}$ is of finite type and admits a separated good moduli space ${\rm C}_d^{\CY}$. Since $\sC_d^{\CY}$ satisfies the existence part of the valuative criterion for properness by Theorem \ref{thm:properness}, we conclude that ${\rm C}_d^{\CY}$ is a proper algebraic space by Theorem \ref{t:AHLH}.  The projectivity will be verified in (3) by proving the ampleness of the Hodge line bundle.

    (1) By definition we know that $(X,D)$ is in $\sC_d^{\GIT}(\bk)$ if and only if $X\cong \bP^1$ and $\mult_p D^{\Div}\leq \frac{d}{2}$ for every $p\in X$. By \cite[\S 4.1]{GIT} this is equivalent to $D^{\Div}$ being GIT semistable as a binary form in $H^0(\bP^1, \cO(d))$. 
    To show that there is an isomorphism between these two stacks, we look at the forgetful map $\psi:\sC_d^{\GIT} \to B\PGL_2$ sending $[(X,D) \to S]$ to $[X \to S]$ where $B\PGL_2$ is the classifying stack of $\mathbb{P}^1$-bundles. Clearly the fiber of $\psi$ is the scheme $\bP(H^0(\bP^1,\cO(d)))^{\rm ss}$ equipped with the natural $\PGL_2$-action. Thus we obtain the isomorphism between stacks.

    The isomorphism of good moduli spaces follows from Proposition \ref{p:Alpergmprops}.3. 

    (2) The commutative diagram exists by the universality of good moduli spaces from Proposition \ref{p:Alpergmprops}.3. Therefore it remains to show that $\sigma:{\rm C}_{d}^{\GIT} \to {\rm C}_{d}^{\CY}$ is an isomorphism. By Lemma \ref{lem:P^1-stack-sm}, we know that $\sigma$ is dominant and ${\rm C}_d^{\CY}$ is normal. If $d=2$ then ${\rm C}_{d}^{\GIT}$ is a point which implies the isomorphism. Assume $d\geq 3$ from now on. Then, there is a dense saturated open substack $\sC_d^{\rm s}\subset \sC_d^{\GIT}$ parametrizing $(X,D)$ where $X\cong \bP^1$ and $D^{\Div}$ is GIT stable. Moreover, $\sC_d^{\rm s}$ admits a coarse moduli space ${\rm C}_d^{\rm s}$ as an open subscheme of ${\rm C}_d^{\rm GIT}$. Since every pair $(X,D)$ in $\sC_d^{\rm s}(\bk)$ is klt by \cite[\S 4.1]{GIT}, by Proposition \ref{p:sequivklt} we know that the map $\sigma|_{{\rm C}_d^{\rm s}}: {\rm C}_d^{\rm s}\to {\rm C}_d^{\CY}$ is injective.     
    If $d$ is odd, then by \cite[\S 4.1]{GIT} we know that ${\rm C}_d^{\rm s}= {\rm C}_d^{\GIT}$ and hence $\sigma$ is dominant and injective between normal proper varieties, which implies that $\sigma$ is an isomorphism by Zariski's main theorem. If $d$ is even, then  by \cite[\S 4.1]{GIT} we know that $ {\rm C}_d^{\GIT}\setminus {\rm C}_d^{\rm s}$ is a single point. Thus $\sigma$ is dominant, birational and quasi-finite between normal proper varieties, which implies that $\sigma$ is an isomorphism by Zariski's main theorem.

    (3)  As $\sC_d^{\rm CY}$ is finite type, Proposition \ref{p:Hodgedescends} gives the existence of the Hodge line bundle on ${\rm C}_d^{\rm CY}$.  If $d = 2$, ${\rm C}_d^{\rm CY}$ is a point, so the result follows.  Assume then $d \ge 3$. By the Nakai--Moishezon criterion for ampleness \cite[Thm. 3.11]{Kol90}, it suffices to show $L_{\Hodge}\vert_Z^{\dim Z}>0$ for each irreducible closed subspace $Z\subset {\rm C}_{d}^\CY$ of positive dimension.  Since $\dim Z > 0$ and the non-klt locus is at most a single point of $ {\rm C}_{d}^\CY$ by \cite[\S 4.1]{GIT}, the  fiber of $\sC_d^{\rm CY} \to {\rm C}_d^{\rm CY}$ over a general point of $Z$  contains only klt pairs.  Therefore, Theorem \ref{p:ftHodgeampleonsubspace}.1 implies $L_{\Hodge}\vert_Z^{\dim Z}>0$ and we conclude that the Hodge line bundle is ample. 
\end{proof}

In fact, we can say more: if $d$ is odd, then 
by Proposition \ref{p:coreg>0condition}, for any $(X,D) \in \sC_d^{\rm CY}(\bk)$, $\coreg(X,D) >0$ so $\reg(X,D) < 0$, i.e. $(X,D)$ is klt. Thus $X\cong \bP^1$ which shows that $\sC_d^{\CY} = \sC_d^{\GIT} =\sC_d^{\rm s}$ and hence the good moduli space $ \sC_d^{\rm CY} \to  {\rm C}_d^{\rm CY}$ is a coarse moduli space.  
If $d$ is even, we can have lc pairs $(X,D)$ of coregularity 0 and hence regularity 0. Since ${\rm C}_d^{\GIT}\setminus {\rm C}_d^{\rm s}$ consists of a single point $(\bP^1, \frac{2}{d}(\frac{d}{2}[0]+\frac{d}{2}[\infty]))$ by \cite[\S 4.1]{GIT}, we know that all non-klt pairs in $\sC_d^{\CY}$ are S-equivalent to each other. By blowing up the point $([0],0)$ in the family $(\bP^1\times \bA^1, \frac{2}{d}(\frac{d}{2}[0]\times\bA^1+\frac{d}{2}[\infty]\times\bA^1))\to \bA^1$, we obtain a test configuration of $(\bP^1, \frac{2}{d}(\frac{d}{2}[0]+\frac{d}{2}[\infty]))$ whose central fiber is isomorphic to $(\bP^1 \vee \bP^1, \frac{2}{d} (\frac{d}{2} p_1+ \frac{d}{2} p_2))$ where $p_1$ is a point on the first copy of $\bP^1$ and $p_2$ is a point on the second copy. Since the identity component of the automorphism group of $(\bP^1 \vee \bP^1, \frac{2}{d} (\frac{d}{2} p_1+ \frac{d}{2} p_2))$ is $\bG_m^2$, this pair is the unique non-klt polystable point in ${\rm C}_d^{\CY}$ by Proposition \ref{prop:reg0polystable}.

\subsection{Curves of odd degree in $\bP^1\times\bP^1$}
We now study the moduli stack $\sK_{d}^{\CY}$ of boundary polarized CY pairs $(\bP^1\times\bP^1, \frac{2}{d}C)$ and their degenerations where $d\geq 3$ is odd and $C\in |\cO(d,d)|$. 

\begin{defn}
Fix an integer $d\geq 2$.
Let $\chi'(m):=\chi(\bP^1\times\bP^1, \omega_{\bP^1\times\bP^1}^{[-m]}) = (2m+1)^2$ and
\[
\sK_d \subset \cM(\chi', d, (1, \tfrac{2}{d}))
\]
denote the open substack whose $\bk$-points are pairs $(X,\Delta+D)$ 
such that $X\cong \bP^1 \times \bP^1$, $\Delta : = 1\cdot \emptyset$, and $D:=\frac{2}{d}C$, where $C$ is a smooth curve of bidegree $(d,d)$.

Let $\sK_d^{\CY}:=\overline{\sK_d}$ be the stack theoretic closure of $\sK_d$ in $\cM(\chi', d, (1, \frac{2}{d}))$. 
\end{defn}

\begin{thm}\label{thm:p1xp1}
    Assume that $d\geq 3$ is an odd integer. Then 
    the stack $\sK_{d}^{\CY}$ is of finite type  and admits a proper good moduli space ${\rm K}_d^{\CY}$. 
\end{thm}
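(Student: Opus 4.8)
\textbf{Proof proposal for Theorem \ref{thm:p1xp1}.}

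The strategy is to apply the machinery of Theorem \ref{t:modulireg0} (or, equivalently, the finite-type version of Theorem \ref{t:main1}) by first establishing that every pair in $\sK_d^{\CY}$ has regularity at most $0$, and then verifying finite type and properness. The crucial observation is that $d$ is odd: since a smooth curve of bidegree $(d,d)$ on $\bP^1\times\bP^1$ gives the pair $(\bP^1\times\bP^1, \tfrac{2}{d}C)$ with $\tfrac{2}{d}\notin\bZ$, Proposition \ref{p:coreg>0condition} applies. Indeed, for any $(X,\Delta+D)$ in $\sK_d^{\CY}(\bk)$ the log Fano boundary $\Delta = 1\cdot\emptyset$ is a $\bZ$-divisor, and $D = \tfrac{2}{d}D^{\Div}$ with $D^{\Div}$ a $\bZ$-divisor and $1\notin\tfrac{2}{d}\bZ$ because $d$ is odd and $d\ge 3$. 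Hence Proposition \ref{p:coreg>0condition} gives $\coreg(X,D)>0$, so $\reg(X,D)\le \dim X - 1 - 1 = 0$. Therefore $\sK_d^{\CY}$ is contained in the substack $\cM(\chi',d,(1,\tfrac{2}{d}))^{\reg\le 0}$.

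Next I would set up the hypotheses of Theorem \ref{t:modulireg0}. That theorem requires an open, finite-type substack $\sM\subset\cM(\chi',d,(1,\tfrac{2}{d}),{\bf c})$; I would take $\sM$ to be the open substack $\sK_d \subset \cM(\chi',d,(1,\tfrac{2}{d}))$ of pairs $(\bP^1\times\bP^1,\tfrac{2}{d}C)$ with $C$ smooth of bidegree $(d,d)$, which is finite type (it is a quotient of an open subscheme of a single projective space of $(d,d)$-forms by $\Aut(\bP^1\times\bP^1)$, cf.\ the proof of Proposition \ref{prop:localstack}) and has affine diagonal. Since $\sK_d^{\CY} = \overline{\sK_d}$ is the stack-theoretic closure, we have $\overline{\sK_d}\cap \cM(\chi',d,(1,\tfrac{2}{d}))^{\reg\le 0} = \overline{\sK_d} = \sK_d^{\CY}$ by the previous paragraph. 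Thus Theorem \ref{t:modulireg0} applies verbatim with $\overline{\sM}^{\reg\le 0} = \sK_d^{\CY}$, yielding (1) that $\sK_d^{\CY}$ is a finite-type algebraic stack with affine diagonal, and (2) that it admits a separated good moduli space ${\rm K}_d^{\CY}$, which is a separated algebraic space. (One minor technical point to check: the statement of Theorem \ref{t:modulireg0} is phrased for $\sM\subset\cM(\chi,N,{\bf r},{\bf c})$ with a fixed degree datum ${\bf c}$; here the relevant degrees $\deg(D^{\Div}) = D^{\Div}\cdot(-K_X)$ are determined by $\chi'$ and ${\bf r}$ via Proposition \ref{p:anticanonicalsheaf}.3, and $\deg(\Delta^{\Div})=0$, so we may fix ${\bf c}$ and this causes no difficulty.)

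It remains to upgrade separatedness to properness. By Theorem \ref{t:AHLH} (and the sentence following it), once a separated good moduli space exists, it is proper provided the stack satisfies the existence part of the valuative criterion for properness. But $\sK_d^{\CY}$ is an open and closed substack of $\cM(\chi',d,(1,\tfrac{2}{d}))$ — more precisely it is a union of connected components, being the closure of $\sK_d$, which is contained in the open-and-closed locus $\Delta^{\Div}=\emptyset$ of Remark \ref{r:Delta=0} — so one can run the argument of Theorem \ref{thm:properness} directly: given a boundary polarized CY pair over a DVR fraction field $K$ whose generic geometric fiber is $(\bP^1\times\bP^1,\tfrac{2}{d}C)$, Theorem \ref{thm:properness} produces a family of boundary polarized CY pairs over a finite extension $R'$, and since $\sK_d$ is open and $\sK_d^{\CY}$ is its closure (hence stable under specialization within $\cM$), the limit lies in $\sK_d^{\CY}$. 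Therefore ${\rm K}_d^{\CY}$ is proper. I do not expect a serious obstacle here; the only genuine content is the coregularity bound coming from the oddness of $d$ (the first paragraph), and everything else is a direct citation of Theorems \ref{t:modulireg0}, \ref{thm:properness}, and \ref{t:AHLH}. The main thing to be careful about is confirming that $\sK_d^{\CY}$ really is contained in the regularity-$\le 0$ locus, i.e.\ that no pair in the closure can have regularity $1$ (equivalently be klt with $X$ still of the same numerical type but coregularity $0$) — this is precisely what Proposition \ref{p:coreg>0condition} rules out, using that the polarizing boundary has non-integral coefficient $\tfrac{2}{d}$ and this coefficient is preserved in families and under degeneration.
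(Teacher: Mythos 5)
Your proposal is correct and follows essentially the same route as the paper: the oddness of $d$ gives $1\notin\tfrac{2}{d}\bZ$, so Proposition \ref{p:coreg>0condition} forces $\coreg(X,D)>0$ and hence $\reg(X,D)\leq 0$ for every pair in $\sK_d^{\CY}(\bk)$; Theorem \ref{t:modulireg0} then yields finite type and a separated good moduli space, and Theorem \ref{thm:properness} combined with Theorem \ref{t:AHLH} upgrades this to properness. The extra bookkeeping you supply (the choice of ${\bf c}$, and the specialization argument for the valuative criterion) is consistent with how the paper handles these points elsewhere and does not change the argument.
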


\begin{proof} Since $d$ is odd, we know that $1\not\in \frac{2}{d}\bZ$. By Proposition \ref{p:coreg>0condition}, every pair $(X,D)$ in $\sK_{d}^{\CY}(\bk)$ has $\coreg(X,D)>0$, and so 
\[
\reg(X,D)= \dim X -1 -\coreg(X,D)= 1-\coreg(X,D)\leq 0.
 \]
Thus Theorem \ref{t:modulireg0} implies that $\sK_d^{\CY}$ is of finite type and admits a separated good moduli space ${\rm K}_d^{\CY}$. 
Since $\sK_d^{\CY}$ satisfies the existence part of valuative criterion for properness by Theorem \ref{thm:properness}, ${\rm K}_d^{\CY}$ is proper by Theorem \ref{t:AHLH}.
\end{proof}

As a consequence, we obtain a wall crossing diagram including birational morphisms to ${\rm K}_3^{\CY}$ from both the K-moduli space studied in \cite{ADL20} and the KSBA moduli space studied in \cite{DH21} for $d=3$.

\subsection{Quintic surfaces in $\bP^3$}

Let $d\geq 4$ be an integer, $\chi''(m) := \chi(\bP^3, \omega_{\bP^3}^{[-m]})$, and 
\[
\sS_d\subset \cM(\chi'', d , (1 , 4/d))
\]
denote the open substack whose $\bk$-points are the pairs   $(X,\Delta+D)$ such that $X\cong \bP^3$, $\Delta= 1 \cdot \emptyset$, and $D:= \tfrac{4}{d} D^{\rm Div}$, where $D^{\rm Div} \subset \bP^3$ is a smooth surface of degree $d$.
Let
$
\sS_d^{\rm CY}: = \overline{\sS_d}$
denote the stack theoretic closure of $\sS_d$ in $\cM(\chi'', d, (1, 4/d))$.

\begin{thm}
The stack $\sS_5^{\CY}$ is a finite type algebraic stack and admits a proper good moduli space ${\rm S}_5^{\CY}$.
\end{thm}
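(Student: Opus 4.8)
The plan is to follow the strategy used for $\sC_d^{\CY}$ in Theorem \ref{thm:pts-P^1} and for $\sK_d^{\CY}$ in Theorem \ref{thm:p1xp1}, reducing everything to Theorem \ref{t:modulireg0} and the properness statement Theorem \ref{thm:properness}. The key numerical point is that $\frac{4}{5}$ is not a reciprocal of an integer: if $(X, D)$ is a pair in $\sS_5^{\CY}(\bk)$, then $D = \frac{4}{5} D^{\Div}$ where $D^{\Div}$ is a $\bZ$-divisor and $1 \notin \frac{4}{5}\bZ$, so by Proposition \ref{p:coreg>0condition} (applied with $\Delta = 0$, $r = \frac{4}{5}$, $D' = D^{\Div}$) we conclude $\coreg(X, D) > 0$. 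Since $\dim X = 3$ for the generic pair and more generally $\dim X_0 \le 3$ for any degeneration $X_0$, this gives
\[
\reg(X, D) = \dim X - 1 - \coreg(X, D) \le 3 - 1 - 1 = 1.
\]
Hmm — this only bounds the regularity by $1$, not by $0$, so Theorem \ref{t:modulireg0} does not directly apply. The resolution is that we do not need regularity $\le 0$ everywhere; rather I would apply Theorem \ref{t:modulireg0} to a suitable finite type open substack $\sM \subset \cM(\chi'', 5, (1, 4/5))$ containing $\sS_5$, noting that the relevant locus $\overline{\sM}^{\reg \le 0}$ is exactly the part of the closure we care about once we check that Type III (i.e. regularity $2$) and, crucially, regularity $1$ degenerations cannot occur here. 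Actually the cleaner route: the coregularity bound gives $\coreg(X,D) \ge 1$, hence $\reg(X, D) \le \dim X - 2 \le 1$; and since the source of a $3$-fold pair has dimension $\coreg \ge 1$, Type III is excluded. So the possible types are $\reg = -1, 0, 1$. To get into the setting of Theorem \ref{t:modulireg0}, I need to exclude $\reg = 1$.

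First I would establish finite type-ness directly, which is where $d = 5$ (as opposed to general $d$) matters. The point is that for $d = 5$ the pair $(\bP^3, \frac{4}{5}D^{\Div})$ and its slc degenerations $(X, \frac{4}{5}C)$ satisfy $K_X + \frac{4}{5}C \sim_{\bQ} 0$ with $-K_X = \frac{4}{5}C$ ample and $C$ a degree $5$ surface, so these are boundary polarized CY pairs whose underlying $X$ is a degeneration of $\bP^3$; by \cite[Thm. 1.1]{ADL19}-type boundedness for K-semistable log Fano pairs (or directly, by the fact that $(X, (1-\vep)\frac{4}{5}C)$ is a degeneration of the K-semistable pair $(\bP^3, (1-\vep)\frac 45 D^{\Div})$ for $0 < \vep \ll 1$, combined with openness and boundedness of K-semistability in \cite{BLX19, Xu20}), the family $\sS_5$ has bounded closure. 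More carefully: I would argue that $\sS_5 \subset \cP$-type stack of K-semistable pairs, take its closure inside the (finite type) K-moduli stack, and deduce via Proposition \ref{p:boundedness} that there exist $k, c$ with $-k(K_X)$ very ample Cartier and $\deg$ bounded for all pairs in $\sS_5^{\CY}(\bk)$; this makes $\sS_5^{\CY}$ finite type.

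With finite type-ness in hand, the plan is: (1) Apply Theorem \ref{t:modulireg0}, or rather its method, to the finite type stack $\sS_5^{\CY}$. Since $\sS_5^{\CY}$ is a finite type algebraic stack with affine diagonal (inherited from $\cM(\chi'', 5, (1, 4/5))$ via Theorem \ref{t:stack}), and since by the coregularity argument above every pair in $\sS_5^{\CY}(\bk)$ has regularity $\le 1$, I would need the stronger statement that in fact $\reg \le 0$ on all of $\sS_5^{\CY}$ — this is the step I expect to be the main obstacle, since it requires ruling out regularity $1$ degenerations. If I can show that any degeneration $X_0$ of $\bP^3$ arising here has a source of dimension $\ge 2$, equivalently that no regularity-$1$ (coregularity-$2$) slc pair $(X_0, \frac 45 C_0)$ with $X_0$ a degeneration of $\bP^3$ exists, then $\sS_5^{\CY} = (\sS_5^{\CY})^{\reg \le 0}$ is of finite type (already known) and Theorem \ref{t:modulireg0} gives a separated good moduli space ${\rm S}_5^{\CY}$. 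Alternatively — and this is the safer fallback I would actually pursue — one can bypass the $\reg \le 0$ question entirely: since $\sS_5^{\CY}$ is finite type with affine diagonal, it suffices to verify S-completeness and $\Theta$-reductivity directly via Theorems \ref{t:Scomplete} and \ref{t:Thetared} (which hold for the full stack $\cM(\chi'', 5, (1, 4/5))$, hence for the closed-in-closure substack $\sS_5^{\CY}$ once one checks $\sS_5^{\CY}$ is closed under the relevant extensions — this closure property follows because the generic fiber condition "$X \cong \bP^3$" is preserved under the $\Theta$- and $S$-extension by Lemma \ref{l:CYbirational}.3 over the punctured surface, forcing the special fiber into $\overline{\sS_5}$). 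Then Theorem \ref{t:AHLH} produces a separated good moduli space ${\rm S}_5^{\CY}$, and (2) properness: $\sS_5^{\CY}$ satisfies the existence part of the valuative criterion for properness by Theorem \ref{thm:properness} (every pair $(X_K, \frac 45 C_K)$ over $K$ with $X_K \cong \bP^3_K$ extends to a family of boundary polarized CY pairs after finite base change, and the generic fiber of the extension being $\bP^3$ forces it into $\sS_5^{\CY}$), so by the last sentence of Theorem \ref{t:AHLH}, ${\rm S}_5^{\CY}$ is proper. The main obstacle, as noted, is the bookkeeping needed to confirm that $\sS_5^{\CY}$ — as the closure of $\sS_5$ — is genuinely closed under the S-completeness/$\Theta$-reductivity extensions and the valuative-criterion extension; this should follow from Lemma \ref{l:CYbirational} and the openness of the locus $\{X \cong \bP^3\}$ is not open, but the locus $\{X \text{ a degeneration of } \bP^3\}$ is closed in the appropriate sense, which is precisely what "stack-theoretic closure" encodes, so the extensions land back in $\sS_5^{\CY}$ automatically once one knows the generic fiber over the two-dimensional base lies in $\sS_5$.
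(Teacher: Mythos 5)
There are two genuine gaps here, and you have correctly identified one of them yourself without closing it.

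First, your argument for finite‑type‑ness does not work. You propose to bound $\sS_5^{\CY}(\bk)$ by taking the closure of $\sS_5$ inside the (finite type) K‑moduli stack and transporting the resulting bounds via Proposition \ref{p:boundedness}. But $\sS_5^{\CY}$ is the closure inside the moduli stack of boundary polarized CY pairs, which contains all slc CY degenerations, not only the K‑semistable ones; a degeneration of a K‑semistable pair need not be K‑semistable, so bounds valid on the K‑moduli closure say nothing about the CY closure. This is precisely the failure mode exhibited in Example \ref{e:unbounded}: $\cP_d^{\rm K}$ is of finite type while $\cP_d^{\CY}$ is not when $3\mid d$. In this moduli problem boundedness of the closure is never automatic; in the paper it is a \emph{consequence} of the regularity bound, via Proposition \ref{p:regbounded} inside Theorem \ref{t:modulireg0}. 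This also sinks your fallback route: Theorem \ref{t:AHLH} requires a finite type stack, so verifying S‑completeness and $\Theta$‑reductivity directly cannot bypass the boundedness question.

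Second, the step you flag as "the main obstacle" — upgrading $\reg(X,D)\le 1$ (which your use of Proposition \ref{p:coreg>0condition} does correctly give) to $\reg(X,D)\le 0$ — is exactly where the content specific to $d=5$ lives, and you do not supply it. The paper's argument is: given $(X,D)\in\sS_5^{\CY}(\bk)$, realize it as the special fiber of a family over a curve germ with generic fiber in $\sS_5$; use the properness of the KSBA moduli of quintic surface pairs \cite[Thm.\ 3.15]{DeV19} to produce, after a finite base change, a second filling whose special fiber is KSBA stable; invoke \cite[Thm.\ 5.3]{DeV19}, which says KSBA stable degenerations of $(\bP^3,(1+\vep)\tfrac45 S)$ have regularity $\le 0$; and finally transfer the bound back to $(X,D)$ because the two special fibers are S‑equivalent (Corollary \ref{c:sequiv}) and regularity is an S‑equivalence invariant (Propositions \ref{p:sequiv} and \ref{p:SrcSequiv}). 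Without this input (or a substitute classification of coregularity‑one degenerations of $\bP^3$ with a $\tfrac45$‑boundary), both the finite‑type‑ness and the hypotheses of Theorem \ref{t:modulireg0} remain unverified. Once $\reg\le 0$ is in hand, Theorem \ref{t:modulireg0} delivers finite type and the separated good moduli space in one stroke, and properness follows from Theorem \ref{thm:properness} together with Theorem \ref{t:AHLH}, as you correctly describe in part (2).
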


The proof relies on \cite[Thm. 5.3]{DeV19}, which will be used to show that all pairs in the moduli space have regularity at most 0.

\begin{proof}
We will first show that every pair $(X,D)$ in $\sS_5^{\CY}(\bk)$ has $\reg(X,D) \leq 0$. 
To proceed, fix a family $(Y,G) \to C$ in $\sS_5^{\CY}$ over the germ of a curve $0 \in C$ such that $(X,D)\cong (Y_0,G_0)$ and $(Y_K,D_K)$ is in $\sS_5$, where $K:= K(C)$. 
Since $(Y_K, (1+\vep)D_K)$ is slc for $0<\vep \ll1$, \cite[Thm. 3.15]{DeV19} implies that, after possibly replacing $0 \in C$ by a finite cover, there exists a family $(Y',G') \to C$ in $\sS_5^{\CY}$ such that $(Y'_0,(1+\vep)G'_0) $ is slc. 
Now,
\[
\reg(X,D) = \reg(Y'_0,G'_0) \leq 0
,\]
where the first equality is by Propositions \ref{p:sequiv} and
\ref{p:SrcSequiv} and the second is \cite[Thm. 5.3]{DeV19}.

Since all pairs in $\sS_5^{\CY}$ have regularity at most 0,  Theorem \ref{t:modulireg0} 
implies $\sS_5^{\CY}$ admits a separated good moduli space ${\rm S}_5^{\CY}$. 
Since $\sS_5^{\CY}$ satisfies the existence part of the valuative criterion for properness by Theorem \ref{thm:properness},  ${\rm S}_5^{\CY}$ is proper by Theorem \ref{t:AHLH}.
\end{proof}

The above proof does not generalize to the case when degree $d \ne 5$, since there exist pairs appearing with higher regularity  \cite[Rem. 5.4]{DeV19}.

\appendix

\section{Special degenerations of $\bP^2$}\label{a:specialdegenerations}
In this section, we use a technical result from Section \ref{ss:1comp} to  characterize all special degenerations of $\bP^2$.
This gives a complete answer to \cite[Prob. 3.4]{AIM20}. 
 To begin, we recall the following definition.

\begin{defn}
A \emph{special test configuration} $\cX$ of a klt Fano variety $X$ is the data of 
\begin{enumerate}
	\item a $\bG_m$-equivariant proper flat family of klt varieties $\cX\to \bA^1$ such that $-K_{\cX}$ is relatively ample, and
	\item a $\bG_m$-equivariant isomorphism
	\[
	\cX\times_{\bA^1} (\bA^1\setminus 0) \cong  X\times (\bA^1\setminus 0)
	,\]
	where $\bG_m$ acts on the right side as the product of the trivial and standard actions.	
\end{enumerate}
We say $X_0$ is a \emph{special degeneration} of $X$ if there exists a special test configuration $\cX$ of $X$ with $\cX_0\cong X_0$. 
\end{defn}

\begin{thm}\label{thm:P^2-1comp}
Any non-trivial special test configuration $\cX$ of $\bP^2$ satisfies that $r(\ord_{\cX_0})$ is an lc place of a $1$-complement of $\bP^2$. 
\end{thm}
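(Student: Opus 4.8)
The goal is to show that for any nontrivial special test configuration $\cX$ of $\bP^2$, the restricted valuation $v := r(\ord_{\cX_0})$ is an lc place of some $1$-complement of $\bP^2$, i.e.\ there exists an effective $\bQ$-divisor $B$ on $\bP^2$ with $K_{\bP^2}+B\sim 0$, $(\bP^2,B)$ log canonical, and $A_{\bP^2,B}(v)=0$. First I would promote the special test configuration $\cX$ of the Fano variety $\bP^2$ to a weakly special test configuration of a \emph{pair}. Concretely, pick any $D_0 := \tfrac{3}{d}C$ for $d\gg 0$ so that $(\bP^2,\tfrac{3}{d}C)$ is a boundary polarized CY pair in $\cP_d^{\CY}$ with $C$ smooth of degree $d$ and $\cX_0$-generic, or more cleanly just observe that a special test configuration of $\bP^2$ is in particular a weakly special test configuration of the log Fano pair $(\bP^2,0)$ in the sense of Lemma~\ref{l:tclcplace} (using that $\cX_0$ is klt, hence the special fiber is in particular slc). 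Then $\cX_0$ is normal and irreducible (a test configuration with irreducible special fiber), so by the valuative interpretation in Section~\ref{ss:testconfigps}, $v = v_{\cX_0} \in \DivVal_{\bP^2}$.

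The key input is Proposition~\ref{p:exists1comp}, applied in the following way. The special test configuration $\cX$ induces a $\bG_m$-action on $\cX_0$; since $\cX_0$ is a klt degeneration of $\bP^2$, it lies in $\cP_d^{\CY}(\bk)$ once we equip it with an appropriate polarizing boundary. I would argue that $v = v_{\cX_0}$ is an lc place of some boundary polarized CY pair structure $(\bP^2, \tfrac{3}{d}C)$ for suitable $C$ — this follows from Theorem~\ref{thm:CZ-lcplace}: weakly special test configurations of $(\bP^2,\tfrac{3}{d}C)$ with integral central fiber biject with $\bZ$-valued lc places of $(\bP^2,\tfrac{3}{d}C)$. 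The point is that given $\cX$, one first uses Proposition~\ref{p:lcplacestestconfig}-type finite generation to build the test configuration of the pair, then reads off that $v$ is an lc place. Alternatively, and more directly, I would use that $\cX_0$ is Type I, II, or III as a pair; if Type I (klt) the test configuration of the pair is trivial by Lemma~\ref{l:CYbirational}.2 and Proposition~\ref{p:sequivklt}, contradicting nontriviality unless the degeneration is already captured; so after replacing $\cX_0$ by a further $\bG_m$-degeneration if necessary, $\cX_0$ is Type II or III, and Proposition~\ref{p:exists1comp} (with $\bT = \bG_m$ acting on $\cX_0$, or rather pulling back to $\bP^2$ via the product structure over $\bA^1\setminus 0$) produces a $1$-complement.

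More carefully, here is the order of steps. (1) Given $\cX$ nontrivial, form the product $\bG_m$-action on $\bP^2 = \cX_1$ induced by $\cX$; this is a one-parameter subgroup $\lambda: \bG_m \to \PGL_3$ (nontrivial, else $\cX$ is a product, but a product special test config of $\bP^2$ induced by $\lambda$ has $\cX_0 = \bP^2$ with the degenerate action, and $\ord_{\cX_0}$ restricts to a trivial valuation — so we may and do assume $\cX_0 \not\cong \bP^2$, handling the product case separately since then $v$ is trivial and the statement is vacuous or interpreted as $v = 0$). (2) Since $\cX_0$ is klt Fano and a degeneration of $\bP^2$, by \cite[Thm. 8.3]{Hac04} (or the structure results cited in the paper) $\cX_0$ is a Manetti surface, i.e.\ a klt degeneration of $\bP^2$; moreover $v_{\cX_0} = r(\ord_{\cX_0})$ satisfies $A_{\bP^2 \times \bA^1, \bP^2 \times 0}(\ord_{\cX_0}) = A_{\bP^2}(v_{\cX_0})$ by \cite[Prop. 4.11]{BHJ17}. (3) Now the pair $(\cX_0, B_{\cX_0})$ obtained by degenerating a $1$-complement: take the boundary polarized CY pair structure. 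If $\cX_0$ is Type II or III, apply Proposition~\ref{p:exists1comp} with the torus $\bT := \lambda(\bG_m) \subset \Aut(\bP^2)$ to get a $\bT$-invariant $1$-complement $B$ of $\bP^2$. If $\cX_0$ happens to be Type I (klt boundary polarized CY pair), then by Theorem~\ref{thm:CZ-lcplace} and Proposition~\ref{p:sequivklt} the test configuration is trivial, contradiction. (4) The $\bT$-invariance of $B$ means the degeneration $\cX$ of $(\bP^2, B)$ via $\lambda$ is a weakly special test configuration, so by Theorem~\ref{thm:CZ-lcplace} again, $v_{\cX_0} = r(\ord_{\cX_0})$ is an lc place of $(\bP^2, B)$. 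Since $(\bP^2,B)$ is a $1$-complement, we are done.

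\textbf{The main obstacle.} The delicate point is step (3): ensuring that $\cX_0$, as a klt boundary polarized CY surface pair (for an appropriate choice of polarizing boundary making $v_{\cX_0}$ an lc place), is genuinely Type II or III so that Proposition~\ref{p:exists1comp} applies, rather than Type I. The resolution is that a \emph{nontrivial} special test configuration cannot have $\cX_0$ klt with $\ord_{\cX_0}$ giving a nontrivial lc place of a klt pair — by Lemma~\ref{l:typeIIlcplace} and the rigidity of klt pairs (Proposition~\ref{p:sequivklt}, via Lemma~\ref{l:CYbirational}.2), such a configuration must be trivial. Concretely: choose $B$ to be a $1$-complement of $\bP^2$ such that $A_{\bP^2,B}(v_{\cX_0}) = 0$; the existence of \emph{some} complement with this property for the given $\lambda$-invariant valuation is exactly what must be extracted, and this is where Proposition~\ref{p:exists1comp} does the work after first degenerating $(\bP^2, \tfrac{3}{d}C)$ (for generic $C$) along $\lambda$ to land in the Type II/III situation, then transporting the complement back up via Lemma~\ref{l:lcDlcB} and Lemma~\ref{l:extenddivisor}. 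The remaining bookkeeping — that the extended complement remains a $1$-complement (index preserved by Lemma~\ref{l:Ncompspecialize}), remains lc, and has $v_{\cX_0}$ as an lc place (Lemma~\ref{l:tclcplace}) — is routine given those tools. This reduces, as the introduction remarks, the full classification to \cite[\S 6]{LXZ21}.
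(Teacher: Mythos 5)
There is a genuine gap, and it sits exactly at the starting point of the argument. To upgrade the special test configuration $\cX$ of the Fano variety $\bP^2$ to a weakly special test configuration of a \emph{pair} $(\bP^2,D)$ (via Lemma~\ref{l:tclcplace}), you must first exhibit a complement $D$ with $A_{\bP^2,D}(r(\ord_{\cX_0}))=0$. You assert that this "follows from Theorem~\ref{thm:CZ-lcplace}" for a "suitable $C$," or alternatively that one can degenerate $(\bP^2,\tfrac{3}{d}C)$ for generic $C$ along $\cX$; neither works. Theorem~\ref{thm:CZ-lcplace} presupposes the CY pair and only converts its lc places into test configurations, so invoking it here is circular, and for generic smooth $C$ the pair $(\bP^2,\tfrac{3}{d}C)$ is klt with no lc places, so the closure of $\tfrac{3}{d}C\times(\bA^1\setminus 0)$ in $\cX$ does not yield a weakly special test configuration of that pair. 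The missing ingredient is the nontrivial external input \cite[Thm.~A.2]{BLX19}: every special test configuration of a klt Fano variety satisfies that $r(\ord_{\cX_0})$ is (proportional to) an lc place of some $N$-complement. The paper's proof opens with exactly this, obtaining an $N$-complement $D$ and the weakly special test configuration $(\cX,\cD)$ of $(\bP^2,D)$, and only then runs the machinery you cite.

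A second, related error: a non-product special test configuration does \emph{not} induce a one-parameter subgroup $\lambda:\bG_m\to\PGL_3$ acting on $\cX_1\cong\bP^2$; the $\bG_m$-action on $\cX$ permutes the fibers over $\bA^1$ and restricts to the product of the \emph{trivial} action on $\bP^2$ with the standard action on $\bA^1\setminus 0$. So your plan to apply Proposition~\ref{p:exists1comp} to $\bP^2$ itself with $\bT=\lambda(\bG_m)$ rests on a false premise (and would in any case not guarantee that the resulting $1$-complement has $r(\ord_{\cX_0})$ as an lc place). The correct move, as in the paper, is to apply Proposition~\ref{p:exists1comp} to the \emph{central fiber} $(\cX_0,\cD_0)$ — which genuinely carries a $\bG_m$-action and is Type II or III because the test configuration is nontrivial — producing a $\bG_m$-invariant $1$-complement $B_0$ on $\cX_0$, and then to extend $B_0$ over the total space $\cX$ by Lemma~\ref{l:extenddivisor}. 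Restricting the resulting family $(\cX,\cB)$ to $\cX_1$ gives a $1$-complement $B$ of $\bP^2$ for which $(\cX,\cB)$ is a weakly special test configuration, whence $A_{\bP^2,B}(r(\ord_{\cX_0}))=0$ by Lemma~\ref{l:lctestconfig}.2. You do name most of the right lemmas in your final paragraph, but without the \cite{BLX19} input and with the equivariance applied on the wrong object, the argument as written does not close.
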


\begin{proof}
Let $\cX \to \bA^1$ be a special test configuration of $\bP^2$. By \cite[Thm. A.2]{BLX19}, there exists an $N$-complement $D$ on $\bP^2$ and an lc place $E$ of $(\bP^2, D)$ such that $r(\ord_{\cX_0}) = b\cdot \ord_E$ for some $b\in \bZ_{>0}$. Let $\cD$ be the closure of $D\times (\bA^1\setminus\{0\})$ in $\cX$. By Lemma \ref{l:tclcplace}, we know that $(\cX,\cD)$ is a weakly special test configuration of $(\bP^2, D)$. Thus $(\cX_0, \cD_0)\in \cP_{3N}^{\rm CY}(\bk)$. By Proposition \ref{p:exists1comp}, there exists a $\bG_m$-equivariant $1$-complement $B_0$ on $X_0$. By Lemma \ref{l:extenddivisor}, $B_0$ extends to a $\bG_m$-equivariant divisor $\cB$ on $\cX$ such that $(\cX, \cB)\to \bA^1$ is a $\bG_m$-equivariant family of boundary polarized CY pairs of index $1$. Thus there is a $1$-complement $B$ of $\bP^2$ such that $(\cX,\cB)\to \bA^1$ is a weakly special test configuration of $(\bP^2, B)$. By Lemma \ref{l:lctestconfig}.2, we know that $E$ is also an lc place of $(\bP^2,B)$. The proof is finished. 
\end{proof}

It is clear that a $1$-complement of $\bP^2$ is one of the following: a smooth cubic curve, an irreducible nodal cubic curve, a smooth conic union a transversal line, or a triangle of lines. With the help of \cite[Section 6]{LXZ21}, we can completely classify all special degenerations of $\bP^2$.

\begin{thm}
Let $X_0$ be a special degeneration of $\bP^2$. Then $X_0$ is one of the following:  $\bP^2$, $\bP(1,1,4)$, $\bP(1, d_n^2, d_{n+1}^2) $, or the weighted hypersurfaces $\{x_0 x_3 = x_1^{d_{n+1}} + x_2^{d_{n-1}}\}\subset \bP(1, d_{n-1}, d_{n+1}, d_n^2)$. Here $(d_n)_{n\geq 0}$ is the sequence of integers such that $d_0=d_1 =1$ and $d_{n+1} = 3d_n - d_{n-1}$.
\end{thm}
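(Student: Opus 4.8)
The plan is to combine Theorem~\ref{thm:P^2-1comp} with a case analysis over the (very short) list of $1$-complements of $\bP^2$, using known results on lc places of those complements. By Theorem~\ref{thm:P^2-1comp}, any non-trivial special test configuration $\cX$ of $\bP^2$ satisfies that $r(\ord_{\cX_0})$ is an lc place of a $1$-complement $B$ of $\bP^2$. First I would enumerate the possibilities for $B$: since $-K_{\bP^2}\sim \cO(3)$ and $(\bP^2,B)$ must be lc with $K_{\bP^2}+B\sim 0$, the divisor $B$ is a plane cubic which is lc as a pair, i.e. one of: a smooth cubic, an irreducible nodal cubic, a conic plus a transversal line, or a triangle of three lines (these exhaust the reduced plane cubics whose pair is lc; a cuspidal cubic, a conic plus tangent line, a line plus a double line, etc.\ are not lc). Then the special degeneration $\cX_0$ is the central fibre of the weakly special test configuration attached to the divisorial valuation $\ord_E$ via Theorem~\ref{thm:CZ-lcplace} (after passing to the product case and using that $\cX$ is special, hence $\cX_0$ klt, so $\cX_0$ itself is normal and the test configuration is integral), and $\cX_0$ is determined by $\ord_E$.

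Next I would analyze each of the four cases for $B$. If $B$ is a smooth cubic or an irreducible nodal cubic, then the lc places of $(\bP^2,B)$ are exactly $\ord_B$ together with (for the nodal cubic) the exceptional divisors obtained by repeatedly blowing up the node while preserving the strict transform of $B$; tracking these weighted blowups, or invoking the classification in \cite[\S 6]{LXZ21}, shows the resulting degenerations are $\bP^2$ (the trivial one), $\bP(1,1,4)$, the weighted projective planes $\bP(1,d_n^2,d_{n+1}^2)$, and the weighted hypersurfaces $\{x_0x_3=x_1^{d_{n+1}}+x_2^{d_{n-1}}\}\subset\bP(1,d_{n-1},d_{n+1},d_n^2)$, where $(d_n)$ is the Markov-type recursion $d_0=d_1=1$, $d_{n+1}=3d_n-d_{n-1}$ (these are precisely the $a=b=c$ specializations of the Markov triples appearing in Example~\ref{e:unbounded}, together with their "half-way" blowdowns). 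If $B$ is a triangle of lines, then $(\bP^2,B)$ is a toric pair and every lc place is a toric valuation; the induced special test configurations are the toric degenerations of $\bP^2$, which by the classification of toric degenerations (again \cite[\S6]{LXZ21}, or direct toric computation) are again exactly $\bP^2$, $\bP(1,1,4)$, and the $\bP(1,d_n^2,d_{n+1}^2)$. If $B$ is a conic plus a transversal line, the pair has an $\mathbb{A}_1\times\mathbb{A}_1$-type toric structure at the two intersection points; its lc places are a subset of those of the triangle case (equivalently, one degenerates the conic-plus-line complement to the triangle by Lemma~\ref{l:lcDlcB} without losing lc places), so no new degenerations arise. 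Collating the four cases yields exactly the asserted list.

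The main obstacle will be the second paragraph: carrying out the explicit identification of the central fibres $\cX_0 = \Proj(\bigoplus \fa_{m}(\ord_E))$ for the tower of exceptional divisors $E$ over the node of the nodal cubic, and matching them to the weighted hypersurfaces $\{x_0x_3=x_1^{d_{n+1}}+x_2^{d_{n-1}}\}$. I expect to handle this not by a direct graded-ring computation but by reduction: the nodal cubic $B$ degenerates $\bG_m$-equivariantly to the triangle (via a product test configuration as in the proof of Lemma~\ref{l:lcDlcB}), so it suffices to understand lc places of the triangle that are \emph{not} toric after this degeneration, i.e. the non-toric lc places of $(\bP^2,B)$ with $B$ nodal; these are governed by the minimal model program on weighted blowups and are classified in \cite[\S 6]{LXZ21} in connection with the normalized volume of $\bP^2$ and the Markov equation. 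A secondary subtlety is the bookkeeping of which $d_n$ appear: one must check that every term of the recursion is realized by an actual lc place (existence) and that no other divisor occurs (exhaustiveness), both of which follow from \cite[\S6]{LXZ21} once the reduction to that setting is in place, together with the elementary observation that $\bP(1,1,4)$ corresponds to the unique lc place whose center is a smooth point of $\bP^2$ lying on $B$ with the "length-one" weighted blowup. Finally I would note the bookkeeping point that the trivial test configuration gives $\cX_0\cong\bP^2$, which is listed for completeness.
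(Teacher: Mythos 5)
Your overall architecture (reduce via Theorem~\ref{thm:P^2-1comp} to lc places of a $1$-complement, enumerate the four reduced lc plane cubics, and quote \cite[\S 6]{LXZ21} for the nodal cubic) is the same as the paper's. However, two of your four cases are argued incorrectly, and one of them hides the only genuinely new computation in the proof.

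The serious gap is the conic-plus-line case. Your proposed reduction "its lc places are a subset of those of the triangle case, via Lemma~\ref{l:lcDlcB}" is false: $\ord_Q$ is an lc place of $(\bP^2,Q+L)$ but has log discrepancy $1$ with respect to any triangle of lines, so $\LC(\bP^2,Q+L)\not\subset\LC(\bP^2,L_1+L_2+L_3)$ and the hypothesis of Lemma~\ref{l:lcDlcB} fails. Worse, the reduction would give the wrong answer even if it held, because the triangle case produces \emph{only} $\bP^2$: for a toric pair with full reduced toric boundary, every lc place is a toric valuation, and a toric valuation induces a product test configuration (this is what the paper means by "of product type"), so no nontrivial degenerations arise there at all. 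Your claim that the triangle yields $\bP(1,1,4)$ and the $\bP(1,d_n^2,d_{n+1}^2)$ conflates "toric varieties that are degenerations of $\bP^2$" with "central fibres of test configurations induced by lc places of $(\bP^2,\{xyz=0\})$"; it happens not to break exhaustiveness, but it is the reason your conic-plus-line argument looks plausible when it is not. The actual content needed for the conic-plus-line case is: $E=L$ gives $\bP^2$ and $E=Q$ gives $\bP(1,1,4)$ by degeneration to the normal cone, while for $E$ centered at a point $p\in Q\cap L$ one writes $\ord_E$ as a monomial valuation $v_t$ of weights $(1,t)$ in the analytic coordinates cut out by $(Q,L)$ and must prove (i) $v_t$ is toric for $t\ge\tfrac12$, and (ii) for $t\in(0,\tfrac12)$ the linearity of $t\mapsto S_{\bP^2}(v_t)$ on $[0,\tfrac12]$ lets one compute the central fibre as a two-step degeneration $\Proj\,\gr_{v_0}(\gr_{v_{1/2}}R)\cong\bP(1,1,4)$. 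This $S$-invariant computation (Proposition~\ref{prop:P2-S-inv} in the paper) is the one step not already contained in \cite{LXZ21}, and your proposal omits it entirely. A smaller but real omission: for $B$ a smooth cubic the unique lc place is $\ord_B$, whose induced central fibre is the projective cone over an elliptic curve; this is lc but \emph{not} klt, so it is not a special degeneration and the case must be discarded rather than folded into the nodal-cubic list as you do.
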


\begin{proof}
Let $B$ be the $1$-complement on $\bP^2$ from Theorem \ref{thm:P^2-1comp}.
If $B$ is a smooth cubic curve, then $E=B$ which implies that $X_0$ is the projective cone over $B$ by Lemma \ref{l:degnormalcone}, which is not klt. If $B$ is an irreducible nodal cubic curve, then the special degenerations were classified in \cite[Thm. 6.1 \& Rem. 6.6]{LXZ21} which gives the list from the statement. If $B$ is a triangle, then $(\bP^2, B)$ is toric which implies that any special degeneration is of product type, hence $X_0\cong\bP^2$. 

If $B$ is a smooth conic $Q$ union a transversal line $L$, then we claim that $X_0$ is isomorphic to either $\bP^2$ or $\bP(1,1,4)$. If $E$ is a divisor on $\bP^2$, then $X_0$ is isomorphic to either $\bP^2$ (when $E=L$) or $\bP(1,1,4)$ (when $E=Q$) by Lemma \ref{l:degnormalcone}. 
Thus we may assume that the center of $E$ on $\bP^2$ is a point $p\in Q\cap L$. In particular, $\ord_E$ is proportional to the monomial valuation $v_t$ at $p$ of weight $(1,t)$ in the analytic coordinate defined by $(Q,L)$ for some $t\in \bQ_{>0}$. If $t\geq \frac{1}{2}$, then Proposition \ref{prop:P2-S-inv} implies that $v_t$ and hence $E$ are toric, which indicates $X_0\cong \bP^2$. We may assume that $t\in (0, \frac{1}{2})$. Since $S_{\bP^2}(v_t)$ is affine for $t\in [0,\frac{1}{2}]$ by Proposition \ref{prop:P2-S-inv}, \cite[Proof of Lemma 4.6]{LXZ21} implies that $X_0 \cong \Proj\,\gr_{v_t} R \cong \Proj\,\gr_{v_0}(\gr_{v_\frac{1}{2}} R)$ where $R$ is the anti-canonical ring of $\bP^2$. Since $v_{\frac{1}{2}}$ is toric whose induced $\bG_m$-action preserves $Q$, we know that $\gr_{v_\frac{1}{2}} R \cong R$, and the induced filtration of $v_0$ on $\gr_{v_\frac{1}{2}} R $ is the same as that on $R$. Thus we have $X_0 \cong \Proj\,\gr_{v_0} R \cong \bP(1,1,4)$. The proof is finished.
\end{proof}

\begin{prop}\label{prop:P2-S-inv}
Let $Q$ and $L$ be a smooth conic and a transversal line in $\bP^2$. Let $p\in Q\cap L$ be an intersection point. Denote by $v_t$ the monomial valuation at $p$ of weights $(1,t)$ in the analytic coordinates defined by $(Q,L)$. Then $v_t$ is a toric valuation for $t\geq \frac{1}{2}$. Moreover, 
\[
S_{\bP^2}(v_t) = \begin{cases}
\frac{1}{2} + 2t  & \textrm{ if }t\in [0, \frac{1}{2}];\\
1 + t & \textrm{ if }t\in [\frac{1}{2},+\infty).
\end{cases}
\]
Here we follow the notation from \cite{BJ20} on the $S$-invariant of valuations.
\end{prop}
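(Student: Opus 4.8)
\textbf{Proof proposal for Proposition \ref{prop:P2-S-inv}.}

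The plan is to reduce the computation to two toric computations, one on $\bP^2$ itself and one on a toric surface obtained by a weighted blowup, and to compute $S$-invariants by integrating volumes of restrictions of $-K_{\bP^2}$ along the associated filtrations. First I would fix local analytic coordinates $(y,z)$ at $p$ so that $Q = \{y = 0\}$ and $L = \{z = 0\}$, and set up $v_t$ as the monomial valuation with $v_t(y) = 1$, $v_t(z) = t$. The key structural observation is that, although $Q$ is not a line, the pair $(\bP^2, \tfrac12 Q + L)$ is log canonical with $K_{\bP^2} + \tfrac12 Q + L \sim_{\bQ} 0$, and more importantly the quasi-monomial valuation $v_{1/2}$ is an \emph{lc place} of this complement. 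By Lemma \ref{l:pltcyclicquot}-type reasoning (or directly: the weighted blowup of $p$ with weights $(2,1)$ in the coordinates where $Q,L$ become coordinate axes, i.e. after the degree-2 base change straightening $Q$, realizes $v_{1/2}$ as a torus-invariant divisor), $v_{1/2}$ and hence every $v_t$ for $t \geq \tfrac12$ is toric with respect to a suitable toric structure on (a blowup of) $\bP^2$; this is what I would write out carefully to justify the first assertion.

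For the $S$-invariant, I would use the standard formula $S_{\bP^2}(v_t) = \frac{1}{(-K_{\bP^2})^2}\int_0^\infty \mathrm{vol}(-K_{\bP^2} - x\, v_t)\, dx = \frac{1}{9}\int_0^\infty \mathrm{vol}(\mathcal{F}_{v_t}^{(x)})\, dx$, where the filtration is the one induced by $v_t$ on the anticanonical ring $R = \bigoplus_m H^0(\bP^2, \mathcal{O}(3m))$. On the regime $t \in [\tfrac12, \infty)$, since $v_t$ is toric I would compute this via the Newton--Okounkov body: $-K_{\bP^2}$ corresponds to the triangle of lattice width $3$, and $v_t$ corresponds to a linear functional; the volume $\mathrm{vol}(-K - x v_t)$ becomes the area of the sub-polytope where the functional is $\geq x$, a piecewise-polynomial function of $x$ that one integrates to get $1 + t$. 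On the regime $t \in [0, \tfrac12]$, I would use the two-step degeneration: filter first by $v_{1/2}$ (toric, with $\mathrm{gr}_{v_{1/2}} R \cong R$ since the induced $\bG_m$-action preserves $Q$) and then by $v_0 = \mathrm{ord}_L$ restricted to the conic; concretely this realizes $S_{\bP^2}(v_t)$ as an affine function of $t$ on $[0,\tfrac12]$, and I would pin down the two coefficients by evaluating at the endpoints: $S_{\bP^2}(v_0) = S_{\bP^2}(\mathrm{ord}_L) = \tfrac12$ (the classical $S$-invariant of a line in $\bP^2$, since $A_{\bP^2}(\mathrm{ord}_L) = 1$ and $L$ is K-semistable-computing, giving $S = 1/2$) and $S_{\bP^2}(v_{1/2}) = 1 + \tfrac12 = \tfrac32$ from the formula on the other regime (consistency at $t = \tfrac12$), which forces $S_{\bP^2}(v_t) = \tfrac12 + 2t$.

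The main obstacle I expect is making the affineness of $t \mapsto S_{\bP^2}(v_t)$ on $[0,\tfrac12]$ rigorous rather than heuristic: one must argue that the Okounkov-body picture degenerates in the right way, i.e. that passing to $\mathrm{gr}_{v_{1/2}}$ does not change the graded ring and that the residual filtration by $v_0$ is independent of $t$ on this range, so that the only $t$-dependence enters linearly through the weight $v_t = v_{1/2} \cdot (\text{something}) + v_0$-type decomposition. This is essentially the content of \cite[Proof of Lemma 4.6]{LXZ21} and \cite[Section 6]{LXZ21}, and I would cite those for the degeneration step; the genuinely new computation is just the two endpoint values and the toric integral on $[\tfrac12,\infty)$, both of which are elementary. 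I would also double-check the normalization $(-K_{\bP^2})^2 = 9$ and the log discrepancies $A_{\bP^2}(v_t) = 1 + t$ (so that the formula is consistent with $\beta$-invariant sign changes, though that is not needed for the statement).
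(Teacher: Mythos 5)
Your regime $t\geq \tfrac12$ (toricity plus the toric integral giving $1+t$) and the overall strategy of pinning down an affine function on $[0,\tfrac12]$ by endpoint values are in the spirit of the paper, but the step that actually carries the proof on $[0,\tfrac12]$ is missing. You assert that the two-step degeneration (first by $v_{1/2}$, then by $v_0$) ``realizes $S_{\bP^2}(v_t)$ as an affine function of $t$'' and you plan to cite \cite[Proof of Lem.~4.6]{LXZ21} for this. That is backwards relative to how that machinery works: in \cite{LXZ21}, affineness of $S$ along a quasi-monomial segment is the \emph{hypothesis} from which one deduces that $\gr_{v_t}R\cong \gr_{v_0}(\gr_{v_{1/2}}R)$ (this is exactly how the paper uses the proposition afterwards), not a consequence of it. In general $t\mapsto S_{\bP^2}(v_t)$ is only concave, and the naive Okounkov-body picture does not make $v_t$ a linear functional on a fixed body for $t<\tfrac12$ (the minimizing monomial can jump), so affineness genuinely needs to be proved. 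The paper closes this gap differently: it invokes concavity of $t\mapsto S_{\bP^2}(v_t)$ \cite[Lem.~4.6.1]{LXZ21} and then computes the single interior value $S_{\bP^2}(v_{1/4})=1$ directly, via the $(4,1)$-weighted blowup $\mu:\tX\to\bP^2$ in the coordinates $(x',y)$ with $x'=x-y^2$: one checks $(\tQ^2)=0$, hence $T_{\bP^2}(\ord_E)=\ord_E(\tfrac32 Q)=6$, and then $S_{\bP^2}(\ord_E)=T/3+12/T=4$, so $S_{\bP^2}(v_{1/4})=1$. Since a concave function meeting its chord at an interior point is affine, this forces $S_{\bP^2}(v_t)=\tfrac12+2t$ on $[0,\tfrac12]$. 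Your proposal contains no substitute for this interior computation.

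There is also an error at the endpoint: with weights $(1,t)$ on the coordinates cutting out $(Q,L)$, the valuation $v_0$ puts weight $1$ on the equation of $Q$ and weight $0$ on that of $L$, so $v_0=\ord_Q$, not $\ord_L$. You write $S_{\bP^2}(v_0)=S_{\bP^2}(\ord_L)=\tfrac12$; in fact $S_{\bP^2}(\ord_L)=1$ (and $\beta(\ord_L)=A-S=0$), while $S_{\bP^2}(\ord_Q)=\tfrac19\int_0^{3/2}(3-2x)^2\,dx=\tfrac12$. Your two mistakes cancel to give the right number, but the justification as written is wrong. Finally, for the toricity claim at $t\geq\tfrac12$, the paper's route is cleaner than your ``blowup realizing $v_{1/2}$'' sketch: in coordinates where $L=\{y=0\}$ and $Q=\{x-y^2=0\}$, one has $v_t(x)=\min\{1,2t\}=1$ for $t\geq\tfrac12$, and then the argument of \cite[Prop.~6.4]{LXZ21} identifies $v_t$ with the monomial valuation of weight $(1,t)$ in the honest toric coordinates $(x,y)$; note also that toricity of $v_{1/2}$ alone would not immediately give toricity of $v_t$ for all $t>\tfrac12$.
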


\begin{proof}
We may pick affine coordinates $(x,y)$  at $p$ such that $Q= \{x'=0\}$  and $L=\{y=0\}$ where $x' = x-y^2$.  Then we know that $v_t(x) = \min \{1, 2t\}$. By similar arguments as \cite[Proof of Proposition 6.4]{LXZ21}, we have that $v_t$ is a toric monomial valuation in the coordinates $(x,y)$ of weight $(1,t)$ whenever $t\geq \frac{1}{2}$. Thus a standard toric computation shows that $S_{\bP^2}(v_t) = 1+t$ for $t\geq \frac{1}{2}$. Meanwhile, $S_{\bP^2} (v_0) = S_{\bP^2}(Q) = \frac{1}{2}$ by a standard volume computation. We know that $S_{\bP^2}(v_t)$ is a concave function by \cite[Lem. 4.6.1]{LXZ21}. Thus it suffices to show that $S_{\bP^2}(v_{\frac{1}{4}}) = \frac{1}{2}(S_{\bP^2}(v_0) +S_{\bP^2}(v_{\frac{1}{2}}))=1 $.

Let $\mu:\tX \to \bP^2$ be the $(4,1)$-weighted blowup at $p$ in the analytic coordinates $(x',y)$ with exceptional divisor $E$. Denote by $\tQ: = \mu_*^{-1}Q$. Then we have 
\[
(\tQ^2) = (\mu^* Q - 4E)^2 =  (Q^2) + 16(E^2) = 4 - 16\cdot \frac{1}{4} = 0.
\]
Thus by \cite[Lem. 1.22]{KM98} we know that $[\tQ]$ is at the boundary of $\overline{NE}(\tX)$. Thus we have $T_{\bP^2} (\ord_E) = \ord_E(\frac{3}{2}Q) = 6$. By \cite[Lem. 6.2]{LXZ21} we know that 
\[
S_{\bP^2}(\ord_E) = \frac{T_{\bP^2}(\ord_E) }{3} + \frac{12}{T_{\bP^2}(\ord_E) }= 4.
\]
Since $v_{\frac{1}{4}} = \frac{1}{4}\ord_E$, we have that $S_{\bP^2}(v_{\frac{1}{4}}) = \frac{1}{4}S_{\bP^2}(\ord_E) = 1$. The proof is finished. 
\end{proof}

\section{Coregularity 0 pairs}\label{a:coreg0}
In this appendix, we prove two results concerning coregularity 0 CY pairs.

\begin{thm}\label{t:coreg0index}
If $(X,\Delta)$ is a projective slc CY pair of coregularity $0$ with Weil index\footnote{The \emph{Weil index} of $\Delta$ is the smallest positive integer $\la$ such that $\la \Delta$ is an integral Weil divisor.}  $\la$, then  $\la'(K_{X}+\Delta)\sim 0$, where $\la'= {\rm lcm}(\la,2)$.
\end{thm}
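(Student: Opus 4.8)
The strategy is to reduce to the well-understood case of an index-1 degeneration via the machinery developed in Section~\ref{s:sources} and then transfer the index back up. First I would pass to the normalization $(\overline{X},\overline{G}+\overline{\Delta}) = \sqcup_i (\overline{X}_i,\overline{G}_i+\overline{\Delta}_i)$; since coregularity is preserved under normalization (the source of each component is again a point), each $(\overline{X}_i,\overline{G}_i+\overline{\Delta}_i)$ is an lc CY pair of coregularity $0$. By \cite[Prop. 5.8]{Kol13}, the descent of pluri-log-canonical forms from $\overline{X}$ to $X$ is governed by the involution $\tau$ on $\overline{G}^n$ together with the compatibility of the residues, so it suffices to produce a $\tau$-compatible nowhere-vanishing section of $\omega_{\overline{X}_i}^{[\la']}(\la'(\overline{G}_i+\overline{\Delta}_i))$ on each component — i.e.\ to handle the lc case and then glue, mimicking the argument of Lemma~\ref{l:1-compP1s} but in arbitrary dimension. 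This reduces the theorem to: an lc boundary-type CY pair (allowing $\Delta$ arbitrary rather than ample) of coregularity $0$ with Weil index $\la$ has $\la'(K+\Delta)\sim 0$.

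For the lc case, the key input is Proposition~\ref{p:degenreducedboundary}, or rather the argument behind it: on a dlt modification $f:(Y,\Gamma_Y)\to (X,\Delta)$ a minimal lc center $Z$ is a point (coregularity $0$), and blowing up $Z$ produces an lc place $F$ with $(F,\Gamma_Y|_F)\cong (\bP^n,\{x_0\cdots x_n=0\})$. I would then run the weighted blowup construction to degenerate $(X,\Delta)$ to a pair $(X_0,\Delta_0)$ whose normalization is crepant birational to $(\bP^n,\{x_0\cdots x_n=0\})$; the crucial point is that $K_{\overline{X}_0}+\overline{G}_0+\overline{\Delta}_0\sim 0$ and $\Delta_0$ is integral of coefficient $1$ along each component of its support. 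Here the analogue of Lemma~\ref{l:Ncompspecialize} (specialization of the index, which holds over $\bA^1$ by the same argument since $X_0$ is connected) shows that the index of $(X,\Delta)$ divides the index of $(X_0,\Delta_0)$, which is $1$ or $2$: it is $1$ if the degeneration has reduced boundary and connected conductor, and the possible $2$ comes precisely from the case where a component of the conductor is glued to itself (a "dihedral" node), exactly as in the dichotomy in the proof of Proposition~\ref{p:typeIII-Sequiv}. The ``$\la'={\rm lcm}(\la,2)$'' is then forced: one factor of $2$ may be needed to clear such a self-gluing, and $\la$ is needed so that $\la'\Delta$ is integral (the degeneration argument only controls the index up to integrality of the boundary).

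The main obstacle I anticipate is not the degeneration itself but controlling which index can appear in $(X_0,\Delta_0)$ and matching it cleanly with $\la$. Concretely: after the coregularity-$0$ degeneration, $\Delta_0$ need not literally have the same Weil index as $\Delta$ (weighted blowups can change coefficients along components over the central fiber), so I must argue carefully that the only new phenomenon affecting the index is the $2$-torsion coming from self-conductor gluings — everything else is $1$-torsion plus the integrality of $\la'\Delta$. This is the delicate bookkeeping: one shows $K_{X_0}+\Delta_0$ is $2$-torsion (using the lc surface/higher-dimensional gluing analysis of $\omega$-descent, i.e.\ $\tau^*R(\overline{s})=-R(\overline{s})$ and the fact that squaring kills the sign ambiguity), hence $2(K_{X_0}+\Delta_0)\sim 0$, hence by specialization $2(K_X+\Delta)$ is ``$\sim$ a $\bQ$-linearly-trivial integral divisor'', and then integrality of $\la\Delta$ upgrades this to $\la'(K_X+\Delta)\sim 0$. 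A careful treatment of the non-normal case — ensuring the involution $\tau$ interacts correctly with the chosen pluricanonical sections on each normalized component — is where the argument will require the most care, and I would model it closely on the proof of Lemma~\ref{l:1-compP1s} combined with \cite[Prop. 5.3 \& 5.8]{Kol13}.
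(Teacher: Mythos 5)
Your overall shape is right — reduce the non-normal case to the lc case by descending pluricanonical sections through the residue maps and the involution $\tau$ (this is exactly Proposition~\ref{p:descendcomplement}, where the evenness of $m$ kills the sign ambiguity $\tau^*R(\overline{s})=-R(\overline{s})$), and then degenerate the normal pair to a coregularity-$0$ pair with reduced boundary whose normalization is crepant birational to $(\bP^n,\{x_0\cdots x_n=0\})$. But there are two genuine gaps. The first is that Proposition~\ref{p:degenreducedboundary} and the test-configuration machinery behind it (Theorem~\ref{thm:CZ-lcplace}, Proposition~\ref{p:lcplacestestconfig}) apply only to \emph{boundary polarized} CY pairs: the Rees/Proj construction needs an ample polarizing boundary, i.e.\ $-K_X-\Delta'$ ample for the log Fano part. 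A general slc CY pair $(X,\Delta)$ of coregularity $0$ carries no such polarization (e.g.\ $\Delta$ may be zero or non-ample), so you cannot directly ``run the weighted blowup construction to degenerate $(X,\Delta)$.'' The paper's essential extra idea is the cone trick: form the projective orbifold cone $Y=C_p(X,L)$ with $\Gamma:=\Delta_{L,p}$ and $D:=X_\infty$, so that $(Y,\Gamma+D)$ \emph{is} a boundary polarized CY pair of coregularity $0$ with $(D,\mathrm{Diff}_\Gamma(D))\cong(X,\Delta)$ (Proposition~\ref{prop:orbcone-adjunction}.3); one degenerates the cone, and the desired degeneration of $(X,\Delta)$ is obtained by restricting to the family $\cD$ of boundary divisors via adjunction.

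The second gap is the transfer from the central fiber back to $(X,\Delta)$. Lemma~\ref{l:Ncompspecialize} runs in the opposite direction from what you need: it says that if the \emph{generic} fiber has index $N$ then so does the family, whence the index of the \emph{special} fiber divides $N$. Knowing $2(K_{\cX_0}+\Delta_{\cX_0})\sim 0$ gives you nothing about the generic fiber via that lemma, and your phrase ``the index of $(X,\Delta)$ divides the index of $(X_0,\Delta_0)$'' is the reverse divisibility, which is not what specialization provides. The paper instead sets $\cL:=\omega_{\cX/\bA^1}^{[\la']}(\la'\Delta_{\cX})$, notes that $\la'\Delta_{\cX}$ is integral (this is where $\la$ enters) and that $\cL_0$ is a trivial line bundle because $2\mid\la'$ and $2(K_{\cX_0}+\Delta_{\cX_0})\sim 0$; then \cite[Prop.~2.79]{KolNewBook} makes $\cL$ a line bundle on the total space and \cite[Prop.~2.65]{KolNewBook} gives deformation invariance of $h^0$ for this relatively numerically trivial line bundle, so $h^0(X,\omega_X^{[\la']}(\la'\Delta))>0$ and the resulting effective divisor in $|\la'(K_X+\Delta)|$ must be zero since $K_X+\Delta\sim_{\bQ}0$. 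Without some such argument your final upgrade from the special fiber to $(X,\Delta)$ does not go through.
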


\begin{thm}\label{t:coreg0Fano}
Let $X$ be an lc  Fano variety.
If $X$  admits a complement of coregularity 0, then $X$ admits a 2-complement of coregularity 0.
\end{thm}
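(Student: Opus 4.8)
\textbf{Proof proposal for Theorem \ref{t:coreg0Fano}.}

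The plan is to reduce Theorem \ref{t:coreg0Fano} to Theorem \ref{t:coreg0index} via a degeneration argument, mirroring the strategy already used in the body of the paper for plane curve pairs. First I would let $D$ be a complement of $X$ of coregularity $0$, so that $(X,D)$ is an lc CY pair (equivalently a boundary polarized CY pair with $X$ a Fano variety, once we note $-K_X$ is ample and $D$ is $\bQ$-Cartier after passing to the appropriate model); note $\coreg(X,D)=0$. By Proposition \ref{p:degenreducedboundary} there is a weakly special degeneration $(X,D)\rightsquigarrow (X_0,D_0)$ such that the normalization $(\overline{X}_0,\overline{G}_0+\overline{D}_0)$ is crepant birational to $(\bP^n,\{x_0\cdots x_n=0\})$; in particular $K_{\overline{X}_0}+\overline{G}_0+\overline{D}_0\sim 0$ and $D_0$ is a $\bZ$-divisor. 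Thus $(X_0,D_0)$ is an slc CY pair of coregularity $0$ whose Weil index is $1$, and by Theorem \ref{t:coreg0index} we get $2(K_{X_0}+D_0)\sim 0$, i.e. $D_0$ is a $2$-complement of $X_0$ of coregularity $0$. It remains to ``lift'' this back to $X$.

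For the lifting step I would work on the weakly special test configuration $(\cX,\cD)\to\bA^1$ realizing the degeneration, so that $(\cX_1,\cD_1)\cong (X,D)$ and $(\cX_0,\cD_0)\cong (X_0,D_0)$. The key point is that a $2$-complement is detected by an lc place computation: by Lemma \ref{l:lctestconfig}.2 (applied with the log Fano boundary $\Delta = 0$) the valuation $v_{\cX_0}$ associated to the special fiber satisfies $A_{X,D}(v_{\cX_0})=0$, hence $v_{\cX_0}\in \LC(X,D)$; more generally every irreducible component $E\subset \cX_0$ gives an lc place of $(X,D)$. Since $(X_0,D_0)$ has index dividing $2$, Lemma \ref{l:Ncompspecialize} applied to the family $(\cX,\cD)\to\bA^1$ (or rather its restriction over a DVR, say $\Spec \cO_{\bA^1,0}$) shows that $2(K_{\cX/\bA^1}+\cD)\sim 0$, and restricting to the fiber over $1$ gives $2(K_X+D)\sim 0$. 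So $D$ itself is already a $2$-complement of $X$; the only thing to check is that $D$ still has coregularity $0$. This follows because weakly special degenerations preserve the source (Proposition \ref{p:SrcSequiv}): $\Src(X,D)\cbir \Src(X_0,D_0)$, and $\dim\Src(X_0,D_0)=\coreg(X_0,D_0)=0$, so $\coreg(X,D)=0$ as well.

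One subtlety I would need to address carefully is the setup of Proposition \ref{p:degenreducedboundary}, which is stated for lc \emph{boundary polarized} CY pairs, i.e. with $D$ ample and $\bQ$-Cartier. A general $1$-complement need not have these properties. The fix: the degeneration produced by Proposition \ref{p:degenreducedboundary} is constructed from the blowup $F$ of a $0$-dimensional minimal lc center on a dlt modification, and its proof only uses that $F$ is an lc place of $(X,D)$ together with Theorem \ref{thm:CZ-lcplace} to produce a test configuration — and Theorem \ref{thm:CZ-lcplace} already applies to all lc boundary polarized CY pairs. To put ourselves in that situation from the start, I would first replace $D$ by a small perturbation or, more cleanly, use that $-K_X$ is ample to find $0\le D'\sim_{\bQ}-K_X$ with $\LC(X,D)\subset\LC(X,D')$ and $D'$ ample $\bQ$-Cartier (as in the proof of Proposition \ref{p:exists1comp}); then $(X,D')$ is a boundary polarized CY pair, coregularity is preserved since the relevant lc places agree, and the argument above goes through with $D$ replaced by $D'$. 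Wait — we must be careful that $D'$ remains coregularity $0$: since $\LC(X,D)\subseteq \LC(X,D')$ and the minimal lc center of $(X,D)$ is a point, $(X,D')$ has an lc center which is a point, forcing $\coreg(X,D')=0$.

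The main obstacle I expect is precisely this bookkeeping around passing between a bare complement and a boundary polarized CY pair, and verifying that Lemma \ref{l:Ncompspecialize} can be invoked in the non-normal / relative setting of the test configuration (its statement requires a regular local base, which is satisfied by $\Spec \cO_{\bA^1,0}$, so this should be routine but needs stating). The statement that the result ``also holds for lc Fano varieties'' — as opposed to klt — is automatic in this approach, since nowhere did we use klt-ness: Proposition \ref{p:degenreducedboundary}, Theorem \ref{thm:CZ-lcplace}, Lemma \ref{l:lctestconfig}, Lemma \ref{l:Ncompspecialize}, and Proposition \ref{p:SrcSequiv} are all available for lc (indeed slc) pairs. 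Finally, to match the phrasing of \cite{FFMP22}, I would remark that the resulting $2$-complement $D$ (or $D'$) is effective and $(X,D)$ is lc with $2(K_X+D)\sim 0$, which is exactly a $2$-complement in the sense of the paper's Remark following Definition \ref{d:bpcy}.
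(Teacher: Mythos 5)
Your first step — degenerating $(X,D)\rightsquigarrow (X_0,D_0)$ via Proposition \ref{p:degenreducedboundary} and then using Proposition \ref{p:descendcomplement} (through Theorem \ref{t:coreg0index}) to get $2(K_{X_0}+D_0)\sim 0$ — is exactly the paper's first step, and your side remark about ampleness of $D$ is a non-issue (a complement of a log Fano pair is automatically a boundary polarized CY pair, as the paper notes after Definition \ref{d:bpcy}). The gap is in the lifting step. You invoke Lemma \ref{l:Ncompspecialize} to deduce $2(K_{\cX/\bA^1}+\cD)\sim 0$ from $2(K_{\cX_0}+\cD_0)\sim 0$, but that lemma runs in the opposite direction: its hypothesis is that the \emph{generic} fiber has index dividing $N$, and its conclusion is that the whole family (hence the special fiber) does. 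You are trying to propagate an index bound from the special fiber back to the generic fiber, which the lemma does not give and which is false. Indeed your conclusion "$D$ itself is already a $2$-complement" fails in simple examples: take $X=\bP^1$ and $D=p_1+\tfrac{1}{3}(p_2+p_3+p_4)$ with the $p_i$ distinct. Then $(X,D)$ is an lc CY pair of coregularity $0$ (the lc center is the point $p_1$) and $D$ is a $3$-complement but not a $2$-complement, since $2D$ is not even a $\bZ$-divisor; yet $X$ certainly admits a $2$-complement of coregularity $0$, e.g. $p_1+p_2$. So the theorem genuinely requires producing a \emph{new} complement, not certifying the old one.

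The paper closes this gap with Lemma \ref{l:extenddivisor}: starting from the divisor $\cD_0$ on the central fiber, which satisfies $2(K_{\cX_0}+\cD_0)\sim 0$, one extends it ($\bG_m$-equivariantly, using that every orbit closure in the test configuration meets the central fiber and that $H^0$ of the relevant pluri-anticanonical sheaf surjects onto that of the central fiber) to a divisor $\cB$ on all of $\cX$ with $(\cX,\cB)\to\bA^1$ a family of boundary polarized CY pairs and $2(K_{\cX/\bA^1}+\cB)\sim 0$. The fiber $B=\cB_1$ is then a $2$-complement of $X$, in general different from $D$, and $\coreg(X,B)=\coreg(\cX_0,\cB_0)=\coreg(X,D)=0$ by Proposition \ref{p:SrcSequiv}, exactly as in your final paragraph. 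So your outline is recoverable by replacing the appeal to Lemma \ref{l:Ncompspecialize} with an appeal to Lemma \ref{l:extenddivisor} and accepting that the output is a new complement.
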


Recall, a \emph{complement} of an lc Fano variety $X$ is a $\bQ$-divisor $D$ such that $(X,D)$ is an lc CY pair. A complement has \emph{coregularity 0} if $\coreg(X,D)=0$ and is a \emph{2-complement} if $2(K_X+D)\sim0$.

The above theorems were previously proven in \cite[Cor. 9]{FMM22} and \cite[Thm. 4]{FFMP22} using different techniques from the proofs given below. 
For example, the proof of Theorem \ref{t:coreg0Fano} in \cite{FFMP22} uses the strategy of Birkar's proof of the boundedness of complements \cite{Bir19} with significant upgrades to utilize the coregularity assumption. 
Note that \cite{FMM22,FFMP22} also prove versions of these theorems for generalized pairs.
We thank Stefano Filipazzi and Joaqu\'{i}n Moraga for explaining their results in \cite{FFMP22,FMM22}, which led us to the following arguments.

We prove Theorems \ref{t:coreg0index} and \ref{t:coreg0Fano} using a degeneration argument.
The proof of Theorem \ref{t:coreg0Fano} takes the following strategy.
\begin{enumerate}
\item Using that $X$ admits a complement $D$   of  coregularity 0, we can construct  a test configuration degenerating
$
(X,D)\rightsquigarrow (X_0,D_0),
$
where the normalization  $(\overline{X}_0,\overline{G}_0+\overline{D}_0)$ is an lc 
CY pair with $K_{\overline{X}_0}+\overline{G}_0+\overline{D}_0\sim 0$.
 \item By descending a non-vanishing section of 
$\omega_{\oX_0}^{[2]}(2(\overline{G}_0+\overline{D}_0))$ to $X_0$, we show  $\overline{D}_0$ is a 2-complement. 
This step uses higher Poincar\'e maps \cite[4.18]{Kol13} and is similar to \cite[Sec. 5.3]{FMM22}

\item Finally, we  extend $D_0$ to a 2-complement on $X$.
\end{enumerate}
The proof of Theorem \ref{t:coreg0index} takes a similar approach.

\subsection{Poincar\'e residue map}
Let $(Y,\Delta_Y)$ be a dlt sub-pair and $p \in Y$ a minimal lc center of dimension $0$.
 If $m$ is an even integer such that $m(K_Y+\Delta_Y)$ is Cartier, then there is a Poincar\'e residue map 
\[
\cR^m_{(Y,\Delta_Y)\to p}: 
H^0(Y,\omega_{Y}^{[m]}(m\Delta_Y)) \to \bk,
\]
which is defined by writing an element $s\in H^0(Y,\omega_{Y}^{[m]}(m\Delta_Y))$ in the form 
\[
s =_{\rm loc} f \cdot  ( \tfrac{dx_1}{x_1} \wedge \cdot \cdots \cdot \wedge  \tfrac{dx_n}{x_n})^{\otimes m},
\]
where $x_1,\ldots, x_n \in \cO_{Y,p}$ is a regular system of parameters such that  $\Supp(\Delta_Y) =_{\rm loc}\{ x_1 \cdots x_n=0\}$ and $f \in \cO_{Y,p}$, and setting $\cR^{m}_{(Y,\Delta_Y)\to p}(s):=f(p)$. 
Note that the map is independent of the choice of parameters, as well as their ordering, which uses the assumption that $m$ is even.
See \cite[\S 4.18]{Kol13} for a detailed study of such residue maps and their generalizations.

Next let $(X,\Delta)$ be a projective lc  CY pair of coregularity 0. 
Fix a dlt modification $g:(Y,\Delta_Y)\to (X,\Delta)$ and a  minimal lc center $p\in Y$.
For each even integer $m$ such that $m(K_X+\Delta)\sim 0$, 
we define a map 
\[
R_{(X,\Delta)}^m:
	H^0(X,\omega_{X}^{[m]}(m\Delta))
	\to \bk
\]
by the composition of the pullback map 
\[
g^*:H^0(X,\omega_{X}^{[m]}(m\Delta))
\to 
H^0(Y, \omega_Y^{[m]}(m\Delta_Y))
\]
followed by the Poincar\'e residue map $\cR_{(Y,\Delta_Y)\to p}^m$.
Note that  $R_{(X,\Delta)}^m$ is an isomorphism since it is a nonzero linear map between 1-dimensional vector spaces.

\begin{prop}\label{p:Poincareind}
The map $R_{(X,\Delta)}^m$ is independent of the choice of dlt modification $Y$  and minimal lc center $p$.
\end{prop}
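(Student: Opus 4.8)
The statement to prove is Proposition \ref{p:Poincareind}, the well-definedness of the map $R^m_{(X,\Delta)}$. The plan is to reduce the independence of the choices to a comparison between any two dlt modifications, and then invoke the established theory of Poincar\'e residue maps on crepant birational dlt pairs from \cite[\S 4.18]{Kol13}. First I would dispose of the independence of the minimal lc center $p$ for a \emph{fixed} dlt modification $g:(Y,\Delta_Y)\to (X,\Delta)$. Since $(Y,\Delta_Y)$ is dlt and $\operatorname{coreg}(X,\Delta)=0$, the minimal lc centers of $(Y,\Delta_Y)$ are exactly the $0$-dimensional strata of $\Delta_Y^{=1}$, i.e.\ the ``vertices'' of the dual complex $\mathcal{D}(\Delta_Y)$. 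By \cite[Prop. 4.37]{Kol13} (connectivity of the dual complex, using $-K_Y-\Delta_Y$ trivial here), $\mathcal{D}(\Delta_Y)$ is connected, so it suffices to check that $\cR^m_{(Y,\Delta_Y)\to p}=\cR^m_{(Y,\Delta_Y)\to p'}$ whenever $p,p'$ are two vertices joined by an edge $Z\subset Y$ (a $1$-dimensional lc center). This is exactly the compatibility of Poincar\'e residues along a chain, which one obtains from \cite[4.18]{Kol13}: restricting to $Z$ via adjunction, $(Z,\Delta_Z)$ is a dlt CY curve (so $Z\cong\bP^1$, $\Delta_Z=p+p'$), and the two residues on $Y$ factor through the two residues of $H^0(Z,\omega_Z^{[m]}(m\Delta_Z))$ at $p$ and $p'$; these agree up to the sign $(-1)^m=1$ since $m$ is even. (This is the same mechanism as in Lemma \ref{l:1-compP1s}.)

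Next I would handle the independence of the dlt modification. Given two dlt modifications $g_i:(Y_i,\Delta_{Y_i})\to(X,\Delta)$, both are crepant birational to $(X,\Delta)$ and hence to each other, so by \cite[Prop. 1.21]{Kol13} (or by dominating both by a common log resolution) there is a third dlt pair $(W,\Delta_W)$ with crepant birational morphisms $W\to Y_i$. It therefore suffices to treat the case where $Y_2\to Y_1$ is a crepant birational morphism of dlt pairs. Choose a minimal lc center $q\subset Y_2$ lying over a minimal lc center $p\subset Y_1$ (such a $q$ exists: the image of any minimal lc center of $(Y_2,\Delta_{Y_2})$ is an lc center of $(Y_1,\Delta_{Y_1})$, and $q$ maps to a minimal one since $\dim q=0$ forces the image to be a point). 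By the previous paragraph I may compute $R^m_{(X,\Delta)}$ using $q$ on $Y_2$ and $p$ on $Y_1$. Now $g^*_{Y_2}=(\text{pullback } Y_2\to Y_1)^*\circ g^*_{Y_1}$, so the claim reduces to: for a crepant birational morphism $h:(Y_2,\Delta_{Y_2})\to(Y_1,\Delta_{Y_1})$ of dlt pairs and $q\mapsto p$ minimal lc centers, $\cR^m_{(Y_2,\Delta_{Y_2})\to q}\circ h^* = \cR^m_{(Y_1,\Delta_{Y_1})\to p}$. This is precisely the functoriality of the Poincar\'e/tower residue map under crepant pullback recorded in \cite[4.18]{Kol13}, applied locally near $p$: since $h$ is crepant and $q$ lies over $p$, the local generator $(\tfrac{dx_1}{x_1}\wedge\cdots\wedge\tfrac{dx_n}{x_n})^{\otimes m}$ pulls back to a local generator with the same residue value.

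The main obstacle I anticipate is the bookkeeping in this last functoriality step: one must verify that $q$ can be chosen to lie over $p$ with the \emph{right} local analytic identification of boundary components so that the two residue computations literally match (no stray sign, no stray unit). The clean way around this is not to do the local computation by hand but to appeal directly to the general framework of ``tower of residue maps'' in \cite[\S 4.18]{Kol13}, which already establishes that for a dlt pair the iterated residue to a minimal lc center is canonical and compatible with crepant birational maps; the only new input needed is the evenness of $m$ to kill the ordering/orientation signs, and the connectivity of the dual complex to pass between different minimal lc centers. Combining these two inputs with \cite[Prop. 1.21]{Kol13} to link arbitrary dlt modifications, we conclude that $R^m_{(X,\Delta)}$ depends only on $(X,\Delta)$ and the even integer $m$, which is the assertion of the proposition.
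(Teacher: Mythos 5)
Your overall architecture is reasonable and the first half is fine: the independence of the minimal lc center $p$ for a fixed dlt modification via connectivity of the top-dimensional strata and the $\bP^1$-link computation (with $(-1)^m=1$ killing the sign) is exactly the content of \cite[Prop.~14]{Kol16}, which the paper simply cites. (A small terminological slip: a $0$-dimensional stratum of $\Delta_Y^{=1}$ is a top-dimensional cell of $\mathcal{D}(\Delta_Y)$, not a vertex; the connectivity you need is of top cells through codimension-one faces, which is what the $\bP^1$-link argument supplies.)

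The gap is in the second half, and you have in fact located it yourself: the assertion that for a crepant birational \emph{morphism} $h:(Y_2,\Delta_{Y_2})\to(Y_1,\Delta_{Y_1})$ of dlt sub-pairs one has $\cR^m_{(Y_2,\Delta_{Y_2})\to q}\circ h^*=\cR^m_{(Y_1,\Delta_{Y_1})\to p}$ is not ``recorded'' in \cite[\S 4.18]{Kol13} in the generality you need. That section constructs the higher residue maps and their basic adjunction properties for a fixed tower of lc centers; it does not state their invariance under an arbitrary crepant morphism that modifies the geometry over $p$ (and such modifications are unavoidable in your reduction: e.g.\ blowing up the $0$-dimensional lc center $p$ itself produces another dlt modification that is \emph{not} an isomorphism near $p$, so you cannot arrange the common model $W$ to be an isomorphism over the relevant points of both $Y_1$ and $Y_2$). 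This functoriality is precisely the heart of the proposition, and the paper supplies it by a different route: it first replaces each $Y_i$ by a log resolution that \emph{is} an isomorphism at all minimal lc centers (\cite[Thm.~10.45]{Kol13}, so that comparison is trivial), then connects the two log resolutions by weak factorization \cite{AKMW02} into single smooth blowups with centers in snc position with the boundary, and verifies the invariance of the residue by an explicit local computation for one such blowup. To repair your argument you would either need to carry out essentially this weak-factorization reduction, or locate a genuine crepant-invariance statement for the $m$-th power residue to minimal lc centers (the closest is \cite[Thm.~17]{Kol16}, but one must check it covers the $0$-dimensional, even-$m$ normalization you are using); as written, the citation does not close the argument.
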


\begin{proof}
The independence of the choice of minimal lc center $p \in Y$ was shown in \cite[Prop. 14]{Kol16}.
To see the independence of the choice of dlt modification,  we follow similar arguments in \cite{Kol16}. Fix two dlt modifications
$(Y,\Delta_{Y})$ and $(Y',\Delta_{Y'})$  of $(X,\Delta)$. 
By \cite[Thm. 10.45]{Kol13}, there exist log resolutions 
$(Z,\Delta_{Z}) \to (Y,\Delta_{Y})$
and 
$(Z',\Delta_{Z'}) \to (Y',\Delta_{Y'})$,
 which are isomorphisms at all minimal lc centers. 
By the weak factorization theorem  \cite{AKMW02}, 
there is a sequence of smooth blowups and blowdowns
\[
Z:=Z_1 \overset{\pi_1}{\dashrightarrow} Z_2 \dashrightarrow \cdots  \dashrightarrow Z_{r-1}\overset{\pi_{i-1}}{\dashrightarrow} Z_r =:Z'
\]
and an integer $1 \leq s<r$
such that the  composition $\pi_i^{-1}\circ \cdots \circ \pi_1^{-1}:Z_i\dashrightarrow  Z_1$ for $i\leq s$ and $\pi_{r-1} \circ \cdots  \circ \pi_i: Z_i \dashrightarrow Z_r$ for $i\geq s$ are morphisms. 
We define $\Delta_{Z_i}$ to be the crepant pullback of $\Delta_{Z_1}$ for $i \leq s$ and the crepant pullback of $\Delta_{Z_r}$ for $i\geq s$ via the above morphisms. Additionally,  we may assume each $\pi_i$ is a blowup or blowdown of a smooth subvariety that has simple normal crossing with $\Supp(\Delta_{Z_{i+1}})$ or $\Supp(\Delta_{Z_i})$, respectively.

By the above discussion, the proposition reduces to  understanding the residue map under a single blowup. In particular, it suffices to verify: 
if $g:(W,\Delta_W) \to (Z,\Delta_Z)$  is the blowup along  a subvariety that has simple normal crossing with $\Supp(\Delta_Z)$, then

\[
\cR^m_{(W,\Delta_{W})\to w}\circ g^* = \cR^m_{(Z,\Delta_{Z})\to z} ,
\]
where $z\in Z$ and $w \in W$ are minimal lc centers satisfying $g(w)=z$. 
This follows  from a local computation. 
\end{proof}

\subsection{Descending complements}

\begin{prop}\label{p:descendcomplement}
Let $(X,\Delta)$ be an slc CY pair of coregularity 0 and $m$ an even integer.
If the normalization $(\oX,\oG+\oDe)$ satisfies $m(K_{\oX}+\oG+\oDe)\sim 0$, then $m(K_{X}+\Delta)\sim 0$.
\end{prop}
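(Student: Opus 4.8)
The strategy is to use Koll\'ar's gluing theory (specifically \cite[Prop. 5.8]{Kol13}) to descend a non-vanishing pluricanonical section from the normalization $(\oX,\oG+\oDe)$ to $X$, following the template of the analogous descent arguments in Section \ref{s:sequivcurves} (e.g.\ Lemma \ref{l:1-compP1s}) but now with the Poincar\'e residue maps of the previous subsection as the main bookkeeping device. Concretely, since $m(K_{\oX}+\oG+\oDe)\sim 0$ we may fix a non-vanishing section $\overline{s}\in H^0(\oX,\omega_{\oX}^{[m]}(m(\oG+\oDe)))$. We want to show that $\overline{s}$ descends to a section $s\in H^0(X,\omega_X^{[m]}(m\Delta))$; once we have a non-zero such $s$, it cuts out an effective divisor $B$ with $m(K_X+\Delta)\sim B$, and since $K_X+\Delta\sim_{\bQ}0$ we get $B=0$, hence $m(K_X+\Delta)\sim 0$.

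\textbf{Key steps.} First I would reduce to working component-by-component on the normalization: write $(\oX,\oG+\oDe)=\sqcup_i(\oX_i,\oG_i+\oDe_i)$, so $\overline{s}$ is a tuple $(\overline{s}_i)$ of non-vanishing sections on each component. Each $(\oX_i,\oG_i+\oDe_i)$ is an lc CY pair of coregularity $0$ (coregularity is unchanged under normalization, cf.\ the definition of $\Src$ for slc pairs and Lemma \ref{l:familyslcnormadj}). Second, I would recall from \cite[5.2 \& 5.11]{Kol13} that $X$ is the geometric quotient of $\oX$ by the gluing involution $\tau:\oG^n\to\oG^n$, and that by \cite[Prop. 5.8]{Kol13} the section $\overline{s}$ descends to $X$ if and only if the induced sections on $\oG^n$ via the Poincar\'e residue $\cR^m_{\oX\to \oG^n}$ satisfy $\tau^*(\cR^m(\overline{s}))=\cR^m(\overline{s})$ (the sign being $+$ rather than $-$ because $m$ is even — this is exactly the point where evenness of $m$ is used, just as in the definition of $R^m_{(X,\Delta)}$). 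Third, the conductor $(\oG^n,{\rm Diff}_{\oG^n}(\oDe))$ is itself an slc CY pair with $m(K_{\oG^n}+{\rm Diff}_{\oG^n}(\oDe))\sim 0$ by adjunction, and it has coregularity $0$ as well (a minimal lc center of a boundary component of a coregularity $0$ pair is still a point, by \cite[Thm. 1.6]{Kol16} as used in the proof that $\Src$ is well-defined). I would then run an induction on dimension: on the conductor, apply the independence of the Poincar\'e residue map (Proposition \ref{p:Poincareind}, which shows $R^m$ is canonical, i.e.\ does not depend on choices of dlt modification or minimal lc center) together with \cite[Prop. 4.6]{Kol13} relating minimal lc centers of $\oX_i$ lying in $\oG_i$ to minimal lc centers of $\oG_i^n$. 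The upshot is that the residue of $\overline{s}$ along any minimal lc center is computed the same way whether one restricts first to a boundary divisor or not, and since the $\tau$-equivariance compatibility propagates from the $0$-dimensional minimal lc centers (where it is essentially tautological, or where the base case of the induction applies) back up through the conductor, one gets $\tau^*(\cR^m(\overline{s}))=\cR^m(\overline{s})$.

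\textbf{Main obstacle.} The technical heart is keeping the Poincar\'e residue maps coherent across the gluing: one must check that the residue of $\overline{s}$ along a boundary component $\oF_i\subset\oG_i$, further residued down to a $0$-dimensional minimal lc center $p_i\in \oF_i^n$, agrees (up to the $\tau$-action interchanging $\oF_i^n\leftrightarrow\oF_j^n$ and $p_i\leftrightarrow p_j$) with the same iterated residue on the $j$-side, and that this forces the single-step residue to $\oG^n$ to be $\tau$-invariant. Proposition \ref{p:Poincareind} handles the independence needed, and \cite[Prop. 5.12]{Kol13} (that $\tau$ fixes the different) ensures the residue data on the two sides of the conductor match. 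The only genuinely delicate point is the sign/compatibility of the \emph{iterated} residue $\cR^m_{(\oX,\,\oG+\oDe)\to\oG^n\to p}=\cR^m_{(\oX,\oG+\oDe)\to p}$, which holds by the transitivity of Poincar\'e residues for even $m$ (cf.\ \cite[4.18]{Kol13}); once this transitivity is in hand, the descent criterion \cite[Prop. 5.8]{Kol13} applies verbatim and the proof concludes as in Lemma \ref{l:1-compP1s}.
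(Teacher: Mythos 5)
Your proposal is correct and follows essentially the same route as the paper: descend $\overline{s}$ via Koll\'ar's criterion \cite[Prop. 5.8]{Kol13} (with sign $+1$ because $m$ is even), using the factorization of the Poincar\'e residue through the conductor \cite[Thm. 17]{Kol16} and the fact that the residue maps on the conductor components are isomorphisms of one-dimensional vector spaces to force $\tau$-invariance of the restricted section $\overline{t}$. The only step to make explicit in the write-up is the rescaling of $\overline{s}$ on each connected component of $\oX$ so that its residue at a minimal lc center equals $1$ — this, combined with Proposition \ref{p:Poincareind}, is what makes your ``tautological'' base case at the $0$-dimensional centers actually hold; without it the residues on the two sides of a conductor component need not match.
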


The  proposition is proven in \cite[Thm. 4.10]{FFMP22} using the theory of admissible sections.
We give a different proof using residue maps and \cite[Prop. 5.8]{Kol13}.

\begin{proof}[Proof of Proposition \ref{p:descendcomplement}]
Let $(\overline{X},\overline{G}+\overline{\Delta}):=\sqcup_{i=1}^r (\overline{X}_i,\overline{G}_i+\overline{\Delta}_i)$
 denote the normalization of $(X,\Delta)$ and $\oG_{i}^n= \sqcup_j \oG_{i,j}$ the conductor divisor.
By assumption, there is a generically non-vanishing section 
$ \overline{s}\in H^0(\overline{X},\omega_{\overline{X}}^{[m]}(m(\overline{\Delta}+\overline{G})))$.
By rescaling each $\overline{s}\vert_{\oX_i}$, we may assume  $R^m_{(\overline{X}_i,\overline{\Delta}_i+\overline{G}_i)}(\overline{s} \vert_{\oX_i})= 1$.
Let $\overline{t}$ denote the image of $\overline{s}$ under the residue map
\[
H^0(\oX,\omega_{\oX}^{[m]}(m(\oG+\oDe)))
\to 
H^0(\oG^n,\omega_{\oG^n}^{[m]}(m {\rm Diff}_{\oG^n}(\oDe)))
.\]
Since $m$ is even,  $\overline{s}$ descends to a section $s\in H^0(X,\omega_{X}^{[m]}(m\Delta))$ if and only if  
$\overline{t}=\tau^*\overline{t}$, 
where $\tau: \overline{G}^n\to \overline{G}^n$ is the induced involution; see \cite[Prop. 5.8]{Kol13}.

To verify the previous condition,  note that 
$R^m_{(\overline{X}_i,\overline{\Delta}_i+\overline{G}_i)}$ equals the composition
\[
H^0(\overline{X}_i,  \omega_{\oX_i}(m(\overline{G}_i+\overline{\Delta}_i)) ) 
\longrightarrow
H^0( \oG_{i,j},\omega_{\oG_{i,j}}^{[m]} (m {\rm Diff}_{\oG_{i,j}} (\Delta)) )
\overset{R_{i,j}}{\longrightarrow}
\bk
,\]
where $\overline{G}_{i,j}$ is an irreducible component of $\overline{G}_i$
and
$R_{i,j}:=R^m_{(\oG_{i,j}, {\rm Diff}_{\oG_{ij}}\overline{\Delta})}$ by \cite[Thm. 17]{Kol16}.
Thus $R_{i,j} ( \overline{t}  \vert_{\oG_{i,j}})=1$ for all $i$ and $j$
and so
$R_{i,j} ((\tau^* \overline{t}) \vert_{ \oG_{i,j}})
=1$ as well.
Using that  $R_{i,j}$ is an isomorphism, it follows that 
$(\tau^*\overline{t})\vert_{ \oG_{i,j}}
= 
\overline{t}\vert_{\oG_{i,j}}$,
and hence $\overline{t}= \tau^*\overline{t}$.
Therefore  $\overline{s}$ descends to a section of $ s\in H^0(X,\omega_{X}^{[m]}(m\Delta))$, which is generically non-zero.
Thus there is an effective divisor $D$ with $D\sim m(K_{X}+\Delta)$.
Since $K_X+\Delta\sim_{\bQ} 0$,  we conclude $0 =D\sim m(K_X+\Delta)$.
\end{proof}

\subsection{Proof of theorems}

\begin{proof}[Proof of Theorem \ref{t:coreg0index}]
We may assume $X$ is normal by Proposition \ref{p:descendcomplement}. 
Now choose an ample Cartier divisor  $L$ on $X$ and consider the  pair $(Y,\Gamma+D)$, where $Y=C_p(X,L)$ is the projective cone over $X$ with respect to $L$, $\Gamma:= \Delta_{L,p}$ and $D:=X_\infty$ in the notation of Definition \ref{def:orb-div}. 
By Proposition \ref{prop:orbcone-adjunction}.3,  $(Y,\Gamma+D)$ is an lc boundary polarized CY pair 
and  $(D, {\rm Diff}_\Gamma(D)) \cong   (X,\Delta)$. Note that $(Y,\Gamma+D)$ has coregularity 0.
By Propositions \ref{p:degenreducedboundary} and \ref{p:descendcomplement}, there exists a test configuration 
$(\cY, \Gamma_{\cY}+\cD)
\to \bA^1$
of $(Y,\Gamma+D)$
such that 
$2(K_{\cY_0}+\Gamma_{\cY_0}+\cD_0)\sim 0$. 

To construct a degeneration of $(X,\Delta)$, set  
\[\cX:= \cD\quad \text{ and }\quad \Delta_{\cX}:= {\rm Diff}_{\cD}(\Gamma_{\cY}).
\]
Note that $(\cX,\Delta_{\cX})\to \bA^1$ is a projective family of slc pairs  by Lemma \ref{l:slcadj} and $(\cX,\cD)\vert_{\bA^1 \setminus 0} \cong (X,D)\times(\bA^1 \setminus 0)$. 
By adjunction, 
$K_{\cX/\bA^1} + \Delta_{\cX}\sim_{\bQ}0$ and
$2(K_{\cX_0}+ \Delta_{\cX_0}) \sim 0$.
Now set $
\cL:= \omega_{\cX/\bA^1}^{[\la']}(\la' \Delta_{\cX})$.
Since $\la' (K_{\cX/\bA^1}+\Delta_{\cX})$ is a $\bQ$-Cartier $\Z$-divisor, 
$\cL_t
\cong   
\omega_{\cX_t}^{[\la']}(\la'\Delta_{\cX_t}) $ for all $t\in \bA^1$
 by \cite[Prop. 2.79]{KolNewBook}. 
Thus
$
\cL_0
$
is a line bundle and  $h^0(\cX_0,\cL_0)\neq 0$.
Thus $\cL$ is a line bundle in a neighborhood of $\cX_0$ and hence a line bundle on $\cX$.
Since $\cL$ is a line bundle and $K_{\cX/\bA^1}+\Delta_{\cX} \sim_{\bQ}0$, $h^0(\cX_t,\cL_t)$ is independent of $t\in U$ by \cite[Prop. 2.65]{KolNewBook}. 
Therefore
\[
h^0(X,\omega_{X}^{[\la']}(\la'\Delta))
=
h^0(\cX_1,\cL_1)
 =
 h^0(\cX_0,\cL_0)> 0.
\]
Thus there is a divisor $0 \leq  B\sim \la'(K_{X}+\Delta)$. 
Since $K_{X}+\Delta \sim_{\bQ}0$,  $ B=0$ as desired.
\end{proof}

\begin{proof}[Proof of Theorem \ref{t:coreg0Fano}]
Fix  a complement $D$ of $X$ such that $(X,D)$ has  coregularity 0. 
By Propositions \ref{p:degenreducedboundary} and  \ref{p:descendcomplement}, there exists a test configuration $(\cX,\cD) \to \bA^1$ of $(X,D)$ such that 
$2(K_{\cX_0} +\cD_0) \sim 0$.
By Lemma \ref{l:extenddivisor},
there exists a  $\Q$-divisor $\cB$ on $\cX$ such that $\cB_0= \cD_0$ and $(\cX,\cB)\to \bA^1$ is a $\bG_m$-equivariant family of boundary polarized CY pairs with  $2(K_{\cX/\bA^1} +B)\sim 0$. 
Therefore $B=\cB_1$ is a 2-complement of $X$
and
\[
{\rm coreg}(X,B)= {\rm coreg}(\cX_0,\cB_0)
= {\rm coreg}(X, D)=0
,\]
where the first two equalities hold by Proposition \ref{p:SrcSequiv}.
\end{proof}

\bibliographystyle{alpha}
\bibliography{bpcy}

\end{document}